\newcommand{\noun}[1]{\textsc{#1}}
\providecommand{\tabularnewline}{\\}
\numberwithin{equation}{section}
\numberwithin{figure}{section}
\newcommand\thmsname{\protect\theoremname}
\newcommand\nm@thmtype{theorem}
\theoremstyle{plain}
\newenvironment{namedthm}[1][Undefined Theorem Name]{
  \ifx{#1}{Undefined Theorem Name}\renewcommand\nm@thmtype{theorem*}
  \else\renewcommand\thmsname{#1}\renewcommand\nm@thmtype{namedtheorem}
  \fi
  \begin{\nm@thmtype}}
  {\end{\nm@thmtype}}
\theoremstyle{plain}
\newtheorem{thm}{\protect\theoremname}[section]
\theoremstyle{definition}
\newtheorem{defn}[thm]{\protect\definitionname}
\theoremstyle{remark}
\newtheorem{rem}[thm]{\protect\remarkname}
\theoremstyle{definition}
\newtheorem*{defn*}{\protect\definitionname}
\theoremstyle{definition}
\newtheorem*{example*}{\protect\examplename}
\theoremstyle{plain}
\newtheorem{prop}[thm]{\protect\propositionname}
\theoremstyle{plain}
\newtheorem{lem}[thm]{\protect\lemmaname}
\theoremstyle{plain}
\newtheorem{cor}[thm]{\protect\corollaryname}
\newenvironment{lyxlist}[1]
	{\begin{list}{}
		{\settowidth{\labelwidth}{#1}
		 \setlength{\leftmargin}{\labelwidth}
		 \addtolength{\leftmargin}{\labelsep}
		 }}
	{\end{list}}
\theoremstyle{plain}
\newtheorem{conjecture}[thm]{\protect\conjecturename}
\providecommand{\conjecturename}{Conjecture}
\providecommand{\corollaryname}{Corollary}
\providecommand{\definitionname}{Definition}
\providecommand{\examplename}{Example}
\providecommand{\lemmaname}{Lemma}
\providecommand{\propositionname}{Proposition}
\providecommand{\remarkname}{Remark}
\providecommand{\theoremname}{Theorem}
\begin{document}


\global\long\def\tx#1{\mathrm{#1}}%
\global\long\def\dd#1{\tx d#1}%
\global\long\def\tt#1{\mathtt{#1}}%
\global\long\def\ww#1{\mathbb{#1}}%
\global\long\def\DD#1{\tx D#1}%
\global\long\def\nf#1#2{\nicefrac{#1}{#2}}%
\global\long\def\group#1{{#1}}%

\newcommand{\bigslant}[2]{{\raisebox{.3em}{$#1$}/\raisebox{-.3em}{$#2$}}}


\global\long\def\quot#1#2{\bigslant{#1}{#2}}%

\global\long\def\rr{\mathbb{R}}%
\global\long\def\rbar{\overline{\mathbb{R}}}%
\global\long\def\cc{\mathbb{C}}%
\global\long\def\cbar{\overline{\cc}}%
\global\long\def\disc{\mathbb{D}}%
\global\long\def\dbar{\overline{\disc}}%
\global\long\def\zz{\mathbb{Z}}%
\global\long\def\zp{\mathbb{Z}_{\geq0}}%
\newcommandx\zsk[1][usedefault, addprefix=\global, 1=k]{\nicefrac{\zz}{#1\zz}}%
\global\long\def\nn{\mathbb{N}}%
\global\long\def\nbar{\overline{\nn}}%
\global\long\def\qq{\mathbb{Q}}%
\global\long\def\qbar{\overline{\qq}}%

\global\long\def\rat#1{\cc\left(#1\right) }%
\global\long\def\pol#1{\cc\left[#1\right] }%
\global\long\def\id{\tx{Id}}%

\global\long\def\GL#1#2{\tx{GL}_{#1}\left(#2\right)}%

\global\long\def\spec#1{\tx{Spec}\left(#1\right)}%


\global\long\def\fol#1{\mathcal{F}_{#1}}%
\global\long\def\pp#1{\frac{\partial}{\partial#1}}%
\global\long\def\ppp#1#2{\frac{\partial#1}{\partial#2}}%
\newcommandx\sat[2][usedefault, addprefix=\global, 1=\fol{}]{\tx{Sat}_{#1}\left(#2\right)}%
\global\long\def\lif#1{\mathcal{L}_{#1}}%
\global\long\def\holo#1{\mathfrak{h}_{#1}}%
\global\long\def\sing#1{\tx{Sing}\left(#1\right)}%
\global\long\def\flow#1#2#3{\Phi_{#1}^{#2}#3}%
\global\long\def\ddd#1#2{\frac{\dd{#1}}{\dd{#2}}}%
\newcommandx\vfs[1][usedefault, addprefix=\global, 1={\cc^{2},0}]{\mathfrak{X}\left(#1\right)}%
\newcommandx\per[2][usedefault, addprefix=\global, 1=, 2=]{\mathfrak{T}_{#2}^{#1}}%
\newcommandx\persec[2][usedefault, addprefix=\global, 1=, 2=]{\mathfrak{S}_{#2}^{#1}}%


\global\long\def\ii{\tx i}%
\global\long\def\ee{\tx e}%

\global\long\def\re#1{\Re\left(#1\right)}%
\global\long\def\im#1{\Im\left(#1\right)}%

\global\long\def\sgn#1{\mbox{sign}\left(#1\right)}%
\global\long\def\floor#1{\left\lfloor #1\right\rfloor }%
\global\long\def\ceiling#1{\left\lceil #1\right\rceil }%


\global\long\def\germ#1{\cc\left\{  #1\right\}  }%
\global\long\def\frml#1{\cc\left[\left[#1\right]\right] }%
\global\long\def\mero#1{\cc\left(\left\{  #1\right\}  \right)}%


\newcommandx\norm[2][usedefault, addprefix=\global, 1=]{\left\Vert #1\right\Vert _{#2}}%
\global\long\def\adh#1{\tx{cl}\left(#1\right)}%
\global\long\def\ccnx#1{\tx{cc}\left(#1\right)}%
\newcommandx\neigh[2][usedefault, addprefix=\global, 1=, 2=0]{\left(\cc^{#1},#2\right)}%


\newcommandx\proj[2][usedefault, addprefix=\global, 1=1, 2=\cc]{\mathbb{P}_{#1}\left(#2\right)}%
\global\long\def\sone{\mathbb{S}^{1}}%


\global\long\def\inj{\hookrightarrow}%
\global\long\def\surj{\twoheadrightarrow}%
\global\long\def\longto{\longrightarrow}%
\global\long\def\longmaps{\longmapsto}%
\global\long\def\cst{\tx{cst}}%
\global\long\def\Beta#1#2{\tx B\left(#1,#2\right)}%
\global\long\def\Gama#1{\tx{\Gamma}\left(#1\right)}%
\global\long\def\oo#1{\tx o\left(#1\right)}%
\global\long\def\OO#1{\tx O\left(#1\right)}%

\newcommandx\diff[1][usedefault, addprefix=\global, 1={\cc^{2},0}]{\tx{Diff}\left(#1\right)}%
\newcommandx\holf[1][usedefault, addprefix=\global, 1={\cc^{2},0}]{\tx{Holo}\left(#1\right)}%
\newcommandx\holb[1][usedefault, addprefix=\global, 1={\cc^{2},0}]{\tx{Holo_{c}}\left(#1\right)}%
\newcommandx\fdiff[2][usedefault, addprefix=\global, 1=2, 2=0]{\widehat{\tx{Diff}}\left(\cc^{#1},#2\right)}%
\newcommandx\homeo[1][usedefault, addprefix=\global, 1={\cc^{2},0}]{\tx{Homeo}\left(#1\right)}%


\newcommandx\vspec[3][usedefault, addprefix=\global, 1=~]{\tx{#2}_{#1}^{#3}}%
\newcommandx\fvnf[1][usedefault, addprefix=\global, 1=~]{\vspec[#1]{\widehat{Z}}{}}%
\newcommandx\fonf[1][usedefault, addprefix=\global, 1=~]{\vspec[#1]{\widehat{X}}{}}%
\newcommandx\vnf[1][usedefault, addprefix=\global, 1=~]{\vspec[#1]{\mathcal{Z}}{}}%
\newcommandx\onf[1][usedefault, addprefix=\global, 1=~]{\vspec[#1]{\mathcal{X}}{}}%

\newcommandx\mods[3][usedefault, addprefix=\global, 1=loc, 2=k, 3=]{\tx{Mod}_{\tx{#1}}^{\tx{#3}}\left(#2\right)}%

\newcommandx\sobj[3][usedefault, addprefix=\global, 1=\varepsilon]{#2_{#1}^{#3}}%
\newcommandx\scal[3][usedefault, addprefix=\global, 1=\varepsilon]{\sobj[#1]{\mathcal{#2}}{#3}}%
\newcommandx\stx[3][usedefault, addprefix=\global, 1=\varepsilon]{\sobj[#1]{\tx{#2}}{#3}}%
\newcommandx\sfrak[3][usedefault, addprefix=\global, 1=\varepsilon]{\sobj[#1]{\mathfrak{#2}}{\tx{#3}}}%

\newcommandx\sectobase[3][usedefault, addprefix=\global, 1=, 2=j]{#1_{#3}^{#2}}%
\global\long\def\gat#1#2#3{\sectobase[#1][#3\tx g]{#2}}%
\global\long\def\sad#1#2#3{\sectobase[#1][#3\tx s]{#2}}%
\global\long\def\nod#1#2#3{\sectobase[#1][#3\tx n]{#2}}%

\newcommandx\sect[2][usedefault, addprefix=\global, 1=, 2=j]{\sectobase[\mathcal{V}][#2]{#1}}%
\newcommandx\conv[1][usedefault, addprefix=\global, 1=k]{\tx{Convergent}_{#1}}%
\newcommandx\nfsec[2][usedefault, addprefix=\global, 1=k, 2=P^{\tau}y]{\tx{Section}_{#1}\left\{  #2\right\}  }%
\global\long\def\unisp{\germ{\varepsilon}^{\times}}%
\newcommandx\hirze[1][usedefault, addprefix=\global, 1=r]{\tx{Sphere}\left(#1\right)}%
\newcommandx\bber[1][usedefault, addprefix=\global, 1=d]{\tx{Ber}\left(#1\right)}%
\newcommandx\ber[3][usedefault, addprefix=\global, 1=d]{\bber[#1,\begin{array}{c}
 #2\\
 #3 
\end{array}]}%

\title[Normal forms for convergent saddle-node unfoldings]{Analytic normal forms and inverse problems for unfoldings of 2-dimensional
saddle-nodes with analytic center manifold}
\author{C.~Rousseau$^{\star}$ \& L.~Teyssier$^{\dagger}$\texttwosuperior{}}
\date{January 2018}
\thanks{The first author was supported by NSERC in Canada. The second author
was supported by the French National Research Agency grant ANR-13-JS01-0002-01.
He would also like to express his gratitude for the welcome and support
he received during his stay at the CNRS unit UMI-3457 located in the
CRM~/~Université de Montréal.}
\subjclass[2000]{(2010 MSC) 34A26, 34C20, 34C23, 34M35, 37G10, 37L10. }
\keywords{Normal forms, holomorphic vector fields, saddle-node bifurcation,
unfolding of singularities, modulus space, analytic conjugacy.}
\email{\texttt{rousseac@dms.umontreal.ca} \textendash \&\textendash{} \texttt{teyssier@math.unistra.fr}}
\begin{abstract}
We give normal forms for generic $k$-dimensional parametric families
$\left(Z_{\varepsilon}\right)_{\varepsilon}$ of germs of holomorphic
vector fields near $0\in\cc^{2}$ unfolding a saddle-node singularity
$Z_{0}$, under the condition that there exists a family of invariant
analytic curves unfolding the weak separatrix of $Z_{0}$. These normal
forms provide a moduli space for these parametric families. In our
former 2008 paper, a modulus of a family was given as the unfolding
of the Martinet-Ramis modulus, but the realization part was missing.
We solve the realization problem in that partial case and show the
equivalence between the two presentations of the moduli space. Finally,
we completely characterize the families which have a modulus depending
analytically on the parameter. We provide an application of the result
in the field of non-linear, parameterized differential Galois theory.
\end{abstract}

\maketitle
{\scriptsize{}\hfill{}}%
\begin{tabular}{c}
{\scriptsize{}$\,^{\star}$Département de Mathématiques et de Statistique}\tabularnewline
{\scriptsize{}Université de Montréal, Canada}\tabularnewline
\texttt{\scriptsize{}\url{http://www.dms.umontreal.ca/~rousseac}}\tabularnewline
\end{tabular}{\scriptsize{}\hfill{}}%
\begin{tabular}{c}
{\scriptsize{}$\,^{\dagger}$Unité Mixte Internationale 3457}\tabularnewline
{\scriptsize{}CNRS \& CRM, Université de Montréal, Canada}\tabularnewline
\texttt{\scriptsize{}\url{http://math.u-strasbg.fr/~teyssier}}\tabularnewline
\end{tabular}{\scriptsize{}\hfill{}}{\scriptsize\par}

\bigskip{}

\section{Introduction}

Heuristically, moduli spaces of holomorphic dynamical systems not
only encode but also describe qualitatively the dynamics itself, and
to some extent allow a better understanding of remarkable dynamical
phenomena. This paper is part of a large program aimed at studying
the conjugacy classes of dynamical systems in the neighborhood of
stationary points (up to local changes of analytic coordinates). Stationary
points and their invariant manifolds organize the global dynamics
while degenerate stationary points organize the bifurcation diagrams
in families of dynamical systems. Stationary points of discrete dynamical
systems correspond to fixed-points of the iterated map(s), while for
continuous dynamical systems they correspond to singularities in the
underlying differential equation(s).

A natural tool for studying conjugacy classes is the use of normal
forms. For hyperbolic stationary points (generic situation), the system
is locally conjugate to its linear part so that the quotient space
of (local) hyperbolic systems is given by the space of linear dynamical
systems. However, for most non-hyperbolic stationary points the normalizing
changes of coordinates (sending \emph{formally} the system to a normal
form) is given by a divergent power series. Divergence is very instructive:
it tells us that the dynamics of the original system and that of the
normal form are qualitatively different. In that respect, a subclass
of singularities that has been thoroughly studied in the beginning
of the 80's is that of 1-resonant singularities: these include parabolic
fixed-points of germs of 1-dimensional diffeomorphisms, resonant-saddle
singularities and saddle-node singularities of 2-dimensional vector
fields, as well as non-resonant irregular singular points of linear
differential systems. These various resonant dynamical systems share
a lot of common properties, among which is the finite-determinacy
of their formal normal forms (\emph{e.g.} polynomial expressions in
the case of vector fields). Another property they share is that they
can be understood as the \emph{coalescence} of special ``geometric
objects'', either of stationary points or of a singular point with
a limit cycle in the case of the Hopf bifurcation at a weak focus.

\subsection{Scope of the paper}

The present work is the follow-up of~\cite{RouTey} in which we described
a set of functional moduli for unfoldings of codimension $k$ saddle-node
vector fields $Z=\left(Z_{\varepsilon}\right)_{\varepsilon}$ depending
on a finite-dimensional parameter $\varepsilon\in\neigh[k]$. Here
we focus mainly on the inverse problem and on the question of finding
(almost unique) normal forms, as we explain below.

The most basic example of such an unfolding is given by the codimension
$1$ unfolding (expressed in the canonical basis of $\cc^{2}$)
\begin{align}
Z_{\varepsilon}\left(x,y\right) & :=\left[\begin{array}{c}
x^{2}+\varepsilon\\
y
\end{array}\right]~~~~,~\varepsilon\in\cc.\label{eq:sn_model_vf}
\end{align}
Real slices of the phase-portraits are shown in Figure~\ref{fig:example}.
The merging (bifurcation) occurs at $\varepsilon=0$: for $\varepsilon\neq0$
the system has two stationary points located at $\left(\pm\sqrt{-\varepsilon},0\right)$
which collide as $\varepsilon$ reaches $0$.

\begin{figure}[H]
\hfill{}\subfloat[\foreignlanguage{english}{$\varepsilon<0$}]{\includegraphics[width=3cm]{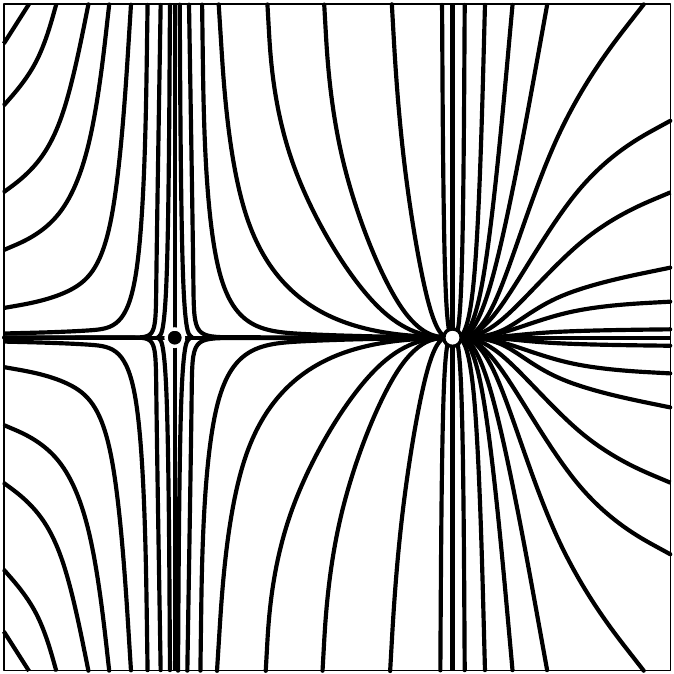}

}\hfill{}\subfloat[\foreignlanguage{english}{$\varepsilon=0$}]{\includegraphics[width=3cm]{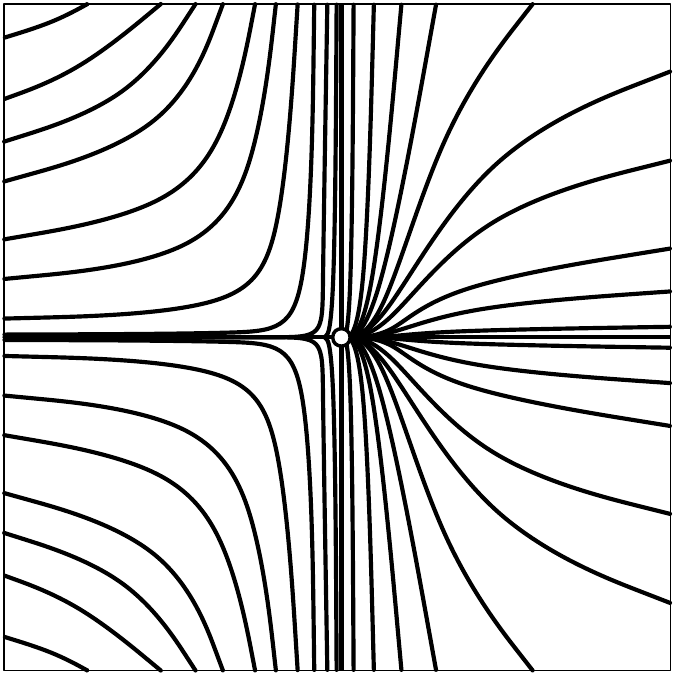}

}\hfill{}\subfloat[\foreignlanguage{english}{$\varepsilon>0$}]{\includegraphics[width=3cm]{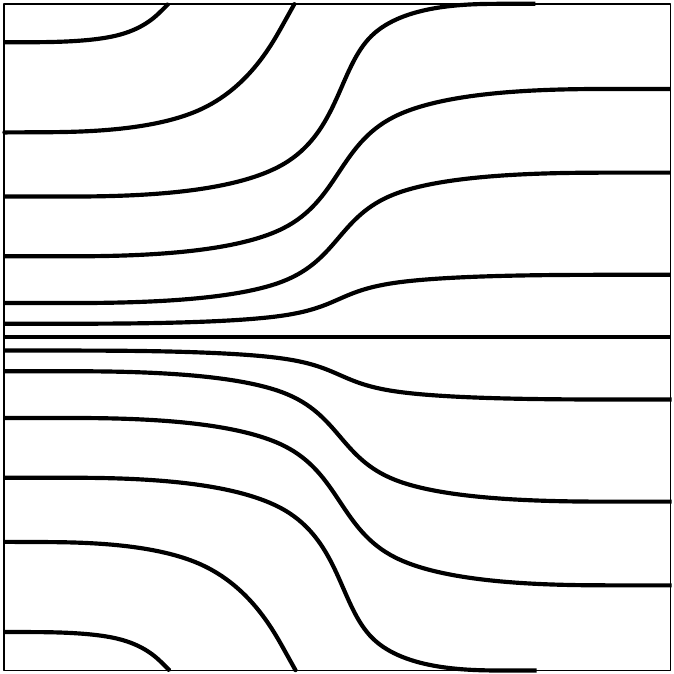}

}\hfill{}

\caption{\label{fig:example}Typical members of the simplest saddle-node bifurcation.}
\end{figure}

\subsection{Modulus of classification}

Each merging stationary point organizes the dynamics in its own neighborhood
in a rigid way. The local models of these rigid dynamics seldom agree
on overlapping areas and in general cannot be glued together. If this
incompatibility persists as the confluence happens, then we have divergence
of the normalizing series at the limit. In the case of $1$- or $2$-dimensional
resonant systems the normalizing series is $k$-summable. The divergence
is then quantified by the \emph{Stokes phenomenon}: there exists a
formal normalizing transformation, and a covering of a punctured neighborhood
of the singularity by $2k$ sectors over which there exist unique
sectorial normalizing transformations that are Gevrey-asymptotic to
the formal normalization. Comparing the normalizing transformations
on intersections of consecutive sectors provides a modulus of analytic
classification. This modulus takes the form of Stokes matrices for
irregular singularities of linear differential systems and functional
moduli for singularities of nonlinear dynamical systems (see for instance~\cite{IlYako}).

\bigskip{}

The classification of resonant systems may seem rather mysterious.
But if we remember that we are studying the merging of ``simple''
singularities, then it becomes natural to unfold the situation and
study the ``multiple'' singularity as a limiting case. Indeed, analyzing
unfoldings sheds a new light on the ``complicated'' dynamics of
the limiting systems. The idea was suggested by several mathematicians,
including V.~\noun{Arnold}, A.~\noun{Bolibruch} and \noun{J.~Martinet~\cite{MarBif}.}
It was put in practice for unfoldings of saddle-node singularities
by A.~\noun{Glutsyuk}~\cite{GluBif} on regions in parameter space
over which the confluent singularities are all hyperbolic. The system
can be linearized in the neighborhood of each singularity, and the
mismatch in the normalizing changes of coordinates tends to the components
of the saddle-node's Martinet-Ramis modulus~\cite{MaRa-SN} when
the singularities merge. But the tools were still missing for a full
classification of unfoldings of multiple singularities, in particular
on a full neighborhood in parameter space of the bifurcation value. 

The thesis of P.~\noun{Lavaurs~\cite{Lavau}} on parabolic points
of diffeomorphisms opened the way for such classifications, for he
studied the complementary regions in parameter space. The first classification
of generic unfoldings of codimension 1 fixed-point of diffeomorphisms
regarded the parabolic point~\cite{MaRouRou}, and then the resonant-saddle
and saddle-node singularities of differential equations~\cite{RouSN,RouResVF}.
The first classification of generic unfoldings of codimension $k$
saddle-nodes was done by the authors~\cite{RouTey} using the visionary
ideas of A.~\noun{Douady} and P.~\noun{Sentenac~}\cite{DES,BraDia}
that R.~\noun{Oudkerk} had used on some regions in parameter space
in his thesis~\cite{Oud}. Then followed classifications of generic
unfoldings of codimension $k$ parabolic points~\cite{RouParak}
and of non-resonant irregular singular points of Poincaré rank $k$
differential systems~\cite{HuLaRou}.

\bigskip{}

In the spirit of this general context we obtained in~\cite{RouTey}
a (family of) functional data 
\begin{eqnarray*}
\mathfrak{m}_{\varepsilon} & = & \left(\sad f{\varepsilon}{},\sad{\psi}{\varepsilon}{},\nod{\psi}{\varepsilon}{}\right)_{\varepsilon}.
\end{eqnarray*}
For $\varepsilon=0$ this data coincides with the saddle-node's modulus~\cite{MaRa-SN,MeshVoro,Tey-SN}.
Although the original work of J.~\noun{Martinet} and J.-P.~\noun{Ramis}
already covered parametric cases, it was then assumed that the (formal)
type of the singularity remained constant. On the contrary we were
interested in bifurcations, which are deformations where the additional
parameters change the type (or number) of singularities. Our main
contribution was to reconcile Glutsyuk's and Lavaurs's viewpoint and
devise a uniform framework valid for a complete neighborhood of the
bifurcation value of the parameter. That being said, the very nature
of our geometric construction prevented the modulus to be continuous
on the whole parameter space. This space needs to be split into a
finite number of \emph{cells} whose closures cover a neighborhood
of the bifurcation value, on which the modulus is analytic on $\varepsilon$
with continuous extension to the closure. 

\subsection{The inverse (or realization) problem}

At the time of the first works on the question, identifying the moduli
space was still out of reach. Performing this identification is called
the inverse problem. It was first solved for codimension $1$ parabolic
fixed-points and resonant-saddle singularities~\cite{RouColRea,RouRea},
as well as for the irregular singularities of linear differential
systems with Poincaré rank $1$~\cite{RouLamLin2}. For codimension
$k$ the realization problem was first solved for unfoldings of non-resonant
irregular singular points of Poincaré rank $k$~\cite{HurtuRou}.
But the realization question is still open for unfoldings of codimension
$k$ parabolic points. 

\bigskip{}

Let us formulate the inverse problem in the case at hands.
\begin{namedthm}[Inverse problem]
Among all elements of the vector space $\mathcal{M}$ to which $\mathfrak{m}=\left(\mathfrak{m}_{\varepsilon}\right)_{\varepsilon}$
belongs, to identify those coming as moduli of a saddle-node bifurcation.
\end{namedthm}
The present paper answers completely this challenge in the case of
bifurcations with a persistent analytic center manifold. The common
feature to that case and the one studied in~\cite{HurtuRou} is that
solving the inverse problem ultimately provides unique normal forms
(privileged representative in each analytic class). 

Having persistent analytic center manifold can be read in the modulus
as the condition $\nod{\psi}{}{}=\id$. Although any element of the
specialization of $\mathcal{M}$ at $\varepsilon=0$ can be realized
as the modulus of a saddle-node vector field~\cite{MaRa-SN,Tey-SN},
this property does not hold anymore for bifurcations: the typical
element of\emph{ $\mathcal{M}\cap\left\{ \nod{\psi}{}{}=\id\right\} $
}can never be realized as a modulus of saddle-node bifurcation. Let
us explain how this is so. It is rather easy to get convinced that
there is no obstruction to realize any given deformation $\left(\mathfrak{m}_{\varepsilon}\right)_{\varepsilon\in\adh{\mathcal{E}}}$
of a saddle-node's modulus $\mathfrak{m}_{0}$ over any given cell
$\mathcal{E}$ in parameter space. By this we mean that for each fixed
$\varepsilon\in\adh{\mathcal{E}}$ it is possible to find a holomorphic
vector field $Z_{\varepsilon}$ on a neighborhood $\mathcal{U}$ of
$\left(0,0\right)$ such that comparisons between its sectorial normalizing
maps coincide with $\mathfrak{m}_{\varepsilon}$. Furthermore the
dependence $\varepsilon\mapsto Z_{\varepsilon}$ has the expected
regularity on the cell's closure, and the neighborhood $\mathcal{U}$
is independent on $\varepsilon$. The sole obstacle lies therefore
in gluing these cellular realizations together over cellular intersections
in order to obtain a genuine analytic parametric family $Z$ whose
modulus agrees with $\mathfrak{m}$. Favorable situations can be characterized
by a strong criterion imposed on $\mathfrak{m}$, called \emph{compatibility
condition}. A necessary and sufficient condition is that two realizations
over different cells in parameter space be conjugate over the intersection
of the two cells, thus allowing correction to a uniform family. One
difficulty is to express this condition on the abstractly encoded
dynamics $\mathfrak{m}$ (that is, before performing the cellular
realization). The compatibility condition takes the simple form that
the abstract holonomy pseudogroups generated by $\mathfrak{m}$ be
conjugate, a condition which can easily be expressed in terms of the
modulus. The general case of a bifurcation without analytic center
manifold remains open, and we hope to address it in the near future.

\subsection{Summary of the paper's content}

Here we review the content of the present work. For precise statements
of our main results, as for more detailed proof techniques, we refer
to Section~\ref{sec:Statements}. Recall that one can associate two
dynamical data to a vector field $X=A\pp x+B\pp y$: 
\begin{itemize}
\item the trajectories of $X$ parametrized by the complex time in the associated
flow-system
\begin{align*}
\begin{cases}
\dot{x} & =A\left(x,y\right)\\
\dot{y} & =B\left(x,y\right)
\end{cases} & ,
\end{align*}
\item the underlying foliation $\fol X$ whose leaves coincide with orbits
of $X$, obtained by forgetting about a particular parametrization
of the trajectories. The foliation really is attached to the underlying
non-autonomous differential equation 
\begin{align*}
A\left(x,y\right)y' & =B\left(x,y\right)
\end{align*}
rather than to the vector field itself. 
\end{itemize}
The action of (analytic or formal) changes of variables $\Psi$ on
vector fields $X$ by conjugacy is obtained as the pullback
\begin{eqnarray*}
\Psi^{*}X & := & \tx D\Psi^{-1}\left(X\circ\Psi\right).
\end{eqnarray*}
The vector fields $X$ and $\Psi^{*}X$ are then (analytically or
formally) \textbf{conjugate}. When two foliations $\fol X$ and $\fol{\widetilde{X}}$
are conjugate (when $X$ is conjugate to a scaling of $\tilde{X}$
by a non-vanishing function) it is common to say that $X$ and $\widetilde{X}$
are \textbf{orbitally equivalent}. While for unfoldings we also allow
parameter changes, we restrict our study to parameter~/~coordinates
changes of the form
\begin{align*}
\Psi~:~\left(\varepsilon,x,y\right) & \longmaps\left(\phi\left(\varepsilon\right),~\Psi_{\varepsilon}\left(x,y\right)\right).
\end{align*}

\bigskip{}

In this paper we focus on families $Z=\left(Z_{\varepsilon}\right)_{\varepsilon\in\neigh[k]}$
unfolding a codimension $k$ saddle-node singularity for $\varepsilon=0$
and the study of their conjugacy class (\emph{resp}. orbital equivalence
class) under local analytic changes of variables and parameter (\emph{resp}.
and scaling by non-vanishing functions). Such families can always
be brought by a formal change of variables and parameter into the
formal normal form\footnote{As is customary we write vector fields in the form of derivations,
by identifying the canonical basis of $\cc^{2}$ with $\left(\pp x,\pp y\right)$. } 
\[
u_{\varepsilon}\left(x\right)\left(P_{\varepsilon}(x)\frac{\partial}{\partial x}+y\left(1+\mu_{\varepsilon}x^{k}\right)\frac{\partial}{\partial y}\right),
\]
where 
\begin{align*}
P_{\varepsilon}(x)=x^{k+1}+\varepsilon_{k-1}x^{k-1}+\cdots+\varepsilon_{1}x+\varepsilon_{0} & ~~~,~k\in\nn\\
u_{\varepsilon}\left(x\right)=u_{0,\varepsilon}+u_{1,\varepsilon}x+\cdots+u_{k,\varepsilon}x^{k} & ~~~,~u_{0,\varepsilon}\neq0
\end{align*}
and $\varepsilon\mapsto\left(\mu_{\varepsilon},u_{0,\varepsilon},\ldots,u_{k,\varepsilon}\right)$
is holomorphic near $0$. A proof of this widely accepted result seems
to be missing in the literature, hence we provide one. 

The first step in our previous work~\cite{RouTey} consisted in preparing
the unfolding $\left(Z_{\varepsilon}\right)_{\varepsilon}$ by bringing
it in a form where the polynomial $P_{\varepsilon}$ determines the
$\pp x$-component. Formal and analytic equivalences between such
forms must consequently preserve the coefficients of $P_{\varepsilon}$,
which then become privileged \emph{canonical parameters}. This process
eliminates the difficulty of dealing with changes of parameters and
allows to work for fixed values of $\varepsilon$. Then we established
a complete classification. The modulus was composed of two parts:
the \emph{formal part} given by the formal normal form above, and
the \emph{analytic part} given by an unfolding of the saddle-node's
functional modulus. The formal~/~analytic part of the modulus itself
consists in the Martinet-Ramis orbital part (characterizing the vector
field up to orbital equivalence) and an additional part classifying
the time. For example $\mu_{\varepsilon}$ is the formal orbital class
while $u_{\varepsilon}$ is the formal temporal class.

\bigskip{}

We completely solve the realization problem for orbital equivalence
(\emph{i.e}. for foliations) when each $Z_{\varepsilon}$ admit a
single analytic invariant manifold passing through every singularity.
But we do more: we provide almost unique ``normal forms'' (the only
degree of freedom being linear transformations in $y$), which are
polynomial in $x$ when $\mu_{0}\notin\mathbb{R}_{\leq0}$. In that
generic situation, an unfolding is orbitally equivalent to an unfolding
over $\proj\times\neigh$ of the form 
\[
P_{\varepsilon}(x)\frac{\partial}{\partial x}+y\left(1+\mu_{\varepsilon}x^{k}+\sum_{j=1}^{k}x^{j}R_{j,\varepsilon}(y)\right)\frac{\partial}{\partial y},
\]
where the $R_{j}$ are analytic in both the geometric variable $y$
and the parameter $\varepsilon$. In this generic case the construction
is a direct generalization of that of F.~\noun{Loray}'s~\cite[Theorems 2 and 4]{Loray}
for $\varepsilon=0$ and $k=1$, and only involves tools borrowed
from complex geometry. In the non-generic case (when $\mu_{0}\leq0$)
we also provide almost unique ``normal forms'', which are in some
sense global in $x$: in this case the foliation is defined on a fiber
bundle of negative degree $-\tau(k+1)<\mu_{0}$ over $\proj$ and
is induced by vector fields of the form 
\begin{equation}
X_{\varepsilon}\left(x,y\right):=P_{\varepsilon}(x)\frac{\partial}{\partial x}+y\left(1+\mu_{\varepsilon}x^{k}+\sum_{j=1}^{k}x^{j}R_{j,\varepsilon}\left(P_{\varepsilon}^{\tau}(x)y\right)\right)\frac{\partial}{\partial y}.\label{eq:intro-orbital_normal_form}
\end{equation}
This result offers a new presentation of the moduli space which has
the advantage over that of~\cite{RouTey} to be made up of functions
analytic in the parameter (it does not require the splitting of the
parameter space into cells). 

As far as normal forms are concerned, we provide some also for the
family of vector fields. This requires normalizing the ``temporal
part''. The method used is an unfolding of the construction of R\noun{.~Schäfke}
and L.~\noun{Teyssier}~\cite{SchaTey} performed for $\varepsilon=0$.
As a by-product we provide an explicit section of the cokernel of
the derivation $X_{\varepsilon}$ (\emph{i.e.} a linear complement
of the image of $X_{\varepsilon}$ acting as a Lie derivative on the
space of analytic germs).

\bigskip{}

An important observation is that the normalization we just described
does not involve classification moduli in any way (nor does it rely
on the analytical classification for that matter), at least in the
generic case $\mu_{0}\notin\ww R_{\leq0}$. Therefore it does not
answer the inverse (or realization) problem which is posed in terms
of classification moduli. This leads us to discuss the compatibility
condition. 

As we mentioned earlier we can realize any unfolding $\mathfrak{m}=\left(\mathfrak{m}_{\varepsilon}\right)_{\varepsilon\in\adh{\mathcal{E}}}$
of a saddle-node's modulus $\mathfrak{m}_{0}$ over a cell $\mathcal{E}$
in parameter space, but we have such control of the construction that
we can guarantee this realization is an unfolding in normal form~\eqref{eq:intro-orbital_normal_form},
save for the fact that the functions $\varepsilon\mapsto R_{j,\varepsilon}$
are merely analytic on $\mathcal{E}$ with continuous extension to
the closure. It is possible to express the holonomy group of $X_{\varepsilon}$
with respect to the analytic center manifold (the geometrical dynamics)
as a representation of an abstract group of words formed with elements
of the modulus $\mathfrak{m}$ (acting in orbits space). The compatibility
condition simply states that the holonomy pseudogroups over the intersection
of two neighboring cells are conjugate by a tangent-to-identity mapping.
If the condition is satisfied then two cellular realizations are conjugate
for values of the parameter in the cells' intersection. Usually when
such a situation occurs, we need to apply a conjugacy to the vector
fields so that they match in the new coordinates. Here no need for
it. Indeed, since the realizations over the different cells are in
normal form, they necessarily are conjugate by a linear map. The additional
hypothesis in the compatibility condition that the conjugating map
is tangent-to-identity allows to conclude that the cellular unfoldings
actually agree and therefore define a genuine unfolding analytic in
$\varepsilon\in\neigh[k]$.

\bigskip{}

Our analysis presents in an effective way the relationship between
Rousseau-Teyssier classification moduli and the coefficients of the
normal forms, so that numerical, and in some cases symbolic, computations
can be performed. Also, we have refined our understanding of the modulus
compared to the presentation in \cite{RouTey}. The number of cells
is now the optimal number $C_{k}=\frac{1}{k+1}\binom{2k}{k}$ (the
$k^{\tx{th}}$ Catalan's number) given by the Douady-Sentenac classification~\cite{DES,DiaCata}.
Moreover we have reduced the degrees of freedom: instead of having
the modulus given up to conjugacy by linear functions depending both
on $\varepsilon$ and the cell, now the modulus is given up to conjugacy
by linear functions depending only on $\varepsilon$ in an analytic
way. This new equivalence relation in the presentation of the modulus
was essential in getting the realizations over the different cells
to match when the compatibility condition is satisfied.

Last but not least we were able to completely characterize the moduli
that depend analytically on the parameter. These only occur when $k=1$
and their normal forms are given by Bernoulli unfoldings 
\begin{equation}
P_{\varepsilon}\left(x\right)\frac{\partial}{\partial x}+y\left(1+\mu_{\varepsilon}x^{k}+xr_{\varepsilon}\left(x\right)\left(P_{\varepsilon}\left(x\right)^{\tau}y\right)^{d}\right)\frac{\partial}{\partial y}\label{eq:bernoulli_vf}
\end{equation}
with $d\in\nn$ and $d\mu\in\zz$ (in particular $\mu$ must be a
rational constant, which is seldom the case). This proves that the
compatibility condition is not trivially satisfied by every element
of $\mathcal{M}\cap\left\{ \nod{\psi}{}{}=\id\right\} $. On the contrary,
the typical situation is that of moduli which are analytic and bounded
only on single cells. This reminds us of the setting of Borel-summable
divergent power series, in particular in the case $k=1$ where the
cells are actual sectors and it can be proved that the moduli are
sectorial sums of $\frac{1}{2}$-summable power series (as in~\cite{RouColRea}).
When $k>1$ the lack of a theory of summation in more than one variable
prevents us from reaching similar conclusions, although the moduli
are natural candidates for such sums and a general summation theory
should probably contain the case we studied here. We reserve such
considerations for future works, perhaps using the theory of polynomial
summability recently introduced by J.~\noun{Mozo} and R.~\noun{Schäfke}
(this work~\cite{MozoScha} is still in preparation, we refer to
the prior monomial case performed in~\cite{CanaMozScha}).

\subsection{Applications}

Our main results can be used to solve problems outside the scope of
finding normal forms or addressing the \emph{local} inverse problem.
Let us mention two applications, the second of which we develop in
Section~\ref{subsec:Applications}.

\bigskip{}

The first (and most straightforward) one concerns the global inverse
problem, also known as non-linear Riemann-Hilbert problem, posed by
Y.~\noun{Ilyashenko} and S.~\noun{Yakovenko}~in~\cite[Chapter IV]{IlYako}.
Being given a (germ of a) complex surface $\mathcal{M}$ seen as the
total space of a fiber bundle over a divisor $\proj\subset\mathcal{M}$,
the problem is to characterize the holonomy representations of complex
foliations on $\mathcal{M}$ tangent to (and regular outside) the
divisor and transverse to the fibers, except over $k+2$ singularities
(which are all assumed non-degenerate) where the fibers are invariant
by the foliation. Using a sibling of Loray's technique, they solve
it for fiber bundles of degree $0$ and $-1$, although they only
provide details for the former case. Our results open the way to generalizations
in several directions:
\begin{itemize}
\item allowing saddle-node(s) with central manifold along the divisor and
adding to the holonomy representation the components of the modulus
of the saddle-nodes, similarly to the generalized linear Riemann-Hilbert
problem when irregular singularities are allowed; 
\item allowing foliations depending analytically on the parameter; 
\item considering realizations on fiber bundles of negative degree: we obtain
here realizations on bundles with degree given by an arbitrary non-positive
multiple of $k+1$ (see Conjecture~\ref{conj:alternate_normal_forms}
for a brief discussion of possible improvement to any non-positive
degree);
\item allowing resonant nodes: in our paper all nodes were linearizable
because their Camacho-Sad index was greater than $1$. But nodes with
smaller Camacho-Sad index pose no additional problem. 
\end{itemize}
We propose to address this matter in the near future.

\bigskip{}

The other application regards differential Galois theory: heuristically,
classification invariants carry Galoisian information (pertaining
to the integrability in Liouvillian closed-form). For instance, in
the limiting case of a saddle-node singularity it is well-known that
Martinet-Ramis moduli play the same role for non-linear equations
as Stokes matrices do for linear systems near an irregular singularity.
A Galoisian formulation of this fact in terms of Malgrange groupoid~\cite{MalgShort,Malg}
can be found in the work of G.~\noun{Casale}~\cite{Casa}. When
the differential equation depends on a parameter $\varepsilon$, the
recent thesis of D.~\noun{Davy} describes a form of ``semi-continuity''
for specializations of its parametrized Malgrange groupoid $\mathfrak{M}$.
He proves that the size of the groupoid $\mathfrak{M}_{\varepsilon}$
is constant if $\varepsilon$ is generic, more precisely if the parameter
does not belong to a (maybe empty) countable union $\Omega$ of hypersurfaces,
while for $\varepsilon\in\Omega$ the groupoid $\mathfrak{M}_{\varepsilon}$
can only get smaller. The present study illustrates and refines this
phenomenon. 

Consider the extreme case $P_{\varepsilon}\left(x\right)\pp x+y\left(1+\mu_{\varepsilon}x^{k}+\varepsilon R_{\varepsilon}\left(x,y\right)\right)\pp y$
for $R$ arbitrary: the vector field $X_{0}$ is surely ``not less
integrable'' (it is the formal normal form) than for $\varepsilon\neq0$.
This is actually the only possible kind of degeneracy near the saddle-node
bifurcation, for we will establish that $\Omega\cap\neigh[k]\subset\left\{ 0\right\} $.
We obtain the latter property by unfolding a result by M.~\noun{Berthier}
and F.~\noun{Touzet}~\cite{BerTouze}, characterizing vector fields
admitting local non-trivial Liouvillian first integrals near an elementary
singularity. We deduce that normal forms of integrable unfoldings
are necessarily a Bernoulli unfolding~\eqref{eq:bernoulli_vf}. Both
proofs are very different in nature, and we obtain a particularly
short one by framing the problem for normal forms, revealing the usefulness
of their simple expression and of the explicit section of their cokernel.

\section{\label{sec:Statements}Statement of the main results}

In all what follows $\varepsilon$ is the parameter, belonging to
some $\neigh[k]$ for $k\in\nn$, and we study (holomorphic germs
of a) parametric family of (germs at $0\in\cc^{2}$ of) vector fields
$Z=\left(Z_{\varepsilon}\right)_{\varepsilon\in\neigh[k]}$ for which
a saddle-node bifurcation occurs at $\varepsilon=0$. That is to say,
when $\varepsilon=0$ the vector field $Z_{0}$ is of \textbf{saddle-node}
type near the origin of $\cc^{2}$:
\begin{itemize}
\item $0$ is an isolated singularity of $Z_{0}$,
\item the differential at $0$ of the vector field has exactly one non-zero
eigenvalue (the singularity is elementary degenerate). 
\end{itemize}
The family $Z=\left(Z_{\varepsilon}\right)_{\varepsilon}$ is called
a holomorphic germ of an \textbf{unfolding} of $Z_{0}$. We  study
in details only\textbf{ ``}generic'' unfoldings, those which possess
the ``right number'' of parameters to encode the bifurcation structure.
Roughly speaking we require that for an open and dense set of parameters
the vector field $Z_{\varepsilon}$ have $k+1$ distinct non-degenerate
singular points. The latter merge into a saddle-node singularity of
multiplicity $k+1$ (codimension $k$) as $\varepsilon\to0$. Let
us make these statements precise.
\begin{defn}
\label{def:generic_unfolding}An unfolding $Z$ of a codimension $k\in\nn$
saddle-node $Z_{0}$ is \textbf{generic} if there exists a biholomorphic
change of coordinates and parameter such that, in the new coordinates
$(x,y)$ and new parameter $\varepsilon$, the singular points of
each $Z_{\varepsilon}$ are given by $P_{\varepsilon}(x)=y=0$, where
\begin{align*}
P_{\varepsilon}(x) & :=x^{k+1}+\varepsilon_{k-1}x^{k-1}+\cdots+\varepsilon_{1}x+\varepsilon_{0}.
\end{align*}
\end{defn}

\begin{rem}
Generic families are essentially \emph{universal}. In particular,
the bifurcation diagram of singular points is the elementary catastrophe
of codimension $k$ (in the complex domain).
\end{rem}

The analytic unstable manifold of $Z_{0}$, tangent at $0$ to the
eigenspace associated to the non-zero eigenvalue of its differential,
is called the \textbf{strong separatrix}. The other eigenspace corresponds
to a ``formal separatrix'' $\left\{ y=\widehat{s}_{0}\left(x\right)\right\} $
called the \textbf{weak separatrix} (generically divergent~\cite{Remond},
always summable in the sense of Borel~\cite{HuKiMa}). We say that
a saddle-node is convergent or divergent according to the nature of
its weak separatrix. 
\begin{defn}
We say that the generic unfolding $Z$ is \textbf{purely convergent}
when there exists a holomorphic function 
\begin{align*}
s~:~\neigh[k][0]\times\neigh & \longto\cc\\
\left(\varepsilon,x\right) & \longmapsto s_{\varepsilon}\left(x\right)
\end{align*}
 such that:

\begin{itemize}
\item each graph $\mathcal{S}_{\varepsilon}$ of $s_{\varepsilon}$ is tangent
to $Z_{\varepsilon}$ and contains $\sing{Z_{\varepsilon}}$ (the
singular set of $Z_{\varepsilon}$, consisting in all zeros of $Z_{\varepsilon}$),
\item $\mathcal{S}_{0}$ is the weak separatrix of $Z_{0}$ (in particular
the latter is convergent).
\end{itemize}
We call $\conv$ the set of all such unfoldings. 

\end{defn}

\begin{rem}
~

\begin{enumerate}
\item By applying beforehand the change of variables $\left(\varepsilon,x,y\right)\mapsto\left(\varepsilon,x,y+s_{\varepsilon}\left(x\right)\right)$
to the unfolding we can always assume that $\left\{ y=0\right\} $
is invariant by $Z_{\varepsilon}$ for all $\varepsilon\in\neigh[k]$.
\item There exist unfoldings $Z$ of a convergent saddle-node $Z_{0}$ such
that, for all $\varepsilon$ close enough to $0$, no analytic invariant
curve $\mathcal{S}_{\varepsilon}$ exist. We use the term ``purely
convergent'' to insist that in the present case \emph{every} vector
field $Z_{\varepsilon}$ for $\varepsilon\in\neigh[k]$ must admit
an analytic invariant curve. 
\end{enumerate}
\end{rem}

\subsection{Normalization of purely convergent unfoldings}

For $\mathbf{z}$ a finite-dimensional complex multivariable we write
$\germ{\mathbf{z}}$ the algebra of convergent power series in $\mathbf{z}$,
naturally identified with the space of germs of a holomorphic function
at $\mathbf{0}$. We extend this notation in the obvious manner so
that $\germ{\varepsilon,x}$ is the space of convergent power series
in the $k+1$ complex variables $\varepsilon_{0},\ldots,\varepsilon_{k-1}$
and $x$.

\subsubsection{Formal classification}

We first give an unfolded version of the well-known Bruno-Dulac-Poincaré
normal forms~\cite{Bruno,Dulac,Dulac2}. Here we do not assume that
$Z$ be purely convergent.
\begin{namedthm}[Formal Normalization Theorem]
Let $k\in\nn$ be given. For $\varepsilon=\left(\varepsilon_{0},\ldots,\varepsilon_{k-1}\right)\in\cc^{k}$
define the polynomial
\begin{align*}
P_{\varepsilon}\left(x\right) & :=x^{k+1}+\sum_{j=0}^{k-1}\varepsilon_{j}x^{j}.
\end{align*}
Take a generic unfolding $Z$ of a saddle-node of codimension $k$.
There exists $\left(\mu,u\right)\in\germ{\varepsilon}\times\germ{\varepsilon,x}$
with $\left(\varepsilon,x\right)\mapsto u_{\varepsilon}\left(x\right)$
polynomial in $x$ of degree at most $k$ and satisfying $u_{0}\left(0\right)\neq0$,
such that $Z$ is formally conjugate to the \textbf{formal normal
form }
\begin{align}
\fvnf & :=u~\fonf~~,\label{eq:formal_normal_form}
\end{align}
where 
\begin{align}
\fonf[\varepsilon]\left(x,y\right) & :=P_{\varepsilon}\left(x\right)\pp x+y\left(1+\mu_{\varepsilon}x^{k}\right)\pp y\label{eq:formal_orbital_normal_form}
\end{align}
defines the\textbf{ formal orbital normal form}. Notice that these
vector fields are polynomial in $\left(x,y\right)$ and holomorphic
in $\varepsilon\in\neigh[k]$. 
\end{namedthm}
In general the parameter of the normal form $\fvnf$ differs from
the original parameter of $Z$. However the formal change of parameter
$\varepsilon\mapsto\phi\left(\varepsilon\right)$ happens to be actually
analytic (as proved in~\cite[Theorem 3.5]{RouSN} and recalled in
Theorem~\ref{thm:preparation}). Moreover such normal forms are essentially
unique, in the sense that among all formal conjugacies only some linear
changes of variables and parameter preserve the whole family. For
example transforming $x$ into $\alpha x$ for some nonzero $\alpha\in\cc$
in $P_{\varepsilon}\pp x$ yields the vector field 
\begin{align*}
\frac{1}{\alpha}P_{\varepsilon}\left(\alpha x\right)\pp x & =\alpha^{k}P_{\widetilde{\varepsilon}}\left(x\right)\pp x
\end{align*}
where $\widetilde{\varepsilon}:=\left(\varepsilon_{j}\alpha^{1-j}\right)_{j<k}$.
Therefore by taking $\alpha^{k}=1$ the linear change $\left(\widetilde{\varepsilon},x\right)\mapsto\left(\varepsilon,\alpha x\right)$
transforms $\left(\fonf[\varepsilon]\right)_{\varepsilon}$ into $\left(\fonf[\widetilde{\varepsilon}]\right)_{\widetilde{\varepsilon}}$.
It turns out this is the only degree of freedom for formal changes
of parameters (see Section~\ref{sec:Formal}), which makes the parameter
of the normal form special. 
\begin{defn}
\label{def:canonical_parameter}The parameter of the normal form $\fvnf$
(\emph{modulo} the action of $\zsk$ on $\left(\varepsilon,x\right)$)
is called the \textbf{canonical parameter} of the original unfolding
$Z$. In all the following a representative $\varepsilon$ of the
canonical parameter is always implicitly fixed and forbidden to change.
\end{defn}

As a consequence, two formal normal forms with formal invariants $\left(\mu,u\right)$
and $\left(\widetilde{\mu},\widetilde{u}\right)$ as above are (for
fixed canonical parameter $\varepsilon$):
\begin{enumerate}
\item orbitally formally equivalent if, and only if, they have same \textbf{formal
orbital invariant} $\mu=\widetilde{\mu}$ ;
\item formally conjugate if, and only if, they have same \textbf{formal
invariant} $\left(\mu,u\right)=\left(\widetilde{\mu},\widetilde{u}\right)$.
\end{enumerate}

\subsubsection{Analytical normalization}
\begin{defn}
\label{def:section}For $k\in\nn$ a positive integer let us introduce
the functional space in the complex multivariable $\left(\varepsilon,x,v\right)\in\cc^{k+2}$:
\begin{align*}
\nfsec[k][v] & :=\left\{ f_{\varepsilon}\left(x,v\right)=v\sum_{j=1}^{k}f_{\varepsilon,j}\left(v\right)x^{j}~:~f_{\varepsilon,j}\left(v\right)\in\germ{\varepsilon,v}\right\} .
\end{align*}
We let $v$ figure explicitly in the notation $\nfsec[][v]$ since
this variable (and this variable only) will be subject to further
specification.
\end{defn}

\begin{namedthm}[Normalization Theorem]
For a given $k\in\nn$ we fix a formal orbital invariant $\mu\in\germ{\varepsilon}$
and choose $\tau\in\zp$ such that $\mu_{0}+\left(k+1\right)\tau\notin\rr_{\leq0}$.
For every $Z\in\conv[k]$ with formal invariant $\left(\mu,u\right)$,
there exist $Q,~R\in\nfsec[k]$ such that $Z$ is analytically conjugate
to
\begin{align}
\mathcal{Z} & :=\frac{u}{1+uQ}\mathcal{X}\label{eq:normal_form}
\end{align}
where
\begin{align}
\mathcal{X} & :=\fonf+Ry\pp y.\label{eq:orbital_normal_form}
\end{align}
\end{namedthm}
\begin{rem}
In case $\tau=0$ (which can be enforced whenever the generic condition
$\mu_{0}\notin\rr_{\leq0}$ holds) normal forms induce foliations
with holomorphic extension to $\proj\times\neigh$. This is no longer
true if $\tau>0$ and if $R$ is not polynomial in the $y$-variable. 
\end{rem}

Specializing the theorem to $\varepsilon=0$, we recover the earlier
results~\cite{SchaTey,Loray}. Let us briefly present the unfolded
geometric construction of F.~\noun{Loray} (performed at an orbital
level in~\cite{Loray} when $k=1$) to get the gist of the argument.
We define a holomorphic family of abstract foliated complex surfaces
$\left(\mathcal{M},\fol{}\right)=\left(\mathcal{M}_{\varepsilon},\mathcal{F}_{\varepsilon}\right)_{\varepsilon\in\neigh[k]}$
given by two charts. The first one is a domain $\mathcal{U}^{0}:=\left\{ 0\leq\left|x\right|<\rho^{0}\right\} \times\neigh$
together with some arbitrary convergent unfolding $Z$, provided the
following non-restrictive properties (see~\cite{RouTey}) are fulfilled
for all $\varepsilon\in\neigh[k]$:
\begin{itemize}
\item $Z_{\varepsilon}$ is holomorphic on the domain and has at most $k+1$
singular points in $\mathcal{U}^{0}$ (counted with multiplicity in
case of saddle-nodes) each one located within $\mathcal{U}^{0}\cap\left\{ 0\leq\left|x\right|<\nf 1{\rho^{\infty}}\right\} $
for some $\rho^{\infty}>1/\rho_{0}$,
\item $Z_{\varepsilon}$ is transverse to the lines $\left\{ x=c\right\} $
whenever $P_{\varepsilon}\left(c\right)\neq0$, 
\item $Z_{\varepsilon}$ leaves $\left\{ y=0\right\} $ invariant.
\end{itemize}
The other chart is a domain $\mathcal{U}^{\infty}:=\left\{ \nf 1{\rho^{\infty}}<\left|x\right|\leq\infty\right\} \times\neigh$
equipped with a foliation $\fol{\varepsilon}^{\infty}$ 
\begin{itemize}
\item having a single, reduced singularity at $\left(\infty,0\right)$, 
\item otherwise transverse to the lines $\left\{ x=\cst\right\} $, 
\item leaving $\left\{ y=0\right\} $ invariant.
\end{itemize}
Biholomorphic fibered transitions maps fixing $\left\{ y=0\right\} $
exists on the annulus $\mathcal{U}^{0}\cap\mathcal{U}^{\infty}$ precisely
when $Z_{\varepsilon}$ and $\fol{\varepsilon}^{\infty}$ have (up
to local conjugacy) mutually inverse holonomy maps above, say, the
invariant circle $\frac{\rho_{0}\rho_{\infty}+1}{2\rho_{\infty}}\sone\times\left\{ 0\right\} $.
The resulting complex surface $\mathcal{M}_{\varepsilon}$ is naturally
a holomorphic fibration by discs over the divisor $\lif{}\simeq\proj$.
In other words  $\mathcal{M}_{\varepsilon}$ is a germ of a Hirzebruch
surface, classified at an analytic level~\cite{Koda,Ued,GrauFisch}
by the self-intersection $-\widehat{\tau}\in\zz_{\leq0}$ of $\lif{}$
in $\mathcal{M}_{\varepsilon}$. From the compactness of $\lif{}$
stems the polynomial-in-$x$ nature of the foliation $\fol{\varepsilon}$.
Other considerations then allow to recognize that $\fol{}$ is (globally
conjugate to a family of foliations) in normal form~\eqref{eq:normal_form}.

\bigskip{}

Let us explain where $\fol{\varepsilon}^{\infty}$ comes from, and
at the same time how the Hirzebruch class $\widehat{\tau}=\left(k+1\right)\tau$
is involved. When the construction of $\left(\mathcal{M},\fol{}\right)$
is possible, the global holomorphic foliation $\fol{\varepsilon}$
leaves the compact divisor $\lif{}$ invariant and Camacho-Sad index
formula~\cite{CamaSad} applies. The sum of indices of $Z_{\varepsilon}$
at its $k+1$ singularities, with respect to $\lif{}$, is $\mu_{\varepsilon}$
so $\fol{\varepsilon}^{\infty}$ must have index $-\left(\mu_{\varepsilon}+\widehat{\tau}\right)$.
By assumption the singularity at $\left(\infty,0\right)$ can therefore
never be a (saddle-)node. Invoking the realization result of~\cite[Section 4.4]{SchaTey}
(more precisely in the chart near $\left(\infty,0\right)$) it is
always possible to find a foliation $\fol{\varepsilon}^{\infty}$
with the desired properties. On the contrary when $\mu_{\varepsilon}+\widehat{\tau}\leq0$
then no such $\fol{\varepsilon}^{\infty}$ may exist at all except
in very special cases (detailed in~\cite[Theorem 2]{Loray}) since,
for instance, the holonomy along $\lif{}$ of a node is always linearizable
while the weak holonomy of $Z_{\varepsilon}$ has no reason to be
linearizable. We discuss this problem in more details while dealing
with the non-linear Riemann-Hilbert problem below.

Therefore one can always take $\tau:=0$ except when $\mu_{0}\leq0$,
which accounts for the ``twist'' $P_{\varepsilon}\left(x\right)^{\tau}y\sim_{x\to\infty}x^{\widehat{\tau}}y$
in normal forms~\eqref{eq:normal_form}. 

\subsubsection{Normal forms uniqueness}

To fully describe the quotient space (moduli space) of $\conv$ by
analytical conjugacy~/~orbital equivalence, the Normalization Theorem
must be complemented with a description of equivalence classes within
the family of normal forms~\eqref{eq:normal_form}, leading us to
discuss its uniqueness clause. 
\begin{defn}
\label{def:normal_invariants}~

\begin{enumerate}
\item For $Z\in\conv$ we denote 
\begin{align}
\mathfrak{n}\left(Z\right) & :=\left(\mu,u,R,Q\right)\label{eq:normal_invariants}\\
\mathfrak{o}\left(Z\right) & :=\left(\mu,R\right)\label{eq:normal_orbital_invariants}
\end{align}
respectively the \textbf{normal invariant} of $Z$ and its \textbf{normal
orbital invariant}, where the functional tuples on the right-hand
side are given by the Normalization Theorem. 
\item For $c\in\unisp$ and $f\in\germ{\varepsilon,x,y}$ define 
\begin{align*}
c^{*}f & :=\left(\varepsilon,x,y\right)\longmapsto f_{\varepsilon}\left(x,~c_{\varepsilon}y\right).
\end{align*}
We extend component-wise this action of $\unisp$ to tuple of functions
such as $\mathfrak{n}$ and $\mathfrak{o}$ above.
\end{enumerate}
\end{defn}

\begin{namedthm}[Uniqueness Theorem]
~
\begin{enumerate}
\item Two normal forms~\eqref{eq:normal_form} associated to the same fixed
$\tau$ and moduli~\eqref{eq:normal_invariants} $\mathfrak{n}$
and $\widetilde{\mathfrak{n}}$ are analytically conjugate (by a change
of coordinates fixing the parameter) if, and only if, there exists
$c\in\unisp$ such that $c^{*}\mathfrak{n}=\widetilde{\mathfrak{n}}$.
For any conjugacy $\Psi~:~\left(\varepsilon,x,y\right)\mapsto\left(\varepsilon,\Psi_{\varepsilon}\left(x,y\right)\right)$
there exists a unique $t\in\germ{\varepsilon}$ such that 
\begin{align*}
\Psi & =c^{*}\flow{\mathcal{Z}}t{},
\end{align*}
where $\flow{\mathcal{Z}}t{}$ is the local flow of $\mathcal{Z}$
at time $t\in\cc$. Moreover it is fibered in the $x$-variable if,
and only if, $t=0$. In that case $\Psi$ is linear:
\begin{align*}
\Psi=c^{*}\id~:~\left(\varepsilon,x,y\right) & \longmapsto\left(\varepsilon,~x,~c_{\varepsilon}y\right).
\end{align*}
\item Let $\mathfrak{o}$ and $\widetilde{\mathfrak{o}}$ be the corresponding
orbital invariants. The normal forms are analytically orbitally equivalent
(by a change of coordinates fixing the parameter) if, and only if,
there exists $c\in\unisp$ such that $c^{*}\mathfrak{o}=\widetilde{\mathfrak{o}}$.
For any orbital equivalence $\Psi$ there exists a unique $F\in\germ{\varepsilon,x,y}$
such that 
\begin{align*}
\Psi & =c^{*}\flow{\mathcal{Z}}F{}.
\end{align*}
Moreover $\Psi$ is fibered in the $x$-variable if, and only if,
$F=0$. In that case $\Psi$ is linear.
\end{enumerate}
\end{namedthm}
\begin{rem}
In particular normal forms~\eqref{eq:normal_form} are unique when
only tangent-to-identity in the $y$-variable, fibered in the $x$-variable
conjugacies are allowed. 
\end{rem}

Again the proof is largely based on the strategy of F.~\noun{Loray}
introduced in~\cite{Loray}, although the actual implementation in
the parametric case calls for subtle adaptations. The idea is to extend
any local and fibered conjugacy between normal forms to a global conjugacy
on a ``big'' neighborhood of $\lif{}$, from which it easily follows
that only linear maps can do that.

\subsection{Inverse problem}

For given $k\in\nn$ we can split the parameter space $\neigh[k]$
into $C_{k}=\frac{1}{k+1}\binom{2k}{k}$ open cells $\mathcal{E}_{\ell}$
such that 
\begin{align*}
\bigcup_{\ell}\mathcal{E}_{\ell} & =\neigh[k]\backslash\Delta_{k},
\end{align*}
where $\Delta_{k}$ is the set of parameters $\varepsilon$ for which
$P_{\varepsilon}$ has at least a multiple root ($\Delta_{k}$ is
the discriminant curve). We recall that we can associate~\cite{RouTey}
an orbital modulus to a purely convergent unfolding $Z$
\begin{align*}
\mathfrak{m}\left(Z\right) & :=\left(\mathfrak{m}_{\ell}\right)_{1\leq\ell\leq C_{k}}\\
\mathfrak{m}_{\ell} & :=\left(\sad{\phi}{\ell}{j,}\right)_{j\in\zsk}
\end{align*}
where for each $j\in\zsk$ and each $\ell$ the map
\begin{align*}
\left(\varepsilon,h\right)\in\mathcal{E}_{\ell}\times\neigh & \longmaps\sad{\phi}{\ell,\varepsilon}{j,}\left(h\right)
\end{align*}
is holomorphic, vanishes along $\left\{ h=0\right\} $ and admits
a continuous extension to $\adh{\mathcal{E}_{\ell}}\times\neigh$.
\begin{rem}
~
\begin{enumerate}
\item The upper index ``s'' is purely notational and refers to the fact
that the function $\sad{\phi}{\ell,\varepsilon}{j,}$ comes from the
$j^{\tx{th}}$ ``s''addle intersection, where the dynamics behaves
very much like a saddle point. 
\item The diffeomorphisms $\sad{\psi}{\ell,\varepsilon}{j,}$, which unfold
the components $\sad{\psi}0{j,}$ of the (classical) Martinet-Ramis
modulus, are given by $\sad{\psi}{\ell,\varepsilon}{j,}(h)=h\exp\left(\frac{2\ii\pi\mu_{\varepsilon}}{k}+\sad{\phi}{\ell,\varepsilon}{j,}(h)\right)$.
\end{enumerate}
\end{rem}

Let us write $\mathcal{H}_{\ell}\left\{ h\right\} $ the vector space
of all such functions, so that
\begin{align*}
\mathfrak{m}\left(Z\right) & \in\prod_{\ell}\mathcal{H}_{\ell}\left\{ h\right\} ^{k}.
\end{align*}
The data $\mathfrak{m}\left(Z\right)$ is a complete orbital invariant
for the local analytic classification of purely convergent unfoldings.

\subsubsection{Orbital realization}

The definition of the \emph{compatibility condition} involves notions
going beyond the scope of the present summarized statements. We refer
to Section~\ref{subsec:Compatibility-condition} for a precise definition.
Instead let us use the following terminology.
\begin{defn*}
We say that $\left(\mu,\mathfrak{m}\right)\in\germ{\varepsilon}\times\prod_{\ell}\mathcal{H}_{\ell}\left\{ h\right\} ^{k}$
is\textbf{ realizable} if there exists a generic convergent unfolding
$Z$ with formal orbital class $\mu$ and orbital modulus $\mathfrak{m}=\mathfrak{m}\left(Z\right)$.
\end{defn*}
For the sake of completeness, let us state the following fundamental
result even though all material was not properly introduced.
\begin{namedthm}[Orbital Realization Theorem]
Let $\mu\in\germ{\varepsilon}$ be given. A functional data $\mathfrak{m}\in\prod_{\ell}\mathcal{H}_{\ell}\left\{ h\right\} ^{k}$
yields a realizable $\left(\mu,\mathfrak{m}\right)$ if, and only
if, $\left(\mu,\mathfrak{m}\right)$ satisfies the compatibility condition
(presented in Definition~\ref{def:compatibility_condition}).
\end{namedthm}
Although it is not directly used in the present paper, considerations
akin to those from~\cite{SchaTey} show that the map sending a normal
form to its orbital modulus
\begin{align*}
\mathfrak{o}=\left(\mu,R\right) & \longmaps\left(\mu,\mathfrak{m}\right)
\end{align*}
is upper-triangular, in the sense that the $n^{\tx{th}}$-jet of $\mathfrak{m}_{\ell}$
with respect to $h$ is completely determined by $\mu$ and the $n^{\tx{th}}$-jet
of $R$ with respect to $y$. In that sense passing from modulus to
normal form is a (non-effective) computable process. In the case $k=1$
we show how to compute the diagonal entries. More details on this
topic can be found in Section~\ref{subsec:first-order}. 

\subsubsection{Moduli which are analytic with respect to the parameter}

Our final main result proves that the compatibility condition defines
a proper subset of the vector space $\germ{\varepsilon}\times\prod_{\ell}\mathcal{H}_{\ell}\left\{ h\right\} ^{k}$.
\begin{namedthm}[Parametrically Analytic Orbital Moduli Theorem]
Let $\mu\in\germ{\varepsilon}$ and $\mathfrak{m}=\left(\mathfrak{m}_{\ell}\right)_{\ell}\in\prod_{\ell}\mathcal{H}_{\ell}\left\{ h\right\} ^{k}$
be given. Assume $\mathfrak{m}$ is holomorphic, in the sense that
$\mathfrak{m}_{\ell}=M|_{\mathcal{E}_{\ell}\times\neigh}$ for some
$M\in h\germ{\varepsilon,h}^{k}$. The following conditions are equivalent:

\begin{enumerate}
\item $\left(\mu,\mathfrak{m}\right)$ satisfies the compatibility condition,
\item either $\mathfrak{m}=0$, or $k=1$ and there exists $d\in\nn$, $\alpha\in\germ{\varepsilon}\backslash\left\{ 0\right\} $
such that 

\begin{itemize}
\item $d\mu\in\zz$ (in particular $\mu$ is a rational constant),
\item $M\left(h\right)=-\frac{1}{d}\log\left(1-\alpha h^{d}\right)$.
\end{itemize}
\end{enumerate}
If one of the conditions is satisfied and $\mathfrak{m}\neq0$, then
an orbital normal form realizing $\left(\mathfrak{m}_{\ell}\right)_{\ell}$
is:
\begin{align*}
\onf[\varepsilon] & =\fonf[\varepsilon]+r_{\varepsilon}xP_{\varepsilon}\left(x\right)^{\tau d}y^{d+1}\pp y
\end{align*}
for some $r\in\germ{\varepsilon}\backslash\left\{ 0\right\} $. We
then speak of \textbf{Bernoulli unfoldings} because the underlying
non-autonomous differential equation induced by the flow is Bernoulli.

\end{namedthm}
\begin{rem}
\label{rem:Bernoulli_normalization}By letting $\germ{\varepsilon}^{\times}$
act linearly through $\left(\varepsilon,x,y\right)\mapsto\left(\varepsilon,x,c_{\varepsilon}y\right)$
we may normalize further $r$ to some $\varepsilon^{\kappa}$ for
$\kappa\in\zp$. See also Section~\ref{subsec:first-order}.
\end{rem}

\subsection{\label{subsec:Applications}Application: non-linear differential
Galois theory}

M.~\noun{Berthier} and F.~\noun{Touzet}~\cite{BerTouze} proved
that the Martinet-Ramis modulus of a convergent saddle-node vector
field admitting non-trivial Liouvillian first integrals~\cite{SingVDP}
must be a ramified homography $h\mapsto\alpha h\left(1+\beta h^{d}\right)^{-\nf 1d}$,
from which they deduce that the vector field is Bernoulli. (It is
indeed straightforward to compute the modulus of a Bernoulli vector
field by solving explicitly the underlying differential equation.)
Roughly speaking this situation corresponds to differential equations
admitting ``closed-form'' solutions obtained by iteratively taking
quadrature (or exponential thereof) of elements of (algebraic extensions
of) the base-field (here, meromorphic functions on a polydisk containing
$P_{\varepsilon}^{-1}\left(0\right)\cap\left\{ y=0\right\} $).
\begin{namedthm}[Integrability Theorem]
 Let $\left(Z_{\varepsilon}\right)_{\varepsilon}$ be a generic,
purely convergent saddle-node unfolding and denote by $\mathcal{L}$
the germ of set consisting in those $\varepsilon\in\neigh[k]$ for
which $Z_{\varepsilon}$ admits a Liouvillian first integral. The
following statements are equivalent.
\begin{enumerate}
\item The locus of integrability $\mathcal{L}$ is full: $\mathcal{L}=\neigh[k]$.
\item Its (analytic) Zariski closure is full: $\overline{\mathcal{L}}^{\tt{Zar}}=\neigh[k]$.
\item Its orbital normal form $\onf$ is a Bernoulli unfolding.
\end{enumerate}
\end{namedthm}
\begin{rem}
~
\begin{enumerate}
\item The case $\overline{\mathcal{L}}^{\tt{Zar}}\neq\neigh[k]$ corresponds
to $\mathcal{L}$ being a germ at $0$ of a proper analytic subvariety.
Then $\mathcal{L}$ is the locus of parameters for which the normal
form is Bernoulli. For instance in case of the normal form given by
$R_{\varepsilon}\left(t\right):=t^{d}+L\left(\varepsilon\right)t^{d+1}$
we have $\mathcal{L}=L^{-1}\left(0\right)$, as we discuss after the
proof of the theorem.
\item In the case $k=1$ the second condition is equivalent to any of the
following three conditions: the germ $\mathcal{L}$ accumulates on
$0$, $\mathcal{L}$ is infinite, $\mathcal{L}\neq\left\{ 0\right\} $.
\end{enumerate}
\end{rem}

\begin{proof}
The property of having a non-trivial Liouvillian first integral is
both orbital and invariant by change of analytic coordinates, so we
do not lose generality by taking $Z=\onf$ in normal form~\eqref{eq:orbital_normal_form}.
Integrability is equivalent to the existence of a Godbillon-Vey sequence~\cite{GodVey}
of length at most 2, that is to the existence of two non-zero meromorphic
$1$-forms $\omega$ and $\eta$ for which 
\begin{align*}
\dd{\eta} & =0\\
\dd{\omega} & =\delta\omega\wedge\eta~~~,~\delta\in\left\{ 0,1\right\} \\
\omega\left(\onf\right) & =0.
\end{align*}
(The multivalued map $H:=\int\exp\left(\delta\int\eta\right)\omega$
is indeed a Liouvillian first integral of $\onf$, obtained by quadrature
of closed $1$-forms.) This in turn is (almost) equivalent to solving
for meromorphic, transverse $Y\neq0$ in the Lie-bracket equation
\begin{align}
\left[\onf,Y\right] & =\delta Y~~~,~\delta\in\left\{ 0,1\right\} ,\label{eq:godbillon-vey_lie}
\end{align}
since the dual basis $\left(\eta,\omega\right)$ of $\left(\onf,Y\right)$
is a Godbillon-Vey sequence and \emph{vice versa}. There is a subtlety
here, because $\onf$ may fail to meet this condition while there
could exist an integrating factor $V$ for which $V\onf$ does. We
deliberately ignore this eventuality, because the case $V\neq1$ can
be deduced from the particular case $V=1$ by a direct (albeit cumbersome)
adaptation. For the same reason we only deal with the case $\mu_{0}\notin\rr_{\leq0}$. 

The implications (3)$\Rightarrow$(1)$\Rightarrow$(2) are clear enough
(we particularly refer to Lemma~\ref{lem:bernoulli_unfold_with_bernoulli_modulus}
for the first one). Let us prove (2)$\Rightarrow$(3). The strategy
is the following: we first show that the vector field is Bernoulli
for each $\varepsilon\in\mathcal{L}$, then we invoke the analyticity
of the normal form and the fact that $\overline{\mathcal{L}}^{\tt{Zar}}$
is full to cover a whole neighborhood of $0$ in parameter space.
Hence, let us fix $\varepsilon\in\mathcal{L}$ and drop the index
$\varepsilon$ altogether. According to the above discussion one can
find $\delta\in\left\{ 0,1\right\} $ and a vector field 
\begin{align*}
Y & =A\left(x,y\right)\pp y+B\left(x,y\right)\onf
\end{align*}
solving~\eqref{eq:godbillon-vey_lie} for two functions $A\neq0$
and $B$ meromorphic on a polydisk containing $P^{-1}\left(0\right)\cap\left\{ y=0\right\} $.
From~\eqref{eq:godbillon-vey_lie} we deduce the relations
\begin{align}
\begin{cases}
\onf\cdot B & =\delta B,\\
\onf\cdot A & =\left(\delta+1+\mu x^{k}+\ppp{yR}y\right)A.
\end{cases}\label{eq:Godbillon-Vey_system}
\end{align}
The second equation tells us that $\left\{ A=0\right\} \cup\left\{ A=\infty\right\} $
is a union of separatrices of $\onf$, therefore of the form 
\begin{align*}
A\left(x,y\right) & =y^{d+1}U\left(x,y\right)\prod_{P\left(z\right)=0}\left(x-z\right)^{\ell\left(z\right)},
\end{align*}
for some choice of $d,~\ell\left(z\right)\in\zz$ and for some holomorphic
and never vanishing function $U$. Let us prove that $R=r\left(x\right)y^{d}$,
from which follows either $d\in\nn$ or $R=0$. 

The last equation of~\eqref{eq:Godbillon-Vey_system} becomes
\begin{align*}
\onf\cdot\log U & =\delta-d\left(1+\mu x^{k}\right)-\sum_{P\left(z\right)=0}\ell\left(z\right)\frac{P\left(x\right)}{x-z}+\left(y\ppp Ry-dR\right),
\end{align*}
because $\onf\cdot\log y=1+\mu x^{k}+R$. Evaluating this identity
at any one of the $k+1$ points $\left(x,y\right)=\left(x,0\right)$
such that $P\left(x\right)=0$ yields $0=\delta-d\left(1+\mu x^{k}\right)-\ell\left(x\right)P'\left(x\right)$,
since on the one hand $R$ and $y\ppp Ry$ vanish when $y=0$ while
on the other hand $\ell\left(z\right)\frac{P\left(x\right)}{x-z}$
evaluates to $0$ if $z\neq x$ and to $\ell\left(x\right)P'\left(x\right)$
otherwise. As a consequence we have equality of the polynomials $\sum_{P\left(z\right)=0}\ell\left(z\right)\frac{P\left(x\right)}{x-z}=\delta-d\left(1+\mu x^{k}\right)$
of degree $k$. Therefore 
\begin{align*}
\onf\cdot\log U & =y\ppp Ry-dR~.
\end{align*}
In the course of Section~\ref{sec:Temporal} we show that $\tx{im}\left(\onf\cdot\right)\cap\nfsec[][y]=\left\{ 0\right\} $
(see Remark~\ref{rem:section_fixed_parameter}). Hence, the fact
that $y\ppp Ry-dR\in xy\pol x_{<k}\left\{ y\right\} =\nfsec[][y]$
lies in the image of $\onf\cdot$ can only mean $y\ppp Ry-dR=0$.
From this we deduce at once that
\begin{align*}
R\left(x,y\right) & =xr\left(x\right)y^{d}~~~~~,~r\in\pol x_{<k}.
\end{align*}

The condition that, for a specific $\varepsilon$, the vector field
$\onf[\varepsilon]$ be Bernoulli corresponds to the vanishing of
all coefficients in $R_{\varepsilon}$ of $y^{n}$ but for $n=d$.
Since $\left(\varepsilon,y\right)\mapsto R_{\varepsilon}\left(y\right)$
is analytic with respect to $\varepsilon$ and $\overline{\mathcal{L}}^{_{\tt{Zar}}}=\neigh[k]$,
if $d$ is independent on $\varepsilon$ then the identity principle
implies $R_{\varepsilon}\left(x,y\right)=xr_{\varepsilon}\left(x\right)y^{d}$
for all $\left(\varepsilon,x,y\right)\in\neigh[k+2]$. The fact that
$d$ is indeed independent on $\varepsilon$ stems from Baire's category
theorem. 
\end{proof}
\begin{rem}
The proof relies in an essential way on the analyticity of the orbital
normal form $\onf$ with respect to $\varepsilon$ near $0$. Compare
with the method of proof of~\cite{BerTouze}: for $\varepsilon=0$
the argument is based on the fact that the existence of a Godvillon-Vey
sequence forces the Martinet-Ramis modulus to be a ramified homography.
This argument works as well for $\varepsilon\neq0$, but we could
not have argued on from there since the modulus is in general not
analytic at $\varepsilon=0$: although being a ramified homography
is an analytic condition, an accumulation of zeros of this relation
as $\varepsilon\to0$ could arise without holding for all $\varepsilon$
(for $k=1$, say). This situation cannot occur, and our shorter argument
does not involve the unfolded modulus of classification.
\end{rem}

The Galoisian characterization of the existence of Godbillon-Vey sequences
of length at most $2$ is performed in~\cite{Casa}, and for fixed
$\varepsilon$ its length equals the (transverse) rank $\tx{rk}\left(\mathfrak{M}_{\varepsilon}\right)$
of the Galois-Malgrange groupoid $\mathfrak{M}_{\varepsilon}$. This
rank takes values in $\left\{ 0,1,2,\infty\right\} $, integrability
corresponding to finite values. For the normal forms~\eqref{eq:orbital_normal_form}
with $R_{\varepsilon}\left(x,t\right)=\sum_{n>0}r_{\varepsilon,n}\left(x\right)t^{n}$,
we have
\begin{align*}
\tx{rk}\left(\mathfrak{M}_{\varepsilon}\right)= & \begin{cases}
1+\#\left\{ n~:~r_{\varepsilon,n}\neq0\right\}  & \tx{if~it~is~\leq2,}\\
\infty & \tx{otherwise}.
\end{cases}
\end{align*}
Therefore $\varepsilon\mapsto\tx{rk}\left(\mathfrak{M}_{\varepsilon}\right)$
is lower semi-continuous: accidental values of the rank can only correspond
to more integrable systems. 
\begin{example*}
Taking into account Remark~\ref{rem:Bernoulli_normalization}, in
the case $k=1$ and $R\neq0$ the vector field $\onf[0]$ is ``more
integrable'' (transverse rank $1$) than the generic $\onf[\varepsilon]$
(transverse rank $2$) if and only if the exponent $\kappa$ is positive. 
\end{example*}
This is a special instance of a general result on parametrized Galois-Malgrange
groupoids obtained very recently by G.~\noun{Casale} and D.~\noun{Davy}~\cite{CasaDav}.
They show that for rather general deformations of foliations $\left(\mathcal{F}_{\varepsilon}\right)_{\varepsilon}$,
the rank $\tx{rk}\left(\mathfrak{M}_{\varepsilon}\right)$ of the
specialization $\mathfrak{M}_{\varepsilon}$ of the Galois-Malgrange
groupoid of the family is lower semi-continuous in $\varepsilon$.
Moreover, the locus of discontinuity is contained in a countable union
of proper analytic subvarieties. We showed that in the case of purely
convergent saddle-node bifurcations, the locus of discontinuity is
at most a proper analytic subvariety.

\subsection{Structure of the paper}
\begin{itemize}
\item We begin with fixing notations and providing precise definitions in
Section~\ref{sec:Notations}. Readers familiar with complex foliations
may skip this section.
\item The Formal Normalization Theorem is proved in Section~\ref{sec:Formal}. 
\end{itemize}
We first present the generic case (for which one can take $\tau=0$),
since it is easier to highlight the ideas than in the case $\tau>0$.
\begin{itemize}
\item The orbital part of the Normalization and Uniqueness Theorems are
established in Section~\ref{sec:Geometric} when $\tau=0$. 
\item The temporal part of the Normalization and Uniqueness Theorems are
established in Section~\ref{sec:Temporal} when $\tau=0$. 
\item In Section~\ref{sec:Analytic} one finds the definition of compatibility
condition, and the proof of the Orbital Realization Theorem in the
generic case $\tau=0$.
\item In Section~\ref{sec:tau} we prove the Orbital Realization Theorem
in the case $\tau>0$. This provides \emph{a posteriori} a proof of
the orbital part of the Normalization and Uniqueness Theorems when
$\tau>0$.
\item In Section~\ref{sec:Bernoulli} we discuss the Bernoulli unfoldings
and prove the Parametrically Analytic Orbital Moduli Theorem.
\item Finally, in Section~\ref{sec:Computations}, we conclude with a few
words on computations.
\end{itemize}

\section{\label{sec:Notations}Preliminaries}

\subsection{Notations}

\subsubsection{\label{subsec:General-notations}General notations}
\begin{itemize}
\item We let the set $\nn:=\left\{ 1,2,\ldots\right\} $ stand for all positive
integers, whereas the set of non-negative integers will be written
$\zp=\left\{ 0,1,\ldots\right\} $.
\item For $n\in\nn$ we let $\neigh[n][0]$ stand for any small enough domain
in $\cc^{n}$ containing $0$.
\item The domain $\ww D:=\left\{ z\in\cc~:~\left|z\right|<1\right\} $ is
the standard open unit disk.
\item The closure of a subset $A$ of a topological space is written $\adh A$.
\item $k\in\nn$ is fixed, $\varepsilon=\left(\varepsilon_{0},\ldots,\varepsilon_{k-1}\right)\in\neigh[k]$
is the parameter and
\begin{align}
P_{\varepsilon}\left(x\right) & =x^{k+1}+\sum_{j=0}^{k-1}\varepsilon_{j}x^{j}.\label{eq:definition_P}
\end{align}
\item The parameter space $\neigh[k]$ is covered by the closure of $C_{k}=\frac{1}{k+1}\binom{2k}{k}$
open and contractible cells $\mathcal{E}_{\ell}$.
\item The period operator $\per=\left(\per[j]\right)_{j\in\zsk}$ is built
near Definition~\ref{def:period_operator}.
\item The very nature of constructions involves using more sub- and super-scripts
than one is generally comfortable with. To alleviate this downside
we stick to a single convention: \textbf{subscripts are }\textbf{\emph{always}}\textbf{
parameter-related}, while superscripts are in general related to the
geometric variables $\left(x,y\right)$ or to indices in power series
expansions. Example: we write $\sad V{\ell,\varepsilon}{j,}$ for
the ``s''addle part of the $j^{\tx{th}}$ sector in $x$-variable,
relatively to the parameter $\varepsilon$ being taken in the $\ell^{\tx{th}}$
parametric cell. In the course of the text we try to drop indices
whenever possible.
\item \textbf{The dependency on the parameter $\varepsilon$ is implicit
in most instances}. For example, $\mu\in\germ{\varepsilon}$ stands
for the formal orbital modulus while $\mu_{\varepsilon}$ stands for
the value of $\mu$ at the particular value of the parameter $\varepsilon$.
Yet in many places where $\varepsilon$ is fixed we do use $\mu$
instead of $\mu_{\varepsilon}$ in order to help reducing the notational
footprint. This also applies for other parametric objects.
\end{itemize}

\subsubsection{\label{subsec:Functional-spaces}Functional spaces}

In the following $\mathcal{R}$ is a commutative ring with a multiplicative
action by complex numbers.
\begin{itemize}
\item $\mathcal{R}^{\times}$ is the multiplicative group of its invertible
elements.
\item $\mathcal{R}\left[\mathbf{z}\right]$ is the commutative ring of polynomials
in the complex finite-dimensional (multi)variable $\mathbf{z}=\left(z_{1},\ldots,z_{n}\right)$
with coefficients in $\mathcal{R}$. 
\item After choosing a binary relation $\prec$ among $\left\{ =,<,\leq,\ldots\right\} $
we let $\mathcal{R}\left[\mathbf{z}\right]_{\prec d}$ be the subset
of $\mathcal{R}\left[\mathbf{z}\right]$ consisting of polynomials
$P$ such that $\deg P\prec d$. 
\item The projective limit $\mathcal{R}\left[\left[\mathbf{z}\right]\right]:=\lim_{d\to\infty}\mathcal{R}\left[\mathbf{z}\right]_{\leq d}$
is the ring of formal power series in $\mathbf{z}$ with coefficients
in $\mathcal{R}$.
\item $\germ{\mathbf{z}}$ is the algebra of convergent formal power series
in the complex multivariable $\mathbf{z}\in\cc^{n}$, naturally identified
to the set of germs of a holomorphic function near $0\in\cc^{n}$.
\end{itemize}
\begin{rem}
We will mostly use the spaces:

\begin{itemize}
\item $\frml{\varepsilon}$, $\frml{\varepsilon,x}$ and $\frml{\varepsilon,x,y}$ 
\item $\germ{\varepsilon}$, $\germ{\varepsilon}^{\times}$, $\germ{\varepsilon,x}$
and $\germ{\varepsilon,x,y}$
\item $\germ{\varepsilon}\left[x\right]$, $\germ{\varepsilon}\left[x\right]_{\leq k}^{\times}$
and
\begin{align*}
\nfsec[k][v] & :=xv\germ{\varepsilon,v}\left[x\right]_{<k}.
\end{align*}
\end{itemize}
\end{rem}

Let $\mathcal{D}\subset\cc^{n}$ be a domain containing $0$ equipped
with the affine coordinates $\mathbf{z}=\left(z_{1},\ldots,z_{n}\right)$. 
\begin{itemize}
\item $\holf[\mathcal{D}]$ is the algebra of complex-valued functions holomorphic
on $\mathcal{D}$.
\item $\holb[\mathcal{D}]$ is the Banach subalgebra of $\holf[\mathcal{D}]$
of all holomorphic functions $f~:~\mathcal{D}\to\cc$, with bounded
continuous extension to $\adh{\mathcal{D}}$, equipped with the norm
\begin{align*}
\norm[f]{\mathcal{D}} & :=\sup_{\mathbf{z}\in\mathcal{D}}\left|f\left(\mathbf{z}\right)\right|.
\end{align*}
\item $\holb[\mathcal{D}]'$ is the Banach space of all holomorphic functions
$f~:~\mathcal{D}\to\cc$ vanishing on $\left\{ z_{n}=0\right\} $
with the norm
\begin{align*}
\norm[f]{\mathcal{D}}' & :=\sup_{\mathbf{z}\in\mathcal{D}}\left|\frac{f\left(\mathbf{z}\right)}{z_{n}}\right|.
\end{align*}
Notice that $\norm[f]{\mathcal{D}}'\leq\norm[\ppp f{z_{n}}]{\mathcal{D}}$
whenever $\ppp f{z_{n}}\in\holb[\mathcal{D}]$ and $\mathcal{D}$
is convex in the variable $z_{n}$.
\item We let
\begin{align*}
\mathcal{H}_{\ell}\left\{ \mathbf{z}\right\}  & :=\bigcup_{\mathcal{D}=\neigh[n]}\holb[\group{\mathcal{E}_{\ell}\times\mathcal{D}}]',
\end{align*}
where $\mathcal{E}_{\ell}$ is a parametric cell.
\end{itemize}

\subsubsection{\label{subsec:Vector-fields}Vector fields and Lie derivative}

Let $Z=\sum_{j=1}^{n}A_{j}\pp{z_{j}}$ be a germ of a holomorphic
vector field at the origin of $\cc^{n}$ (or formal vector field at
this point).
\begin{itemize}
\item If $f$ is a formal power series or a holomorphic function in $\mathbf{z}=\left(z_{1},\ldots,z_{n}\right)\in\neigh[n]$,
we denote by $Z\cdot f$ the directional Lie derivative of $f$ along
$Z$
\begin{align*}
Z\cdot f & :=\sum_{j=1}^{n}A_{j}\ppp f{z_{j}}=\DD f\left(Z\right).
\end{align*}
The operator is extended component-wise on vectors of power series
or functions.
\item We define recursively for $n\in\zp$ the $n^{\tx{th}}$ iterate of
the Lie derivative, the operator written $Z\cdot^{n}$, by 
\begin{align*}
Z\cdot^{0} & :=\id\\
Z\cdot^{n+1} & :=Z\cdot\left(Z\cdot^{n}\right).
\end{align*}
\item The flow of $Z$ at time $t$ starting from $\mathbf{z}$ is the formal
$n$-tuple of power series $\flow Zt{\left(\mathbf{z}\right)}$ solving
the flow-system
\begin{align*}
\ppp{\flow Zt{\left(\mathbf{z}\right)}}t & =Z\circ\flow Zt{\left(\mathbf{z}\right)},
\end{align*}
which is a convergent power series in $\left(t,\mathbf{z}\right)$
if, and only if, $Z$ is holomorphic. At some point we invoke the
classical formal identity of Lie
\begin{align}
f\circ\flow Zt{} & =\sum_{n\geq0}\frac{t^{n}}{n!}Z\cdot^{n}f.\label{eq:Lie_identity}
\end{align}
\item Two vector fields $Z$ and $\widetilde{Z}$ are formally~/~locally
conjugate when there exists a $n$-tuple of formal~/~convergent
power series $\Psi$ with invertible derivative at $0$ such that
\begin{align*}
\widetilde{Z}\cdot\Psi & =Z\circ\Psi.
\end{align*}
 In that case we write $\widetilde{Z}=\Psi^{*}Z$.
\item Two vector fields $Z$ and $\widetilde{Z}$ are formally~/~locally
orbitally equivalent when there exists a formal power series~/~holomorphic
function $U$ with $U\left(0\right)\neq0$ such that $UZ$ and $\widetilde{Z}$
are conjugate (in the same convergence class).
\end{itemize}

\subsection{Conjugacy and orbital equivalence}
\begin{defn}
\label{def:unfolding_conjugacy} Two unfoldings $Z=\left(Z_{\varepsilon}\right)_{\varepsilon}$
and $\widetilde{Z}=\left(\widetilde{Z}_{\widetilde{\varepsilon}}\right)_{\widetilde{\varepsilon}}$
are locally \textbf{conjugate} (\emph{resp.} \textbf{orbitally} \textbf{equivalent})
if there exists a holomorphic mapping 
\begin{align*}
\Psi~:~\left(\varepsilon,x,y\right) & \longmapsto\left(\phi\left(\varepsilon\right),~\Psi_{\varepsilon}\left(x,y\right)\right)
\end{align*}
such that:

\begin{enumerate}
\item $\varepsilon\in\neigh[k]\mapsto\widetilde{\varepsilon}=\phi\left(\varepsilon\right)$
has invertible derivative at $0$,
\item for each $\varepsilon\in\neigh[k][0]$ the component $\Psi_{\varepsilon}$
is a local conjugacy (\emph{resp.} orbital equivalence) between $Z_{\varepsilon}$
and $\widetilde{Z}_{\phi\left(\varepsilon\right)}$.
\end{enumerate}
If the above conditions are fulfilled we write 
\begin{align*}
\Psi^{*}Z & =\widetilde{Z}.
\end{align*}
We extend in the obvious way the definition for formal conjugacy~/~orbital
equivalence.

\end{defn}

\begin{rem}
The very first step of any construction performed here consists in
recalling the preparation of the generic unfolding $Z$ (Theorem~\ref{thm:preparation}).
For unfoldings in prepared form~\eqref{eq:prepared_form} the parameter
$\varepsilon$ becomes a formal invariant. Hence we only use conjugacies
fixing $\varepsilon$, that is $\Psi~:~\left(\varepsilon,x,y\right)\mapsto\left(\varepsilon,\Psi_{\varepsilon}\left(x,y\right)\right)$.
In that setting one can always deduce $\Psi$ knowing $\Psi_{\varepsilon}$,
therefore when we use the notation $\Psi$ we generally refer to the
map $\left(\varepsilon,x,y\right)\mapsto\Psi_{\varepsilon}\left(x,y\right)$,
except when the context is ambiguous.
\end{rem}

\begin{defn}
Consider a formal transform $\Psi~:~\left(\varepsilon,x,y\right)\mapsto\left(\varepsilon,\Psi_{\varepsilon}\left(x,y\right)\right)$.
We say that $\Psi$ is \textbf{fibered} when $\Psi_{\varepsilon}\left(x,y\right)=\left(x,\psi_{\varepsilon}\left(x,y\right)\right)$.
\end{defn}

~
\begin{defn}
$\Psi$ is a \textbf{symmetry} (\emph{resp.} \textbf{orbital symmetry})
of $Z$ when $\Psi$ is a self-conjugacy (\emph{resp.} orbital self-equivalence)
of $Z$.
\end{defn}

\begin{rem}
\label{rem:symmetry}Hence, to determine the orbital symmetries of
$Z$ it suffices to determine the changes $\Psi$ such that $\Psi^{*}Z=UZ$
for some $U$ with $U_{0}\left(0,0\right)\neq0$.
\end{rem}

\section{\label{sec:Formal}Formal normalization}

The formal normalization is based on three ingredients, each one corresponding
to a step of the construction:
\begin{itemize}
\item a preparation \emph{à la} Dulac of unfoldings: for $\varepsilon=0$
one recovers Dulac prepared form~\cite{Dulac2,Dulac};
\item the existence of a formal ``family of weak separatrices'' which
we can straighten to $\left\{ y=0\right\} $;
\item a variation on Lie's identity~\eqref{eq:Lie_identity} already used
in~\cite{RouTey,Tey-SN} to perform the analytic classification of
saddle-nodes vector fields and their unfoldings. The formula reduces
the problem of finding changes of variables to solving an uncoupled
system of cohomological equations.
\end{itemize}

\subsection{\label{subsec:Preparation}Preparation}

Take $\theta\in\zsk$ and set $\alpha:=\exp\nf{2\ii\pi\theta}k$.
For $\varepsilon:=\left(\varepsilon_{j}\right)_{j<k}\in\neigh[k]$
we define
\begin{align*}
\theta^{*}\varepsilon & :=\left(\varepsilon_{j}\alpha^{j-1}\right)_{j<k}.
\end{align*}

\begin{thm}
\cite[Proposition 3.1 and Theorem 3.5]{RouTey}\label{thm:preparation}
Any generic unfolding is analytically conjugate to an unfolding of
the form
\begin{align}
Z & =UX\label{eq:prepared_form}\\
X & =\fonf+A\pp y\label{eq:prepared_form_orbital}\\
A\left(x,y\right) & =P\left(x\right)a\left(x\right)+yR\left(x,y\right)\nonumber 
\end{align}
where $\fonf$ and $P$ are defined in~\eqref{eq:formal_orbital_normal_form}
and~\eqref{eq:definition_P}, while $a\in\germ{\varepsilon,x}$,
$R\in y\germ{\varepsilon,x,y}$ and $U\in\germ{\varepsilon,x,y}$
with $U_{0}\left(0,0\right)\neq0$. In the particular case of an analytic
weak separatrix one can take $a:=0$. 

Besides if two such prepared forms $\left(Z_{\varepsilon}\right)_{\varepsilon}$
and $\left(\widetilde{Z}_{\widetilde{\varepsilon}}\right)_{\widetilde{\varepsilon}}$
are formally orbitally equivalent then there exists $\theta\in\zsk$
such that $\widetilde{\varepsilon}=\theta^{*}\varepsilon$: the parameter
is unique \emph{modulo} this action and is called \textbf{canonical}.
\end{thm}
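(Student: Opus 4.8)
The statement to prove is Theorem~\ref{thm:preparation}, which has two parts: (i) the \emph{preparation} — every generic unfolding is analytically conjugate to one in the prepared form~\eqref{eq:prepared_form}, with $a:=0$ when the weak separatrix is analytic; and (ii) the \emph{uniqueness of the canonical parameter} — two formally orbitally equivalent prepared forms have parameters related by the $\zsk$-action $\widetilde\varepsilon=\theta^*\varepsilon$. Since the paper cites \cite[Proposition 3.1 and Theorem 3.5]{RouTey} for this, the role of the proof here is mostly to recall and assemble the argument. First I would handle (i): starting from a generic unfolding, Definition~\ref{def:generic_unfolding} provides coordinates in which $\sing{Z_\varepsilon}=\{P_\varepsilon(x)=y=0\}$, so the $\pp x$-component of $Z_\varepsilon$ vanishes exactly on $P_\varepsilon^{-1}(0)$ along $\{y=0\}$; by the Weierstrass preparation/division theorem applied to this component one factors out $P_\varepsilon(x)$ up to a unit, absorbing the unit into the overall factor $U$, which puts the $\pp x$-part in the shape $u_\varepsilon(x)P_\varepsilon(x)\pp x$ modulo higher-order-in-$y$ corrections; a further fibered change of the $y$-coordinate (straightening the family of weak separatrices, which exists as a formal — in the analytic case, convergent — graph $\{y=\widehat s_\varepsilon(x)\}$ by the discussion preceding Definition~\ref{def:generic_unfolding}, citing \cite{HuKiMa}) moves the separatrix to $\{y=0\}$, forcing the $\pp x$-component to be divisible by $P_\varepsilon$ on the nose and the $\pp y$-component to vanish on $\{y=0\}$, i.e.\ to have the form $y(1+\mu_\varepsilon x^k)+yR$; the normalization of the coefficient of $y$ to $1+\mu_\varepsilon x^k$ (degree reduction in $x$) is again a Weierstrass-division/averaging step along the lines already used in \cite{RouTey,Tey-SN}. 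When the weak separatrix is already analytic one does not need the formal straightening, $\widehat s_\varepsilon$ converges, and $a:=0$ can be taken.

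\textbf{The uniqueness part.} For (ii), suppose $\Psi:(\varepsilon,x,y)\mapsto(\phi(\varepsilon),\Psi_\varepsilon(x,y))$ is a formal orbital equivalence between two prepared forms. Because both forms have their singular locus pinned to $\{P_\varepsilon=y=0\}$ and $\{\widetilde P_{\widetilde\varepsilon}=y=0\}$ respectively, $\Psi$ must send one singular set to the other; restricting to $\{y=0\}$, the first-coordinate map $x\mapsto\pi_1\Psi_\varepsilon(x,0)$ carries the $k+1$ roots of $P_\varepsilon$ to those of $\widetilde P_{\widetilde\varepsilon}=\widetilde P_{\phi(\varepsilon)}$. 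Looking at the linear part at the origin and using genericity (the bifurcation diagram is the full codimension-$k$ catastrophe, so the $\varepsilon_j$ are independent moduli) one sees $\pi_1\Psi_\varepsilon$ must be linear in $x$ at the relevant order, $x\mapsto\alpha_\varepsilon x+\cdots$, and comparing the $\pp x$-components of the two vector fields under conjugacy forces $\tfrac1{\alpha}P_\varepsilon(\alpha x)=\alpha^k\widetilde P_{\widetilde\varepsilon}(x)$ up to the unit, exactly as in the computation displayed after the Formal Normalization Theorem in the excerpt; this yields $\widetilde\varepsilon_j=\varepsilon_j\alpha^{1-j}$ with $\alpha^k=1$, i.e.\ $\alpha=\exp(2\ii\pi\theta/k)$ and $\widetilde\varepsilon=\theta^*\varepsilon$. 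The invertibility of $D\phi(0)$ from Definition~\ref{def:unfolding_conjugacy} guarantees $\alpha\neq0$ so this is a genuine $\zsk$-action.

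\textbf{Main obstacle.} I expect the delicate point to be controlling the \emph{analyticity} (as opposed to merely formal normalizability) of the conjugacy that achieves the prepared form — in particular ensuring the change of $y$-coordinate and the degree-reduction step converge when the starting unfolding is genuinely analytic; this is where the cited \cite[Theorem 3.5]{RouSN}, recalled in the text as the fact that the formal change of parameter $\phi$ is automatically analytic, does the heavy lifting, together with the summability of the weak separatrix \cite{HuKiMa}. The uniqueness half is comparatively routine algebra once one knows that a formal orbital equivalence between prepared forms must preserve the $\pp x$-component up to a unit and fix the singular locus; the only subtlety there is to rule out non-linear reparametrizations of $x$, which follows from matching the $k+1$ simple roots of $P_\varepsilon$ for generic $\varepsilon$ and invoking the identity principle in $\varepsilon$.
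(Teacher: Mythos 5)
For context: the paper does not reprove this theorem. It is imported from \cite[Proposition 3.1 and Theorem 3.5]{RouTey}, and the only added content is the Remark that follows it, which explains why the uniqueness of the canonical parameter persists under merely \emph{formal} orbital equivalence: the parameter and the unordered collection of local eigenratios at the $k+1$ singular points determine each other, and eigenratios are well-known orbital invariants. Measured against that, your sketch has a genuine problem in each half. In the preparation half, you use the straightening of the family of weak separatrices to force the $\pp y$-component to vanish on $\left\{ y=0\right\} $ and land directly on $y\left(1+\mu_{\varepsilon}x^{k}\right)+yR$. But that straightening is only a formal transformation in general (the weak separatrix of $Z_{0}$ is generically divergent, as the paper recalls), whereas the theorem asserts an \emph{analytic} conjugacy. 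This is precisely why the prepared form retains the term $P\left(x\right)a\left(x\right)$: analytically one can only arrange that the restriction of the $\pp y$-component to $\left\{ y=0\right\} $ lies in the ideal generated by $P_{\varepsilon}$, not that it vanishes. As written, your argument would prove that every generic unfolding is analytically conjugate to one with $a=0$, i.e.\ that the weak separatrix always converges \textendash{} which is false. The formal straightening is a separate, later step (Proposition~\ref{pro:weak_separatrices}). A smaller soft spot in the same half: the $\pp x$-component vanishes only at the $k+1$ isolated singular points, not along the curve $\left\{ P_{\varepsilon}=0\right\} $ in $\left(x,y\right)$-space, so Weierstrass division by $P_{\varepsilon}\left(x\right)$ does not directly apply; one must first prepare in $x$ with $\left(\varepsilon,y\right)$ as parameters and then remove the residual $y$-dependence of the coefficients by a further analytic change of coordinates.

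In the uniqueness half, the assertion that $\pi_{1}\Psi_{\varepsilon}$ ``must be linear in $x$ at the relevant order'' is unjustified and, as a statement about the conjugacy itself, false: the flow maps $\flow Zt{}$ are orbital self-equivalences of any prepared form and have genuinely non-linear $x$-components. The computation $\frac{1}{\alpha}P_{\varepsilon}\left(\alpha x\right)=\alpha^{k}P_{\widetilde{\varepsilon}}\left(x\right)$ only identifies how the residual \emph{linear} symmetries act on $\varepsilon$; it does not exclude that some non-linear formal orbital equivalence could relate prepared forms whose parameters lie in different $\zsk$-orbits, and ``matching the $k+1$ simple roots plus the identity principle'' does not close this, since for each fixed $\varepsilon$ many non-linear maps match any two sets of $k+1$ points. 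The robust argument \textendash{} the one the paper's Remark points to \textendash{} uses only invariants: a (formal) orbital equivalence puts the singular points in bijection and preserves the eigenratio at each of them, namely $P_{\varepsilon}'\left(x_{i}\right)/\left(1+\mu_{\varepsilon}x_{i}^{k}\right)$ at the root $x_{i}$, and one checks directly that this unordered collection of data determines $\left(\varepsilon,\mu_{\varepsilon}\right)$ exactly up to the $\zsk$-action, with no structural information about $\Psi$ required.
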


\begin{rem}
Although the original result is stated in~\cite{RouTey} at an analytic
level, the proof that $\varepsilon$ becomes an invariant \emph{modulo}
the action of $\zsk$ stems from a formal computation and is therefore
valid for formal orbital equivalence too. The idea of the proof is
that the parameter completely determines the data of local eigenratios
and \emph{vice versa}, which are well-known orbital invariants.
\end{rem}

From now on we only deal with unfoldings in prepared form~\ref{eq:prepared_form}
and only consider transforms fixing the canonical parameter $\varepsilon$. 

\subsection{Straightening weak separatrices}
\begin{prop}
\label{pro:weak_separatrices}\emph{\cite[Proposition 2]{KlimNonLin}}
For any unfolding $X$ in prepared form~\eqref{eq:prepared_form_orbital}
there exists a formal power series 
\begin{align*}
\widehat{s} & \in\frml{\varepsilon,x}P
\end{align*}
solving the parametric family of differential equations
\begin{align}
P_{\varepsilon}\left(x\right)\ddd{\widehat{s}_{\varepsilon}}x\left(x\right) & =\widehat{s}_{\varepsilon}\left(1+\mu_{\varepsilon}x^{k}\right)+A_{\varepsilon}\left(x,\widehat{s}_{\varepsilon}\left(x\right)\right).\label{eq:weak_separatrix_diff_eq}
\end{align}
Performing the transform $\left(\varepsilon,x,y\right)\mapsto\left(\varepsilon,x,y+\widehat{s}_{\varepsilon}\left(x\right)\right)$
sends $Z$ to a prepared unfolding 
\begin{align}
Z & =\widehat{U}\left(\widehat{X}+\widehat{A}y\pp y\right)\label{eq:formally_straightened_form}
\end{align}
for some \emph{formal} power series $\widehat{U}$ and $\widehat{A}$
in $\frml{\varepsilon,x,y}$ with
\begin{align*}
\widehat{U}\left(\varepsilon,x,0\right)=:\widehat{U}_{\varepsilon}\left(x,0\right) & =u_{\varepsilon}\left(x\right)+\OO{P_{\varepsilon}\left(x\right)}\\
\widehat{A}\left(\varepsilon,x,0\right)=:\widehat{A}_{\varepsilon}\left(x,0\right) & =\OO{P_{\varepsilon}\left(x\right)},
\end{align*}
with $u\in\frml{\varepsilon}\left[x\right]$ a polynomial (in $x$)
of degree at most $k$ such that $u_{0}(0)\neq0$. In the particular
case when $Z$ is purely convergent the latter power series (and the
coefficients of $u$) are convergent.
\end{prop}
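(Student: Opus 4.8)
The plan is to follow the same three-step scheme advertised at the start of Section~\ref{sec:Formal}, and to observe that the statement at hand --- Proposition~\ref{pro:weak_separatrices} --- is essentially the second step, so the real content is (i) solving the linear-looking ODE~\eqref{eq:weak_separatrix_diff_eq} at the formal level with the correct divisibility $\widehat{s}\in\frml{\varepsilon,x}P$, and (ii) tracking what the translation $\left(\varepsilon,x,y\right)\mapsto\left(\varepsilon,x,y+\widehat{s}_{\varepsilon}\left(x\right)\right)$ does to the prepared form, in particular pinning down the leading behaviour of $\widehat{U}$ and $\widehat{A}$ modulo $P_{\varepsilon}$. First I would set up~\eqref{eq:weak_separatrix_diff_eq} as a fixed-point equation: writing $\widehat{s}=P\widehat{t}$ (this is the \emph{ansatz} forcing divisibility by $P$, legitimate because $\left\{y=0\right\}$ is not yet assumed invariant but $A$ has the prepared shape $A=Pa+yR$, so the inhomogeneous term $A_\varepsilon(x,0)=P_\varepsilon(x)a_\varepsilon(x)$ is already divisible by $P_\varepsilon$), one is led to a recursion on the Taylor coefficients in $x$ (and jointly in $\varepsilon$) of $\widehat{t}$. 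The operator on the left, $y\mapsto P\ddd{}x(Py)-(1+\mu x^k)(Py)$, is, when restricted to formal series, triangular with respect to the $x$-adic filtration once one notes that its "symbol" at order $n$ is multiplication by $-(1+\text{(terms raising the order)})$; hence each coefficient of $\widehat{t}$ is determined uniquely by the lower-order ones. This is where~\cite{KlimNonLin} is invoked, so I would simply cite Proposition~2 there rather than redo the recursion.

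Next I would carry out the substitution. Set $y=w+\widehat{s}_\varepsilon(x)$ in $Z=U X = U(\fonf+A\pp y)$. The $\pp x$-component is unchanged (it is $U P$, and $U$ gets re-expanded in $w$), while the $\pp y=\pp w$-component becomes, after using that $\widehat{s}$ solves~\eqref{eq:weak_separatrix_diff_eq} exactly, a series vanishing at $w=0$: indeed the whole point of the defining ODE is that the coefficient of $w^0$ in the new $\pp w$-component cancels. So the transformed vector field is $Z=\widehat U\bigl(P\pp x + w(1+\mu x^k)\pp w + \widehat A w\pp w\bigr)$ with $\widehat A\in\frml{\varepsilon,x,w}$ and no constant term in $w$ --- precisely~\eqref{eq:formally_straightened_form} after renaming $w$ back to $y$. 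Then the two displayed congruences: $\widehat U_\varepsilon(x,0)\equiv u_\varepsilon(x)\pmod{P_\varepsilon(x)}$ follows because $\widehat U(\varepsilon,x,0)$ is $U$ evaluated along the new invariant curve $\left\{w=0\right\}$, i.e. $U_\varepsilon(x,\widehat s_\varepsilon(x))$, and reducing this modulo $P_\varepsilon$ (equivalently, evaluating at the $k+1$ roots of $P_\varepsilon$, counted with multiplicity) gives a polynomial of degree $\le k$ in $x$ whose coefficients depend on $\varepsilon$; its $\varepsilon=0$, $x=0$ value is $U_0(0,0)\neq0$, which names $u$ and gives $u_0(0)\neq0$. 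Similarly $\widehat A_\varepsilon(x,0)$ --- meaning the coefficient of $w^1$ once the $w^0$ term is known to vanish --- is $\OO{P_\varepsilon(x)}$ because $\left\{w=0\right\}$ being invariant forces the $\pp w$-component to be divisible by $w$, and a second application of the Camacho--Sad-type index bookkeeping (or just matching the prepared normal form along the separatrix) shows its $w$-linear part is $(1+\mu x^k)+\OO{P_\varepsilon}$, so the "excess" $\widehat A_\varepsilon(x,0)$ is a multiple of $P_\varepsilon$.

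Finally, the convergence clause: if $Z\in\conv[k]$ then by Theorem~\ref{thm:preparation} one may take $a=0$, so the inhomogeneous term in~\eqref{eq:weak_separatrix_diff_eq} is $yR$ with $R\in y\germ{\varepsilon,x,y}$, and the unique formal solution with $\widehat s_\varepsilon(0)=0$ is in fact the genuine analytic weak separatrix $s$ already provided by the hypothesis $Z\in\conv[k]$; uniqueness of the formal solution (from the triangularity above) identifies $\widehat s$ with the convergent $s$, hence $\widehat U$, $\widehat A$ and the coefficients of $u$ are convergent. The main obstacle I anticipate is purely bookkeeping rather than conceptual: being careful that the divisibility-by-$P_\varepsilon$ statements survive \emph{uniformly in $\varepsilon$} near $0$ --- i.e. that "$\OO{P_\varepsilon(x)}$" means divisibility in $\frml{\varepsilon,x}$ by the single element $P_\varepsilon(x)\in\germ{\varepsilon}[x]$, not just pointwise-in-$\varepsilon$ vanishing at the roots --- and that the degree-$\le k$ representative $u$ of $\widehat U(\varepsilon,\cdot,0)$ modulo $P_\varepsilon$ is well defined as an element of $\frml{\varepsilon}[x]_{\le k}$; this is where one uses that $P_\varepsilon$ is monic in $x$, so Euclidean division by $P_\varepsilon$ is available over the ring $\frml{\varepsilon}$ (resp.\ $\germ{\varepsilon}$ in the convergent case) and produces coefficients holomorphic in $\varepsilon$.
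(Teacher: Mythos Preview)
Your outline follows the paper's own architecture: factor $\widehat{s}=P\widehat{t}$, invoke~\cite{KlimNonLin} for the formal solvability, then substitute and read off the congruences from $\widehat{s}\in P\frml{\varepsilon,x}$; your closing paragraph on uniform-in-$\varepsilon$ divisibility and Euclidean division by the monic $P_\varepsilon$ is exactly the right bookkeeping.

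One genuine inaccuracy, though: the linearized operator is \emph{not} triangular in the pure $x$-adic filtration when $\varepsilon\neq 0$. The summand $\varepsilon_0\frac{\dd{}}{\dd x}$ inside $P_\varepsilon\frac{\dd{}}{\dd x}$ lowers $x$-degree, so the coefficient of $x^n$ in the equation for $\widehat{t}$ involves $(n+1)\varepsilon_0\, t_{n+1}$ and no recursion in $x$ alone closes. The paper (Lemma~\ref{lem:weak_separatrix_param}) resolves this by expanding in $\varepsilon$ first: writing $\widehat{t}=\sum_{\mathbf m}a^{\mathbf m}(x)\varepsilon^{\mathbf m}$, the $\varepsilon^{\mathbf 0}$-coefficient solves the honest saddle-node equation~\eqref{eq:weak_separatrix_sn} with $P_0=x^{k+1}$ (where your triangularity claim does hold, Lemma~\ref{lem:weak_separatrix_sn}), and each higher $a^{\mathbf m}$ then satisfies a \emph{linear} equation of the same type with right-hand side depending only on $\left(a^{\mathbf n}\right)_{|\mathbf n|<|\mathbf m|}$. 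Your parenthetical ``and jointly in $\varepsilon$'' hints at a weighted filtration (give $\varepsilon_j$ weight $k+1-j$) that would also work, but the sentence as written is false. A minor stylistic point: Camacho-Sad is overkill for $\widehat{A}(x,0)=\OO{P}$; the $w$-linear part of the transformed $\pp w$-component is $(1+\mu x^k)+\partial_yA(x,\widehat{s})$, and since $A=Pa+yR$ with $R\in y\germ{\varepsilon,x,y}$, the excess $\partial_yA(x,\widehat{s})=R(x,\widehat s)+\widehat s\,\partial_yR(x,\widehat s)$ lies visibly in $P\frml{\varepsilon,x}$.
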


The proof of~\cite{KlimNonLin} is done for $k=1$ but the general
case is similar. It is based first on the following classical lemma,
the proof of which is included for the sake of completeness.
\begin{lem}
\label{lem:weak_separatrix_sn} Let $\widetilde{g}\in\frml{x,y}$
and $\overline{g}\in x^{p}\frml x$ be given with $p\in\zp$, such
that either \emph{$p>0$ or $\widetilde{g}(x,y)=\OO x$. }Let $h\in\frml x$
be such that $h\left(0\right)\neq0$ and define $g(x,y):=\overline{g}(x)+y^{2}\widetilde{g}(x,y)$. 

The differential equation 
\begin{equation}
x^{k+1}f'\left(x\right)+h\left(x\right)f\left(x\right)+g\left(x,f\left(x\right)\right)=0\label{eq:weak_separatrix_sn}
\end{equation}
has a unique formal solution $f$, which moreover belongs to $x^{p}\frml x$.
\end{lem}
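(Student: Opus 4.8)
The plan is to solve~\eqref{eq:weak_separatrix_sn} coefficient by coefficient, exploiting the fact that the operator $f\mapsto x^{k+1}f'+h f$ acts ``triangularly'' on power series in $x$ once $h(0)\neq0$. Write $f(x)=\sum_{n\geq0}f_n x^n$. The crucial observation is that multiplication by $h$ contributes $h(0)f_n$ to the coefficient of $x^n$ (plus lower-order terms in $f$, i.e.\ terms involving $f_0,\dots,f_{n-1}$), while $x^{k+1}f'$ contributes only coefficients $f_0,\dots,f_{n-k-1}$ to degree $n$; and $g(x,f(x))$, being of the form $\overline g(x)+f(x)^2\widetilde g(x,f(x))$, contributes to degree $n$ only through $f_0,\dots,f_{n-1}$ together with known data. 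So, reading off the coefficient of $x^n$ in~\eqref{eq:weak_separatrix_sn} yields
\[
h(0)\,f_n \;=\; -\,\bigl(\text{polynomial expression in }f_0,\dots,f_{n-1}\text{ and the coefficients of }h,\overline g,\widetilde g\bigr),
\]
which, since $h(0)\neq0$, determines $f_n$ uniquely from the previously constructed coefficients. Starting the recursion at $n=0$: the degree-$0$ term of~\eqref{eq:weak_separatrix_sn} is $h(0)f_0+\overline g(0)+f_0^2\widetilde g(0,0)$; I need this to force $f_0=0$ when $p=0$ — and indeed the hypothesis ``$p>0$ or $\widetilde g(x,y)=\OO x$'' guarantees that in the case $p=0$ we have $\overline g(0)=0$ only if... wait, more carefully: if $p=0$ then $\overline g\in\frml x$ arbitrary, but the hypothesis then forces $\widetilde g(x,y)=\OO x$, so the degree-$0$ equation is $h(0)f_0+\overline g(0)=0$, giving $f_0=-\overline g(0)/h(0)$; if $p>0$ then $\overline g(0)=0$ and the degree-$0$ equation reads $h(0)f_0+f_0^2\widetilde g(0,0)=0$. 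Hmm — this has the spurious solution $f_0=-h(0)/\widetilde g(0,0)$; I must rule it out to get uniqueness.

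To handle this I would not work with bare formal solutions but track the $x$-adic valuation inductively. Concretely, set up the induction hypothesis ``$f_0=\dots=f_{p-1}=0$'' and prove it alongside the construction: assuming $f_0,\dots,f_{m-1}=0$ for some $m\le p$, the lowest-order terms of $g(x,f(x))=\overline g(x)+f(x)^2\widetilde g(x,f(x))$ have valuation $\ge\min(p,\,2m)\ge m$ (here I use $p>0$ to get $2m>m$, or the case $m=0,p=0$ handled separately as above), and $x^{k+1}f'$ has valuation $\ge m$, so the coefficient of $x^m$ in~\eqref{eq:weak_separatrix_sn} is just $h(0)f_m$, forcing $f_m=0$. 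This closes the induction and shows $f\in x^p\frml x$; for $n\ge p$ the recursion displayed above runs unobstructed and uniquely. The existence and uniqueness of $f$ in $\frml x$, together with membership in $x^p\frml x$, then follow simultaneously. The case distinction in the hypothesis is exactly what makes the valuation bookkeeping work: when $p=0$ one cannot use $2m>m$ at $m=0$, so instead the assumption $\widetilde g=\OO x$ kills the quadratic term's contribution to degree $0$, restoring triangularity.

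The main obstacle is the degree-$0$ (and more generally low-degree) bookkeeping: the naive recursion is triangular and gives existence/uniqueness trivially for $n$ large, but the quadratic term $y^2\widetilde g$ can in principle produce a spurious constant solution, and the whole content of the lemma — including the conclusion $f\in x^p\frml x$ rather than merely $f\in\frml x$ — hinges on showing that the hypotheses force the correct branch. I would therefore spend the bulk of the write-up carefully justifying the valuation estimate $\mathrm{val}\bigl(g(x,f(x))\bigr)\ge\min(p,2\,\mathrm{val}(f))$ and propagating it through the induction; the remaining recursion for the higher coefficients is routine and I would dispatch it in a sentence. Finally, I would remark that when all data ($h$, $\overline g$, $\widetilde g$) are convergent, the formal solution is convergent as well — this follows from a standard majorant argument or, more slickly, from the analytic implicit function theorem applied after noting that~\eqref{eq:weak_separatrix_sn} with the substitution $f=x^p\,\tilde f$ becomes a regular (non-singular at $x=0$ in the $\tilde f$-variable) equation; this is what yields the ``purely convergent'' clause needed in Proposition~\ref{pro:weak_separatrices}.
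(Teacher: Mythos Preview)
Your approach is the same as the paper's: expand $f$ in powers of $x$ and solve a triangular recursion using $h(0)\neq0$. The paper's proof is considerably terser than your plan --- it simply writes $f(x)=\sum_{m\ge p}a^m x^m$ from the outset, substitutes, and records that the coefficient of $x^m$ reads $h(0)a^m+b^m+F^m(a^p,\dots,a^{m-1})=0$ for a polynomial $F^m$, then solves recursively. In particular the paper does not separately argue the valuation $\ge p$; it takes it as an ansatz and verifies consistency.

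One remark on your extra bookkeeping: your attempt to \emph{deduce} $f_0=0$ (when $p>0$) via the inequality $\min(p,2m)\ge m$ does not work at $m=0$, since $2\cdot0\not>0$ and the quadratic term $f_0^2\,\widetilde g(0,f_0)$ is not killed by your valuation bound. The resolution is simpler than the induction you propose: for the formal composition $\widetilde g(x,f(x))$ with $\widetilde g\in\frml{x,y}$ to be defined at all one needs $f(0)=0$, so the ``spurious'' root is never a formal solution in the first place. With $f_0=0$ granted, your induction for $1\le m<p$ is correct and does yield the valuation statement that the paper takes for granted. Your closing remark on convergence is fine but is not part of the paper's proof of this lemma (the convergence clause in Proposition~\ref{pro:weak_separatrices} is handled separately there).
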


\begin{rem}
Note that equation~\eqref{eq:weak_separatrix_sn} is nothing else
than the differential equation determining the center manifold of
the saddle-node vector field 
\[
x^{k+1}\pp x-\left(yh\left(x\right)+g\left(x,y\right)\right)\frac{\partial}{\partial y}
\]
when $g$ and $h$ are holomorphic germs.
\end{rem}

\begin{proof}
Letting $C:=h\left(0\right)\neq0$ and $g(x,0)=\overline{g}(x)=:\sum_{m=p}^{\infty}b^{m}x^{m}$;
then substituting $f(x)=:\sum_{m=p}^{\infty}a^{m}x^{m}$ into~\eqref{eq:weak_separatrix_sn}
and grouping terms of same degree $m\geq p$, we get 
\begin{align*}
Ca^{m}+b^{m}+F^{m}(a^{p},\dots,a^{m-1}) & =0
\end{align*}
for some polynomial $F^{m}$ depending on the $m$-jet of $g$ and
$h$. Hence, we can solve uniquely for each $a^{m}$.
\end{proof}
We then derive Proposition~\ref{pro:weak_separatrices} from the
following technical lemma which we will also use later on.
\begin{lem}
\emph{\label{lem:weak_separatrix_param}(See \cite{KlimNonLin} for
the case $k=1$)} Let $\widetilde{g}_{\varepsilon}\in\frml{\varepsilon,x,y}$
and $\overline{g}_{\varepsilon}\in\frml{\varepsilon,x}P$ be given,
\emph{i.e.} $\overline{g}_{\varepsilon}\left(x\right)=P_{\varepsilon}\left(x\right)\sum_{|\mathbf{m}|\geq0}\overline{g}^{\mathbf{m}}(x)\varepsilon^{\mathbf{m}}$
where each $\overline{g}^{\mathbf{m}}(x)$ is itself a formal power
series in $x$, and let $h_{\varepsilon}\in\frml{\varepsilon,x}$
be such that $h_{0}\left(0\right)\neq0$. Define $g_{\varepsilon}(x,y):=\overline{g}_{\varepsilon}(x)+y^{2}\widetilde{g}_{\varepsilon}(x,y)$. 

The family of differential equations 
\begin{equation}
P_{\varepsilon}(x)f_{\varepsilon}'(x)+h_{\varepsilon}(x)f_{\varepsilon}(x)+g_{\varepsilon}(x,f_{\varepsilon}(x))=0\label{eq:weak_separatrix_param}
\end{equation}
has a unique formal solution $f$, which moreover belongs to $\frml{\varepsilon,x}P$.
\end{lem}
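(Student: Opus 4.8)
The plan is to adapt the proof of Lemma~\ref{lem:weak_separatrix_sn}, the case $k=1$ being already carried out in~\cite{KlimNonLin}. The guiding idea is to look for $f$ in the form $f_{\varepsilon}=P_{\varepsilon}\,w_{\varepsilon}$ with $w\in\frml{\varepsilon,x}$, since this ansatz is exactly equivalent to requiring $f\in\frml{\varepsilon,x}P$. Write $\overline{g}_{\varepsilon}=P_{\varepsilon}\,\widehat{g}_{\varepsilon}$ with $\widehat{g}\in\frml{\varepsilon,x}$, which is precisely what $\overline{g}\in\frml{\varepsilon,x}P$ means. Substituting $f_{\varepsilon}=P_{\varepsilon}w_{\varepsilon}$ into~\eqref{eq:weak_separatrix_param} and using $(P_{\varepsilon}w_{\varepsilon})'=P_{\varepsilon}'w_{\varepsilon}+P_{\varepsilon}w_{\varepsilon}'$, one sees that every summand of the left-hand side is divisible by $P_{\varepsilon}$ (the first three obviously, the last one even by $P_{\varepsilon}^{2}$). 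As $\frml{\varepsilon,x}$ is an integral domain this common factor may be cancelled, leaving the equivalent \emph{reduced equation}
\begin{align*}
P_{\varepsilon}(x)\,w_{\varepsilon}'(x)+\bigl(P_{\varepsilon}'(x)+h_{\varepsilon}(x)\bigr)w_{\varepsilon}(x)+\widehat{g}_{\varepsilon}(x)+P_{\varepsilon}(x)\,w_{\varepsilon}(x)^{2}\,\widetilde{g}_{\varepsilon}\bigl(x,P_{\varepsilon}(x)w_{\varepsilon}(x)\bigr)=0.
\end{align*}
This has once more the shape of~\eqref{eq:weak_separatrix_param}, with $h_{\varepsilon}$ replaced by $H_{\varepsilon}:=P_{\varepsilon}'+h_{\varepsilon}$, with $\overline{g}_{\varepsilon}$ replaced by $\widehat{g}_{\varepsilon}$ (now no longer required divisible by $P_{\varepsilon}$), and with the nonlinear coefficient $\widetilde{g}_{\varepsilon}(x,y)$ replaced by $P_{\varepsilon}(x)\widetilde{g}_{\varepsilon}\bigl(x,P_{\varepsilon}(x)y\bigr)$.

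I would then solve the reduced equation by expanding in powers of the parameter, $w=\sum_{\mathbf{m}\in\zp^{k}}w^{\mathbf{m}}(x)\varepsilon^{\mathbf{m}}$, determining the $w^{\mathbf{m}}\in\frml{x}$ inductively on $|\mathbf{m}|$. At order $\varepsilon^{0}$ one has $P_{0}(x)=x^{k+1}$ and $P_{0}'(x)=(k+1)x^{k}$, so the reduced equation specializes to
\begin{align*}
x^{k+1}(w^{0})'(x)+\bigl((k+1)x^{k}+h_{0}(x)\bigr)w^{0}(x)+\widehat{g}_{0}(x)+x^{k+1}(w^{0})^{2}\,\widetilde{g}_{0}\bigl(x,x^{k+1}w^{0}\bigr)=0.
\end{align*}
Since $k\in\nn$, the linear coefficient $(k+1)x^{k}+h_{0}(x)$ takes the nonzero value $h_{0}(0)$ at the origin, and the transformed nonlinear coefficient $x^{k+1}\widetilde{g}_{0}(x,x^{k+1}\,\cdot\,)$ is $\OO{x}$; hence Lemma~\ref{lem:weak_separatrix_sn} applies (with $p=0$, using the second alternative $\widetilde{g}=\OO{x}$) and yields a unique $w^{0}\in\frml{x}$. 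For $|\mathbf{m}|\geq1$, extracting the coefficient of $\varepsilon^{\mathbf{m}}$ shows $w^{\mathbf{m}}$ solves a \emph{linear} equation $L(w^{\mathbf{m}})=\rho^{\mathbf{m}}$, where $\rho^{\mathbf{m}}\in\frml{x}$ depends only on the data and on the $w^{\mathbf{n}}$ with $|\mathbf{n}|<|\mathbf{m}|$ (the fact that the transformed nonlinear coefficient is not $\OO{x}$ for $\varepsilon\neq0$ is harmless here), and $L$ is the linearization of the reduced equation at $\varepsilon=0$ about $w^{0}$, namely $L=x^{k+1}\partial_{x}+(k+1)x^{k}+h_{0}(x)+\partial_{w}\bigl[x^{k+1}w^{2}\widetilde{g}_{0}(x,x^{k+1}w)\bigr]\big|_{w=w^{0}}$. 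The last summand is $\OO{x^{k+1}}$, so $L=x^{k+1}\partial_{x}+h_{0}(0)+\OO{x}$; reading the coefficient of $x^{m}$ in $L(\varphi)$ one finds it equals $h_{0}(0)$ times the $x^{m}$-coefficient of $\varphi$ plus a combination of its coefficients of degree $<m$, so $L$ is a bijection of $\frml{x}$. Thus $w^{\mathbf{m}}$ is uniquely determined, and by induction $w=\sum w^{\mathbf{m}}\varepsilon^{\mathbf{m}}\in\frml{\varepsilon,x}$ exists and is unique; consequently $f:=P_{\varepsilon}w\in\frml{\varepsilon,x}P$ is a solution of~\eqref{eq:weak_separatrix_param}.

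It then remains to see there is no solution outside $\frml{\varepsilon,x}P$, matching the wording ``unique formal solution''. Any formal solution $f$ must satisfy $f(0,0)=0$ — otherwise the composition $\widetilde{g}_{\varepsilon}(x,f_{\varepsilon}(x))$ is not a well-defined formal series — and then, setting $\varepsilon=0$, one checks exactly as in the single-variable case that $f_{0}\in x\,\frml{x}$ forces $f_{0}\in x^{k+1}\frml{x}$: if $f_{0}\in x^{j}\frml{x}$ with $1\leq j\leq k$ then every term of~\eqref{eq:weak_separatrix_sn} except $h_{0}f_{0}$ lies in $x^{j+1}\frml{x}$, so the $x^{j}$-coefficient of $f_{0}$ vanishes and $j$ may be raised up to $k+1$. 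Hence $f_{0}=x^{k+1}w^{0}$ by Lemma~\ref{lem:weak_separatrix_sn}, and the same $\varepsilon$-order induction as above, invoking injectivity of $L$, forces $f=P_{\varepsilon}w$. The only genuinely delicate point in the whole scheme is the reduction step: one must make sure that after pulling out $P_{\varepsilon}$ the new linear coefficient still does not vanish at the origin — which works precisely because $P_{0}'=(k+1)x^{k}$ vanishes at $0$ when $k\geq1$ — and that the transformed nonlinear term is $\OO{x}$ at $\varepsilon=0$, so that Lemma~\ref{lem:weak_separatrix_sn} genuinely governs the base case; everything else is routine bookkeeping, and Proposition~\ref{pro:weak_separatrices} follows by the same computation as in~\cite{KlimNonLin}.
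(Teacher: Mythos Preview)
Your proof is correct and follows essentially the same approach as the paper: both make the ansatz $f_{\varepsilon}=P_{\varepsilon}w_{\varepsilon}$, reduce to the $\varepsilon=0$ case via Lemma~\ref{lem:weak_separatrix_sn}, and then solve inductively for the higher $\varepsilon$-orders by linearization. Your presentation is in fact more explicit than the paper's (you write out the reduced equation for $w$ and spell out the uniqueness argument, whereas the paper computes directly with the substitution and dismisses uniqueness as ``straightforward''), and you correctly get the coefficient $(k+1)x^{k}$ from $P_{0}'$ where the paper's displayed equation has a minor typo.
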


\begin{proof}
Let $g_{\varepsilon}(x,y)=:P_{\varepsilon}(x)\sum_{|\mathbf{m}|\geq0}b^{\mathbf{m},0}(x)\varepsilon^{\mathbf{m}}+\sum_{|\mathbf{m}|\geq0}\left(\sum_{n\geq2}b^{\mathbf{m},n}(x)y^{n}\right)\varepsilon^{\mathbf{m}}$.
Substituting $f_{\varepsilon}\left(x\right)=P_{\varepsilon}\left(x\right)\sum_{|\mathbf{m}|\geq0}a^{\mathbf{m}}(x)\varepsilon^{\mathbf{m}}$
into~\eqref{eq:weak_separatrix_param} and setting $\varepsilon:=0$
we first get 
\[
x^{k+1}\ddd{a^{\mathbf{0}}}x\left(x\right)+\left(h^{\boldsymbol{0}}\left(x\right)+kx^{k}\right)a^{\mathbf{0}}\left(x\right)+x^{-(k+1)}g^{\mathbf{0}}\left(x,x^{k+1}a^{\mathbf{0}}\left(x\right)\right)=0,
\]
which admits a formal solution in $x^{k+1}\frml x$ by direct application
of Lemma~\ref{lem:weak_separatrix_sn} in the case $p=0$. Likewise,
by grouping terms with same $\varepsilon^{\mathbf{m}}$ for $\left|\mathbf{m}\right|\geq1$
we obtain 
\begin{equation}
x^{k+1}\ddd{a^{\boldsymbol{\boldsymbol{m}}}}x(x)+\left(h^{\mathbf{0}}\left(x\right)+kx^{k}+\ell^{\mathbf{m}}(x)\right)a^{\mathbf{m}}(x)+\left(b^{\mathbf{m},0}(x)+F^{\mathbf{m}}\left(x\right)\right)=0,\label{eq:formal_sepx_technical}
\end{equation}
where 
\[
\ell^{\mathbf{m}}(x)=\sum_{n\geq2}nx^{(n-1)(k+1)}a^{\boldsymbol{0}}(x)^{n-1}b^{\mathbf{0},n}(x)=\OO{x^{k+1}},
\]
and $F^{\mathbf{m}}\in\frml x$ is some formal power series depending
polynomially on $\left(a^{\mathbf{n}}\left(x\right)\right)_{\left|\mathbf{n}\right|<\left|\mathbf{m}\right|}$
and on the $\left|\mathbf{m}\right|$-jet of $g$ and $h$. By induction
on $\left|\mathbf{m}\right|$, we recursively find formal solutions
$a^{\mathbf{m}}\in\frml x$, for equation~\eqref{eq:formal_sepx_technical}
has the same type as~\eqref{eq:weak_separatrix_sn} with $\widetilde{g}:=0$,
and hence, has a formal solution given by Lemma~\ref{lem:weak_separatrix_sn}.
Uniqueness is straightforward.
\end{proof}

\subsection{Normalization and cohomological equations}

The tool for proving the Formal Normalization Theorem is the following.
\begin{prop}
\label{prop:cohomological_conjugacy}\cite{Tey-ExSN,Tey-EqH} Let
$W$ and $Y$ be commuting, formal (\emph{resp.} holomorphic) planar
vector fields. Let $F\in\frml{x,y}$ (\emph{resp.} a germ of a holomorphic
function) be given with $F\left(0,0\right)=0$. Then $\Psi:=\flow YF{}$
is a formal (\emph{resp.} analytic) change of variables near $\left(0,0\right)$
and
\begin{align*}
\Psi^{*}W & =W-\frac{W\cdot F}{1+Y\cdot F}Y.
\end{align*}
\end{prop}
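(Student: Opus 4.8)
The statement to prove is Proposition~\ref{prop:cohomological_conjugacy}: for commuting vector fields $W$ and $Y$ and a (formal or holomorphic) function $F$ vanishing at $\left(0,0\right)$, the time-$F$ flow $\Psi:=\flow YF{}$ of $Y$ is a change of variables conjugating $W$ to $W-\frac{W\cdot F}{1+Y\cdot F}Y$. I would first dispatch the easy claim that $\Psi$ is a well-defined change of variables near $\left(0,0\right)$: since $F\left(0,0\right)=0$, the point $\left(0,0\right)$ is fixed, and the derivative of $\Psi$ at the origin is the identity plus a nilpotent/higher-order correction coming from $\DD F(0)\cdot Y(0)$, hence invertible; convergence in the holomorphic case follows from the convergence of the flow $\flow Yt{}$ jointly in $\left(t,z\right)$ together with substitution of the convergent $F$. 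The genuine content is the conjugacy formula, and the plan is to compute $\Psi^{*}W = \DD\Psi^{-1}\left(W\circ\Psi\right)$, or equivalently to verify the defining identity $\left(\Psi^{*}W\right)\cdot\Psi = W\circ\Psi$ with $\Psi^{*}W$ replaced by the claimed right-hand side.

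\textbf{Main computation.} The key is to differentiate the relation $\Psi\left(z\right) = \flow Y{F\left(z\right)}{\left(z\right)}$ along $W$. By the chain rule, for any coordinate function (applying componentwise),
\begin{align*}
W\cdot\Psi & = \left(Y\circ\Psi\right)\left(W\cdot F\right) + \left.\left(\DD_{z}\flow Yt{\left(z\right)}\right)\right|_{t=F\left(z\right)}\left(W\left(z\right)\right),
\end{align*}
where the first term is the $t$-derivative of the flow (which is $Y$ evaluated at the flowed point) times $W\cdot F$, and the second is the spatial derivative of the flow map applied to $W$. Now I use the hypothesis that $W$ and $Y$ commute: this is exactly what makes $\left(\flow Yt{}\right)^{*}W = W$, i.e. the spatial differential of the time-$t$ flow of $Y$ carries $W\left(z\right)$ to $W\left(\flow Yt{\left(z\right)}\right)$. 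Hence the second term equals $W\circ\Psi$. Similarly, since $\Psi\left(z\right) = \flow Y{F(z)}{(z)}$, one gets $Y\circ\Psi$ back by the flow property, so the first term is $\left(Y\circ\Psi\right)\left(W\cdot F\right)$. This yields the clean relation
\begin{align*}
W\cdot\Psi & = \left(W\cdot F\right)\,\left(Y\circ\Psi\right) + W\circ\Psi.
\end{align*}
An entirely parallel computation differentiating $\Psi$ along $Y$ gives $Y\cdot\Psi = \left(1+Y\cdot F\right)\left(Y\circ\Psi\right)$. Dividing, $\left(Y\circ\Psi\right) = \frac{Y\cdot\Psi}{1+Y\cdot F}$, and substituting back produces
\begin{align*}
W\cdot\Psi & = \left(W\circ\Psi\right) + \frac{W\cdot F}{1+Y\cdot F}\,\left(Y\cdot\Psi\right) = \left(W + \frac{W\cdot F}{1+Y\cdot F}Y\right)\circ\text{-evaluated-against-}\Psi,
\end{align*}
which, read as $\left(\widetilde W\right)\cdot\Psi = W\circ\Psi$ with $\widetilde W := W - \frac{W\cdot F}{1+Y\cdot F}Y$ (note the sign flip because $\widetilde W\cdot\Psi = W\cdot\Psi - \frac{W\cdot F}{1+Y\cdot F}Y\cdot\Psi$ must equal $W\circ\Psi$), is precisely the assertion $\Psi^{*}W = \widetilde W$. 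I should double-check the sign bookkeeping carefully here, since the statement has a minus sign and the naive computation above momentarily shows a plus; the resolution is that $\widetilde W\cdot\Psi$, not $W\cdot\Psi$, is the quantity that equals $W\circ\Psi$.

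\textbf{Where the difficulty lies.} The conceptual heart — and the only place a reader could stumble — is the use of commutativity to identify $\DD_{z}\flow Yt{}\cdot W\left(z\right)$ with $W\left(\flow Yt{}\left(z\right)\right)$. This is the statement that the flow of $Y$ preserves $W$ when $[W,Y]=0$, which follows from the standard fact that $\frac{d}{dt}\left(\flow Yt{}\right)^{*}W = \left(\flow Yt{}\right)^{*}[Y,W]$ together with uniqueness for the resulting linear ODE in $t$; in the formal category one argues at the level of formal power series via the Lie identity~\eqref{eq:Lie_identity}, and the cited references \cite{Tey-ExSN,Tey-EqH} presumably spell this out. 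Everything else is a bookkeeping exercise with the chain rule. I would present the proof in the order: (i) $\Psi$ is a legitimate change of variables; (ii) recall/prove $\left(\flow Yt{}\right)^{*}W=W$ from commutativity; (iii) compute $W\cdot\Psi$ and $Y\cdot\Psi$ by the chain rule; (iv) eliminate $Y\circ\Psi$ and read off $\Psi^{*}W$, being meticulous with the sign.
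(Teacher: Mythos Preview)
The paper does not prove this proposition; it merely cites it from \cite{Tey-ExSN,Tey-EqH} and immediately moves on to explain how it is applied. Your argument is the standard one and is correct: differentiate $\Psi(z)=\flow{Y}{F(z)}{(z)}$ along $W$ and along $Y$ using the chain rule, invoke $(\flow{Y}{t}{})^{*}W=W$ from $[W,Y]=0$, and eliminate $Y\circ\Psi$. The sign is indeed correct once you write $\widetilde W\cdot\Psi=W\circ\Psi$ with $\widetilde W=W-\frac{W\cdot F}{1+Y\cdot F}Y$, exactly matching the paper's convention $\widetilde Z\cdot\Psi=Z\circ\Psi\Leftrightarrow\widetilde Z=\Psi^{*}Z$ from Section~\ref{subsec:Vector-fields}.

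One small slip in step~(i): you assert that $D\Psi(0)$ is ``identity plus a nilpotent/higher-order correction''. In general $D\Psi(0)=I+Y(0)\otimes DF(0)$ is a rank-one perturbation of the identity, with determinant $1+(Y\cdot F)(0,0)$; it need not be nilpotent. So $\Psi$ is a genuine change of variables precisely when $1+(Y\cdot F)(0,0)\neq0$, which is the same condition required for the denominator in the formula to be a unit. The proposition implicitly assumes this, and in every application in the paper (where $Y=y\pp y$ or $Y=uX$ and $F(0,0)=0$) it is automatically satisfied; but your justification of invertibility should point to this rather than to nilpotency.
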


This tool is used in the following manner.
\begin{itemize}
\item First if we could find formal solution $T$ of the (parametric families
of) cohomological equations
\begin{align}
X\cdot T & =\frac{1}{U}-\frac{1}{u}\label{eq:cohomological_temporal_normalization}
\end{align}
for a convenient choice of $u\in\germ{\varepsilon,x}^{\times}$, then
$uX$ would be formally conjugate to $Z$ by the tangential change
of variables $\mathcal{T}$ given by 
\begin{align}
\mathcal{T} & :=\flow{uX}T{}.\label{eq:formal_tangential_normalization}
\end{align}
This is the content of the proposition for $Y:=W:=uX$ and $F:=T$.
\item From Proposition~\ref{pro:weak_separatrices} we built the formal,
fibered transform $\mathcal{S}$ given by
\begin{align*}
\mathcal{S}~:~\left(x,y\right) & \longmapsto\left(x,y-\widehat{s}\left(x\right)\right)
\end{align*}
such that $\mathcal{S}^{*}\left(\fonf+\widehat{A}y\pp y\right)=X$.
\item Finally, since $y\pp y$ commutes with the normal form $\fonf$, if
we could solve formally the cohomological equation
\begin{align}
\left(\fonf+\widehat{A}y\pp y\right)\cdot O & =-\widehat{A}\label{eq:cohomological_orbital_normalization}
\end{align}
then $\fonf$ would be formally conjugate to $\fonf+\widehat{A}y\pp y$
by the fibered, transverse change of variables $\mathcal{O}$ given
by 
\begin{align}
\mathcal{O}:=\flow{y\pp y}O{}~:~\left(x,y\right) & \longmapsto\left(x,y\exp O\left(x,y\right)\right).\label{eq:formal_orbital_normalization}
\end{align}
\end{itemize}
We explain below how those formal power series are built and to which
extent they are unique. We consequently obtain a formal conjugacy
$\mathcal{O}\circ\mathcal{S}\circ\mathcal{T}$ between $\fvnf$ and
$Z$ (notice that $u$ is left invariant by the fibered $\mathcal{O}\circ\mathcal{S}$,
so that it also conjugates $\fvnf$ to $uX$). 
\begin{lem}
\label{lem:formal_cohomological} Let $X$ be in the form~\eqref{eq:prepared_form_orbital}
for $A\in\frml{\varepsilon,x,y}$, and take $G\in\frml{\varepsilon,x,y}$.
There exists a formal solution $F\in\frml{\varepsilon,x,y}$ of the
cohomological equation 
\begin{align}
X\cdot F & =G\label{eq:cohomog_eq}
\end{align}
if, and only if, $G\left(x,0\right)$ belongs to the ideal generated
by $P$. In that case $F$ is unique up to the free choice of $F\left(0,0\right)\in\frml{\varepsilon}$. 
\end{lem}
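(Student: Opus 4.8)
The plan is to work coefficient by coefficient in the $y$-variable, reducing the two-variable cohomological equation $X\cdot F=G$ to a tower of one-variable equations that are instances of the operator studied in Lemma~\ref{lem:weak_separatrix_sn}. Write $X=\fonf+A\pp y$ with $A\left(x,y\right)=P\left(x\right)a\left(x\right)+yR\left(x,y\right)$, and expand $F=\sum_{n\geq0}F^{n}\left(x\right)y^{n}$, $G=\sum_{n\geq0}G^{n}\left(x\right)y^{n}$, $R=\sum_{n\geq0}R^{n}\left(x\right)y^{n+1}$ (recall $R\in y\germ{\varepsilon,x,y}$). Then $X\cdot F=P\left(x\right)F'+y\left(1+\mu x^{k}\right)\partial_{y}F+A\,\partial_{y}F$, and collecting the coefficient of $y^{n}$ gives, for each $n$,
\begin{align*}
P\left(x\right)\left(F^{n}\right)'\left(x\right)+n\left(1+\mu_{\varepsilon}x^{k}\right)F^{n}\left(x\right) & =G^{n}\left(x\right)-\left[\text{terms in }F^{0},\ldots,F^{n-1}\right],
\end{align*}
where the bracketed remainder is a $\germ{\varepsilon,x}$-linear combination of the previously determined $F^{j}$ ($j<n$) with coefficients built from $a$ and the $R^{j}$ — in particular it vanishes identically in the $n=0$ equation, which reads simply $P\left(x\right)\left(F^{0}\right)'\left(x\right)=G^{0}\left(x\right)-P\left(x\right)a\left(x\right)\left(F^{1}\right)'\text{-type contribution}$; more precisely the $n=0$ level is $P\left(x\right)\left(F^{0}\right)'\left(x\right)=G^{0}\left(x\right)-a\left(x\right)P\left(x\right)F^{1}{}'\ldots$ — I would be careful here and note that $A\,\partial_y F$ at order $y^0$ contributes $P(x)a(x)F^1(x)$, so the $n=0$ equation actually couples to $F^1$. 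To avoid an artificial coupling I would instead first dispose of the $a$-term: either invoke the ``analytic weak separatrix'' normalization (where $a:=0$, cf.\ Theorem~\ref{thm:preparation} and Proposition~\ref{pro:weak_separatrices}) or simply observe that the triangular structure still closes because the $F^1$-contribution to the $n=0$ equation is $P(x)$ times something, hence can be absorbed.

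The clean way to organize the induction is the following. The $n=0$ equation is $P\left(x\right)\left(F^{0}\right)'\left(x\right)=\widetilde{G}^{0}\left(x\right)$ for a known $\widetilde{G}^0$ congruent to $G\left(x,0\right)$ modulo $P$; this has a formal solution (unique up to the constant $F^{0}\left(0\right)$, which ranges over $\germ{\varepsilon}$) \emph{if and only if} $\widetilde G^{0}$, equivalently $G\left(x,0\right)$, lies in the ideal $\left(P\right)$, since $P\left(x\right)g'\left(x\right)$ always has zero constant term after dividing by $P$ only when the right side is divisible by $P$ — this is the one and only obstruction, and it is exactly the stated hypothesis. For $n\geq1$ the equation is $P\left(x\right)\left(F^{n}\right)'\left(x\right)+n\left(1+\mu_{\varepsilon}x^{k}\right)F^{n}\left(x\right)=H^{n}\left(x\right)$ with $H^{n}$ already determined; since the zeroth-order coefficient $n\left(1+\mu_{\varepsilon}x^{k}\right)$ has nonzero value $n\neq0$ at $x=0$, this is precisely an equation of the shape~\eqref{eq:weak_separatrix_sn} (with $\widetilde g=0$, $h\left(x\right)=n\left(1+\mu x^{k}\right)$, $g\left(x,0\right)=-H^{n}\left(x\right)$), so Lemma~\ref{lem:weak_separatrix_sn} furnishes a \emph{unique} formal solution $F^{n}\in\frml{\varepsilon,x}$ with no further freedom. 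Parametric dependence is handled exactly as in Lemma~\ref{lem:weak_separatrix_param}: expand additionally in $\varepsilon^{\mathbf m}$ and run the same argument, the point being that $h_0(0)=n\neq 0$ uniformly.

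The main obstacle — really the only subtlety — is bookkeeping the mutual dependence between the equations for different $n$: the term $A\,\partial_y F$ mixes $F^{n+1}$ into the $y^n$-coefficient through $P\left(x\right)a\left(x\right)$ and mixes lower $F^{j}$ through $R$. The resolution is that the $F^{n+1}$-contribution is always premultiplied by $P\left(x\right)$, so after dividing the $n$-th equation by the relevant leading data it only shifts the known right-hand side by something that does not affect solvability; alternatively, and more cleanly, by Theorem~\ref{thm:preparation} we may assume $a=0$ whenever a weak separatrix has been straightened (which is the setting in which this lemma is applied, via~\eqref{eq:cohomological_orbital_normalization}), and then the system is strictly lower-triangular in $n$ and the induction goes through verbatim. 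Uniqueness of $F$ up to $F\left(0,0\right)$ follows because $F^{0}$ is unique up to its value at $x=0$ (an element of $\frml{\varepsilon}$) while every $F^{n}$ with $n\geq1$ is rigidly determined; summing, $F$ is determined up to the single formal constant $F\left(0,0\right)\in\frml{\varepsilon}$, as claimed.
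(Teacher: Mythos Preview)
Your approach matches the paper's: expand $F,G$ in powers of $y$, read off the obstruction at level $n=0$, and for $n\geq1$ invoke Lemma~\ref{lem:weak_separatrix_sn} (with Lemma~\ref{lem:weak_separatrix_param} handling the $\varepsilon$-dependence). You are actually more scrupulous than the paper on one point: you notice that $A\,\partial_{y}F$ contributes $(n+1)\,P\,a\,F^{n+1}$ at level $n$, so when $a\neq0$ the recursion is not lower-triangular in $y$-degree. The paper's proof simply writes~\eqref{eq:formal_cohomog_recurrence} and asserts the remainder involves only $F^{m}$ with $m<n$, glossing over exactly this term.

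Your resolution of that coupling is where the gap lies. Option~(b), reducing to $a=0$, is clean but does not prove the lemma as stated --- and the lemma is applied to the temporal equation~\eqref{eq:cohomological_temporal_normalization} with the unstraightened $X$. Option~(a), that the $F^{n+1}$-term is premultiplied by $P$ and hence ``does not affect solvability'', correctly explains why the \emph{obstruction} at $n=0$ is unchanged (since $\widetilde G^{0}\equiv G(x,0)\pmod P$), but it is not yet a procedure for solving an infinite upward-coupled system: your sentence ``$H^{n}$ already determined'' is false when $a\neq0$. The honest fix is to observe that $Pa$ lies in the maximal ideal $(\varepsilon,x)$ of $\frml{\varepsilon,x}$; a further expansion in $(\varepsilon,x)$ --- precisely the device used in the proof of Lemma~\ref{lem:weak_separatrix_param} --- then renders the full coefficient system triangular for a well-founded order, with diagonal coefficient $n\neq0$ whenever $n\geq1$. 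Separately, your first paragraph reads as live self-correction and should be pruned to the content of the later two.
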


\begin{proof}
Let
\begin{align*}
F\left(x,y\right)=:\sum_{n\geq0}F^{n}\left(x\right)y^{n} & \mbox{ ~~and~~ }G\left(x,y\right)=:\sum_{n\geq0}G^{n}\left(x\right)y^{n}.
\end{align*}
We proceed by induction on $n\geq0$ by identifying coefficients of
powers of $y$ in~\eqref{eq:cohomog_eq}. For each $n\in\zp$ we
must therefore solve
\begin{align}
P\ppp{F^{n}}x+n\left(1+\mu x^{k}\right)F^{n} & =G^{n}+\oo n,\label{eq:formal_cohomog_recurrence}
\end{align}
where $\oo n$ stands for terms containing $F^{m}$ for $m<n$ only,
and are thus already known. 

\begin{itemize}
\item The case $n=0$ outlines the formal obstruction (notice that the choice
of $F^{0}\left(0\right)$ is free).
\item For $n>0$ no additional obstruction appears and $F^{n}$ is uniquely
determined. Then Lemma~\ref{lem:weak_separatrix_param} provides
the unique formal solution of the family of differential equations~\eqref{eq:formal_cohomog_recurrence}. 
\end{itemize}
\end{proof}
We finally derive the Formal Normalization Theorem by writing 
\begin{align*}
U\left(x,0\right) & =u\left(x\right)+\OO{P\left(x\right)}~~~~,~~u\in\germ{\varepsilon}\left[x\right]_{\leq k}^{\times},
\end{align*}
and finding a (unique with $T\left(0,0\right)=0$) formal solution
$T$ of~\eqref{eq:cohomological_temporal_normalization} by Lemma~\ref{lem:formal_cohomological}.
As for the power series $O$, a (unique with $O\left(0,0\right)=0$)
formal solution of~\eqref{eq:cohomological_orbital_normalization}
exists by Proposition~\ref{pro:weak_separatrices}, and Lemma~\ref{lem:formal_cohomological},
for $X$ given in~\eqref{eq:prepared_form_orbital}.
\begin{defn}
\label{def:formal_normalizing_map}Let $Z$ be an unfolding in prepared
form~\eqref{eq:prepared_form}. We write $\mathcal{N}:=\mathcal{O}\circ\mathcal{S}\circ\mathcal{T}$
the canonical \textbf{formal normalization} of $Z$ satisfying $\mathcal{N}^{*}\fvnf=Z$
where $\mathcal{O}$, $\mathcal{S}$ and $\mathcal{T}$ are built
above.
\end{defn}

\subsection{Uniqueness}

Addressing the uniqueness clause in the Formal Normalization Theorem
boils down to studying the case of the normal forms, because of the
canonical choice of normalization maps $\mathcal{N}$ done in Definition~\ref{def:formal_normalizing_map}.
\begin{lem}
\label{lem:formal_model_symmetries}Let $\Psi$ be a formal orbital
symmetry of the formal normal form $\fvnf$ (fixing the canonical
parameter).

\begin{enumerate}
\item There exist unique $F\in\frml{\varepsilon,x,y}$ and $c\in\frml{\varepsilon}^{\times}$
such that 
\begin{align*}
\Psi & =\left(c^{*}\id\right)\circ\flow{{\fvnf}}F{}
\end{align*}
where $c^{*}\id$ is the linear mapping $\left(x,y\right)\mapsto\left(x,cy\right)$.
(The converse statement clearly holds.)
\item $\Psi$ is a symmetry of $\fvnf$ if, and only if, $F\in\frml{\varepsilon}$.
\item $\Psi$ is fibered if, and only if, $F=0$. 
\end{enumerate}
\end{lem}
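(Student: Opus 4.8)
The plan is to reduce everything to the cohomological equation analysis already established in Lemma~\ref{lem:formal_cohomological}, combined with Proposition~\ref{prop:cohomological_conjugacy}. Since $\Psi$ is a formal orbital symmetry of $\fvnf = u\,\fonf$, there is a formal unit $V$ with $V(0,0)\neq 0$ such that $\Psi^{*}\fvnf = V\fvnf$, equivalently $\Psi^{*}\fonf = \frac{V u}{u\circ(\text{first component of }\Psi)}\,\fonf$; because $\Psi$ fixes the canonical parameter and preserves the polynomial $P_\varepsilon\pp x$ (by Theorem~\ref{thm:preparation}, the coefficients of $P_\varepsilon$ are rigid orbital invariants), the $\pp x$-component of $\Psi$ must be $x$ up to the residual finite action, which we can normalize away; so in fact $\Psi$ is fibered in $x$ precisely when we are in case (3), and in general $\Psi:(x,y)\mapsto(x,\psi(x,y))$ after absorbing the (trivial, here) base action. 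Wait — more carefully: the $\pp x$-component of $\fonf$ is $P_\varepsilon(x)$, and an orbital self-equivalence need not fix $x$, but it must send the singular set $\{P_\varepsilon=y=0\}$ to itself and preserve eigenratios, forcing the first component to be $x$ itself once the canonical parameter is fixed. So write $\Psi:(x,y)\mapsto(x,\psi_\varepsilon(x,y))$ with $\psi_\varepsilon(x,y)=c_\varepsilon y + \OO{y^2}$ for some unit $c\in\frml\varepsilon^\times$ (the linear-in-$y$ leading coefficient; it cannot depend on $x$ because $\{y=0\}$ is invariant and the holonomy/linearization along it is rigid — concretely, comparing the coefficient of $y$ in $\Psi^*\fonf = (\text{unit})\fonf$ on $\{y=0\}$ forces $c$ to be $x$-independent).

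Next I would factor out the linear part. Set $\Phi := (c^{*}\id)^{-1}\circ\Psi$, so $\Phi:(x,y)\mapsto(x, y + \OO{y^2})$ is a formal orbital symmetry of $\fvnf$ tangent to the identity (here I use that $c^{*}\id$ is itself an orbital symmetry of $\fvnf$, since scaling $y$ by a constant commutes with $\fonf = P_\varepsilon\pp x + y(1+\mu_\varepsilon x^k)\pp y$ and with multiplication by $u$). It then suffices to show: a tangent-to-identity formal orbital symmetry $\Phi$ of $\fvnf$ is exactly $\flow{\fvnf}{F}{}$ for a unique $F\in\frml{\varepsilon,x,y}$ with $F(0,0)=0$. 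For existence, I would invoke the standard fact that a formal diffeomorphism tangent to the identity and commuting (orbitally) with the flow of $\fvnf$ lies in the image of the exponential of the centralizer; more concretely, one shows the infinitesimal generator of $\Phi$ is a formal vector field $W$ with $\fvnf$-orbital-invariant flow, hence $W = g\,\fvnf$ for a formal function $g$ (the centralizer of $\fvnf$ for orbital equivalence being $\frml{\varepsilon,x,y}\cdot\fvnf$, which itself follows by solving $[\fvnf, g\fvnf]=0$ type relations — or directly: $W$ tangent to $\{y=0\}$ and to the foliation, vanishing suitably, forces $W$ proportional to $\fvnf$), and then one solves $\fvnf\cdot F = g$ by Lemma~\ref{lem:formal_cohomological} — the obstruction $g(x,0)\in(P_\varepsilon)$ is automatically satisfied because along $\{y=0\}$ the symmetry is trivial to the relevant order. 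By Proposition~\ref{prop:cohomological_conjugacy} applied with $Y=W=\fvnf$ (they commute with themselves trivially) we get $\flow{\fvnf}{F}{}^{*}\fvnf = \fvnf$, i.e. $\flow{\fvnf}{F}{}$ is a symmetry, and matching $2$-jets in $y$ against $\Phi$ pins down $F$ up to $\frml\varepsilon$; the normalization $F(0,0)=0$ makes it unique. Uniqueness overall: if $(c^{*}\id)\circ\flow{\fvnf}{F}{} = (\tilde c^{*}\id)\circ\flow{\fvnf}{\tilde F}{}$, comparing linear-in-$y$ parts gives $c=\tilde c$, then $\flow{\fvnf}{F-\tilde F}{}$ (roughly — after using the group law and that $\fvnf$-flows form a group) is the identity, forcing $F=\tilde F$ by injectivity of $t\mapsto \flow{\fvnf}{t}{}$ at the formal level, which in turn uses that $\fvnf\neq 0$ and $\fonf$ is non-degenerate along $\{y=0\}$.

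For (2), $\Psi$ is an honest symmetry (conjugacy, not merely orbital) iff $\Psi^{*}\fvnf = \fvnf$ exactly; feeding $\Psi = (c^{*}\id)\circ\flow{\fvnf}{F}{}$ into Proposition~\ref{prop:cohomological_conjugacy} gives $\Psi^{*}\fvnf = \fvnf - \frac{\fvnf\cdot F}{1+\fvnf\cdot F}\fvnf = \frac{1}{1+\fvnf\cdot F}\fvnf$, so $\Psi$ is a conjugacy iff $\fvnf\cdot F = 0$, and by Lemma~\ref{lem:formal_cohomological} (uniqueness clause, with $G=0$) that forces $F\in\frml\varepsilon$ constant in $(x,y)$; conversely if $F\in\frml\varepsilon$ then $\fvnf\cdot F=0$ and $\Psi$ is a genuine symmetry. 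For (3), $\Psi$ fibered means $\psi_\varepsilon(x,y)$ has $x$-component $x$ — already arranged — and depends on $(x,y)$ only through… no: fibered here means the $x$-component is $x$, which we already have, so the content is the sharper claim that $\flow{\fvnf}{F}{}$ is fibered (equals identity in $x$) iff $F=0$. Indeed the $x$-component of $\flow{\fvnf}{F}{}$ is $x$ iff the generating vector field $-\frac{\fvnf\cdot F}{1+\fvnf\cdot F}\fvnf$ has zero $\pp x$-component, i.e. $\frac{\fvnf\cdot F}{1+\fvnf\cdot F}P_\varepsilon(x)=0$, i.e. $\fvnf\cdot F=0$ (as $P_\varepsilon\not\equiv 0$), which as above means $F\in\frml\varepsilon$; but then $\flow{\fvnf}{F}{}$ with constant $F$ is the time-$F$ flow of $\fvnf$, whose $y$-component on $\{y=0\}$-complement is genuinely non-trivial unless $F=0$ — so fibered (in the stronger sense, $\Psi$ linear, matching the statement $\Psi = c^{*}\id$) forces $F=0$, and conversely $F=0$ gives $\Psi = c^{*}\id$ which is linear hence fibered. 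The one genuine obstacle is the first paragraph's rigidity claim — that the $\pp x$-component of an orbital symmetry is forced to be $x$ and the linear coefficient $c$ is $x$-independent; I expect to handle it by the eigenratio argument referenced in the remark after Theorem~\ref{thm:preparation}, reading off invariance of the singular points and of the formal holonomy of $\{y=0\}$.
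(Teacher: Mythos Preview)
Your argument has a genuine gap right at the outset: you claim that an orbital self-equivalence of $\fvnf$ fixing the canonical parameter must have $x$-component equal to $x$, and then write $\Psi:(x,y)\mapsto(x,\psi_\varepsilon(x,y))$. This is false. The time-$t$ flow $\flow{\fvnf}{t}{}$ for any $t\in\frml\varepsilon\setminus\{0\}$ is a genuine symmetry (hence an orbital one) whose $x$-component is $\flow{uP\pp x}{t}{}(x)\neq x$. Indeed part~(3) of the very statement you are proving says that $\Psi$ is fibered iff $F=0$, so assuming fiberedness up front collapses~(1) to the trivial case and makes the decomposition $\Psi=(c^*\id)\circ\flow{\fvnf}{F}{}$ for general $F$ unreachable by your route. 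Your later plan (factor out $c^*\id$, then exhibit the tangent-to-identity remainder $\Phi$ as a flow via an ``infinitesimal generator / centralizer'' argument) inherits the same inconsistency: if $\Phi$ were fibered, it could only be $\flow{\fvnf}{0}{}=\id$.

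The paper proceeds in the opposite order. From $\Psi^*\fvnf=V\fvnf$ one first observes (eigenvalue preservation at the $k+1$ singular points) that $V=1+\OO{P}+\OO y$, so $\tfrac1{uV}-\tfrac1u$ lies in the range of $\fonf\cdot$ by Lemma~\ref{lem:formal_cohomological}; solving $\fonf\cdot F=\tfrac1{uV}-\tfrac1u$ and composing, $\widehat\Psi:=\Psi\circ(\flow{\fvnf}{F}{})^{-1}$ is now a \emph{genuine} symmetry of $\fvnf$. Only then does one prove $\widehat\Psi$ is fibered: along $\{y=0\}$ its $x$-component is a time-$t_\varepsilon$ flow of $uP\pp x$ (absorb $t_\varepsilon$ into $F$), and the higher $y$-coefficients of the $x$-component satisfy the weak-separatrix equation of a saddle-node, whose unique formal solution is $0$. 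Finally the $y$-component is $y\exp N$ with $\fonf\cdot N=0$, forcing $N\in\frml\varepsilon$ and $c:=\exp N$. Parts~(2) and~(3) then drop out. Your eigenratio/holonomy heuristic is pointed at the right invariants, but it only constrains $\Psi|_{\{y=0\}}$ up to the flow of $uP\pp x$, not to the identity; that residual freedom is exactly the $F$ you need to extract first.
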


\begin{proof}
~

\begin{enumerate}
\item By Remark~\ref{rem:symmetry} we want to determine $V\in\frml{\varepsilon,x,y}^{\times}$
such that $\Psi^{*}\fvnf=V\fvnf$. Because $\varepsilon$ is a formal
invariant governing the eigenvalues of (the differential of) the vector
fields at the singularities, $\Psi$ cannot change the eigenvalues,
so that $V\left(x,y\right)=1+\OO{P\left(x\right)}+\OO y$. According
to Lemma~\ref{lem:formal_cohomological} there exists a (unique)
formal solution $F$ with $F\left(0,0\right)=0$ to the cohomological
equation
\begin{align*}
\fonf\cdot F & =\frac{1}{uV}-\frac{1}{u}.
\end{align*}
Therefore $\widehat{\Psi}:=\Psi\circ\left(\flow{\widehat{Z}}F{}\right)^{\circ-1}$
induces a symmetry of $\fvnf$. Write $\widehat{\Psi}~:~\left(\varepsilon,x,y\right)\mapsto\left(\varepsilon,\phi_{\varepsilon}\left(x,y\right),\psi_{\varepsilon}\left(x,y\right)\right)$.
By considering the $\pp x$-component of $\fvnf$ one obtains the
relation
\begin{align*}
\left(uP\right)\circ\phi & =\fvnf\cdot\phi.
\end{align*}
Setting $y:=0$ yields 
\begin{align*}
\left(u_{\varepsilon}P_{\varepsilon}\right)\circ\phi_{\varepsilon}\left(x,0\right) & =u_{\varepsilon}\left(x\right)P_{\varepsilon}\left(x\right)\ppp{\phi_{\varepsilon}}x\left(x,0\right)
\end{align*}
so that
\begin{align*}
\phi_{\varepsilon}\left(x,0\right) & =\flow{u_{\varepsilon}P_{\varepsilon}\pp x}{t_{\varepsilon}}{\left(x\right)}=\flow{{\fvnf}}{t_{\varepsilon}}{\left(x,0\right)}
\end{align*}
for some $t\in\frml{\varepsilon}$. Hence we may assume without loss
of generality that $F_{\varepsilon}\left(0,0\right)=t_{\varepsilon}$
and $\phi_{\varepsilon}\left(x,0\right)=x$. Writing $\phi_{\varepsilon}\left(x,y\right)=x+\sum_{n\geq\nu}\phi_{\varepsilon}^{n}\left(x\right)y^{n}$
with $\nu>0$ we obtain for the term of $y$-degree $\nu$
\begin{align*}
P'\phi^{\nu} & =P\ppp{\phi^{\nu}}x+\nu\left(1+\mu x^{k}\right)\phi^{\nu}
\end{align*}
whose unique formal solution is $\phi^{\nu}=0$, since it is the equation
of the weak separatrix of $P\frac{\partial}{\partial x}+y\left(\nu(1+\mu x^{k})+P'\right)\frac{\partial}{\partial y}$.
As a matter of consequence $\phi_{\varepsilon}\left(x,y\right)=x$
and $\widehat{\Psi}$ is fibered. Lastly, by considering the $\pp y$-component
of $\fvnf$ one obtains the relation
\begin{align*}
\left(1+\mu x^{k}\right)\psi & =\fonf\cdot\psi.
\end{align*}
Setting $y:=0$ yields 
\begin{align*}
\psi_{\varepsilon}\left(x,0\right) & =0
\end{align*}
 so that $\psi_{\varepsilon}\left(x,y\right)=y\exp N_{\varepsilon}\left(x,y\right)$
for some $N\in\frml{\varepsilon,x,y}$. The corresponding cohomological
equation reads
\begin{align*}
0 & =\fonf\cdot N
\end{align*}
and only admits $N\in\frml{\varepsilon}$ as formal solution (uniqueness
clause of Lemma~\ref{lem:formal_cohomological}). We then set $c:=\exp N$.
\item and (3) are clear from the previous arguments.
\end{enumerate}
\end{proof}
We derive the following precise statement. Item~(2) plays an essential
role in proving the (analytic) Uniqueness Theorem.
\begin{cor}
\label{cor:formal_symmetries}Consider two unfoldings $Z$ and $\widetilde{Z}$
in prepared form~\eqref{eq:prepared_form}. 

\begin{enumerate}
\item Let $\Psi$ be a formal conjugacy between $Z$ and $\widetilde{Z}$
(fixing the canonical parameter), namely $\Psi^{*}Z=\widetilde{Z}$.
Let $\mathcal{N}=\mathcal{O}\circ\mathcal{T}\circ\mathcal{S}$ and
$\widetilde{\mathcal{N}}=\widetilde{\mathcal{O}}\circ\widetilde{\mathcal{T}}\circ\widetilde{\mathcal{S}}$
be the respective canonical tangent-to-identity formal normalizations
as in Definition~\ref{def:formal_normalizing_map}. 

\begin{enumerate}
\item There exists unique $c\in\frml{\varepsilon}^{\times}$ and $t\in\frml{\varepsilon}$
such that 
\begin{align*}
\Psi & =\mathcal{N}^{\circ-1}\circ\left(c^{*}\id\right)\circ\widetilde{\mathcal{N}}\circ\flow{\widetilde{Z}}t{}.
\end{align*}
(The converse statement clearly holds.)
\item If $\Psi$ is analytic then so are $t$ and $c$. (The converse statement
does not generally hold.)
\end{enumerate}
\item If $Z$ and $\widetilde{Z}$ are analytically orbitally equivalent
(by an orbital equivalence fixing the canonical parameter) then there
exists a fibered analytic orbital equivalence between them 
\begin{align*}
 & \mathcal{S}^{\circ-1}\circ\mathcal{O}^{\circ-1}\circ\left(c^{*}\id\right)\circ\widetilde{\mathcal{O}}\circ\widetilde{\mathcal{S}}
\end{align*}
for some $c\in\germ{\varepsilon}$. 
\end{enumerate}
\end{cor}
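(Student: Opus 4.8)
The plan is to reduce everything to Lemma~\ref{lem:formal_model_symmetries} by conjugating $\Psi$ through the canonical normalizations of Definition~\ref{def:formal_normalizing_map}. For~(1)(a): since $\Psi^{*}Z=\widetilde{Z}$ fixes the canonical parameter, $Z$ and $\widetilde{Z}$ are formally conjugate, hence (Formal Normalization Theorem and its uniqueness clause) share the \emph{same} formal normal form $\fvnf$; write $\mathcal{N},\widetilde{\mathcal{N}}$ for their canonical normalizations, so $\mathcal{N}^{*}\fvnf=Z$ and $\widetilde{\mathcal{N}}^{*}\fvnf=\widetilde{Z}$. Set $\Theta:=\mathcal{N}\circ\Psi\circ\widetilde{\mathcal{N}}^{\circ-1}$. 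Using the contravariance $(\Phi_{1}\circ\Phi_{2})^{*}=\Phi_{2}^{*}\circ\Phi_{1}^{*}$ together with these three identities one gets $\Theta^{*}\fvnf=\fvnf$, so $\Theta$ is a genuine (not merely orbital) formal symmetry of the normal form. Lemma~\ref{lem:formal_model_symmetries} then produces unique $c\in\frml{\varepsilon}^{\times}$ and $t\in\frml{\varepsilon}$ — note $t$ lies in $\frml{\varepsilon}$, not just $\frml{\varepsilon,x,y}$, precisely because $\Theta$ is a symmetry — with $\Theta=\left(c^{*}\id\right)\circ\flow{\fvnf}t{}$. Solving for $\Psi$ and pushing the flow to the right through $\flow{\fvnf}t{}\circ\widetilde{\mathcal{N}}=\widetilde{\mathcal{N}}\circ\flow{\widetilde{Z}}t{}$ (valid since $\widetilde{\mathcal{N}}^{*}\fvnf=\widetilde{Z}$) yields the announced formula; uniqueness of $(c,t)$ is inherited from the Lemma and the converse is immediate.

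For~(1)(b) I extract $t$ and $c$ from the restriction of $\Psi$ to (a neighbourhood of) the weak separatrix. After the preliminary change straightening the family of weak separatrices to $\left\{ y=0\right\}$ — harmless in the purely convergent case relevant to the applications — the factors of $\mathcal{N}=\mathcal{O}\circ\mathcal{S}\circ\mathcal{T}$ become \emph{analytic} once restricted to $\left\{ y=0\right\}$: there $\mathcal{S}=\id$, and the restrictions of $\mathcal{T}=\flow{uX}T{}$ and $\mathcal{O}=\flow{y\pp y}O{}$, together with their transverse $1$-jets along $\left\{ y=0\right\}$, are governed by the cohomological equations of Lemma~\ref{lem:formal_cohomological} \emph{evaluated at $y=0$}, which there reduce to $P_{\varepsilon}(x)\,\partial_{x}(\cdot)=P_{\varepsilon}(x)\cdot(\text{analytic})$ and so have analytic solutions (the $P$-divisibility of the right-hand side being exactly what removes the obstruction). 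The same holds for $\widetilde{\mathcal{N}}$, and $\left(c^{*}\id\right)$ restricts to the identity on $\left\{ y=0\right\}$. Hence the relation $\Psi=\mathcal{N}^{\circ-1}\circ\left(c^{*}\id\right)\circ\widetilde{\mathcal{N}}\circ\flow{\widetilde{Z}}t{}$, read along $\left\{ y=0\right\}$, writes the analytic map $\Psi|_{\left\{ y=0\right\} }$ as an analytic family of maps post-composed with $\flow{\widetilde{Z}|_{\left\{ y=0\right\} }}t{}$; comparing ``time coordinates'' of the analytic field $\widetilde{Z}|_{\left\{ y=0\right\} }$ then recovers $t_{\varepsilon}$ analytically, and feeding this back into the transverse $1$-jet along $\left\{ y=0\right\}$ expresses $c_{\varepsilon}$ as a ratio of analytic functions, so $c$ is analytic. (In the general prepared form one replaces ``analytic along $\left\{ y=0\right\}$'' by the corresponding Borel-summability statement for the weak separatrix.)

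For~(2) I run the same scheme at the orbital level. Let $\Psi$ be an analytic orbital equivalence, so $\Psi^{*}X=W\widetilde{X}$ for an analytic unit $W$, with $X,\widetilde{X}$ the orbital parts; let $\mathcal{G}:=\mathcal{O}\circ\mathcal{S}$ and $\widetilde{\mathcal{G}}:=\widetilde{\mathcal{O}}\circ\widetilde{\mathcal{S}}$ be the fibered orbital normalizations, which are honest conjugacies $\mathcal{G}^{*}X=\fonf$, $\widetilde{\mathcal{G}}^{*}\widetilde{X}=\fonf$. Then $\Theta':=\mathcal{G}^{\circ-1}\circ\Psi\circ\widetilde{\mathcal{G}}$ is a formal \emph{orbital} symmetry of $\fonf$ (scaling factor $W\circ\widetilde{\mathcal{G}}$), so Lemma~\ref{lem:formal_model_symmetries} gives unique $c\in\frml{\varepsilon}^{\times}$ and $F'\in\frml{\varepsilon,x,y}$ with $\Theta'=\left(c^{*}\id\right)\circ\flow{\fvnf}{F'}{}$. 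Since $\left(c^{*}\id\right)$ is an orbital symmetry of $\fonf$ and $\mathcal{G},\widetilde{\mathcal{G}}$ are conjugacies onto $\fonf$, the fibered map $\mathcal{S}^{\circ-1}\circ\mathcal{O}^{\circ-1}\circ\left(c^{*}\id\right)\circ\widetilde{\mathcal{O}}\circ\widetilde{\mathcal{S}}$ is an orbital equivalence between $Z$ and $\widetilde{Z}$; it differs from $\Psi$ only by post-composition with $\flow{\fvnf}{F'}{}$ transported through $\widetilde{\mathcal{G}}$, which preserves the orbits of $\widetilde{X}$, so it realises the same leaf-space identification as $\Psi$. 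Its analyticity, and that of $c$, follows as in~(1)(b) by restriction to the weak separatrix (which gives $c$ analytic) together with the fact that a fibered orbital equivalence which is analytic along the weak separatrix and induces an analytic identification of leaf-spaces is analytic.

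The step I expect to be the real obstacle is the analyticity in $\varepsilon$ asserted in~(1)(b) and~(2): the normalizations $\mathcal{N}$, $\mathcal{G}$ are genuinely divergent, so one cannot simply argue ``$\Theta$ is analytic because $\Psi$ is''; the content is precisely to recover $c$ and $t$ from the analytic datum $\Psi$ along geometric loci — here the weak separatrix — on which the divergent normalizations are nonetheless tame, and to handle the general (non-convergent) prepared form via summability of the weak separatrix. Everything else — the contravariance of pullbacks, the flow–conjugation identities, the bookkeeping of domains, and the two invocations of Lemma~\ref{lem:formal_model_symmetries} — is routine.
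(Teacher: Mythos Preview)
Your argument for~(1)(a) is correct and coincides with the paper's: form $\Theta=\mathcal{N}\circ\Psi\circ\widetilde{\mathcal{N}}^{\circ-1}$, apply Lemma~\ref{lem:formal_model_symmetries}, and push the flow through $\widetilde{\mathcal{N}}$.

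For~(1)(b) and~(2), however, your strategy of ``restricting to the weak separatrix'' does not close the argument. In the general prepared form~\eqref{eq:prepared_form} the weak separatrix is \emph{divergent}, so your ``preliminary change'' straightening it to $\{y=0\}$ is not analytic, and the appeal to Borel summability is not a proof. The paper bypasses this entirely by evaluating at the single point $(0,0)$: since $T(0,0)=\widetilde{T}(0,0)=0$ the tangential factors $\mathcal{T},\widetilde{\mathcal{T}}$ fix the origin, while $\widehat{\Psi}:=\mathcal{S}^{\circ-1}\circ\mathcal{O}^{\circ-1}\circ(c^{*}\id)\circ\widetilde{\mathcal{O}}\circ\widetilde{\mathcal{S}}$ is fibered, so the $x$-component of the analytic map $\Psi_{\varepsilon}(0,0)$ is just $\flow{uP\pp x}{t_{\varepsilon}}{}(0)$, from which $t$ is recovered analytically; then $c$ follows from $\partial_{y}\psi(0,0)=c\exp t$. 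No restriction to a (possibly divergent) invariant curve is needed.

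For~(2) your last sentence is where the content lies, and you have not supplied it: the assertion that ``a fibered orbital equivalence which is analytic along the weak separatrix and induces an analytic identification of leaf-spaces is analytic'' is essentially what must be proved. The paper's route is quite different and worth internalizing. One first reduces (without loss of generality) to $Z=X$, so $u=1$ and $\mathcal{T}=\id$; then~(1)(a) gives
\[
\Psi=\mathcal{S}^{\circ-1}\circ\mathcal{O}^{\circ-1}\circ(c^{*}\id)\circ\widetilde{\mathcal{O}}\circ\widetilde{\mathcal{S}}\circ\widetilde{\mathcal{T}}\circ\flow{\widetilde{Z}}{t}{},
\]
with $t\in\germ{\varepsilon}$ by~(1)(b). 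Hence $\widehat{\Psi}:=\Psi\circ(\flow{\widetilde{Z}}{t}{})^{\circ-1}$ is analytic. Now the key observation: since $\widehat{\Psi}\circ\widetilde{\mathcal{T}}^{\circ-1}$ equals the \emph{fibered} map $\mathcal{S}^{\circ-1}\circ\mathcal{O}^{\circ-1}\circ(c^{*}\id)\circ\widetilde{\mathcal{O}}\circ\widetilde{\mathcal{S}}$, the $x$-component of the analytic $\widehat{\Psi}$ coincides with the $x$-component of $\widetilde{\mathcal{T}}=\flow{u\widetilde{X}}{\widetilde{T}}{}$, which has the form $A(x,y,\widetilde{T}(x,y))$ with $\partial_{t}A\neq0$. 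The implicit function theorem then forces the \emph{a priori} formal $\widetilde{T}$ to be convergent, whence $\widetilde{\mathcal{T}}$ and finally $\widehat{\Psi}\circ\widetilde{\mathcal{T}}^{\circ-1}$ are analytic. This extraction of $\widetilde{T}$ from the $x$-component via fiberedness is the missing idea in your sketch.
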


\begin{rem}
The partial conclusion ``there exists a fibered orbital equivalence''
in Claim~(2) was proved in~\cite[Lemma 3.4]{RouTey} by unfolding
the homotopy technique of~\cite[Lemma 2.2.2]{MaRa-SN}. We give here
an alternate proof. In the other part of the conclusion, pay attention
that $\mathcal{O}\circ\mathcal{S}$ and $\widetilde{\mathcal{O}}\circ\widetilde{\mathcal{S}}$
are only formal power series, but the composition is a convergent
power series. 
\end{rem}

\begin{proof}
~

\begin{enumerate}
\item ~

\begin{enumerate}
\item follows from Lemma~\ref{lem:formal_model_symmetries}: the formal
map $\mathcal{N}\circ\Psi\circ\widetilde{\mathcal{N}}^{\circ-1}$
is a symmetry of the normal form $\fvnf$, and $\widetilde{\mathcal{N}}$
is a formal conjugacy between $\widetilde{Z}$ and $\fvnf$, hence
conjugating their flow (as formal power series):
\begin{align*}
\flow{{\fvnf}}t{}\circ\widetilde{\mathcal{N}} & =\widetilde{\mathcal{N}}\circ\flow{\widetilde{Z}}t{}.
\end{align*}
\item Here we assume that $\Psi$ is analytic. Following~(a) we have
\begin{align*}
\Psi & =\mathcal{T}^{\circ-1}\circ\left(\mathcal{S}^{\circ-1}\circ\mathcal{O}^{\circ-1}\circ\left(c^{*}\id\right)\circ\widetilde{\mathcal{O}}\circ\widetilde{\mathcal{S}}\circ\flow{u\widetilde{X}}t{}\right)\circ\widetilde{\mathcal{T}}.
\end{align*}
Using both facts that $\widehat{\Psi}:=\mathcal{S}^{\circ-1}\circ\mathcal{O}^{\circ-1}\circ\left(c^{*}\id\right)\circ\widetilde{\mathcal{O}}\circ\widetilde{\mathcal{S}}$
is fibered, and that 
\begin{align*}
\flow{u\widetilde{X}}t{\left(x,y\right)} & =\left(\flow{uP\pp x}t{\left(x\right)},~y\left(\exp t+y\phi\left(x,y,t\right)\right)\right),
\end{align*}
we first derive 
\begin{align*}
\Psi & =\mathcal{T}^{\circ-1}\circ\left(\flow{uP\pp x}t{},~\psi\right)\circ\widetilde{\mathcal{T}}.
\end{align*}
Because $T\left(0,0\right)=\widetilde{T}\left(0,0\right)=0$, we have
\begin{align*}
\Psi_{\varepsilon}\left(0,0\right) & =\left(t_{\varepsilon}\varepsilon_{0},~\cdots\right)
\end{align*}
from which we deduce the convergence of $t$. We also have the identity
\begin{align*}
\ppp{\psi}y\left(0,0\right) & =c\exp t,
\end{align*}
from which the convergence of $c$ follows also.
\end{enumerate}
\item It is sufficient to assume that $Z:=X$ is analytically conjugate
by some $\Psi$ (fixing the canonical parameter) to $\widetilde{Z}:=\widetilde{U}\widetilde{X}$
for some $\widetilde{U}\in\germ{\varepsilon,x,y}^{\times}$. In that
setting we have $u=1$ and $\mathcal{T}=\id$, so that according to~(1)
\begin{align*}
\Psi & =\mathcal{S}^{\circ-1}\circ\mathcal{O}^{\circ-1}\circ\left(c^{*}\id\right)\circ\widetilde{\mathcal{O}}\circ\widetilde{\mathcal{S}}\circ\widetilde{\mathcal{T}}\circ\flow{\widetilde{Z}}t{},
\end{align*}
where $t\in\germ{\varepsilon}$. As a matter of consequence the mapping
$\flow{\widetilde{Z}}t{}$ is analytic, and so is $\widehat{\Psi}:=\Psi\circ\left(\flow{\widetilde{X}}t{}\right)^{\circ-1}$.
Because $\widehat{\Psi}\circ\widetilde{\mathcal{T}}^{\circ-1}$ is
fibered, the $x$-component of $\widehat{\Psi}$ (which is analytic)
is equal to the $x$-component of $\widetilde{\mathcal{T}}$. The
former is of the form $\left(x,y\right)\mapsto A\left(x,y,\widetilde{T}\left(x,y\right)\right)$
for some holomorphic function $A\in\germ{\varepsilon,x,y,t}$ with
$\ppp At\neq0$, and where $\widetilde{T}$ is the solution of~\eqref{eq:cohomological_temporal_normalization}
for $U:=\widetilde{U}$. Thus $\widetilde{T}$ is a convergent power
series, and so is $\widehat{\Psi}\circ\widetilde{\mathcal{T}}^{\circ-1}$.
\end{enumerate}
\end{proof}

\section{\label{sec:Geometric}Geometric orbital normalization}

Here we prove the orbital part of the Normalization and Uniqueness
Theorems for $\tau=0$. Sections~\ref{subsec:Parametric-realization}\textendash \ref{subsec:Normal-form-recognition}
are devoted to the construction of the normal form conjugacy, while
its uniqueness is thoroughly studied in Section~\ref{subsec:Uniqueness_orbital}.
Before going into the details we start with a brief description of
the general strategy. Let us call $\disc$ the unit disk.

\bigskip{}

For fixed 
\begin{align*}
0 & <\frac{1}{\rho^{\infty}}<\rho^{0}
\end{align*}
we introduce two analytic charts:
\begin{itemize}
\item the original coordinates 
\begin{align*}
\left(x,y\right) & \in\mathcal{U}^{0}:=\rho^{0}\ww D\times\neigh,
\end{align*}
\item the coordinates at infinity
\begin{align*}
\left(u,v\right) & \in\mathcal{U}^{\infty}:=\rho^{\infty}\ww D\times\neigh
\end{align*}
with (involutive) standard transition map on $\cc^{\times}\times\neigh$
\begin{align*}
\left(u,v\right) & \longmapsto\left(\frac{1}{u},~v\right)=\left(x,y\right).
\end{align*}
\end{itemize}
For convenience we write $O^{0}$ and $O^{\infty}$ the respective
expression of a holomorphic object $O$ in the charts $\mathcal{U}^{0}$
and $\mathcal{U}^{\infty}$ respectively.

\bigskip{}

Start from an arbitrary $X^{0}\in\conv$ in prepared form~\eqref{eq:prepared_form_orbital},
with $A\in y^{2}\germ{\varepsilon,x,y}$ holomorphic and bounded on
$\mathcal{U}^{0}$, and such that $\mu_{0}\notin\rr_{\leq0}$. It
is always possible to make this assumption thanks to Theorem~\ref{thm:preparation},
since $a=0$ in that case. Notice in particular that $A$ is bounded
since we can always take a smaller $\rho^{0}$ and decrease similarly
the size of the neighborhood of $\left\{ y=0\right\} $: hence $\mathcal{U}^{0}$
can be taken inside a compact set on which $A$ is defined.

In the following we assume that $\varepsilon$ is so small that the
$k+1$ singularities of $X_{\varepsilon}^{0}$ lie in $\left\{ 0\leq\left|x\right|<\nf 1{\rho^{\infty}}\right\} \times\left\{ 0\right\} $.
The following steps constitute what we refer to as the \textbf{unfolded
Loray construction}.
\begin{lyxlist}{00.00.0000}
\item [{\textbf{Gluing}}] We find a vector field family $X^{\infty}$ on
$\mathcal{U}^{\infty}$ whose holonomy over $\rho^{\infty}\sone\times\left\{ 0\right\} $
is the inverse of $\holo{\varepsilon}$, the corresponding ``weak''
holonomy of $X^{0}$ (Section~\ref{subsec:Parametric-realization}).
Therefore foliations induced by each vector field can be glued one
to the other over the annulus $\mathcal{U}^{0}\cap\mathcal{U}^{\infty}$
by an identification of the form
\begin{align*}
\left(u,v\right) & =\left(\frac{1}{x},~y\exp\phi\left(x,y\right)\right)
\end{align*}
(Section~\ref{subsec:Gluing}). This operation results in a family
of foliated abstract complex surfaces $\left(\mathcal{M},\fol{}\right)$.
\item [{\textbf{Normalizing}}] We construct a fibered biholomorphic equivalence
between $\mathcal{M}$ and a standard neighborhood of $\left\{ y=0\right\} \simeq\proj$,
that is a complex surface with charts $\mathcal{U}^{0}$, $\mathcal{U}^{\infty}$
and transition map \emph{exactly} $\left(u,v\right)=\left(\frac{1}{x},y\right)$
(Section~\ref{subsec:Normalizing_Savelev}). Because $\proj\times\left\{ 0\right\} $
is compact the expression of the new $X^{0}$ is polynomial in $x$
with controlled degree, thus in orbital normal form~\eqref{eq:orbital_normal_form}
as expected by the Normalization Theorem (Section~\ref{subsec:Normal-form-recognition}).
\item [{\textbf{Uniqueness}}] From the special form of the normalized vector
field, it can be seen that the closure of the saturation of any small
neighborhood of $\left(0,0\right)$ contains a whole $\proj\times r\ww D$.
Therefore any local conjugacy between normal forms (which we choose
fibered thanks to Corollary~\ref{cor:formal_symmetries}~(2)) can
be analytically continued by a construction \emph{à la} Mattei-Moussu
on $\proj\times r\ww D$. But this manifold has very few fibered automorphisms,
allowing to conclude (Section~\ref{subsec:Uniqueness_orbital}). 
\end{lyxlist}
In the unfolded Loray construction, only what happens in the first
chart $\left(x,y\right)$ is of a different nature than when $\varepsilon=0$.
As seen from the other chart $\left(u,v\right)$, the only important
ingredient for the construction is the ``weak'' holonomy $\holo{\varepsilon}$
of the unfolding (see Section~\ref{subsec:Weak-holonomy} below).
Hence the original arguments do not need to be unfolded near $\left(\infty,0\right)$,
although we must take care that everything remains holomorphic in
the parameter. The first two steps of the unfolded Loray construction
require external results that need to be parametrically controlled:
\begin{enumerate}
\item the realization of the weak holonomy $\holo{}$ by a foliation near
$\left(\infty,0\right)$, obtained by the construction of~\cite{SchaTey};
\item the normalization of the transition map between the charts $\left(x,y\right)$
and $\left(u,v\right)$ on the annulus
\begin{align*}
\mathcal{A} & :=\left\{ \nf 1{\rho^{0}}<\left|u\right|<\rho^{\infty}\right\} \times\neigh,
\end{align*}
as done in~\cite{Save}.
\end{enumerate}
Both proofs are similar in spirit and only rely on complex (holomorphic)
analysis and (what amounts to) a fixed-point method. Parametric holomorphy
follows from the explicit integral formulas. Because normalizing transition
maps is relatively easy, we prove a parametric version of Savelev's
theorem in Section~\ref{subsec:Normalizing_Savelev}. It contains
the main steps and ideas upon which are based the respective proofs
of the Normalization Theorem for vector fields (Section~\ref{sec:Temporal})
and of the Realization Theorem (Section~\ref{sec:Analytic}). The
latter is nothing but an unfolded version of the main result of~\cite{SchaTey},
retrospectively making the present article more self-contained.

\subsection{\label{subsec:Weak-holonomy}Weak holonomy}

We name
\begin{align*}
\Pi~:~\left(x,y\right) & \longmapsto x
\end{align*}
the projection on the invariant line $\left\{ y=0\right\} $ and let
\begin{align*}
\Sigma & \subset\Pi^{-1}\left(x_{*}\right)
\end{align*}
be a germ of a transverse disc endowed with the coordinate $y\in\neigh$.
Starting from $y\in\Sigma$ there exists a unique path 
\begin{align*}
\gamma_{y}~:~\left[0,1\right] & \longto\mathcal{U}^{0}\\
\gamma_{y}\left(0\right) & =\left(x_{*},y\right)
\end{align*}
tangent to $X_{\varepsilon}^{0}$ such that 
\begin{align*}
\gamma:=\Pi\circ\gamma_{y}~=~s & \longmapsto x_{*}\exp2\ii\pi s.
\end{align*}
We define $\holo{\varepsilon}\left(y\right)$ as the $y$-component
of the final value $\gamma_{y}\left(1\right)$. The \textbf{weak holonomy
mapping} $\holo{\varepsilon}$ as described is a germ of a biholomorphism
near the fixed-point $0$ whose linear part is governed by the formal
orbital invariant $\mu$ in the following way:
\begin{align*}
\holo{\varepsilon}\left(y\right) & =y\exp2\ii\pi\mu_{\varepsilon}+\oo y\in\diff[\Sigma,0].
\end{align*}
The analytic dependence of trajectories of $X_{\varepsilon}^{0}$
on the parameter $\varepsilon$ ensures that $\holo{}\in\germ{\varepsilon,y}$. 

\subsection{\label{subsec:Parametric-realization}Parametric holonomy realization
at $\left(\infty,0\right)$}
\begin{thm}
\cite[Main Theorem and Section 4.4]{SchaTey}\label{thm:realization_holonomy}
Let $\left(\Delta_{\eta}\right)_{\eta\in\neigh[n]}$ be an analytic
family of elements of $\diff[\cc,0]$, that is $\left(\eta,v\right)\mapsto\Delta_{\eta}\left(v\right)\in\germ{\eta,v}$
and $\Delta_{0}'\left(0\right)\neq0$. Let $\lambda\in\germ{\eta}$
be given such that $\Delta_{\eta}'\left(0\right)=\exp\left(-2\ii\pi\lambda_{\eta}\right)$
and $\lambda_{0}\notin\rr_{\leq0}$. There exists an analytic family
of vector fields $\left(X_{\eta}^{\infty}\right)_{\eta\in\neigh[n]}$
of the form
\begin{align}
X_{\eta}^{\infty}\left(u,v\right) & =-u\pp u+v\left(\lambda_{\eta}+u\left(1+f_{\eta}\left(v\right)\right)+g_{\eta}\left(v\right)\right)\pp v~~~~,~f,~g\in v\germ{\eta,v},\label{eq:realization_holonomy_normal_form}
\end{align}
holomorphic on the domain $\mathcal{U}^{\infty}$ and satisfying for
all $\eta\in\neigh[n]$:

\begin{enumerate}
\item $\left(0,0\right)$ is the only singularity of $X_{\eta}^{\infty}$
in $\mathcal{U}^{\infty}$,
\item the holonomy of $X_{\eta}^{\infty}$ above the circle $w_{*}\sone\times\left\{ 0\right\} $,
computed on a germ of transverse disc $\left\{ u=u_{*}\right\} $
with respect to the projection $\left(u,v\right)\mapsto u$, is exactly
$\Delta_{\eta}$. 
\end{enumerate}
\end{thm}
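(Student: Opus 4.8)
The plan is to realize $\Delta_{\eta}$ as the monodromy of an abstractly built foliated surface, trivialize the underlying complex surface by a parametric version of Savelev's theorem, and finally read off and normalize the resulting equation. Everything is arranged to be manifestly holomorphic in $\eta$, which is the only genuinely new point compared with the case $n=0$ treated in \cite{SchaTey,Loray}; I would follow those references as a template and insert parameters. Note that the hypothesis $\lambda_{0}\notin\rr_{\leq0}$ is exactly what makes the construction possible: it guarantees that the linear model below is a reduced, non-nodal singularity, and for a node the holonomy along a separatrix is always linearizable, so without this hypothesis an arbitrary $\Delta_{\eta}$ could not be realized.

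\textbf{Step 1: abstract model.} I would write $\Delta_{\eta}=\ell_{\eta}\circ\psi_{\eta}$, where $\ell_{\eta}\left(v\right):=v\exp\left(-2\ii\pi\lambda_{\eta}\right)$ is the multiplier and $\psi_{\eta}\in\diff[\cc,0]$ is tangent to the identity, both holomorphic in $\eta$. On $\left\{\left|u\right|<r_{0}\right\}\times\neigh$ I place the linear model $-u\pp u+\lambda_{\eta}v\pp v$, whose monodromy around $\left\{u=0\right\}$ is precisely $\ell_{\eta}$ and whose only singularity is the reduced point $\left(0,0\right)$. I then cover the circle $\left\{\left|u\right|=r_{*}\right\}$ by two arcs, attach a trivially foliated annular chart over each, and glue them over the two components of the overlap by the identity on one and by a fibered map realizing $\psi_{\eta}$ on the other. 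The total monodromy of the resulting foliation $\fol{\eta}$ around $\left\{v=0\right\}$ is then $\Delta_{\eta}$, it still leaves $\left\{u=0\right\}$ and $\left\{v=0\right\}$ invariant, and $\left(0,0\right)$ is still its only singularity. The underlying surface $\mathcal{N}_{\eta}$ is obtained from the standard bidisc by this one fibered gluing cocycle, and depends holomorphically on $\eta$.

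\textbf{Step 2: trivialization of the ambient surface.} Next I would produce a fibered biholomorphism $\left(u,v\right)\mapsto\left(u,\Theta_{\eta}\left(u,v\right)\right)$ with $\Theta_{\eta}\left(u,0\right)=0$, holomorphic in $\eta$, from $\mathcal{N}_{\eta}$ onto the standard $\mathcal{U}^{\infty}$. This is the parametric Savelev theorem: one represents the sought $\Theta_{\eta}$ by a Cauchy-type integral of the (holomorphic-in-$\eta$) gluing cocycle and closes a contraction argument in the Banach space $\holb[\mathcal{U}^{\infty}]'$, the holomorphy in $\eta$ being read off the explicit integral formulas. This is precisely the argument carried out in Section~\ref{subsec:Normalizing_Savelev} for a model situation, and I would invoke it here mutatis mutandis. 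Pushing $\fol{\eta}$ forward by $\Theta_{\eta}$ gives a genuine holomorphic foliation on $\mathcal{U}^{\infty}$, transverse to $\left\{u=c\right\}$ for $c\neq0$, leaving $\left\{u=0\right\}$ and $\left\{v=0\right\}$ invariant, with a unique singularity at $\left(0,0\right)$ and monodromy $\Delta_{\eta}$ around $\left\{v=0\right\}$.

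\textbf{Step 3: normal form and conclusion.} Writing a defining $1$-form $P_{\eta}\,\dd u+Q_{\eta}\,\dd v$ for this foliation, invariance of $\left\{u=0\right\}$ forces $u\mid Q_{\eta}$ and invariance of $\left\{v=0\right\}$ forces $v\mid P_{\eta}$, with $Q_{\eta}/u$ a unit; dividing, the foliation is induced by $-u\pp u+vS_{\eta}\left(u,v\right)\pp v$ with $S_{\eta}$ holomorphic and $S_{\eta}\left(0,0\right)=\lambda_{\eta}$. Scaling $u$ by the holomorphic function $\eta\mapsto 1/\partial_{u}S_{\eta}\left(0,0\right)$ and then removing the terms $u^{i}$, $i\geq2$, of $S_{\eta}$ by a fibered change $\left(u,v\right)\mapsto\left(u,v\exp h_{\eta}\left(u,v\right)\right)$ — a triangular, obstruction-free family of cohomological equations of the type used throughout this paper — brings the vector field to the announced form $-u\pp u+v\left(\lambda_{\eta}+u\left(1+f_{\eta}\left(v\right)\right)+g_{\eta}\left(v\right)\right)\pp v$ with $f,g\in v\germ{\eta,v}$. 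A fibered change is transverse to the fibers, hence does not alter the monodromy around $\left\{v=0\right\}$, which is still $\Delta_{\eta}$; the scaling of $u$ only rescales the reference circle. Finally $\left(0,0\right)$ is the only singularity: transversality to $\left\{u=c\right\}$ rules it out for $c\neq0$, and on $\left\{u=0\right\}$ the $\pp v$-component is $v\left(\lambda_{\eta}+g_{\eta}\left(v\right)\right)$, non-vanishing for $0<\left|v\right|$ small because $\lambda_{0}\neq0$.

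\textbf{Expected main obstacle.} The only real difficulty is Step 2: obtaining the trivializing biholomorphism on a polydisc \emph{independent of $\eta$ and with holomorphic dependence on $\eta$}. This forces one to replace soft existence statements by explicit Cauchy integrals and a uniform contraction estimate. The resonance-type subtleties in Step 3 (when $\lambda_{\eta}$ is rational) are comparatively mild and are absorbed by the functional freedom in $f$ and $g$, exactly as in Loray's treatment of the case $n=0$.
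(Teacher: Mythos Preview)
Your route differs from the paper's. The paper does not reprove this statement: it cites \cite{SchaTey}, whose method (mirrored for a sibling problem in Section~\ref{subsec:Parametric-normalization}) is a \emph{direct synthesis} via iterated Cauchy--Heine transforms that builds sectorial first integrals $H^{j}$ with prescribed transition data and then reads off a vector field already in the shape~\eqref{eq:realization_holonomy_normal_form}. The accompanying remark only adjusts from ``holonomy conjugate to $\Delta$'' (with $f=0$) to ``holonomy equal to $\Delta$'' by post-composing with $(u,v)\mapsto(u,\Psi(v))$. By contrast you take a Loray-style path: glue an abstract foliated surface with the right monodromy, trivialize it, and only then normalize. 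That is a legitimate alternative in spirit, but the difficulty is distributed very differently, and your sketch underestimates where the work lies.

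The genuine gap is Step~3. You assert that bringing $-u\pp u+vS_{\eta}(u,v)\pp v$ to the shape~\eqref{eq:realization_holonomy_normal_form} is ``a triangular, obstruction-free family of cohomological equations of the type used throughout this paper''. It is neither. The cohomological equations in this paper (Lemma~\ref{lem:formal_cohomological}, Theorem~\ref{thm:squid_and_solve}) concern the \emph{saddle-node} operator $P_{\varepsilon}\pp x+\cdots$, whose obstruction module is $\frml{\varepsilon}[x]_{\leq k}$. At $(\infty,0)$ you face instead a \emph{saddle} $-u\pp u+v(\lambda+\cdots)\pp v$: order by order in $v$ one must solve $uN_{j}'-j(\lambda+u)N_{j}=\text{(known)}$, a regular-singular ODE whose obstruction lives at $m=j\lambda$ whenever $j\lambda\in\nn$. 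These resonances can in fact be absorbed using the two free coefficients $f_{j},g_{j}$ at each level, but checking this --- and, crucially, proving convergence of $\sum_{j}N_{j}(u)v^{j}$ uniformly in $\eta$ --- is precisely the content of the normal-form theorem in \cite{SchaTey} (or \cite[Theorem~4]{Loray}), i.e.\ essentially the result you are trying to establish. Your scaling ``$u\mapsto u/\partial_{u}S_{\eta}(0,0)$'' also presupposes $\partial_{u}S_{\eta}(0,0)\neq0$, which your Steps~1--2 do not guarantee (the linear seed has $S\equiv\lambda$). A smaller point: Section~\ref{subsec:Normalizing_Savelev} trivializes a neighborhood of $\proj$ with transition $(u,v)\mapsto(\tfrac1u,v\exp\phi)$, not a disc bundle over a disc as in your Step~2; the Cauchy--Heine machinery does adapt, but it is not ``precisely'' that argument. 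In short, Steps~1--2 are soft, and the real theorem hides in your Step~3; the paper's (and \cite{SchaTey}'s) synthesis avoids this by producing the normal form directly.
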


\begin{rem}
The result of~\cite{SchaTey} asserts the existence of a vector field
of the form~\eqref{eq:realization_holonomy_normal_form} with $f:=0$
whose holonomy on $\Sigma$ is conjugate to $\Delta$ by some analytic
family $\Psi$. The conjugacy $\left(u,v\right)\mapsto\left(u,\Psi\left(v\right)\right)$
transforms the vector field into the form~\eqref{eq:realization_holonomy_normal_form}
for different $f,~g$ but its holonomy is exactly $\Delta$ on $\Sigma$.
\end{rem}

In the generic case $\lambda_{0}\notin\rr$ the theorem is (almost)
trivial. All holonomy maps 
\begin{align*}
\Delta_{\eta}~:~v\longmapsto & v\exp\left(-2\ii\pi\lambda_{\eta}+\delta_{\eta}\left(v\right)\right)~~~,~\delta_{\eta}\left(0\right)=0,
\end{align*}
are hyperbolic and locally analytically linearizable for that matter
(Koenig's theorem), the unique tangent-to-identity linearization being
given by $\Psi_{\eta}~:~v\mapsto v\exp\psi_{\eta}\left(v\right)$,
where
\begin{align*}
\psi_{\eta} & :=\sum_{n=0}^{\infty}\delta_{\eta}\circ\Delta_{\eta}^{\circ n}.
\end{align*}
Local uniform convergence ensures that $\psi$ is analytic in both
$t$ and $\eta$. The fibered mapping $\left(u,v\right)\mapsto\left(u,\Psi_{\eta}\left(v\right)\right)$
transforms the linear vector field $-u\pp u+\lambda v\pp v$ into
a vector field $X_{\eta}^{\infty}$ fulfilling the conclusions (1)-(2)
of the theorem (but not of the form~\eqref{eq:realization_holonomy_normal_form}).
However if $\lambda_{0}\in\rr$ this construction fails: the linearization
domain may shrink to a point (if $\Delta_{0}$ is not analytically
linearizable). The form~\eqref{eq:realization_holonomy_normal_form}
has the advantage of being valid for all cases, analytically in the
parameter. Notice that the presence of the term $-uv\pp v$ in~\eqref{eq:realization_holonomy_normal_form}
discards any linear realization even when $\lambda_{0}\notin\rr$.

\bigskip{}

We define $\eta:=\varepsilon\in\neigh[k]$,
\begin{align*}
\lambda_{\varepsilon} & :=\mu_{\varepsilon}\notin\rr_{\leq0},\\
\Delta_{\varepsilon}~:~v & \longmapsto\holo{\varepsilon}^{\circ-1}\left(v\right),
\end{align*}
and apply Theorem~\ref{thm:realization_holonomy}, to obtain an analytic
family $X^{\infty}$ in the chart $\left(u,v\right)$. In order to
stitch the induced foliation with that of $X_{\varepsilon}^{0}$ we
need to prepare it by changing slightly the coordinates on $\mathcal{U}^{\infty}$.
Let $\widetilde{X}_{\varepsilon}^{0}$ be the vector field corresponding
to $X_{\varepsilon}^{0}$ in the coordinates $\left(u,v\right)=\left(\frac{1}{x},y\right)$,
that is
\begin{align*}
\widetilde{X}_{\varepsilon}^{0}\left(u,v\right) & =-u^{2}P_{\varepsilon}\left(\frac{1}{u}\right)\pp u+v\left(\mu_{\varepsilon}u^{-k}+1+\OO v\right)\pp v\\
 & =uP_{\varepsilon}\left(\frac{1}{u}\right)\times\left(-u\pp u+v\left(\lambda_{\varepsilon}+h_{\varepsilon}\left(u\right)+\OO v\right)\pp v\right),
\end{align*}
where
\begin{align}
h_{\varepsilon}~:~u & \longmapsto\frac{u^{k}+\mu_{\varepsilon}}{u^{k+1}P_{\varepsilon}\left(\frac{1}{u}\right)}-\mu_{\varepsilon}~~~~\in\holf[\left(\cc^{k},0\right)\times\rho^{\infty}\ww D]\label{eq:divisor_orbital_change}
\end{align}
vanishes at $0$. Notice indeed that the polynomial $u^{k+1}P_{\varepsilon}\left(\frac{1}{u}\right)\in\pol u_{\leq k+1}$
has its roots outside the closed disc $\adh{\rho^{\infty}\ww D}$,
whereas it takes the value $1$ at $0$. Remark also that the quantity
$uP_{\varepsilon}\left(\frac{1}{u}\right)$ needs to be factored out
in order to recognize a vector field alike to $X_{\varepsilon}^{\infty}$
near $\left(\infty,0\right)$. This function is non-vanishing on the
annulus $\mathcal{A}$. Let $\widetilde{X}_{\varepsilon}^{\infty}$
be the vector field corresponding to $X_{\varepsilon}^{\infty}$ through
the inverse transform 
\begin{align}
\left(u,v\right) & \longmapsto\left(u,~v\exp\int_{u_{*}}^{u}\left(h_{\varepsilon}\left(z\right)-z\right)\frac{\dd z}{z}\right).\label{eq:divisor_orbital_straightenning}
\end{align}
By construction we have
\begin{align*}
\widetilde{X}_{\varepsilon}^{\infty}\left(u,v\right) & =-u\pp u+v\left(\lambda_{\varepsilon}+h_{\varepsilon}\left(u\right)+\OO v\right)\pp v,
\end{align*}
which glues with $\widetilde{X}_{\varepsilon}^{0}$ through $(u,v)=\left(\frac{1}{x},y\right)$
as presented in the next paragraph.

\subsection{\label{subsec:Gluing}Gluing}

Both transformed vector fields $\widetilde{X}^{0}$ and $\widetilde{X}^{\infty}$
built in the previous section have same holonomy $\Delta_{\varepsilon}$
on $\Sigma$. We glue the (foliations defined by the) vector fields
$\widetilde{X}_{\varepsilon}^{0}$ and $\widetilde{X}_{\varepsilon}^{\infty}$
over the fibered annulus $\mathcal{A}$ through a fibered map $\Phi_{\varepsilon}$
fixing $\Sigma$ and (classically) obtained by foliated path-lifting,
as we explain now. For $\left(u,v\right)\in\mathcal{A}$ we join $u_{*}$
to $u$ in $\mathcal{A}\cap\left\{ v=0\right\} $ by some path $\gamma$
and define
\begin{align*}
\Phi_{\varepsilon}\left(u,v\right) & :=\left(u,\holo{\varepsilon,\gamma}^{\infty}\circ\left(\holo{\varepsilon,\gamma}^{0}\right)^{\circ-1}\left(v\right)\right),
\end{align*}
where $\holo{\varepsilon,\gamma}^{0}$ (\emph{resp.} $\holo{\varepsilon,\gamma}^{\infty}$)
is the holonomy map obtained by lifting the path $\gamma$ through
$\Pi$ in the foliation induced by $\widetilde{X}_{\varepsilon}^{0}$
(\emph{resp.} $\widetilde{X}_{\varepsilon}^{\infty}$). The map $\Phi_{\varepsilon}$
is well-defined because when $\gamma$ is a loop both mappings $\holo{\varepsilon,\gamma}^{\infty}$
and $\holo{\varepsilon,\gamma}^{0}$ coincide with the same corresponding
iterate of $\Delta_{\varepsilon}$. Clearly $\Phi_{\varepsilon}$
depends analytically on $\varepsilon\in\neigh[k]$ and is a germ of
a fibered biholomorphism near $\mathcal{A}\cap\left\{ v=0\right\} $
satisfying
\begin{align*}
\Phi^{*}\widetilde{X}^{0} & =uP\left(\frac{1}{u}\right)\widetilde{X}^{\infty},\\
\Phi\left(u,v\right) & =\left(u,~v\exp\phi\left(u,v\right)\right),\\
\phi\left(u,0\right) & =\phi\left(u_{*},v\right)=0.
\end{align*}

\subsection{\label{subsec:Normalizing_Savelev}Normalizing}

So far the construction yields an analytic family of complex foliated
surfaces, written $\left(\mathcal{M},\fol{}\right)$, defined by the
charts $\left(\mathcal{U}^{0},\fol{}^{0}\right)$ and $\left(\mathcal{U}^{\infty},\fol{}^{\infty}\right)$
with transition map 
\begin{align}
\left(u,v\right) & \longmapsto\left(\frac{1}{u},~v\exp\phi\left(u,v\right)\right)=\left(x,y\right).\label{eq:transition_mapping}
\end{align}

\begin{rem}
The foliation $\fol{\varepsilon}$ is transverse to the fibers of
the global fibration by discs $\Pi~:~\mathcal{M}_{\varepsilon}\to\mathcal{L}$
given in the first chart by $\left(x,y\right)\mapsto x$, except along
the $k+2$ invariant discs $\left\{ P_{\varepsilon}\left(x\right)=0\right\} $
and $\left\{ x=\infty\right\} $.
\end{rem}

Each manifold $\mathcal{M}_{\varepsilon}$ is a neighborhood of the
invariant divisor $\lif{}\simeq\proj$, corresponding to $\left\{ y=0\right\} $
and $\left\{ v=0\right\} $ in the respective chart, while the natural
embedding $\proj[1]\inj\mathcal{M}$ has self-intersection $0$ according
to Camacho-Sad index formula~\cite{CamaSad} (the singularities near
$\left(0,0\right)$ contribute for a sum of Camacho-Sad indices equal
to $\mu_{\varepsilon}$ while the singularity at $\left(\infty,0\right)$
does for $-\lambda_{\varepsilon}=-\mu_{\varepsilon}$). 
\begin{defn}
\label{def:standard_sphere}~For $r>0$ we define the \textbf{standard
neighborhood of radius $r$ of the Riemann sphere} 
\begin{align*}
\hirze & :=\proj\times r\ww D,
\end{align*}
the complex surface equipped with the (global) affine coordinates
\begin{align*}
\left(u,v\right) & \in\cc\times r\ww D
\end{align*}
and transition map on $\cc^{\times}\times r\ww D$ given by $\left(u,v\right)=\left(\frac{1}{x},y\right)$,
\emph{i.e.} by~\eqref{eq:transition_mapping} with $\phi:=0$. The
other chart of $\hirze$ is the domain $\left(x,y\right)\in\cc\times r\ww D$.
When speaking of a \textbf{standard neighborhood of the sphere} we
actually refer to $\hirze$ for some $r>0$ small enough.
\end{defn}

\begin{thm}
\label{thm:fibred_gluing_parametric} Let $\mathcal{M}$ be an analytic
family of complex surfaces with transition maps~\eqref{eq:transition_mapping}.
There exists a standard neighborhood $\mathcal{V}=\hirze$ of $\mathcal{L}$,
for some $r>0$, and an analytic family of fibered holomorphic injective
mappings 
\begin{align*}
\Psi~:~\mathcal{V} & \longto\mathcal{M}
\end{align*}
agreeing with the identity on $\mathcal{L}$. 
\end{thm}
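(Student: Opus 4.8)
The plan is to establish this as a parametric refinement of Savelev's theorem~\cite{Save}, following its strategy: reduce the construction of $\Psi$ to a non-linear additive Cousin problem on the annulus $\mathcal{A}=\left\{ 1/\rho^{0}<\left|u\right|<\rho^{\infty}\right\} \times\neigh$, solve that problem by a Cauchy-integral splitting combined with a contraction argument, and keep track of holomorphy in $\varepsilon$ throughout.

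First I would look for $\Psi$ chart by chart in the fibered, normalized form
\begin{align*}
\Psi^{0}~:~\left(x,y\right) & \longmapsto\left(x,~y\exp\alpha\left(x,y\right)\right), & \Psi^{\infty}~:~\left(u,v\right) & \longmapsto\left(u,~v\exp\beta\left(u,v\right)\right),
\end{align*}
with $\alpha,\beta$ bounded holomorphic and vanishing on $\left\{ y=0\right\} $ resp. $\left\{ v=0\right\} $, which forces $\Psi$ to be fibered and to restrict to the identity on $\mathcal{L}$. Requiring that $\Psi^{0}$ and $\Psi^{\infty}$ glue into a single map valued in $\mathcal{M}$ — i.e. that $\Psi^{0}\left(1/u,v\right)$ and $\Psi^{\infty}\left(u,v\right)$ represent the same point of $\mathcal{M}$ through the transition~\eqref{eq:transition_mapping} over the overlap, recalling that $\mathcal{V}$ carries the standard transition $\left(u,v\right)\mapsto\left(1/u,v\right)$ — amounts to the single functional equation
\begin{align}
\alpha\left(1/u,v\right)-\beta\left(u,v\right) & =\phi\left(u,~v\exp\beta\left(u,v\right)\right),\label{eq:prop-cousin}
\end{align}
to be satisfied on $\mathcal{A}$, with both sides vanishing identically on $\left\{ v=0\right\} $.

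To solve~\eqref{eq:prop-cousin} I would fix radii $1/\rho^{0}<R_{1}<R_{2}<\rho^{\infty}$ and use the classical additive decomposition obtained by Cauchy integrals over the circles $\left|u\right|=R_{1}$ and $\left|u\right|=R_{2}$: any $g$ holomorphic on a neighbourhood of $\left\{ R_{1}\leq\left|u\right|\leq R_{2}\right\} \times r\ww D$ splits uniquely as $g=g_{+}+g_{-}$, with $g_{+}$ holomorphic on $\left\{ \left|u\right|<R_{2}\right\} \times r\ww D$, $g_{-}$ holomorphic on $\left\{ \left|u\right|>R_{1}\right\} \times r\ww D$ and $g_{-}\left(\infty,\cdot\right)=0$, the operators $g\mapsto g_{\pm}$ being bounded for the supremum norm with constants depending only on $R_{1},R_{2}$. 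Writing $\Phi\left[\beta\right]\left(u,v\right):=\phi\left(u,v\exp\beta\left(u,v\right)\right)$, I would then solve~\eqref{eq:prop-cousin} by the iteration $\beta_{0}:=0$, $\beta_{n+1}:=-\Phi\left[\beta_{n}\right]_{+}$, setting $\alpha\left(1/u,v\right):=\Phi\left[\beta_{\infty}\right]_{-}\left(u,v\right)$ at the limit. The decisive estimate is that, although $\phi$ need not be small, it vanishes on $\left\{ v=0\right\} $, hence on the thin polydisc $\left\{ \left|v\right|\leq r\right\} $ one has $\norm[\phi\left(\cdot,v\exp\beta\right)]{}\leq Mr\,\ee^{\norm[\beta]{}}$, where $M$ bounds $\ppp{\phi}{v}$ on a slightly larger domain and is independent of $r$. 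Therefore, for $r>0$ small enough, the iteration preserves the ball $\left\{ \norm[\beta]{}\leq1\right\} $ and is a contraction there, uniformly in $\varepsilon\in\neigh[k]$; its limit $\beta_{\infty}$, and then $\alpha$, are bounded holomorphic, vanish on $\left\{ v=0\right\} $, and — as locally uniform limits of maps holomorphic in $\varepsilon$ — depend holomorphically on $\varepsilon$ as well.

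Finally I would check that the resulting $\Psi$ is injective: for $r$ small the fiber maps $v\mapsto v\exp\alpha\left(x,v\right)$ and $v\mapsto v\exp\beta\left(u,v\right)$ have non-vanishing derivative and are injective on $r\ww D$, by a short estimate using Cauchy bounds on $\ppp{\alpha}{v}$ and $\ppp{\beta}{v}$, so $\Psi^{0}$ and $\Psi^{\infty}$ are biholomorphisms onto their images; since they agree over the overlap by~\eqref{eq:prop-cousin}, they patch into an injective holomorphic $\Psi$ from $\mathcal{V}:=\hirze$ to $\mathcal{M}$, fibered over $\proj$ and equal to the identity on $\mathcal{L}$. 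One then shrinks $\rho^{0},\rho^{\infty}$ down to $1/R_{1},R_{2}$ — harmless, since $\mathcal{M}$ is only a germ along $\mathcal{L}$ — and checks that the discs $\left\{ \left|x\right|<1/R_{1}\right\} $ and $\left\{ \left|u\right|<R_{2}\right\} $ still cover $\proj$ and contain the respective singularities, so this $\mathcal{V}$ does the job. I expect the main obstacle to be the solvability of~\eqref{eq:prop-cousin}: $\phi$ carries no smallness in any norm that divides out $v$, and the contraction survives only because $\phi$ genuinely shrinks on the thin polydisc $\left\{ \left|v\right|\leq r\right\} $; the remaining points — the domain bookkeeping, the fiberwise injectivity, and the holomorphic dependence on $\varepsilon$ — are routine once this scheme is in place.
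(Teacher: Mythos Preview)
Your approach is correct and follows the same overall strategy as the paper: write $\Psi$ in exponential fibered form, reduce to the non-linear additive Cousin problem $\alpha(1/u,v)-\beta(u,v)=\phi\bigl(u,v\exp\beta(u,v)\bigr)$ on the annulus, and solve it by iterating a Cauchy-integral splitting. The paper's execution differs from yours in two respects worth noting. First, the paper deliberately does \emph{not} run a contraction argument: it only shows that the iterates stay uniformly bounded and that successive iterates agree to increasing order in $v$ (since $\phi(u,0)=0$), whence convergence follows from a Montel-type lemma (a bounded sequence of holomorphic functions whose Taylor series at one point converges in the Krull topology must converge locally uniformly). The paper explicitly remarks that this sidesteps the sharper Lipschitz estimate your contraction requires. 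Second, the paper integrates over the \emph{boundary} circles $\rho^\infty\sone$ and $(1/\rho^0)\sone$ rather than intermediate radii; the price is that the naive Cauchy bound blows up as $u$ approaches the contour, which the paper remedies by a two-step estimate (bound $F^\infty$ on an inner half-annulus directly, then on the outer half via the functional relation and the already-obtained bound on $F^0$). Your choice of interior radii $R_1,R_2$ avoids that trick but forces a final shrinkage of the charts; be aware that your claim that $g\mapsto g_\pm$ is bounded in sup-norm only holds if the output is measured on a strictly smaller domain than the input, so the iteration does not close up on a single fixed annulus without an extra device---this is precisely the domain bookkeeping that the paper's two-step estimate is designed to absorb.
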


The rest of the subsection is devoted to the proof of this theorem.
We refer to Section~\eqref{subsec:Functional-spaces} for the definitions
of the functional spaces in use. We are looking for $\Psi$, or rather
its expression in the charts $\mathcal{U}^{0}$ and $\mathcal{U}^{\infty}$,
in the form
\begin{align*}
\Psi^{0}\left(x,y\right) & =\left(x,~y\exp\psi^{0}\left(x,y\right)\right)\\
\Psi^{\infty}\left(u,v\right) & =\left(u,~v\exp\psi^{\infty}\left(u,v\right)\right).
\end{align*}
The normalization equation becomes a non-linear additive Cousin problem
on $\mathcal{A}$:
\begin{align*}
\psi^{0}\left(\frac{1}{u},v\right)-\psi^{\infty}\left(u,v\right) & =\phi\circ\Psi^{\infty}\left(u,v\right).
\end{align*}

Starting from $\psi_{0}^{0}:=0$ and $\psi_{0}^{\infty}:=0$ we build
iteratively two bounded sequences of holomorphic functions 
\begin{align*}
\psi_{n}^{\sharp}\in\holb[\group{{{\neigh[k]}}\times\rho^{\sharp}\ww D\times r\ww D}] & ,~\sharp\in\left\{ 0,\infty\right\} 
\end{align*}
solution of the linearized additive Cousin problem (or discrete cohomological
equation)
\begin{align}
\psi_{n+1}^{0}\left(\frac{1}{u},v\right)-\psi_{n+1}^{\infty}\left(u,v\right) & =\phi\left(u,v\exp\psi_{n}^{\infty}\left(u,v\right)\right).\label{eq:savelev_cousin_recurrence}
\end{align}
The Cousin problem has explicit solutions given by a Cauchy-Heine
transform. From these solutions we obtain \emph{a priori} bounds on
the norm of $\psi_{n}^{\sharp}$, allowing to fix the radius $r>0$
beforehand. We let
\begin{align*}
\mathcal{U}_{r}^{0} & :=\left\{ \left(x,y\right)~:~\left|x\right|<\rho^{0},~\left|y\right|<r\right\} ,\\
\mathcal{U}_{r}^{\infty} & :=\left\{ \left(u,v\right)~:~\left|u\right|<\rho^{\infty},~\left|v\right|<r\right\} ,
\end{align*}
 be an atlas for $\hirze$. We postpone the proof of the next main
lemma to the end of the section.

\begin{figure}
\hfill{}\subfloat[For $F^{0}$ in $\left(x,y\right)$ coordinates.]{\includegraphics[width=5cm]{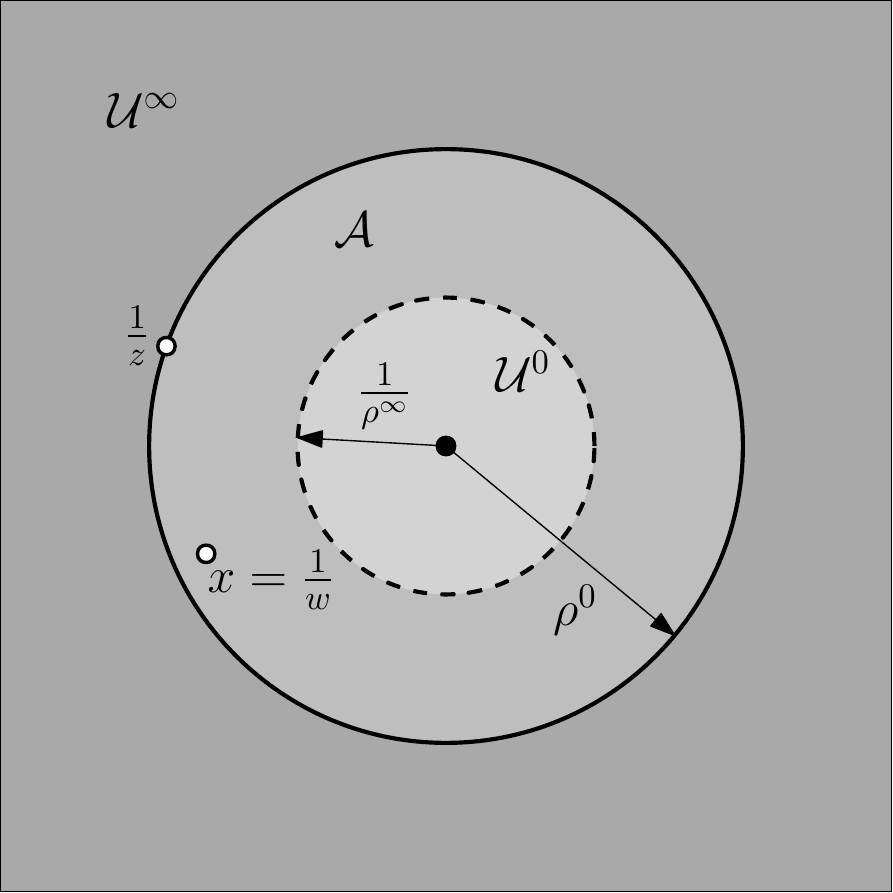}

}\hfill{}\subfloat[For $F^{\infty}$ in $\left(u,v\right)$ coordinates.]{\includegraphics[width=5cm]{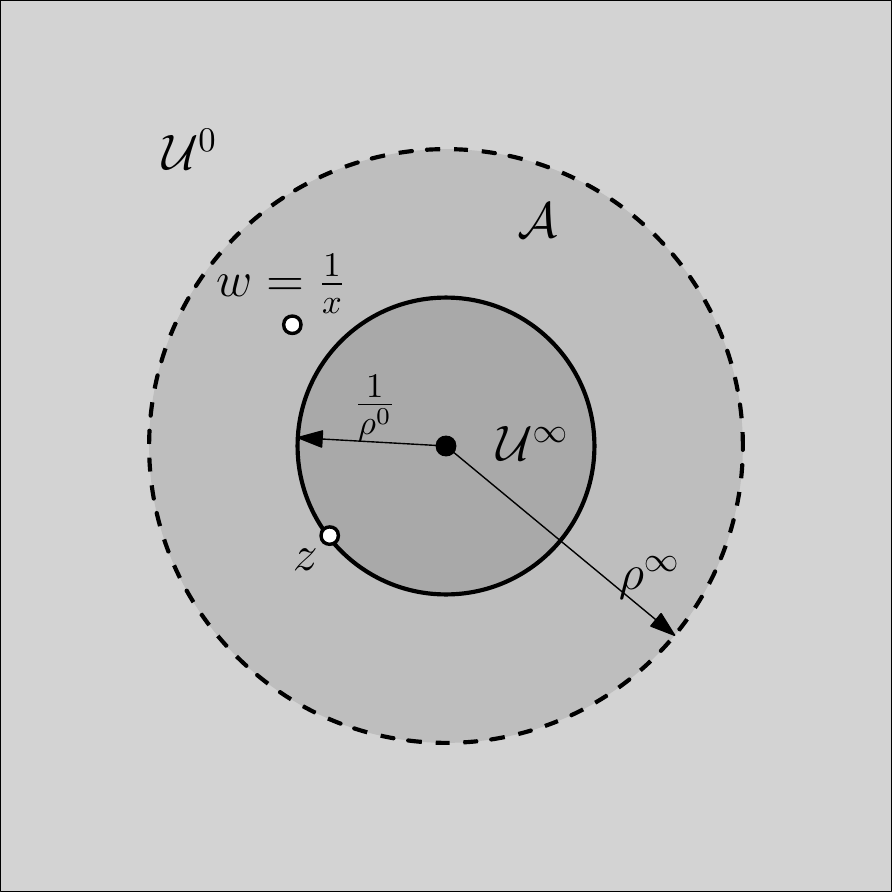}

}\hfill{}

\caption{\label{fig:Cauchy-Heine_Savelev}Integration contours in the respective
charts.}
\end{figure}
\begin{lem}
\label{lem:Savelev-Cousin}Assume that $\phi\in\holb[\mathcal{A}_{\eta}]'$
for some domain $\mathcal{A}_{\eta}:=\left\{ \frac{1}{\rho^{0}}<\left|u\right|<\rho^{\infty}\right\} \times\eta\ww D$.
Let $\psi\in\holb[\mathcal{U}_{r}^{\infty}]$ be such that the image
of $\mathcal{A}_{r}$ by $\left(u,v\right)\mapsto\left(u,v\exp\psi\left(u,v\right)\right)$
is included in $\mathcal{A}_{\eta}$. Define 
\begin{align}
\begin{cases}
F^{\infty}\left(u,v\right) & :=\frac{1}{2\ii\pi}\oint_{\rho^{\infty}\sone}\phi\left(z,v\exp\psi\left(z,v\right)\right)\frac{\dd z}{z-u},\\
F^{0}\left(x,y\right) & :=\frac{x}{2\ii\pi}\oint_{\frac{1}{\rho^{0}}\sone}\phi\left(z,y\exp\psi\left(z,y\right)\right)\frac{\dd z}{xz-1}.
\end{cases}\label{eq:Savelev_Cauchy-Heine}
\end{align}
Then the following properties hold.

\begin{enumerate}
\item $F^{0}\in\holb[\mathcal{U}_{r}^{0}]$ and $F^{\infty}\in\holb[\mathcal{U}_{r}^{\infty}]$.
Moreover for $\sharp\in\left\{ 0,\infty\right\} $
\begin{align}
\norm[F^{\sharp}]{\mathcal{U}_{r}^{\sharp}} & \leq rK\norm[\phi]{\mathcal{A}_{\eta}}'\exp\norm[\psi]{\mathcal{U}_{r}^{\infty}}\label{eq:estim_F}
\end{align}
where 
\begin{align*}
K:= & \left(1+\frac{2\rho^{0}}{\rho^{\infty}\rho^{0}-1}\right).
\end{align*}
\item For all $\left(u,v\right)\in\mathcal{A}_{r}$ we have
\begin{align}
F^{0}\left(\frac{1}{u},v\right)-F^{\infty}\left(u,v\right) & =\phi\left(u,v\exp\psi\left(u,v\right)\right).\label{eq:savelev_cousin}
\end{align}
\item These are the only holomorphic solutions of the previous equation
which are bounded, up to the addition of a function $v\mapsto f^{\infty}\left(v\right)$
with $f^{\infty}\in\holb[r\ww D]$.
\end{enumerate}
\end{lem}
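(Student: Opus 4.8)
The statement is the fibered, parameter-dependent avatar of the classical Laurent (Cauchy--Heine) splitting of a holomorphic function on an annulus, so the plan is to run that argument while keeping a grip on the transverse variable and on the constants. First I would introduce $g\left(z,w\right):=\phi\left(z,w\exp\psi\left(z,w\right)\right)$, which by the hypothesis that $\left(u,v\right)\mapsto\left(u,v\exp\psi\left(u,v\right)\right)$ maps $\mathcal{A}_{r}$ into $\mathcal{A}_{\eta}$ is holomorphic on $\mathcal{A}_{r}$, with bounded continuous extension to its closure. The one substantive remark is that $\phi$ vanishes along $\left\{ v=0\right\} $, so that on $\mathcal{A}_{r}$
\[\left|g\left(z,w\right)\right|=\left|\phi\left(z,w\exp\psi\left(z,w\right)\right)\right|\le\left|w\exp\psi\left(z,w\right)\right|\,\norm[\phi]{\mathcal{A}_{\eta}}'\le r\,\norm[\phi]{\mathcal{A}_{\eta}}'\exp\norm[\psi]{\mathcal{U}_{r}^{\infty}};\]
this single inequality is what produces the whole factor $r\,\norm[\phi]{\mathcal{A}_{\eta}}'\exp\norm[\psi]{\mathcal{U}_{r}^{\infty}}$ in~\eqref{eq:estim_F}. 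In this notation $F^{\infty}\left(u,v\right)$ and $F^{0}\left(1/u,v\right)$ are, respectively, the Cauchy transforms of $g\left(\cdot,v\right)$ against the outer circle $\rho^{\infty}\sone$ and against the inner circle $\tfrac{1}{\rho^{0}}\sone$ of the annulus underlying $\mathcal{A}_{r}$ --- the factor $x=1/u$ in the definition of $F^{0}$ being precisely what turns the kernel $\tfrac{x}{xz-1}$ into $\tfrac{1}{z-1/x}$.

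For item~(1): holomorphy in the ``horizontal'' variable is immediate, since the only pole of $\tfrac{1}{z-u}$ (respectively $\tfrac{1}{z-1/x}$) sits at $z=u$ (respectively $z=1/x$), which stays off the integration circle precisely when $\left|u\right|<\rho^{\infty}$ (respectively $\left|x\right|<\rho^{0}$); holomorphy in the transverse variable follows by differentiating under the integral sign, the integrand being holomorphic there because $\psi$ is and its graph remains in the domain of $\phi$ (and holomorphy in the parameter, if carried along, is read off the same explicit formulas). The bound~\eqref{eq:estim_F} is then the triangle inequality applied to the integral: $\left|g\right|$ on the contour is bounded as above, and multiplying by the length of the contour and an elementary upper bound for the modulus of the kernel --- which uses only $\rho^{0}\rho^{\infty}>1$ --- collapses the geometric data into the single constant $K=1+\tfrac{2\rho^{0}}{\rho^{0}\rho^{\infty}-1}$. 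For item~(2), identity~\eqref{eq:savelev_cousin} is Cauchy's integral formula on the annulus applied to $g\left(\cdot,v\right)$: for $\left(u,v\right)\in\mathcal{A}_{r}$ one has $g\left(u,v\right)=\tfrac{1}{2\ii\pi}\oint_{\rho^{\infty}\sone}\tfrac{g\left(z,v\right)}{z-u}\dd z-\tfrac{1}{2\ii\pi}\oint_{\frac{1}{\rho^{0}}\sone}\tfrac{g\left(z,v\right)}{z-u}\dd z$, the two terms on the right being $F^{\infty}\left(u,v\right)$ and $F^{0}\left(1/u,v\right)$; matching this with the orientation convention used in~\eqref{eq:Savelev_Cauchy-Heine} gives~\eqref{eq:savelev_cousin}.

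Item~(3) is a rigidity observation: if $\left(\widetilde{F}^{0},\widetilde{F}^{\infty}\right)$ is another bounded solution of~\eqref{eq:savelev_cousin}, then $\delta^{0}:=F^{0}-\widetilde{F}^{0}$ and $\delta^{\infty}:=F^{\infty}-\widetilde{F}^{\infty}$ satisfy $\delta^{0}\left(1/u,v\right)=\delta^{\infty}\left(u,v\right)$ on $\mathcal{A}_{r}$, so for each fixed $v\in r\ww D$ the functions $u\mapsto\delta^{\infty}\left(u,v\right)$ and $u\mapsto\delta^{0}\left(1/u,v\right)$ patch along the annulus into a holomorphic bounded function on $\proj$, hence a constant $f^{\infty}\left(v\right):=\delta^{\infty}\left(0,v\right)$; one checks that $f^{\infty}\in\holb[r\ww D]$, and then $\delta^{\infty}\equiv f^{\infty}\equiv\delta^{0}$, which is exactly the stated ambiguity. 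The only step I expect to require genuine care is the bookkeeping in item~(1): arranging the nested radii and the value of $\eta$ so that $F^{0}$ and $F^{\infty}$ really land in $\holb[\mathcal{U}_{r}^{0}]$ and $\holb[\mathcal{U}_{r}^{\infty}]$ with the advertised constant $K$; everything else is textbook one-variable complex analysis made to depend holomorphically on the remaining variables by inspection of~\eqref{eq:Savelev_Cauchy-Heine}.
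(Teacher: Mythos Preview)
Your overall strategy matches the paper's: item~(2) is Cauchy's formula on the annulus, and item~(3) is Liouville on $\proj$ applied fiberwise in~$v$ (the paper isolates this as a separate lemma on bounded holomorphic functions on $\hirze$, but the content is identical).

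The one genuine gap is in item~(1), precisely at the step you flagged as requiring care. Your proposed mechanism --- triangle inequality plus ``an elementary upper bound for the modulus of the kernel'' --- does not work as stated: for $u$ close to the outer circle $\rho^{\infty}\sone$, the kernel $\tfrac{1}{z-u}$ with $z\in\rho^{\infty}\sone$ is \emph{not} uniformly bounded, so the na\"ive estimate gives nothing on all of $\mathcal{U}_{r}^{\infty}$. The paper's device is to bisect the annulus by the mid-circle $\left|u\right|=\rho:=\tfrac{1}{2}\bigl(\rho^{\infty}+\tfrac{1}{\rho^{0}}\bigr)$ and argue in two regimes. For $\left|u\right|\le\rho$ the direct estimate on $F^{\infty}$ works, the kernel integral over $\rho^{\infty}\sone$ being at most $\tfrac{\rho^{\infty}}{\rho^{\infty}-\rho}=\tfrac{2\rho^{0}}{\rho^{\infty}\rho^{0}-1}$; the analogous direct bound holds for $F^{0}$ when $\left|x\right|\le\tfrac{1}{\rho}$. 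For $\rho\le\left|u\right|<\rho^{\infty}$ one instead invokes the already-proved identity~\eqref{eq:savelev_cousin} to write $F^{\infty}\left(u,v\right)=F^{0}\bigl(\tfrac{1}{u},v\bigr)-g\left(u,v\right)$: now $\left|\tfrac{1}{u}\right|\le\tfrac{1}{\rho}$ is safely away from the inner circle, so $F^{0}$ is controlled by the constant just obtained, while $\left|g\right|\le r\,\norm[\phi]{\mathcal{A}_{\eta}}'\exp\norm[\psi]{\mathcal{U}_{r}^{\infty}}$ contributes the additional~$1$. This two-step bootstrap is what produces the specific shape $K=1+\tfrac{2\rho^{0}}{\rho^{\infty}\rho^{0}-1}$ and is the missing ingredient in your plan.
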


\begin{rem}
The integral formula~\eqref{eq:Savelev_Cauchy-Heine} shows right
away that $F^{\sharp}$ depends holomorphically on any extraneous
parameter on which $\phi$ were to depend holomorphically. 
\end{rem}

It is straightforward to check that fixing some
\begin{align*}
0<r & \leq\eta\exp\left(-\eta K\norm[\phi]{{{\neigh[k]}}\times\eta\ww D}'\right)
\end{align*}
inductively produces well-defined sequences $\left(\psi_{n}^{\sharp}\right)_{n\in\nn}$
of $\holb[\mathcal{U}_{r}^{\sharp}]$, for we have the implication 

\[
\norm[\psi_{\varepsilon,n}^{\infty}]{\mathcal{U}_{r}^{\infty}}<\eta K\norm[\phi_{\varepsilon}]{\eta\ww D}'\Longrightarrow\left|v\exp\psi_{\varepsilon,n}^{\infty}\left(u,v\right)\right|<r\left|\exp\psi_{\varepsilon,n}^{\infty}\left(u,v\right)\right|<r\exp\left(\eta K\norm[\phi_{\varepsilon}]{\eta\ww D}'\right)\leq\eta
\]
for all $\left(u,v\right)\in\mathcal{A}_{r}$. Using \eqref{eq:estim_F}
with $\psi:=\psi_{\varepsilon,n}^{\infty}$ finally yields

\[
\norm[\psi_{\varepsilon,n+1}^{\infty}]{\mathcal{U}_{r}^{\infty}}<\eta K\norm[\phi_{\varepsilon}]{\eta\ww D}'.
\]

We establish now that both sequences converge in $\holb[\mathcal{U}_{r}^{\sharp}]$.
The hypothesis $\phi\left(u,0\right)=0$ guarantees that $\psi_{n+1}^{\sharp}\left(u,v\right)=\psi_{n}^{\sharp}\left(u,v\right)+\OO{v^{n+1}}$,
hence the bounded sequence $\left(\psi_{n}^{\sharp}\right)_{n\in\nn}$
converges for the projective topology on $\frml{\varepsilon,u}\left[\left[v\right]\right]$
(for the Krull distance actually). Therefore the sequences converge
towards holomorphic and bounded functions 
\begin{align*}
\psi^{\sharp} & :=\lim_{n\to\infty}\psi_{n}^{\sharp}\in\holb[\group{{{\neigh[k]}}\times\rho^{\sharp}\ww D\times r\ww D}]
\end{align*}
according to the next lemma.
\begin{lem}
\label{lem:bounded_krull_converge}\cite[Lemma 2.10]{SchaTey} Let
$\mathcal{D}$ be a domain in $\ww C^{m}$ and consider a bounded
sequence $\left(f_{p}\right)_{p\in\ww N}$ of $\holb[\mathcal{D}]$
satisfying the additional property that there exists some point $\mathbf{z}_{0}\in\mathcal{D}$
such that the corresponding sequence of Taylor series $\left(T_{p}\right)_{p\in\ww N}$
at $\mathbf{z}_{0}$ is convergent in $\frml{\mathbf{z}-\mathbf{z}_{0}}$
(for the projective topology). Then $\left(f_{p}\right)_{p}$ converges
uniformly on compact sets of $\mathcal{D}$ towards some $f_{\infty}\in\holb[\mathcal{D}]$.
\end{lem}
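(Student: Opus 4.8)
The plan is to deduce the statement from Montel's theorem together with the identity principle on the connected open set $\mathcal{D}$; the role of the hypothesis at $\mathbf{z}_{0}$ is merely to force all limit values of the normal family $\left(f_{p}\right)_{p}$ to coincide. First I would observe that $\left(f_{p}\right)_{p}$, being a uniformly bounded sequence of holomorphic functions on the domain $\mathcal{D}\subset\ww C^{m}$, is a normal family: by Montel's theorem every subsequence admits a sub-subsequence converging uniformly on compact subsets of $\mathcal{D}$ to some holomorphic function, which is automatically bounded by $M:=\sup_{p}\norm[f_{p}]{\mathcal{D}}<\infty$.

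Next I would identify all such limits. Convergence of $\left(T_{p}\right)_{p}$ in $\frml{\mathbf{z}-\mathbf{z}_{0}}$ for the projective (Krull) topology means precisely that, for each multi-index $\alpha$, the coefficient of $\left(\mathbf{z}-\mathbf{z}_{0}\right)^{\alpha}$ in $T_{p}$ is eventually independent of $p$; write $c_{\alpha}$ for its stable value. Fix a closed polydisc $\overline{P}\subset\mathcal{D}$ centred at $\mathbf{z}_{0}$. If $\left(f_{p_{j}}\right)_{j}$ converges uniformly on $\overline{P}$ to a holomorphic $g$, then Cauchy's integral formula for the Taylor coefficients of $f_{p_{j}}$ at $\mathbf{z}_{0}$ (an integral over the distinguished boundary of $\overline{P}$, a compact set) passes to the limit, so the $\alpha$-th Taylor coefficient of $g$ at $\mathbf{z}_{0}$ equals $\lim_{j}c_{\alpha}(f_{p_{j}})=c_{\alpha}$. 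Hence \emph{every} uniform-on-compacts subsequential limit $g$ of $\left(f_{p}\right)_{p}$ has one and the same Taylor expansion $\sum_{\alpha}c_{\alpha}\left(\mathbf{z}-\mathbf{z}_{0}\right)^{\alpha}$ at $\mathbf{z}_{0}$. If $g$ and $h$ are two such limits, then $g-h$ vanishes to infinite order at $\mathbf{z}_{0}\in\mathcal{D}$, so $g\equiv h$ on the connected set $\mathcal{D}$ by the identity principle; call $f_{\infty}$ the common value.

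Finally I would upgrade subsequential convergence to genuine convergence. A uniformly bounded sequence possessing a \emph{unique} uniform-on-compacts subsequential limit $f_{\infty}$ converges to $f_{\infty}$ uniformly on compact sets: otherwise some compact $K\subset\mathcal{D}$ and $\delta>0$ would give $\norm[f_{p_{j}}-f_{\infty}]{K}\geq\delta$ along a subsequence, which by normality has a further subsequence converging uniformly on $K$; by uniqueness that limit is $f_{\infty}$, contradicting $\norm[f_{p_{j}}-f_{\infty}]{K}\geq\delta$. The limit $f_{\infty}$ is holomorphic with $\norm[f_{\infty}]{\mathcal{D}}\leq M$, hence lies in $\holf[\mathcal{D}]\cap L^{\infty}$; the only mildly delicate point is the continuous extension to $\adh{\mathcal{D}}$ demanded by the definition of $\holb[\mathcal{D}]$, which one secures by, if necessary, shrinking $\mathcal{D}$ by an arbitrarily small amount — harmless in every application made in this paper, where the domains involved can always be enlarged slightly beforehand. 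There is no deeper obstacle: this is a soft normal-families argument, and the only genuine care required is to exploit the projective-topology hypothesis in the coefficient-wise form above, so that the identity principle can be brought to bear.
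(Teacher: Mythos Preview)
Your argument is correct and is precisely the intended one. The paper does not actually give a proof of this lemma: it is quoted from \cite[Lemma 2.10]{SchaTey}. The remark immediately following the statement, however, makes the strategy explicit: ``The method used here \ldots\ does not use the fact that $\holb[\mathcal{D}]$ is a Banach space, only that it is a Montel space (any bounded subset is sequentially compact).'' Your proof---Montel's theorem to extract subsequential limits, identity principle at $\mathbf{z}_{0}$ to force uniqueness, then the standard normal-families upgrade to full convergence---is exactly this.

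One small comment on your interpretation of the projective topology: the paper specifies parenthetically ``for the Krull distance actually,'' and in the application (just before the lemma) one has $\psi_{n+1}^{\sharp}=\psi_{n}^{\sharp}+\OO{v^{n+1}}$, so the coefficients genuinely stabilise rather than merely converge in $\cc$. Your reading is therefore the right one, though the weaker coefficient-wise-convergence reading would also suffice for the argument.

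Your caveat about continuous extension to $\adh{\mathcal{D}}$ is honest and well placed. Uniform convergence on compacta alone does not force boundary continuity of the limit, so strictly speaking one only obtains $f_{\infty}$ holomorphic and bounded on $\mathcal{D}$. In the paper's applications this is harmless for the reason you give (the domains can be enlarged slightly, or equivalently the Cauchy--Heine contours can be deformed outward), and the cited source presumably handles the point in the same pragmatic way.
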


\begin{rem}
The limiting functions $\psi^{\sharp}$ are \emph{not} obtained through
the use of a fixed-point theorem, although they are a fixed-point
of~\eqref{eq:savelev_cousin_recurrence}. The method used here, based
on Lemma~\ref{lem:bounded_krull_converge}, does not use the fact
that $\holb[\mathcal{D}]$ is a Banach space, only that it is a Montel
space (any bounded subset is sequentially compact). Also the estimate~\eqref{eq:estim_F}
obtained in Lemma~\ref{lem:Savelev-Cousin}~(1) is easier to derive
than a sharper estimate aimed at establishing that $\psi_{n}^{\sharp}\mapsto\psi_{n+1}^{\sharp}$
is a contraction mapping. 
\end{rem}

\subsubsection{Proof of Lemma~\ref{lem:Savelev-Cousin}~(2)}

This is nothing but Cauchy formula.

\subsubsection{Proof of Lemma~\ref{lem:Savelev-Cousin}~(1)}

Clearly the function $F^{\sharp}$ is holomorphic on the domain $\mathcal{U}_{r}^{\sharp}$.
Notice also that modifying slightly the integration path does not
change the value of the function, so that $F^{\sharp}$ is bounded
on $\mathcal{U}_{r}^{\sharp}$. Let us evaluate its norm. 

Set $\Psi~:~\left(u,v\right)\mapsto\left(u,v\exp\psi\left(u,v\right)\right)$
and define $\rho>0$ by $2\rho:=\rho^{\infty}+\frac{1}{\rho^{0}}$.
We prove the estimate on $\norm[F^{\infty}]{\mathcal{U}_{r}^{\infty}}$
and $\norm[F^{0}]{\mathcal{U}_{r}^{0}}$ in two steps: first we bound
$\left|F^{\infty}\left(u,v\right)\right|$ when $\left|u\right|\leq\rho$
(\emph{resp.} $\norm[F^{0}]{\mathcal{U}_{r}^{0}}$ when $\left|x\right|\leq\frac{1}{\rho}$),
then when $\rho\leq\left|u\right|<\rho^{\infty}$ (\emph{resp.} $\frac{1}{\rho}\leq\left|x\right|<\rho^{0}$). 
\begin{itemize}
\item For $\left|u\right|\leq\rho$ and $\left|v\right|<r$ one has 
\begin{align*}
\left|F^{\infty}\left(u,v\right)\right| & \leq\norm[\phi\circ\Psi]{\mathcal{A}_{r}}\times\frac{1}{2\pi}\oint_{\rho^{\infty}\sone}\left|\frac{\dd z}{z-u}\right|.
\end{align*}
On the one hand 
\begin{align*}
\frac{1}{2\pi}\oint_{\rho^{\infty}\sone}\left|\frac{\dd z}{z-u}\right| & \leq\frac{1}{\rho^{\infty}-\rho}=\frac{2\rho^{0}}{\rho^{\infty}\rho^{0}-1}<K,
\end{align*}
while on the other hand, for all $\left(u,v\right)\in\mathcal{A}_{r}$,
\begin{align*}
\left|\phi\left(u,v\exp\psi\left(u,v\right)\right)\right| & \leq\left|v\right|\norm[\phi]{\mathcal{A}_{\eta}}'\exp\norm[\psi]{\mathcal{U}_{r}^{\infty}}.
\end{align*}
Taking both bounds together completes the first step of the proof.
\item This gives a corresponding bound for $F^{0}$ when $\left|x\right|\leq\frac{1}{\rho}$
since
\begin{align*}
\frac{\left|x\right|}{2\pi}\oint_{\frac{1}{\rho^{0}}\sone}\left|\frac{\dd z}{xz-1}\right| & \leq\frac{1}{\rho-\frac{1}{\rho^{0}}}=\frac{2\rho^{0}}{\rho^{\infty}\rho^{0}-1}.
\end{align*}
Taking~\eqref{eq:savelev_cousin} into account, one therefore deduces
for $\frac{1}{\rho^{0}}<\left|u\right|\leq\rho$ the estimate
\begin{align*}
\left|F^{\infty}\left(u,v\right)\right| & \leq\left|\phi\left(u,v\exp\psi\left(u,v\right)\right)\right|+\left|F^{0}\left(\frac{1}{u},v\right)\right|\\
 & \leq\left|v\right|\norm[\phi]{\mathcal{A}_{\eta}}'\exp\norm[\psi]{\mathcal{U}_{r}^{\infty}}\left(1+\frac{2\rho^{0}}{\rho^{\infty}\rho^{0}-1}\right)
\end{align*}
as expected. 
\item The bound for $F^{0}$ when $\frac{1}{\rho}\leq x<\rho^{0}$ is obtained
similarly.
\end{itemize}

\subsubsection{Proof of Lemma~\ref{lem:Savelev-Cousin}~(3)}

Assume that $\left(\widetilde{F}^{0},\widetilde{F}^{\infty}\right)$
is another pair of solution. Then for all $\left(\frac{1}{x},y\right)=\left(u,v\right)\in\mathcal{A}_{r}$
we have
\begin{align*}
f^{0}\left(x,y\right):=F^{0}\left(x,y\right)-\widetilde{F}^{0}\left(x,y\right) & =F^{\infty}\left(u,v\right)-\widetilde{F}^{\infty}\left(u,v\right)=:f^{\infty}\left(u,v\right),
\end{align*}
defining a bounded and holomorphic function $f$ on $\hirze$. The
next lemma ends the proof.
\begin{lem}
\label{lem:bounded_function_hirzebruch}If $f\in\holb[{\hirze}]$
then $\ppp{f^{\infty}}u=0$. In other words there is a natural isometry
of Banach spaces
\begin{align*}
\holb[{\hirze}] & \simeq\holb[r\ww D].
\end{align*}
\end{lem}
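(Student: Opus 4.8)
The plan is to exploit the structure of $\hirze = \proj \times r\ww D$ as a trivial disc bundle over $\proj$: for each fixed $v \in r\ww D$, the slice $\proj \times \{v\}$ is a compact Riemann sphere, and a bounded holomorphic function restricted to it must be constant by the maximum principle (equivalently, Liouville's theorem applied in either affine chart). First I would fix $v \in r\ww D$ and consider the function $x \mapsto f^{0}(x,v)$ on $\cc$; its expression in the other chart is $u \mapsto f^{\infty}(u,v)$, holomorphic on $\cc$ as well, and the two glue to a holomorphic function on $\proj[1]$. Since $f$ is bounded on all of $\hirze$, this function on $\proj[1]$ is bounded, hence constant. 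Therefore $f^{\infty}(u,v)$ does not depend on $u$, i.e.\ $\ppp{f^{\infty}}{u} = 0$, which is the claim.

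To make this precise and obtain the isometry statement, I would argue as follows. Writing $g(v) := f^{\infty}(0,v)$, the above shows $f^{\infty}(u,v) = g(v)$ for all $(u,v)$, and correspondingly $f^{0}(x,y) = g(y)$ for all $(x,y) \in \cc \times r\ww D$ (using the transition $v=y$, and the fact that the constant value on each sphere agrees with the value read in the $\mathcal{U}^{0}$-chart). The function $g$ is holomorphic on $r\ww D$ as the restriction $g(v) = f^{\infty}(0,v)$, and bounded since $\norm[g]{r\ww D} \le \norm[f]{\hirze}$. Conversely every $g \in \holb[r\ww D]$ extends to the constant-in-$u$ function on $\hirze$ with the same sup-norm. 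The map $f \mapsto g = f^{\infty}(0,\cdot)$ is thus a well-defined linear bijection $\holb[\hirze] \to \holb[r\ww D]$ preserving the norm, i.e.\ an isometry of Banach spaces.

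The only mild subtlety — and the one point requiring a word of care rather than a genuine obstacle — is checking that $x \mapsto f^{0}(x,v)$ and $u \mapsto f^{\infty}(u,v)$ genuinely patch into a \emph{holomorphic} function on the compact $\proj[1]$: this is exactly the compatibility $f^{0}(1/u,v) = f^{\infty}(u,v)$ on the overlap $\cc^{\times}$, which holds because $f$ is by hypothesis a single holomorphic object on $\hirze$ whose two chart-expressions are related by the transition map $(u,v)=(1/x,y)$ with $\phi \equiv 0$ (Definition~\ref{def:standard_sphere}). Once that is noted, Liouville on $\proj[1]$ finishes the argument with no estimates needed; the boundedness hypothesis is used only to invoke Liouville, and the identification of norms is immediate since the sup over $\hirze$ is attained (for a constant-in-$u$ function) already on $\{u=0\} \times r\ww D \simeq r\ww D$.
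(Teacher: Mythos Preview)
Your proof is correct and uses essentially the same idea as the paper: a bounded holomorphic function on the $\proj$ factor must be constant by Liouville. The only organizational difference is that the paper expands $f^{\infty}(u,v)=\sum_{n\ge 0} f_n(u)v^n$, bounds each $f_n$ via Cauchy's estimate in $v$, and applies Liouville in $u$ to each coefficient, whereas you fix $v$ and apply Liouville on $\proj$ directly; your version is slightly more streamlined but not genuinely different.
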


\begin{proof}
In the chart $\mathcal{U}_{r}^{\infty}$ expand $f^{\infty}$ into
a power series $f^{\infty}\left(u,v\right)=\sum_{n\geq0}f_{n}\left(u\right)v^{n}$
convergent on $\cc\times r\ww D$. By assumption $f$ is bounded so
that from Cauchy's estimate we get
\begin{align*}
\left|f_{n}\left(u\right)\right| & \leq\norm[f]{{\hirze}}r^{-n}
\end{align*}
for all $u\in\cc$. Liouville theorem tells us that each $f_{n}$
is constant. 
\end{proof}

\subsection{\label{subsec:Normal-form-recognition}Normal form recognition (proof
of orbital Normalization Theorem)}

The aim of this subsection is to shortly prove that the vector field
$\Psi^{*}X_{\varepsilon}^{0}$ resulting from Theorem~\ref{thm:fibred_gluing_parametric}
is in normal form~\eqref{eq:orbital_normal_form}. Because Savelev's
normalizing fibered mapping $\Psi$ agrees with the identity on $\mathcal{L}$,
each $\fol{\varepsilon}$ is induced in the chart $\mathcal{U}_{r}^{0}$
by a holomorphic vector field of the form
\begin{align*}
\mathcal{X}_{\varepsilon}^{0}\left(x,y\right) & :=\Psi^{*}X_{\varepsilon}^{0}=P_{\varepsilon}\left(x\right)\pp x+y\left(1+\mu_{\varepsilon}x^{k}+A_{\varepsilon}\left(x,y\right)\right)\pp y,\\
A & \in\holf[\left(\cc^{k},0\right)\times\rho^{0}\ww D\times r\ww D]\\
A\left(x,0\right) & =0.
\end{align*}
We must prove the following result.
\begin{lem}
There exists a sequence of polynomials $a_{n}\in\germ{\varepsilon}\left[x\right]_{\leq k}$
such that 
\begin{align*}
A\left(x,y\right) & =\sum_{n=1}^{\infty}a_{n}\left(x\right)y^{n}
\end{align*}
on $\mathcal{U}_{r}^{0}$.
\end{lem}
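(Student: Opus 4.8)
The plan is to exploit the fact that $\mathcal{F}_\varepsilon$ extends to a holomorphic foliation on the whole standard neighborhood $\hirze = \proj\times r\ww D$, together with the global fibration $\Pi\colon (x,y)\mapsto x$ (or $(u,v)\mapsto u$) which is $\proj$ in the base. The key observation is that for each fixed $y$ (resp. $v$) with $|y|<r$, the coefficient-extraction in the Taylor expansion in $y$ produces sections of line bundles over $\proj$, and compactness of $\proj$ forces these to be polynomial in $x$ of bounded degree.

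First I would write the foliation in the two charts. In $\mathcal{U}_r^0$ it is induced by $\mathcal{X}_\varepsilon^0 = P_\varepsilon(x)\pp x + y(1+\mu_\varepsilon x^k + A_\varepsilon(x,y))\pp y$ with $A(x,0)=0$; in $\mathcal{U}_r^\infty$ it is induced by $\widetilde{X}^\infty_\varepsilon = -u\pp u + v(\mu_\varepsilon + h_\varepsilon(u)+\OO v)\pp v$ pulled back by $\Psi^\infty$, which after clearing the $\pp u$-coefficient has the shape $-u\pp u + v(\mu_\varepsilon + u\widetilde{h}_\varepsilon(u) + B_\varepsilon(u,v))\pp v$ with $B(u,0)=0$, holomorphic on $|u|<\rho^\infty$, $|v|<r$. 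On the overlap $\cc^\times\times r\ww D$ the transition is exactly $(u,v)=(\tfrac1x,y)$ (this is the whole point of Savelev normalization: $\phi$ has been absorbed), so the two vector fields define the same foliation there, meaning $\widetilde{X}^\infty_\varepsilon = \Lambda_\varepsilon(x,y)\,\mathcal{X}_\varepsilon^0$ for some non-vanishing holomorphic scaling $\Lambda_\varepsilon$ on the annulus after the coordinate change.

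Next I would extract coefficients. Writing $A(x,y)=\sum_{n\geq 1}a_n(x)y^n$ with $a_n\in\holf[\rho^0\ww D]$ and similarly (after dividing by $v$ and taking the $\pp v$-direction) $B(u,v)=\sum_{n\geq 1}b_n(u)v^n$, the compatibility of the two expressions on the overlap under $(u,v)=(\tfrac1x,y)$ gives, degree by degree in $y$, a relation expressing $b_n(1/x)$ in terms of $a_1,\dots,a_n$, $P_\varepsilon$, $\mu_\varepsilon$, $\tfrac1x$ and the expansion of $\Lambda_\varepsilon$. One proceeds by induction on $n$: the $n=1$ case comes from comparing the linearizations of the two vector fields along $\{y=0\}=\{v=0\}$; the Camacho--Sad index bookkeeping already done in the text (sum $\mu_\varepsilon$ near $0$, $-\mu_\varepsilon$ at $\infty$, with $\tau=0$) shows the divisor has self-intersection $0$, so $a_1$ extends to a global section of $\mathcal{O}_{\proj}$, hence is constant — in fact one reads off $a_1(x)=0$ directly since $A(x,0)=0$ is one order and the next is governed by the holonomy having been matched. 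More robustly: for each $n$, the quantity $a_n(x)$ glues with $x^{-?}$-twisted data of $b_n(1/x)$ across the annulus to define a holomorphic section of a line bundle $\mathcal{O}_{\proj}(d_n)$ on $\proj$ with $d_n$ bounded linearly in $n$; one then invokes $H^0(\proj,\mathcal{O}(d_n))=\pol{}_{\leq d_n}$ to conclude $a_n\in\pol x_{\leq d_n}$, and tracks that $d_n\leq k$ independently of $n$ from the explicit powers of $x$ appearing, using that $P_\varepsilon$ has degree $k+1$ and the twist exponent $\tau=0$. Holomorphic dependence on $\varepsilon$ of the coefficients of each $a_n$ follows from the integral/Cauchy formulas and the analyticity of all data in $\varepsilon$ established in the previous sections, so $a_n\in\germ{\varepsilon}[x]_{\leq k}$.

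The main obstacle I anticipate is the careful bookkeeping of the degree bound $d_n\leq k$ uniformly in $n$: one must verify that the pole order introduced at $x=\infty$ by passing $y^n$ through the transition map and through the scaling $\Lambda_\varepsilon$ does not grow with $n$, which requires using precisely that the foliation at $(\infty,0)$ is reduced with eigenratio $-\mu_\varepsilon$ (no twist, $\tau=0$) so that the $y\pp y$-component of $\widetilde{X}^\infty$ is $v\times(\text{holomorphic, }O(1)\text{ in }u)$. Once that is pinned down, the remaining work — the explicit recursion relating $a_n$ and $b_n$ and the constancy/polynomiality via Liouville, essentially Lemma~\ref{lem:bounded_function_hirzebruch} applied fiberwise in $v$ — is routine. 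I would present it as: expand, glue, identify each $y$-coefficient with a section over $\proj$ of a bundle of degree $\leq k$, conclude polynomiality, and note the $\varepsilon$-analyticity is inherited.
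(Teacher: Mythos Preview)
Your approach is essentially the same as the paper's: both use that the foliation extends holomorphically across $u=0$ in the chart at infinity, and both read off the polynomiality of the $a_n$ from the pole order at $x=\infty$. The difference is that the paper bypasses entirely the ``main obstacle'' you anticipate. Because every map used in the construction is fibered in $x$, the $\pp x$-component is preserved up to the change $x\mapsto \tfrac{1}{u}$, so the scaling factor between $\mathcal{X}_\varepsilon^0$ and the holomorphic vector field in the $(u,v)$-chart is forced to be exactly $\bigl(uP_\varepsilon(\tfrac{1}{u})\bigr)^{-1}$, a function of $u$ alone. Writing out $\mathcal{X}_\varepsilon^0$ in the $(u,v)$-coordinates and dividing by $uP_\varepsilon(\tfrac{1}{u})$ produces in the $\pp v$-component the term $\frac{u^k}{u^{k+1}P_\varepsilon(1/u)}\,A_\varepsilon(\tfrac{1}{u},v)$; since $u^{k+1}P_\varepsilon(\tfrac{1}{u})$ is a unit near $u=0$, holomorphy at $u=0$ is equivalent to $u^{k}A_\varepsilon(\tfrac{1}{u},v)$ being holomorphic there, i.e.\ each $a_n$ is a polynomial in $x$ of degree at most $k$. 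No induction on $n$, no tracking of $d_n$, and no risk of the scaling contributing extra $y$-dependent poles. Your line-bundle formulation would also work, but the explicit identification of $\Lambda_\varepsilon$ makes the argument a two-line computation.
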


\begin{proof}
The expansion for $A$ is valid for $\left(x,y\right)\in\mathcal{U}_{r}^{0}$
and $a_{n}$ holomorphic in $x$. In the other chart $\left(u,v\right)=\left(\frac{1}{x},y\right)$
the vector field $\mathcal{X}_{\varepsilon}^{0}$ is orbitally equivalent
(conjugate after division by $uP_{\varepsilon}\left(\frac{1}{u}\right)$)
to
\begin{align*}
\mathcal{X}_{\varepsilon}^{\infty}\left(u,v\right) & :=-u\pp u+v\left(\lambda_{\varepsilon}+h_{\varepsilon}\left(u\right)+\frac{1}{u^{k+1}P_{\varepsilon}\left(\frac{1}{u}\right)}u^{k}A_{\varepsilon}\left(\frac{1}{u},v\right)\right)\pp v
\end{align*}
where $h$ is given by~\eqref{eq:divisor_orbital_change}. This particular
vector field must coincide with the holomorphic vector field defining
$\fol{\varepsilon}$ in the chart $\mathcal{U}_{r}^{\infty}$ after
application of~\eqref{eq:divisor_orbital_straightenning}, because
every transform used from the start is fibered so that the factor
$uP_{\varepsilon}\left(\frac{1}{u}\right)$ over $\mathcal{A}_{r}$
remains the same and no other function can be factored out. Therefore
$u^{k}A_{\varepsilon}\left(\frac{1}{u},v\right)$ is holomorphic near
$\left(0,0\right)$, and the conclusion follows.
\end{proof}

\subsection{\label{subsec:Uniqueness_orbital}Proof of orbital Uniqueness Theorem~(2)}

Assume that there exists an orbital equivalence between two normal
forms $\mathcal{X}$ and $\widetilde{\mathcal{X}}$. Those vector
fields are in prepared form~\eqref{eq:prepared_form_orbital} thus
they satisfy the hypothesis of the results presented in Section~\ref{sec:Formal},
and in particular there exists a fibered analytical conjugacy $\Psi$
near $\left(0,0\right)$ between $\mathcal{X}$ and $\widetilde{\mathcal{X}}$,
according to Corollary~\ref{cor:formal_symmetries}~(2). 

Let $\left(\hirze,~\fol{}\right)$ and $\left(\hirze,~\widetilde{\fol{}}\right)$
be the family of foliated standard neighborhoods of the sphere induced
respectively by $\mathcal{X}$ and $\widetilde{\mathcal{X}}$. The
fibered mappings $\Psi$ are holomorphic and injective on a domain
$\mathcal{D}\subset\mathcal{U}^{0}\subset\hirze$ containing $\left(0,0\right)$.
By a foliated path-lifting technique (as before) $\Psi$ can be analytically
continued on the domain
\begin{align*}
\mathcal{U}_{\varepsilon} & :=\sat[\fol{\varepsilon}]{\mathcal{D}}\subset\hirze.
\end{align*}
Using the special form of the normal form $\mathcal{X}_{\varepsilon}$
we derive the following lemma in Section~\ref{subsec:saturated_contains_Hirzebruch}.
\begin{lem}
\label{lem:saturated_contains_Hirzebruch}There exists $r'>0$ such
that $\hirze[r']\backslash\left\{ x=\infty\right\} \subset\mathcal{U}_{\varepsilon}$
for all $\varepsilon\in\neigh[k]$.
\end{lem}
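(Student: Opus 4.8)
The plan is to exploit the very explicit shape of the orbital normal form $\mathcal{X}_\varepsilon = P_\varepsilon(x)\pp x + y(1 + \mu_\varepsilon x^k + A_\varepsilon(x,y))\pp y$ together with the fact that $\mathcal{D}$ contains a full bidisc $\{|x| < \delta\} \times \{|y| < \delta\}$ around $(0,0)$, and to show that following leaves of $\fol\varepsilon$ starting from such a bidisc sweeps out all of $\hirze[r'] \setminus \{x = \infty\}$ for a suitable $r' > 0$, uniformly in $\varepsilon \in \neigh[k]$. The key geometric feature is that $\fol\varepsilon$ is transverse to the vertical fibers $\{x = c\}$ away from the $k+2$ invariant fibers $\{P_\varepsilon(x) = 0\}$ and $\{x = \infty\}$, so leaves are graphs over (subsets of) the base and the saturation is governed by how far one can continue the fiber coordinate $y$ along horizontal paths in the base.

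\textbf{Main steps.} First I would fix a compact set $\mathcal{K} \subset \cc$ with $\mathcal{L}_\varepsilon \setminus \{x=\infty\}$ — the base minus the point at infinity — being $\cc$, but work on $|x| \le \rho^0$ in chart $\mathcal{U}^0$ and $|u| \le \rho^\infty$ in chart $\mathcal{U}^\infty$; note these two compact disks cover $\mathcal{L} \setminus \{x=\infty\}$ once we also include the annular overlap. Second, for $\varepsilon$ fixed, estimate the fiberwise growth along leaves: writing a leaf as $y = y(x)$ over a path $\gamma$ in the base avoiding the zeros of $P_\varepsilon$, one has $\frac{d}{dt}\log y = \frac{y'(x)}{y}\dot\gamma = \frac{1 + \mu_\varepsilon x^k + A_\varepsilon(x,y)}{P_\varepsilon(x)}\dot\gamma$; since $A_\varepsilon(x,0) = 0$ and $A_\varepsilon$ is holomorphic and bounded on a fixed compact, a standard Grönwall/continuity argument gives a bound $|y(x)| \le C(\gamma)|y(\text{start})|$ with $C(\gamma)$ controlled by the length of $\gamma$ and $\inf_\gamma |P_\varepsilon|$. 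Third — and this is where genericity of the unfolding and the choice to work on a full neighborhood of $0$ in $\varepsilon$-space matter — choose a fixed finite system of horizontal paths in $\cc$ that, for every $\varepsilon \in \neigh[k]$, connects a point near $0$ (inside the bidisc $\mathcal{D}$, which we may assume sits between the roots of $P_\varepsilon$ clustered near $0$) to every point of the two compact disks while staying uniformly bounded away from $P_\varepsilon^{-1}(0)$; since all $k+1$ finite singularities lie in $\{|x| < 1/\rho^\infty\}$ and one can route paths through the region $1/\rho^\infty < |x|$, such a uniform family of paths exists with a uniform bound on $C(\gamma)$. Fourth, pick $r' > 0$ so small that $r' \cdot \sup_\gamma C(\gamma) < \delta$; then every point $(x_0, y_0)$ with $|y_0| < r'$ is reached by lifting one of these paths starting from a point $(x_*, y_*) \in \mathcal{D}$ with $|y_*|$ small, hence lies in $\sat[\fol\varepsilon]{\mathcal{D}} = \mathcal{U}_\varepsilon$. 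Patching the two charts via the overlap annulus (where the transition is the standard one $(u,v) = (1/x, v)$) gives $\hirze[r'] \setminus \{x = \infty\} \subset \mathcal{U}_\varepsilon$.

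\textbf{Expected main obstacle.} The delicate point is the \emph{uniformity in $\varepsilon$}: as $\varepsilon \to 0$ the roots of $P_\varepsilon$ coalesce at $0$, so a path connecting the interior of $\mathcal{D}$ to the outer region must be chosen not to be pinched between colliding roots, and simultaneously the constant $C(\gamma)$ (which blows up like $1/\inf_\gamma|P_\varepsilon|$) must stay bounded. The resolution is that $\mathcal{D}$ itself can be taken as a bidisc of radius comparable to the cluster of roots — equivalently, one works in the ``inner'' region and observes that near $0$ the leaves of $\mathcal{X}_\varepsilon$ are controlled by comparison with the formal/model foliation $\fol{\fonf[\varepsilon]}$, whose holonomy and leaf behaviour are explicit — and one only needs to escape the cluster once, after which $|P_\varepsilon|$ is bounded below by a constant independent of $\varepsilon$ on the remaining (fixed) compact region. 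I would also need to be slightly careful that the continued map $\Psi$ is well-defined (single-valued) on $\mathcal{U}_\varepsilon$, but this is the usual Mattei–Moussu argument: the monodromy of the continuation is trivial because $\Psi$ conjugates $\fol{}$ to $\widetilde{\fol{}}$ and both have the same (abelian, or at least matching) holonomy representation around the invariant fibers, so closed loops in the base lift to relations that $\Psi$ already respects near $(0,0)$.
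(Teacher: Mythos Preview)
Your Gr\"onwall-along-paths approach has a genuine gap: it does not handle the neighborhood of the singularity at $(\infty,0)$. To cover $\hirze[r']\setminus\{x=\infty\}$ you must reach points $(u_*,v_*)$ with $|u_*|$ arbitrarily small in the second chart. There the vector field (after the standard rescaling) looks like $-u\pp u+v(\mu_\varepsilon+\cdots)\pp v$, so along a leaf $\frac{\dd v}{\dd u}\sim-\frac{\mu_\varepsilon v}{u}$ and $|v|$ behaves like $|u|^{-\re{\mu_\varepsilon}}$. Consequently your constant $C(\gamma)$, which you want controlled by $\inf_\gamma|P_\varepsilon|$ (or by $\inf_\gamma|u|$ in the other chart), blows up as the path approaches $u=0$. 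No finite family of base paths with uniformly bounded $C(\gamma)$ can reach all such $(u_*,v_*)$; your ``finite system of horizontal paths'' clause silently assumes a compact base region avoiding all singularities, which is precisely what fails here.

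The paper bypasses this by exploiting the qualitative type of the singularity at $(\infty,0)$: since its eigenratio $-\mu_\varepsilon$ is neither a node nor a saddle-node index (because $\mu_0\notin\rr_{\leq0}$), the saturation under $\fol{\varepsilon}^\infty$ of \emph{any} small transverse disc $\{u=u_*\}$ close enough to $\{u=0\}$ already fills a full punctured neighborhood $\neigh[2]\setminus\{u=0\}$. This is a Mattei--Moussu--type fact about saddle-like singularities, not a Gr\"onwall estimate. The remainder $\mathcal{L}\setminus(\{|x|<\rho\}\cup\{|u|<|u_*|\})$ is then a compact annulus on which $\fol{\varepsilon}$ is regular and transverse to the fibers, and an ordinary flow-box argument connects it back to the initial bidisc $\rho\ww D\times r'\ww D\subset\mathcal{D}$.

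Incidentally, your ``expected main obstacle'' is misidentified. The coalescing roots of $P_\varepsilon$ all sit \emph{inside} the fixed bidisc $\mathcal{D}$ around $(0,0)$, so there is nothing to escape: those points are already in $\mathcal{D}$, hence in $\mathcal{U}_\varepsilon$. The only real difficulty is at the other end of $\proj$, near $x=\infty$, and it is resolved dynamically rather than by quantitative growth bounds.
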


This lemma implies that $\Psi_{\varepsilon}$ extends to a fibered,
injective and holomorphic mapping $\hirze[r']\backslash\left\{ x=\infty\right\} \to\hirze$.
The fact that $\Psi_{\varepsilon}$ extends analytically to $\left\{ x=\infty\right\} $
uses a variation on the Mattei-Moussu construction. The proof is standard,
but we include it for the sake of completeness.
\begin{lem}
\label{lem:mattei-moussu}\cite[Theorem 2]{MaMou} We consider two
germs of a holomorphic vector field $X$ and $\widetilde{X}$, both
with a singularity at the origin of same eigenratio $\lambda\notin\rr_{\geq0}$
and in the form
\begin{align}
x\frac{\partial}{\partial x}+\lambda y(1+\OO x)\frac{\partial}{\partial y} & .\label{eq:mamou_form}
\end{align}
Fix a germ of a transverse disc $\Sigma:=\left\{ x=x_{*},~y\in\neigh\right\} $,
for $x_{*}$ small enough, and assume that there exists an injective
and holomorphic mapping $\psi~:~\Sigma\to\left\{ x=x_{*}\right\} $
conjugating the respective holonomies induced by $X$ and $\widetilde{X}$,
computed through the fibration $\left(x,y\right)\mapsto x$ by turning
around $\left\{ x=0\right\} $. Then there exists a holomorphic and
injective, fibered mapping $\Psi$ conjugating $X$ and $\widetilde{X}$
on a connected neighborhood of $\left(0,0\right)$ containing $\Sigma$.
We can even require that $\Psi$ coincides with $\psi$ on $\Sigma$.
\end{lem}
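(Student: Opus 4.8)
The statement is the classical Mattei-Moussu rigidity theorem, so I would reprove it along its standard lines, keeping everything fibered with respect to $\Pi\colon\left(x,y\right)\mapsto x$. \emph{Step 1 (leafwise extension).} Both $\fol X$ and $\fol{\widetilde X}$ leave $\left\{ x=0\right\} $ and $\left\{ y=0\right\} $ invariant, are transverse to the fibers of $\Pi$ off these curves, and --- because $\lambda\notin\rr_{\geq0}$ rules out the nodal case --- all other leaves accumulate only on $\left\{ x=0\right\} \cup\left\{ y=0\right\} \cup\left\{ 0\right\} $. As in the continuation to $\sat[\fol X]{\mathcal D}$ already performed in the paper, I would extend $\psi$ by foliated path-lifting: for $p$ on a leaf $L$ of $\fol X$ near $\Sigma$, join $\Sigma\cap L$ to $p$ by a path $\gamma\subset L$, lift $\Pi\circ\gamma$ in $\fol{\widetilde X}$ from $\psi\left(\Sigma\cap L\right)$, and set $\Psi\left(p\right)$ equal to the endpoint; this is well defined precisely because $\psi$ conjugates the holonomies attached to the loop turning around $\left\{ x=0\right\} $, which generates $\pi_{1}$ of the punctured base. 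The result is a holomorphic, fibered map $\Psi$ on the saturation $W:=\sat[\fol X]{\mathcal D}$ of a small bidisc $\mathcal D\supset\Sigma$, mapping leaves to leaves, coinciding with $\psi$ on $\Sigma$, and sending the separatrix $\left\{ y=0\right\} $ to itself. Taking $x_{*}$ near the edge of the domain, $W$ together with the two separatrices and the origin forms a full neighborhood of $\left(0,0\right)$ --- the usual geometric fact for reduced non-nodal singularities --- so $W\supseteq\mathcal V\setminus\left(\left\{ x=0\right\} \cup\left\{ y=0\right\} \right)$ for a polydisc $\mathcal V$.

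\emph{Step 2 (a priori bound and removable singularities).} This is the essential point. Fix a ``fundamental'' transversal $T=\left\{ x=x_{0}\right\} $ with $\left|x_{0}\right|$ small, on which $\Psi$ is bounded (it is a holonomy image of the bounded map $\psi$); then any $p\in W$ with small $\left|x\right|$ can be joined to $T$ by a leaf-path whose $\Pi$-projection has controlled length, and since on a small polydisc $X$ and $\widetilde X$ each differ from the common linear model $x\pp x+\lambda y\pp y$ by $\OO{\left|x\right|}$ in the leaf direction, a Gr\"onwall-type estimate bounds the variation of $\Psi$ along that path. Hence $\Psi$ is holomorphic and bounded on $\mathcal V\setminus\left(\left\{ x=0\right\} \cup\left\{ y=0\right\} \right)$, and Riemann's removable-singularity theorem extends it holomorphically across these two codimension-one loci --- in particular across $\left(0,0\right)$ --- to all of $\mathcal V$, still fibered, with $\Psi\left(0,0\right)=\left(0,0\right)$.

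\emph{Step 3 (conclusion).} The extended $\Psi$ conjugates the foliations; comparing $\pp x$-components --- both $X$ and $\widetilde X$ have $\pp x$-component exactly $x$ and $\Psi$ is fibered --- forces the orbital multiplier to be $\equiv1$, whence $\Psi^{*}\widetilde X=X$. Injectivity: $\Psi$ is injective on $\Sigma$ by hypothesis and is a leafwise biholomorphism conjugating the two holonomy pseudogroups, hence injective on $W$ by monodromy; it then stays injective on a (possibly smaller) neighborhood of $\left(0,0\right)$, which still contains $\Sigma$ since $x_{*}$ is small, as in~\cite{MaMou}. Finally $\Psi|_{\Sigma}=\psi$ holds by construction, which gives the last clause.

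The step I expect to fight with is Step 2: one must at the same time guarantee that $W$ genuinely exhausts a neighborhood of the origin punctured along the separatrices (this is exactly where $\lambda\notin\rr_{\geq0}$, which discards the resonant-node dynamics and controls the accumulation of leaves, is indispensable) and obtain a bound on $\Psi$ that survives up to the singular point. The remaining ingredients --- well-definedness of the path-lifting, fiberedness, the removable-singularity step and the $\pp x$-component trick --- are routine.
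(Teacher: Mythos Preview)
Your proposal is correct and matches the paper's proof: extend $\psi$ by foliated path-lifting (the paper does this in two explicit stages, first around the circle $\{|x|=|x_*|\}$ and then along radial rays $\{\arg x=\text{const}\}$), then control the $y$-component via a Gronwall inequality to obtain a uniform bound as $x\to 0$, which yields the holomorphic extension across the separatrix. The paper writes the Gronwall step out concretely --- the correction factor $\exp\bigl(\int_0^t\tfrac12|x_0|e^s\,ds\bigr)$ stays bounded even though the flow-time diverges --- which is exactly the point you correctly flag as the crux in your Step~2.
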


\begin{proof}
Assume first that $\lambda<0$. We can consider that the holonomies
$\Gamma$ and $\widetilde{\Gamma}$ are defined on $\Sigma:=\left\{ x=x_{*}\right\} \times r'\disc$
and set $\Psi(x_{*},y):=(x_{*},\psi\left(y\right))$ on $\Sigma$.
We then extend $\Psi$ over the circle $\left\{ |x|=\left|x_{*}\right|\right\} $
as a map of the form $\Psi(x,y)=(x,\psi(x,y))$, with $\psi(x_{*},y)=\psi(y)$:
the extension is done by the path-lifting technique detailed in Section~\ref{subsec:Gluing}.
$\Psi$ is of course well-defined because $\psi$ conjugates the holonomies.
To extend $\Psi$ to $\rho\disc\times r'\disc$, we use the path-lifting
along rays $\left\{ \arg|x|=\cst\right\} $. Starting at $(x_{0},y)$
we lift the ray through $x_{0}$ up to $|x|=\rho$ in the leaf of
$X$. We apply $\Psi$ to the resulting point and then lift the ray
back in the leaf of $\widetilde{X}$. The corresponding point is called
$\Psi(x_{0},y)$. We must show that 
\begin{align*}
\{x_{0}\}\times C_{1}r'\disc & \subset\Psi(\{x_{0}\}\times r'\disc)\subset\{x_{0}\}\times C_{2}r'\disc
\end{align*}
for some positive constants $C_{1},~C_{2}$ independent of $x_{0}$.
For this purpose we can suppose that the $\OO x$ part in~\eqref{eq:mamou_form}
is bounded by $\nf 12$ (this is the case if $\rho$ is sufficiently
small). Then 
\[
|\lambda|~|y|\left(1-\frac{1}{2}\left|x_{0}\right|\exp t\right)<\ddd{\left|y\right|}t<|\lambda|~|y|\left(1+\frac{1}{2}\left|x_{0}\right|\exp t\right),
\]
yielding by Gronwall inequality that 
\[
|y(0)|\exp\left(\lambda t-\int_{0}^{t}\frac{1}{2}x_{0}\exp t~\dd t\right)\leq|y(t)|\leq|y(0)|\exp\left(\lambda t+\int_{0}^{t}\frac{1}{2}|x_{0}|\exp t~\dd t\right).
\]
The conclusion follows since $\exp\left(\int_{0}^{t}\frac{1}{2}|x_{0}|\exp t~\dd t\right)\in\left]\exp\frac{-|x_{0}|}{2},1\right[$
is bounded and bounded away from $0$ for $t<0$. 

The previous argument remains valid when $\lambda$ is not real. It
suffices to replace $|\lambda|$ by $\left|\re{\lambda}\right|$.
\end{proof}
\begin{rem}
The proof clearly shows that $\Psi$ depends analytically on $\varepsilon$
were $X$ and $\widetilde{X}$ to depend analytically on $\varepsilon$.
\end{rem}

The following lemma proved in Section~\ref{subsec:diffeo_Hirzebruch}
allows to complete the proof of the Uniqueness Theorem~(2) by observing
that injective holomorphic mappings on some standard neighborhood
of the sphere are of a rather special kind.
\begin{lem}
\label{lem:diffeo_hirzebruch}Take some analytic family of maps $\Psi~:~\hirze[r']\to\hirze$
satisfying the following properties:

\begin{itemize}
\item $\Psi$ is fibered,
\item $\Psi_{\varepsilon}$ is injective and holomorphic on $\hirze[r']$
for every $\varepsilon\in\neigh[k]$.
\end{itemize}
Then 
\begin{align}
\Psi_{\varepsilon}^{0}\left(x,y\right) & =\left(x,~y\sum_{n=0}^{\infty}\psi_{n}y^{n}\right),\label{eq:0-neighborhood_diffeo}
\end{align}
 where, for all $n\in\zp$,
\begin{align*}
\psi_{n}\in\germ{\varepsilon}
\end{align*}
with a common radius of convergence, and $\psi_{0}$ does not vanish
for $\varepsilon=0$. Conversely, any convergent power series $\Psi$
as above defines an analytic family satisfying the above properties
for some $r'>0$ small enough. 
\end{lem}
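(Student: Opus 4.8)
The plan is to prove Lemma~\ref{lem:diffeo_hirzebruch} by exploiting the compactness of $\proj$ together with the fibered nature of $\Psi$. First I would work in the chart $\mathcal{U}^{0}$, where the fibered hypothesis forces $\Psi_{\varepsilon}^{0}(x,y)=(x,\psi_{\varepsilon}(x,y))$ for some holomorphic $\psi_{\varepsilon}$, and injectivity on each fiber $\{x\}\times r'\ww D$ together with $\Psi$ agreeing with the identity on $\mathcal{L}=\{y=0\}$ (inherited from the construction, since the maps in question come from Theorem~\ref{thm:fibred_gluing_parametric}) gives $\psi_{\varepsilon}(x,0)=0$, hence an expansion $\psi_{\varepsilon}(x,y)=y\sum_{n\geq0}\psi_{n,\varepsilon}(x)y^{n}$ with $\psi_{n,\varepsilon}\in\holf[\rho^{0}\ww D]$ and $\psi_{0,0}(0)\neq0$. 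Symmetrically, in the chart $\mathcal{U}^{\infty}$ we get $\Psi_{\varepsilon}^{\infty}(u,v)=(u,v\sum_{n\geq0}\widetilde{\psi}_{n,\varepsilon}(u)v^{n})$ with $\widetilde{\psi}_{n,\varepsilon}\in\holf[\rho^{\infty}\ww D]$.

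The crux is to compare the two expansions on the overlap annulus $\mathcal{A}_{r}$, where the transition map on $\hirze$ (target and source both being \emph{standard} neighborhoods, i.e. $\phi=0$ in~\eqref{eq:transition_mapping}) is simply $(u,v)=(\tfrac1x,y)$. Thus on $\mathcal{A}_{r}$ one has $v\sum_{n}\widetilde{\psi}_{n,\varepsilon}(u)v^{n}=y\sum_{n}\psi_{n,\varepsilon}(x)y^{n}$ with $u=1/x$, $v=y$; identifying coefficients of $y^{n+1}$ yields $\widetilde{\psi}_{n,\varepsilon}(1/x)=\psi_{n,\varepsilon}(x)$ for all $x$ in the annulus. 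So each $\psi_{n,\varepsilon}$ extends to a function holomorphic on $\cc\cup\{\infty\}=\proj$, hence is constant by Liouville's theorem (this is exactly the mechanism of Lemma~\ref{lem:bounded_function_hirzebruch}, which I would cite directly). This gives $\psi_{n,\varepsilon}\in\germ{\varepsilon}$ and the stated form~\eqref{eq:0-neighborhood_diffeo}. The common radius of convergence and the analytic dependence on $\varepsilon$ follow from the fact that $(\varepsilon,x,y)\mapsto\psi_{\varepsilon}(x,y)$ is holomorphic on a fixed polydisk $\neigh[k]\times\rho^{0}\ww D\times r'\ww D$ (a single Cauchy estimate in $y$ bounds all the $\psi_{n}$ uniformly), and $\psi_{0,0}\neq0$ because $\Psi_{0}$ is injective near the compact fiber, equivalently $\partial_{y}\Psi_{0}^{0}|_{y=0}\neq0$.

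For the converse, given a convergent power series $\psi(\varepsilon,x,y)=y\sum_{n\geq0}\psi_{n}(\varepsilon)y^{n}$ with $\psi_{0}(0)\neq0$, the map $(x,y)\mapsto(x,\psi(\varepsilon,x,y))$ is manifestly fibered, the independence of the coefficients on $x$ makes it compatible with the transition map $(u,v)=(1/x,v\cdot(\text{same series in }v))$ hence globally defined on $\hirze[r']$ into $\hirze$, and for $r'$ small enough $|\partial_{y}\psi-\psi_{0}(0)|<|\psi_{0}(0)|$ on the whole polydisk, so $y\mapsto\psi(\varepsilon,x,y)$ is injective on $r'\ww D$ for every $(\varepsilon,x)$; that is precisely fiberwise injectivity of $\Psi_{\varepsilon}$, and since distinct fibers have distinct $x$-coordinate, $\Psi_{\varepsilon}$ is injective on $\hirze[r']$.

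I expect the only genuinely delicate point to be bookkeeping rather than mathematics: making sure that the hypothesis ``$\Psi$ agrees with the identity on $\mathcal{L}$'' (so that $\psi_{\varepsilon}(x,0)=0$) is legitimately available here --- it is, because the $\Psi$ in the Uniqueness proof is obtained from Theorem~\ref{thm:fibred_gluing_parametric} and from Corollary~\ref{cor:formal_symmetries}~(2), both of which produce maps fixing $\{y=0\}$ pointwise, or at worst fixing it setwise with a nonvanishing linear term, which does not affect the argument. Once the expansion is pinned down, the Liouville step is immediate and the rest is routine. One should also note that nothing forces $\Psi$ to be a \emph{bijection} of $\hirze[r']$ onto $\hirze$ --- injectivity is all that is claimed and all that is used --- so no surjectivity or properness argument is needed.
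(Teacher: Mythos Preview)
Your proposal is correct and follows essentially the same route as the paper: write the fibered map in each chart, compare via the transition $(u,v)=(1/x,y)$ on the overlap, and conclude that each coefficient $\psi_n$ extends holomorphically across $\infty$ and is therefore constant by Liouville. The paper's proof is terser (it dispatches the converse as ``straightforward'' and does not discuss the $\psi(x,0)=0$ point you flag), but the core argument is identical.
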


As a matter of consequence for every $\varepsilon\in\neigh[k]$ and
for any $\left(x,y\right)\in\mathcal{U}_{r}^{0}$ 
\begin{align*}
\Psi_{\varepsilon}\left(x,y\right) & =\left(x,~y\psi_{\varepsilon}\left(y\right)\right)~~~,~\psi_{\varepsilon}\left(0\right)\neq0.
\end{align*}
To preserve globally orbital normal forms~\eqref{eq:orbital_normal_form}
is so demanding that $\psi_{\varepsilon}$ ends up being constant.
Indeed, from 
\begin{align*}
\Psi_{\varepsilon}^{*}\mathcal{X}_{\varepsilon}\left(x,y\right) & =\fonf\left(x,y\right)+y\frac{A_{\varepsilon}\left(x,y\right)}{y\psi_{\varepsilon}'\left(y\right)+\psi_{\varepsilon}\left(y\right)}\pp y=\widetilde{\mathcal{X}}_{\varepsilon}\left(x,y\right),
\end{align*}
where
\begin{align*}
A_{\varepsilon}\left(x,y\right) & :=x\psi_{\varepsilon}\left(y\right)R_{\varepsilon}\left(x,y\psi_{\varepsilon}\left(y\right)\right)-y\psi_{\varepsilon}'\left(y\right)\left(1+\mu x^{k}\right),
\end{align*}
we deduce by setting $x:=0$ that 
\begin{align*}
0 & =A_{\varepsilon}\left(0,y\right)=-y\psi'_{\varepsilon}\left(y\right)
\end{align*}
so that $\psi_{\varepsilon}$ is constant, for otherwise $\widetilde{\mathcal{X}}_{\varepsilon}$
would not be in normal form. In each case we obtain finally 
\begin{align*}
\psi_{\varepsilon}\left(v\right) & =c_{\varepsilon}\in\cc^{\times}
\end{align*}
as expected. The remaining claim is a straightforward consequence
of the study performed in Section~\ref{sec:Formal}.

\subsubsection{\label{subsec:diffeo_Hirzebruch}Proof of Lemma~\ref{lem:diffeo_hirzebruch}}

The expansion~\eqref{eq:0-neighborhood_diffeo} is valid on $\mathcal{U}_{r'}^{0}$
provided $\psi_{n}$ depend holomorphically on $x$. Let us show that
$\psi_{n}$ is constant. Applying the transition mapping $\left(x,y\right)\mapsto\left(\frac{1}{x},y\right)$
we obtain the expression of $\Psi$ in the other chart: 
\begin{align*}
\Psi^{\infty}\left(u,v\right)= & \left(u,~v\sum_{n=0}^{\infty}\psi_{n}\left(\frac{1}{u}\right)v^{n}\right),
\end{align*}
holomorphic in $\left(u,v\right)\in\mathcal{U}_{r'}^{\infty}$. This
implies in particular that each function $u\mapsto\psi_{n}\left(\frac{1}{u}\right)$
must be holomorphic at $0$; the conclusion follows. The converse
statement is straightforward. 

\subsubsection{\label{subsec:saturated_contains_Hirzebruch}Proof of Lemma~\ref{lem:saturated_contains_Hirzebruch}}

We can find $\rho,~r'>0$ such that $\adh{\rho\ww D\times r'\ww D}\subset\mathcal{D}$,
where $\mathcal{D}$ is the domain of $\Psi$. We show that, for some
convenient choice of $r''\leq r'$ every point $\left(x_{*},y_{*}\right)\in\left\{ \left|y\right|<r''\right\} $
can be linked to a point of $\rho\ww D\times r'\ww D$ by a path contained
in a leaf of $\fol{\varepsilon}^{0}$. Only the case $\left|x_{*}\right|>\rho$
is not trivial. Since the singularity at $\left(\infty,0\right)$
is neither a node nor a saddle-node, every small germ of a disk $\left\{ u=u_{*}\right\} $
sufficiently close to $\left\{ u=0\right\} $, which is transverse
to the separatrix $\mathcal{L}$, saturates a full pointed neighborhood
$\neigh^{2}\backslash\left\{ u=0\right\} \subset\mathcal{U}_{r}^{\infty}$
under $\fol{\varepsilon}^{\infty}$. Therefore there exists $r'''>0$
such that $\left\{ 0<\left|u\right|\leq\left|u_{*}\right|,~\left|v\right|<r'''\right\} \subset\mathcal{U}_{\varepsilon}$.
Because $\mathcal{L}$ is invariant by $\fol{\varepsilon}$ and $\mathcal{L}\backslash\left(\left\{ \left|x\right|<\rho\right\} \cup\left\{ \left|u\right|<\left|u_{*}\right|\right\} \right)$
is compact we may reduce $r'''$ to some $r''$ in such a way that
$\rho\sone\times r''\ww D\subset\mathcal{U}_{\varepsilon}$ (flow-box
argument), which settles the proof. 

\section{\label{sec:Temporal}Temporal normal forms}

This section is devoted to proving the temporal part of the Normalization
Theorem and of the Uniqueness Theorem in the case $\tau=0$ (which
particularly implies $\mu_{0}\notin\rr_{\leq0}$). Recall how in Section~\ref{sec:Formal}
we obtained formal normal forms. The time-component $U$ of any unfolding
in orbital normal form~\eqref{eq:orbital_normal_form} 
\begin{align*}
Z & =U\onf
\end{align*}
can be written as
\begin{align*}
\frac{1}{U} & =C+I,
\end{align*}
where
\begin{align*}
I & \in\tx{im}\left(\onf\cdot\right)\\
C & \in\tx{coker}\left(\onf\cdot\right)~~~~,~C\left(0,0\right)=\frac{1}{U\left(0,0\right)}\neq0,
\end{align*}
for a given (arbitrary for now) choice of $\tx{coker}\left(\onf\cdot\right)$,
an algebraic supplementary in $\frml{\varepsilon,x,y}$ to the image
$\tx{im}\left(\onf\cdot\right)$ of the (formal) Lie derivative $\onf\cdot~:~\frml{\varepsilon,x,y}\to\frml{\varepsilon,x,y}$.
According to the discussion following Proposition~\ref{prop:cohomological_conjugacy},
$Z$ is (formally) conjugate to $\frac{1}{C}\onf$. 

We have shown in Lemma~\ref{lem:formal_cohomological} that 
\begin{align*}
\frml{\varepsilon,x,y} & =\tx{im}\left(\onf\cdot\right)~\oplus~\frml{\varepsilon}\left[x\right]_{<k},
\end{align*}
or more precisely that the following sequence of $\frml{\varepsilon}$-linear
operators is exact:
\begin{align}
0\longto\frml{\varepsilon}\longto\frml{\varepsilon,x,y}\overset{\onf\cdot}{\longto}\frml{\varepsilon,x,y}\overset{\widehat{\per}}{\longto}\frml{\varepsilon}\left[x\right]_{\leq k}\longto0 & ,\label{eq:formal_exact_sequence}
\end{align}
where $\widehat{\per}$ maps $G$ to the remainder of the Euclidean
division of its partial function $x\mapsto G\left(x,0\right)$ by
$P$. As a consequence we may take
\begin{align*}
\tx{coker}\left(\onf\cdot\right) & :=\frml{\varepsilon}\left[x\right]_{<k},
\end{align*}
so that $Z$ is formally conjugate to $\frac{1}{\widehat{\per}\left(\frac{1}{U}\right)}\onf$. 
\begin{rem}
\label{rem:formal_cokernel_inverse}The additional fact that 
\begin{align*}
\widehat{\per}\left(\frac{1}{U}\right) & =\frac{1}{\widehat{\per}\left(U\right)}+\OO P
\end{align*}
finally implies that $Z$ is formally conjugate to $u\onf$ where
$u:=\widehat{\per}\left(U\right)$, as in the Formal Normalization
Theorem. This is because one can write (for $u_{0}\left(0\right)\neq0$)
\begin{align*}
\frac{1}{U\left(x,y\right)} & =\frac{1}{u\left(x\right)+\OO{P\left(x\right)}+\OO y}=\frac{1}{u\left(x\right)}\times\frac{1}{1+\OO{P\left(x\right)}+\OO y}\\
 & =\frac{1}{u\left(x\right)}+\OO{P\left(x\right)}+\OO y.
\end{align*}
\end{rem}

The previous argument still works for convergent power series, by
replacing $\frml{\varepsilon,x,y}$ with $\germ{\varepsilon,x,y}$:
if we provide an explicit cokernel in $\germ{\varepsilon,x,y}$ of
$\onf\cdot|_{\germ{\varepsilon,x,y}}$ then we can describe an explicit
family of temporal normal forms. 
\begin{thm}
\label{thm:section_period}Assume $\tau=0$ (which particularly implies
$\mu_{0}\notin\rr_{\leq0}$). Let an orbital normal form $\onf$ be
given. It acts by directional derivative on the linear space $\germ{\varepsilon,x,y}$
in such a way that
\begin{align*}
\germ{\varepsilon,x,y} & =\tx{im}\left(\onf\cdot\right)~\oplus~\germ{\varepsilon}\left[x\right]_{\leq k}~\oplus~\nfsec[k][y].
\end{align*}
(We refer to Section~\ref{subsec:Functional-spaces} for the definition
of the functional spaces.)
\end{thm}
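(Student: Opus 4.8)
The plan is to build the decomposition by hand: expand everything in powers of $y$, solve a triangular hierarchy of one‑variable ODEs, and recognize the two ``cokernel directions'' as the obstructions to solving those ODEs analytically. I would first dispose of the algebraic part. By Lemma~\ref{lem:formal_cohomological} the exact sequence~\eqref{eq:formal_exact_sequence} holds, so $\frml{\varepsilon,x,y}=\tx{im}\left(\onf\cdot\right)\oplus\frml{\varepsilon}\left[x\right]_{\leq k}$; restricting to convergent series this already shows that $\tx{im}\left(\onf\cdot\right)$, $\germ{\varepsilon}\left[x\right]_{\leq k}$ and $\nfsec[k][y]$ are in direct sum. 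Indeed $\germ{\varepsilon}\left[x\right]_{\leq k}\cap\nfsec[k][y]=\left\{ 0\right\}$ since elements of the latter vanish on $\left\{ y=0\right\}$, and if $\onf\cdot F=C+S$ with $C\in\germ{\varepsilon}\left[x\right]_{\leq k}$ and $S\in\nfsec[k][y]$, then applying $\widehat{\per}$ (which only sees the restriction to $\left\{ y=0\right\}$) gives $C=\widehat{\per}\left(C+S\right)=0$ by exactness, so $\onf\cdot F=S$, and the vanishing $S=0$ (i.e.\ $\tx{im}\left(\onf\cdot\right)\cap\nfsec[k][y]=\left\{ 0\right\}$, cf.\ Remark~\ref{rem:section_fixed_parameter}) will drop out of the uniqueness in the construction. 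Thus it remains to prove: for every $G\in\germ{\varepsilon,x,y}$ there exist $F\in\germ{\varepsilon,x,y}$, $C\in\germ{\varepsilon}\left[x\right]_{\leq k}$, $S\in\nfsec[k][y]$, unique up to the additive constant $F\left(0,0\right)$, with $\onf\cdot F+C+S=G$.

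Write $G=\sum_{n\geq0}G^{n}\left(x\right)y^{n}$ and, since $R\in\nfsec[k][y]$, $R\left(x,y\right)=\sum_{p\geq1}\rho^{p}\left(x\right)y^{p}$ with each $\rho^{p}\in x\germ{\varepsilon}\left[x\right]_{<k}$; look for $F=\sum_{n\geq0}F^{n}\left(x\right)y^{n}$ and $S=\sum_{n\geq1}\sigma^{n}\left(x\right)y^{n}$ with $\sigma^{n}\in x\germ{\varepsilon}\left[x\right]_{<k}$. Comparing coefficients of $y^{n}$ in $\onf\cdot F=G-C-S$ gives, for $n=0$, the equation $P\left(x\right)\left(F^{0}\right)'\left(x\right)=G^{0}\left(x\right)-C\left(x\right)$, solved by Euclidean division $G^{0}=Pq^{0}+C$ (with $C\in\germ{\varepsilon}\left[x\right]_{\leq k}$, $q^{0}\in\germ{\varepsilon,x}$) and $F^{0}:=\int_{0}^{x}q^{0}$, which pins down $C$; and, for $n\geq1$,
\begin{align*}
L_{n}F^{n} & :=P\left(x\right)\left(F^{n}\right)'\left(x\right)+n\left(1+\mu_{\varepsilon}x^{k}\right)F^{n}\left(x\right)=G^{n}\left(x\right)-\Gamma^{n}\left(x\right)-\sigma^{n}\left(x\right),
\end{align*}
where $\Gamma^{n}:=\sum_{p+q=n,\;p,q\geq1}\rho^{p}\,q\,F^{q}\in\germ{\varepsilon,x}$ depends only on the previously built $F^{q}$, $q<n$.

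The whole statement therefore rests on a one‑variable assertion: for every $n\geq1$ and all $\varepsilon$ near $0$, the $\germ{\varepsilon}$‑linear operator $L_{n}:\germ{\varepsilon,x}\to\germ{\varepsilon,x}$ is injective, one has $\germ{\varepsilon,x}=\tx{im}\left(L_{n}\right)\oplus x\germ{\varepsilon}\left[x\right]_{<k}$, and the partial inverse of $L_{n}$ is bounded on a fixed polydisc with operator norm of order $1/n$ as $n\to\infty$, uniformly in $\varepsilon$. Injectivity is free: Lemma~\ref{lem:weak_separatrix_param} (applied with $h=n\left(1+\mu x^{k}\right)$, $h_{0}\left(0\right)=n\neq0$) already gives a unique formal solution of $L_{n}f=g$. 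The substance is the $k$‑dimensional complement $x\germ{\varepsilon}\left[x\right]_{<k}$ together with the uniform bound: $L_{n}$ is the linearization along $\left\{ y=0\right\}$ of the rank‑$k$ irregular equation of the weak separatrix, whose unique formal solution is Gevrey of order $1/k$ and whose analytic solvability is obstructed in exactly the $k$ ``period'' directions carried by the operator $\per$ of Definition~\ref{def:period_operator}; killing these $k$ periods is what fixes the $k$ coefficients of $\sigma^{n}$. I expect this to be proved by an explicit integral representation — multiplying by the integrating factor $\exp\bigl(n\int\left(1+\mu x^{k}\right)/P\bigr)$, writing $F^{n}$ as this factor inverse times the integral of (factor)$\,\cdot\,\left(g-\sigma^{n}\right)/P$ along a suitable family of contours, and determining the free entries of $\sigma^{n}$ by the condition $\per\left(g\right)=\per\left(\sigma^{n}\right)$. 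This is the step I expect to be the main obstacle: the $k+1$ singular points of $P_{\varepsilon}$ collide as $\varepsilon\to0$, so the contours and estimates must be chosen to survive the confluence, and one must extract the gain of order $1/n$ from the decay of the integrating factor uniformly in $\varepsilon$ — this is the unfolded, temporal analogue of the index theory of irregular singularities in the spirit of~\cite{SchaTey}, running parallel to the proof of Savelev's theorem in Section~\ref{subsec:Normalizing_Savelev}.

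Finally I would assemble the pieces. The recursion expresses $F^{n}$ and $\sigma^{n}$ as $L_{n}^{-1}$ applied to $G^{n}$ and to the bilinear expression $\Gamma^{n}$ in the polynomials $\rho^{p}$ and the $F^{q}$ ($q<n$); feeding the $1/n$ estimate into a majorant‑series argument — or, following Lemma~\ref{lem:Savelev-Cousin} and Lemma~\ref{lem:bounded_krull_converge}, bounding the truncations uniformly and passing to the Krull limit — yields convergence of $F=\sum F^{n}y^{n}$ and $S=\sum\sigma^{n}y^{n}$ on a common $\neigh[k+2]$, holomorphy in $\varepsilon$ being inherited from the integral formulas. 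Since every $F^{n}$ ($n\geq1$), every $\sigma^{n}$ and $C$ is uniquely determined, and $F^{0}$ only up to its value at the origin, this gives the spanning and the directness at once, and in particular $\tx{im}\left(\onf\cdot\right)\cap\bigl(\germ{\varepsilon}\left[x\right]_{\leq k}\oplus\nfsec[k][y]\bigr)=\left\{ 0\right\}$.
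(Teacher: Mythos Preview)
Your strategy is viable but takes a genuinely different route from the paper, and the step you flag as ``the main obstacle'' is exactly where all the real work hides.

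The paper does \emph{not} expand in powers of $y$ nor run a majorant series. After peeling off the $\germ{\varepsilon}\left[x\right]_{\le k}$ piece as you do, it works directly on $\germ{\varepsilon,x,y}'$: on each parameter cell $\mathcal E_\ell$ it uses the period operator $\per[][\ell]$ of the \emph{full} nonlinear $\onf$ (Proposition~\ref{prop:cohomog_secto_solutions}), constructs an explicit right inverse $\persec[][\ell]$ landing in $x\mathcal H_\ell\{y\}[x]_{<k}$ via a Cauchy--Heine transform along the boundaries of unbounded squid sectors (Proposition~\ref{prop:section_period_cellular} and Section~\ref{subsec:section_period_cellular}), and then shows that $\persec[][\ell]\circ\per[][\ell]$ is independent of the cell (Proposition~\ref{prop:section_period}). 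This yields directly a projection $\mathfrak K\colon\germ{\varepsilon,x,y}'\to\nfsec[][y]$ with kernel $\onf\cdot\germ{\varepsilon,x,y}'$.

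Your level-by-level reduction to the one-variable assertion ``$\germ{\varepsilon,x}=\tx{im}(L_n)\oplus x\germ{\varepsilon}[x]_{<k}$ with a uniform $O(1/n)$ partial inverse'' is correct and conceptually appealing, since $L_n$ involves only the explicit $\fonf$; this is closer in spirit to how \cite{SchaTey} handles $\varepsilon=0$. But two points deserve emphasis. First, your $\sigma^n$ is determined \emph{a priori} only cell by cell (the model periods $\widehat\per_\ell$ depend on $\ell$), and you must still prove that the resulting $\sigma^n$ is cell-independent so that it is genuinely holomorphic in $\varepsilon$ across $\Delta_k$; this is exactly the content of Proposition~\ref{prop:section_period}, whose proof (Section~\ref{subsec:section_period}) reduces, as you anticipate, to the injectivity of $\widehat\per$ on $x\pol x_{<k}y^d$. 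Second, the $O(1/n)$ bound uniform in $\varepsilon$ through the confluence of the roots of $P_\varepsilon$ is not a formality: it requires the squid-sector geometry and the decay estimates of Lemma~\ref{lem:integrable_first-integral}, not just an integrating-factor formula. So your route would end up invoking the same sectorial/cellular machinery, only applied to $\fonf$ rather than $\onf$; what it buys is a cleaner separation of the linear obstruction from the nonlinear recursion, at the cost of an extra convergence argument. What the paper's route buys is avoiding that convergence argument entirely and producing the section $\persec$ of the period operator as a by-product, an object with independent significance for the modulus.
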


\begin{rem}
\label{rem:section_fixed_parameter}The construction of the cokernel
of $\onf\cdot$ is eventually performed for $\varepsilon$ fixed.
Therefore the theorem can also be specialized in the following way:
for every $\varepsilon\in\neigh[k]$ such that $\mu_{\varepsilon}\notin\rr_{\leq0}$
and every disk $D\supset P_{\varepsilon}^{-1}\left(0\right)$ not
containing any root of $1+\mu_{\varepsilon}x^{k}$, we have the $\cc$-linear
decomposition 
\begin{align*}
\holb[D]\left\{ y\right\}  & =\tx{im}\left(\onf[\varepsilon]\cdot\right)~\oplus~\cc\left[x\right]_{\leq k}~\oplus~xy\pol x_{<k}\left\{ y\right\} .
\end{align*}
If $\mu_{\varepsilon}\in\rr_{\leq0}$ a section of the cokernel is
given by $xP_{\varepsilon}^{\tau}y\pol x_{<k}\left\{ P_{\varepsilon}{}^{\tau}y\right\} $.
\end{rem}

The aim of this section is to prove this theorem but, before doing
so, let us explain how it helps completing the proofs of the Normalization
and Uniqueness Theorems. Every function $U\in\germ{\varepsilon,x,y}^{\times}$
can be written uniquely as
\begin{align*}
U & =\frac{u}{1+uG}
\end{align*}
where $\widehat{\per}\left(G\right)=0$, by simply taking $u:=\widehat{\per}\left(U\right)$
as in Remark~\ref{rem:formal_cokernel_inverse}. Then Theorem~\eqref{thm:section_period}
allows decomposing $G$ uniquely as 
\begin{align*}
G & =Q+I
\end{align*}
with $Q\in\nfsec[k][y]$ and $I\in\onf\cdot\germ{\varepsilon,x,y}$,
so that $Z$ is analytically conjugate to some $\frac{u}{1+uQ}\onf$,
unique up to the action of linear transforms $\left(x,y\right)\mapsto\left(x,cy\right)$
as expected (as follows from Uniqueness Theorem~(2) which has been
proved in the previous section). This yields Uniqueness Theorem~(1).

\subsection{\label{subsec:Proof's-reduction}Reduction of the proof}

We must study the obstructions to solve analytically cohomological
equations of the form
\begin{align*}
\onf\cdot F & =G~~~~~~,~G\in\germ{\varepsilon,x,y}\cap\ker\widehat{\per}.
\end{align*}
First observe that this equation, restricted to the invariant line
$\left\{ y=0\right\} $, is always satisfied by a holomorphic function
$f~:~x\mapsto F\left(x,0\right)$ solving
\begin{align*}
f'\left(x\right) & =\frac{G\left(x,0\right)}{P\left(x\right)}\in\germ{\varepsilon,x}.
\end{align*}
By subtracting $f$ from $F$ and $x\mapsto G\left(x,0\right)$ from
$G$, we may always assume without loss of generality that 
\begin{align*}
G\left(x,0\right) & =F\left(x,0\right)=0,
\end{align*}
\emph{i.e. }$G\in\germ{\varepsilon,x,y}'$ as defined in Section~\ref{subsec:Functional-spaces}.

\bigskip{}

Let 
\begin{align*}
\Delta_{k} & :=\left\{ \varepsilon\in\neigh[k]~:~\#P_{\varepsilon}^{-1}\left(0\right)\leq k\right\} 
\end{align*}
be a germ at $0$ of the discriminant hypersurface of $P_{\varepsilon}$,
so that each open set $\neigh[k]\backslash\Delta_{k}$ consists in
generic values of the parameter for which $P_{\varepsilon}$ has only
simple roots. Proving Theorem~\ref{thm:section_period} will require
to work in the functional spaces
\begin{align*}
\mathcal{H}_{\ell}\left\{ \mathbf{z}\right\}  & :=\bigcup_{\mathcal{D}=\neigh[n]}\holb[\group{\mathcal{E}_{\ell}\times\mathcal{D}}]'~~~~~~,~\mathbf{z}:=\left(z_{1},\ldots,z_{n}\right)
\end{align*}
for some decomposition $\left(\mathcal{E}_{\ell}\right)_{\ell}$ of
$\neigh[k]\backslash\Delta_{k}$ into finitely many (germs of) open
cells as explained in Section~\ref{subsec:squid_sectors}. (We recall
that the definition of the space $\holb[\mathcal{D}]'$ is given in
Section~\ref{subsec:Functional-spaces}.) We choose these spaces
because of the next property.
\begin{lem}
\label{lem:intersection_cellular_space}~
\begin{align*}
\germ{\varepsilon,\mathbf{z}}' & =\bigcap_{\ell}\mathcal{H}_{\ell}\left\{ \mathbf{z}\right\} .
\end{align*}
(By the intersection on the right hand side we of course mean the
functions who have an extension on the unions of the different domains.)
\end{lem}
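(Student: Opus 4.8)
The statement to prove is the identity $\germ{\varepsilon,\mathbf{z}}' = \bigcap_\ell \mathcal{H}_\ell\{\mathbf{z}\}$, where $\germ{\varepsilon,\mathbf{z}}'$ denotes the germs at $0$ of holomorphic functions vanishing on $\{z_n=0\}$, and each $\mathcal{H}_\ell\{\mathbf{z}\}$ consists of functions holomorphic and bounded on $\mathcal{E}_\ell\times\mathcal{D}$ (with continuous extension to the closure), vanishing on $\{z_n=0\}$. The inclusion $\subseteq$ is essentially immediate: a germ $f\in\germ{\varepsilon,\mathbf{z}}'$ is by definition holomorphic on some polydisc $\neigh[k]\times\mathcal{D}$, hence its restriction to each $\mathcal{E}_\ell\times\mathcal{D}'$ (for $\mathcal{D}'$ a slightly smaller polydisc, so that $f$ is bounded there) lies in $\holb[\mathcal{E}_\ell\times\mathcal{D}']'$, and the vanishing condition persists; so $f\in\mathcal{H}_\ell\{\mathbf{z}\}$ for every $\ell$. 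The content of the lemma is the reverse inclusion $\supseteq$.

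For $\supseteq$, suppose $f$ is defined and bounded on $\bigl(\bigcup_\ell\mathcal{E}_\ell\bigr)\times\mathcal{D}$ with matching restrictions, i.e.\ $f\in\holf[\bigl(\bigcup_\ell\mathcal{E}_\ell\bigr)\times\mathcal{D}]$ and bounded. Since $\bigcup_\ell\mathcal{E}_\ell = \neigh[k]\backslash\Delta_k$ by construction (Section~\ref{subsec:squid_sectors}), the function $f$ is holomorphic and bounded on $\bigl(\neigh[k]\backslash\Delta_k\bigr)\times\mathcal{D}$. The discriminant locus $\Delta_k$ is a (germ of a) proper complex-analytic hypersurface in $\neigh[k]$, hence $\Delta_k\times\mathcal{D}$ is an analytic subset of $\neigh[k]\times\mathcal{D}$ of complex codimension $1$. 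The plan is to invoke the second Riemann extension theorem (Riemann's theorem on removable singularities for several complex variables, e.g.\ in the form found in Gunning--Rossi): a function holomorphic on the complement of an analytic hypersurface and locally bounded near that hypersurface extends holomorphically across it. Therefore $f$ extends to a holomorphic function $\tilde f$ on all of $\neigh[k]\times\mathcal{D}$, which is then a genuine germ in $\germ{\varepsilon,\mathbf{z}}$. The vanishing condition $\tilde f|_{\{z_n=0\}}=0$ passes to the extension by continuity (or by the identity principle, since $\tilde f$ vanishes on the intersection of $\{z_n=0\}$ with the dense open set $\bigl(\neigh[k]\backslash\Delta_k\bigr)\times\mathcal{D}$), so $\tilde f\in\germ{\varepsilon,\mathbf{z}}'$, completing the proof.

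The main obstacle — really the only point requiring care — is verifying the \emph{local boundedness} hypothesis of the Riemann extension theorem near every point of $\Delta_k\times\mathcal{D}$, including near the singular points of $\Delta_k$ itself and near $\mathbf{0}$. This is where the \emph{bounded}-with-continuous-extension-to-the-closure clause built into the definition of $\holb$ (and hence of $\mathcal{H}_\ell\{\mathbf{z}\}$) is used: because each $\mathcal{E}_\ell$ is taken with $f\in\holb[\mathcal{E}_\ell\times\mathcal{D}]'$, the function $f$ is bounded on $\mathcal{E}_\ell\times\mathcal{D}$ with a bound $\norm[f]{\mathcal{E}_\ell\times\mathcal{D}}$; since there are finitely many cells $\mathcal{E}_\ell$ whose union is $\neigh[k]\backslash\Delta_k$, the sup over $\ell$ of these bounds is a global bound for $|f|$ on $\bigl(\neigh[k]\backslash\Delta_k\bigr)\times\mathcal{D}$. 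Thus $f$ is in fact globally bounded, a fortiori locally bounded near $\Delta_k\times\mathcal{D}$, and the extension theorem applies. (One should also note that the cells' closures cover a full neighborhood of $0$, so no boundary effect near the origin obstructs the argument.) Everything else is routine.
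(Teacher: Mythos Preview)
Your proposal is correct and follows essentially the same route as the paper's own proof: the inclusion $\subseteq$ is immediate, and for $\supseteq$ one observes that $f$ is bounded and holomorphic on $\bigl(\neigh[k]\backslash\Delta_k\bigr)\times\mathcal{D}$ and then invokes Riemann's theorem on removable singularities across the analytic hypersurface $\Delta_k\times\mathcal{D}$. Your write-up is simply more explicit about the boundedness (finitely many cells, each carrying a bound) and about the persistence of the vanishing condition on $\{z_n=0\}$, points the paper leaves implicit.
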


\begin{proof}
We certainly have
\begin{align*}
\germ{\varepsilon,\mathbf{z}}' & \subset\bigcap_{\ell}\mathcal{H}_{\ell}\left\{ \mathbf{z}\right\} .
\end{align*}
Conversely if $f\in\bigcap_{\ell}\mathcal{H}_{\ell}\left\{ \mathbf{z}\right\} $
then $f$ defines a bounded, holomorphic function on $\left(\neigh[k]\backslash\Delta_{k}\right)\times\neigh[n]$,
which extends holomorphically to $\neigh[k+n]$ according to Riemann's
theorem on removable singularities.
\end{proof}
Working over a fixed cell germ $\mathcal{E}_{\ell}$ is easy as compared
to $\neigh[k]$.
\begin{prop}
\label{prop:cohomog_secto_solutions}\cite{RouTey} Let $\mathcal{E}_{\ell}$
be a parameter cell. There exists $\per[][\ell]$, called the \textbf{period
operator} over $\mathcal{E}_{\ell}$, such that the sequence of $\holb[\mathcal{E}_{\ell}]$-linear
operators is exact:
\begin{align}
0\longto\holb[\mathcal{E}_{\ell}]\longto\mathcal{H}_{\ell}\left\{ x,y\right\} \overset{\onf\cdot}{\longto}\mathcal{H}_{\ell}\left\{ x,y\right\} \overset{\per[][\ell]}{\longto}\prod_{\zsk}\mathcal{H}_{\ell}\left\{ h\right\} \label{eq:exact_sequence_cellular}
\end{align}
where $h$ is a one-dimensional variable (meant to take the values
of a first integral).
\end{prop}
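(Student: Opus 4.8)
The plan is to reduce the statement to a family of one-dimensional sectorial cohomological equations, to solve those explicitly over the ``squid sectors'' attached to the cell, and to read off $\per[][\ell]$ from the mismatches of the sectorial solutions. Writing $F=\sum_{n\ge0}F^n\left(x\right)y^n$ and $G=\sum_{n\ge0}G^n\left(x\right)y^n$, the equation $\onf\cdot F=G$ decouples, coefficient by coefficient in $y$, into the scalar linear ODEs
\[
P_\varepsilon\left(x\right)\ppp{F^n}x+n\left(1+\mu_\varepsilon x^k\right)F^n=G^n\left(x\right)+\oo n,
\]
exactly as in \eqref{eq:formal_cohomog_recurrence}, where $\oo n$ gathers the already-determined lower-order terms. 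So everything reduces to the scalar operators $L_n:=P_\varepsilon\partial_x+n\left(1+\mu_\varepsilon x^k\right)$ acting on holomorphic functions near $P_\varepsilon^{-1}\left(0\right)$, uniformly for $\varepsilon\in\mathcal{E}_\ell$; reconstituting the $y$-series from the $F^n$ and passing from formal to bounded holomorphic is then handled by the bounded-Krull-convergence lemma~\ref{lem:bounded_krull_converge}, as in the proof of Theorem~\ref{thm:fibred_gluing_parametric}.

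To invert $L_n$ I would use the squid-sector covering of the cell (Section~\ref{subsec:squid_sectors}): a finite family $\left(\Omega^j_\varepsilon\right)_j$ of sectorial domains covering a punctured neighborhood of $P_\varepsilon^{-1}\left(0\right)$ in the $x$-disk, depending holomorphically on $\varepsilon\in\mathcal{E}_\ell$ with geometry controlled up to $\adh{\mathcal{E}_\ell}$, and built from the Douady--Sentenac phase portrait of $P_\varepsilon\partial_x$ so that the rational $1$-form $\omega_\varepsilon:=\frac{1+\mu_\varepsilon x^k}{P_\varepsilon\left(x\right)}\dd x$ admits a single-valued primitive $\Lambda^j_\varepsilon$ on each $\Omega^j_\varepsilon$. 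Then $\exp\left(-n\Lambda^j_\varepsilon\right)$ is a single-valued integrating factor for $L_n$ on $\Omega^j_\varepsilon$, and variation of constants solves $L_nF^j=G^n+\oo n$ by an explicit integral of $\exp\left(-n\Lambda^j_\varepsilon\right)\left(G^n+\oo n\right)/P_\varepsilon$ along a path adapted to the sector, with bounds uniform in $\varepsilon$ and holomorphic in $\varepsilon$ (the parametric analogue of the sectorial estimates of~\cite{RouTey,SchaTey}). Summing over $n$ produces, for every $G\in\mathcal{H}_\ell\left\{x,y\right\}$, a cochain of sectorial solutions $F^j$ of $\onf\cdot F^j=G$ on $\Omega^j_\varepsilon\times\neigh$, each unique up to a bounded first integral, which, being constant along the leaves that fill the sector, is a function of $\varepsilon$ alone, i.e. an element of $\holb[\mathcal{E}_\ell]$.

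Finally, one defines $\per[][\ell]\left(G\right)$ from the jumps $F^j-F^{j'}$ on the connected components of $\Omega^j_\varepsilon\cap\Omega^{j'}_\varepsilon$. By the Douady--Sentenac combinatorics these components are of two kinds: \emph{node} type, where the transition map in the leaf space is a hyperbolic germ (so the jump is absorbed into corrections of the $F^j$ obtained by a contraction argument) or trivial; and \emph{saddle} type, of which there are exactly $k$, indexed by $\zsk$ after using the Camacho--Sad relation $\sum_{P_\varepsilon\left(z\right)=0}\frac{1+\mu_\varepsilon z^k}{P_\varepsilon'\left(z\right)}=\mu_\varepsilon$ to eliminate one. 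Across a saddle component the local leaf space is a disk with a first-integral coordinate $h$ and the jump is a genuine function $\phi^j_\varepsilon\left(h\right)\in\mathcal{H}_\ell\left\{h\right\}$; set $\per[][\ell]\left(G\right):=\left(\phi^j\right)_{j\in\zsk}$, which is manifestly $\holb[\mathcal{E}_\ell]$-linear. If $G=\onf\cdot F$ one takes $F^j:=F|_{\Omega^j_\varepsilon}$, so all jumps vanish and $\per[][\ell]\left(G\right)=0$; conversely, if $\per[][\ell]\left(G\right)=0$ then, after normalizing the $F^j$, the saddle jumps vanish and the surviving node jumps form a $1$-cocycle on the nerve of the covering, which is a tree, hence a coboundary, and adjusting the $F^j$ accordingly yields a global $F\in\mathcal{H}_\ell\left\{x,y\right\}$ with $\onf\cdot F=G$. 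Exactness at the first $\mathcal{H}_\ell\left\{x,y\right\}$ amounts to $\ker\left(\onf\cdot\right)=\holb[\mathcal{E}_\ell]$: if $\onf\cdot F=0$ then $F\left(x,0\right)$ is constant, and the lowest-degree-in-$y$ term of $F-F\left(\cdot,0\right)$ would be a multiple of $\prod_{P_\varepsilon\left(z\right)=0}\left(x-z\right)^{-\nu c_z}$ with $c_z:=\frac{1+\mu_\varepsilon z^k}{P_\varepsilon'\left(z\right)}$, which fails to be holomorphic near the roots for generic $\varepsilon\in\mathcal{E}_\ell$, so holomorphy of $F$ in $\varepsilon$ forces it to depend on $\varepsilon$ only. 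The principal obstacle throughout is geometric rather than analytic: producing the squid-sector covering with geometry controlled uniformly up to $\adh{\mathcal{E}_\ell}$ --- in particular through the confluence $\varepsilon\to\Delta_k$ where the roots of $P_\varepsilon$ collide --- and checking that its nerve is a tree; this is precisely the machinery of~\cite{RouTey}, the sectorial solvability and estimates being in the style of~\cite{SchaTey}.
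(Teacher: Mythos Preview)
Your overall architecture is correct and matches the paper's: both build sectorial solutions $F^j$ of $\onf\cdot F=G$ over the squid sectors of Theorem~\ref{thm:squid_and_solve}, and define $\per[][\ell]$ from the jumps $F^{j+1}-F^j=T^j\circ H^j$ across the $k$ saddle-type intersections. The reduction to the triangular system~\eqref{eq:formal_cohomog_recurrence} is a legitimate variant; the paper instead works directly with the two-dimensional equation, but this is cosmetic.

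Where you diverge from the paper is in the handling of the gate (what you call ``node-type'') intersections, and here there is a real gap. Your mechanism --- absorbing the node jumps by a contraction argument, then killing the residual cocycle because ``the nerve is a tree'' --- does not work as stated: the nerve of the squid-sector covering is \emph{not} a tree. The $k$ sectors $V^0,\ldots,V^{k-1}$ are arranged cyclically via their saddle parts $\sad V{}{j,}\subset V^j\cap V^{j+1}$, so the nerve already contains a $k$-cycle; gate edges only add more. (The Douady--Sentenac combinatorics does produce a tree, but that is the separating graph of the polynomial flow, not the nerve of this covering.) Your Camacho--Sad aside is also a red herring: the count of exactly $k$ saddle intersections is just the cyclic adjacency of the $k$ sectors, no index relation is needed.

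The paper's mechanism is simpler and is the point you are missing. Each squid sector $V^j$ is built to adhere to a \emph{node-type} root $\nod x{}{j,}$, which forces the sectorial first integral $H^j$ to be unbounded (indeed surjective onto $\cc$) on $V^j\times\neigh$. Hence a bounded first integral on $V^j$ that vanishes on $\{y=0\}$ is identically zero, and the bounded sectorial solution $F^j\in\holb[V^j\times\neigh]'$ is \emph{unique} (Theorem~\ref{thm:squid_and_solve}(2)). This uniqueness does all the work: it makes $\per[][\ell]$ well-defined with no normalization choices, and it forces the gate-part jumps $F^j-F^{\sigma(j)}$ to vanish automatically (they factor through $H^j$, which is unbounded on the gate part adherent to the node, so the factor is a bounded entire function vanishing at $0$, hence zero). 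Once the saddle jumps vanish, the $F^j$ therefore glue without any coboundary adjustment, giving exactness at the middle term directly.
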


The surjectivity of the period operator $\per[][\ell]$ has not been
established in the cited reference, but it would have followed from
an immediate adaptation of the argument of~\cite[Lemma 3.4]{Tey-SN}.
Here, though, we prove a stronger result by producing an explicit
section to the period operator (Proposition~\ref{prop:section_period_cellular}
to come). The construction of the period operator over $\mathcal{E}_{\ell}$
is explained in Section~\ref{subsec:period_operator} below. It involves
cutting up $\neigh[2]\backslash\left(P_{\varepsilon}^{-1}\left(0\right)\times\left\{ 0\right\} \right)$
into $k$ open (bounded) spiraling sectors and building sectorial
solutions of the cohomological equation. The period operator measures
how much solutions on neighboring sectors disagree on intersections.
Contrary to what would have make things easier 
\begin{align*}
\per[][\ell]\left(\germ{\varepsilon,x,y}'\right) & \neq\bigcap_{p}\prod_{\zsk}\mathcal{H}_{p}\left\{ h\right\} =\prod_{\zsk}\germ{\varepsilon,h}',
\end{align*}
so that $\per[][\ell]$ is neither onto nor into the natural candidate
$\prod_{\zsk}\germ{\varepsilon,h}'$. This situation differs drastically
from the case $\varepsilon=0$, and can be explained. It turns out
that the variable $h$ in the $j^{\tx{th}}$ factor of $\prod_{j\in\zsk}\mathcal{H}_{p}\left\{ h\right\} $
stands for values of the canonical first integral of $\onf$ on the
$j^{\tx{th}}$ sector (see the discussion preceding Definition~\ref{def:period_operator}).
Different sectorial decompositions for fixed $\varepsilon$, corresponding
to different cells $\mathcal{E}_{\ell}$ containing $\varepsilon$,
lead to incommensurable sectorial dynamics: there is no correspondence
between $h$-variables coming from different overlapping cells (see
also Section~\ref{sec:Bernoulli}). Therefore we need to relocate
the obstructions in geometrical space $\left(x,y\right)$, by introducing
a well-chosen section $\persec[][\ell]$ of $\per[][\ell]$. 
\begin{prop}
\label{prop:section_period_cellular} Let $\mathcal{E}_{\ell}$ be
a parameter cell and assume $\tau=0$ (which particularly implies
$\mu_{0}\notin\rr_{\leq0}$). There exists a linear isomorphism
\begin{align*}
\persec[][\ell]~:~\prod_{\zsk}\mathcal{H}_{\ell}\left\{ h\right\}  & \longto x\mathcal{H}_{\ell}\left\{ y\right\} \left[x\right]_{<k}
\end{align*}
such that $\per[][\ell]\circ\persec[][\ell]=\id$. This particularly
means we recover a cellular  cokernel of $\onf\cdot$ as follows:
\begin{align*}
\mathcal{H}_{\ell}\left\{ x,y\right\}  & =\left(\onf\cdot\mathcal{H}_{\ell}\left\{ x,y\right\} \right)~\oplus~x\mathcal{H}_{\ell}\left\{ y\right\} \left[x\right]_{<k}.
\end{align*}
\end{prop}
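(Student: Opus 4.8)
The plan is to construct $\persec[][\ell]$ explicitly by solving, for each prescribed tuple $\left(\phi^{j}\right)_{j\in\zsk}\in\prod_{\zsk}\mathcal{H}_{\ell}\left\{ h\right\}$, a cohomological equation whose period is exactly this tuple, and to arrange that the solution lives in the finite-dimensional-over-$\mathcal{H}_{\ell}\left\{y\right\}$ space $x\mathcal{H}_{\ell}\left\{ y\right\} \left[x\right]_{<k}$. First I would recall the construction of $\per[][\ell]$ from Section~\ref{subsec:period_operator}: over the cell $\mathcal{E}_{\ell}$ one cuts $\neigh[2]\backslash\left(P_{\varepsilon}^{-1}(0)\times\{0\}\right)$ into $k$ spiraling sectors $\sad V{\ell}{j,}$, solves $\onf\cdot F^{j}=G$ sectorially (unique up to an additive holomorphic function of a sectorial first integral $h_{j}$), and reads off $\per[][\ell](G)=\left(F^{j+1}-F^{j}\right)_{j}$ expressed through $h$. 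The key point making $\tau=0$ essential is that $\mu_{0}\notin\rr_{\leq0}$ guarantees the node at $(\infty,0)$ has eigenratio off $\rr_{\leq0}$, so the sectorial normalizations extend to a genuine neighborhood of the divisor and the first integrals $h_{j}$ have the right boundedness; this is what lets the Cauchy–Heine type integrals converge.

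The heart of the argument is a Cauchy–Heine (or Borel–Laplace-flavored) integral formula, exactly parallel to Lemma~\ref{lem:Savelev-Cousin}: given the jump data $\phi^{j}(h)$ across the $j^{\tx{th}}$ sector intersection, define a function $G$ on each sector by integrating $\phi^{j}$ against a Cauchy kernel in the first-integral variable, producing sectorial solutions $F^{j}$ whose differences are the prescribed $\phi^{j}$. I would then check that the resulting $G$ is not merely sectorial but globally defined on $\mathcal{E}_{\ell}\times\neigh[2]$ and, crucially, that it is \emph{polynomial of degree $<k$ in $x$} with coefficients in $\mathcal{H}_{\ell}\left\{ y\right\}$ vanishing on $\{y=0\}$. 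The degree bound comes from the same mechanism as in the orbital normal form recognition (Section~\ref{subsec:Normal-form-recognition}): the integrand decays at $x=\infty$ fast enough that the Cauchy–Heine transform, viewed in the chart $(u,v)=(1/x,y)$, extends holomorphically across $u=0$, forcing polynomiality; the precise degree $<k$ is dictated by the order of vanishing of $P_{\varepsilon}$ at infinity, i.e. $u^{k+1}P_{\varepsilon}(1/u)\to 1$. One then verifies $\per[][\ell]\circ\persec[][\ell]=\id$ essentially by construction (the jumps of the constructed solution are the input data), and injectivity of $\persec[][\ell]$ follows because a zero period with a solution in $x\mathcal{H}_{\ell}\left\{ y\right\}\left[x\right]_{<k}$ would be a genuine (non-sectorial) holomorphic solution vanishing appropriately, hence zero by the exactness in~\eqref{eq:exact_sequence_cellular} combined with the degree constraint — a polynomial-in-$x$ element of $\onf\cdot\mathcal{H}_{\ell}\left\{x,y\right\}$ vanishing on $\{y=0\}$ must vanish. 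The direct-sum statement $\mathcal{H}_{\ell}\left\{ x,y\right\}=\left(\onf\cdot\mathcal{H}_{\ell}\left\{ x,y\right\}\right)\oplus x\mathcal{H}_{\ell}\left\{ y\right\}\left[x\right]_{<k}$ is then a formal consequence: given $G$, subtract $\persec[][\ell]\left(\per[][\ell](G)\right)$ to land in $\ker\per[][\ell]=\onf\cdot\mathcal{H}_{\ell}\left\{x,y\right\}$, and the intersection of the two summands is zero by the injectivity just argued (together with the reduction $G(x,0)=0$, which handles the $\germ{\varepsilon}[x]_{\le k}$ discrepancy separately as in Section~\ref{subsec:Proof's-reduction}).

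The main obstacle I anticipate is twofold. First, the bookkeeping of the Cauchy–Heine kernels across $k$ spiraling sectors whose geometry depends on $\varepsilon\in\mathcal{E}_{\ell}$: one must choose integration contours that vary holomorphically with $\varepsilon$ and stay within the domains of definition, and obtain uniform bounds so that the output lies in $\holb[\group{\mathcal{E}_{\ell}\times\mathcal{D}}]'$ for a fixed polydisc $\mathcal{D}$ — this is the analogue of the delicate radius-fixing in Lemma~\ref{lem:Savelev-Cousin} but now with moving singularities of $P_{\varepsilon}$. Second, and more conceptually, establishing the degree-$<k$-in-$x$ claim rigorously: one needs to argue that the Cauchy–Heine transform of the jump data, which a priori is only sectorial in $x$, patches to something with a pole of controlled order at $x=\infty$, and the cleanest route is probably to transport the problem to the surface $\mathcal{M}_{\varepsilon}$ of Section~\ref{sec:Geometric} and invoke compactness of the divisor $\lif{}\simeq\proj$ together with a Liouville-type argument as in Lemma~\ref{lem:bounded_function_hirzebruch}, rather than estimating the integral directly. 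Everything else — the identity $\per[][\ell]\circ\persec[][\ell]=\id$, injectivity, and the splitting — should be routine once these two analytic points are in hand.
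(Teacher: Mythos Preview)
Your strategy is the paper's: a Cauchy--Heine transform produces sectorial primitives $F^{j}$ with the prescribed jumps, then $Q:=\onf\cdot F^{j}$ glues globally and turns out polynomial in $x$. Two corrections to your execution, however.

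First, the Cauchy--Heine integral is taken in the \emph{geometric} variable $x$ (not ``in the first-integral variable''): over the boundaries $\Gamma^{j,\pm}$ of \emph{unbounded} squid sectors running out to $x=\infty$ one sets
\[
F^{j}(x,y)=\sum_{p\neq j+1}\int_{\Gamma^{p,-}}\frac{T^{p-1}\bigl(H^{p-1}(z,y)\bigr)}{z-x}\,\dd z+\int_{\Gamma^{j,+}}\frac{T^{j}\bigl(H^{j}(z,y)\bigr)}{z-x}\,\dd z,
\]
and convergence relies on the decay of $\widehat H^{j}$ along these arcs (Lemma~\ref{lem:integrable_first-integral}), which is precisely where $\mu_{0}\notin\rr_{\leq0}$ enters. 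The degree bound then comes from \emph{direct estimation} of $\norm[x\ppp{F^{j}}x]{}$ and $\norm[y\ppp{F^{j}}y]{}$ (Proposition~\ref{prop:period_cousin}(3)), giving $Q(x,y)=\OO{x^{k}}$ at $\infty$---hence degree $\leq k$, not $<k$ as you wrote. A final subtraction $Q\mapsto Q-\onf\cdot F$ with $F(y)=\int_{0}^{y}Q(0,v)\,\frac{\dd v}{v}$ is still needed to force $Q(0,y)=0$ and land in $x\mathcal{H}_{\ell}\{y\}[x]_{<k}$. Your alternate route via $\mathcal{M}_{\varepsilon}$ and a Liouville argument is not what the paper does and would require extra work, since $\mathcal{M}_{\varepsilon}$ was built for the foliation, not for the period data.

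Second, your trivial-intersection argument (``a polynomial-in-$x$ element of $\onf\cdot\mathcal{H}_{\ell}\{x,y\}$ vanishing on $\{y=0\}$ must vanish'') is too quick: exactness of~\eqref{eq:exact_sequence_cellular} alone does not give this. The paper (Section~\ref{subsec:section_period}) reduces at the leading $y$-power $y^{d}$ to the cohomological equation for the \emph{formal model} $\fonf[\varepsilon]$, and then shows the model period $\widehat\per$ is injective on $x\pol x_{<k}y^{d}$ by continuity from the known $\varepsilon=0$ computation.
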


This proposition is showed later in Section~\ref{subsec:section_period_cellular}
using a refinement of the Cauchy-Heine transform, this time on unbounded
sectors in the $x$-variable. Theorem~\ref{thm:section_period} is
proved once we establish the next gluing property, as done in Section~\ref{subsec:section_period}.
\begin{prop}
\label{prop:section_period} For every parameter cells $\mathcal{E}_{\ell}$
and $\mathcal{E}_{\widetilde{\ell}}$ with non-empty intersection
we have
\begin{align*}
\persec[][\ell]\circ\per[][\ell] & =\persec[][\widetilde{\ell}]\circ\per[][\widetilde{\ell}]
\end{align*}
on $\mathcal{H}_{\ell}\left\{ x,y\right\} \cap\mathcal{H}_{\widetilde{\ell}}\left\{ x,y\right\} $. 
\end{prop}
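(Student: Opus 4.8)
The statement to prove is Proposition~\ref{prop:section_period}: the composition $\persec[][\ell]\circ\per[][\ell]$ agrees with $\persec[][\widetilde{\ell}]\circ\per[][\widetilde{\ell}]$ on the overlap $\mathcal{H}_{\ell}\left\{ x,y\right\}\cap\mathcal{H}_{\widetilde{\ell}}\left\{ x,y\right\}$. The natural strategy is to fix $G$ in that intersection and show that the two candidate ``cokernel representatives'' $N_\ell := \persec[][\ell]\circ\per[][\ell](G)$ and $N_{\widetilde\ell} := \persec[][\widetilde{\ell}]\circ\per[][\widetilde{\ell}](G)$ coincide by exhibiting that their difference lies in $\onf\cdot\germ{\varepsilon,x,y}$ while simultaneously lying in the space $x\germ{\varepsilon,y}\left[x\right]_{<k}$ — and then invoking the fact (to be assembled from the cellular exact sequences) that the image of $\onf\cdot$ meets that section space only in $\{0\}$.

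\textbf{Step 1: reduce to a difference statement.} By Proposition~\ref{prop:cohomog_secto_solutions}, over each cell $G - \persec[][\ell]\per[][\ell](G) \in \onf\cdot\mathcal{H}_{\ell}\left\{x,y\right\}$, since $\per[][\ell]$ of this difference is zero. Hence $N_{\widetilde\ell} - N_\ell = \bigl(G - N_\ell\bigr) - \bigl(G - N_{\widetilde\ell}\bigr)$ is a difference of an element of $\onf\cdot\mathcal{H}_{\ell}\left\{x,y\right\}$ and an element of $\onf\cdot\mathcal{H}_{\widetilde\ell}\left\{x,y\right\}$, both defined (as bounded holomorphic functions) over $\mathcal{E}_\ell\times\neigh$, resp. $\mathcal{E}_{\widetilde\ell}\times\neigh$. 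On the other hand $N_\ell, N_{\widetilde\ell}$ both live in $x\mathcal{H}_{\bullet}\left\{y\right\}\left[x\right]_{<k}$, so their difference is a polynomial in $x$ of degree $<k$ with coefficients in $\mathcal{H}_{\ell}\left\{y\right\}\cap\mathcal{H}_{\widetilde\ell}\left\{y\right\}$.

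\textbf{Step 2: promote to a genuine germ.} The key regularity input is that, writing $F_\ell$ for a $\mathcal{H}_{\ell}\left\{x,y\right\}$-primitive with $\onf\cdot F_\ell = G - N_\ell$ and similarly $F_{\widetilde\ell}$, the combination $\onf\cdot(F_\ell - F_{\widetilde\ell}) = N_{\widetilde\ell}-N_\ell$ is bounded holomorphic on $(\mathcal{E}_\ell\cap\mathcal{E}_{\widetilde\ell})\times\neigh^2$; but more is true because $N_{\widetilde\ell}-N_\ell$, being polynomial in $x$ of bounded degree with coefficients holomorphic and bounded in $y$ on $\mathcal{E}_\ell\cap\mathcal{E}_{\widetilde\ell}$, extends — when we let $\ell,\widetilde\ell$ range over \emph{all} cells sharing the given pair, and cover $\neigh[k]\setminus\Delta_k$ — to a function defined and bounded on all of $(\neigh[k]\setminus\Delta_k)\times\neigh$. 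By Lemma~\ref{lem:intersection_cellular_space} (Riemann's removable singularity theorem applied across the discriminant $\Delta_k$), such a function lies in $\germ{\varepsilon,x,y}'$, i.e. is a genuine convergent germ; being polynomial in $x$ of degree $<k$ it lies in $x\germ{\varepsilon,y}\left[x\right]_{<k} = \nfsec[k][y]$. Here one must be a little careful: the argument needs $N_{\widetilde\ell}-N_\ell$ to be \emph{globally} single-valued across the cells, which follows from the standard cocycle/transitivity bookkeeping for the sectorial solutions (the period operators are mutually compatible via the Stokes cocycle).

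\textbf{Step 3: kill the difference.} Now $N_{\widetilde\ell}-N_\ell \in \nfsec[k][y]$ and simultaneously $N_{\widetilde\ell}-N_\ell \in \onf\cdot\germ{\varepsilon,x,y}$ — the latter because $F_\ell - F_{\widetilde\ell}$, having vanishing ``period'' (its $\onf\cdot$-image is a germ, and by Lemma~\ref{lem:formal_cohomological}/Remark~\ref{rem:formal_cokernel_inverse} the formal cokernel is $\germ{\varepsilon}\left[x\right]_{\leq k}$, into which $\nfsec[k][y]$ projects trivially) is itself a germ. But by Theorem~\ref{thm:section_period} (or directly by the decomposition $\germ{\varepsilon,x,y} = \tx{im}(\onf\cdot)\oplus\germ{\varepsilon}\left[x\right]_{\leq k}\oplus\nfsec[k][y]$ that that theorem asserts — and which this proposition is a lemma toward, so one should instead use the partial statement $\tx{im}(\onf\cdot)\cap\nfsec[k][y] = \{0\}$ established independently, cf. Remark~\ref{rem:section_fixed_parameter}), an element lying in both $\tx{im}(\onf\cdot)$ and $\nfsec[k][y]$ must be $0$. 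Hence $N_\ell = N_{\widetilde\ell}$.

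\textbf{Main obstacle.} The delicate point is Step 2 — ensuring that the discrepancy $N_{\widetilde\ell}-N_\ell$ is genuinely single-valued and bounded over a full punctured neighborhood of $\Delta_k$ in parameter space, not merely over pairwise cell intersections. This is where the precise combinatorics of the cellular decomposition $(\mathcal{E}_\ell)_\ell$ and the compatibility of the period operators $\per[][\ell]$ on triple overlaps enters; one cannot avoid invoking the structure of the Stokes cocycle and the fact that different cells containing a common parameter value induce sectorial decompositions related by the holonomy. The circularity concern with Theorem~\ref{thm:section_period} is handled by noting that only the injectivity-type fact $\tx{im}(\onf\cdot)\cap\nfsec[k][y]=\{0\}$ — a local, fixed-$\varepsilon$ statement provable directly from the recursion in Lemma~\ref{lem:formal_cohomological} — is needed here, and the full direct-sum decomposition is then deduced \emph{from} this proposition together with Proposition~\ref{prop:section_period_cellular}.
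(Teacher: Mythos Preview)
Your overall strategy---show that the difference $N_\ell - N_{\widetilde\ell}$ lies in both $\tx{im}(\onf\cdot)$ and in the section space, hence vanishes---matches the paper's. But Step~2 is a wrong turn, and the ``main obstacle'' you identify is a phantom.

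The paper avoids Step~2 entirely by working at \emph{fixed} $\varepsilon\in\mathcal{E}_\ell\cap\mathcal{E}_{\widetilde\ell}$ throughout (cf.\ Remark~\ref{rem:corollaries_squid_solve}(3)). For such a fixed $\varepsilon$, both $Q_\ell:=\bigl(\persec[][\ell]\per[][\ell](G)\bigr)_\varepsilon$ and $Q_{\widetilde\ell}$ belong to the $\varepsilon$-free space $x\pol x_{<k}\{y\}'$, and by Corollary~\ref{cor:characterization_solution_cohomog_fixed_epsilon} the existence of an analytic solution to $\onf[\varepsilon]\cdot F=G_\varepsilon-Q$ is \emph{independent of which cell's period vanishes}. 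Hence $Q_\ell-Q_{\widetilde\ell}\in\tx{im}(\onf[\varepsilon]\cdot)\cap x\pol x_{<k}\{y\}'$ directly, with no need to promote anything to a germ in $\varepsilon$, no Riemann extension across $\Delta_k$, and no Stokes-cocycle bookkeeping on triple overlaps. Your attempt to ``let $\ell,\widetilde\ell$ range over all cells'' and patch globally is circular: knowing that the various $N_\ell$ agree on overlaps is precisely the proposition.

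The paper then proves $\tx{im}(\onf[\varepsilon]\cdot)\cap x\pol x_{<k}\{y\}'=\{0\}$ for each fixed small $\varepsilon$ by an inductive argument on the $y$-valuation: for the lowest-degree term $y^dG_d(x)$, the cohomological equation sees only the formal model $\fonf[\varepsilon]$, so one is reduced to showing that the restricted period map $\widehat{\per}:x\pol x_{<k}y^d\to\cc^kh^d$ is injective. This is done by a limit argument $\varepsilon\to0$, invoking the known invertibility of $\widehat{\per}_0$ from~\cite{Tey-ExSN}. You gesture at this fact via Remark~\ref{rem:section_fixed_parameter}, and that is indeed the whole content---but you should recognize that it already gives you the fixed-$\varepsilon$ conclusion and makes your Step~2 superfluous.
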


From Lemma~\ref{lem:intersection_cellular_space} we deduce the identity
\begin{align*}
\nfsec[][y] & =\bigcap_{\ell}x\mathcal{H}_{\ell}\left\{ y\right\} \left[x\right]_{<k},
\end{align*}
hence the proposition actually provides us with a well-defined, surjective
operator
\begin{align}
\mathfrak{K}~:~\germ{\varepsilon,x,y}' & \longto\nfsec[][y]\label{eq:period_section}\\
G & \longmapsto\persec[][\ell]\left(\per[][\ell]\left(G\right)\right),\nonumber 
\end{align}
whose kernel coincides with $\onf\cdot\germ{\varepsilon,x,y}'$, \emph{i.e.
}the sequence of $\germ{\varepsilon}$-linear operators
\begin{align}
0\longto\germ{\varepsilon,x,y}'\overset{\onf\cdot}{\longto}\germ{\varepsilon,x,y}'\overset{\mathfrak{K}}{\longto} & \nfsec[][y]\longto0\label{eq:exact_sequence}
\end{align}
is exact, as required to establish Theorem~\ref{thm:section_period}.

\subsection{\label{subsec:period_operator}Cohomological equation and period
operator}
\begin{thm}
\cite{RouTey}\label{thm:squid_and_solve} For every $\rho>0$ there
exists:

\begin{itemize}
\item a covering of $\neigh[k]\backslash\Delta_{k}$ by finitely many open,
contractible cells $\left(\mathcal{E}_{\ell}\right)_{\ell}$, 
\item for every $\varepsilon\in\mathcal{E}_{\ell}$, a covering of 
\begin{align*}
V_{\varepsilon}: & =\rho\ww D\backslash P_{\varepsilon}^{-1}\left(0\right)
\end{align*}
 into $k$ open, contractible squid sectors 
\begin{align*}
V_{\ell,\varepsilon}^{j} & ,~~~~j\in\zsk,
\end{align*}
\end{itemize}
for which the following properties are satisfied. Recall that the
closure of a subset $A$ of a topological space is written $\adh A$.

\begin{enumerate}
\item Each map $\varepsilon\mapsto\adh{V_{\ell,\varepsilon}^{j}}$ is continuous
for the Hausdorff distance on compact sets and
\begin{align*}
\lim_{\varepsilon\underset{\mathcal{E}_{\ell}}{\to}0}\adh{V_{\ell,\varepsilon}^{j}} & =\adh{V_{0}^{j}}
\end{align*}
 coincides with (the closure of) a usual sector of the limiting saddle-node,
namely
\[
V_{0}^{j}:=\left\{ x~:~0<|x|<\rho,~\arg x\in\left]-\frac{3\pi}{2k}+\eta+j\frac{2\pi}{k}~,~\frac{3\pi}{2k}-\eta+j\frac{2\pi}{k}\right[\right\} 
\]
for some $\eta\in\left]0,\frac{\pi}{2k}\right[$. 
\item We let 
\begin{align*}
V_{\ell}^{j} & :=\bigcup_{\varepsilon\in\mathcal{E}_{\ell}}\left\{ \varepsilon\right\} \times V_{\ell,\varepsilon}^{j}.
\end{align*}
For every $G\in\holb[\mathcal{E}_{\ell}\times\rho\ww D\times{{\neigh}}]'$
there exists a unique family $\left(\sectobase[F][j]{\ell}\right)_{j\in\zsk}$
such that $\sectobase[F][j]{\ell}$ is the unique solution of
\begin{align*}
\onf\cdot F & =G
\end{align*}
in the space $\holb[V_{\ell}^{j}\times{{\neigh}}]'$. Moreover
\begin{align*}
\lim_{\varepsilon\underset{\mathcal{E}_{\ell}}{\to}0}F_{\ell,\varepsilon}^{j} & =F_{0}^{j}
\end{align*}
uniformly on compact sets of $V_{0}^{j}\times\neigh$, where $F_{0}^{j}$
is the canonical sectorial solution of the limiting cohomological
equation~\cite{Tey-EqH}.
\item There exists a solution $F\in\holb[\mathcal{E}_{\ell}\times\rho\ww D\times{{\neigh}}]$
of $\onf\cdot F=G$ if, and only if, for every $\varepsilon\in\mathcal{E}_{\ell}$
and $j\in\zsk$ 
\begin{align*}
F_{\ell,\varepsilon}^{j+1} & =F_{\ell,\varepsilon}^{j}
\end{align*}
on corresponding pairwise intersections of sectors $V_{\ell,\varepsilon}^{j}\times\neigh$. 
\end{enumerate}
\end{thm}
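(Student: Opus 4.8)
The statement is quoted from~\cite{RouTey}; I indicate the argument, which is the backbone of Sections~\ref{sec:Temporal}\textendash\ref{sec:Analytic}. The plan is to treat the three items in the stated order, the first being the genuine difficulty. \emph{Squid sectors.} On the invariant line $\left\{ y=0\right\}$ the normal form $\onf[\varepsilon]$ restricts to $P_{\varepsilon}\left(x\right)\pp x$, whose real phase portrait is exactly the object classified by \noun{Douady} and \noun{Sentenac}~\cite{DES,DiaCata}: for $\varepsilon\notin\Delta_{k}$ the $k+1$ simple roots of $P_{\varepsilon}$ carry a combinatorial tree, and the parameters realizing a fixed tree form one of the $C_{k}=\frac{1}{k+1}\binom{2k}{k}$ connected, contractible open cells $\mathcal{E}_{\ell}$ whose union is $\neigh[k]\backslash\Delta_{k}$. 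Over a single cell the combinatorics is rigid, so one follows continuously a distinguished family of trajectories of a suitably rotated real field $\Re\left(c_{\varepsilon}P_{\varepsilon}\left(x\right)\right)\pp x$ and uses them to cut $\rho\ww D\backslash P_{\varepsilon}^{-1}\left(0\right)$ into $k$ spiralling \emph{squid} sectors $V_{\ell,\varepsilon}^{j}$, $j\in\zsk$; one then checks that $\varepsilon\mapsto\adh{V_{\ell,\varepsilon}^{j}}$ is continuous for the Hausdorff distance and that, as $\varepsilon\to0$ inside $\mathcal{E}_{\ell}$, the roots coalesce while the sectors open up to the usual sectors $V_{0}^{j}$ of the limiting saddle-node of Poincar\'{e} rank $k$. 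I would import this part essentially verbatim from~\cite{RouTey,Oud}, and it is the hard ingredient: arranging the continuity in $\varepsilon$ \emph{and} the correct degeneration at $\varepsilon=0$ simultaneously is exactly the delicate Douady-Sentenac and Oudkerk analysis, while the remaining two items are routine once the sectors are fixed.

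\emph{Sectorial solutions.} Fix $\ell$, $j$ and $G\in\holb[\mathcal{E}_{\ell}\times\rho\ww D\times{{\neigh}}]'$; write $F=\sum_{n\geq1}F^{n}y^{n}$ and $G=\sum_{n\geq1}G^{n}y^{n}$ (there is no term of $y$-degree $0$, by the prime convention). Comparing coefficients of $y^{n}$ in $\onf\cdot F=G$ yields the scalar linear equations $P_{\varepsilon}\ppp{F^{n}}x+n\left(1+\mu_{\varepsilon}x^{k}\right)F^{n}=G^{n}+E^{n}$, where $E^{n}$ depends only on $F^{1},\dots,F^{n-1}$. Its homogeneous solutions are $\exp\left(-n\int\frac{1+\mu_{\varepsilon}x^{k}}{P_{\varepsilon}}\dd x\right)$, which near a root $x_{*}$ of $P_{\varepsilon}$ behave like $\left(x-x_{*}\right)^{-n\lambda_{*}}$ with $\lambda_{*}$ the eigenratio of $\onf[\varepsilon]$ at that singularity; since each end of a squid sector spirals into a single root, exactly one choice of base point in the variation-of-constants integral produces a bounded solution, which is therefore the unique $F^{n}\in\holb[V_{\ell}^{j}]$, and uniqueness of the whole $F$ follows because a nonzero homogeneous solution is unbounded on a squid sector. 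An inductive estimate on that integral operator (whose norm is bounded over $\mathcal{E}_{\ell}$, by the limit behaviour of the sectors) together with $\left|G\left(\varepsilon,x,y\right)\right|\leq C\left|y\right|$ gives $F\in\holb[V_{\ell}^{j}\times{{\neigh}}]'$ after shrinking $\neigh$; holomorphy in $\varepsilon$, and the uniform limit $F_{\ell,\varepsilon}^{j}\to F_{0}^{j}$ of~\cite{Tey-EqH}, are both read off the same integral formulas using the convergence of the sectors from the first step.

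\emph{Gluing criterion.} If $F$ is a genuine solution on $\mathcal{E}_{\ell}\times\rho\ww D\times\neigh$, then $\onf[\varepsilon]\cdot F$ vanishes along $\left\{ y=0\right\}$, so $F$ is constant there, say equal to $c$; then $F-c$ lies in the primed space and restricts on each $V_{\ell}^{j}\times\neigh$ to a solution of the cohomological equation, hence equals $F_{\ell,\varepsilon}^{j}$ by the uniqueness clause above, and the $F_{\ell,\varepsilon}^{j}$ therefore agree on overlaps. Conversely, if $F_{\ell,\varepsilon}^{j+1}=F_{\ell,\varepsilon}^{j}$ on each $V_{\ell,\varepsilon}^{j}\cap V_{\ell,\varepsilon}^{j+1}$, the sectorial pieces glue into one bounded holomorphic $F$ solving $\onf[\varepsilon]\cdot F=G$ off the hypersurface $\left\{ P_{\varepsilon}\left(x\right)=0\right\}$ in $\mathcal{E}_{\ell}\times\rho\ww D\times\neigh$; being bounded it extends holomorphically across that hypersurface by Riemann's removable-singularity theorem, producing the desired $F\in\holb[\mathcal{E}_{\ell}\times\rho\ww D\times{{\neigh}}]$.
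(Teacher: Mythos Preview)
Your outline is correct and matches the paper's approach (which defers to~\cite{RouTey} for the full construction and proves the gluing criterion exactly as you do, via uniqueness plus Riemann's removable-singularity theorem). There is, however, one genuine imprecision in your first step. You write that ``the parameters realizing a fixed tree form one of the $C_{k}$ connected, contractible open cells $\mathcal{E}_{\ell}$ whose union is $\neigh[k]\backslash\Delta_{k}$''. This is not so: the Douady--Sentenac regions $K_{\ell}$ of fixed combinatorial type are pairwise \emph{disjoint} and miss the real-codimension-$1$ bifurcation hypersurfaces where homoclinic connections occur (that is, where some $\sum_{x\in I}\frac{1}{P_{\varepsilon}'(x)}\in\ii\rr$). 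The actual cells $\mathcal{E}_{\ell}$ are strict enlargements of the $K_{\ell}$, and producing these enlargements so that (i) they cover, (ii) the sectors still vary continuously, and (iii) the node/saddle dichotomy of the roots persists, is precisely the technical heart of the construction. The paper's Section~\ref{subsec:squid_sectors} explains the mechanism: one replaces real trajectories of $P_{\varepsilon}\pp x$ by trajectories of $\ee^{\ii\theta_{\varepsilon}(x)}P_{\varepsilon}\pp x$ for a piecewise-constant \emph{admissible angle} $\theta$ (Definition~\ref{def:node-saddle_root}), chosen so that $\re{\ee^{\ii\theta_{\varepsilon}(x_{\varepsilon})}P_{\varepsilon}'(x_{\varepsilon})}$ keeps a fixed sign on the enlarged cell. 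Your phrase ``suitably rotated real field'' gestures at this but collapses the two stages; since you explicitly flag importing this part from~\cite{RouTey}, the gap is one of presentation rather than conception, but you should be aware that the covering claim fails without the enlargement.

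Your treatments of items~(2) and~(3) are fine. For~(2), the level-by-level variation-of-constants scheme is exactly how~\cite[Section~7]{RouTey} proceeds; note only that a squid sector is adherent to two or three roots (two of saddle type and possibly one of node type), so ``each end spirals into a single root'' should be read as ``each boundary arc lands at a prescribed root whose type dictates the integration endpoint''. For~(3), your argument is the paper's: the forward direction uses uniqueness of the bounded sectorial solution, and the converse is Riemann's theorem applied to the bounded glued function on $\rho\ww D\backslash P_{\varepsilon}^{-1}(0)$.
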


We provide details regarding how squid sectors and parameter cells
are obtained in Section~\ref{subsec:squid_sectors} below. The way
sectorial solutions $\left(F_{\ell}^{j}\right)_{j\in\zsk}$ are built
is explained in~\cite[Section 7]{RouTey}. The third property encodes
all we need to know in order to characterize algebraically the obstructions
to solve analytically cohomological equations. It is, as usual, eventually
a consequence of Riemann's theorem on removable singularities. 
\begin{rem}
\label{rem:corollaries_squid_solve}~

\begin{enumerate}
\item A usual saddle-node sector is divided by rays separated by an angle
slightly larger than $\frac{\pi}{k}$: allowing an extra $\frac{\pi}{2k}$
on each side yields sectors of opening between $\frac{\pi}{k}$ and
$\frac{2\pi}{k}$. However we are in the particular case of a saddle-node
with analytic center manifold, meaning that we need twice less sectors
to describe the singularity structure. Hence the angle between the
dividing rays can be taken as big as $\frac{2\pi}{k}$: allowing an
extra $\frac{\pi}{2k}$ on each side yields an opening between $\frac{2\pi}{k}$
and $\frac{3\pi}{k}$.
\item A corollary to this theorem is the fact that any generic convergent
unfolding is conjugate to its formal normal form over every region
$V_{\ell}^{j}\times\neigh$. In particular each $\onf$ is conjugate
over $V_{\ell,\varepsilon}^{j}\times\neigh$ to $\fonf$ by a fibered
mapping
\begin{align*}
\left(x,y\right) & \longmapsto\left(x,~y\exp N_{\ell,\varepsilon}^{j}\left(x,y\right)\right)
\end{align*}
built upon a sectorial solution of
\begin{align*}
\onf[\varepsilon]\cdot N_{\ell,\varepsilon}^{j} & =-R_{\varepsilon}
\end{align*}
as in Proposition~\ref{prop:cohomological_conjugacy}.
\item A really important property of the construction: it is performed~\cite[Section 7]{RouTey}
for each \emph{fixed} $\varepsilon\in\mathcal{E}_{\ell}$, the holomorphic~/~continuous
dependence on $\varepsilon$ of resulting objects being a by-product.
This greatly simplifies understanding what happens on overlapping
cells. This is also the reason why we omit to include the subscripts
$\ell$ and $\varepsilon$ in the sequel, whenever doing so does not
introduce ambiguity.
\end{enumerate}
\end{rem}

The period operator $\per[][\ell]$ is obtained as follows. Fix $\varepsilon\in\mathcal{E}_{\ell}$
and $\rho>0$ as in the previous theorem. Starting from any $G\in\holf[\rho\ww D\times{{\neigh}}]'$
we can find a unique collection $\left(F^{j}\right)_{j\in\zsk}\in\prod_{j}\holf[V^{j}\times{{\neigh}}]'$
of bounded functions solving the equation $\onf\cdot F=G$ over a
squid sector. On each intersection we have $\onf[~]\cdot F^{j+1}=G=\onf[~]\cdot F^{j}$
so that $F^{j+1}-F^{j}$ is a first integral of $\onf[~]$. Therefore
it factors as 
\begin{align}
F^{j+1}-F^{j} & =T^{j}\circ H^{j}~~~~,~T^{j}\in\germ h'\label{eq:def_period_from_secto_solutions}
\end{align}
where $H^{j}=H_{\ell,\varepsilon}^{j}$ is the \textbf{canonical sectorial
first integral} with connected fibers
\begin{align}
H^{j} & :=\widehat{H}^{j}\exp N^{j},\label{eq:sectorial_first_integral}
\end{align}
obtained from that of the formal normal form 
\begin{align}
\widehat{H}^{j}\left(x,y\right) & :=y\exp\int^{x}-\frac{1+\mu z^{k}}{P\left(z\right)}\dd z\label{eq:model_first_integral}
\end{align}
by composition with the sectorial normalization (Remark~\ref{rem:corollaries_squid_solve}).
We can fix once and for all a determination of each first integral
$\widehat{H}^{j}=\widehat{H}_{\ell}^{j}$ on $V_{\ell}^{j}$ in such
a way that
\begin{align}
\widehat{H}^{j+1} & =\widehat{H}^{j}\exp\nf{2\ii\pi\mu}k\label{eq:formal_transition_map}
\end{align}
in $\sad V{}{j,}$. The linear factor appearing on the right-hand
side is here to accommodate the multivaluedness of $\exp\int^{x}-\frac{1+\mu z^{k}}{P\left(z\right)}\dd z=x^{-\mu}\times\tx{holo}\left(x\right)$
near $\infty$, so that $\widehat{H}^{j+k}=\widehat{H}^{j}$.
\begin{defn}
\label{def:period_operator}Consider a parameter cell $\mathcal{E}_{\ell}$
and $\rho>0$ as in Theorem~\ref{thm:squid_and_solve}. For $G\in\holb[\mathcal{E}_{\ell}\times\rho\ww D\times{{\neigh}}]$
define the \textbf{period} of $G$ with respect to $\onf$ as the
$k$-tuple 
\begin{align*}
\per[][\ell]\left(G\right) & :=\frac{1}{2\ii\pi}\left(T^{j}\right)_{j\in\zsk}\in\prod_{\zsk}\mathcal{H}_{\ell}\left\{ h\right\} 
\end{align*}
where $T_{\varepsilon}^{j}:=T^{j}$ is build as above in~\eqref{eq:def_period_from_secto_solutions}
for $G:=G_{\varepsilon}$ and $\varepsilon\in\mathcal{E}_{\ell}$.
We define $\per[j][\ell]\left(G\right):=\frac{1}{2\ii\pi}T^{j}$ to
be the $j^{\tx{th}}$ component of $\per[][\ell]\left(G\right)$.
\end{defn}

\begin{rem}
Following up on Remark~\ref{rem:corollaries_squid_solve}~(1), it
seems that the period of $R$ must play a special role regarding classification,
since it measures the discrepancy between sectorial orbital conjugacies
to the formal normal form $\fonf$. It is actually the case that the
unfolded Martinet-Ramis modulus is linked to this period through the
relationship
\begin{align*}
\sad{\psi}{\ell}{j,}\left(h\right)=h\exp\left(\frac{2\ii\pi\mu}{k}+\sad{\phi}{\ell}{j,}\left(h\right)\right)=h\exp\left(\frac{2\ii\pi\mu}{k}-\per[j][\ell]\left(R\right)\left(h\right)\right) & .
\end{align*}
A similar formula holds for the temporal modulus, namely $\sad f{\ell}{j,}=\per[j][\ell]\left(\frac{1}{U}-1\right)$.
We refer to~\cite{RouTey} for a more detailed discussion regarding
these integral representations of the modulus of classification.
\end{rem}

We sum up the relevant results needed in the sequel as a corollary
to Theorem~\ref{thm:squid_and_solve}.
\begin{cor}
\label{cor:characterization_solution_cohomog_fixed_epsilon}Pick $\varepsilon\in\neigh[k]\backslash\Delta_{k}$
and $\rho>0$ such that $P_{\varepsilon}^{-1}\left(0\right)\subset\rho\ww D$,
as well as some holomorphic function $G\in\holf[\rho\ww D\times{{\neigh}}]'$.
The following assertions are equivalent.

\begin{enumerate}
\item There exists $F\in\holf[\rho\ww D\times{{\neigh}}]'$ such that $\onf[\varepsilon]\cdot F=G$.
\item There exists $\ell$ with $\varepsilon\in\mathcal{E}_{\ell}$ such
that
\begin{align*}
\per[][\ell]\left(G\right)_{\varepsilon} & =0.
\end{align*}
\item For all $\ell$ with $\varepsilon\in\mathcal{E}_{\ell}$ we have 
\begin{align*}
\per[][\ell]\left(G\right)_{\varepsilon} & =0.
\end{align*}
\end{enumerate}
If moreover $G\in\mathcal{H}_{\ell}\left\{ x,y\right\} $ then 
\begin{align*}
\lim_{\varepsilon\underset{\mathcal{E}_{\ell}}{\to}0} & \per[][\ell]\left(G\right)_{\varepsilon}=\per\left(G_{0}\right)
\end{align*}
 uniformly on $\neigh$, where $\per~:~\germ{x,y}'\to\prod_{\zsk}\germ h'$
is the period operator of the limiting saddle-node~\cite{Tey-EqH}.
\end{cor}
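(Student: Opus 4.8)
The plan is to deduce everything from Theorem~\ref{thm:squid_and_solve}, specialized to a fixed parameter value $\varepsilon$. First I would fix $\varepsilon\in\neigh[k]\backslash\Delta_k$ and $\rho>0$ with $P_\varepsilon^{-1}(0)\subset\rho\ww D$, and note that by construction of the covering in Theorem~\ref{thm:squid_and_solve} there is at least one cell $\mathcal{E}_\ell$ containing $\varepsilon$ (the cells cover $\neigh[k]\backslash\Delta_k$), possibly several. For each such $\ell$, apply part~(2) of Theorem~\ref{thm:squid_and_solve} to the given $G\in\holf[\rho\ww D\times\neigh]'$ (shrinking $\rho$ if necessary so that $G$ is bounded, which does not affect solvability by Riemann's removable singularity theorem): this produces the unique collection of sectorial solutions $(F^j)_{j\in\zsk}$ over the squid sectors $V^j_{\ell,\varepsilon}$. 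By definition $\per[][\ell](G)_\varepsilon = \frac{1}{2\ii\pi}(T^j)_j$ where $F^{j+1}-F^j = T^j\circ H^j$, so $\per[][\ell](G)_\varepsilon=0$ is equivalent to $F^{j+1}=F^j$ on all sector intersections. Part~(3) of Theorem~\ref{thm:squid_and_solve} then says exactly that this gluing condition is equivalent to the existence of a solution $F\in\holb[V_\varepsilon\times\neigh]'$, which extends holomorphically across $P_\varepsilon^{-1}(0)$ to an element of $\holf[\rho\ww D\times\neigh]'$ by Riemann's theorem (using that the singularities of $X_\varepsilon$ along $\{y=0\}$ are of the relevant type — already implicit in the cited construction).

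Next I would organize the equivalences. The implication (1)$\Rightarrow$(3) follows because if $F$ solves $\onf[\varepsilon]\cdot F=G$ on $\rho\ww D\times\neigh$, then its restrictions to the squid sectors solve the sectorial equation and hence, by the uniqueness clause in Theorem~\ref{thm:squid_and_solve}(2), must coincide with the canonical sectorial solutions $F^j$; thus all $F^{j+1}-F^j=0$ and every period vanishes. The implication (3)$\Rightarrow$(2) is trivial since there is at least one admissible $\ell$. The implication (2)$\Rightarrow$(1) is the content of Theorem~\ref{thm:squid_and_solve}(3) combined with the removable-singularity extension described above. This closes the cycle, giving the equivalence of the three assertions. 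I should be slightly careful that the space in Theorem~\ref{thm:squid_and_solve} is $\holb[\cdots]$ while the corollary states $\holf[\cdots]$; the passage is harmless because one may always shrink $\rho$ and the neighborhood of $\{y=0\}$, and solvability is unaffected.

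For the final limiting statement, I would invoke directly the continuity clauses in Theorem~\ref{thm:squid_and_solve}: if moreover $G\in\mathcal{H}_\ell\{x,y\}$, i.e.\ $G$ is defined and bounded on $\mathcal{E}_\ell\times\rho\ww D\times\neigh$ with continuous extension to the closure, then by part~(2) the sectorial solutions satisfy $F^j_{\ell,\varepsilon}\to F^j_0$ uniformly on compacts of $V^j_0\times\neigh$ as $\varepsilon\to 0$ within $\mathcal{E}_\ell$, where $F^j_0$ are the canonical sectorial solutions of the limiting saddle-node. Likewise the sectorial first integrals $H^j_{\ell,\varepsilon}$ converge to those of the limiting saddle-node (formula~\eqref{eq:sectorial_first_integral}, together with the normalization~\eqref{eq:formal_transition_map} which is built to be continuous in $\varepsilon$). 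Since $\per[j][\ell](G)_\varepsilon = \frac{1}{2\ii\pi}T^j$ where $T^j$ is read off from $F^{j+1}-F^j=T^j\circ H^j$ on the intersection, and composition/division by the convergent first integrals is continuous on the relevant function spaces, passing to the limit gives $\per[][\ell](G)_\varepsilon\to\per(G_0)$ uniformly on $\neigh$ in the $h$-variable, with $\per$ the period operator of the limiting saddle-node from~\cite{Tey-EqH}.

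The main obstacle is purely bookkeeping: matching the function spaces ($\holb$ versus $\holf$, bounded-on-the-cell versus germ) and checking that the removable-singularity extension across $P_\varepsilon^{-1}(0)\times\{0\}$ is legitimate in the $\holf[\cdots]'$ category — this uses that we have arranged $G(x,0)=0$ is \emph{not} assumed here, so the extension is across the codimension-two set $\{P_\varepsilon(x)=0\}\cap\{y=0\}$ where the sectorial solutions are a priori only bounded, which is precisely the situation Riemann's theorem handles. No genuinely new estimate is needed; everything reduces to citing Theorem~\ref{thm:squid_and_solve} and Definition~\ref{def:period_operator} and tracking the limit through the (continuous) construction.
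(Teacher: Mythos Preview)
Your proposal is correct and follows essentially the same approach as the paper: the equivalence is read off directly from Theorem~\ref{thm:squid_and_solve}, the key point being that the existence of a global analytic solution $F$ is intrinsic and does not depend on which squid-sector decomposition (i.e.\ which cell $\mathcal{E}_\ell$) one uses to compute the period. The paper's proof is terser---it simply observes that (1)$\Leftrightarrow$(2) for each fixed $\ell$ by Theorem~\ref{thm:squid_and_solve}, and then that (2)$\Rightarrow$(3) because ``the analyticity of $F$ has nothing to do with the way the underlying squid sectors are cut''---but your cycle (1)$\Rightarrow$(3)$\Rightarrow$(2)$\Rightarrow$(1) via uniqueness of sectorial solutions is the same argument unpacked, and your treatment of the limit statement via the continuity clause of Theorem~\ref{thm:squid_and_solve}(2) is exactly what is intended.
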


\begin{proof}
For fixed $\varepsilon$ and $\ell$ Theorem~\ref{thm:squid_and_solve}
asserts the equivalence between existence of an analytic solution
$F$ of the cohomological equation $X_{\varepsilon}\cdot F=G$ and
vanishing of the period $\per[][\ell]\left(G\right)_{\varepsilon}$.
But the analyticity of $F$ has nothing to do with the way the underlying
squid sectors are cut, therefore $\per[][\widetilde{\ell}]\left(G\right)_{\varepsilon}=0$
as soon as $\varepsilon\in\mathcal{E}_{\widetilde{\ell}}$.
\end{proof}

\subsection{\label{subsec:squid_sectors}Description of (unbounded) squid sectors
and parameter cells}

To characterize the dynamics, describe the modulus of analytic classification
and more generally build the period operator, we need to work over
$k$ open \emph{squid sectors} in $x$-space covering either $\rho\disc\setminus P_{\varepsilon}^{-1}(0)$
(bounded case) or $\cc\setminus P_{\varepsilon}^{-1}(0)$ (unbounded
case). Since $\left\{ y=0\right\} $ is an analytic center manifold,
each sector in this paper is the union of two consecutive sectors
described originally in~\cite{RouTey}. The cited reference also
guarantees that it is sufficient to limit ourselves to the complement
of the discriminant hypersurface $\Delta_{k}\ni0$ in parameter space.
Although we only reach parameters for which all roots of $P_{\varepsilon}$
are simple, the construction passes without difficulty to the limit
$\varepsilon\to\Delta_{k}$. For $\varepsilon\notin\Delta_{k}$ the
squid sectors are attached to two or three roots. When $\varepsilon\to0$
they converge to the sectors used in the description of the Martinet-Ramis
modulus for convergent saddle-nodes. 

The singular points depend analytically on $\varepsilon\in\neigh[k]\backslash\Delta_{k}$.
To obtain a family of squid sectors suiting our needs, we must ensure
that the sectors vary continuously as $\varepsilon$ does. This is
however not achievable on a full pointed neighborhood of $\Delta_{k}$
in parameter space, for reasons we are about to explain (we particularly
refer to Remark~\ref{rem:modulus_behavior_near_roots}). Even so,
we manage to deal with all values of $\varepsilon$ by covering the
space $\neigh[k]$ with the closure of finitely many contractible
domains $\left(\mathcal{E}_{\ell}\right)_{\ell}$ in $\varepsilon$-space,
which we call \emph{cells}, on which admissible families of squid
sectors exist.

\subsubsection{The dynamics of $\dot{x}=P_{\varepsilon}(x)$}

Let us recall the main features of the vector field $P_{\varepsilon}\pp x$.
When $P_{\varepsilon}$ has distinct roots $x_{\varepsilon}$, each
singular point $x_{\varepsilon}$ has an associated nonzero eigenvalue
$\lambda_{\varepsilon}=P_{\varepsilon}'(x_{\varepsilon})$.
\begin{itemize}
\item The point $x_{\varepsilon}$ is a \textbf{radial node} if $\lambda_{\varepsilon}\in\rr$.
It is attracting (\emph{resp.} repelling) if $\lambda_{\varepsilon}<0$
(resp. $\lambda_{\varepsilon}>0$). 
\item The point $x_{\varepsilon}$ is a \textbf{center} if $\lambda_{\varepsilon}\in i\rr$. 
\item The point $x_{\varepsilon}$ is a \textbf{focus} if $\lambda_{\varepsilon}\notin\rr\cup\ii\rr$.
It is attracting (\emph{resp.} repelling) if $\re{\lambda_{\varepsilon}}<0$
(resp. $\re{\lambda_{\varepsilon}}>0$). 
\end{itemize}
The point $x=\infty$ serves as an organizing center; indeed, the
vector field $P_{\varepsilon}\pp x$ has a pole of order $k-1$ with
$2k$ separatrices at $x=\infty$, alternately attracting and repelling
(see Figure~\ref{fig:foliation_near_infinity}), thus limiting $2k$
saddle sectors at $\infty$. The system is structurally stable in
the neighborhood of $\infty$ for $\varepsilon$ small. These saddle
sectors give a phase portrait resembling $2k$ petals along the boundary
of any (sufficiently large) disk centered at the origin. The relationship
between the magnitude of the parameter and the size of the disk will
be detailed in Section~\ref{subsec:Size-of-sectors}.

\begin{figure}
\hfill{}\subfloat[Neighborhood of $\infty$ for $k=3$]{\includegraphics[width=4.5cm]{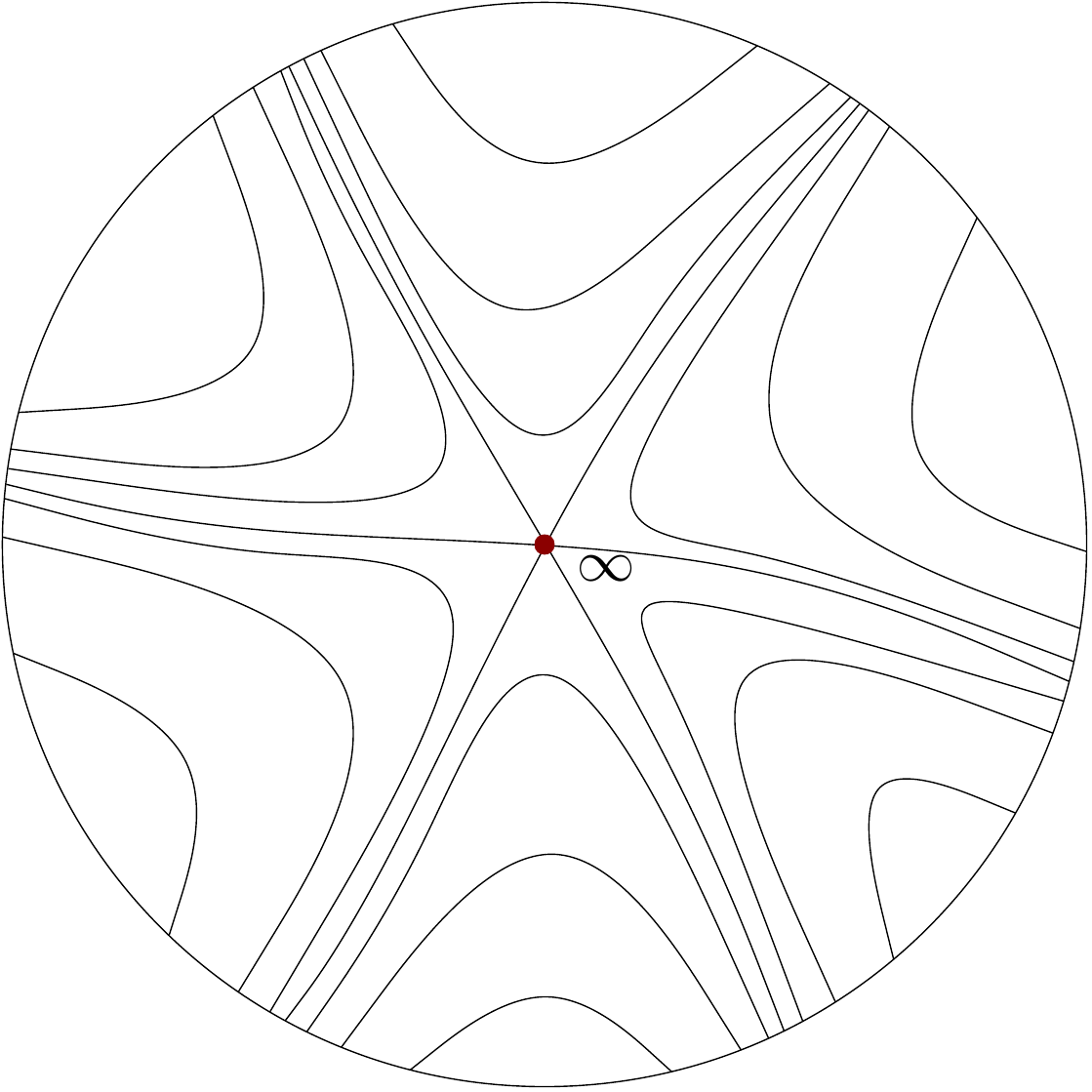}

}\hfill{}\subfloat[The neighborhood of $\rho\protect\disc$ ]{\includegraphics[width=4.5cm]{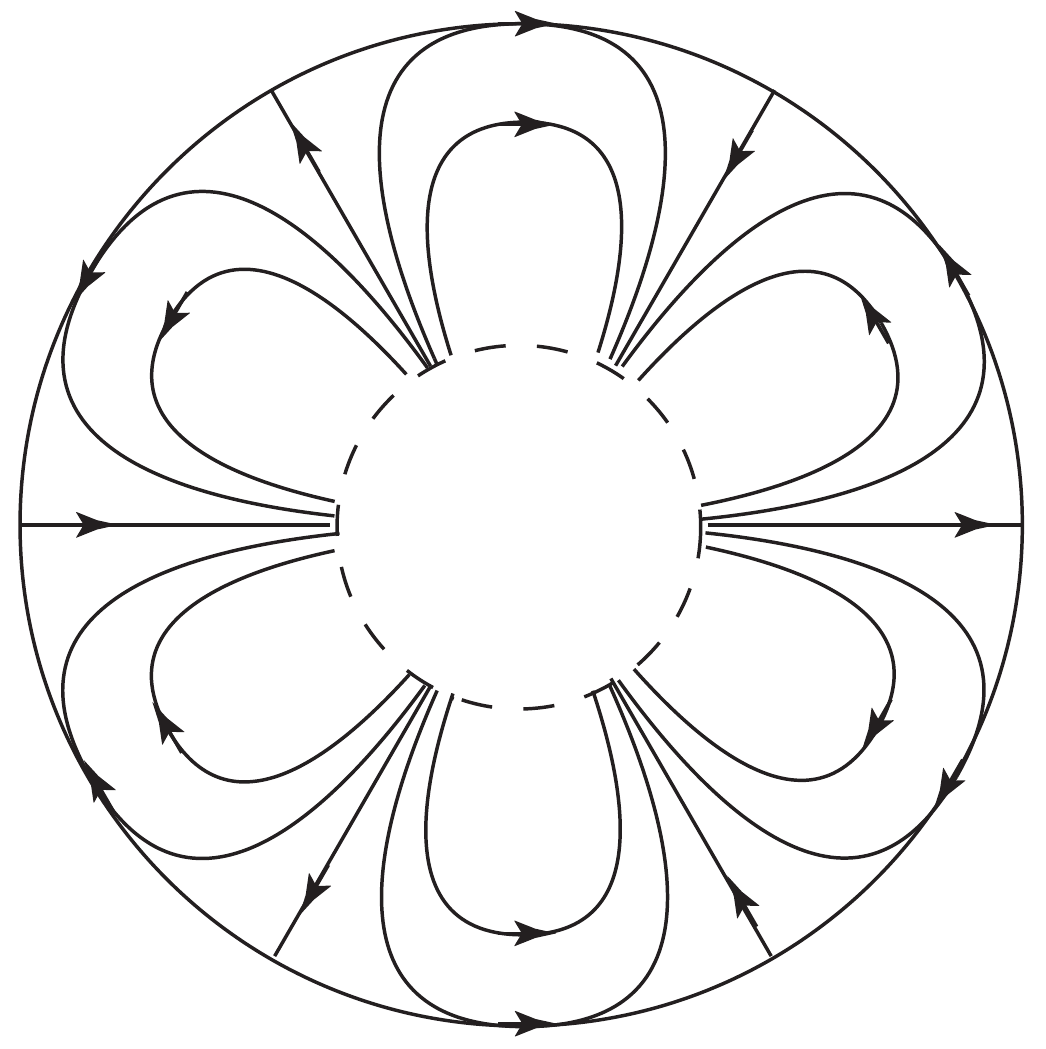}

}\hfill{} \caption{\label{fig:foliation_near_infinity}The separatrices of the pole at
$\infty$ and the petals along the boundary of the disk $\rho\protect\disc$.}
\end{figure}
The dynamics is completely determined by the separatrices of $\infty$.
Because all roots of $P_{\varepsilon}$ are simple, only two types
of behavior occur. 
\begin{itemize}
\item For generic values of $\varepsilon$, following the separatrices from
$\infty$ (either in backward or forward direction) one lands at repelling
($t\to-\infty$) or attracting ($t\to\infty$) singular points $x_{\varepsilon}$
of focus or radial node type. In that case, each singular point is
attached to at least one separatrix and the system is structurally
stable among polynomial systems of degree $k+1$. See Figure~\ref{fig:example_poly_foliation}
for a phase portrait with generic $\varepsilon$.
\item The sets of generic $\varepsilon$ are separated by bifurcation hypersurfaces
of (real) codimension $1$. For these non-generic values of $\varepsilon$
a homoclinic connection occurs between an attracting separatrix and
a repelling separatrix of infinity: there is then a real integral
curve flowing out from infinity in the $x$-plane and flowing back
to infinity in finite time. For these bifurcation sets, the singular
points $P_{\varepsilon}^{-1}\left(0\right)$ can be split into two
(nonempty) subsets $I_{1}$ and $I_{2}$ satisfying 
\begin{equation}
\sum_{x\in I_{m}}\frac{1}{P_{\varepsilon}'(x)}\in\ii\rr,\qquad m=1,~2.\label{eq:bifurcation_locus}
\end{equation}
This can be seen by integrating the $1$-form $\dd t=\frac{\dd x}{P_{\varepsilon}\left(x\right)}$
along a homoclinic orbit, and evaluating residues. When $I_{m}$ is
a singleton, the corresponding singular point is a center. 
\end{itemize}
\begin{figure}
\hfill{}\includegraphics[width=6cm]{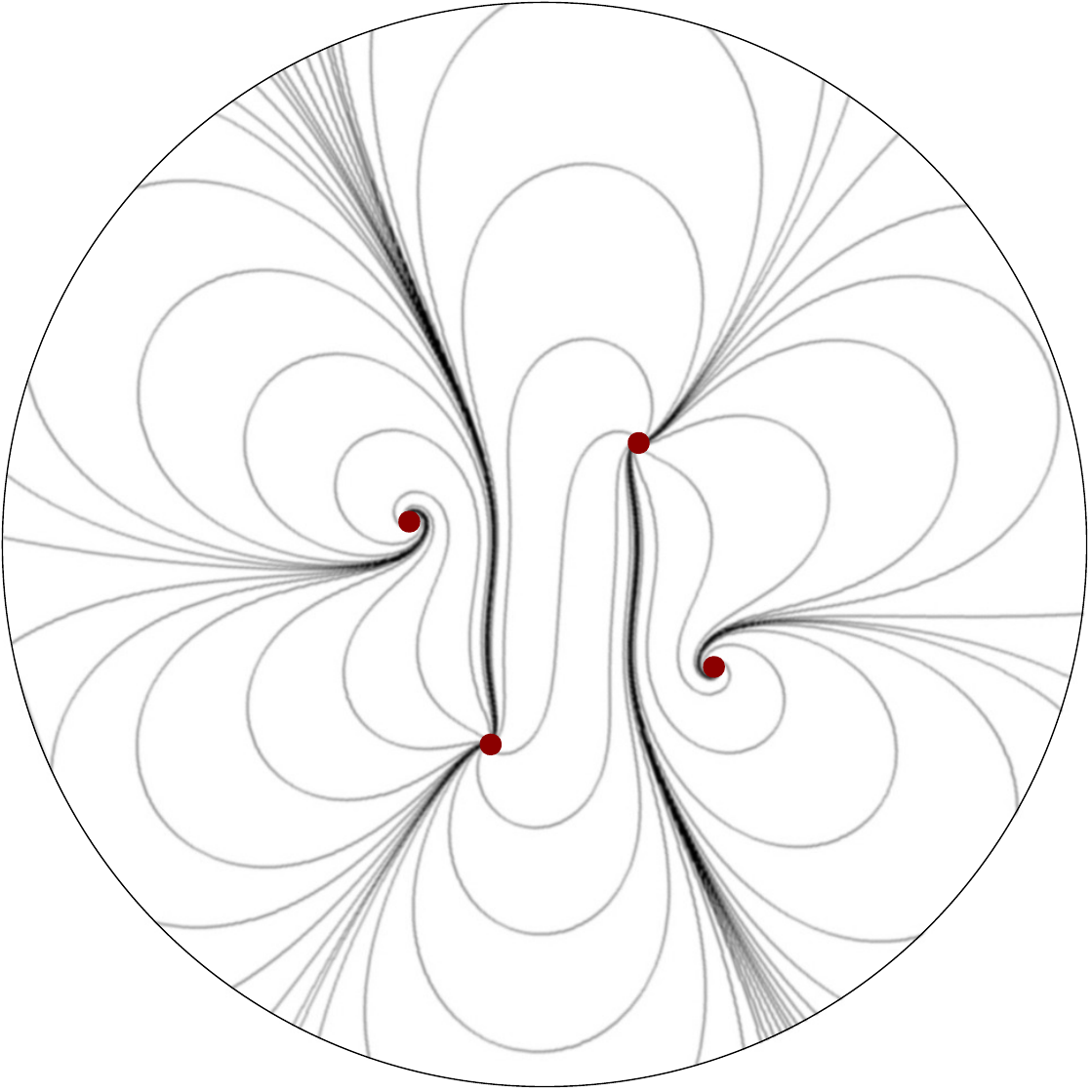}\hfill{}

\caption{\label{fig:example_poly_foliation}An example of a structurally stable
real foliation induced by a complex polynomial vector field of degree
$4$ for $\varepsilon$ in some $K_{\ell}$.}
\end{figure}
The union of the $2k$ separatrices of $\infty$ is called the \textbf{separating
graph} in \cite{DES} (see Figure~\ref{fig:skeleton}(A)). It splits
$\cc$ into $k$ simply connected regions. In each of these regions
we can draw a curve $\gamma_{j}$ connecting the interior of a saddle
sector at $\infty$ to the interior of another saddle sector (see
Figure~\ref{fig:skeleton}(B)). There are exactly $C_{k}=\frac{1}{k+1}\binom{2k}{k}$
ways of pairing two by two the saddle sectors of $\infty$ by non-intersecting
curves, thus providing a topological invariant for the vector field
(we also refer to~\cite{DiaCata}). 

\begin{figure}
\hfill{}\subfloat[\foreignlanguage{english}{The separating graph}]{\includegraphics[width=5.5cm]{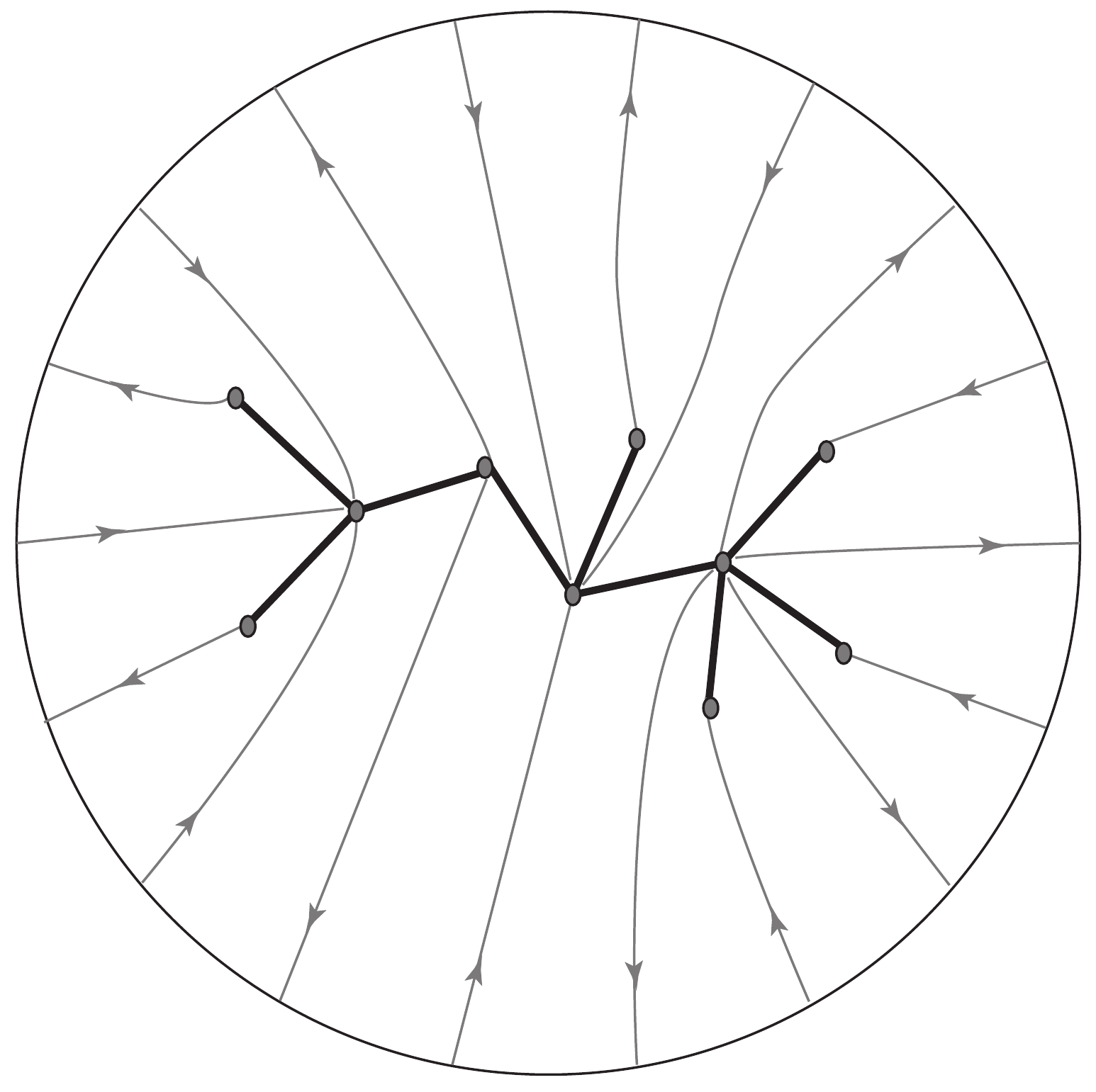}

}\hfill{}\subfloat[\foreignlanguage{english}{The curves $\gamma_{i}$ }]{\includegraphics[width=5.5cm]{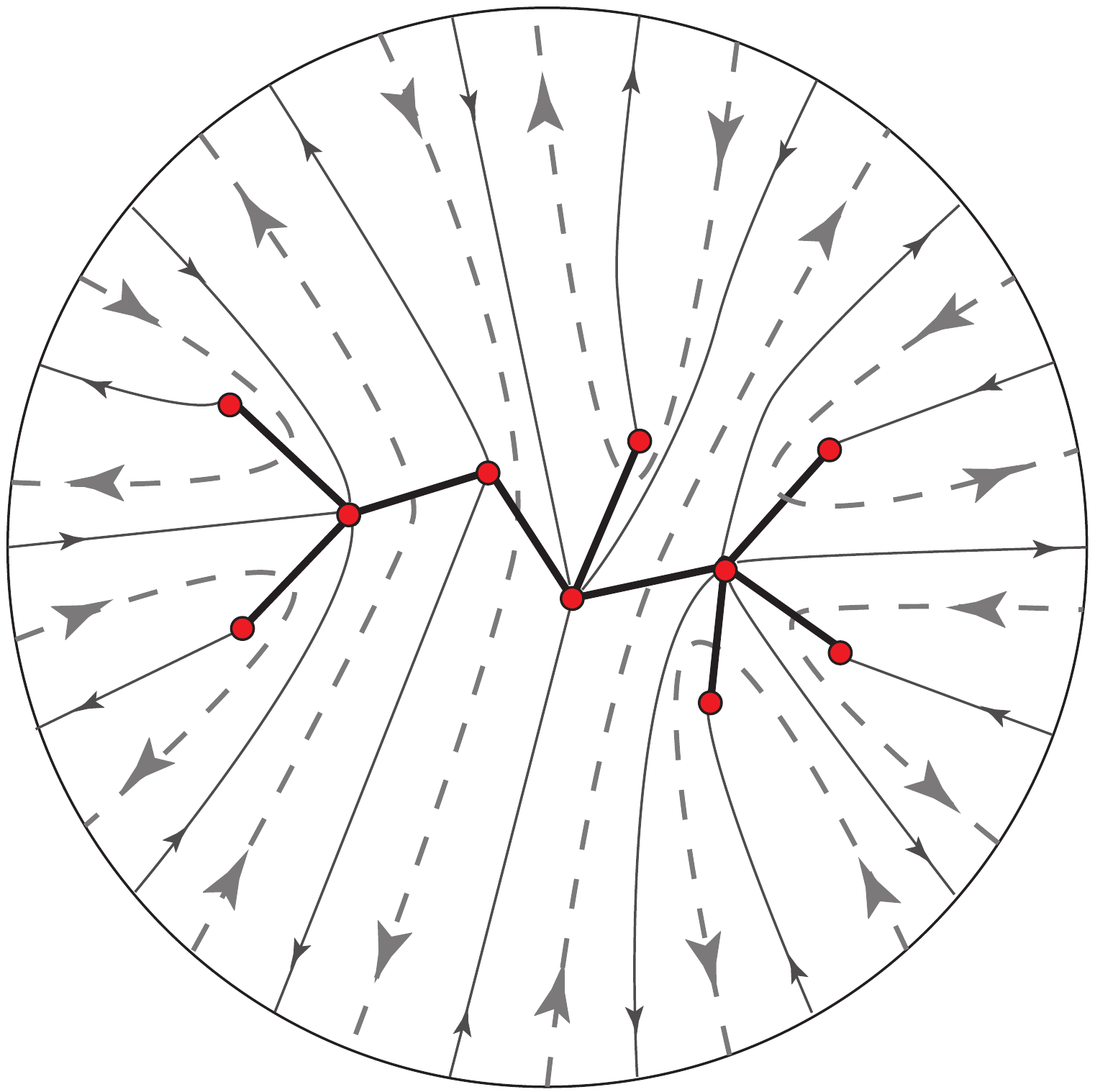}

}\hfill{}

\caption{\label{fig:skeleton}On the left, the separating graph formed by the
separatrices landing at the singular points and flow lines (in bold)
between the singular points. On the right, the curves $\gamma_{i}$
(in dotted lines) used to calculate the $\tau_{i}$.}
\end{figure}

\subsubsection{Rough description of the cells $\mathcal{E}_{\ell}$ in parameter
space}

The non-generic values of $\varepsilon$ form a set of (real) codimension
$1$ which partitions a convenient neighborhood of $0$ in parameter
space (to be described slightly later) into $C_{k}$ open regions
$K_{\ell}$, corresponding to structurally stable vector fields  with
same topological invariant. In each region $K_{\ell}$, the topology
of the phase portrait is completely determined by the topological
way of attaching the $2k$ separatrices to the $k+1$ singular points.
If $x_{\varepsilon}$ is a root of $P_{\varepsilon}$ (depending continuously
on $\varepsilon$) then $\re{P_{\varepsilon}'(x_{\varepsilon})}$
has a constant sign for all $\varepsilon\in K_{\ell}$. Each cell\textbf{
}$\mathcal{E}_{\ell}$ in parameter space will be a small enlargement
of $K_{\ell}$, so that the cells cover the complement of $\Delta_{k}$.

A.~\noun{Douady} and P.~\noun{Sentenac} have also provided a very
clever parametrization of the domains $K_{\ell}$, thus showing that
they are simply connected.
\begin{thm}
\label{thm:DES_cells}\cite{DES} Let $K_{\ell}$ be a maximal domain
corresponding to structurally stable vector fields. Then, there exists
a biholomorphism $\Phi_{\ell}:K_{\ell}\rightarrow\ww H^{k}$, where
$\ww H$ is the upper half-plane. In particular, $K_{\ell}$ is contractible.
The set $\Phi_{\ell}^{-1}\left(\left(\ii\rr_{\geq0}\right)^{n}\right)$,
which we call the \textbf{spine} of $K_{\ell}$, corresponds to polynomial
vector fields with real eigenvalues at each singular point.

The map $\Phi_{\ell}$ is defined as follows: let $\left(\gamma_{\varepsilon}^{j}\right)_{j\in\left\{ 1,\dots,k\right\} }$
be $k$ disjoint loops attached to $\infty$ and pairing the saddle
sectors of $\infty$, without intersecting the separating graph. Then
$\Phi_{\ell}(\varepsilon)=(\tau_{\varepsilon}^{1},\dots,\tau_{\varepsilon}^{k})$,
where 
\begin{align*}
\tau_{\varepsilon}^{j}: & =\int_{\gamma_{\varepsilon}^{j}}\dd t=\int_{\gamma_{\varepsilon}^{j}}\frac{\dd x}{P_{\varepsilon}(x)},
\end{align*}
the orientation of $\gamma_{\varepsilon}^{j}$ being chosen so that
$\im{\tau_{\varepsilon}^{j}}>0$.

Since $\tau_{\varepsilon}^{j}=2\ii\pi\sum_{x\in I}\frac{1}{P_{\varepsilon}'(x)}$,
where $I$ is the set of singular points in a domain bounded by $\gamma_{\varepsilon}^{j}$,
the sum $\tau_{\varepsilon}^{j}$ admits an analytic continuation
outside $K_{\ell}$. In particular, when $\varepsilon$ is a boundary
point of $K_{\ell}$ for which there is a homoclinic loop through
$\infty$, some of the $\tau_{\varepsilon}^{j}$ become real.
\end{thm}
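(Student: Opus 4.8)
The heart of the matter is to produce a holomorphic inverse of $\Phi_{\ell}$ by a surgery ("cut-and-paste") in the time coordinate $t$, for which $\dd t=\omega_{\varepsilon}:=\frac{\dd x}{P_{\varepsilon}(x)}$; once this is done the remaining assertions are soft. The plan is: (a) check that each $\tau^{j}$ is holomorphic on $K_{\ell}$ and that $\Phi_{\ell}(K_{\ell})\subset\ww H^{k}$; (b) show that any prescribed period vector $(\tau^{1},\dots,\tau^{k})\in\ww H^{k}$ is realized by a unique $\varepsilon\in K_{\ell}$, with holomorphic dependence; (c) conclude that $\Phi_{\ell}$ is a holomorphic bijection, hence a biholomorphism, and read off the spine statement. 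For (a): on $\neigh[k]\backslash\Delta_{k}$ the roots of $P_{\varepsilon}$ depend holomorphically on $\varepsilon$, and on the structurally stable cell $K_{\ell}$ the separating graph has constant combinatorial type, so the loops $\gamma_{\varepsilon}^{j}$ pairing the saddle sectors of $\infty$ may be chosen to vary continuously. Then $\tau_{\varepsilon}^{j}=\int_{\gamma_{\varepsilon}^{j}}\omega_{\varepsilon}$ is a period of the holomorphic family $\omega_{\varepsilon}$ along a continuously moving cycle, hence holomorphic in $\varepsilon$; by the residue theorem it equals $2\ii\pi\sum_{x\in I_{j}}\frac{1}{P_{\varepsilon}'(x)}$, where $I_{j}$ collects the roots enclosed by $\gamma_{\varepsilon}^{j}$. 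With the chosen orientation $\im{\tau_{\varepsilon}^{j}}\geq0$, and $\im{\tau_{\varepsilon}^{j}}=0$ would mean $\sum_{x\in I_{j}}\frac{1}{P_{\varepsilon}'(x)}\in\rr$, i.e. a homoclinic connection through $\infty$ as in~\eqref{eq:bifurcation_locus}, forbidden on $K_{\ell}$; hence $\Phi_{\ell}(K_{\ell})\subset\ww H^{k}$.

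For step (b), fix $(\tau^{1},\dots,\tau^{k})\in\ww H^{k}$. The non-crossing chord diagram attached to the cell $K_{\ell}$, decorated by these $k$ complex numbers, is exactly the combinatorial-metric data needed to glue flat half-planes/strips (carrying $\dd t$) along their boundary rays into a translation surface whose conformal completion $S$ is equipped with a meromorphic $1$-form $\omega$ having precisely $k+1$ simple poles, with residues summing to $0$, and a single zero of order $k-1$ (the cone point of angle $2\pi k$ accounting for the $2k$ separatrices of $\infty$). The divisor of $\omega$ then has degree $-2$, so $S$ has genus $0$, i.e. $S\simeq\proj$; sending the zero of $\omega$ to $\infty$ forces $\omega=\frac{c\,\dd x}{Q(x)}$ for a degree-$(k+1)$ polynomial $Q$, and a unique affine reparametrization of $x$ (up to the $\zsk$-ambiguity already met in Definition~\ref{def:canonical_parameter}) turns $Q$ into some $P_{\varepsilon}$ with $c=1$. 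One verifies that this $\varepsilon$ lies in $K_{\ell}$, realizes the prescribed combinatorics, and satisfies $\Phi_{\ell}(\varepsilon)=(\tau^{j})_{j}$; uniqueness is built in, and the assignment $(\tau^{j})\mapsto\varepsilon$ is holomorphic because the gluing parameters are. This exhibits a holomorphic two-sided inverse of $\Phi_{\ell}$.

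An alternative to the explicit inverse is to argue that $\Phi_{\ell}$ is a proper local biholomorphism of $K_{\ell}$ onto $\ww H^{k}$: locality follows from a Vandermonde-/Wronskian-type nonvanishing of $\det\left(\ppp{\tau^{j}}{\varepsilon_{i}}\right)$, using $\ppp{\tau^{j}}{\varepsilon_{i}}=-\int_{\gamma_{\varepsilon}^{j}}\frac{x^{i}\,\dd x}{P_{\varepsilon}(x)^{2}}$ together with the simplicity of the roots on $K_{\ell}$; properness holds because reaching $\partial K_{\ell}$ inside $\neigh[k]\backslash\Delta_{k}$ creates a homoclinic loop and pushes some $\tau^{j}$ onto $\rr$. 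A proper local biholomorphism onto the simply connected $\ww H^{k}$ is a covering, hence a homeomorphism, hence a biholomorphism. Either way we reach step (c): since a holomorphic bijection between domains of $\cc^{k}$ is automatically biholomorphic, $\Phi_{\ell}:K_{\ell}\to\ww H^{k}$ is a biholomorphism, so $K_{\ell}$ is contractible.

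Finally, on the spine $\Phi_{\ell}^{-1}((\ii\rr_{\geq0})^{k})$ all sums $\sum_{x\in I_{j}}\frac{1}{P_{\varepsilon}'(x)}$ are real; choosing the $\gamma_{\varepsilon}^{j}$ as a fan so that $I_{1}\subsetneq\dots\subsetneq I_{k}$ with singleton increments, and using $\sum_{x\in P_{\varepsilon}^{-1}(0)}\frac{1}{P_{\varepsilon}'(x)}=0\in\rr$ (the sum of residues of $\omega_{\varepsilon}$, the one at $\infty$ vanishing since $k\geq1$), subtracting consecutive relations yields $\frac{1}{P_{\varepsilon}'(x)}\in\rr$ for every root $x$, i.e. every eigenvalue $P_{\varepsilon}'(x_{\varepsilon})$ is real; running the computation backwards gives the converse. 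As stressed above, the genuine obstacle is the surgery of step (b): items (a), (c) and the spine are residue bookkeeping, whereas rebuilding the polynomial vector field from a period vector requires the full translation-surface argument of Douady and Sentenac.
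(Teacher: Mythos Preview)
The paper does not prove this theorem: it is stated with a citation to \cite{DES} (Douady--Sentenac) and used as a black box, so there is no ``paper's own proof'' to compare against. Your sketch is in fact a reasonable outline of the Douady--Sentenac argument, with the translation-surface surgery of step~(b) being precisely their construction.

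That said, two points in your write-up deserve tightening. First, in the spine argument you ``choose the $\gamma_{\varepsilon}^{j}$ as a fan so that $I_{1}\subsetneq\dots\subsetneq I_{k}$''. You are not free to do this: the non-crossing chord diagram (equivalently the permutation $\sigma$ of Section~\ref{subsec:Non-crossing-permutation}) is dictated by the cell $K_{\ell}$, and the sets $I_{j}$ form a laminar family that is generally not a chain. The correct linear-algebra statement is that the dual graph of the $k+1$ regions is a tree with $k$ edges, and the indicator vectors of the edge-cuts $I_{j}$ together with the all-ones vector always span $\rr^{k+1}$; from this and $\sum_{x}1/P_{\varepsilon}'(x)=0$ you recover the reality of each individual $1/P_{\varepsilon}'(x)$. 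Second, your alternative ``proper local biholomorphism'' route is more of a heuristic than a proof as written: the Jacobian nonvanishing is not a one-line Vandermonde computation (the $\tau^{j}$ are sums of residues over subsets $I_{j}$, and differentiating involves both $\partial P_{\varepsilon}'/\partial\varepsilon_{i}$ and the implicit variation of the roots), and properness requires separately treating escape to $\Delta_{k}$ (where one $\tau^{j}\to\infty$) and to infinity in parameter space (where all $\tau^{j}\to0\in\partial\ww H$). The surgery argument you give first is the robust one; the alternative would need substantially more detail to stand on its own.
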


\bigskip{}

The cells have a very useful conic structure, induced by a multiplicative
action of $\rr_{>0}\ni\lambda$ through linear rescaling 
\begin{align}
\left(\varepsilon_{k-1},\dots,\varepsilon_{0},x,t\right) & \longmapsto\left(\lambda^{-(k-2)}\varepsilon_{k-1},\dots,\varepsilon_{1},\lambda\varepsilon_{0},\lambda x,\lambda^{-k}t\right),\label{eq:time_rescalling}
\end{align}
as indeed the differential equation $\dot{x}=P_{\varepsilon}(x)$
is invariant under this action. The cones we use are of the form 
\[
\left\{ \left(\lambda^{2}\varepsilon_{k-1},\dots,\lambda^{k}\varepsilon_{1},\lambda^{k+1}\varepsilon_{0}\right)~:~\lambda\in\left]0,1\right[,~\varepsilon\in K\right\} ,
\]
where $K$ is a relative domain within a sphere-like real hypersurface.
This compact hypersurface takes the form $\left\{ \norm[\varepsilon]{}=\cst\right\} $
with
\begin{equation}
\norm[\varepsilon]{}:=\max\left(|\varepsilon_{k-1}|^{\frac{1}{2}},\dots,|\varepsilon_{1}|^{\frac{1}{k}},|\varepsilon_{0}|^{\frac{1}{k+1}}\right).\label{eq:norm_eps}
\end{equation}

\begin{rem}
The expression~\ref{eq:norm_eps} does not define a norm because
the homogeneity axiom is not satisfied. However, if we take into account
that the $\varepsilon_{j}$ are the symmetric functions in the roots
$\left(x_{0},\dots,x_{k}\right)\in\cc^{k+1}$ of $P_{\varepsilon}$,
it lifts to a norm on $\cc^{k+1}$. Thus $\norm[\varepsilon]{}$ measures
the magnitude of the parameter $\varepsilon$ and the $\norm[\bullet]{}$-balls
form a fundamental basis of neighborhood of $0$. In the following
we consider only these parametric neighborhoods.
\end{rem}

The regions $K_{\ell}$ of structural stability defined above are
cones of this form, and so will be their enlargements to cells $\mathcal{E}_{\ell}$
covering the complement of $\Delta_{k}$. Also, when considering limits
for $\varepsilon\to0$ it will be natural to consider limits for $\lambda\to0$
along orbits of the $\rr_{>0}$-action 
\begin{equation}
\left\{ \left(\lambda^{2}\varepsilon_{k-1},\dots,\lambda^{k}\varepsilon_{1},\lambda^{k+1}\varepsilon_{0}\right))~:~\lambda\in\left]0,1\right[\right\} .\label{eq:curve}
\end{equation}

\subsubsection{\label{subsec:admissible_angles}Saddle- and node-like singular points,
admissible angles}

We want to stress that a singular point $x_{\varepsilon}$ of $\dot{x}=P_{\varepsilon}(x)$
with non-real eigenvalue $\lambda=a+\ii b$ can be both attracting
and repelling depending on how we approach it along logarithmic spirals.
Making sense of this statement entails complexifying the time. Let
us explain how.
\begin{itemize}
\item Consider the linear equation $\dot{x}=\lambda x$. Its solutions are
$x(t)=x_{0}\exp\left(\lambda t\right)$. Now, let us allow complex
values of $t$ along some slanted real line $t=(c+\ii d)T=T\exp\left(\ii\theta\right)$
in $\cc$-space for some fixed $c+\ii d\in\mathbb{S}^{1}$, with $c>0$
(corresponding to $\theta\in\left]-\frac{\pi}{2},\frac{\pi}{2}\right[$)
and $T\in\rr$. Then 
\begin{align*}
x\left(t\left(T\right)\right) & =x_{0}\exp\left(\left(\left(ac-bd\right)+\ii\left(ad+bc\right)\right)T\right),
\end{align*}
and $\lim_{T\to+\infty}x(t(T))=0$ (\emph{resp}. $\lim_{T\to-\infty}x(t(T))=0$)
when $ac-bd<0$ (\emph{resp}. $ac-bd>0$). 
\item Since $b\neq0$, it is always possible to find $c_{1}>0$, $d_{1}$
(\emph{resp}. $c_{2}>0$, $d_{2}$ ) such that $ac_{1}-bd_{1}>0$
(\emph{resp}. $ac_{2}-bd_{2}<0$).
\item Note that approaching the singular point along a line $t=(c+id)T$
in $t$-space is the same as approaching it along a real trajectory
of the rotated equation $\ddd xT=\lambda\exp\left(\ii\theta\right)\times x$.
Such a trajectory is a logarithmic spiral.
\item All these properties hold for the original system too, since the vector
field $P_{\varepsilon}\pp x$ is analytically linearizable near the
singular point (Poincaré's theorem).
\end{itemize}
Locally around each root $x_{\varepsilon}$ the squid sectors will
coincide with domains bounded by asymptotic logarithmic spirals, given
by trajectories of rotated vector fields $\exp\left(\ii\theta_{\varepsilon}\right)P_{\varepsilon}\pp x$.
The angular function $\left(\varepsilon,x\right)\in\mathcal{E}_{\ell}\times\cc\mapsto\theta_{\varepsilon}\left(x\right)\in\left]-\frac{\pi}{4},\frac{\pi}{4}\right[$
will be piecewise constant and zero outside a neighborhood of $\partial K_{\ell}$,
and for $x$ far from the singular points.
\begin{defn}
\label{def:node-saddle_root}Let $\mathcal{E}$ be a domain in the
complement of $\Delta_{k}$. 
\begin{enumerate}
\item An \textbf{admissible angle} on $\mathcal{E}$ is a piecewise constant
function $\theta~:~\mathcal{E}\times\cc\to\left]-\frac{\pi}{4},\frac{\pi}{4}\right[$
such that for any analytic family of roots $\left(x_{\varepsilon}\right)_{\varepsilon\in\mathcal{E}}$
of $\left(P_{\varepsilon}\right)_{\varepsilon\in\mathcal{E}}$, the
function $\varepsilon\in\mathcal{E}\mapsto\re{P_{\varepsilon}'\left(x_{\varepsilon}\right)\exp\left(\ii\theta_{\varepsilon}\left(x_{\varepsilon}\right)\right)}$
keeps a constant sign. In the following we use the notation
\begin{align}
\vartheta & :=\exp\left(\ii\theta\right).\label{eq:vartheta}
\end{align}
\item We say that an analytic family $\left(x_{\varepsilon}\right)_{\varepsilon\in\mathcal{E}}$
of singular points of $\dot{x}=P_{\varepsilon}(x)$ is of \textbf{node
type} on $\mathcal{E}$ if there exists an admissible angle such that
\begin{align*}
\re{P_{\varepsilon}'\left(x_{\varepsilon}\right)\vartheta_{\varepsilon}\left(x_{\varepsilon}\right)} & >0~~~~~\left(\forall\varepsilon\in\mathcal{E}\right)
\end{align*}
and of \textbf{saddle type} on $\mathcal{E}$ if
\begin{align*}
\re{P_{\varepsilon}'\left(x_{\varepsilon}\right)\vartheta_{\varepsilon}\left(x_{\varepsilon}\right)} & <0~~~~~\left(\forall\varepsilon\in\mathcal{E}\right).
\end{align*}
We use the notation $\left(\nod x{\varepsilon}{}\right)_{\varepsilon}$
(\emph{resp.} $\left(\sad x{\varepsilon}{}\right)_{\varepsilon}$)
for a family of roots of node (\emph{resp.} saddle) type on the domain
$\mathcal{E}$.
\end{enumerate}
\end{defn}

\begin{rem}
\label{rem:modulus_behavior_near_roots}

\begin{enumerate}
\item The cells $\mathcal{E}_{\ell}$ in parameter space will be small contractible
enlargements of the cones $K_{\ell}$, on which there exist admissible
angles. Additional constraints will be demanded to these angular functions
in order to guarantee that the cells and sectors meet all technical
requirements.
\item The choice of $\frac{\pi}{4}$ for an upper bound in the size of an
admissible angle $\theta$ is arbitrary as any bound $\alpha\in\left]0,\frac{\pi}{2}\right[$
would do. However the larger $\alpha$, the smaller the bound $\rho$
on $\norm[\varepsilon]{}$. Indeed we approach each singular point
along a trajectory of some vector field $\vartheta_{\varepsilon}\left(x\right)P_{\varepsilon}\left(x\right)$.
When $\theta$ is large and the singular points are not far enough
from $r\sone$, the trajectory follows wide spirals and may escape
$r\ww D$ before landing at the singular point. An ``absolute''
(\emph{i.e.} independent of the bound $\alpha$) necessary condition
for the existence of an admissible angle such that $\left(x_{\varepsilon}\right)_{\varepsilon}$
has node- (\emph{resp. }saddle-) type on a neighborhood of $K_{\ell}$
is that $P_{\varepsilon}'\left(x_{\varepsilon}\right)\notin\rr_{<0}$
(\emph{resp}. $P_{\varepsilon}'\left(x_{\varepsilon}\right)\notin\rr_{>0}$)
for $\varepsilon\in K_{\ell}$. Therefore no admissible angle exists
on a full pointed neighborhood of $\Delta_{k}$. 
\item We can illustrate on the formal normal form why admissible angles
are of paramount importance. In the flow system of $\vartheta\fonf$
for real time 
\begin{align*}
\begin{cases}
\dot{x} & =\vartheta_{\varepsilon}\left(x\right)P_{\varepsilon}\left(x\right)\\
\dot{y} & =\vartheta_{\varepsilon}\left(x\right)y\left(1+\mu_{\varepsilon}x^{k}\right)
\end{cases}
\end{align*}
the variation of the modulus $\phi:=\left|y\right|^{2}=y\overline{y}$
of a solution follows the law 
\begin{align*}
\dot{\phi} & =2\phi\re{\vartheta_{\varepsilon}\left(x\right)\left(1+\mu_{\varepsilon}x^{k}\right)}.
\end{align*}
Close enough to the singularity $\left(x_{\varepsilon},0\right)$
all non-zero solutions therefore accumulate backwards exponentially
fast on $\left(x_{\varepsilon},0\right)$ if $x_{\varepsilon}=\nod x{\varepsilon}{}$
is of node type or, on the contrary, diverge forwards exponentially
fast for a saddle type root $\sad x{\varepsilon}{}$. This behavior
mimics that of a node~/~saddle planar foliation near a point with
real residue $\vartheta_{\varepsilon}\left(x_{\varepsilon}\right)P_{\varepsilon}'\left(x_{\varepsilon}\right)$.
This dynamical dichotomy is the cornerstone of the construction of
the period operator (the modulus of classification) in~\cite{RouTey}.
\end{enumerate}
\end{rem}

\subsubsection{\label{subsec:Size-of-sectors}Size of sectors and of the parameter}

The diameter $\rho$ of the bounded part of the sectors is such that
$|1+\mu x^{k}|>\frac{1}{2}$ when $\left|x\right|<\rho$. Note that
the roots of $P_{\varepsilon}$ all lie within $\sqrt{k}\norm[\varepsilon]{}\adh{\ww D}$.
Indeed it suffices to show that if $\left|x\right|>\sqrt{k}\norm[\varepsilon]{}$,
then $P_{\varepsilon}(x)\neq0$. On the one hand $\left|x^{k+1}\right|>k^{\frac{k+1}{2}}\norm[\varepsilon]{}^{k+1}$.
On the other hand 
\[
\left|\sum_{j=0}^{k-1}\varepsilon_{j}x^{j}\right|\leq\norm[\varepsilon]{}^{k+1}\sum_{j=0}^{k-1}k^{\frac{j}{2}}\leq\norm[\varepsilon]{}^{k+1}k^{\frac{k+1}{2}}.
\]

In fact outside the disk $\sqrt{k}\norm[\varepsilon]{}\adh{\disc}$
the trajectories of $P_{\varepsilon}\pp x$ are petals as depicted
in Figure~\ref{fig:foliation_near_infinity}~(\noun{B}). Set 
\begin{equation}
\rho_{\varepsilon}:=2\sqrt{k}\norm[\varepsilon]{}.\label{eq:rho_eps}
\end{equation}
Then we choose $\varepsilon$ sufficiently small so that $\rho_{\varepsilon}<\frac{\rho}{2}$.
Later in Lemma~\ref{lem:integrable_first-integral} we will further
reduce $\rho$ and $\varepsilon$ so that 
\begin{align}
\left|\mu x^{k}\right|+2\rho\left|P''\left(x\right)\right| & \leq\frac{3}{4}\label{eq:polynomial_infty_estim}
\end{align}
 for $|x|<\rho$.

\subsubsection{The ideal construction of sectors}

Let us now choose a cone $K_{\ell}$ and describe the corresponding
open squid sectors $\left(V_{\ell,\varepsilon}^{j}\right)_{j\in\zsk}$
covering $\rho\disc\setminus P_{\varepsilon}^{-1}(0)$. On a ``large''
neighborhood of the spine of $K_{\ell}$ (to be made precise below),
\emph{i.e.} not too close to the boundary of $K_{\ell}$, they are
limited by real trajectories of $P_{\varepsilon}\pp x$ chosen as
follows (see also Figure~\ref{fig:ideal-decomposition}).

\begin{figure}[H]
\hfill{}\includegraphics[width=8cm]{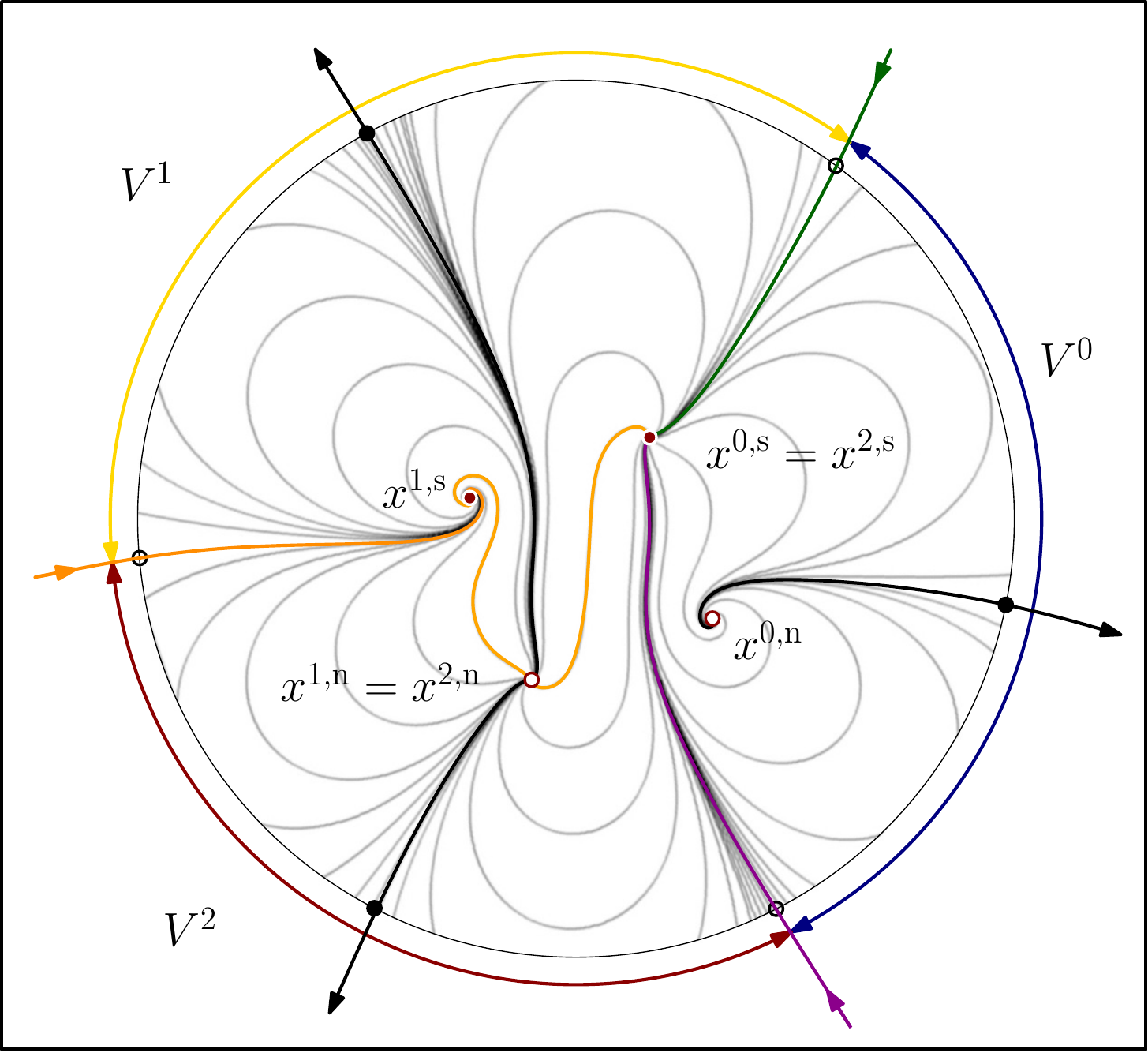}\hfill{}

\caption{\label{fig:ideal-decomposition}Curves involved in the ideal decomposition.
Stable separatrices at $\infty$ in black, unstable ones in green,
orange and purple. }
\end{figure}
\begin{enumerate}
\item The unstable separatrices of $P_{\varepsilon}\pp x$ through $\infty$
split $\rho\sone$ into $k$ arcs. We enlarge slightly these arcs
to an open covering of the circle. Each arc is one piece of the boundary
of a sector $V_{\varepsilon}^{j}$. 
\item Two other pieces of the boundary of $V_{\varepsilon}^{j}$ are given
by the forward trajectories of $P_{\varepsilon}\pp x$ through the
endpoints of the arc, which land in singular points $\sad x{}{j-1,}$
and $\sad x{}{j,}$ (not necessarily distinct) such that $\re{P_{\varepsilon}'\left(\sad x{}{j,}\right)}<0$
(\emph{i.e.} the roots are of saddle type). These trajectories spiral
as soon as $\im{P_{\varepsilon}'\left(\sad x{}{j,}\right)}\neq0$
(which is the generic situation). 
\item Suppose $\sad x{}{j,}\neq\sad x{}{j-1,}$. For a given boundary arc
of $\rho\sone$ there exists one stable separatrix through $\infty$
which cuts it at one point and lands at root $\nod x{}{j,}$ of node
type. This singular point belongs to the boundary of $V_{\varepsilon}^{j}$.
The last two pieces of the boundary are two complete trajectories
of $P_{\varepsilon}\pp x$, one joining $\nod x{}{j,}$ to $\sad x{}{j-1,}$
and the other joining $\nod x{}{j,}$ to $\sad x{}{j,}$. These trajectories
are chosen in such a way that $\left(V_{\varepsilon}^{j}\right)_{j\in\zsk}$
cover $\rho\disc\setminus P_{\varepsilon}^{-1}(0)$. 
\item When $\sad x{}{j,}=\sad x{}{j-1,}$, we introduce two trajectories
between $\sad x{}{j,}$ and $\nod x{}{j,}$, thus introducing a self-intersection
of $V_{\varepsilon}^{j}$. This is motivated by the need of dealing
with ramified functions near $\nod x{}{j,}$. See Figure~\ref{fig:Non-equivalent-decompositions}~(A).
\end{enumerate}
\begin{figure}[h]
\hfill{}\includegraphics[width=8cm]{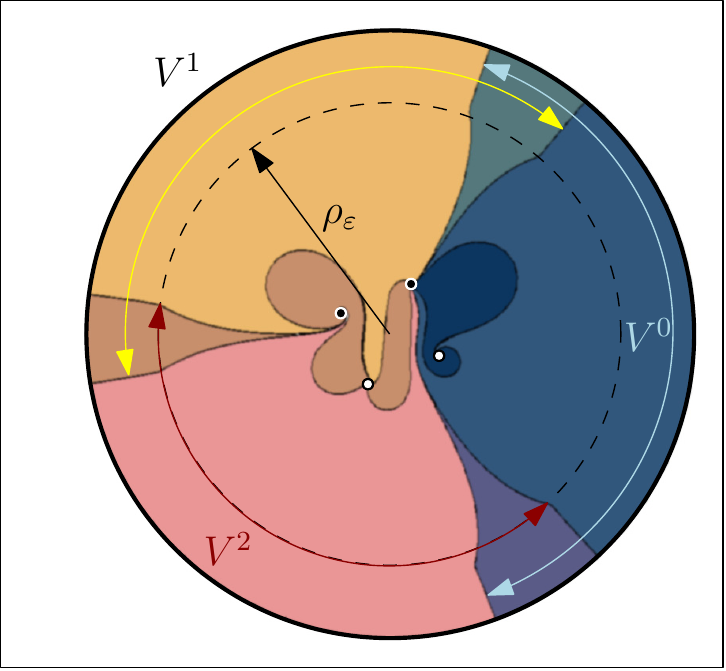}\hfill{}

\caption{\foreignlanguage{english}{\label{fig:Squid_k=00003D3}\foreignlanguage{american}{Decomposition
into bounded, overlapping squid sectors induced by the flow depicted
in Figures~\ref{fig:example_poly_foliation} and~\ref{fig:ideal-decomposition}.}}}
\end{figure}

\subsubsection{The problem with the ideal construction of sectors}

Of course the ideal construction will not always work. It can fail
for the following reasons. For a set $I\subset P_{\varepsilon}^{-1}\left(0\right)$
and $\varepsilon\notin\Delta_{k}$ define
\begin{align}
\nu_{\varepsilon}\left(I\right) & :=\sum_{x\in I}\frac{1}{P_{\varepsilon}'(x)}.\label{eq:subset_residue}
\end{align}

\begin{itemize}
\item The first one is when $\varepsilon$ is not generic: the separatrices
may form a homoclinic loop preventing them to land at singular points.
A homoclinic loop $\gamma$ through $\infty$ partitions the set of
singular points $P_{\varepsilon}^{-1}\left(0\right)$ into $I$ and
$I'$ such that 
\begin{equation}
\re{\nu_{\varepsilon}\left(I\right)}=\re{\nu_{\varepsilon}\left(I'\right)}=0.\label{eq:homoclinic}
\end{equation}
\item When $\varepsilon$ is close to a hypersurface corresponding to a
homoclinic loop, it can also occur that the trajectories through the
endpoints of the arc first exit the disk $\rho\ww D$ before landing
at a singular point. 
\item When $\varepsilon$ crosses a hypersurface corresponding to a homoclinic
loop, then $\re{P_{\varepsilon}'(x_{\varepsilon})}$ can change sign,
thus preventing the above construction to be continuous in $\varepsilon\in\mathcal{E}_{\ell}$. 
\item As $\varepsilon$ approaches $0$ (or, more generally, $\Delta_{k}$)
we would like the sectors to converge to usual sectors associated
to saddle-node singularities.
\end{itemize}

\subsubsection{The remedy in the construction of sectors}

The remedy to all these problems is the same. We want to keep the
above picture all over the cell $\mathcal{E}_{\ell}$ and we want
the cells to cover the complement of $\Delta_{k}$. The boundary of
$K_{\ell}$ is composed of real hypersurfaces corresponding to homoclinic
loop bifurcations. On each such hypersurface we have~\eqref{eq:homoclinic}
for some $I$, while on $K_{\ell}$ the real part of the corresponding
$\nu_{\varepsilon}\left(I\right)$ has a fixed sign and so does $\im{\tau_{j}}$.
But we have seen in Section~\ref{subsec:admissible_angles} that
this is not an obstruction for having the points remaining of node-
or saddle-type: we just need to be sufficiently careful on how we
approach them, by adjusting the spiraling of the sectors. In practice,
this boils down to replacing the piece of a trajectory of $P_{\varepsilon}\pp x$
inside the disk $\rho_{\varepsilon}\disc$ by the piece of a trajectory
of $\exp\left(\ii\theta\right)\times P_{\varepsilon}\pp x$ for some
admissible angle $\theta$ as in Definition~\ref{def:node-saddle_root}
(with some additional specifications).
\begin{prop}
\label{prop:cells_construction}Being given $\delta\in\left]0,\frac{\pi}{4}\right[$
and $\rho>0$, there exists $\eta>0$ such that the following properties
hold.
\begin{enumerate}
\item Let $\mathcal{E}_{\ell}$ be the open set in $\left\{ \norm[\varepsilon]{}<\eta\right\} \backslash\Delta_{k}$
defined by the next conditions:
\begin{itemize}
\item for each homoclinic-loop bifurcation hypersurface on the boundary
of $K_{\ell}$, separating the singular points in two nonempty groups
$I\cup I'$ as in~\eqref{eq:homoclinic}, we have
\begin{align*}
\begin{cases}
\arg\nu_{\varepsilon}\left(I\right)\in\left]-\frac{\pi}{2}-\delta,\frac{\pi}{2}+\delta\right[~~~~~ & \text{if}\quad\re{\nu_{\varepsilon}\left(I\right)}>0\:\:\text{on}\:\:K_{\ell},\\
\arg\nu_{\varepsilon}\left(I\right)\in\left]\frac{\pi}{2}-\delta,\frac{3\pi}{2}+\delta\right[~~~~~ & \text{if}\quad\re{\nu_{\varepsilon}\left(I\right)}<0\:\:\text{on}\:\:K_{\ell},
\end{cases}
\end{align*}
\item for the $\tau_{\varepsilon}^{j}$ defined in Theorem~\ref{thm:DES_cells}
we have
\begin{align*}
\arg\tau_{\varepsilon}^{j}\in\left]-\delta,\pi+\delta\right[~~~~ & \text{for all }j\in\left\{ 1,\dots,k\right\} .
\end{align*}
\end{itemize}
Then $\mathcal{E}_{\ell}$ is a conic contractible neighborhood of
$K_{\ell}$ and $\bigcup_{\ell}\mathcal{E}_{\ell}=\left\{ \norm[\varepsilon]{}<\eta\right\} \backslash\Delta_{k}$.
\item There exists an admissible angle $\theta$ (corresponding to a direction
$\vartheta=\exp\left(\ii\theta\right)$) on $\mathcal{E}_{\ell}$
such that for each homoclinic-loop bifurcation hypersurface on the
boundary of $K_{\ell}$, separating the singular points in two nonempty
groups $I\cup I'$, we have
\begin{equation}
\begin{cases}
\arg\sum_{x\in I}\frac{1}{\vartheta_{\varepsilon}\left(x\right)P_{\varepsilon}'(x)}\in\left]-\frac{\pi}{2}+\delta,\frac{\pi}{2}-\delta\right[~~~~~ & \text{if}\quad\re{\nu_{\varepsilon}\left(I\right)}>0\:\:\text{on}\:\:K_{\ell},\\
\arg\sum_{x\in I}\frac{1}{\vartheta_{\varepsilon}\left(x\right)P_{\varepsilon}'(x)}\in\left]\frac{\pi}{2}+\delta,\frac{3\pi}{2}-\delta\right[~~~~~ & \text{if}\quad\re{\nu_{\varepsilon}\left(I\right)}<0\:\:\text{on}\:\:K_{\ell}.
\end{cases}\label{eq:choice_delta}
\end{equation}
\item Any trajectory of $\vartheta_{\varepsilon}P_{\varepsilon}\pp x$,
starting from a point of $\rho\sone$, does not exit the disk $\rho\ww D$
before landing at a singular point.
\end{enumerate}
\end{prop}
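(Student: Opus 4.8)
The plan is to establish the three assertions in turn, relying throughout on the Douady--Sentenac parametrization of Theorem~\ref{thm:DES_cells} and on the conic invariance of all the relevant data under the $\rr_{>0}$-action~\eqref{eq:time_rescalling}.

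\emph{(1) The cells.} Each sum $\nu_{\varepsilon}(I)$ attached to a homoclinic-loop hypersurface of $\partial K_{\ell}$, as well as each $\tau_{\varepsilon}^{j}=2\ii\pi\,\nu_{\varepsilon}(I_{j})$, is the residue of $\dd x/P_{\varepsilon}$ along a loop, hence holomorphic on $\neigh[k]\setminus\Delta_{k}$. Since $\dd t=\dd x/P_{\varepsilon}$ is multiplied by the positive real $\lambda^{-k}$ under~\eqref{eq:time_rescalling}, every such $\nu_{\varepsilon}(I)$ and $\tau_{\varepsilon}^{j}$ is scaled by $\lambda^{-k}>0$; the defining $\arg$-conditions are therefore invariant under the action, $\mathcal{E}_{\ell}$ is conic, and it is enough to argue on the compact sphere $\left\{ \norm[\varepsilon]{}=\eta\right\}$. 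On $\adh{K_{\ell}}$ all these conditions hold with the margin $\delta$ to spare: on $K_{\ell}$ the relevant $\re{\nu_{\varepsilon}(I)}$ has a fixed nonzero sign and $\im{\tau_{\varepsilon}^{j}}>0$, while on a bifurcation face $\re{\nu_{\varepsilon}(I)}=0$, i.e. $\arg\nu_{\varepsilon}(I)=\pm\tfrac{\pi}{2}$, which lies strictly inside both relevant windows, and each $\arg\tau_{\varepsilon}^{j}\in\left\{ 0,\pi\right\}$ lies strictly inside $\left]-\delta,\pi+\delta\right[$. Thus $\mathcal{E}_{\ell}$ is an open neighbourhood of $\adh{K_{\ell}}$, and since the $\adh{K_{\ell}}$ cover $\left\{ \norm[\varepsilon]{}<\eta\right\} \setminus\Delta_{k}$ for $\eta$ small (the structural-stability picture of~\cite{DES}), so do the $\mathcal{E}_{\ell}$. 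Contractibility follows by deformation-retracting $\mathcal{E}_{\ell}$ onto $K_{\ell}$: in the extended coordinates $(\tau_{\varepsilon}^{1},\dots,\tau_{\varepsilon}^{k})$, $\mathcal{E}_{\ell}$ is carved out by finitely many open sector- and half-plane-type conditions which one straightens coordinate-wise toward $\ww H^{k}$ while keeping every constraint satisfied, and $K_{\ell}\cong\ww H^{k}$ is contractible.

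\emph{(2) The admissible angle.} I would build $\theta$ piecewise constant. On a neighbourhood of the spine of $K_{\ell}$, and for $x$ away from the singular points, every eigenvalue $P_{\varepsilon}'(x_{\varepsilon})$ has $\re{\,\cdot\,}$ of a fixed nonzero sign, so $\theta\equiv0$ is admissible there and we keep it. Near the face of $\partial K_{\ell}$ corresponding to a splitting $I\cup I'$ with $\re{\nu_{\varepsilon}(I)}>0$ on $K_{\ell}$ (respectively $<0$) we set $\theta$ equal, on the roots of the block $I$, to the small constant $+2\delta$ (respectively $-2\delta$); the blocks attached to distinct faces of $\partial K_{\ell}$ are pairwise disjoint in the Douady--Sentenac picture, so these prescriptions do not conflict, and on the overlap with the ``zero'' region one interpolates through values of modulus $<\delta$. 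Because the defining $\arg$-condition of $\mathcal{E}_{\ell}$ confines $\arg\nu_{\varepsilon}(I)$ to within $\delta$ of $\pm\tfrac{\pi}{2}$ there and $\vartheta_{\varepsilon}$ is constant on $I$, one gets $\arg\left(\sum_{x\in I}\tfrac{1}{\vartheta_{\varepsilon}(x)P_{\varepsilon}'(x)}\right)=\arg\nu_{\varepsilon}(I)-\theta$, which then lands in the strict windows of~\eqref{eq:choice_delta}; in particular $\re{P_{\varepsilon}'(x_{\varepsilon})\vartheta_{\varepsilon}(x_{\varepsilon})}$ keeps a constant sign along any analytic family of roots, so $\theta$ is admissible on $\mathcal{E}_{\ell}$ in the sense of Definition~\ref{def:node-saddle_root}.

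\emph{(3) Non-escape --- the main obstacle.} All roots of $P_{\varepsilon}$ lie in $\sqrt{k}\norm[\varepsilon]{}\adh{\ww D}$ and $\rho_{\varepsilon}=2\sqrt{k}\norm[\varepsilon]{}<\tfrac{\rho}{2}$ by~\eqref{eq:rho_eps}, so $\rho\sone$ is far from the singular set; there $\theta=0$ and the field is $P_{\varepsilon}\pp x$ with $P_{\varepsilon}(x)=x^{k+1}\bigl(1+\xi_{\varepsilon}(x)\bigr)$, $\norm[\xi_{\varepsilon}]{}$ as small as we please on the collar $\tfrac{\rho}{2}\le\left|x\right|\le\rho$ once $\eta$ is small. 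On the $k$ starting arcs of $\rho\sone$, namely those delimited by the unstable separatrices of $\infty$, one has $\re{x^{k}}\le-c<0$, so $\frac{\dd{}}{\dd t}\left|x\right|^{2}=2\left|x\right|^{2}\re{x^{k}(1+\xi_{\varepsilon})}<0$ on the collar and forward trajectories enter $\rho\ww D$ without ever returning to $\rho\sone$. Once inside, estimate~\eqref{eq:polynomial_infty_estim} (after shrinking $\rho$ and $\eta$ if needed) controls the flow of $\vartheta_{\varepsilon}P_{\varepsilon}\pp x$ tightly enough that it cannot re-cross $\rho\sone$; in the simply connected region the underlying real foliation has no limit cycle (only foci and nodes, together with the petal structure at $\infty$), so every such trajectory accumulates at a singular point, which by (2) is of saddle or node type for $\vartheta_{\varepsilon}P_{\varepsilon}\pp x$ and is hence actually reached. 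The delicate points, where most of the work lies, are: verifying that the rotated residues~\eqref{eq:choice_delta} genuinely force trajectories to land rather than spiral back out of $\rho\ww D$; choosing the piecewise-constant $\theta$ and its transition regions so that (2) and (3) hold simultaneously; and obtaining the uniformity in $\varepsilon$ of every estimate, which is again where the conic structure and the compactness of $\left\{ \norm[\varepsilon]{}=\eta\right\}$ are used.
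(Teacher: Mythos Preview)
Your treatment of (1) and (2) is in the same spirit as the paper's (the paper dismisses (1) as ``clear'' and gives for (2) exactly your construction: $\theta\equiv0$ on the spine, rotate by $|\theta|\le2\delta$ near each homoclinic face), only spelled out in more detail. One small slip in (2): you say the blocks $I$ attached to distinct faces of $\partial K_{\ell}$ are pairwise disjoint, but in general several homoclinic hypersurfaces meet and the corresponding partitions $I\cup I'$ are nested rather than disjoint; the piecewise-constant $\theta$ still exists, but the justification needs adjusting.

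For (3) your approach diverges from the paper's, and this is where the real gap lies. The paper does \emph{not} argue in $x$-space via radial estimates; instead it passes to the time coordinate $t=\int\frac{\dd x}{P_{\varepsilon}}$, where the complement of $\rho\ww D$ becomes a lattice of holes of size $\sim\frac{1}{k\rho^{k}}$ separated by periods $|\tau^{j}|\ge C\norm[\varepsilon]{}^{-k}$ (Lemma~\ref{lem:time_minoration}), and each squid sector becomes a piecewise-linear strip. Non-escape is then the purely planar statement that a suitably slanted strip of width $\sim\frac{|\tau^{j}|}{2}$ threads between consecutive holes; the rotation by $\theta$ is exactly what bends the strip when $\tau^{j}$ approaches the real axis. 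This is worked out in Section~\ref{subsec:Practical-description}.

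Your $x$-space argument does not close. First, the claim that $\re{x^{k}}\le-c<0$ on ``the $k$ starting arcs of $\rho\sone$'' is false: those arcs are the ones cut out by the \emph{unstable} separatrices of $\infty$, hence each is centered on a \emph{stable} separatrix direction where $\re{x^{k}}=|x|^{k}>0$, and forward trajectories from such points leave $\rho\ww D$. (What is true is that the \emph{endpoints} of the arcs, which are the only starting points actually used for the sector boundaries, lie near the unstable directions.) Second, estimate~\eqref{eq:polynomial_infty_estim} bounds $|\mu x^{k}|+2\rho|P''|$ and is tailored to Lemma~\ref{lem:integrable_first-integral}; it says nothing about trajectories of $\vartheta_{\varepsilon}P_{\varepsilon}\pp x$ staying in $\rho\ww D$. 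Third, and most importantly, the case you flag as ``delicate'' --- when $\varepsilon$ is near $\partial K_{\ell}$ and some singular points are near centers, so trajectories spiral widely --- is precisely where radial monotonicity fails entirely, and only the $t$-picture (strips versus well-separated holes) gives a clean, uniform argument.
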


\begin{proof}
~
\begin{enumerate}
\item This is clear.
\item We build the angle $\theta$ (piece-wise constant in $x$) in such
a way that $\vartheta_{\varepsilon}=1$ when $\varepsilon$ is on
the spine of $K_{\ell}$. When we approach a component of $\partial K_{\ell}$
corresponding to a homoclinic loop separating the roots of $P_{\varepsilon}$
as $I\cup I'$, we can rotate the vector field by an angle $\left|\theta\right|\leq2\delta<\frac{\pi}{4}$
so that $\arg\sum_{x\in I}\frac{1}{\vartheta_{\varepsilon}\left(x\right)P_{\varepsilon}'(x)}$
belongs to the given interval. 
\item A more precise quantitative description of the sectors is needed to
show that the magnitude of $\rho$ (in $x$-space) together with the
choice of $\delta$ give constraints on the size $\eta$ of the $\norm[\bullet]{}$-ball
in $\varepsilon$-space, and that taking $\left|\theta\right|$ large
enough is sufficient to secure the conclusion. All this is done in
the time coordinate $t=\int\frac{\dd x}{P_{\varepsilon}(x)}$. We
come back to this below in Section~\ref{subsec:Practical-description}. 
\end{enumerate}
\end{proof}
\begin{defn}
\label{def:cells_and_squids}~
\begin{enumerate}
\item The contractile, conic domain $\mathcal{E}_{\ell}$ given by the previous
proposition is called a \textbf{cell} in parameter space.
\item The $k$ domains in $x$-space built like ideal sectors but bounded
by trajectories of $\vartheta_{\varepsilon}P_{\varepsilon}\pp x$
instead of $P_{\varepsilon}\pp x$ are called \textbf{squid sectors}.
\end{enumerate}
\end{defn}

\subsubsection{\label{subsec:Non-crossing-permutation}Pairing sectors}

\begin{figure}[H]
\hfill{}\includegraphics[width=8cm]{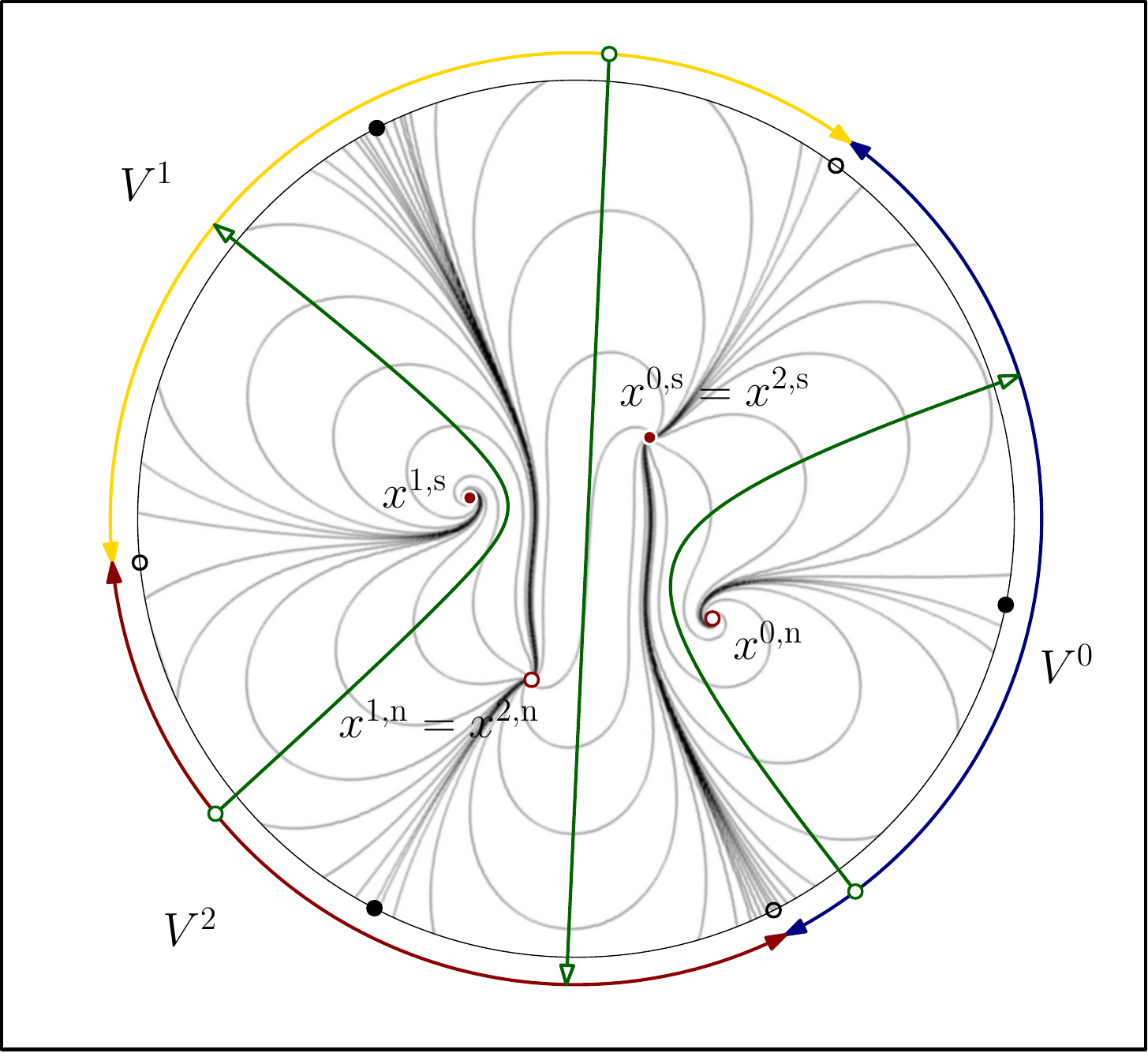}\hfill{}

\caption{\label{fig:squid_permutation} Construction of the non-crossing permutation
$\sigma$; here $\sigma=\left(\protect\begin{array}{ccc}
0 & 1 & 2\protect\\
0 & 2 & 1
\protect\end{array}\right)$.}
\end{figure}
\begin{defn}
Recall that a \emph{non-crossing permutation} $\sigma\in\mathfrak{S}_{k}$
is a permutation such that if $p_{0},\ldots,p_{k-1}$ are circularly
ordered points on a circle, there exist pairwise non-intersecting
curves within the inscribed disc joining $p_{j}$ and $p_{\sigma\left(j\right)}$
for all $j$.

\begin{enumerate}
\item There exists a (non-crossing) permutation $\sigma=\sigma_{\varepsilon}$
on $\left\{ 0,\dots,k-1\right\} $ yielding a pairing of the sector
$V_{\varepsilon}^{j}$ with $V_{\varepsilon}^{\sigma(j)}$ (see Figure~\ref{fig:squid_permutation})
in the following way. If the sector $V_{\varepsilon}^{j}$ shares
its vertices $\sad x{}{j-1,}$ and $\nod x{}{j,}$ with a distinct
sector $V_{\varepsilon}^{j'}$, then we define $\sigma(j):=j'$. Otherwise
we let $\sigma(j)=j$. 
\item The squid sector $V_{\varepsilon}^{j}$ is \textbf{introvert} if $\sigma\left(j\right)=j$,
and \textbf{extrovert} otherwise (see Figure~\ref{fig:extrovert-introvert_squids}).
\end{enumerate}
\end{defn}

The permutation $\sigma$ is a complete topological invariant~\cite{DES,BraDia}
for structurally stable vector field $P\pp x$ (\emph{i.e.} for generic
$\varepsilon$) and any non-crossing permutation can be realized in
this way. In particular $\varepsilon\mapsto\sigma_{\varepsilon}$
is constant on the conic domains $K_{\ell}$.

\begin{figure}[h]
\hfill{}\subfloat[Extrovert squid sector: $\sigma\left(j\right)\protect\neq j$]{\includegraphics[width=6cm]{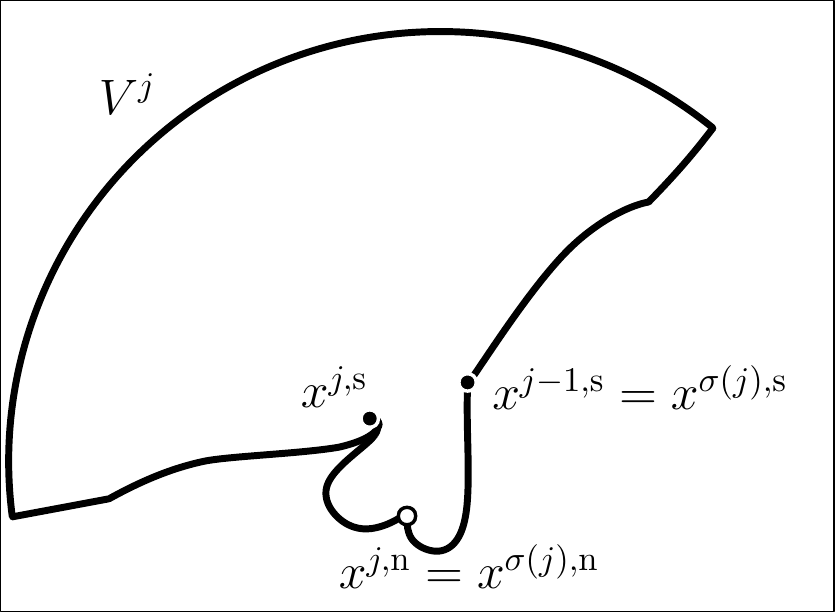}

}\hfill{}\subfloat[Introvert squid sector]{\includegraphics[width=6cm]{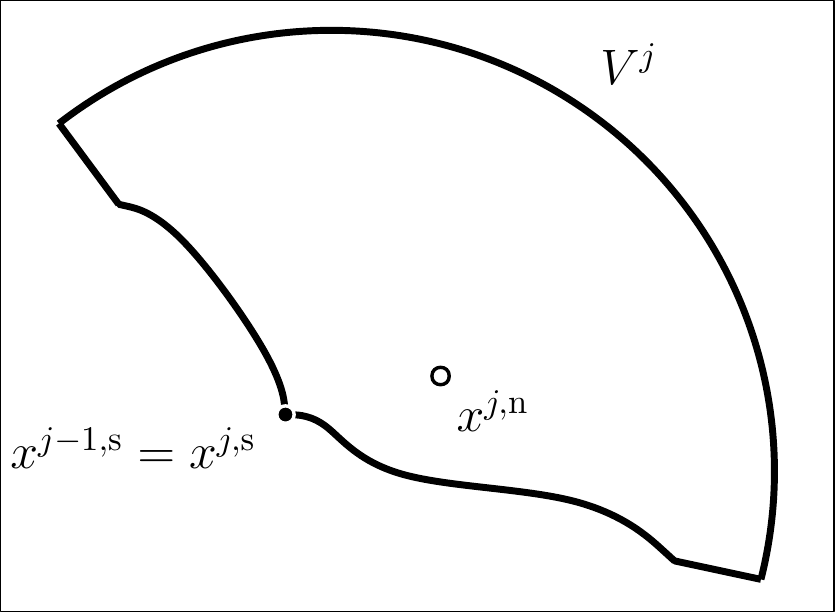}

}\hfill{}

\caption{\label{fig:extrovert-introvert_squids}The two kinds of a bounded
squid sector for $k>1$.}
\end{figure}

\subsubsection{\label{subsec:Practical-description}Practical description and quantitative
estimates}

Here we end the proof of Proposition~\ref{prop:cells_construction}.
As discussed earlier, finding an admissible angular function is equivalent
to finding suitable piecewise affine real curves in the complex time
coordinates. Studying $\dot{x}=P_{\varepsilon}(x)$ for complex values
of the time $t$ is the natural point of view taken by~\cite{DES,BraDia}.
In that setting we could view the whole $x$-line as a single complex
trajectory of the flow of $P_{\varepsilon}\pp x$. Although one might
consequently try to parametrize points in the $x$-variable by values
of the time $t\left(x\right)\in\cc$ this is too simplistic: the time
function is multivalued at $\infty$. Nonetheless, the idea is very
powerful and fruitful if we limit ourselves to simply connected domains
in time space. Let us define the time function by
\begin{align*}
t\left(x\right) & :=\int_{\infty}^{x}\frac{\dd z}{P_{\varepsilon}\left(z\right)}.
\end{align*}
When $\varepsilon$ is generic we obtain 
\[
t\left(x\right)=\sum_{x_{\varepsilon}\in P_{\varepsilon}^{-1}\left(0\right)}\frac{1}{P_{\varepsilon}'(x_{\varepsilon})}\log(x-x_{\varepsilon}).
\]

To understand the image of the complement in $\cc$ of $\rho\ww D$
under this map, it is first necessary to describe the $k$-sheeted
Riemann surface over $\cc$, on which the map $t$ is defined. Since
the integral starts at $x=\infty$ then $t(\infty)=0$. We have to
remember that $\infty$ is a pole of order $k-1$ of the vector field
over which the time function has locally the form $t=-\frac{1}{kx^{k}}$,
which means that it is a ramification point. Moreover, $\infty$ can
be reached in finite time and, for $\varepsilon\neq0$, there are
\emph{periods}, which correspond to the time to go from $\infty$
to $\infty$ along a path circling some singular points. This means
that the time function is multi-valued and there are an infinite number
of ramification points, all corresponding to images of $\infty$.
The distance between two images on one sheet is a period of a loop
around singular points. These periods are all greater than some $C\norm[\varepsilon]{}$
(see Lemma~\ref{lem:time_minoration} below).

The image of the complement in $\cc$ of $\rho\disc$ under the map
$t$ is therefore, for small $\varepsilon$, a union of holes (topological
disks) of approximate radius $\frac{1}{k\rho^{k}}$ in the $k$-sheeted
Riemann surface (Figure~\ref{fig:strips}) over $\cc$, with one
central hole around $0$. The ramifications occur at the images of
$\infty$. Each hole contains an image of $\infty$ by the multivalued
continuation of $t$. A half turn around the central hole corresponds
to an angle of $\frac{\pi}{k}$ on $\partial\disc_{\rho}$ (or to
a saddle sector of $\infty$). Hence one $\tau_{j}$ of Theorem~\ref{thm:DES_cells}
is associated to each half turn, thus pairing the half turns two by
two. Since $\tau_{j}$ is a period in $t$-space, it is a distance
between centers of holes and, on each half sheet, the next hole is
obtained by translating the current hole by $\tau_{j}$. 
\begin{lem}
\label{lem:time_minoration}There exists $C>0$, depending only on
$k$, such that 
\begin{align*}
|\tau_{j}| & >C\norm[\varepsilon]{}^{-k}.
\end{align*}
\end{lem}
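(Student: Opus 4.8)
The plan is to exploit the conic (scaling) structure of parameter space to reduce the estimate to a bound on a compact set, and then to use the non-vanishing of the Douady--Sentenac periods. The first step is pure homogeneity. Under the multiplicative $\rr_{>0}$-action $\varepsilon_{j}\mapsto\lambda^{k+1-j}\varepsilon_{j}$, $x\mapsto\lambda x$, $t\mapsto\lambda^{-k}t$, under which $\dot x=P_{\varepsilon}(x)$ is invariant (cf.~\eqref{eq:time_rescalling} and~\eqref{eq:curve}), one has $P_{\lambda\cdot\varepsilon}(\lambda x)=\lambda^{k+1}P_{\varepsilon}(x)$, so the singular points are multiplied by $\lambda$, the eigenvalues $P_{\varepsilon}'$ by $\lambda^{k}$, and $\norm[\varepsilon]{}$ by $\lambda$ (this is precisely why $\norm[\bullet]{}$ was defined by~\eqref{eq:norm_eps}). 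Since $\tau_{\varepsilon}^{j}=\int_{\gamma_{\varepsilon}^{j}}\dd t$, the same change of variables gives $\tau_{\lambda\cdot\varepsilon}^{j}=\lambda^{-k}\tau_{\varepsilon}^{j}$, so that the quantity $\bigl|\tau_{\varepsilon}^{j}\bigr|\,\norm[\varepsilon]{}^{k}$ is invariant under the action. It is therefore enough to produce a uniform lower bound for $\bigl|\tau_{\varepsilon}^{j}\bigr|$ on the compact ``unit sphere'' $\bigl\{\norm[\varepsilon]{}=1\bigr\}$.

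On that sphere all roots of $P_{\varepsilon}$ lie in $\sqrt{k}\,\adh{\disc}$ (Section~\ref{subsec:Size-of-sectors}), so the pairwise distances, hence each $\bigl|P_{\varepsilon}'(x)\bigr|=\prod_{\ell}\bigl|x-x_{\ell}\bigr|$, are at most $(2\sqrt{k})^{k}$; by Theorem~\ref{thm:DES_cells} we have $\tau_{\varepsilon}^{j}=2\ii\pi\,\nu_{\varepsilon}(I)$ with $I$ a nonempty proper subset of the singular points, so when $I$ is a singleton we already get $\bigl|\tau_{\varepsilon}^{j}\bigr|\geq2\pi(2\sqrt{k})^{-k}$. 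For general $I$ one needs that the residue sum $\nu_{\varepsilon}(I)$ does not vanish, uniformly. I would argue this through the picture of Figure~\ref{fig:strips}: on the Riemann surface of the time $t=\int\dd x/P_{\varepsilon}$ the ramification points are the images of $\infty$, and a neighbourhood of each of them covers a disc of $t$-radius comparable to $(k\rho_{\varepsilon}^{k})^{-1}$ (with $\rho_{\varepsilon}=2\sqrt{k}\,\norm[\varepsilon]{}$, cf.~\eqref{eq:rho_eps}), because $t\sim-\tfrac{1}{kx^{k}}$ near $\infty$; the period $\tau_{\varepsilon}^{j}$ is exactly the translation carrying one such ramification point to the next along $\gamma_{\varepsilon}^{j}$, so it cannot have modulus less than a fixed multiple of $\rho_{\varepsilon}^{-k}$. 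An alternative route is compactness: $\varepsilon\mapsto\tau_{\varepsilon}^{j}$ extends continuously — analytically, via the residue formula of Theorem~\ref{thm:DES_cells} — to the whole sphere, it lies in the open upper half-plane on each $K_{\ell}$, it becomes a \emph{nonzero} real number on the homoclinic hypersurfaces (being, up to the factor $2\ii\pi$, the genuine finite time needed to traverse a homoclinic orbit, cf.~\eqref{eq:homoclinic}--\eqref{eq:bifurcation_locus}), and it does not degenerate as roots collide; hence its infimum over the sphere is a constant $C>0$. Either way, unravelling the scaling gives $\bigl|\tau_{\varepsilon}^{j}\bigr|=\bigl|\tau_{\norm[\varepsilon]{}^{-1}\cdot\varepsilon}^{j}\bigr|\,\norm[\varepsilon]{}^{-k}\geq C\norm[\varepsilon]{}^{-k}$.

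The main obstacle is precisely the uniform non-vanishing of $\tau_{\varepsilon}^{j}$ on the unit sphere. The delicate cases are the degenerations of the sphere: approaching $\Delta_{k}$, where several singular points coalesce (so that some eigenvalues $P_{\varepsilon}'$ tend to $0$ and a naive term-by-term estimate of $\nu_{\varepsilon}(I)$ breaks down), and approaching the homoclinic hypersurfaces bounding the cells $K_{\ell}$, where $\tau_{\varepsilon}^{j}$ leaves the upper half-plane and becomes real. What keeps $\tau_{\varepsilon}^{j}$ away from $0$ in both regimes is the combinatorial control of the loop $\gamma_{\varepsilon}^{j}$: being chosen disjoint from the separating graph, it encircles a prescribed, nonempty, proper block of singular points, whose residue sum is a genuine period — equivalently, the traversal time of a loop with nonempty interior — and hence never $0$, in contrast with an arbitrary partial sum of the residues $\{1/P_{\varepsilon}'(x_{i})\}$, which can vanish.
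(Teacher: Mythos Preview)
Your scaling reduction is exactly the paper's first move, and it is correct: homogeneity of $\tau_{\varepsilon}^{j}$ and of $\norm[\varepsilon]{}$ under the action~\eqref{eq:time_rescalling} reduces everything to a uniform lower bound on the ``unit sphere'' $\{\norm[\varepsilon]{}=1\}$.

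Where you diverge from the paper is in how you obtain that lower bound, and both of your routes leave a gap. Route~(b), direct compactness on $\varepsilon\mapsto\tau_{\varepsilon}^{j}$, is problematic because $\tau_{\varepsilon}^{j}$ is attached to a loop $\gamma_{\varepsilon}^{j}$ that is only defined within a single Douady--Sentenac cell $K_{\ell}$; the combinatorics (hence the relevant subset $I$ of roots in the residue formula) changes across cell walls, and the unit sphere meets $\Delta_{k}$ for $k\geq2$, where individual residues blow up. You flag both issues in your last paragraph but do not resolve them, and ``it does not degenerate as roots collide'' is precisely the assertion that needs proof. Route~(a), the hole-separation argument on the time surface, has the right picture but is not quite an argument: two holes live on different sheets, so ``they cannot overlap'' is not a metric statement about $|\tau_{j}|$ without more work.

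The paper closes the gap with a short trick that implements your route~(a) rigorously and, at the same time, makes the compactness of route~(b) legitimate. Rotate time by $\ee^{-\ii\arg\tau_{j}}$: then $\tau_{j}'=|\tau_{j}|$ is the \emph{real} travel time along a homoclinic trajectory of $\ee^{-\ii\arg\tau_{j}}P_{\varepsilon}\,\partial_{x}$ joining two separatrices of $\infty$. Such a trajectory must leave a fixed neighbourhood of $\infty$ and come back, so $|\tau_{j}|$ is at least twice the minimum time to travel from $\infty$ to the circle $\{|x|=2k\}$. That minimum depends only on the vector field in the region $\{|x|\geq 2k\}$, which contains no root of $P_{\varepsilon}$ (Section~\ref{subsec:Size-of-sectors}); hence it is a continuous, strictly positive function on the compact set $\{\norm[\varepsilon]{}=1\}\times\sone$ (the $\sone$ for the rotation angle), and its infimum $C>0$ is the constant you want. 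This bypasses both $\Delta_{k}$ and the cell structure entirely.
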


\begin{proof}
It suffices to show that there exists $C>0$ such that $|\tau_{j}|>C$
when $\norm[\varepsilon]{}=1$, and then to use the rescaling~\eqref{eq:time_rescalling}.
This is done as follows. Changing the time $t\mapsto t':=\ee^{-\ii\arg\tau_{j}}t$,
then $\tau_{j}'=\ee^{-\ii\arg\tau_{j}}\tau_{j}$ is the time along
a homoclinic loop between two separatrices of $\infty$ for the transformed
vector field. In Section~\ref{subsec:Size-of-sectors}, all roots
have been shown to belong to $\sqrt{k}\adh{\disc}$. The time $\tau_{j}'$
is then larger than twice the minimum time to go from $\infty$ to
$\left\{ \left|x\right|=2k\right\} $, and this minimum is positive
on the compact set $\norm[\varepsilon]{}=1$.
\end{proof}
Let us first describe what happens on the spine of the cell. There,
holes are aligned vertically (the $\tau_{j}$ are pure imaginary)
and each sector (which is an ideal sector) corresponds to a horizontal
strip as in Figure~\ref{fig:strips}. If we want to cover $\rho\disc\setminus\{P_{\varepsilon}^{-1}(0)\}$
then we should cover a little more than a full turn around one hole.
The width of the strip should be a little over $\frac{\tau_{j}}{2}$
on the top side and over $\frac{\tau_{j+1}}{2}$ on the bottom side.
When moving to $t$-space the singular points have been sent to $\infty$,
to the left (\emph{resp}. right) for the singular points of node (\emph{resp}.
saddle) type. In such a picture we see the connected parts of the
intersections of two consecutive sectors that go to the boundary.

The internal intersection parts (that we later call gate parts) can
only be seen by using the periodicity of $t$. There are similar half
strips on the $\sigma(j)$-th sheet, with a hole at a distance $\tau_{j}$
and on the $\sigma(j+1)$-th sheet, with a hole at a distance $\tau_{j+1}$.
Their translations by the corresponding period $\tau_{j}$ and $-\tau_{j+1}$
brings them on the $j$-th sheet where they intersect the initial
strip (Figure~\ref{fig:strips}).

\begin{figure}
\hfill{}\includegraphics[width=0.9\columnwidth]{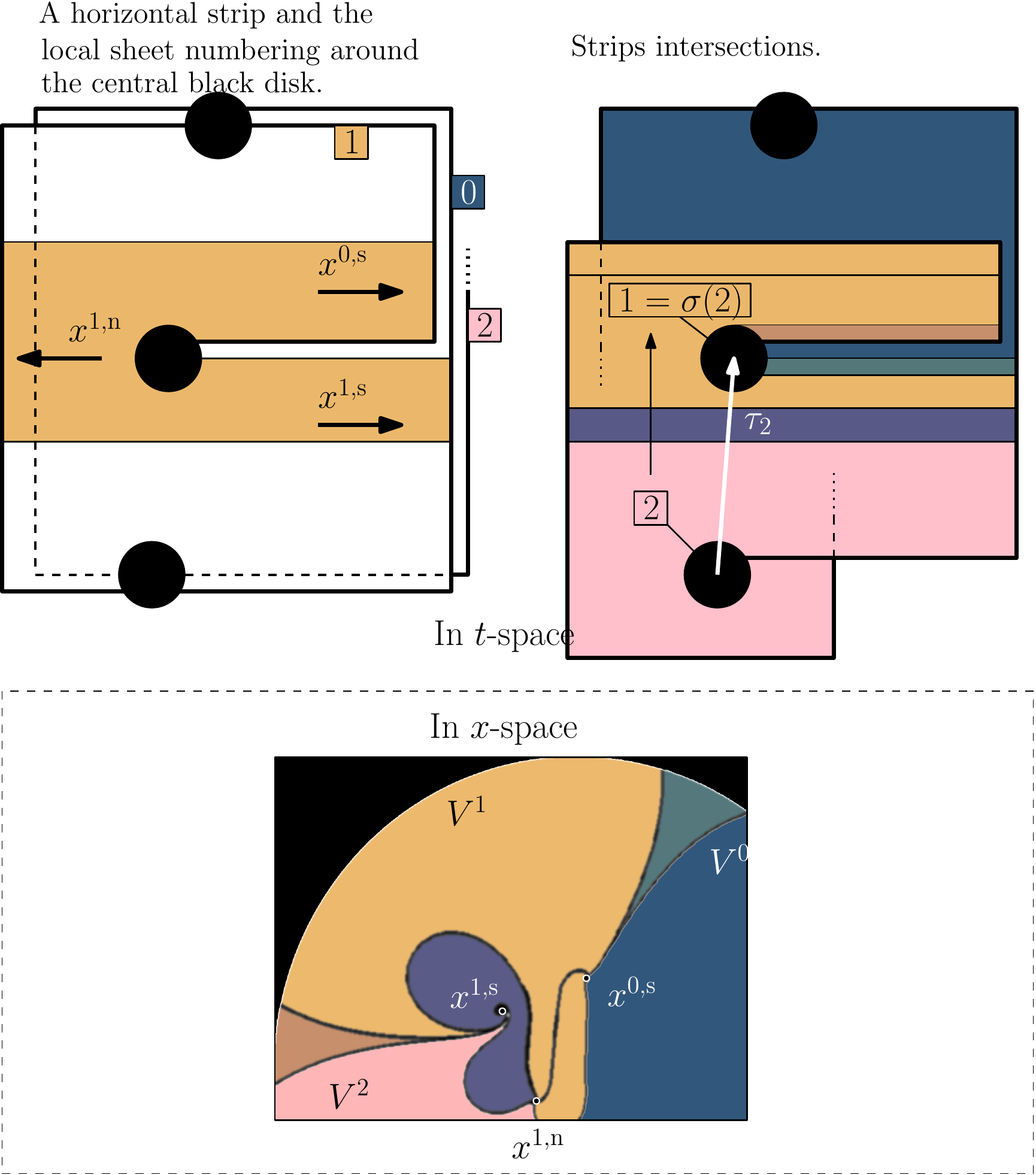}\hfill{}
\caption{\label{fig:strips}The images of these horizontal strips in $t$-space
are sectors $V_{\varepsilon}^{j}$ in $x$-space.}
\end{figure}
If we now move away from the spine of $K_{\ell}$, then two things
happen.
\begin{itemize}
\item On the one hand, the $\tau_{j}$ bend. When they approach the real
line (horizontal direction), then it is no more possible to pass a
horizontal strip because the holes block the way: the remedy is to
slant the strip so that it avoids the hole altogether. 
\item Also, in the $t$-space, each singular point $x_{\varepsilon}$ turns,
since it is located at infinity in the direction of $-\frac{1}{P_{\varepsilon}'(x_{\varepsilon})}$.
An infinite half-strip in the direction $\vartheta=\exp\left(\ii\theta\right)$
can only be sent to a sector with vertex at $x_{\varepsilon}$ if
\begin{equation}
\re{-\frac{\vartheta}{P_{\varepsilon}'(x_{\varepsilon})}}>0,\label{eq:direction_theta}
\end{equation}
(corresponding to the scalar product of $-\frac{1}{P_{\varepsilon}'(x_{\varepsilon})}$
and $\vartheta$ being positive). This forces giving an angle to the
strip in the infinite end of the half-strip approaching a singular
point.
\end{itemize}
The choice of $\delta$ in \eqref{eq:choice_delta} guarantees that
$\tau_{j}$ cannot turn of an angle larger than $\frac{\pi}{2}+\delta$.
The size of the holes is of the order of $\frac{1}{k\rho^{k}}$, which
is very small compared to the $\tau_{j}$ and $\frac{1}{|P_{\varepsilon}'(x_{\varepsilon})|}$
if $\varepsilon$ is sufficiently small.

Now the strip has three infinite ends, a wide one on the left side
attached to a point of node type $\nod x{\varepsilon}{j,}$, and two
thinner ones attached to $\sad x{\varepsilon}{j-1,}$ and $\sad x{\varepsilon}{j,}$.
The slope $\vartheta_{\varepsilon}$ for each infinite end should
be chosen so that \eqref{eq:direction_theta} be satisfied for the
singular point corresponding to that end of the strip.

This is how it is done. In the ideal situation the curves $\gamma_{j}$,
used to pair the saddle sectors (permutation $\sigma$) and to define
the $\tau_{j}$, split the disk into $k+1$ regions, each containing
a singular point. When we are no more in the ideal situation, then
several of the curves $\gamma_{j}$ have disappeared, corresponding
to the fact that some strips are either too thin so as to pass a trajectory
or have disappeared. Then there remains only a few $\gamma_{j}$ dividing
the disk in $m<k+1$ regions. Each of these regions contains some
singular points. In a given region, we have two possibilities: 
\begin{enumerate}
\item either there are several singular points: then they have kept their
saddle or node type and are linked by trajectories that form a tree; 
\item or there is a unique singular point, which is a center or a very widely
spiraling focus. 
\end{enumerate}
For each $\gamma_{j}$ that has disappeared because $\im{\tau_{j}}$
is too small, we bend the strip between the holes while keeping its
width a little more that $\frac{\tau_{j}}{2}$ (resp. $\frac{\tau_{j+1}}{2}$)
(see Figure~\ref{fig:slanted}). This process restores that part
of the strip and forces the bent separatrices to stay inside the disk.

Just before the disappearance of $\gamma_{j}$, each separatrix was
attached to a singular point. If the singular point is close to a
center as in~(2) above, then the bent separatrix will spiral to the
singular point: we may add a little more bending so that it does not
escape the disk before doing so. In~(1) the bent separatrix has no
choice but to cross one of the trajectories of the tree between two
singular points, one of which is the singularity to which it was attached
before. When it does so, we turn to follow a parallel trajectory going
to the singular point then bringing back the strip to the horizontal
direction. We make the same thing for the three infinite ends of each
strip. When doing so, we pay attention to take the same slope at all
infinite ends attached to a given singular point.

\begin{figure}
\centering{}\includegraphics[width=0.9\columnwidth]{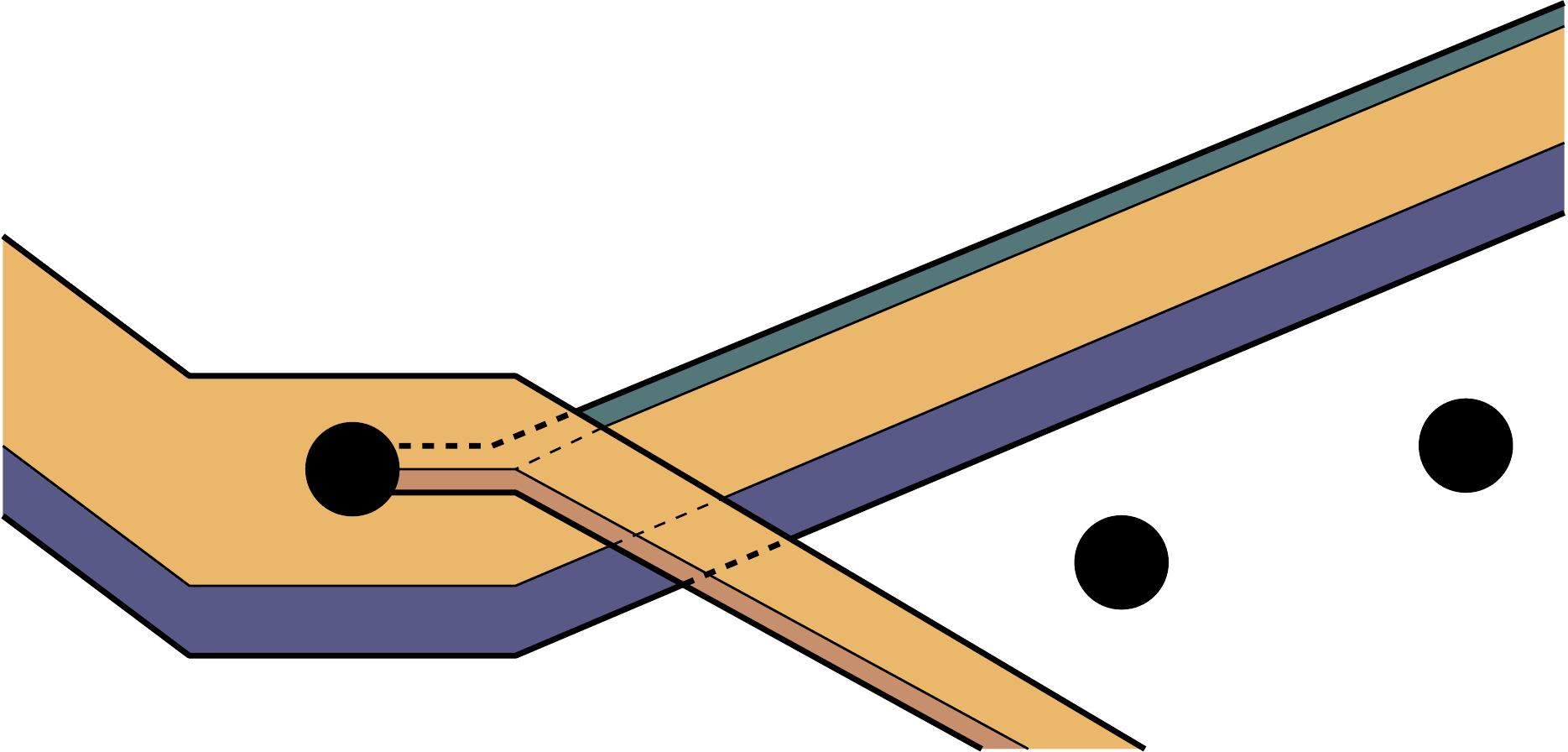}
\caption{\label{fig:slanted}A slanted strip in $t$-space whose image is a
sector $V_{\varepsilon}^{j}$ in $x$-space.}
\end{figure}
\begin{rem}
When $\varepsilon\to0$ along a curve~\eqref{eq:curve} then $P_{\varepsilon}'(x_{\varepsilon})\to0$
and the half-strips are replaced by half-planes. More generally when
$\varepsilon$ tends to a point of $\Delta_{k}$, some half-strips
are replaced by half-planes.
\end{rem}

\subsubsection{Large (unbounded) squid sectors}

When $\mu_{0}\notin\rr_{\leq0}$, we will also need a covering of
the whole of $\cc$ by $k$ sectors. For that purpose, we append to
the sectors $V_{\varepsilon}^{j}$ an infinite part obtained in the
following way: if $x_{1}$ and $x_{2}$ are the endpoints of the boundary
arc of $V_{\varepsilon}^{j}$ along $\rho\disc$, then we follow geometric
spirals $x_{m}\exp\left((1+\ii\nu)\rr_{\geq0}\right)$ for $m\in\left\{ 1,2\right\} $
and some $\nu$ such that 
\[
\re{\mu_{0}}>\nu\im{\mu_{0}}.
\]
If we come back to the representation of the sector in $t$-space,
this amounts to appending some spiraling sector inside the holes (a
neighborhood of $\infty$ in $x$-space is covered by a sector of
opening $2k\pi$ in $t$-space). 

\begin{figure}
\hfill{}\includegraphics[width=0.9\columnwidth]{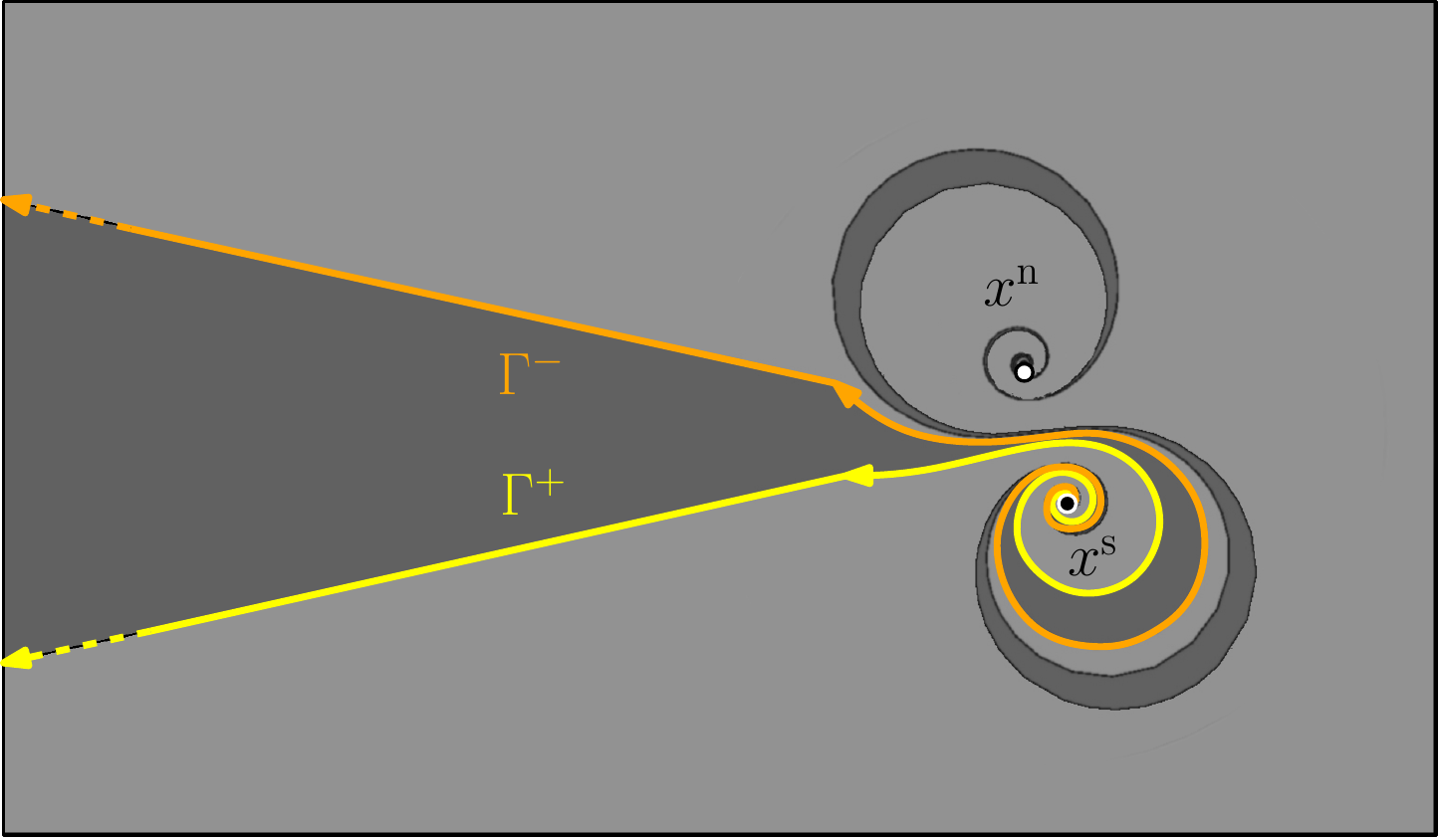}\hfill{}

\caption{\label{fig:unbounded_squid_sectors}Unbounded squid sector for $k=1$
and $\protect\re{\mu}>0$. When $\protect\re{\mu}\protect\leq0$ and
$\mu\protect\notin\protect\rr$ the shaded area bends to form a geometric
spiral near $\infty$. See also Figure~\ref{fig:unbounded_3} for
the case $k>1$.}

\end{figure}
We still denote by $V_{\varepsilon}^{j}$ the resulting unbounded
sectors, since the context will never be ambiguous. 

\subsubsection{Intersections of squid sectors}
\begin{defn}
We let $\Gamma^{j,+}$ (\emph{resp.} $\Gamma^{j-1,-}$) be the part
of the boundary of the unbounded sector $V_{\varepsilon}^{j}$ joining
$\sad x{}{j,}$ (\emph{resp.} $\sad x{}{j-1,}$) to $\infty$ with
this orientation. The intersection of two squid sectors $V_{\varepsilon}^{j}$
and $V_{\varepsilon}^{j'}$ is made of up to three parts in general,
and up to four parts when $k=2$ (see Figure~\ref{fig:Non-equivalent-decompositions}).

\begin{itemize}
\item If $j'=j+1$ (resp. $j'=j-1$) a (connected) \textbf{saddle part}
$\sad V{\varepsilon}{j,}$ (resp. $\sad V{\varepsilon}{j-1,}$), bounded
by the two curves $\Gamma^{j\pm}$ (resp. $\Gamma^{j-1,\pm}$) to
the common point $\sad x{}{j,}$ (resp. $\sad x{}{j-1,}$) of saddle
type. When $k=1$, the saddle-part corresponds to a self-intersection. 
\item If $j'=\sigma\left(j\right)$ a \textbf{gate part} $\gat V{\varepsilon}{j,}$
included in $\rho_{\varepsilon}\ww D$ and adherent to the two singular
points $\sad x{}{j,}$ and $\nod x{}{j,}$. When $j=\sigma(j)$, the
gate part of an introvert sector corresponds to a self-intersection. 
\item If $j'=\sigma\left(j\right)$ and $j=\sigma\left(j'\right)$ for $j\neq j'$,
a second gate part $\gat V{\varepsilon}{j',}$ adherent to the singular
points $\sad x{}{j-1,}=\sad x{}{j',}$ and $\nod x{}{j,}$ (Figure~\ref{fig:Non-equivalent-decompositions}~(B)). 
\end{itemize}
\end{defn}

\begin{figure}[h]
\hfill{}\subfloat[$\varepsilon=\protect\ee^{\protect\ii\pi}$]{\includegraphics[width=6cm]{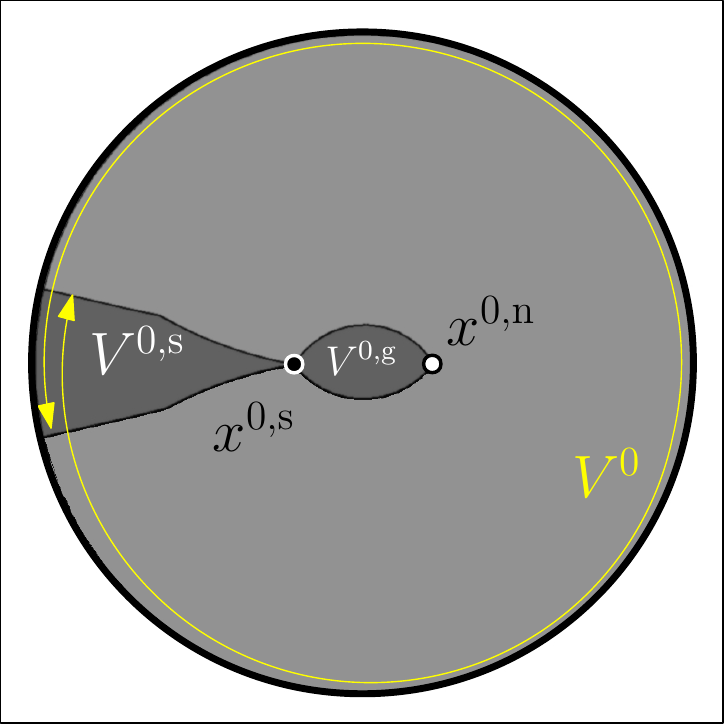}

}\hfill{}\subfloat[$\varepsilon=\protect\ee^{\protect\ii\protect\nf{15\pi}8}$]{\includegraphics[width=6cm]{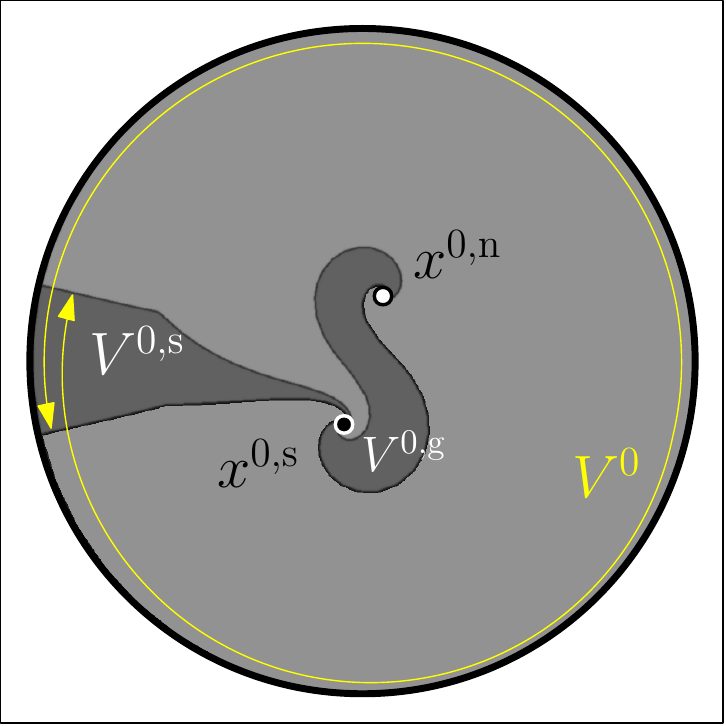}

}\hfill{}

\hfill{}\subfloat[$\varepsilon=\protect\ee^{\protect\ii\protect\nf{17\pi}8}$]{\includegraphics[width=6cm]{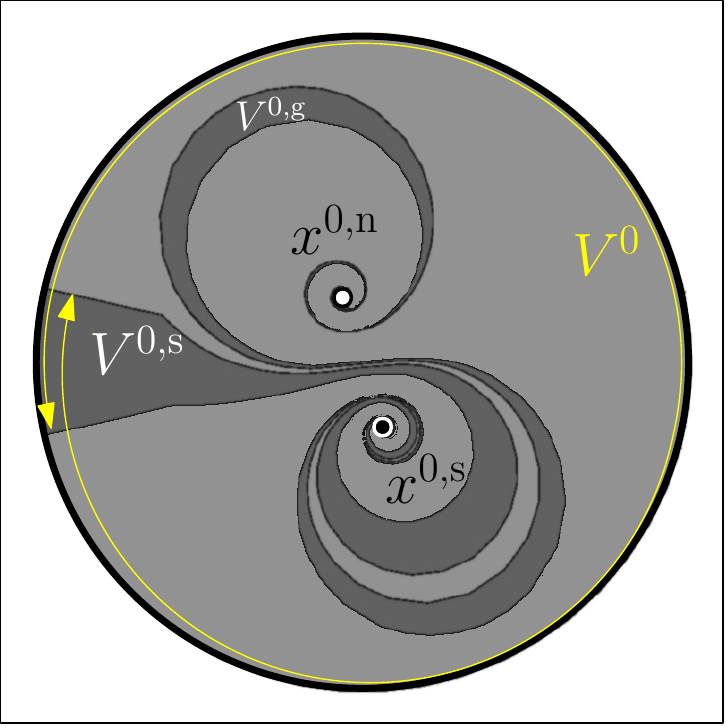}

}\hfill{}

\caption{\label{fig:Squid_k=00003D1}Squid sectors for different values of
$\varepsilon$ when $k=1$.}
\end{figure}

\subsubsection{Non-equivalent decompositions}

For the same value of the parameter $\varepsilon$ in the intersection
of two cells (or a cell's self-intersection), the disc $\rho\ww D$
is split in non-equivalent ways into bounded squid sectors (see Figures~\ref{fig:parametre_space_k=00003D1}
and~\ref{fig:Non-equivalent-decompositions}). By ``non-equivalent''
we mean that at least one boundary of a squid sector is attached to
another root of $P_{\varepsilon}$ when passing from one cell to the
other.

\begin{figure}[h]
\hfill{}\includegraphics[width=13cm]{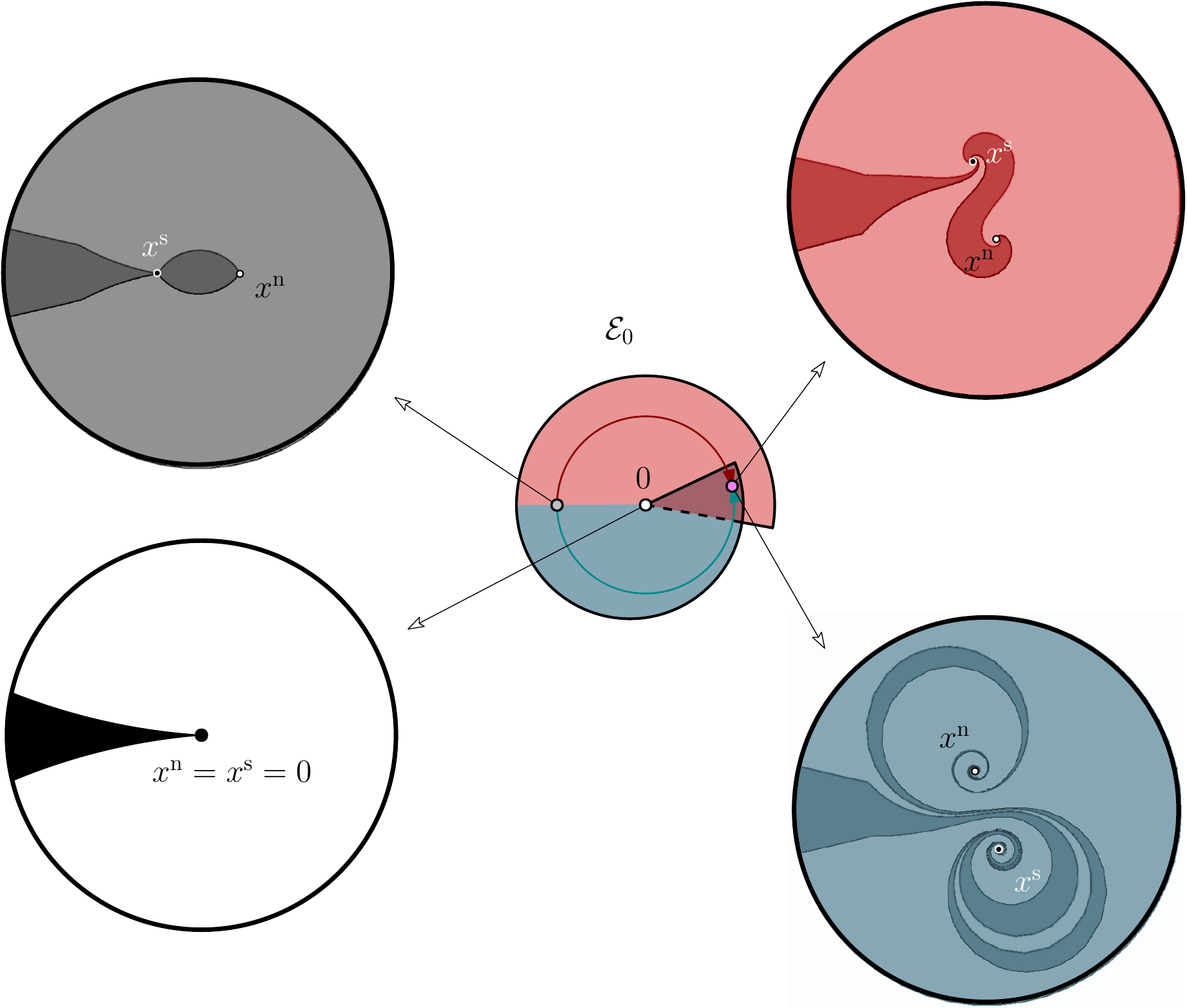}\hfill{}

\caption{\label{fig:parametre_space_k=00003D1}The single (self-overlapping)
cell $\mathcal{E}$ with diverse configurations when $k=1$. Non-equivalent
decompositions are shown on the right. In each picture the location
of the node-like singularity $\protect\nod x{}{}$ is given by the
analytic continuation of the principal determination of $\sqrt{-\varepsilon}$.}
\end{figure}
\begin{figure}[h]
\hfill{}\subfloat[With two introvert squid sectors: $\sigma=\left(\protect\begin{array}{cc}
0 & 1\protect\\
0 & 1
\protect\end{array}\right)$]{\includegraphics[width=6cm]{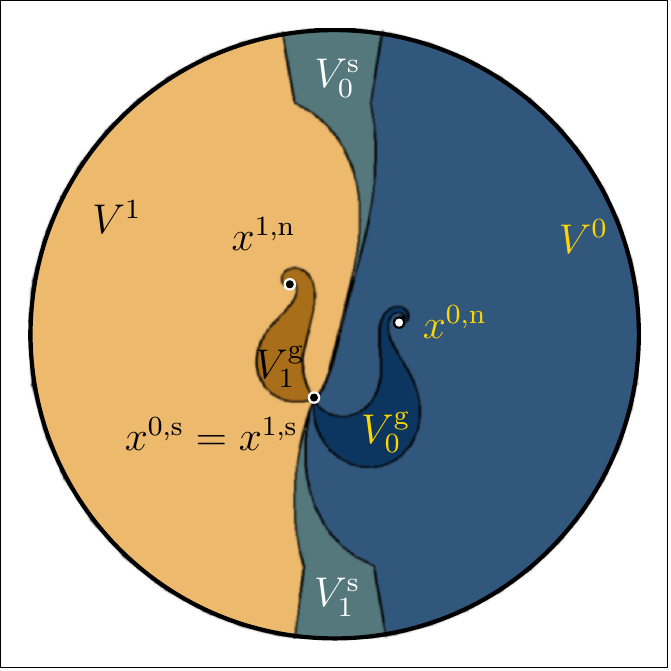}

}\hfill{}\subfloat[With two extrovert squid sectors: $\sigma=\left(\protect\begin{array}{cc}
0 & 1\protect\\
1 & 0
\protect\end{array}\right)$]{\includegraphics[width=6cm]{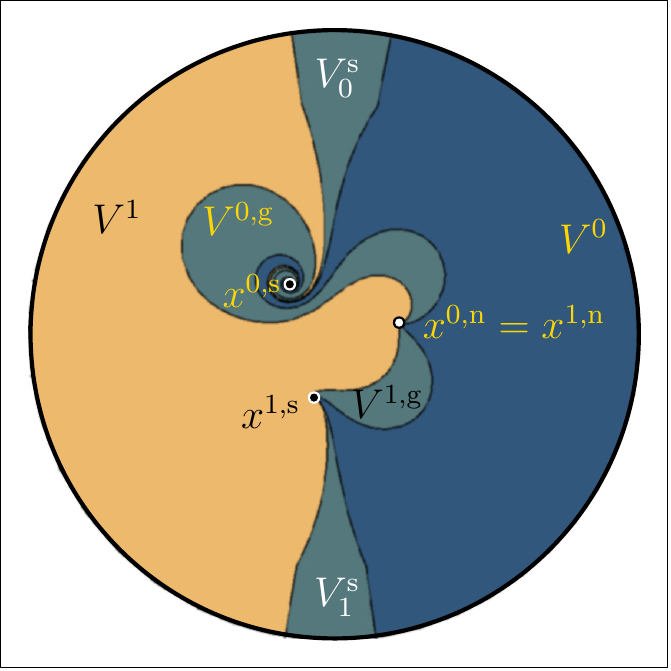}

}\hfill{}

\caption{\label{fig:Non-equivalent-decompositions}Non-equivalent decompositions
for same $\varepsilon$ when $k=2$.}
\end{figure}

\subsubsection{Some useful estimates}

We shape the squid sectors in this way because in doing so we gain
control on the convergence and on the magnitude of integrals involved
in the Cauchy-Heine transform appearing in the next section, in the
wake of Remark~\ref{rem:modulus_behavior_near_roots}. In the following
lemma we use the boundary $\Gamma^{j,\pm}$ of saddle-parts of unbounded
sectors as depicted in Figure~\ref{fig:unbounded_squid_sectors}.
\begin{lem}
\label{lem:integrable_first-integral}Assume $\tau=0$ (which particularly
implies $\mu_{0}\notin\rr_{\leq0}$). One can take $\rho$ and $\mathcal{E}_{\ell}$
sufficiently small so that the following properties hold. 
\begin{enumerate}
\item For all $r>0$ the model first integral~\eqref{eq:model_first_integral}
is bounded on $\sad V{\varepsilon}{j,}\times r\ww D$, more precisely
there exists $C>0$ such that 
\begin{align*}
\left(\forall\varepsilon\in\mathcal{E}_{\ell}\right)~~~~\sup_{\sad V{\varepsilon}{j,}\times r\ww D}\left|\widehat{H}^{j}\right| & \leq rC.
\end{align*}
\item Also $\widehat{H}^{j}$ is $\frac{\dd z}{z-x}$-absolutely integrable
over any component $\Gamma=\Gamma^{j,\pm}$ of the boundary of saddle
part intersections (given the outgoing orientation): for all $x\in V_{\varepsilon}^{j}\backslash\Gamma$
and $y\in\cc$ we have 
\begin{align*}
\int_{\Gamma}\widehat{H}^{j}\left(z,y\right)\frac{\dd z}{z-x} & =:yI^{j}\left(x\right)\in\cc.
\end{align*}
\item There exist a constant $C>0$ such that for all $\varepsilon\in\mathcal{E}_{\ell}$
and all $x\in V_{\varepsilon}^{j}\backslash\Gamma$ 
\begin{align*}
\left|I^{j}\left(x\right)\right| & \leq\frac{C}{\left|z_{*}-x_{*}\right|},
\end{align*}
where $z_{*}=\Gamma\cap\rho\sone$ and $x_{*}$ is likewise the intersection
of $\rho\sone$ and the curve passing through $x$ built in the same
way as $\Gamma$. 
\end{enumerate}
\end{lem}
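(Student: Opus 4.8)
The final statement is Lemma~\ref{lem:integrable_first-integral}, establishing boundedness and integrability properties of the model first integral $\widehat H^j$ on unbounded saddle parts. Let me sketch a proof plan.

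\medskip

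The plan is to work entirely in the time coordinate $t(x)=\int^x \frac{\dd z}{P_\varepsilon(z)}$, where the model first integral takes a transparent form. First I would compute that $\widehat H^j(x,y)=y\exp\!\left(-\int^x \frac{1+\mu z^k}{P_\varepsilon(z)}\dd z\right)=y\exp(-t(x))\exp\!\left(-\mu\int^x\frac{z^k}{P_\varepsilon(z)}\dd z\right)$, so that $|\widehat H^j(x,y)|=|y|\,|x|^{-\re\mu+\nu\im\mu+o(1)}\exp(-\re t(x))$ up to a bounded multiplicative factor, using that $\int^x \frac{z^k}{P_\varepsilon(z)}\dd z = \frac1{k+1}\log P_\varepsilon(x)+\OO1 \sim \log x$ near $\infty$ and that the unbounded sector $V^j_\varepsilon$ spirals along $x\exp((1+\ii\nu)\rr_{\geq0})$. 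The key geometric input (established in Section~\ref{subsec:squid_sectors}) is that on a saddle part $\sad V{\varepsilon}{j,}$, attached to a saddle-type root $\sad x{}{j,}$, the real part $\re t(x)$ tends to $+\infty$ as one moves into the infinite end: indeed the saddle point sits at infinity in the time coordinate in the direction of $-1/P_\varepsilon'(\sad x{}{j,})$ with $\re\bigl(-\vartheta/P_\varepsilon'(\sad x{}{j,})\bigr)>0$, and along the outer spiraling end the choice $\re\mu>\nu\im\mu$ forces $\re t(x)+\re\mu\log|x|\to+\infty$ as well. For part~(1), boundedness of $\widehat H^j$ on $\sad V{\varepsilon}{j,}\times r\ww D$ then follows: the exponent $-\re t(x)-(\re\mu-\nu\im\mu)\log|x|$ is bounded above on the saddle part (it is $\leq$ its value near the arc $\rho\sone$, which is $\OO1$ uniformly in $\varepsilon\in\mathcal E_\ell$ by the continuity of the sector construction), and the linear dependence on $y$ gives the factor $rC$. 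Continuity of $\varepsilon\mapsto\adh{V^j_{\ell,\varepsilon}}$ and the compactness argument of Section~\ref{subsec:Size-of-sectors} give uniformity of $C$.

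\medskip

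For parts~(2) and~(3), I would parametrize the boundary component $\Gamma=\Gamma^{j,\pm}$ by a real parameter $\sigma\geq 0$ with $\Gamma(0)=z_*\in\rho\sone$ and $\Gamma(\sigma)\to\infty$, $\re t(\Gamma(\sigma))\geq c\sigma$ for some $c>0$ by the above (this is where the slanting of the strips to achieve an angle was engineered). Then $|\widehat H^j(z,y)|\leq |y|\,C_1 e^{-c'\sigma}$ along $\Gamma$ for suitable $c'>0$ (absorbing the logarithmic term $|z|^{-\re\mu+\nu\im\mu}$ into the exponential since $|z|\to\infty$), while $|\dd z|$ along $\Gamma$ grows at most polynomially in $\sigma$; hence $\int_\Gamma \widehat H^j(z,y)\frac{\dd z}{z-x}$ converges absolutely, defining $y I^j(x)$ by linearity in $y$. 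For the bound~(3), for $x\in V^j_\varepsilon\setminus\Gamma$ one splits $\Gamma$ into the portion near the arc (where $|z-x|\geq c'' |z_*-x_*|$ by an elementary geometric comparison of the two outgoing curves from $\rho\sone$ built the same way, and $\widehat H^j$ is bounded) and the far portion (where $|z-x|$ is bounded below by a constant and $|\widehat H^j|$ decays exponentially), giving $|I^j(x)|\leq C/|z_*-x_*|$ uniformly.

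\medskip

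The main obstacle is the geometric estimate $\re t(x)\to+\infty$ on the saddle parts together with its \emph{uniformity} in $\varepsilon\in\mathcal E_\ell$ and in the limit $\varepsilon\to 0$. This is precisely what the careful construction of slanted squid sectors via admissible angles in Section~\ref{subsec:Practical-description} was designed to guarantee (the condition $\re(-\vartheta/P_\varepsilon'(\sad x{}{j,}))>0$ at saddle ends, and the matching of slopes at all ends attached to a given singular point), so rather than reprove it I would cite that construction and Lemma~\ref{lem:time_minoration} (to control periods, hence the overall geometry), and just extract the needed exponential-decay estimate along $\Gamma^{j,\pm}$. The secondary technical point is handling the outer spiraling part appended in Section~\ref{subsec:squid_sectors}, where one must verify that the inequality $\re\mu>\nu\im\mu$ indeed makes $|\widehat H^j|$ decay; this is a direct computation with $\widehat H^j(x,y)\sim y\,x^{-\mu}e^{-t(x)}$ and $x=x_m e^{(1+\ii\nu)s}$.
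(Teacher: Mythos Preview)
Your overall approach---exponential decay of $\widehat H^j$ via a Gronwall-type argument in the time coordinate, split into inner and outer pieces---is the same as the paper's. Two places in your sketch are imprecise and would need exactly the refinements the paper provides.

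First, your parametrization of $\Gamma=\Gamma^{j,\pm}$ by $\sigma\geq 0$ with $\Gamma(0)=z_*\in\rho\sone$ and $\Gamma(\sigma)\to\infty$ covers only the \emph{outer} half of the curve; $\Gamma$ also has an \emph{inner} end landing at the saddle root $\sad x{}{j,}$, and absolute convergence there must be checked as well. Relatedly, your claim $\re t(\Gamma(\sigma))\geq c\sigma$ is the estimate for the \emph{inner} end (where the admissible angle forces $\re t\to+\infty$ toward $\sad x{}{j,}$), not the outer one: as $x\to\infty$ one has $t(x)\to 0$, and the decay there comes from the factor $|x|^{-(\re\mu-\nu\im\mu)}$ you computed, not from $\exp(-\re t)$. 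The paper parametrizes $\Gamma$ over all of $\rr$ with $z(+\infty)=\sad x{}{j,}$ and $z(-\infty)=\infty$, and runs two separate variational arguments on $\phi(t)=|\widehat h(z(t))|$ for the two ends.

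Second, the ``elementary geometric comparison'' $|z-x|\geq c''|z_*-x_*|$ you invoke for~(3) is not obviously valid: if $x$ sits deep in the sector (near $\sad x{}{j,}$ or near $\infty$), it can be close to some $z\in\Gamma$ far from the arc while $|z_*-x_*|$ is not small. The paper's device is to replace the fixed $x$ by the moving point $x(t)$ on the curve through $x$ built like $\Gamma$, run the Gronwall argument instead on $\phi_\sharp(t)=\bigl|\widehat h(z(t))\,\dot z(t)/(z(t)-x(t))\bigr|$ (so that the $P_\varepsilon$ or $(1+\ii\nu)z$ factor from $\dot z$ is absorbed into the differential inequality), and then observe that $\bigl|\tfrac{z(t)-x}{z(t)-x(t)}\bigr|$ is bounded away from~$0$ uniformly. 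The bound $C/|z_*-x_*|$ then drops out from $\phi_\sharp(0)$.
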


\begin{proof}
Because $\widehat{H}^{j}$ is linear in $y$ we may only consider
the case $y:=1$. Let 
\[
\widehat{h}~:~x\mapsto\widehat{H}^{j}\left(x,1\right)
\]
be the corresponding partial function. The proof is done in two steps,
corresponding to the two different components ``inner'' (inside
$\rho\disc$) and ``outer'' ($|x|\geq\rho$). We parametrize $\Gamma$
by a piecewise analytic curve $z~:~\ww R\to\cc$ detailed below, such
that (with the obvious abuse of notations) 
\begin{align*}
\begin{cases}
z\left(-\infty\right) & =\infty\\
z\left(0\right) & =z_{\pm}^{j}\in\rho\sone\\
z\left(\infty\right) & =\sad x{}{j,}
\end{cases} & .
\end{align*}
In what follows, $C>0$ indicates a real constant (independent on
$\varepsilon$) whose value varies according to the place where it
appears.
\begin{enumerate}
\item We invoke again the variational argument presented in Remark~\eqref{rem:modulus_behavior_near_roots}.
Over $\left]0,\infty\right[$ we follow the flow of $\vartheta P\pp x$
and we can indeed estimate the modulus 
\begin{align*}
\phi\left(t\right) & :=\left|\widehat{h}\left(z\left(t\right)\right)\right|,
\end{align*}
as $\widehat{h}$ is solution of 
\begin{align*}
\frac{\dd{\widehat{h}}}{\widehat{h}} & =-\left(1+\mu z^{k}\right)\frac{\dd z}{P_{\varepsilon}},
\end{align*}
so that 
\begin{align*}
\frac{\dot{\phi}}{\phi}\left(t\right) & =-\re{\vartheta\left(1+\mu z^{k}\right)}.
\end{align*}
Since $\frac{1}{2\left|\mu_{0}\right|}>\rho^{k}$, and taking the
hypothesis $\left|\arg\vartheta\right|<\frac{\pi}{4}$ into account
we obtain 
\begin{align*}
\frac{\dot{\phi}}{\phi} & \leq-C<0
\end{align*}
and 
\begin{align}
\left|\widehat{h}\left(z\left(t\right)\right)\right| & \leq\left|\widehat{h}\left(z\left(t_{\varepsilon}\right)\right)\right|.\label{eq:estim_saddle_inner}
\end{align}
Over $\left]-\infty,0\right[$ we follow the flow of 
\begin{align*}
\dot{z} & =-\left(1+\ii\nu\right)z,
\end{align*}
above which the modulus of $\widehat{h}$ is governed by 
\begin{align*}
\frac{\dot{\phi}}{\phi} & =\re{\frac{\left(1+\mu z^{k}\right)\left(1+\ii\nu\right)z}{P_{\varepsilon}\left(z\right)}}\\
 & =\re{\frac{\left(1+\mu z^{k}\right)\left(1+\ii\nu\right)}{z^{k}}\times\frac{z^{k+1}}{P_{\varepsilon}\left(z\right)}}\\
 & \geq C\re{\mu+\ii\mu\nu}=:\alpha>0
\end{align*}
for $|z|$ sufficiently large since $\nu$ is chosen in such a way
that $\re{\mu+\ii\nu\mu}>0$ for all $\varepsilon\in\mathcal{E}_{\ell}$.
To conclude the proof we only have to remark that $\sup_{\left|z\right|=\rho}\left|\widehat{h}\left(z\right)\right|\geq\left|\widehat{h}\left(z\left(0\right)\right)\right|$
converges uniformly towards $\sup_{\left|z\right|=\rho}\left|\widehat{H}_{0}^{j}\left(z,1\right)\right|<\infty$
as $\varepsilon\to0$. 
\item and (3) We use the following trick. We work with the integral 
\[
J^{j}(x)=\int_{-\infty}^{\infty}\widehat{h}(z(t))\frac{z'(t)dt}{z(t)-x(t)},
\]
where $t\mapsto x\left(t\right)$ is defined similarly as $t\mapsto z\left(t\right)$
except for the fact that it passes through $x$, uniquely defining
$x_{*}=x\left(0\right)$. To conclude we will need to bound away from
$0$ (uniformly in $\varepsilon$) the quantity $\left|\frac{z\left(t\right)-x}{z\left(t\right)-x\left(t\right)}\right|$.
But this is clear from the pictures because if $\tx{dist}\left(x,\Gamma\right)$
is realized for $z=z\left(t\right)$ then $x\simeq x\left(t\right)$.

Now, to study $J^{j}(x)$ we repeat the above argument but with the
function 
\begin{align*}
\phi_{\sharp}\left(t\right) & :=\left|\frac{\widehat{h}\left(z\left(t\right)\right)A_{\sharp}\left(z\left(t\right)\right)}{z\left(t\right)-x\left(t\right)}\right|,~~~~\sharp\in\left\{ 0,\infty\right\} ,
\end{align*}
where $A_{0}\left(z\right):=\vartheta P_{\varepsilon}\left(z\right)$
and $A_{\infty}\left(z\right):=-(1+i\nu)z$. The variations of $\phi_{\sharp}$
are governed by 
\begin{align*}
\frac{\dot{\phi}_{\sharp}}{\phi_{\sharp}}\left(t\right) & =\re{-\frac{1+\mu z^{k}}{P_{\varepsilon}\left(z\left(t\right)\right)}A_{\sharp}\left(z\left(t\right)\right)+A_{\sharp}'\left(z\left(t\right)\right)-\frac{A_{\sharp}\left(z\left(t\right)-A_{\sharp}\left(x\left(t\right)\right)\right)}{z\left(t\right)-x\left(t\right)}}
\end{align*}
for $t$ in the corresponding interval so that $\dot{z}=A_{\sharp}\left(z\right)$
and $\dot{x}=A_{\sharp}\left(x\right)$. In the case $\sharp=\infty$,
the sum of the last two terms vanishes and then 
\begin{align*}
\frac{\dot{\phi_{\infty}}}{\phi_{\infty}} & \geq C>0
\end{align*}
for large $z$ (hence $t$ close to $-\infty$) from the choice of
$\nu$. Let us now deal with the case $\sharp=0$. We have chosen
$\rho>\rho_{\varepsilon}$ so that 
\begin{align*}
\sup_{\left|z\right|<\rho}\left(\left|\mu z^{k}\right|+2\rho\left|P''\left(z\right)\right|\right) & \leq\frac{3}{4}.
\end{align*}
Because for all $x,~z\in\rho\ww D$ 
\begin{align*}
\left|P\left(x\right)-P\left(z\right)-\left(x-z\right)P'\left(z\right)\right| & \leq\left|x-z\right|^{2}\sup_{\rho\ww D}\left|P''\right|
\end{align*}
we obtain 
\begin{align*}
\frac{\dot{\phi_{0}}}{\phi_{0}} & \leq-C<0
\end{align*}
and 
\begin{align*}
\left|\phi_{0}\left(t\right)\right| & \leq\left|\phi_{0}\left(0\right)\right|\exp\left(-Ct\right)
\end{align*}
for $t\geq0$. 

Therefore the integral 
\begin{align*}
\int_{z\left(0\right)}^{z\left(t\right)}\widehat{h}\left(z\right)\frac{\dd z}{z-x}= & \vartheta\int_{0}^{t}\widehat{h}\left(z\left(t\right)\right)\frac{P_{\varepsilon}\left(z\left(t\right)\right)}{z\left(t\right)-x}\dd t
\end{align*}
is absolutely convergent as $t\to\infty$ and 
\begin{align*}
\left|\int_{z\left(0\right)}^{z\left(\infty\right)}\widehat{h}\left(z\right)\frac{\dd z}{z-x}\right| & \leq C\left|\phi_{0}\left(0\right)\right|.
\end{align*}
But $C\left|\phi_{0}\left(0\right)\right|\leq\frac{C}{\left|x\left(0\right)-z\left(0\right)\right|}$
as expected. 
\end{enumerate}
\end{proof}

\subsection{\label{subsec:section_period_cellular}Cellular section of the period:
proof of Proposition~\ref{prop:section_period_cellular}}

The cellular section $\persec[][\ell]$ of the period operator is
obtained from a variation on the method introduced in Section~\ref{subsec:Normalizing_Savelev}
to normalize the glued abstract manifold by solving a linear Cousin
problem. It is an unfolding of the technique used in~\cite{SchaTey}
for $\varepsilon=0$. The initial data is a $k$-tuple
\begin{align*}
T=\left(T^{j}\right)_{j} & \in\prod_{\zsk}\mathcal{H}_{\ell}\left\{ h\right\} 
\end{align*}
and we seek $Q\in x\mathcal{H}_{\ell}\left\{ y\right\} \left[x\right]_{<k}$,
that is 
\begin{align*}
Q\left(x,y\right) & =x\sum_{n>0}Q_{n}\left(x\right)y^{n}
\end{align*}
for some polynomial $Q_{n}\in\holb[\mathcal{E}_{\ell}]\left[x\right]_{<k}$
in $x$ of degree less than $k$, such that 
\begin{align*}
\per[][\ell]\left(Q\right) & =T.
\end{align*}
We then define the section as 
\begin{align*}
\mathfrak{S}_{\ell}\left(T\right) & :=Q.
\end{align*}

The construction goes along the following steps. They are performed
for fixed $\varepsilon$ in a fixed $\mathcal{E}_{\ell}$, with explicit
control on the parametric regularity. Hence we omit mentioning explicitly
the dependence on $\varepsilon$ and $\ell$. For $r>0$ define
\begin{align*}
\sect[r] & :=\left\{ \left(x,y\right)\in V^{j}\times\cc~:~\left|y\right|<r\right\} .
\end{align*}
We define in a similar fashion the fibered intersections $\sect[r][j,\sharp]$
for $\sharp\in\left\{ \tx s,\tx g\right\} $. 
\begin{itemize}
\item Build sectorial, bounded functions $F^{j}$ on $\sect[r][j]$ such
that
\begin{align}
F^{j+1}-F^{j} & =2\ii\pi T^{j}\circ H^{j}\label{eq:period_cousin}
\end{align}
on $\sad{\mathcal{V}}r{j,}$, where $H^{j}$ is the $j^{\tx{th}}$
canonical sectorial first integral of $\onf$, as in~\eqref{eq:sectorial_first_integral}.
This is done again by a Cauchy-Heine transform (Section~\ref{subsec:Cauchy-Heine-transform}).
\item Because of the functional equation~\eqref{eq:period_cousin} the
identity $\onf\cdot F^{j+1}=\onf\cdot F^{j}$ holds and allows to
patch together a holomorphic function $Q:=\onf\cdot F^{j}$ on a whole
$\cc\times\neigh$ which, by construction, satisfies
\begin{align*}
\per[j]\left(Q\right)\circ H^{j} & =F^{j+1}-F^{j}\\
 & =T^{j}\circ H^{j}
\end{align*}
(Section~\ref{subsec:Regularity-of-section}).
\item Growth control near $x=\infty$ and a final normalization allows concluding
that $Q\in x\germ y\left[x\right]_{<k}$ (Section~\ref{subsec:Growth-control-of-section}).
\end{itemize}

\subsubsection{\label{subsec:Cauchy-Heine-transform}Cauchy-Heine transform}
\begin{defn}
\label{def:etha_adapted}In the following we fix a collection $N=\left(N^{j}\right)_{j}\in\prod_{j\in\zsk}\holb[\group{{\sect[r]}}]'$,
which is a $k$-tuple of functions with an expansion
\begin{align*}
N^{j}\left(x,y\right) & =\sum_{n>0}N^{j,n}\left(x\right)y^{n}
\end{align*}
uniformly absolutely convergent on every $\sect[r']$ for all $0\leq r'<r$,
whose norm is given by 
\begin{align*}
\norm[N]{} & :=\max_{j}\sup_{\sect[r][j]}\left|N^{j}\right|.
\end{align*}

\begin{enumerate}
\item We define the $j^{\tx{th}}$ \textbf{sectorial first integral associated
to} $N$ as the holomorphic function 
\begin{align*}
H_{N}^{j}~:~\sect[r] & \longto\cc\\
\left(x,y\right) & \longmapsto\widehat{H}^{j}\left(x,y\right)\exp N^{j}\left(x,y\right),
\end{align*}
where $\widehat{H}^{j}$ is the sectorial canonical model first integral~\eqref{eq:model_first_integral}
continued over unbounded squid sectors. 
\item For a given $\eta>0$ we say that $N$ is \textbf{$\eta$-adapted}
if $H_{N}^{j}\left(\sad{\mathcal{V}}r{j,}\right)\subset\eta\ww D$.
\end{enumerate}
\end{defn}

Of course we prove in due time (Corollary~\ref{cor:secto_normalization_estimate})
that $N:=N_{\varepsilon}$, defined as the collection of sectorial
solutions of the normalizing equation $\onf[\varepsilon]\cdot N_{\varepsilon}^{j}=-R_{\varepsilon}$,
satisfies the hypothesis of the definition and that $\sup\left|H_{N}^{j}\left(\sad{\mathcal{V}}r{j,}\right)\right|\to0$
as $r\to0$ (uniformly in $\varepsilon\in\mathcal{E}_{\ell}$), mainly
because it is already the case for the model first integral (Lemma~\ref{lem:integrable_first-integral}~(1)).
Therefore, for given $\eta>0$, it will always be possible to find
$r$ (independently on $\varepsilon$) such that $N$ is $\eta$-adapted,
allowing us to use the next result, genuinely the key point in building
the cellular section of the period.

\begin{figure}
\hfill{}\includegraphics[width=0.9\columnwidth]{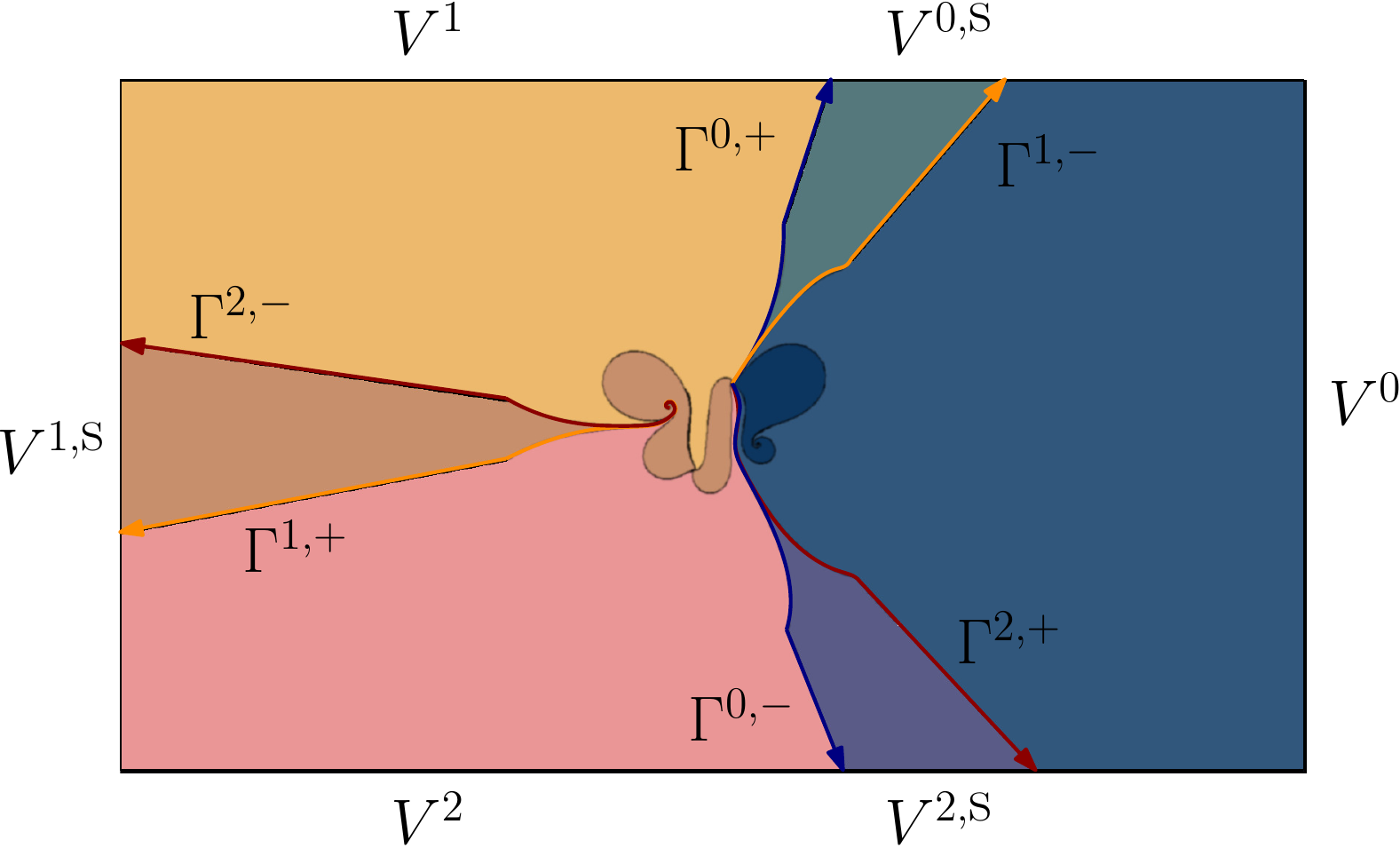}\hfill{}

\caption{\label{fig:unbounded_3}Unbounded squid sectors and paths of integration
($k>1$).}

\end{figure}
\begin{prop}
\label{prop:period_cousin}Assume $\tau=0$ (which particularly implies
$\mu_{0}\notin\rr_{\leq0}$). Let $\mathcal{E}_{\ell}$ be a fixed
cell as in Section~\ref{subsec:squid_sectors}. For every $T\in\prod_{\zsk}\holb[\eta\ww D]'$
holomorphic on a disc of radius $\eta>0$, for every $\eta$-adapted
collection $N$, the $k$-tuple of functions 
\begin{align*}
\mathfrak{F}=\mathfrak{F}\left(T,N\right) & :=\left(F^{j}\right)_{j}\in\prod_{j\in\zsk}\holb[\group{{\sect[r]}}]'
\end{align*}
defined by
\begin{align}
F^{j}\left(x,y\right) & :=\sum_{p\neq j+1}\int_{\Gamma^{p,-}}\frac{T^{p-1}\left(H_{N}^{p-1}\left(z,y\right)\right)}{z-x}\dd z+\int_{\Gamma^{j,+}}\frac{T^{j}\left(H_{N}^{j}\left(z,y\right)\right)}{z-x}\dd z\label{eq:cauchy-heine_period}
\end{align}
fulfills the next conclusions. The paths of integration $\Gamma^{j,\pm}$
bounds the unbounded squid sectors in the following way~: The boundary
of the saddle part $\sad V{}{j,}$ of (unbounded) squid sectors is
$\Gamma^{j,+}\cup\Gamma^{j+1,-}$, as in Figures~\ref{fig:unbounded_squid_sectors}
and~\ref{fig:unbounded_3}, and we set
\begin{align*}
\norm[T']{} & :=\max_{j}\sup_{\eta\ww D}\left|\ddd{T^{j}}h\right|.
\end{align*}

\begin{enumerate}
\item For every $\left(x,y\right)\in\sad{\mathcal{V}}r{j,}$ 
\begin{align}
F^{j+1}\left(x,y\right)-F^{j}\left(x,y\right) & =2\ii\pi T^{j}\left(H_{N}^{j}\left(x,y\right)\right)\label{eq:cousin_relation}
\end{align}
while for every $\left(x,y\right)\in\gat{\mathcal{V}}r{\sigma\left(j\right),}$
\begin{align*}
F^{j}\left(x,y\right) & =F^{\sigma\left(j\right)}\left(x,y\right).
\end{align*}
 (When $k=1$ we refer to~(3) of the following remark for a fuller
explanation.)
\item $F^{j}\in\holb[{\sect}]'$. 
\item There exists $K>0$ independent on $T$, $N$, $r$ and $\varepsilon$
such that the following estimates hold. 

\begin{enumerate}
\item 
\begin{eqnarray*}
\norm[\mathfrak{F}]{} & \leq & rK\norm[T']{}\exp\norm[N]{}.
\end{eqnarray*}
\item 
\begin{eqnarray*}
\norm[y\ppp{\mathfrak{F}}y]{} & \leq & rK\norm[T']{}\norm[1+y\ppp Ny]{}\exp\norm[N]{}.
\end{eqnarray*}
\item 
\begin{eqnarray*}
\norm[x\ppp{\mathfrak{F}}x]{} & \leq & rK\norm[T']{}\norm[1+x\ppp Nx]{}\exp\norm[N]{}.
\end{eqnarray*}
\end{enumerate}
\end{enumerate}
\end{prop}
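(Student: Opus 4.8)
The proof is an unfolded, unbounded-sector instance of the Cauchy--Heine scheme of~\cite{SchaTey}, and I would organise it along the three conclusions of the statement. First I would record the two pointwise bounds used throughout: since $T^{j}(0)=0$ and $T^{j}$ is holomorphic on $\eta\ww D$, the mean-value inequality gives $\left|T^{j}(w)\right|\le\norm[T']{}\left|w\right|$ for $w\in\eta\ww D$; and since $N$ is $\eta$-adapted, over each path $\Gamma^{p,\pm}$ one has $H^{p}_{N}=\widehat H^{p}\exp N^{p}$ with $\left|H^{p}_{N}(z,y)\right|\le\left|\widehat H^{p}(z,y)\right|\exp\norm[N]{}$, where moreover $\widehat H^{p}(z,y)=y\,\widehat H^{p}(z,1)$ is linear in $y$. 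Plugging these into~\eqref{eq:cauchy-heine_period} and invoking Lemma~\ref{lem:integrable_first-integral}~(2)--(3) --- absolute $\frac{\dd z}{z-x}$-integrability of $\widehat H^{p}$ along each $\Gamma^{p,\pm}$ together with the control $\left|I^{p}(x)\right|\le C/\left|z_{*}-x_{*}\right|$ of its Cauchy transform --- shows that each integral in~\eqref{eq:cauchy-heine_period} converges absolutely and locally uniformly on $\sect[r']$ for $r'<r$, so $F^{j}$ is holomorphic there, differentiation under the integral sign is legitimate, and the common factor $\left|y\right|$ extracted from every integrand gives $F^{j}(x,0)=0$, i.e. $F^{j}\in\holb[{\sect[r]}]'$; summing the finitely many contours and using that the Cauchy--Heine transform of a H\"older density vanishing at the ends of its contour stays bounded up to the contour yields estimate~(3)(a), $\sup_{\sect[r][j]}\left|F^{j}\right|\le r\,K\,\norm[T']{}\exp\norm[N]{}$.

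The core is the Cousin relation of item~(1). Comparing the defining sums for $F^{j+1}$ and $F^{j}$, all integrands $T^{p}\circ H^{p}_{N}$ with $p\notin\{j,j+1\}$ appear on both sides over the same path $\Gamma^{p+1,-}$ and cancel; the $T^{j+1}\circ H^{j+1}_{N}$-terms combine into the contour integral over $\partial\sad V{}{j+1,}$, which vanishes by Cauchy's theorem since $x\in\sad V{}{j,}$ lies outside $\sad V{}{j+1,}$ and the integrand is holomorphic in $z$ near that closed curve ($N$ being $\eta$-adapted); and the remaining $T^{j}\circ H^{j}_{N}$-terms combine into
\begin{align*}
\int_{\Gamma^{j+1,-}}\frac{T^{j}\!\left(H^{j}_{N}(z,y)\right)}{z-x}\,\dd z-\int_{\Gamma^{j,+}}\frac{T^{j}\!\left(H^{j}_{N}(z,y)\right)}{z-x}\,\dd z=\oint_{\partial\sad V{}{j,}}\frac{T^{j}\!\left(H^{j}_{N}(z,y)\right)}{z-x}\,\dd z,
\end{align*}
which by the Cauchy integral formula equals $2\ii\pi\,T^{j}\!\left(H^{j}_{N}(x,y)\right)$ for $x\in\sad V{}{j,}$, once the two ``closing'' contributions --- near $\infty$ and near the saddle vertex $\sad x{}{j,}$ --- are shown to vanish; this last point is exactly where the exponential decay of $\widehat H^{j}$ from Lemma~\ref{lem:integrable_first-integral}~(1) and the end-point estimate of item~(3) enter, and where one uses that shrinking $\mathcal E_{\ell}$ and $\rho$ makes $P'_{\varepsilon}(\sad x{}{j,})$ --- hence the eigenratio $(1+\mu(\sad x{}{j,})^{k})/P'_{\varepsilon}(\sad x{}{j,})$ --- large in modulus, forcing strong decay of $\widehat H^{j}$ at $\sad x{}{j,}$. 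The gate identity $F^{j}=F^{\sigma(j)}$ on $\gat{\mathcal V}r{\sigma(j),}$ follows in the same way: $F^{j}-F^{\sigma(j)}$ reduces to $\oint_{\partial\sad V{}{\sigma(j),}}(\cdot)-\oint_{\partial\sad V{}{j,}}(\cdot)$, and a point of a gate part lies outside both saddle parts by the construction of the squid sectors, so Cauchy's theorem kills both integrals. (When $k=1$ the saddle and gate parts are self-overlaps of the single sector $V^{0}$, and the same computation reads as a jump of the multivalued $F^{0}$ across them, as spelled out in the following remark.)

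It remains to prove the derivative estimates~(3)(b)--(c). For~(3)(b) I would differentiate~\eqref{eq:cauchy-heine_period} in $y$ under the integral, use $y\,\partial_{y}\widehat H^{j}=\widehat H^{j}$, hence $y\,\partial_{y}\bigl(T^{j}\circ H^{j}_{N}\bigr)=(T^{j})'(H^{j}_{N})\,H^{j}_{N}\bigl(1+y\,\ppp{N^{j}}y\bigr)$, and repeat the bounds above with $\norm[1+y\ppp Ny]{}$ factored out. For~(3)(c) one first integrates by parts in $z$ to absorb the extra power of $(z-x)^{-1}$ produced by $\partial_{x}$,
\begin{align*}
x\,\partial_{x}\!\int_{\Gamma}\frac{g(z,y)}{z-x}\,\dd z=\Bigl[\,\frac{-x\,g(z,y)}{z-x}\,\Bigr]_{\partial\Gamma}+\int_{\Gamma}\frac{x\,\partial_{z}g(z,y)}{z-x}\,\dd z,\qquad g:=T^{j}\circ H^{j}_{N},
\end{align*}
the boundary terms vanishing because $\widehat H^{j}$ decays at the two ends of $\Gamma$; then $\partial_{z}g=(T^{j})'(H^{j}_{N})\,H^{j}_{N}\bigl(-\tfrac{1+\mu z^{k}}{P}+\ppp{N^{j}}z\bigr)$ together with the absolute $\frac{\dd z}{z-x}$-integrability of $\widehat H^{j}\cdot\frac1P$ along $\Gamma$ (Lemma~\ref{lem:integrable_first-integral} again, the pole of $\frac1P$ at $\sad x{}{j,}$ being dominated by the decay of $\widehat H^{j}$) yields~(3)(c) with $\norm[1+x\ppp Nx]{}$.

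The step I expect to be the main obstacle is the Cousin relation of item~(1): getting the index- and orientation-bookkeeping exactly right so that precisely $\partial\sad V{}{j,}$ survives in $F^{j+1}-F^{j}$, and rigorously justifying the Cauchy integral formula on the \emph{unbounded, spiralling} saddle parts, with no spurious contribution at $\infty$ or at the saddle vertex. That justification --- and likewise the integration by parts behind~(3)(c), where the pole of $(1+\mu z^{k})/P$ at the saddle must be absorbed --- rests entirely on the quantitative decay and integrability estimates of Lemma~\ref{lem:integrable_first-integral}, which is why the cell $\mathcal E_{\ell}$ and the radius $\rho$ have to be taken small.
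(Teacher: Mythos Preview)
Your approach is the same Cauchy--Heine scheme as the paper's, and your proof of item~(1) is identical: Cauchy's residue formula after exactly the contour bookkeeping you describe, justified on the unbounded spiralling saddle parts by exhausting with compact loops and controlling the tails via Lemma~\ref{lem:integrable_first-integral}. The gate identity follows the same way.

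The one genuine difference is in how you obtain the \emph{uniform} bound~(3)(a) when $x$ approaches an integration path $\Gamma$. Lemma~\ref{lem:integrable_first-integral}~(3) only gives $\left|I^{j}(x)\right|\le C/\left|z_{*}-x_{*}\right|$, which blows up there. You appeal to boundedness of Cauchy transforms of H\"older densities up to the contour; the paper instead uses a bisection trick: cut $\sad V{}{j,}$ by a curve $\widehat\Gamma$ parallel to $\Gamma=\Gamma^{j,+}$ through the midpoint of the arc $\rho\sone\cap\sad V{}{j,}$; for $x$ on the far side of $\widehat\Gamma$ the crude bound suffices, while for $x$ on the near side one regards $x$ as a point of $V^{j+1}$ away from $\Gamma^{j+1,-}$ and invokes the \emph{already proved} Cousin relation~\eqref{eq:cousin_relation} to write $F^{j}=F^{j+1}-2\ii\pi T^{j}(H^{j}_{N})$, both terms now uniformly bounded. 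This is more self-contained than a Plemelj-type argument (whose uniformity in $\varepsilon$ on these unbounded, $\varepsilon$-dependent contours would need separate justification), and it is why the paper proves~(1) \emph{before}~(3) rather than in your order. For~(3)(b)--(c) the paper simply repeats this scheme with the differentiated integrand; your integration-by-parts route for~(c) is a legitimate alternative, though to extract the factor $\norm[1+x\ppp Nx]{}$ cleanly you should first write $x=z-(z-x)$ so that $z\,\partial_{z}N^{j}$ (bounded by $\norm[x\ppp Nx]{}$) is what appears, and then absorb the $N$-independent contribution $z(1+\mu z^{k})/P$ into $K$ using $\norm[1+x\ppp Nx]{}\ge 1$.

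One small correction: your parenthetical that shrinking $\mathcal E_{\ell}$ and $\rho$ makes $P'_{\varepsilon}(\sad x{}{j,})$ large in modulus is backwards --- as $\varepsilon\to 0$ the roots coalesce and $P'_{\varepsilon}(\sad x{}{j,})\to 0$. What becomes large is the \emph{inverse} eigenratio $\bigl(1+\mu(\sad x{}{j,})^{k}\bigr)/P'_{\varepsilon}(\sad x{}{j,})$, governing the decay exponent of $\widehat H^{j}$ at the saddle vertex; the estimate you need is exactly the exponential decay in the time variable established in the proof of Lemma~\ref{lem:integrable_first-integral}.
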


\begin{rem}
~
\begin{enumerate}
\item The absolute convergence of the integrals involved in~\eqref{eq:cauchy-heine_period}
is established in the course of the proof, mainly thanks to the estimates
given by Lemma~\ref{lem:integrable_first-integral}. Notice also
that for fixed $\varepsilon$ and $y$ the mapping $x\mapsto F^{j}\left(x,y\right)$
is holomorphic on $V^{j}$ since the squid sector does not contain
any of the curves $\Gamma^{p,-}$ except for $p=j+1$.
\item The integral expression~\eqref{eq:cauchy-heine_period} and Item~(3)
above clearly show that $\mathfrak{F}$, as a function of $\varepsilon\in\mathcal{E}_{\ell}$,
has the same regularity as $T$.
\item In the case $k=1$ the expression~\eqref{eq:cauchy-heine_period}
yields $F\left(x,y\right)=\int_{\Gamma^{+}}\left(\cdots\right)\dd z$,
which can be analytically continued in the $x$-variable on the self-overlapping
squid sector (Figure~\ref{fig:unbounded_squid_sectors}). As $x$
reaches $\Gamma^{-}$ ``from below'' the analytic continuation coincides
with $\int_{\Gamma^{-}}\left(\cdots\right)\dd z$, because the difference
of determination is given by
\begin{align*}
D\left(x,y\right) & :=\int_{\Gamma^{+}-\Gamma^{-}}\frac{T\left(H_{N}\left(z,y\right)\right)}{z-x}\dd z,
\end{align*}
and Cauchy's formula asserts that $D\left(x,y\right)=0$ whenever
$x$ is outside the saddle-part $\sad V{}{}$ enclosed by $\Gamma^{+}\cup\Gamma^{-}$.
On the contrary if $x\in\sad V{}{}$ then $D\left(x,y\right)=2\ii\pi T\left(H_{N}\left(x,y\right)\right)$,
which is the way to understand~\eqref{eq:cousin_relation}. 
\end{enumerate}
\end{rem}

\begin{proof}
This proposition follows the general lines of \cite[Theorem 2.5]{SchaTey}
for $\varepsilon=0$. A simpler instance of the strategy can be found
in Lemma~\ref{lem:Savelev-Cousin}. Except when necessary we drop
every sub- and super-scripts.

\begin{enumerate}
\item This is nothing but Cauchy residue formula. We indeed compute (omitting
to include the integrand for the sake of readability) 
\begin{align*}
F^{j+1}\left(x,y\right)-F^{j}\left(x,y\right) & =\int_{\Gamma^{j+1,+}}-\int_{\Gamma^{j,+}}+\sum_{p\neq j+2}\int_{\Gamma^{p,-}}-\sum_{p\neq j+1}\int_{\Gamma^{p,-}}\\
 & =\left(\int_{\Gamma^{j+1,-}}-\int_{\Gamma^{j,+}}\right)-\left(\int_{\Gamma^{j+2,-}}-\int_{\Gamma^{j+1,+}}\right).
\end{align*}
The candidate singularity in the common integrand $\frac{T^{p}\left(H_{N}^{p}\left(z,y\right)\right)}{z-x}$
in $\int_{\Gamma^{p+1,-}}-\int_{\Gamma^{p,+}}$ is $z=x$. This happens
only when $x\in\sad V{}{p,}$. By hypothesis $x\in\sad V{}{j,}$ hence~\eqref{eq:cousin_relation}
holds.\\
Actually one needs to use a growing family of compact loops within
$\sad V{}{j,}$ converging toward $\partial\sad V{}{j,}$, then to
apply Cauchy formula to each one of them and take the limit. The only
possible choice for the connected component of $\cc\backslash\left(\Gamma^{j+1,-}\cup\Gamma^{j,+}\right)$
for which this construction works is $\sad V{}{j,}$, since in that
sector we can establish tame estimates for the growth of the integrand
(see~(3) below), and we can also establish untamed estimates outsides
a neighborhood of $\adh{\sad V{}{j,}}$.
\item Taking for granted that the integrand defining $F\left(x,y\right)$
for $\left(x,y\right)\in\adh{{\sect[r][~]}}$ is bounded from above
by a real-analytic, integrable function on $\partial\sad V{}{}$,
the analyticity of $F$ on $\sect[r][~]$ is clear from the definition~\eqref{eq:cauchy-heine_period}.
Integration paths used to evaluate $F$ can be slightly deformed outwards
without changing the value of the integral, which shows that $F$
can be analytically continued to any point $\left(x,y\right)$ with
$x\in\partial V\backslash P_{\varepsilon}^{-1}\left(0\right)$ and
$\left|y\right|\leq r$. Concluding that $F$ extends as a continuous
function on $\adh{{\sect[r][~]}}\backslash P_{\varepsilon}^{-1}\left(0\right)$
is again a consequence of~\eqref{eq:cauchy-heine_period} for $y$
is an extraneous parameter. Dominated convergence of $F\left(x,y\right)$,
continuity on $\adh{\mathcal{V}_{r}}\cap P_{\varepsilon}^{-1}\left(0\right)$
and boundedness of $F$ are established in~(3).
\item We begin with proving~(a). Since, for $p\in\zsk$,
\begin{align*}
\left|T^{p}\left(h\right)\right| & \leq\left|h\right|\norm[T']{},
\end{align*}
we deduce
\begin{align*}
\left|\frac{T^{p}\left(H\left(z,y\right)\right)}{z-x}\right| & \leq\frac{\left|\widehat{H}\left(z,y\right)\right|}{\left|z-x\right|}\norm[T']{}\exp\norm[N]{}.
\end{align*}
We then invoke the estimates derived for the model family in Lemma~\ref{lem:integrable_first-integral},
showing dominated convergence for $F\left(x,y\right)$. In order to
bound $F$ it is sufficient to consider only the problem of bounding
$F$ near a single $\Gamma:=\Gamma^{j,+}$. A uniform bound $K$ for
the rightmost sum of integrals simply requires bounding uniformly
$\frac{1}{\left|z_{*}-x_{*}\right|}$ where $z_{*},~x_{*}\in\rho\sone$.
Of course no uniform bound in $x$ exists when $x$ tends to $\Gamma$
(\emph{i.e.} $x_{*}$ tends to $z_{*}$). To remedy this problem we
bisect $\sad V{}{}$ with a curve $\widehat{\Gamma}$ parallel to
$\Gamma$ and passing through the middle of the arc $\rho\sone\cap\sad V{}{}$.
When $x$ is taken in the component of $V^{j}\backslash\widehat{\Gamma}$
not accumulating on $\Gamma$ the value of $\frac{1}{\left|z_{*}-x_{*}\right|}$
is uniformly bounded. When $x$ is taken in the other part we use
the functional relation~\eqref{eq:cousin_relation}: in that configuration
$x$ is understood as an element of $V^{j+1}$ far from $\Gamma^{j+1,-}$
and we are back to the situation we just solved.

A little bit more detailed analysis allows proving that $x\mapsto F\left(x,y\right)$
is Cauchy\footnote{A function $f$ from a metric space $E$ to another one $F$ is Cauchy
at $a$ if for all $\varepsilon>0$ there exists $\delta>0$ such
that $x,~y\in\tx B\left(a,\delta\right)$ implies $d\left(f\left(x\right),f\left(y\right)\right)<\varepsilon$.} near $\sad x{}{j,}$, so that $F$ extends continuously to $\left\{ \sad x{}{j,}\right\} \times r\ww D$.
Items (b) and~(c) are obtained much in the same way, the details
are straightforward adaptations of~(a).
\end{enumerate}
\end{proof}

\subsubsection{\label{subsec:Regularity-of-section}Holomorphy of $Q_{\varepsilon}$}

Now all functions $\onf[\varepsilon]\cdot\mathfrak{F}^{j}$ patch
on intersecting squid sectors to define 
\begin{align*}
Q & \in\holf[\left(\cc\backslash P_{\varepsilon}^{-1}\left(0\right)\right)\times r\ww D].
\end{align*}
If we show that $Q$ is bounded near each disk $\left\{ \sad x{}{j,}\right\} \times r\ww D$
then Riemann's theorem on removable singularities guarantees the holomorphic
extension of $Q$ to $\cc\times\ww D$. But
\begin{align}
\left|Q\left(x,y\right)\right| & \leq\left|P_{\varepsilon}\left(x\right)\right|\norm[\ppp Fx]{}+\left(1+\left|\mu\right|\left|x\right|^{k}+\left|R\left(x,y\right)\right|\right)\norm[y\ppp Fy]{}\label{eq:estim_secto_section}
\end{align}
so that taking Proposition~\ref{prop:period_cousin}~(3) into account
brings the conclusion.

\subsubsection{\label{subsec:Growth-control-of-section}Growth control of $Q_{\varepsilon}$
near $x=\infty$}

In Section~\ref{subsec:Parametric-normalization} we prove that the
$k$-tuple of sectorial solutions $N$ of the cohomological equation
of normalization $\onf\cdot N^{j}=-R$ satisfy the conditions $\norm[x\ppp Nx]{}\le\frac{1}{3}$
and $\norm[y\ppp Ny]{}\le\frac{1}{3}$ if $r$ is chosen small enough
(Corollary~\ref{cor:secto_normalization_estimate}). 
\begin{lem}
For every fixed $y\in r\ww D$ the entire function $x\mapsto Q\left(x,y\right)$
is actually a polynomial of degree at most $k$, and
\begin{align}
Q\left(x,y\right) & =\sum_{n>0}q_{n}\left(x\right)y^{n}~~~,~~q_{n}\in\pol x_{\leq k}~\label{eq:section_polynomial_expansion}
\end{align}
on $\cc\times r\ww D$. 
\end{lem}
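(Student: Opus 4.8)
The plan is to use the growth information we already have on the sectorial primitives $\mathfrak{F}^{j}$ together with the fact that $Q=\onf\cdot\mathfrak{F}^{j}$ extends to an entire function of $x$ (for each fixed $y\in r\ww D$), and to show that $Q$ has polynomial growth of degree $\leq k$ at $x=\infty$, so that by Liouville's theorem it is a polynomial of degree at most $k$ in $x$. First I would recall from Section~\ref{subsec:Regularity-of-section} that $x\mapsto Q(x,y)$ is holomorphic on all of $\cc$ (Riemann's removable singularity theorem having been applied at the points $\sad x{}{j,}$). Then I would write, on each squid sector $V^{j}$,
\begin{align*}
Q(x,y) & =P_{\varepsilon}(x)\ppp{\mathfrak{F}^{j}}x(x,y)+y\left(1+\mu x^{k}+R(x,y)\right)\ppp{\mathfrak{F}^{j}}y(x,y),
\end{align*}
and estimate each term as $|x|\to\infty$ inside $V^{j}$ (the union of the $V^{j}$ being a neighborhood of $\infty$ in $\cc$, after removing the curves $\Gamma^{p,-}$, which are themselves interior to sectors and hence harmless).

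The key estimates are Proposition~\ref{prop:period_cousin}~(3)(b)--(c): they give, on the bounded portion $\sect[r]$, the bounds $\norm[x\ppp{\mathfrak{F}}x]{}\leq rK\norm[T']{}\norm[1+x\ppp Nx]{}\exp\norm[N]{}$ and similarly for $y\partial_{y}\mathfrak{F}$. What I actually need, though, is control on the \emph{unbounded} squid sectors where $|x|$ is large. Here I would return to the Cauchy--Heine integral representation~\eqref{eq:cauchy-heine_period}: since the contours $\Gamma^{p,\pm}$ are contained in $\rho\ww D$ near their ``inner'' ends and spiral out along geometric spirals $x_{m}\exp((1+\ii\nu)\rr_{\geq0})$ at infinity, and since the integrand $T^{p}(H_{N}^{p}(z,y))/(z-x)$ decays exponentially in $|z|$ along those spirals (Lemma~\ref{lem:integrable_first-integral}, applied to $\widehat H^{j}$ and the $\eta$-adaptedness of $N$), one gets that $\mathfrak{F}^{j}(x,y)\to 0$ and $\partial_x \mathfrak{F}^{j}(x,y)=\OO{1/|x|}$ as $|x|\to\infty$ in $V^{j}$; differentiating under the integral sign, $x\partial_x\mathfrak{F}^{j}$ stays bounded. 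Feeding this into the displayed formula for $Q$: the term $P_{\varepsilon}(x)\partial_x\mathfrak{F}^{j}$ is $\OO{|x|^{k+1}\cdot 1/|x|}=\OO{|x|^{k}}$, and the term $y(1+\mu x^{k}+R)\partial_y\mathfrak{F}^{j}$ is $\OO{|x|^{k}}$ as well (using that $R$ is holomorphic and bounded on $\mathcal{U}^{0}$ and $\partial_y\mathfrak{F}^{j}$ is bounded). Hence $Q(x,y)=\OO{|x|^{k}}$ uniformly for $y\in r'\ww D$, $r'<r$, which by Liouville forces $x\mapsto Q(x,y)$ to be a polynomial of degree at most $k$.

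For the expansion~\eqref{eq:section_polynomial_expansion}, I would note that each coefficient $q_{n}(x)$ is obtained as a coefficient in the $y$-Taylor expansion of the entire-in-$x$ function $Q(\cdot,y)$; since $Q(x,\cdot)$ vanishes at $y=0$ (because $\mathfrak{F}^{j}\in\holb[\sect]'$ vanishes on $\{y=0\}$, whence $\onf\cdot\mathfrak{F}^{j}$ does too), the expansion starts at $n=1$, and each $q_n$ inherits the degree bound $\leq k$ from the bound on $Q$ via Cauchy's estimate in $y$ combined with the degree-$\leq k$ conclusion at each fixed $y$. The convergence on $\cc\times r\ww D$ follows from the uniform bound on $Q$ on $\cc\times r'\ww D$ for every $r'<r$. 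The main obstacle I anticipate is the growth control of $\mathfrak{F}^{j}$ and its $x$-derivative \emph{at infinity} in the unbounded squid sectors — Proposition~\ref{prop:period_cousin}~(3) as stated controls norms on the bounded region $\sect[r]$, so one genuinely has to revisit the integral formula~\eqref{eq:cauchy-heine_period} and exploit the exponential decay of $\widehat H^{j}$ along the spiraling boundary arcs (guaranteed by the condition $\re{\mu_0}>\nu\im{\mu_0}$ on the spiral slope and by Lemma~\ref{lem:integrable_first-integral}), rather than just quoting a bound. Everything else is routine.
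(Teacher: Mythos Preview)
Your approach is essentially the paper's: write $Q=\onf\cdot\mathfrak{F}^{j}$, bound each term by $\OO{|x|^{k}}$ using the finiteness of $\norm[x\partial_{x}\mathfrak{F}]{}$ and $\norm[y\partial_{y}\mathfrak{F}]{}$, and apply Liouville. Two points deserve correction.

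First, your ``main obstacle'' is not one. The norms in Proposition~\ref{prop:period_cousin}~(3) are taken over $\mathcal{V}_{r}^{j}=V^{j}\times r\ww D$ where $V^{j}$ is the \emph{unbounded} squid sector (see the definition of $\mathcal{V}_{r}^{j}$ just before Definition~\ref{def:etha_adapted}). So (3)(b,c) already give global bounds on $x\partial_{x}\mathfrak{F}^{j}$ and $y\partial_{y}\mathfrak{F}^{j}$, including near $x=\infty$; there is no need to revisit the Cauchy--Heine integral. The paper simply quotes these bounds together with Corollary~\ref{cor:secto_normalization_estimate}~(2)(b) (which controls $\norm[x\partial_{x}N]{}$, $\norm[y\partial_{y}N]{}$) to get $\norm[x\partial_{x}\mathfrak{F}]{},\norm[y\partial_{y}\mathfrak{F}]{}<\infty$, then writes $|Q|\leq|P_{\varepsilon}(x)/x|\norm[x\partial_{x}\mathfrak{F}]{}+C|x|^{k}\norm[y\partial_{y}\mathfrak{F}]{}=\OO{|x|^{k}}$.

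Second, your justification ``$R$ is holomorphic and bounded on $\mathcal{U}^{0}$'' is the wrong reason, since $\mathcal{U}^{0}$ is bounded in $x$ and you are sending $|x|\to\infty$. The correct (and crucial) fact is that we are working with the orbital normal form $\onf$, so $R\in\nfsec[k][y]$ is a \emph{polynomial in $x$ of degree at most $k$}; this is exactly what gives $1+|\mu||x|^{k}+|R(x,y)|\leq C|x|^{k}$ for $|x|\geq\rho$.
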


\begin{proof}
Since $x\mapsto R\left(x,y\right)$ is a polynomial of degree at most
$k$, there exists a constant $C>0$ such that $1+\left|\mu\right|\left|x\right|^{k}+\left|R\left(x,y\right)\right|\leq C\left|x\right|^{k}$
for every $\left|x\right|\geq\rho$. The bound~\eqref{eq:estim_secto_section}
on $x\mapsto Q\left(x,y\right)$ also holds near $\infty$ so that
\begin{align*}
\left|Q\left(x,y\right)\right| & \leq\left|\frac{P_{\varepsilon}\left(x\right)}{x}\right|\norm[x\ppp{F^{j}}x]{}+C\left|x\right|^{k}\norm[y\ppp Fy]{}.
\end{align*}
From Proposition~\ref{prop:period_cousin}(3)(b,c) and the control
on $\norm[x\ppp{N^{j}}x]{}$, $\norm[y\ppp{N^{j}}y]{}$ we infer $\norm[x\ppp{F^{j}}x]{},~\norm[y\ppp{F^{j}}y]{}<+\infty$,
from which we deduce $\frac{P_{\varepsilon}\left(x\right)}{x}\norm[x\ppp{F^{j}}x]{}=\OO{x^{k}}$
and finally $Q\left(x,y\right)=\OO{x^{k}}$ as well.
\end{proof}
To complete the proof of Proposition~\ref{prop:section_period_cellular}
we need to modify $Q$ so that $Q\left(0,y\right)=0$. In order not
to change the period of $Q$ we can only subtract from $Q$ a function
of the form $\onf[\varepsilon]\cdot F$ with $F$ holomorphic. This
is done by setting 
\begin{align*}
F\left(y\right) & :=\int_{0}^{y}\frac{Q\left(0,v\right)}{v}\dd v,
\end{align*}
so that $Q-\onf[\varepsilon]\cdot F$ vanishes on $\left\{ x=0\right\} $
while still admitting an expansion of the form~\eqref{eq:section_polynomial_expansion}.

\subsection{\label{subsec:section_period}Stitching cellular sections together:
proof of Proposition~\ref{prop:section_period}}

Fix $\neigh[k]\backslash\Delta_{k}$ and $\rho>0$ not larger than
what is allowed in Lemma~\ref{lem:integrable_first-integral}, and
take $G\in\holb[\rho\ww D\times{{\neigh}}]'$. We prove now that for
any fixed $\varepsilon\in\mathcal{E}_{\ell}$, at most one $Q\in x\pol x_{<k}\left\{ y\right\} '$
exists such that $G-Q\in\tx{im}\left(\onf[\varepsilon]\cdot\right)$,
that is $\per\left(G\right)=\per\left(Q\right)$. This amounts to
showing that $\tx{im}\left(\onf[\varepsilon]\cdot\right)\cap x\pol x_{<k}\left\{ y\right\} '=\left\{ 0\right\} $
for all fixed $\varepsilon\in\neigh[k]\backslash\Delta_{k}$. 

Let $G\in\tx{im}\left(\onf[\varepsilon]\cdot\right)\cap x\pol x_{<k}\left\{ y\right\} '$
and write 
\begin{align*}
G\left(x,y\right) & =\onf[\varepsilon]\cdot\sum_{n\geq d}F_{n}\left(x\right)y^{n}=\sum_{n\geq d}G_{n}\left(x\right)y^{n}\in\holf[r\ww D\times{{\neigh}}]~,~d\in\nn~;
\end{align*}
we claim that $G_{d}=0$, which is sufficient to establish the result.
It turns out that for its part of least degree in $y$ the cohomological
equation only depends on its formal normal form:
\begin{align*}
\fonf[\varepsilon]\cdot\left(y^{d}F_{d}\left(x\right)\right) & =y^{d}G_{d}\left(x\right).
\end{align*}
Such a relation holds if and only if the period of $y^{d}G_{d}$ along
the formal normal form vanishes: $\widehat{\per}\left(y^{d}G_{d}\right)=0$.
Therefore we need to prove that 
\begin{align*}
\widehat{\per}~:~x\pol x_{<k}y^{d} & \longto\cc^{k}h^{d}\\
y^{d}G_{d} & \longmapsto\widehat{\per}\left(y^{d}G_{d}\right)
\end{align*}
 is injective if $\varepsilon$ is small enough. As recalled in Corollary~\ref{cor:characterization_solution_cohomog_fixed_epsilon}
we know that for every $a\in\nn$
\begin{align*}
\lim_{\underset{\mathcal{E}_{\ell}}{\varepsilon\longto0}}\widehat{\per}\left(x^{a}y^{b}\right) & =\widehat{\per}_{0}\left(x^{a}y^{b}\right),
\end{align*}
where $\widehat{\per}_{0}$ is the period of the model saddle-node
$\fonf[0]$. The auxiliary result \cite[Proposition 2]{Tey-ExSN}
states precisely that $\widehat{\per}_{0}$ is invertible, and therefore
so is $\widehat{\per}$ for small $\varepsilon$ as expected.

\section{\label{sec:Analytic}Orbital Realization Theorem}

In this section we address the inverse problem for the classification
of unfoldings performed in~\cite{RouTey}, in the special case of
convergent unfoldings of formal invariant $\mu$ with 
\begin{align*}
\mu_{0}\notin\rr_{\leq0}
\end{align*}
and $\tau=0$. The residual cases $\mu_{0}\leq0$ or $\tau>0$ are
dealt with in Section~\ref{sec:tau}. Also notice that we only carry
this study for the orbital part, the case of the temporal realization
is explained in~\cite{TeySurvey} when $k=1$. Generalizing this
approach for $k>1$ using the tools introduced in Section~\ref{sec:Temporal}
should not be difficult.

\bigskip{}

We summarize in Section~\ref{subsec:Invariants} how the invariants
of classification are built. They unfold Martinet-Ramis's invariants~\cite{MaRa-SN}
for the limiting saddle-node, obtained as transition maps between
sectorial spaces of leaves. Yet the construction can only be carried
out analytically on a given parametric cell $\mathcal{E}_{\ell}$,
yielding a cellular invariant $\mathfrak{m}_{\ell}\in\prod_{\zsk}\mathcal{H}_{\ell}\left\{ h\right\} $
(see Section~\ref{subsec:Proof's-reduction} for the definition of
the functional spaces $\mathcal{H}_{\ell}$ and Section~\ref{subsec:Invariants}
for the definition of $m_{\ell}$). The orbital modulus $\mathfrak{m}\left(X\right)$
of an unfolding $X$ consists in the whole collection $\left(\mathfrak{m}_{\ell}\right)_{\ell}$.
\begin{defn}
\label{def:realizable}We say that $\left(\mu,\mathfrak{m}\right)\in\germ{\varepsilon}\times\prod_{\ell}\mathcal{H}_{\ell}\left\{ h\right\} ^{k}$
is\textbf{ realizable} if there exists a generic convergent unfolding
$X$ with formal orbital class $\mu$ and orbital modulus $\mathfrak{m}=\mathfrak{m}\left(X\right)$.
\end{defn}

In Section~\ref{subsec:Parametric-normalization} we prove the next
result.
\begin{thm}
\label{thm:cellular_realization}Assume $\tau=0$ (which particularly
implies $\mu_{0}\notin\rr_{\leq0}$). Fix a germ at $0\in\neigh[k+1]$
of a cell $\mathcal{E}_{\ell}$. Given $\mathfrak{m}_{\ell}\in\prod_{\zsk}\mathcal{H}_{\ell}\left(h\right)$
and $\mu$ with $\mu_{0}\notin\rr_{\leq0}$, there exists a unique
$R_{\ell}\in x\mathcal{H}_{\ell}\left\{ y\right\} \left[x\right]_{<k}$
such that
\begin{align*}
\onf[\ell,\varepsilon] & :=\fonf+yR_{\ell,\varepsilon}\pp y
\end{align*}
has $\mathfrak{m}_{\ell}$ for transition maps in sectorial space
of leaves (\emph{i.e.} for modulus).
\end{thm}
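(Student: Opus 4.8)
The strategy mirrors the cellular section construction of Proposition~\ref{prop:section_period_cellular}, but now the unknown being solved for is the \emph{orbital} datum $R_\ell$ rather than the time-component $Q$. Fix the cell $\mathcal{E}_\ell$ and work for a fixed $\varepsilon\in\mathcal{E}_\ell$, keeping track of the holomorphic/continuous dependence on $\varepsilon$ as a by-product of the explicit integral formulas (as in Remark~\ref{rem:corollaries_squid_solve}~(3)). The first step is to reinterpret ``$\mathfrak{m}_\ell$ is the modulus of $\onf[\ell]$'' in the functional language already set up: by the relation recalled after Definition~\ref{def:period_operator}, the $j^{\mathrm{th}}$ component of the Martinet--Ramis-type modulus is $\sad{\psi}{\ell}{j,}(h)=h\exp\big(\tfrac{2\ii\pi\mu}{k}-\per[j][\ell](R_\ell)(h)\big)$, so prescribing $\mathfrak{m}_\ell$ amounts to prescribing $\per[][\ell](R_\ell)=T$ for a suitable $k$-tuple $T=(T^j)_j\in\prod_{\zsk}\mathcal{H}_\ell\{h\}$ built from $\mathfrak{m}_\ell$. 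Thus the theorem reduces to: find $R_\ell\in x\mathcal{H}_\ell\{y\}[x]_{<k}$ with $\per[][\ell](R_\ell)=T$, together with uniqueness.

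\textbf{The fixed-point scheme.} The obstruction compared to Proposition~\ref{prop:section_period_cellular} is that the sectorial first integrals $H^j_{N}$ of $\onf[\ell]$ themselves depend on $R_\ell$ through the sectorial normalizers $N^j$ solving $\onf[\ell]\cdot N^j=-R_\ell$ (Remark~\ref{rem:corollaries_squid_solve}~(2)). So the construction must be set up as a fixed-point/iteration: starting from $R^{(0)}:=0$ (hence $N^{(0)}:=0$, $H^{j,(0)}_N:=\widehat H^j$), one alternately (i) builds the Cauchy--Heine correction $\mathfrak{F}=\mathfrak{F}(T,N^{(n)})$ via formula~\eqref{eq:cauchy-heine_period} of Proposition~\ref{prop:period_cousin}, sets $R^{(n+1)}:=\onf[\ell]\cdot\mathfrak{F}^j$ (which patches to a holomorphic, and after the growth-control argument of Section~\ref{subsec:Growth-control-of-section} a polynomial-in-$x$ of degree $\le k$, function, then normalize so $R^{(n+1)}(0,y)=0$ as in the end of Section~\ref{subsec:Growth-control-of-section}), and (ii) updates $N^{(n+1)}$ as the $k$-tuple of sectorial solutions of $\onf[\ell]^{(n+1)}\cdot N^{j,(n+1)}=-R^{(n+1)}$ using Theorem~\ref{thm:squid_and_solve}. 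The estimates~(3)(a,b,c) of Proposition~\ref{prop:period_cousin}, together with the bound $\sup|H^j_N(\sad{\mathcal{V}}r{j,})|\to0$ as $r\to0$ (Lemma~\ref{lem:integrable_first-integral}~(1) and Corollary~\ref{cor:secto_normalization_estimate}), guarantee that for $r$ small enough the iteration is well-defined, the $R^{(n)}$ stay in a fixed ball of $x\mathcal{H}_\ell\{y\}[x]_{<k}$, and — crucially — $R^{(n+1)}-R^{(n)}=\OO{y^{n+1}}$ (because $\mathfrak{F},N$ vanish on $\{y=0\}$, each step improves the $y$-order by one). Hence $(R^{(n)})_n$ converges for the Krull topology on $\germ{y}[x]_{\le k}$, and by Lemma~\ref{lem:bounded_krull_converge} (applied componentwise to the $x$-coefficients, which live in $\holb[\mathcal{E}_\ell]$) converges to a bounded holomorphic limit $R_\ell$, which by construction is a fixed point, i.e. $\per[][\ell](R_\ell)=T$ and $R_\ell$ induces $\mathfrak{m}_\ell$.

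\textbf{Uniqueness.} If $R_\ell$ and $\widetilde R_\ell$ both realize $\mathfrak{m}_\ell$ over $\mathcal{E}_\ell$, then $\onf[\ell]$ and $\widetilde{\onf{}}_\ell$ have the same modulus, so by the classification of~\cite{RouTey} they are orbitally equivalent over $\mathcal{E}_\ell$; applying Corollary~\ref{cor:formal_symmetries}~(2) and then the orbital Uniqueness Theorem~(2) (proved in Section~\ref{subsec:Uniqueness_orbital}), the equivalence is fibered, hence linear $(x,y)\mapsto(x,c_\varepsilon y)$ with $c\in\unisp$, hence $\widetilde R_\ell=c^*R_\ell$; but both $R_\ell$ and $\widetilde R_\ell$ are normalized (their sectorial conjugacies to $\fonf$ are tangent to identity, equivalently the normalizing $N^j$ have $N^j(x,0)=0$ with the chosen determination), which forces $c=1$. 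Alternatively, argue directly: the difference of the two realizations lies in $\tx{im}(\onf[\ell]\cdot)\cap x\mathcal{H}_\ell\{y\}[x]_{<k}$ at lowest $y$-degree, and the injectivity of the model period operator $\widehat\per_0$ on $x\pol x_{<k}y^d$ for small $\varepsilon$ (\cite[Proposition 2]{Tey-ExSN}, as used at the end of Section~\ref{subsec:section_period}) forces it to vanish order by order.

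\textbf{Main obstacle.} The delicate point is the self-referential nature of the scheme: the Cauchy--Heine integral in~\eqref{eq:cauchy-heine_period} requires an $\eta$-adapted collection $N$, i.e.\ $H^j_N(\sad{\mathcal{V}}r{j,})\subset\eta\ww D$, while $N$ is itself reconstructed from $R$, which is reconstructed from $\mathfrak{F}$. Closing this loop needs a uniform (in $\varepsilon\in\mathcal{E}_\ell$ and in the iteration index $n$) a priori control: one must fix $r$ so small — using the decay of $|\widehat H^j|$ on saddle parts from Lemma~\ref{lem:integrable_first-integral}~(1) and the estimates $\norm[x\partial_x N]{},\norm[y\partial_y N]{}\le\tfrac13$ from Corollary~\ref{cor:secto_normalization_estimate} — that the ball in which $R^{(n)}$ lives maps into a set on which the next $N^{(n+1)}$ is still $\eta$-adapted with the same $\eta$. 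Verifying this invariance, and that the map $R\mapsto(N\mapsto)\,\mathfrak{F}\mapsto R'$ contracts in Krull topology (which here is automatic from the $\OO{y^{n+1}}$ improvement, so no genuine Banach contraction is needed, only the Montel-space convergence of Lemma~\ref{lem:bounded_krull_converge}), is where the real work lies; everything else is a repackaging of Section~\ref{sec:Temporal} with the roles of $Q$ and $R$ interchanged.
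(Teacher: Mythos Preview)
Your approach is correct but takes a genuinely different route from the paper's. You iterate on $R$: given $R^{(n)}$, solve the sectorial cohomological equation $\onf^{(n)}\cdot N^{j,(n)}=-R^{(n)}$ to obtain $N^{(n)}$, apply the Cauchy--Heine section $\persec$ to produce $R^{(n+1)}$, and repeat. The paper instead iterates directly on $N$ via the single fixed-point equation $N=\tfrac{1}{2\ii\pi}\mathfrak{F}(\mathfrak{m}_\ell,N)$, and only \emph{after} reaching the fixed point extracts $R$ by the closed-form algebraic expression
\[
R^j:=-\frac{P\,\partial_x N^j+y(1+\mu x^k)\,\partial_y N^j}{1+y\,\partial_y N^j},
\]
then checks (Lemma~\ref{lem:synthesis_vector_field}) that these sectorial pieces patch because $H^{j+1}$ is a function of $H^j$. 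The paper's scheme is lighter: no PDE is solved during the iteration, the operator $N\mapsto\mathfrak{F}(\mathfrak{m}_\ell,N)$ does not change from step to step, and the Krull-order improvement $N_{n+1}-N_n=\OO{y^{n+1}}$ is immediate. Your scheme reuses the period-section machinery of Section~\ref{sec:Temporal} wholesale, which is conceptually appealing, but the order improvement $R^{(n+1)}-R^{(n)}=\OO{y^{n+1}}$ you assert requires tracking the $y$-order through the chain $R^{(n)}\to N^{(n)}\to H_{N^{(n)}}\to\mathfrak{F}\to R^{(n+1)}$, each link of which contributes (the key gain being $H_{N^{(n)}}-H_{N^{(n-1)}}=\widehat H\cdot\OO{y^n}=\OO{y^{n+1}}$); this works but is not as automatic as your parenthetical suggests. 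Your second uniqueness argument (via $\tx{im}(\onf\cdot)\cap x\mathcal{H}_\ell\{y\}[x]_{<k}=\{0\}$) is not quite right as stated, since the two period operators $\per_{\onf}$ and $\per_{\widetilde{\onf}}$ differ when $R\neq\widetilde R$; your first argument via the Uniqueness Theorem is the correct one.
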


The fact that this ``analytical synthesis'' gives unique forms of
the same kind as those given by Loray's ``geometric'' construction
bolsters the naturalness of the normal forms presented here. Indeed
the next corollary provides an indirect solution of the inverse problem.
\begin{cor}
\label{cor:realizable_iff_glue}A couple $\left(\mu,\mathfrak{m}\right)$
with $\mu_{0}\notin\rr_{\leq0}$ is realizable if and only if $R_{\ell,\varepsilon}=R_{\widetilde{\ell},\varepsilon}$
for all $\varepsilon\in\mathcal{E}_{\ell}\cap\mathcal{E}_{\widetilde{\ell}}$
and all $\left(\ell,\widetilde{\ell}\right)$. 
\end{cor}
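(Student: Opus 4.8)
The strategy is to chain together the cellular realization result (Theorem~\ref{thm:cellular_realization}) with the uniqueness of normal forms (Uniqueness Theorem~(2), proved in Section~\ref{subsec:Uniqueness_orbital}) and the cohomological description of the modulus from Section~\ref{subsec:period_operator}. First I would establish the ``only if'' direction. Suppose $\left(\mu,\mathfrak{m}\right)$ is realizable by a generic convergent unfolding $X$ with $\mathfrak{m}\left(X\right)=\mathfrak{m}$. By the Normalization Theorem (with $\tau=0$, since $\mu_{0}\notin\rr_{\leq0}$) the unfolding $X$ is analytically orbitally equivalent to a normal form $\onf=\fonf+yR_{\varepsilon}\pp y$ with $R\in\nfsec[k][y]$; in particular $R$ is analytic in $\varepsilon$ near $0$, hence its restriction $R|_{\mathcal{E}_{\ell}\times\neigh}$ lies in $x\mathcal{H}_{\ell}\left\{ y\right\}\left[x\right]_{<k}$ for every $\ell$. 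Since passing to the normal form does not change the orbital modulus, $\onf$ still has modulus $\mathfrak{m}_{\ell}$ over each cell $\mathcal{E}_{\ell}$. By the uniqueness clause of Theorem~\ref{thm:cellular_realization}, $R|_{\mathcal{E}_{\ell}}$ must coincide with the unique $R_{\ell}$ realizing $\mathfrak{m}_{\ell}$; therefore $R_{\ell,\varepsilon}=R_{\varepsilon}=R_{\widetilde{\ell},\varepsilon}$ for every $\varepsilon\in\mathcal{E}_{\ell}\cap\mathcal{E}_{\widetilde{\ell}}$ and every pair $\left(\ell,\widetilde{\ell}\right)$, as claimed.

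For the ``if'' direction, assume the cellular normal forms glue: $R_{\ell,\varepsilon}=R_{\widetilde{\ell},\varepsilon}$ on all overlaps. The plan is to show the $R_{\ell}$ assemble into a single $R\in\nfsec[k][y]=\germ{\varepsilon,x,y}$-valued function, defining a genuine analytic unfolding $\onf=\fonf+yR_{\varepsilon}\pp y$ over $\neigh[k]$, and then check its modulus is $\mathfrak{m}$. Each $R_{\ell}$ has coefficients in $\mathcal{H}_{\ell}\left\{y\right\}$, i.e. bounded holomorphic on $\mathcal{E}_{\ell}\times\neigh$ vanishing on $\{y=0\}$, with polynomial $x$-dependence of degree $<k$. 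By the gluing hypothesis these patch to a bounded holomorphic function on $\left(\bigcup_{\ell}\mathcal{E}_{\ell}\right)\times\neigh=\left(\neigh[k]\backslash\Delta_{k}\right)\times\neigh$; Riemann's theorem on removable singularities (used exactly as in Lemma~\ref{lem:intersection_cellular_space}) then provides $R\in\nfsec[k][y]$ extending all the $R_{\ell}$. Thus $\onf$ is a generic convergent unfolding in prepared form~\eqref{eq:prepared_form_orbital} with analytic weak separatrix $\{y=0\}$, so $\onf\in\conv[k]$ with formal orbital class exactly $\mu$ (read off from the $yx^{k}$-coefficient, unchanged by the construction).

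It remains to verify $\mathfrak{m}\left(\onf\right)=\mathfrak{m}$. Fix a cell $\mathcal{E}_{\ell}$. Over $\mathcal{E}_{\ell}$ one has $\onf=\onf[\ell]$ by construction, and Theorem~\ref{thm:cellular_realization} asserts precisely that $\onf[\ell]$ has $\mathfrak{m}_{\ell}$ as its transition maps in sectorial space of leaves, i.e. as its cellular orbital modulus. Since the orbital modulus of an unfolding is by definition the collection $\left(\mathfrak{m}_{\ell}\left(\onf\right)\right)_{\ell}$ of these cellular data (Section~\ref{subsec:Invariants}), we get $\mathfrak{m}\left(\onf\right)_{\ell}=\mathfrak{m}_{\ell}$ for every $\ell$, hence $\mathfrak{m}\left(\onf\right)=\mathfrak{m}$ and $\left(\mu,\mathfrak{m}\right)$ is realizable. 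The main obstacle in this argument is not conceptual but rests entirely on Theorem~\ref{thm:cellular_realization}: one must have the \emph{existence and uniqueness} of the cellular realizing $R_{\ell}$ in the right functional space, with controlled bounds so that the patched function is genuinely bounded on the punctured parameter neighborhood --- this is what makes the removable-singularity argument applicable. Once that input is granted, the corollary is a short bookkeeping of what ``modulus'' means cellwise versus globally, and the delicate geometric content (the interplay of incommensurable sectorial decompositions over cell overlaps, discussed in Section~\ref{subsec:Proof's-reduction}) has already been absorbed into the statement and proof of Theorem~\ref{thm:cellular_realization}.
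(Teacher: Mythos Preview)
Your proof is correct and follows essentially the same route as the paper's: for the ``if'' direction you glue the $R_{\ell}$ into a bounded holomorphic function on $\left(\neigh[k]\backslash\Delta_{k}\right)\times\neigh$ and invoke Riemann's removable-singularity theorem (exactly as the paper does), then read off the modulus from Theorem~\ref{thm:cellular_realization}; for the ``only if'' direction you pass to normal form via the Normalization Theorem and use the uniqueness clause of Theorem~\ref{thm:cellular_realization} to identify $R|_{\mathcal{E}_{\ell}}$ with $R_{\ell}$. The only point you leave slightly implicit, which the paper makes explicit, is that the normalization can be taken tangent-to-identity in the $y$-variable, so that the orbital modulus is preserved \emph{exactly} (not merely up to the equivalence of Definition~\ref{def:moduli_equivalence}); this is what justifies your sentence ``passing to the normal form does not change the orbital modulus'' and is needed for the uniqueness in Theorem~\ref{thm:cellular_realization} to pin down $R_{\ell}$ on the nose.
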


\begin{proof}
The equality $R_{\ell}=R_{\widetilde{\ell}}$ on $\mathcal{E}_{\ell}\cap\mathcal{E}_{\widetilde{\ell}}$
defines a bounded, holomorphic function $R$ in the parameter $\varepsilon\in\neigh[k+1]\backslash\Delta_{k}$,
which extends holomorphically to a whole neighborhood $\neigh[k+1]$
by Riemann's theorem on removable singularities. The corresponding
unfolding $\onf$ has modulus $\mathfrak{m}\left(\onf\right)=\left(\mathfrak{m}_{\ell}\right)_{\ell}$
by construction.

Conversely, the Normalization Theorem tells us that we can as well
assume that the vector field is in normal form $\onf$~\eqref{eq:orbital_normal_form},
without changing the orbital modulus $\mathfrak{m}=\mathfrak{m}\left(\onf\right)$.
Moreover, the normalization can be performed by tangent-to-identity
mappings in the $y$-variable. According to Theorem~\ref{thm:cellular_realization},
$R_{\ell}$ is uniquely determined by the component $\mathfrak{m}_{\ell}$
of $\mathfrak{m}$, hence $R=R_{\ell}$ on $\mathcal{E}_{\ell}$.
\end{proof}
Somehow this characterization is not satisfying since it involves
the auxiliary unfolding $\onf[\ell]$. In Section~\ref{subsec:Compatibility-condition}
we present an intrinsic characterization of realizable $\left(\mu,\mathfrak{m}\right)$
as a \emph{compatibility condition} imposed on the different dynamics
induced by each pair $\left(\mu,\mathfrak{m}_{\ell}\right)$ on the
sectorial space of leaves (Definition~\ref{def:compatibility_condition}).
Roughly speaking the condition requires that the abstract holonomy
groups be conjugate over cells overlaps. In case of an actual unfolding
$X$ (\emph{i.e.} realizable $\left(\mu,\mathfrak{m}\right)$) these
groups represent in the space of leaves the actual weak holonomy group
induced by $X$ in $\left(x,y\right)$-space. 

\subsection{\label{subsec:Invariants}Classification moduli}

Starting from a generic convergent unfolding $X$ of codimension $k$
in prepared form~\eqref{eq:prepared_form_orbital} with given orbital
formal invariant $\mu$ (with no restriction on $\mu_{0}$), we can
build the following $k$-tuple of periods (Definition~\ref{def:period_operator})
on a germ of a cellular decomposition $\left(\mathcal{E}_{\ell}\right)_{1\leq\ell\leq C_{k}}$,
called the \textbf{orbital modulus }of $X$:
\begin{align}
\mathfrak{m}\left(X\right) & :=\left(\mathfrak{m}_{\ell}\left(X\right)\right)_{1\leq\ell\leq C_{k}},\nonumber \\
\mathfrak{m}_{\ell}\left(X\right) & :=\left(\sad{\phi}{\ell}{j,}\right)_{j\in\zsk},\nonumber \\
\sad{\phi}{\ell}{j,} & :=2\ii\pi\per[j][\ell]\left(-R\right)\in\mathcal{H}_{\ell}\left\{ h\right\} .\label{eq:invariant_as_period}
\end{align}
We state the main result of~\cite{RouTey} in the specific context
of convergent unfoldings.
\begin{defn}
\label{def:moduli_equivalence}~

\begin{enumerate}
\item Fix a germ of a cell $\mathcal{E}_{\ell}$. For $c\in\germ{\varepsilon}^{\times}$,
$\theta\in\zsk$ and $f=\left(f^{j}\right)_{j\in\zsk}\in\mathcal{H}_{\ell}\left\{ h\right\} ^{k}$
define
\begin{align*}
\left(c,\theta\right)^{*}f~:~\left(\varepsilon,h\right) & \longmapsto\left(f_{\varepsilon}^{j+\theta}\left(c_{\varepsilon}h\right)\right)
\end{align*}
and extend component-wise this action to tuples. 
\item We say that two collections $\mathfrak{m},~\widetilde{\mathfrak{m}}\in\prod_{\ell}\mathcal{H}_{\ell}\left\{ h\right\} ^{k}$
are \textbf{equivalent} if there exists $c\in\germ{\varepsilon}^{\times}$
and $\theta\in\zsk$ such that 
\begin{align}
\left(c,\theta\right)^{*}\mathfrak{m} & =\mathfrak{\widetilde{m}}.\label{eq:moduli_equivalence}
\end{align}
\end{enumerate}
\end{defn}

\begin{rem}
The presentation of Definition~\ref{def:moduli_equivalence} is equivalent
to that of~\cite{MaRa-SN} for $\varepsilon=0$. The transition functions
there are simply given by $\psi^{j,s}(h)=h\exp\left(\frac{2i\pi\mu}{k}+\phi^{j,s}\right)$.
This fact will be explained in more details in Section~\ref{subsec:Compatibility-condition}.
\end{rem}

\begin{thm}
\label{thm:classification}\cite{RouTey} Two generic, prepared convergent
unfoldings $X$ and $\widetilde{X}$, in the same formal orbital class
$\mu$ with respective orbital moduli $\mathfrak{m}\left(X\right)$
and $\mathfrak{m}\left(\widetilde{X}\right)$, are equivalent by some
local analytic diffeomorphism if and only if their respective orbital
moduli $\mathfrak{m}\left(X\right)$ and $\mathfrak{m}\left(\widetilde{X}\right)$
are equivalent. Moreover $X$ is locally equivalent to its formal
normal form $\fonf$ if and only if $\mathfrak{m}\left(X\right)=0$.
\end{thm}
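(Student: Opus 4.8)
The plan is to prove Theorem~\ref{thm:classification} by citing~\cite{RouTey} for the general (not-necessarily-convergent) statement and extracting the convergent case, so the real content here is only to verify that the pieces introduced in Sections~\ref{sec:Formal}--\ref{sec:Temporal} assemble into the claimed equivalence. First I would recall from~\cite{RouTey} that to a prepared unfolding $X$ one associates the sectorial spaces of leaves over each squid sector $V_{\ell,\varepsilon}^{j}$, that these are biholomorphic to discs via the canonical sectorial first integrals $H_{\ell,\varepsilon}^{j}$ built in~\eqref{eq:sectorial_first_integral}, and that the transition maps between consecutive sheets are exactly the $\sad{\psi}{\ell}{j,}(h)=h\exp(\tfrac{2\ii\pi\mu}{k}+\sad{\phi}{\ell}{j,}(h))$ with $\sad{\phi}{\ell}{j,}=2\ii\pi\per[j][\ell](-R)$ (this is the content of the Remark following Definition~\ref{def:period_operator}). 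This gives the well-definedness of $\mathfrak{m}(X)$ up to the equivalence of Definition~\ref{def:moduli_equivalence}: changing the prepared form by the residual symmetries (the $\zsk$-action $\theta$ on the canonical parameter from Theorem~\ref{thm:preparation}, and the linear scalings $c\in\germ{\varepsilon}^{\times}$ in $y$) permutes the sectors cyclically and rescales $h$, which is precisely $(c,\theta)^{*}$.

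Next I would prove the ``only if'' direction. Suppose $\Psi^{*}X=U\widetilde{X}$ for an analytic orbital equivalence fixing (a representative of) the canonical parameter --- we may assume this thanks to Theorem~\ref{thm:preparation}, which forces $\widetilde{\varepsilon}=\theta^{*}\varepsilon$ and absorbs $\theta$ into the equivalence relation. By Corollary~\ref{cor:formal_symmetries}~(2) we may furthermore take $\Psi$ fibered in $x$. A fibered orbital equivalence carries leaves to leaves fiberwise, hence descends to biholomorphisms between the sectorial spaces of leaves of $X$ and $\widetilde{X}$ over each $V_{\ell,\varepsilon}^{j}$, intertwining the transition maps. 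Because these spaces of leaves are discs and the induced maps are tangent-to-identity up to a linear factor (the derivative being controlled by the common formal invariant $\mu$ governing the sectorial first integrals), conjugating the transition cocycle $(\sad{\psi}{\ell}{j,})_j$ to $(\widetilde{\sad{\psi}{}{}}{}_{\ell}^{j,})_j$ amounts to a single $c\in\germ{\varepsilon}^{\times}$ per cell; a connectedness argument across overlapping cells (the cells' closures cover a neighborhood of $0$) shows $c$ is independent of $\ell$, yielding $(c,\theta)^{*}\mathfrak{m}(X)=\mathfrak{m}(\widetilde{X})$.

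For the ``if'' direction, assume $(c,\theta)^{*}\mathfrak{m}(X)=\mathfrak{m}(\widetilde{X})$. The point is that $\mathfrak{m}$ is a \emph{complete} cellular invariant: on a fixed cell $\mathcal{E}_{\ell}$, Proposition~\ref{prop:cohomog_secto_solutions} together with the recognition that the period $\per[][\ell]$ determines $R$ modulo $\onf\cdot$ (Theorem~\ref{thm:section_period} / Proposition~\ref{prop:section_period_cellular}) shows that two unfoldings with the same $\mathfrak{m}_{\ell}$ are conjugate over $V_{\ell}^{j}\times\neigh$ by fibered sectorial maps. These sectorial conjugacies agree on intersections precisely because the transition cocycles agree --- this is where equality of $\mathfrak{m}_{\ell}$ (after the $(c,\theta)^{*}$ normalization, which we may undo by a linear change and a cyclic relabelling) is used --- so by Theorem~\ref{thm:squid_and_solve}~(3), i.e.\ Riemann's removable singularity theorem, they glue to a genuine analytic conjugacy near $(0,0)$ for each $\varepsilon\in\mathcal{E}_{\ell}$; holomorphy in $\varepsilon$ on $\mathcal{E}_{\ell}$ follows from the integral (Cauchy-Heine) representations, and boundedness plus Riemann's theorem again across the cellular cover removes the discriminant $\Delta_k$. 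Finally, taking $\widetilde{X}:=\fonf$ (whose modulus is $0$ since its own sectorial normalizations are trivial) gives the last sentence: $\mathfrak{m}(X)=0$ iff $X$ is orbitally equivalent to its formal normal form.

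I expect the main obstacle to be the gluing step in the ``if'' direction: one must check carefully that the sectorial conjugacies, which are only determined up to post-composition by the deck transformations of the spaces of leaves, can be \emph{coherently} chosen so that they match on \emph{all} intersections simultaneously (saddle parts, gate parts, and self-intersections when $k=1$ or $k=2$), and that the resulting glued map is not merely continuous but holomorphic and bounded up to $\Delta_k$ with the correct parametric regularity on each cell --- this is exactly the kind of cocycle-matching bookkeeping that~\cite{RouTey} handles, so in practice the proof is largely a citation, with the novelty being the streamlined reuse of Corollary~\ref{cor:formal_symmetries}~(2) and the explicit cokernel of $\onf\cdot$ to reduce to the fibered case.
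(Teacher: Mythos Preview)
The paper gives no proof of Theorem~\ref{thm:classification}: it is stated with the citation \cite{RouTey} and immediately followed only by a paragraph interpreting the pair $(c,\theta)$ geometrically. So your proposal is not competing against a proof in the paper but against a bare citation, and your own final paragraph correctly anticipates this.

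Your sketch of the ``only if'' direction is fine and matches the spirit of \cite{RouTey}: reduce to a fibered conjugacy via Corollary~\ref{cor:formal_symmetries}~(2), then read off the effect on the sectorial first integrals and hence on the transition cocycle. The reason $c$ is independent of the cell is exactly the observation made in the paragraph after Theorem~\ref{thm:classification}: the conjugacy $\Psi$ is a single analytic germ whose linear part $\partial_y\Psi(0,0)=c$ does not know about cells.

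The ``if'' direction, however, has a misplaced invocation. You appeal to Theorem~\ref{thm:section_period} and Proposition~\ref{prop:section_period_cellular} to argue that equal periods force equal $R$ modulo $\onf\cdot$, but those results concern the cokernel of $\onf\cdot$ for a vector field \emph{already in normal form}, and they are proved in the present paper \emph{after} (and partly using) the classification from \cite{RouTey}. Invoking them here is at best logically out of order and at worst circular. The actual argument in \cite{RouTey} is more direct and does not touch the cokernel: for each $\varepsilon\in\mathcal{E}_\ell$ one composes the sectorial normalization of $X$ with the inverse of that of $\widetilde{X}$ (both to the common formal model $\fonf$, as in Remark~\ref{rem:corollaries_squid_solve}~(2)), obtaining fibered sectorial conjugacies $X\to\widetilde{X}$ over each $V_{\ell,\varepsilon}^j$. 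Equality of the $\sad{\psi}{\ell}{j,}$ (after the $(c,\theta)^*$ normalization) is precisely what makes these agree on saddle intersections; on gate intersections they agree automatically since gate transitions are determined by $\mu$ alone. Then Theorem~\ref{thm:squid_and_solve}~(3) glues them, and parametric regularity plus Riemann across $\Delta_k$ finishes. Replace your middle step by this composition-of-sectorial-normalizations argument and the sketch becomes correct.
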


The pair $\left(c,\theta\right)$ involved in the equivalence between
moduli has a geometrical interpretation. First set $\lambda:=\exp2\ii\pi\nf{\theta}k$
and apply the diagonal mapping
\begin{align*}
\left(\varepsilon_{0},~\ldots,~\varepsilon_{k-1},~x\right)\longmapsto & \left(\varepsilon_{0}\lambda^{-1},~\ldots,~\varepsilon_{j}\lambda^{j-1},~\ldots,~\varepsilon_{k-1}\lambda^{k-2},~x\lambda\right)
\end{align*}
to $X$ so that the moduli of the new unfolding, still written $X$,
differs from the original by a shift in the indices $j$ of offset
$\theta$, as explained in Section~\ref{subsec:Preparation}. According
to Corollary~\ref{cor:formal_symmetries} we may as well restrict
our study now to fibered conjugacies $\Psi$ between $X$ and $\widetilde{X}$
fixing $\left\{ y=0\right\} $. Under these assumptions we have 
\begin{align*}
\Psi~:~\left(\varepsilon,x,y\right)\mapsto\left(\varepsilon,x,y\left(c+\oo 1\right)\right) & .
\end{align*}
This very fact explains why $c$ is independent on the cell $\mathcal{E}_{\ell}$
in the equivalence relation~\eqref{eq:moduli_equivalence}.

\subsection{\label{subsec:Parametric-normalization}Parametric normalization:
proof of Theorem~\ref{thm:cellular_realization}}

In this section we solve the inverse problem on a given parametric
cell $\mathcal{E}_{\ell}$ when $\mu_{0}$ is not in $\rr_{\leq0}$.
Given any collection 
\begin{align*}
\mathfrak{m}_{\ell} & :=\left(\sad{\phi}{}{j,}\right)_{j}\in\prod_{\zsk}\mathcal{H}_{\ell}\left\{ h\right\} 
\end{align*}
we can fix $\eta>0$ such that every $\sad{\phi}{}{j,}$ belongs to
$\holb[\mathcal{E}_{\ell}\times\eta\ww D]'$. The strategy is to synthesize
a $k$-tuple of sectorial functions $\left(H^{j}\right)_{j}$ whose
transition maps over saddle parts are determined by $\mathfrak{m}_{\ell}$
as in~\eqref{eq:invar_transition_maps} below, then to recognize
that they actually are sectorial first-integrals of a holomorphic
vector field $X_{\varepsilon}$ in normal form.

\bigskip{}

We repeat the recipe of Theorem~\ref{thm:fibred_gluing_parametric}
in order to solve the nonlinear equation
\begin{align}
H^{j+1} & =H^{j}\exp\left(\nf{2\ii\pi\mu}k+\sad{\phi}{}{j,}\circ H^{j}\right),\label{eq:invar_transition_maps}
\end{align}
by successively solving the linear Cousin problem of Proposition~\ref{prop:period_cousin}
in the way we explain now. For $\varepsilon:=0$ this is precisely
the technique of~\cite{SchaTey}.

We want to find a solution of $N=\frac{1}{2\pi i}\mathfrak{F}\left(\mathfrak{m}_{\ell},N\right)$
with $N\in\holb[V_{\ell}^{j}\times r\ww D]'$, where $\mathfrak{F}$
is given in~\eqref{eq:cauchy-heine_period}, and we build one through
an iterative process. We start from 
\begin{align*}
N_{0} & :=\left(0\right)_{j}
\end{align*}
and build 
\begin{align*}
N_{n+1} & :=\frac{1}{2\ii\pi}\mathfrak{F}\left(\mathfrak{m}_{\ell},N_{n}\right)
\end{align*}
given by Proposition~\ref{prop:period_cousin}. The fact that each
sequence $\left(N_{n}^{j}\right)_{n}$ converges uniformly to some
$N^{j}\in\holb[V_{\ell}^{j}\times r\ww D]'$ for some $r>0$ follows
in every other respect the argument presented in the proof of Theorem~\ref{thm:fibred_gluing_parametric},
thus we shall not repeat it here.

So far we have built a $k$-tuple of bounded, holomorphic functions
$N=\left(N^{j}\right)_{j}$ satisfying the next properties.
\begin{cor}
\label{cor:secto_normalization_estimate}Assume $\tau=0$ (which particularly
implies $\mu_{0}\notin\rr_{\leq0}$). Let 
\begin{align*}
H^{j} & :=\widehat{H}^{j}\exp N^{j}
\end{align*}
be the canonical first-integral associated with $N^{j}$.

\begin{enumerate}
\item $\left(H^{j}\right)_{j}$ is a solution of~\eqref{eq:invar_transition_maps}.
\item Up to decrease $r>0$ we can assume that:

\begin{enumerate}
\item $N$ is $\eta$-adapted (as in Definition~\ref{def:etha_adapted}),
more precisely:
\begin{align*}
\norm[H^{j}]{} & \leq rC
\end{align*}
for some constant $C>0$,
\item $\left|x\ppp{N^{j}}x\right|\leq\frac{1}{3}$ and $\left|y\ppp{N^{j}}y\right|\leq\frac{1}{3}$
on $V^{j}\times r\ww D$.
\end{enumerate}
\end{enumerate}
\end{cor}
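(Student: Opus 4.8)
\textbf{Proof plan for Corollary~\ref{cor:secto_normalization_estimate}.}
The plan is to run the iteration $N_{n+1}:=\frac{1}{2\ii\pi}\mathfrak{F}\left(\mathfrak{m}_{\ell},N_{n}\right)$ through Proposition~\ref{prop:period_cousin} and track norms uniformly in $\varepsilon\in\mathcal{E}_{\ell}$. First I would fix $\eta>0$ so that every $\sad{\phi}{}{j,}$ lies in $\holb[\mathcal{E}_{\ell}\times\eta\ww D]'$, and set $M:=\norm[\mathfrak{m}_{\ell}']{}=\max_j\sup_{\eta\ww D}\left|\ddd{\sad{\phi}{}{j,}}h\right|$. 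The key quantitative input is Proposition~\ref{prop:period_cousin}~(3), which gives a constant $K>0$ independent of $r$, $N$ and $\varepsilon$ with
\begin{align*}
\norm[\mathfrak{F}(\mathfrak{m}_{\ell},N)]{}&\leq rK M\exp\norm[N]{},\\
\norm[x\ppp{\mathfrak{F}}x]{}&\leq rKM\norm[1+x\ppp Nx]{}\exp\norm[N]{},\\
\norm[y\ppp{\mathfrak{F}}y]{}&\leq rKM\norm[1+y\ppp Ny]{}\exp\norm[N]{}.
\end{align*}
The point is that these bounds carry a factor $r$, so choosing $r$ small makes the right-hand sides small; I would pick $r$ small enough that $rKM\,\ee\leq\min\left(\tfrac{1}{3},\tfrac{\eta}{2C_0}\right)$ for a suitable $C_0$ coming from Lemma~\ref{lem:integrable_first-integral}~(1), and then show by induction that $\norm[N_n]{}\leq 1$, $\norm[x\ppp{N_n}x]{}\leq\tfrac13$, $\norm[y\ppp{N_n}y]{}\leq\tfrac13$ for all $n$: the inductive step feeds $\norm[1+x\ppp{N_n}x]{}\leq\tfrac43$ into the derivative estimate and uses $rKM\cdot\tfrac43\cdot\ee\leq\tfrac13$ (shrinking $r$ further if needed).

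Next I would establish convergence of the bounded sequences $(N_n^j)_n$. As in the proof of Theorem~\ref{thm:fibred_gluing_parametric}, the hypothesis $\sad{\phi}{}{j,}(0)=0$ and the fact that $H_N^j$ vanishes on $\{y=0\}$ force $N_{n+1}^j(x,y)=N_n^j(x,y)+\OO{y^{n+1}}$, so the sequence converges for the Krull (projective) topology on $\frml{\varepsilon,x}\left[\left[y\right]\right]$; being uniformly bounded in $\holb[V_{\ell}^j\times r\ww D]'$, Lemma~\ref{lem:bounded_krull_converge} upgrades this to uniform convergence on compacts towards some $N^j\in\holb[V_{\ell}^j\times r\ww D]'$. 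Passing to the limit in $N_{n+1}=\frac{1}{2\ii\pi}\mathfrak{F}(\mathfrak{m}_{\ell},N_n)$ (the integrand in \eqref{eq:cauchy-heine_period} depends continuously on $N$ through $H_N^j=\widehat H^j\exp N^j$, with dominated-convergence control from Lemma~\ref{lem:integrable_first-integral}) yields $N=\frac{1}{2\ii\pi}\mathfrak{F}(\mathfrak{m}_{\ell},N)$. The uniform bounds pass to the limit, giving $\norm[x\ppp{N^j}x]{}\leq\tfrac13$ and $\norm[y\ppp{N^j}y]{}\leq\tfrac13$, which is~(2)(b), and $N$ is $\eta$-adapted once $r$ is small, with $\norm[H^j]{}=\norm[\widehat H^j\exp N^j]{}\leq \ee\sup_{\sad V{}{j,}\times r\ww D}|\widehat H^j|\leq rC$ by Lemma~\ref{lem:integrable_first-integral}~(1), which is~(2)(a).

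For part~(1), I would unwind the Cousin relation: by Proposition~\ref{prop:period_cousin}~(1) the fixed point $N$ satisfies $N^{j+1}-N^{j}=\sad{\phi}{}{j,}\circ H_N^{j}$ on $\sad{\mathcal V}r{j,}$ (and $N^{j}=N^{\sigma(j)}$ on gate parts). Exponentiating and multiplying by $\widehat H^{j}$, and using the normalization \eqref{eq:formal_transition_map} of the model first integrals $\widehat H^{j+1}=\widehat H^{j}\exp\nf{2\ii\pi\mu}k$, gives exactly
\[
H^{j+1}=\widehat H^{j+1}\exp N^{j+1}=\widehat H^{j}\ee^{\nf{2\ii\pi\mu}k}\exp\!\left(N^{j}+\sad{\phi}{}{j,}\circ H^{j}\right)=H^{j}\exp\!\left(\nf{2\ii\pi\mu}k+\sad{\phi}{}{j,}\circ H^{j}\right),
\]
which is \eqref{eq:invar_transition_maps}. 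The main obstacle I anticipate is not any single estimate but organizing the bookkeeping so that the one constant $K$ of Proposition~\ref{prop:period_cousin} really is independent of $r$ and $\varepsilon$ (this is where the shape of the unbounded squid sectors and Lemma~\ref{lem:integrable_first-integral}~(3) are essential), and verifying that a single choice of $r$ simultaneously makes the iteration a self-map of the ball $\{\norm[N]{}\leq1\}$, makes it contract in the Krull sense, and secures $\eta$-adaptedness; everything else is routine once those uniform bounds are in hand.
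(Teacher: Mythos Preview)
Your proof is correct and follows essentially the same route as the paper's: the paper likewise applies the estimates of Proposition~\ref{prop:period_cousin}~(3) to the iteration $N_{n+1}=\frac{1}{2\ii\pi}\mathfrak{F}(\mathfrak{m}_\ell,N_n)$, choosing $r$ small enough that $\frac{rK}{2\pi}\norm[\mathfrak{m}_\ell']{}\exp\norm[N_n]{}\leq\frac{1}{4}$ so that $\norm[y\ppp{N_{n+1}}{y}]{}\leq\frac{1}{4}\bigl(1+\norm[y\ppp{N_n}{y}]{}\bigr)$ has fixed point $\frac{1}{3}$, and derives~(1) by the identical unwinding of the Cousin relation via~\eqref{eq:formal_transition_map}. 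The only organizational difference is that the paper places the convergence argument (via Lemma~\ref{lem:bounded_krull_converge}) just before the corollary's statement, referring back to the proof of Theorem~\ref{thm:fibred_gluing_parametric}, so the corollary's own proof is quite terse.
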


\begin{proof}
~

\begin{enumerate}
\item Because $H^{j}=\widehat{H}^{j}\exp N^{j}$ and $\widehat{H}^{j+1}=\widehat{H}^{j}\exp\nf{2\ii\pi\mu}k$
(see~\eqref{eq:formal_transition_map}) we have
\begin{align*}
\frac{H^{j+1}}{H^{j}} & =\exp\left(2\ii\pi\nf{\mu}k+N^{j+1}-N^{j}\right).
\end{align*}
Because $\left(N^{j}\right)_{j}$ is obtained as the fixed-point of
the Cauchy-Heine operator
\begin{align*}
\left(N^{j}\right)_{j} & \longmapsto\frac{1}{2\ii\pi}\mathfrak{F}\left(\left(\sad{\phi}{}{j,}\right)_{j},\left(N^{j}\right)_{j}\right),
\end{align*}
according to Proposition~\ref{prop:period_cousin}~(1) the identity
$N^{j+1}-N^{j}=\sad{\phi}{}{j,}\circ H^{j}$ holds, which validates
the claim.
\item ~

\begin{enumerate}
\item This is clear thanks to Proposition~\ref{prop:period_cousin}.
\item Up to decrease slightly $\eta$ we can assume that the derivative
of each component of $\mathfrak{m}_{\ell}$ is bounded on $\eta\ww D$.
From the construction of $N^{j}$ and Proposition~\ref{prop:period_cousin}~(3)
we have
\begin{align*}
\frac{\norm[y\frac{\partial N_{n+1}}{\partial y}]{}}{1+\norm[y\frac{\partial N_{n}}{\partial y}]{}} & \leq\frac{rK}{2\ii\pi}\norm[\mathfrak{m}']{}\exp\norm[N_{n}]{}\leq\frac{1}{4},
\end{align*}
if $r$ is taken small enough. The conclusion follows by taking the
limit $n\to\infty$. The argument for $x\ppp Nx$ is identical.
\end{enumerate}
\end{enumerate}
\end{proof}
Now define
\begin{align*}
X^{j} & :=\fonf+yR^{j}\pp y
\end{align*}
with
\begin{align}
R^{j} & :=-\frac{P\ppp Nx+y\left(1+\mu x^{k}\right)\ppp Ny}{1+y\ppp{N^{j}}y}.\label{eq:R_secto_realization}
\end{align}

\begin{lem}
\label{lem:synthesis_vector_field}~

\begin{enumerate}
\item $X^{j}\cdot N^{j}=-R^{j}$ or, equivalently, $X^{j}\cdot H^{j}=0$.
\item $R^{j+1}=R^{j}$ on $\sad{\mathcal{V}}{}{j,}$.
\end{enumerate}
\end{lem}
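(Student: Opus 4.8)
The plan is to prove the two claims of Lemma~\ref{lem:synthesis_vector_field} by direct computation, exploiting the way $R^{j}$ and $N^{j}$ were constructed. For claim~(1), the idea is simply to unwind the definitions. I would start from
\begin{align*}
X^{j}\cdot N^{j} & =\fonf\cdot N^{j}+yR^{j}\ppp{N^{j}}y=P\ppp{N^{j}}x+y\left(1+\mu x^{k}\right)\ppp{N^{j}}y+yR^{j}\ppp{N^{j}}y.
\end{align*}
Substituting the defining formula~\eqref{eq:R_secto_realization} for $R^{j}$ and clearing the denominator $1+y\ppp{N^{j}}y$ (which is nonvanishing on $V^{j}\times r\ww D$ by Corollary~\ref{cor:secto_normalization_estimate}~(2)(b), since $\left|y\ppp{N^{j}}y\right|\leq\frac{1}{3}<1$), one checks the identity $X^{j}\cdot N^{j}=-R^{j}$ collapses to a triviality. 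The equivalent formulation $X^{j}\cdot H^{j}=0$ then follows from $H^{j}=\widehat{H}^{j}\exp N^{j}$ together with the fact that $\widehat{H}^{j}$ is a first integral of $\fonf$ (it solves $\fonf\cdot\widehat{H}^{j}=0$ by~\eqref{eq:model_first_integral}): indeed $X^{j}\cdot H^{j}=H^{j}\left(\frac{X^{j}\cdot\widehat{H}^{j}}{\widehat{H}^{j}}+X^{j}\cdot N^{j}\right)$, and $X^{j}\cdot\widehat{H}^{j}=\fonf\cdot\widehat{H}^{j}+yR^{j}\ppp{\widehat{H}^{j}}y$, where $\fonf\cdot\widehat{H}^{j}=0$ and $\ppp{\widehat{H}^{j}}y=\widehat{H}^{j}/y$, so $X^{j}\cdot\widehat{H}^{j}/\widehat{H}^{j}=R^{j}$, cancelling against $X^{j}\cdot N^{j}=-R^{j}$.

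For claim~(2), I would argue that $R^{j+1}=R^{j}$ on the saddle part $\sad{\mathcal{V}}{}{j,}$ by showing both vector fields $X^{j}$ and $X^{j+1}$ annihilate a common first integral there, and that two prepared vector fields of the form $\fonf+yR\pp y$ with the same nonconstant first integral on an open set must coincide. Concretely: on $\sad{\mathcal{V}}{}{j,}$ we have the transition relation~\eqref{eq:invar_transition_maps}, $H^{j+1}=H^{j}\exp\left(\nf{2\ii\pi\mu}k+\sad{\phi}{}{j,}\circ H^{j}\right)$, which exhibits $H^{j+1}$ as a (holomorphic, fiber-preserving, tangent to a nonzero linear map in $y$) function of $H^{j}$ alone. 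Hence $X^{j}\cdot H^{j}=0$ forces $X^{j}\cdot H^{j+1}=0$ as well by the chain rule. So on $\sad{\mathcal{V}}{}{j,}$ both $X^{j}$ and $X^{j+1}$ kill $H^{j+1}$. Now $X^{j+1}-X^{j}=y\left(R^{j+1}-R^{j}\right)\pp y$ is vertical, and applying it to $H^{j+1}$ gives $y\left(R^{j+1}-R^{j}\right)\ppp{H^{j+1}}y=y\left(R^{j+1}-R^{j}\right)\frac{H^{j+1}}{y}\left(1+y\ppp{N^{j+1}}y\right)=0$; since $H^{j+1}$ does not vanish identically and $1+y\ppp{N^{j+1}}y$ is nonvanishing, we conclude $R^{j+1}=R^{j}$ on $\sad{\mathcal{V}}{}{j,}$.

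I do not expect a serious obstacle here; the only points requiring a little care are the non-vanishing of the denominators $1+y\ppp{N^{j}}y$ (supplied by Corollary~\ref{cor:secto_normalization_estimate}) and the use of $\ppp{\widehat{H}^{j}}y=\widehat{H}^{j}/y$, valid since $\widehat{H}^{j}\left(x,y\right)=y\exp\int^{x}\!\!-\frac{1+\mu z^{k}}{P\left(z\right)}\dd z$ is linear in $y$. One should also note that on $\sad{\mathcal{V}}{}{j,}$ the squid sectors $V^{j}$ and $V^{j+1}$ overlap in a connected set on which $H^{j}$ (hence $H^{j+1}$) is a genuine first integral with connected fibers, so "does not vanish identically" is automatic. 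The conclusion of~(2) is exactly what is needed to patch the $X^{j}$ into a single holomorphic vector field on $\cc\times r\ww D$, which is the next step in the proof of Theorem~\ref{thm:cellular_realization}.
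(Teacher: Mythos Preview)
Your proposal is correct and follows essentially the same approach as the paper's own proof: part~(1) by direct computation using the definition~\eqref{eq:R_secto_realization} of $R^{j}$, and part~(2) by observing that $H^{j+1}$ is a function of $H^{j}$ via~\eqref{eq:invar_transition_maps}, so $X^{j}\cdot H^{j+1}=0$, whence $\left(X^{j+1}-X^{j}\right)\cdot H^{j+1}=0$ forces $R^{j+1}=R^{j}$. The paper in fact gives fewer details than you do, simply referring to~\cite{SchaTey} for the computations and noting that~(2) ``is equivalent to showing $X^{j}\cdot H^{j+1}=0$''.
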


\begin{proof}
This is formally the same proof as for $\varepsilon=0$: we refer
to~\cite{SchaTey} for details.

\begin{enumerate}
\item follows from elementary calculations. 
\item is equivalent to showing $X^{j}\cdot H^{j+1}=0$. But this condition
is met because of~(1) and the fact that $H^{j+1}$ is a function
of $H^{j}$, as per~\eqref{eq:invar_transition_maps}.
\end{enumerate}
\end{proof}
The lemma indicates that all pieces of $\left(R^{j}\right)_{j}$ glue
together into a holomorphic function $R$. From~\eqref{eq:R_secto_realization}
and the estimates on the derivatives of $N^{j}$ obtained in Corollary~\ref{cor:secto_normalization_estimate}
we conclude that $R$ is bounded near the roots of $P_{\varepsilon}$
(hence Riemann's theorem on removable singularities applies). The
argument of Section~\ref{subsec:Growth-control-of-section} can now
be invoked identically with $Q:=R$ to obtain
\begin{align*}
R\left(x,y\right) & =\sum_{n>0}r_{n}\left(x\right)y^{n}
\end{align*}
for some polynomials $r_{n}$ in $x$ of degree at most $k$. We can
simplify $R$ further by applying to $\fonf+Ry\pp y$ the change of
coordinates
\begin{align*}
\left(x,y\right) & \longmapsto\left(x,~y\exp N\left(y\right)\right)~~,~N\in y\germ y
\end{align*}
 where
\begin{align*}
N' & =-\frac{R\left(0,y\right)}{y\left(1+R\left(0,y\right)\right)}.
\end{align*}
The new vector field $\fonf+\widetilde{R}y\pp y$ satisfies $\widetilde{R}\in x\mathcal{H}_{\ell}\left\{ y\right\} \left[x\right]_{<k}$,
as sought.
\begin{rem}
Notice that Lemma~\ref{lem:synthesis_vector_field} asserts $\left(x,y\right)\mapsto\left(x,N_{\ell}^{j}\left(x,y\right)\right)$
is a fibered normalization of $\onf$ over squid sectors.
\end{rem}

\subsection{\label{subsec:Compatibility-condition}Compatibility condition}

Here we impose no restriction on $\mu_{0}$.

\subsubsection{Node-leaf coordinates}

To each squid sector $V_{\ell}^{j}$ we attach a unique natural coordinate
$h$ which parametrizes the space of leaves $\Omega_{\ell}^{j}$ over
that sector: this coordinate corresponds to values taken by the canonical
first-integral $H_{\ell}^{j}$ (with connected fibers) as defined
in Corollary~\ref{cor:secto_normalization_estimate}. Moreover,
\begin{align*}
H_{\ell}^{j}\left(V_{\ell}^{j}\times\neigh\right) & =\cc.
\end{align*}
This comes from the fact that the sector's shape adheres to the point
in a node-like configuration, forcing the model first integral $\widehat{H}_{\ell}^{j}$
to be surjective: a complete proof of the above statement can be found
in~\cite{RouTey}. This space of leaves is customarily compactified
as the Riemann sphere $\Omega_{\ell}^{j}$ by adding the point $\infty$
corresponding to the ``vertical separatrices'' $\left\{ x=\nod x{}{j,}\right\} $
of the node-type singularity.

Because we deal with convergent unfoldings, this coordinate is completely
determined by the space of leaves of the singular point $\nod x{}{j,}$
of node type attached to $V^{j}$, with two distinguished leaves corresponding
to $0$ (along $\left\{ y=0\right\} $) and $\infty$ (along $\left\{ x=\nod x{}{j,}\right\} $).
In particular, it remains the same when we change the point(s) of
saddle type $\sad x{}{j,}$ and $\sad x{}{\sigma\left(j\right),}$
attached to a sector $V^{j}$ but leave the point of node type $\nod x{}{j,}$
unchanged, while passing from one cell to another.

Let us prove briefly the result on which the compatibility condition
is built. We recall that $\rho_{\varepsilon}$ is the radius of a
disk containing all roots of $P_{\varepsilon}$, as defined by~\eqref{eq:rho_eps}.
\begin{lem}
\label{lem:first-int_multiplier} For every $x_{*}\in V_{\ell}^{j}\backslash\rho_{\varepsilon}\ww D$
the partial mapping
\begin{align*}
h_{\ell}^{j}~:~y & \longmapsto H_{\ell}^{j}\left(x_{*},y\right)
\end{align*}
is a local diffeomorphism near $0$ whose multiplier at $0$ does
not depend on $\ell$. In particular for any $\widetilde{\ell}$ such
that $\mathcal{E}_{\ell}\cap\mathcal{E}_{\widetilde{\ell}}\neq\emptyset$,
the diffeomorphism
\begin{align*}
\delta & :=h_{\widetilde{\ell}}^{j}\circ\left(h_{\ell}^{j}\right)^{\circ-1}
\end{align*}
is tangent-to-identity. Moreover there exists $\eta_{1},\eta_{2},r>0$
such that for all $\varepsilon\in\adh{\mathcal{E}_{\ell}\cap\mathcal{E}_{\widetilde{\ell}}}$
\[
\eta_{1}\disc\subset\delta_{\varepsilon}\left(r\disc\right)\subset\eta_{2}\disc
\]
and $\delta_{\varepsilon}$ is injective on $r\ww D$. 

In the sequel we write this map $\delta_{\widetilde{\ell}\leftarrow\ell}$.
\end{lem}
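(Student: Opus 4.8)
The statement asserts three things about the partial first-integral maps $h_\ell^j\colon y\mapsto H_\ell^j(x_*,y)$: that each is a local biholomorphism at $0$, that the multiplier $(h_\ell^j)'(0)$ does not depend on the cell $\ell$, and that the comparison $\delta_{\widetilde\ell\leftarrow\ell}=h_{\widetilde\ell}^j\circ(h_\ell^j)^{\circ-1}$ is tangent to identity with uniform injectivity and uniform two-sided bounds on a disc. The plan is to compute the multiplier explicitly from the factorization $H_\ell^j=\widehat H_\ell^j\exp N_\ell^j$ of \eqref{eq:sectorial_first_integral}, observe that it depends only on data insensitive to the sectorial decomposition, and then extract the uniform estimates from the compactness of $\adh{\mathcal E_\ell\cap\mathcal E_{\widetilde\ell}}$ together with the continuity statements already available for squid sectors and sectorial solutions.

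First I would write $H_\ell^j(x,y)=y\,\exp\!\big(\int^x-\tfrac{1+\mu z^k}{P(z)}\,\dd z+N_\ell^j(x,y)\big)$, using \eqref{eq:model_first_integral} and the fact that $N_\ell^j\in y\germ y[\![\cdots]\!]$ vanishes on $\{y=0\}$ (Remark~\ref{rem:corollaries_squid_solve}~(2) and Corollary~\ref{cor:secto_normalization_estimate}). Differentiating in $y$ at $y=0$ gives $(h_\ell^j)'(0)=\exp\!\big(\int^{x_*}-\tfrac{1+\mu z^k}{P(z)}\,\dd z+N_\ell^j(x_*,0)\big)=\widehat H_\ell^j(x_*,1)\cdot 1$, since $N_\ell^j(x_*,0)=0$. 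Thus the multiplier equals $\widehat H_\ell^j(x_*,1)$, a quantity built purely from the model first integral $\widehat H^j$ of the formal normal form $\fonf$. The point is that $\widehat H^j$ is fixed once we fix a determination of $\int^x-\tfrac{1+\mu z^k}{P(z)}\dd z$ on the homotopy class of the sector; changing the cell from $\mathcal E_\ell$ to $\mathcal E_{\widetilde\ell}$ replaces the saddle-type vertices $\sad x{}{j,}$ while keeping the node-type vertex $\nod x{}{j,}$ and the invariant line $\{y=0\}$ fixed, so (for $x_*$ outside $\rho_\varepsilon\disc$, in the common region) the integration path from the node-leaf base-point to $x_*$ stays within a fixed homotopy class and the determination of $\widehat H^j$ at $x_*$ is unchanged. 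Hence $(h_\ell^j)'(0)=(h_{\widetilde\ell}^j)'(0)$, so $\delta_{\widetilde\ell\leftarrow\ell}$ is tangent to identity. I would make the ``$h$ coordinate determined by the node space of leaves'' remark (already in the text preceding the lemma, citing \cite{RouTey}) precise enough to justify this homotopy invariance.

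Next, for the uniform statements: Corollary~\ref{cor:secto_normalization_estimate} provides, after shrinking $r$, the bounds $|x\partial_x N_\ell^j|\le\tfrac13$, $|y\partial_y N_\ell^j|\le\tfrac13$ and $\norm[H_\ell^j]{}\le rC$ on $V_\ell^j\times r\disc$, uniformly in $\varepsilon\in\mathcal E_\ell$ (and by continuity on $\adh{\mathcal E_\ell}$, using Theorem~\ref{thm:squid_and_solve}~(2) for the continuous extension to the closure). From $|y\partial_y N_\ell^j|\le\tfrac13$ one gets $\tfrac23\le|\partial_y H_\ell^j(x_*,y)/(\partial_y H_\ell^j(x_*,0))|\le\tfrac43$-type estimates, which combined with the chain rule $\delta_\varepsilon'=(h_{\widetilde\ell}^j)'\cdot((h_\ell^j)^{\circ-1})'$ and the fact that the multipliers coincide gives $|\delta_\varepsilon'(h)-1|$ small, hence $\delta_\varepsilon$ injective on a disc $r\disc$ of radius independent of $\varepsilon$ (by the standard quantitative inverse-function/Bloch-type argument), together with $\eta_1\disc\subset\delta_\varepsilon(r\disc)\subset\eta_2\disc$ for constants $\eta_1,\eta_2>0$ depending only on the uniform bounds. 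Compactness of $\adh{\mathcal E_\ell\cap\mathcal E_{\widetilde\ell}}$ ensures these constants can be chosen once and for all.

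The main obstacle I anticipate is the homotopy-invariance step: making rigorous that the natural node-leaf coordinate $h$ — and therefore the multiplier $\widehat H_\ell^j(x_*,1)$ — genuinely does not change when we pass between overlapping cells. This requires carefully tracking how the squid sector $V_\ell^j$ deforms (Section~\ref{subsec:squid_sectors}): one must check that for $x_*$ in the common part outside $\rho_\varepsilon\disc$ the relevant integration contour from a neighborhood of the node singularity $\nod x{}{j,}$ to $x_*$ can be deformed within $V_\ell^j\cup V_{\widetilde\ell}^j$ without crossing $P_\varepsilon^{-1}(0)$, so that the analytic continuation of $\widehat H^j$ to $x_*$ is the same in both charts. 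Once this is in place, everything else is bookkeeping with the already-established sectorial estimates. I would also remark that the case $\mu_0\in\rr_{\le0}$ (the $\tau>0$ case) is handled identically after replacing $y$ by $P_\varepsilon^\tau y$ in the model first integral, as flagged in Remark~\ref{rem:section_fixed_parameter}.
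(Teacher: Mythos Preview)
Your approach is essentially the same as the paper's: compute the multiplier as $\widehat{H}_\ell^j(x_*,1)$ via the factorization $H_\ell^j=\widehat H^j\exp N_\ell^j$ with $N_\ell^j(x_*,0)=0$, and observe this quantity is cell-independent.

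Two points of comparison. First, the ``homotopy obstacle'' you anticipate is not really there: the paper has already fixed, once and for all, a global determination of each $\widehat H^j$ right after~\eqref{eq:formal_transition_map} (writing $\widehat H^j=\widehat H_\ell^j$), so for $x_*$ outside the disk $\rho_\varepsilon\disc$ containing all roots of $P_\varepsilon$ the value $\widehat H^j(x_*,1)$ is simply that of the chosen branch and carries no $\ell$-dependence by convention. Your contour-deformation argument is not wrong, but it rederives something already encoded in the setup; also, your parenthetical that ``the node-type vertex $\nod x{}{j,}$ is kept fixed'' when changing cells need not hold in general and is in any case irrelevant to the value of $\widehat H^j$ at a point outside $\rho_\varepsilon\disc$. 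Second, for the uniform injectivity and two-sided bounds on $\delta_\varepsilon$, the paper does not carry out the quantitative inverse-function argument you sketch; it simply cites \cite[Corollary~8.8]{RouTey} together with Lemma~\ref{lem:integrable_first-integral}~(1). Your self-contained route via the bounds $|y\partial_y N^j|\le\tfrac13$ from Corollary~\ref{cor:secto_normalization_estimate} would also work and has the merit of being internal to the paper.
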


\begin{proof}
According to Corollary~\ref{cor:secto_normalization_estimate} we
have 
\begin{align*}
H_{\ell}^{j}\left(x_{*},y\right) & =y\widehat{H}_{\ell}^{j}\left(x_{*},1\right)+\oo y.
\end{align*}
Since $x_{*}$ lies outside the disk containing the roots of $P$
the value of $\widehat{H}_{\ell}^{j}\left(x_{*},1\right)$, as fixed
by the determination chosen in~\eqref{eq:model_first_integral},
does not depend on $\ell$ (but it does on $j$). The existence of
$\eta_{1},\eta_{2},r>0$ satisfying the expected properties is a consequence
of~\cite[Corollary 8.8]{RouTey} and Lemma~\ref{lem:integrable_first-integral}~(1).
\end{proof}
\begin{defn}
\label{def:node_coordinate}For a choice of $x_{*}^{j}\in V^{j}\backslash\rho_{\varepsilon}\ww D$
we call $h_{\ell}^{j}$ the \textbf{node-leaf coordinate} of the unfolding
$X_{\ell}$ above $x_{*}$ in the sector $V_{\ell}^{j}$ and relative
to the cell $\mathcal{E}_{\ell}$.
\end{defn}

\subsubsection{Necklace dynamics}

\begin{figure}
\hfill{}\includegraphics[width=0.9\columnwidth]{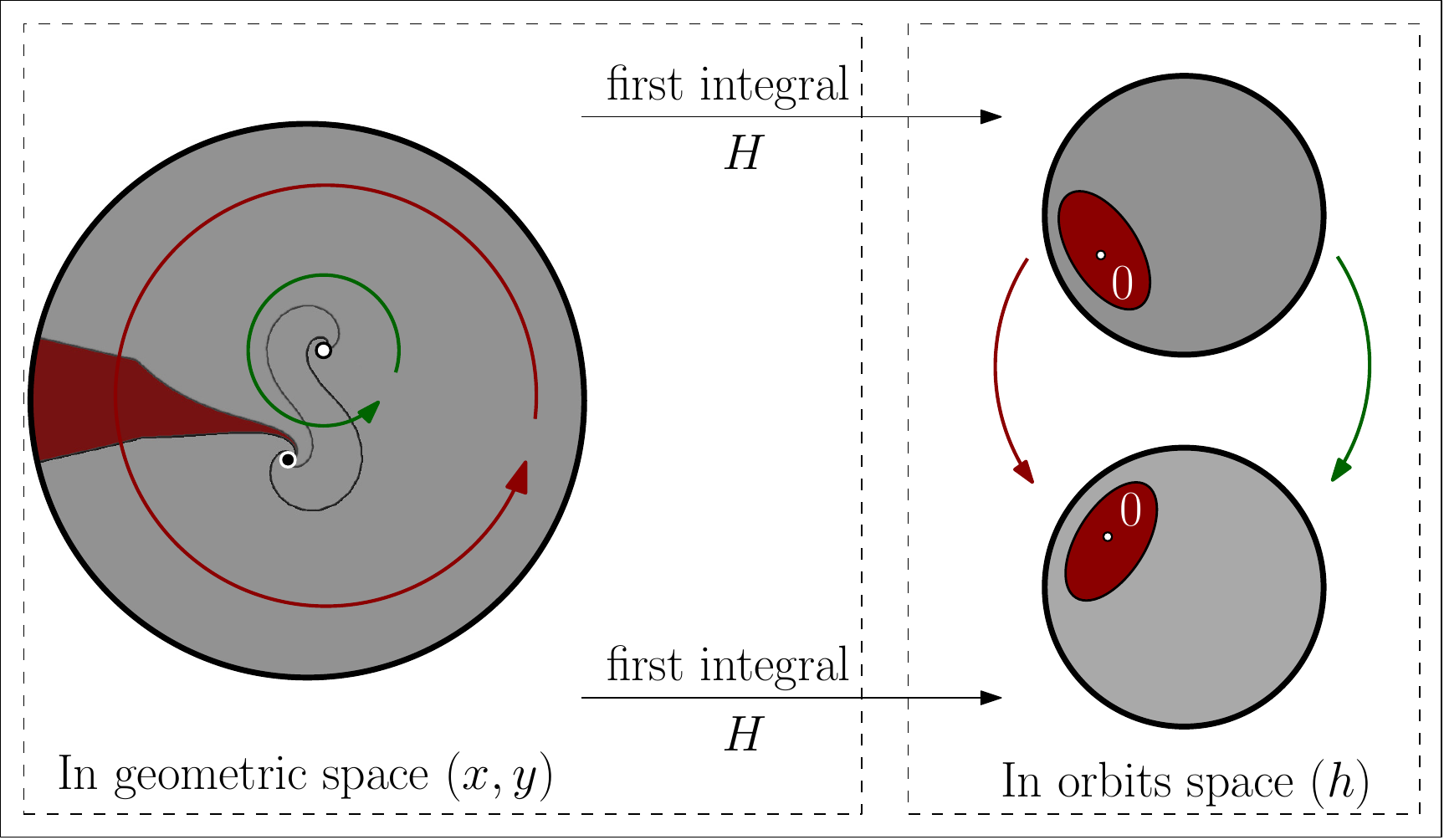}\hfill{}

\caption{Passing from geometric to orbits space \emph{via }the sectorial first
integral $H^{j}$. Colored arrows show how the change of determination
in $H^{j}$ takes place as a mapping between two sectorial spaces
of leaves: the necklace dynamics.}
\end{figure}
Here we work in a fixed germ of a cell $\mathcal{E}_{\ell}$ for fixed
$\varepsilon\in\mathcal{E}_{\ell}$; we drop the $\ell$ and $\varepsilon$
indices whenever not confusing. According to the constructions performed
in~\cite{RouTey}, and hinted at by Theorem~\ref{thm:cellular_realization},
the orbital modulus $\left(\mu,\mathfrak{m}\left(X\right)\right)$
of a convergent unfolding encodes the way the different node-leaf
coordinates glue above the intersection of squid sectors:
\begin{align*}
\begin{cases}
H^{j+1}=H^{j}\exp\left(\nf{2\ii\pi\mu}k+\sad{\phi}{}{j,}\circ H^{j}\right) & ~~~\mbox{above }\sad V{}{j,},\\
H^{\sigma\left(j\right)}=L_{\nu^{j}}\circ H^{j} & ~~~\mbox{above }\gat V{}{j,},
\end{cases}
\end{align*}
 where 
\begin{align*}
L_{c} & ~:~h\longmapsto ch~~~~,~c\neq0,
\end{align*}
and $\nu^{j}=\nu_{\ell}^{j}\in\cc^{\times}$ relates to the dynamical
invariants $\mu$ and the residues $\left(\frac{1}{P_{\varepsilon}'(x^{m})}\right)_{m}$
at the roots $\left(x^{m}\right)_{0\leq m\leq k}$: indeed, the ramification
at the linear level of the first integral at a singular point, given
by $\exp\left(-2\ii\pi\frac{1+\mu_{\varepsilon}\left(x^{m}\right)^{k}}{P_{\varepsilon}'(x^{m})}\right)$,
is equal to the product of all ramifications when crossing sectors
while turning around the point, \emph{i.e. }to the product of one
factor $\exp\nf{2\ii\pi\mu}k$ for each crossed sector $\sad V{}{j,}$
and one factor $\nu_{j}$ for each crossed sector $\gat V{}{j,}$.
It is therefore rather natural to consider the germs of diffeomorphisms
in node-leaf coordinate
\begin{align}
\sad{\psi}{\ell}{j,} & ~:~h\longmapsto h\exp\left(\nf{2\ii\pi\mu}k+\sad{\phi}{\ell}{j,}\left(h\right)\right),\label{eq:necklace_diffeo}\\
\gat{\psi}{\ell}{j,} & ~:~h\longmapsto\nu_{\ell}^{j}h,\nonumber 
\end{align}
where $\left(\mathfrak{m}_{\ell}\right)_{\ell}=\mathfrak{m}\left(X\right)$
and $\mathfrak{m}_{\ell}=\left(\sad{\phi}{\ell}{j,}\right)_{j}$.
Obviously one can do the same construction starting from any tuple
$\mathfrak{m}\in\prod_{\ell}\mathcal{H}_{\ell}\left\{ h\right\} ^{k}$.
\begin{rem}
For some value of the parameter $\varepsilon$ in a given cell $\mathcal{E}_{\ell}$,
the saddle mappings $\sad{\psi}{}{}$ are entirely determined by $\mu$
and $\mathfrak{m}$, while the gate mappings $\gat{\psi}{}{}$ are
entirely determined by $\mu$.
\end{rem}

The dynamics induced by these germs is of interest to us only if it
encodes the underlying dynamics of the unfolding (weak holonomy group).
A necessary condition is that the latter group does not depend on
$\ell$, \emph{i.e.} on the peculiar way of slicing the space into
sectors which is imposed by our construction. Therefore we only want
to consider the ``abstract'' holonomy representation of $\pi_{1}\left(\rho\ww D\backslash P_{\varepsilon}^{-1}\left(0\right),x_{*}\right)$
in the space of leaves. Let us describe this representation (see Figure~\ref{fig:necklace dynamics}
for an example).

\begin{figure}
\hfill{}\includegraphics[width=6cm]{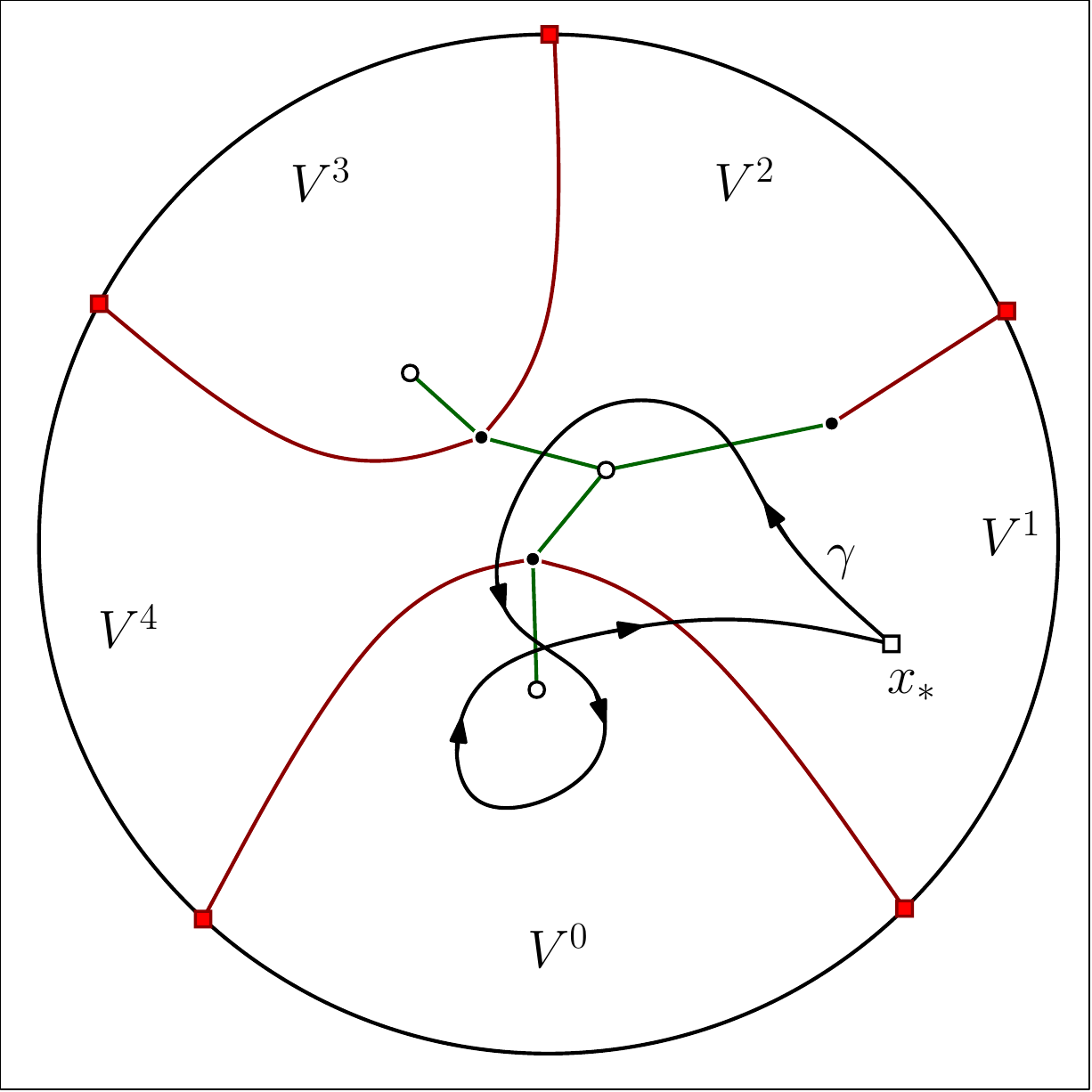}\hfill{}\includegraphics[width=6cm]{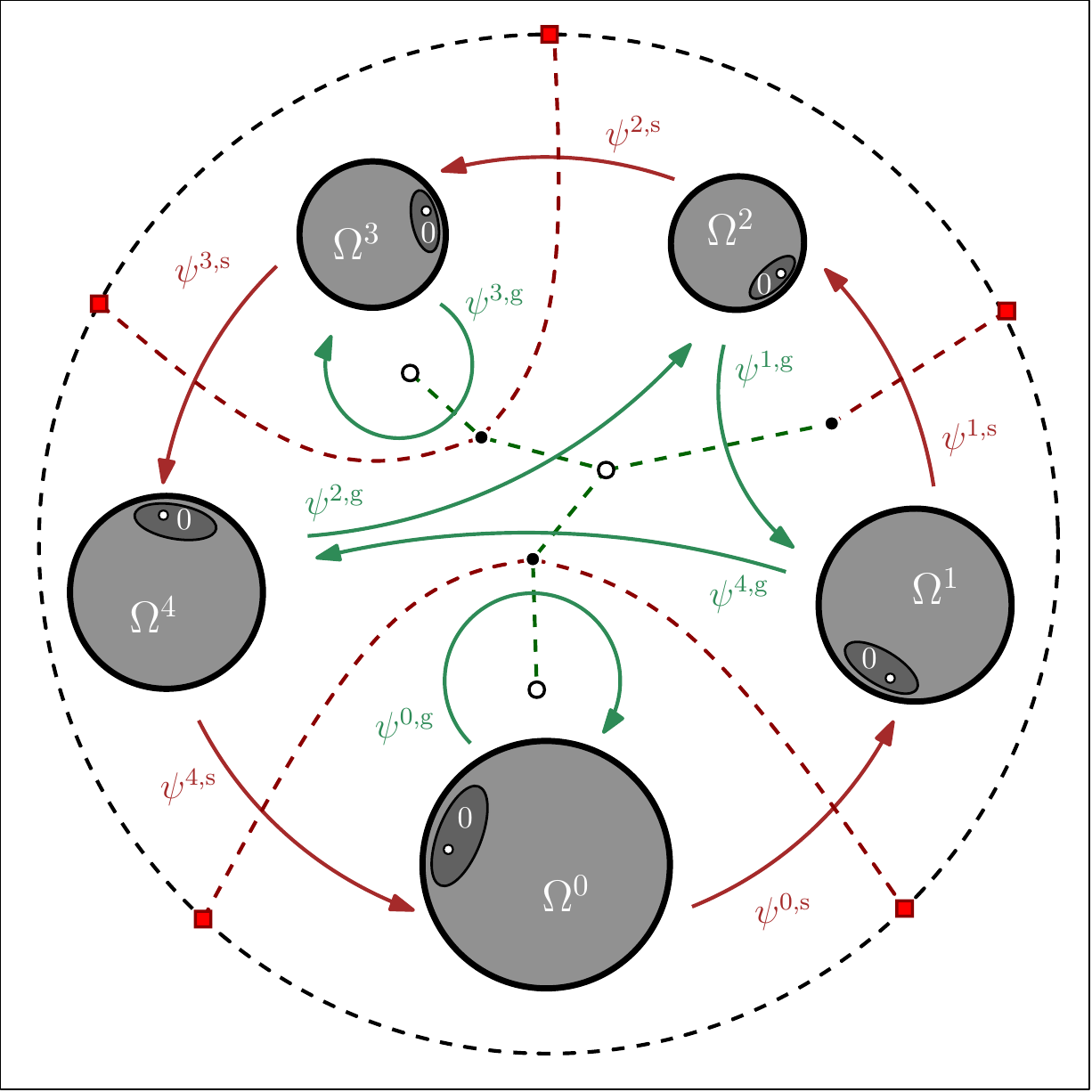}\hfill{}

\caption{\label{fig:necklace dynamics}Schematics of the necklace dynamics
and of the corresponding sectorial decomposition for $k=5$ and $\sigma=\left(\protect\begin{array}{ccccc}
0 & 1 & 2 & 3 & 4\protect\\
0 & 4 & 1 & 3 & 2
\protect\end{array}\right)$. The loop $\gamma\in\pi_{1}\left(\rho\protect\ww D\backslash P_{\varepsilon}^{-1}\left(0\right),x_{*}\right)$
corresponds to the word $\mathfrak{w}\left(\gamma\right)=\protect\tx{s_{0}^{+}g_{0}^{+}g_{0}^{+}s_{4}^{+}g_{2}^{-}g_{1}^{-}}$
in necklace dynamics.}
\end{figure}
\begin{defn}
\label{def:necklace_dynamics}We fix a base-sector $V^{j_{*}}$ and
a base-point $x_{*}\in V^{j_{*}}\backslash\rho_{\varepsilon}\ww D$,
as well as some $\mathfrak{m}_{\ell}=\left(\sad{\phi}{\ell}{j,}\right)_{j}\in\mathcal{H}_{\ell}\left\{ h\right\} ^{k}$.

\begin{enumerate}
\item To any loop $\gamma\in\pi_{1}\left(\rho\ww D\backslash P_{\varepsilon}^{-1}\left(0\right),x_{*}\right)$
we associate the multiplicative word $\mathfrak{w}_{\ell}\left(\gamma\right)$
in the $4k$ letters $\left\{ \tx s_{j}^{\pm},~\tx g_{j}^{\pm}~:~j\in\zsk\right\} $
obtained by keeping track of bounded squid sectors boundaries crossed
successively when traveling along $\gamma$. The superscript $+$
(\emph{resp.} $-$) is given to $\tx s_{j}$ according to whether
one crosses the saddle boundary from $V^{j}$ to $V^{j+1}$ (\emph{resp}.
from $V^{j+1}$ to $V^{j}$), ``in the same direction'' as $\sad{\psi}{}{j,}$
(\emph{resp}. $\left(\sad{\psi}{}{j,}\right)^{\circ-1}$) applies.
For $\tx g_{j}$ we take the same convention for gate transitions
$\gat{\psi}{}{j,}$ and postulate the algebraic relations $\left(\tx s_{j}^{\pm}\right)^{-1}=\tx s_{j}^{\mp}$,
$\left(\tx g_{j}^{\pm}\right)^{-1}=\tx g_{j}^{\mp}$. 
\item To any word $\mathfrak{w}=\prod_{n}\omega_{j_{n}}^{\pm}$ we associate
the germ
\begin{align*}
\psi_{\ell}\left[\mathfrak{w}\right]~:~h & \longmapsto\bigcirc_{n}\left(\psi_{\ell}^{j_{n},\omega}\right)^{\circ\pm1}.
\end{align*}
For instance
\begin{align*}
\psi\left[\tx{s_{0}^{+}g_{0}^{+}g_{0}^{+}s_{4}^{+}g_{2}^{-}g_{1}^{-}}\right] & =\sad{\psi}{}{0,}\circ\left(\gat{\psi}{}{0,}\right)^{\circ2}\circ\sad{\psi}{}{4,}\circ\left(\gat{\psi}{}{2,}\right)^{\circ-1}\circ\left(\gat{\psi}{}{1,}\right)^{\circ-1}.
\end{align*}
\item We write
\begin{align*}
\mathcal{W}_{\ell} & :=\mathfrak{w}_{\ell}\left(\pi_{1}\left(\rho\ww D\backslash P_{\varepsilon}^{-1}\left(0\right),x_{*}\right)\right)
\end{align*}
 the image group of \textbf{admissible words}, that is all the words
corresponding to all the encodings~(1) of a loop with given base-point
$x_{*}$ in a disk of given radius $\rho$ punctured with the roots
of $P_{\varepsilon}$.
\item Let $\mathfrak{m}=\left(\mathfrak{m}_{\ell}\right)_{\ell}\in\prod_{\ell}\mathcal{H}_{\ell}\left\{ h\right\} $.
The collection of image groups $\mathcal{G}\left(\mathfrak{m}\right)=\left(\mathcal{G}_{\ell}\right)_{\ell}$
of germs of a biholomorphism fixing $0$ given by 
\begin{align*}
\mathcal{G}_{\ell} & :=\psi_{\ell}\left[\mathcal{W}_{\ell}\right],
\end{align*}
is called the \textbf{necklace dynamics} associated to $\left(\mu,\mathfrak{m}\right)$
based at the sector $V^{j_{*}}$.
\end{enumerate}
\end{defn}

\begin{rem}
\label{rem:necklace_dynamics}~

\begin{enumerate}
\item To keep notations light we write $\psi_{\ell}\left[\gamma\right]$
instead of $\psi_{\ell}\left[\mathfrak{w}_{\ell}\left(\gamma\right)\right]$
for $\gamma\in\pi_{1}\left(\rho\ww D\backslash P^{-1}\left(0\right),x_{*}\right)$.
The context will never be ambiguous.
\item Obviously the morphisms $\mathfrak{w}_{\ell}$ and $\mathfrak{w}_{\widetilde{\ell}}$
are distinct. The change of cell in $\mathcal{E}_{\ell}\cap\mathcal{E}_{\widetilde{\ell}}$
can be translated algebraically as a group isomorphism $\mathcal{W}_{\ell}\longto\mathcal{W}_{\widetilde{\ell}}$.
For instance when $k=1$ the isomorphism acts on generators as
\begin{align*}
\begin{cases}
\tx g^{+} & \longmapsto\tx g^{-}\tx s^{+}\\
\tx s^{+}\tx g^{-} & \longmapsto\tx g^{+}
\end{cases}
\end{align*}
with notations of Figure~\ref{fig:compare_cell_holnomy}.
\end{enumerate}
\end{rem}

\begin{figure}
\hfill{}\includegraphics[width=10cm]{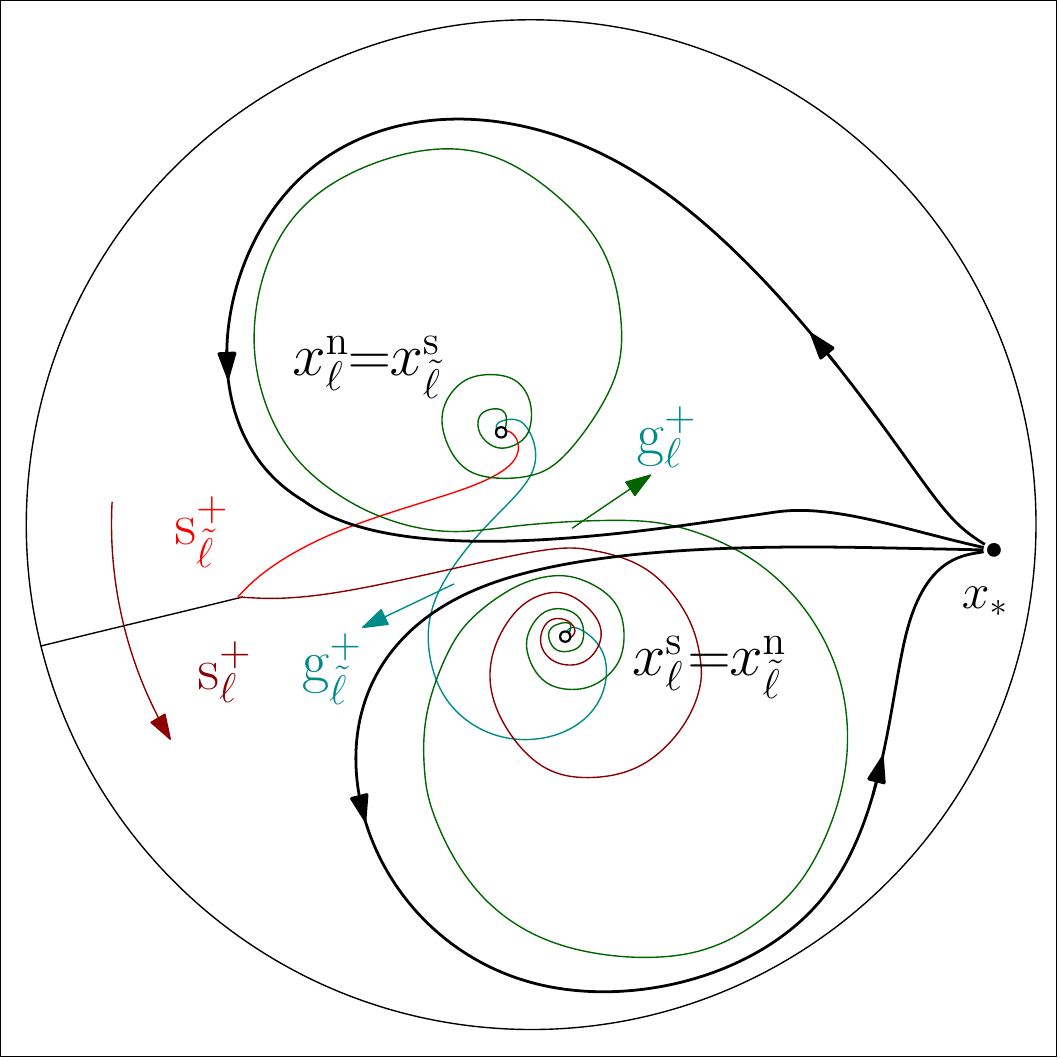}\hfill{}

\caption{\label{fig:compare_cell_holnomy}}
The generators of the two holonomies on the self intersection of the
unique cell $\mathcal{E}$.
\end{figure}
\begin{rem}
~

\begin{enumerate}
\item The groups $\mathcal{W}_{\ell}$ and $\mathcal{G}_{\ell}$ do not
depend on the particular choice of the base-point $x_{*}\in V^{j_{*}}$,
but do depend on the base-sector $V^{j_{*}}$.
\item Changing the base-sector from $V^{j_{*}}$ to another sector $V^{j}$
induces an inner conjugacy between respective necklace dynamics.
\end{enumerate}
\end{rem}

\subsubsection{Compatibility condition}
\begin{defn}
\label{def:compatibility_condition}Let $\mathfrak{m}\in\prod_{\ell}\mathcal{H}_{\ell}\left\{ h\right\} ^{k}$
and $\mu\in\germ{\varepsilon}$. We say that $\left(\mu,\mathfrak{m}\right)$
satisfies the \textbf{compatibility condition} if the different necklace
dynamics (\emph{i.e.} abstract holonomy pseudogroups) combined to
form $\mathcal{G}\left(\mathfrak{m}\right)$ are conjugate, in the
sense that there exists $x_{*}\in\rho\ww D\backslash\rho_{\varepsilon}\ww D$
in a fixed base sector $V^{j_{*}}$ such that for every $\ell,~\widetilde{\ell}$
and any connected component $C$ of $\mathcal{E}_{\ell}\cap\mathcal{E}_{\widetilde{\ell}}\neq\emptyset$
there exists a (perhaps small) subdomain $\Lambda\subset C$ such
that for all $\varepsilon\in\Lambda$ there exists $\delta_{\widetilde{\ell}\leftarrow\ell,\varepsilon}\in\diff[\cc,0]$
satisfying:

\begin{itemize}
\item $\delta_{\widetilde{\ell}\leftarrow\ell,\varepsilon}'\left(0\right)=1$,
\item for all $\gamma\in\pi_{1}\left(\rho\ww D\backslash P^{-1}\left(0\right),x_{*}\right)$,
\begin{align}
\delta_{\widetilde{\ell}\leftarrow\ell,\varepsilon}^{*}\psi_{\ell,\varepsilon}\left[\gamma\right] & =\psi_{\widetilde{\ell},\varepsilon}\left[\gamma\right]\label{eq:compatibility}
\end{align}
where $\delta^{*}\psi=\delta^{-1}\circ\psi\circ\delta$ is the usual
conjugacy for diffeomorphisms.
\end{itemize}
\end{defn}

\begin{figure}
\centering{}\includegraphics[width=6cm]{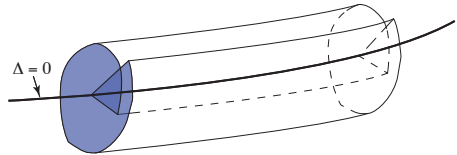} \caption{ \label{fig:auto_inter}A cell $\mathcal{E}_{\ell}$ having self-intersection
around a regular part of $\Delta_{k}$.}
\end{figure}
\begin{rem}
\label{rem:compatibility_self-intersection}Notice that the compatibility
condition also applies when $\widetilde{\ell}=\ell$, \emph{i.e.}
$\mathcal{E}_{\ell}$ is a self-intersecting cell with self-intersection
$\mathcal{E}_{\ell}^{\cap}$ around a regular part of $\Delta_{k}$
as in Figure~\ref{fig:auto_inter}, with the obvious adaptations.
To avoid confusion we denote by $\overline{\varepsilon}$ and $\widetilde{\varepsilon}$
the ``distinct points'' corresponding to the same parameter $\varepsilon\in\mathcal{E}_{\ell}^{\cap}$
seen from two different overlapping parts of the cell. More generally
we decorate objects with corresponding signs, like $\overline{\psi}$
or $\widetilde{\psi}$ in order to really stand for $\psi_{\ell,\overline{\varepsilon}}$
and $\psi_{\ell,\widetilde{\varepsilon}}$ respectively.
\end{rem}

\begin{lem}
\label{lem:realizable_thus_compatible}If $\left(\mu,\mathfrak{m}\right)$
is realizable then the compatibility condition holds.
\end{lem}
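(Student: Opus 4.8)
The plan is to show that if $\left(\mu,\mathfrak{m}\right)$ is realizable by a genuine analytic unfolding $X$, then the various necklace dynamics $\psi_{\ell,\varepsilon}$ all represent, in node-leaf coordinates, the \emph{same} geometric object — the weak holonomy pseudogroup of $X$ around $\{y=0\}$ — and the transition between two presentations is exactly the tangent-to-identity map $\delta_{\widetilde{\ell}\leftarrow\ell,\varepsilon}$ furnished by Lemma~\ref{lem:first-int_multiplier}. First I would fix a base sector $V^{j_*}$ and a base point $x_*\in V^{j_*}\backslash\rho_\varepsilon\ww D$, and recall from Section~\ref{subsec:Compatibility-condition} that for $X$ in normal form (which we may assume by the Normalization Theorem, since realizability and the compatibility condition are both orbital and invariant under analytic conjugacy) the canonical sectorial first integrals $H_\ell^j$ satisfy the gluing relations
\begin{align*}
\begin{cases}
H_\ell^{j+1}=H_\ell^j\exp\left(\nf{2\ii\pi\mu}k+\sad{\phi}{\ell}{j,}\circ H_\ell^j\right) & \text{above }\sad V{}{j,},\\
H_\ell^{\sigma(j)}=\nu_\ell^j H_\ell^j & \text{above }\gat V{}{j,}.
\end{cases}
\end{align*}
The key point is that the germ of holonomy of $X$ along a loop $\gamma\in\pi_1(\rho\ww D\backslash P_\varepsilon^{-1}(0),x_*)$, computed on the transversal $\{x=x_*\}$ in the $y$-coordinate, is a well-defined germ $\mathrm{hol}_\varepsilon(\gamma)\in\diff[\cc,0]$ \emph{independent} of any sectorial decomposition, and by construction $\psi_{\ell,\varepsilon}[\gamma]$ is precisely the expression of $\mathrm{hol}_\varepsilon(\gamma)$ read through the node-leaf coordinate $h_\ell^{j_*}=H_\ell^{j_*}(x_*,\cdot)$; that is, $\psi_{\ell,\varepsilon}[\gamma]=h_\ell^{j_*}\circ\mathrm{hol}_\varepsilon(\gamma)\circ(h_\ell^{j_*})^{\circ-1}$ as germs at $0$.

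Granting that identification, the lemma follows formally. On a connected component $C$ of $\mathcal{E}_\ell\cap\mathcal{E}_{\widetilde\ell}$ I would take $\Lambda\subset C$ to be a subdomain on which $x_*$ lies simultaneously in the base sectors $V_\ell^{j_*}$ and $V_{\widetilde\ell}^{j_*}$ and outside $\rho_\varepsilon\ww D$ for both decompositions — such a $\Lambda$ exists by the continuity of the squid-sector construction in $\varepsilon$ (Theorem~\ref{thm:squid_and_solve}, Proposition~\ref{prop:cells_construction}). Then set $\delta_{\widetilde\ell\leftarrow\ell,\varepsilon}:=h_{\widetilde\ell}^{j_*}\circ(h_\ell^{j_*})^{\circ-1}$, which is exactly the map $\delta_{\widetilde\ell\leftarrow\ell}$ of Lemma~\ref{lem:first-int_multiplier}; that lemma gives $\delta'_{\widetilde\ell\leftarrow\ell,\varepsilon}(0)=1$ and the required injectivity/inclusion of discs. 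For any $\gamma$ we then compute
\begin{align*}
\delta_{\widetilde\ell\leftarrow\ell,\varepsilon}^{*}\psi_{\ell,\varepsilon}[\gamma]
&=(h_\ell^{j_*}\circ(h_{\widetilde\ell}^{j_*})^{\circ-1})\circ\bigl(h_\ell^{j_*}\circ\mathrm{hol}_\varepsilon(\gamma)\circ(h_\ell^{j_*})^{\circ-1}\bigr)\circ(h_{\widetilde\ell}^{j_*}\circ(h_\ell^{j_*})^{\circ-1})^{\circ-1}\\
&=h_{\widetilde\ell}^{j_*}\circ\mathrm{hol}_\varepsilon(\gamma)\circ(h_{\widetilde\ell}^{j_*})^{\circ-1}=\psi_{\widetilde\ell,\varepsilon}[\gamma],
\end{align*}
which is precisely~\eqref{eq:compatibility}. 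The self-intersection case ($\widetilde\ell=\ell$, cf.\ Remark~\ref{rem:compatibility_self-intersection}) is identical after decorating the two sheets with $\overline\varepsilon$ and $\widetilde\varepsilon$.

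I expect the main obstacle to be the rigorous justification of the identification $\psi_{\ell,\varepsilon}[\gamma]=h_\ell^{j_*}\circ\mathrm{hol}_\varepsilon(\gamma)\circ(h_\ell^{j_*})^{\circ-1}$: one must check that traversing $\gamma$ and recording crossings of squid-sector boundaries (the word $\mathfrak w_\ell(\gamma)$) and then composing the corresponding saddle maps $\sad\psi\ell{j,}$ and gate maps $\gat\psi\ell{j,}$ reproduces, leaf by leaf, the actual holonomy transport of $X$. This is essentially the content of the constructions in~\cite{RouTey}, since each $\sad\psi\ell{j,}$ (resp.\ $\gat\psi\ell{j,}$) is the change of first-integral determination when crossing $\sad V{}{j,}$ (resp.\ $\gat V{}{j,}$), but it requires care about base points, orientations of the crossings (the $\pm$ superscripts), and the fact that the node-leaf coordinates $h_\ell^j$ depend only on the node-type singularity attached to $V^j$ — so that the same physical holonomy germ is being conjugated throughout, only the chart on orbit space changing with $\ell$. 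Once this bookkeeping is in place, the proof is the short computation above, and I would point to~\cite{RouTey} and Lemma~\ref{lem:first-int_multiplier} for the two ingredients rather than redo the sectorial analysis.
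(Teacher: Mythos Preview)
Your approach is exactly the paper's: fix a base point $x_*$ in the base sector and take $\delta_{\widetilde\ell\leftarrow\ell}:=h_{\widetilde\ell}^{j_*}\circ(h_\ell^{j_*})^{\circ-1}$ from Lemma~\ref{lem:first-int_multiplier}; the paper's proof is literally that one sentence, leaving the identification of $\psi_\ell[\gamma]$ with the geometric holonomy in node-leaf coordinates implicit (as you correctly anticipated). One small slip: in your displayed computation the rightmost factor should be $\delta$, not $\delta^{\circ-1}$, for the cancellation you claim to go through---but this is a typo, not a gap.
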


\begin{proof}
Fix some point $x_{*}\in V^{0}\backslash\rho_{\varepsilon}\ww D$
and take $\delta_{\widetilde{\ell}\leftarrow\ell}:=h_{\widetilde{\ell}}^{0}\circ\left(h_{\ell}^{0}\right)^{\circ-1}$
on $\mathcal{E}_{\ell}\cap\mathcal{E}_{\widetilde{\ell}}$ as in Lemma~\ref{lem:first-int_multiplier}. 
\end{proof}
\bigskip{}

\begin{rem}
~

\begin{enumerate}
\item Although we do not impose that the mappings $\delta_{\widetilde{\ell}\leftarrow\ell}$
exist on the connected component $C$ of $\mathcal{E}_{\ell}\cap\mathcal{E}_{\widetilde{\ell}}$,
nor depend analytically on $\varepsilon\in\Lambda\subset C$, it will
be true retrospectively and the dynamical conjugacies $\delta_{\widetilde{\ell}\leftarrow\ell}$
are always of the form described in Lemma~\ref{lem:first-int_multiplier}.
In particular the collection $\left(\delta_{\widetilde{\ell}\leftarrow\ell}\right)_{\ell,\widetilde{\ell}}$
is a cocycle: 
\begin{align*}
\delta_{\ell_{2}\leftarrow\ell_{1}}\circ\delta_{\ell_{1}\leftarrow\ell_{0}} & =\delta_{\ell_{2}\leftarrow\ell_{0}}
\end{align*}
whenever all three mappings are simultaneously defined.
\item The compatibility condition could be weakened further. The existence
of $\delta_{\widetilde{\ell}\leftarrow\ell}$ as above is only needed
for $\varepsilon$ belonging to a set $\Lambda$ of full analytic
Zariski closure, \emph{i.e.} such that if a holomorphic function $f$
on $C$ satisfies $f|_{\Lambda}=0$ then $f=0$. The cornerstone of
the proof of the Realization Theorem consists indeed in applying Corollary~\ref{cor:realizable_iff_glue}:
it suffices to check whether the identity $R_{\ell}-R_{\widetilde{\ell}}=0$
holds on every connected component $C$ of $\mathcal{E}_{\ell}\cap\mathcal{E}_{\widetilde{\ell}}$.
\end{enumerate}
\end{rem}

\subsection{\label{subsec:realization_proof}Normal forms stitching: proof of
Orbital Realization Theorem when $\mu_{0}\protect\notin\protect\rr_{\protect\leq0}$
and $\tau=0$. }

Thanks to Lemma~\ref{lem:realizable_thus_compatible}, only the converse
direction of the Realization Theorem still requires a full proof at
this stage. Assume then that the compatibility condition holds. Let
us fix a base point $x_{*}$ in a base sector $V^{j_{*}}$ and pick
$\varepsilon\in\Lambda\subset C\subset\mathcal{E}_{\ell}\cap\mathcal{E}_{\widetilde{\ell}}$
as in Definition~\ref{def:compatibility_condition}. Recalling Lemma~\ref{lem:first-int_multiplier},
the tangent-to-identity mapping
\begin{align}
\Psi~:~\left(x_{*},y\right) & \longmapsto\left(x_{*},\left(h_{\widetilde{\ell}}^{0}\right)^{\circ-1}\circ\delta_{\widetilde{\ell}\leftarrow\ell}\circ h_{\ell}^{0}\right)\label{eq:psi_compat}
\end{align}
conjugates the weak holonomy pseudogroups given by the representation
\begin{align*}
\holo{\ell}~:~\pi_{1}\left(\rho\ww D\backslash P^{-1}\left(0\right),x_{*}\right) & \longto\diff[\left\{ x=x_{*}\right\} ,0].
\end{align*}

Let us formulate a direct consequence of the main results of~\cite{DES}
(see~\cite{BraDia}) in a manner adapted to our setting.
\begin{lem}
\label{lem:gate_is_holomorphic}The map $\varepsilon\in\mathcal{E}_{\ell}\mapsto\left(\nu_{\varepsilon}^{j}\right)_{j\in\zsk}$
is holomorphic and locally injective. In particular there exists a
subdomain $\Lambda'\subset\Lambda$ such that for all $\varepsilon\in\Lambda'$,
every singular point of $X_{\ell}$ and $X_{\widetilde{\ell}}$ is
hyperbolic. 
\end{lem}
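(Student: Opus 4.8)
\textbf{Proof plan for Lemma~\ref{lem:gate_is_holomorphic}.}

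The plan is to prove the two assertions in turn, treating the holomorphy and local injectivity of $\varepsilon\mapsto\left(\nu_{\varepsilon}^{j}\right)_{j\in\zsk}$ first, and then deducing the statement about hyperbolicity. Recall that $\nu_{\varepsilon}^{j}$ was introduced in~\eqref{eq:necklace_diffeo} as the multiplier of the gate transition $\gat{\psi}{\ell}{j,}$, and that it is governed purely by the formal invariant $\mu$ together with the residues $\frac{1}{P_{\varepsilon}'\left(x^{m}\right)}$ at the roots of $P_{\varepsilon}$: indeed the ramification of the canonical first integral at a singular point $x^{m}$ equals $\exp\left(-2\ii\pi\frac{1+\mu_{\varepsilon}\left(x^{m}\right)^{k}}{P_{\varepsilon}'\left(x^{m}\right)}\right)$, and this must be the product of one factor $\exp\nf{2\ii\pi\mu}{k}$ per crossed saddle sector and one factor $\nu_{\ell}^{j}$ per crossed gate sector when turning once around $x^{m}$. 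So $\nu_{\varepsilon}^{j}$ is explicitly an exponential of an integer-combination of $\mu_{\varepsilon}$ and of the quantities $\nu_{\varepsilon}\left(I\right)=\sum_{x\in I}\frac{1}{P_{\varepsilon}'\left(x\right)}$ from~\eqref{eq:subset_residue}, the subsets $I$ being those cut out by the curves $\gamma_{\varepsilon}^{j}$. First I would observe that each such residue-sum is, by Theorem~\ref{thm:DES_cells}, equal to $\frac{1}{2\ii\pi}\tau_{\varepsilon}^{j}=\frac{1}{2\ii\pi}\int_{\gamma_{\varepsilon}^{j}}\frac{\dd x}{P_{\varepsilon}\left(x\right)}$, which admits an analytic continuation across $K_{\ell}$ and hence defines a holomorphic function on the cell $\mathcal{E}_{\ell}$ (and on a neighborhood of $K_{\ell}$ inside $\neigh[k]\backslash\Delta_{k}$). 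Composing with $\exp$ and the (holomorphic) dependence of $\mu_{\varepsilon}$, holomorphy of $\varepsilon\mapsto\left(\nu_{\varepsilon}^{j}\right)_{j}$ follows at once.

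For local injectivity the plan is to invoke the Douady--Sentenac parametrization directly: Theorem~\ref{thm:DES_cells} gives a biholomorphism $\Phi_{\ell}:K_{\ell}\to\ww H^{k}$, $\Phi_{\ell}\left(\varepsilon\right)=\left(\tau_{\varepsilon}^{1},\ldots,\tau_{\varepsilon}^{k}\right)$. Since the $\tau_{\varepsilon}^{j}$ are precisely $2\ii\pi$ times the residue-sums appearing (after taking logarithms) in the $\nu_{\varepsilon}^{j}$, the map $\varepsilon\mapsto\left(\nu_{\varepsilon}^{j}\right)_{j}$ factors through $\Phi_{\ell}$ composed with a fixed (parameter-independent) map of the form $\left(\tau^{1},\ldots,\tau^{k}\right)\mapsto\left(\exp\left(c_{0}^{j}\mu+\sum_{i}c_{i}^{j}\tau^{i}\right)\right)_{j}$ with integer coefficients $c_{i}^{j}$ encoding the combinatorics of the necklace. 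One checks that the integer matrix $\left(c_{i}^{j}\right)$ is invertible — this is exactly the non-crossing-partition combinatorics of~\cite{DES,BraDia}, where the $\tau^{j}$ are recovered from the residue data attached to the gate crossings — so the composed exponential map is a local biholomorphism, and since $\Phi_{\ell}$ is a biholomorphism the full map is locally injective on $\mathcal{E}_{\ell}$ (after slightly shrinking to control the multivaluedness of the exponential/logarithm, which is harmless since we only claim \emph{local} injectivity). I expect this combinatorial invertibility claim to be the main obstacle: one must be careful about which residue-sums $\nu_{\varepsilon}\left(I\right)$ actually occur as factors in each $\nu_{\varepsilon}^{j}$, and argue that the resulting linear system over $\zz$ is unimodular (or at least of full rank), rather than merely appealing vaguely to~\cite{DES}.

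Finally, for the hyperbolicity assertion: a singular point $x^{m}$ of $X_{\ell}$ is hyperbolic (as a singularity of the planar foliation) precisely when its Camacho--Sad index $\frac{1+\mu_{\varepsilon}\left(x^{m}\right)^{k}}{P_{\varepsilon}'\left(x^{m}\right)}$ is not real, equivalently when $P_{\varepsilon}'\left(x^{m}\right)$ is not a positive real multiple of $1+\mu_{\varepsilon}\left(x^{m}\right)^{k}$; near $\varepsilon=0$, where $\left|\mu_{\varepsilon}x^{k}\right|$ is small, this is the condition $P_{\varepsilon}'\left(x^{m}\right)\notin\rr$ up to a small perturbation. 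The non-hyperbolic locus is therefore contained in a real-analytic subset of $\mathcal{E}_{\ell}$ of real codimension $\geq1$ (it is where some $\im{\tau_{\varepsilon}^{j}}$-type quantity, corrected by $\mu$, vanishes), which by local injectivity and holomorphy of $\varepsilon\mapsto\left(\nu_{\varepsilon}^{j}\right)_{j}$ pulls back to a proper real-analytic subset of $\Lambda$; its complement $\Lambda'$ is then a non-empty subdomain of $\Lambda$ on which every singular point of $X_{\ell}$ and of $X_{\widetilde{\ell}}$ is hyperbolic (taking the intersection of the two complements, which is still a non-empty open set since each is the complement of a proper real-analytic subset of the connected $\Lambda$). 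This last part is routine once holomorphy and local injectivity are in hand; the only point to state carefully is that $\Delta_{k}$ itself is excluded from the start, so all roots are simple and the residues are well-defined throughout $\mathcal{E}_{\ell}$.
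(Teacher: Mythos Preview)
Your approach is correct and coincides with the paper's: the lemma is stated in the paper as ``a direct consequence of the main results of~\cite{DES} (see~\cite{BraDia})'' with no further argument, and your plan is precisely an unpacking of that citation --- holomorphy of the residue-sums, factoring through the Douady--Sentenac biholomorphism $\Phi_{\ell}$, and then reading hyperbolicity off the real-analytic (hence proper) vanishing locus of the imaginary parts of the eigenratios. You even flag the one point the paper leaves implicit, namely the invertibility of the combinatorial passage from the $\tau^{j}$ to the $\nu^{j}$; this is exactly the content supplied by~\cite{DES,BraDia}, so your reservation is well placed but does not indicate a gap in the plan.

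One small refinement: in your injectivity step you write the second factor as $\left(\tau^{1},\ldots,\tau^{k}\right)\mapsto\left(\exp\left(c_{0}^{j}\mu+\sum_{i}c_{i}^{j}\tau^{i}\right)\right)_{j}$ as if it were independent of $\varepsilon$, but $\mu=\mu_{\varepsilon}$ enters through $\varepsilon$ as well. This does not harm local injectivity --- the Jacobian of $\varepsilon\mapsto\left(\nu_{\varepsilon}^{j}\right)_{j}$ is still governed by that of $\Phi_{\ell}$ plus a bounded perturbation from $\partial_{\varepsilon}\mu$ --- but you should phrase it as computing the differential of the full composite rather than as a clean factorization through $\Phi_{\ell}$ followed by a fixed map.
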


Using an extension of the Mattei-Moussu construction for hyperbolic
singularities (see below) we can analytically continue $\Psi$ (defined
in~\eqref{eq:psi_compat}) on a whole neighborhood of $\left\{ y=0\right\} $
as a fibered equivalence between $\onf[\ell]$ and $\onf[\widetilde{\ell}]$.
The argument developed in Section~\ref{subsec:Uniqueness_orbital}
(to prove uniqueness of the normal form) is performed for fixed $\varepsilon$,
therefore there exists
\begin{align*}
c & \in\cc^{\times}
\end{align*}
such that
\begin{align*}
R_{\ell,\varepsilon}\left(x,cy\right) & =R_{\widetilde{\ell},\varepsilon}\left(x,y\right).
\end{align*}
But the conjugacy $\Psi$ is tangent to the identity in the $y$-variable
thus $c=1$. Therefore $R_{\ell,\varepsilon}=R_{\widetilde{\ell},\varepsilon}$
on $\Lambda$, thus on $C$ by analytic continuation. Since this argument
can be carried out for any connected component $C$ of any cellular
intersection, Corollary~\ref{cor:realizable_iff_glue} yields the
conclusion. 
\begin{rem}
In fact $\Psi$ itself must be the identity, therefore 
\begin{align*}
\delta_{\widetilde{\ell}\leftarrow\ell} & =h_{\widetilde{\ell}}^{0}\circ\left(h_{\ell}^{0}\right)^{\circ-1}
\end{align*}
as in Lemma~\ref{lem:first-int_multiplier}.
\end{rem}

\bigskip{}

There only remains a single gap in the above argument, namely that
of extending $\Psi$ near each hyperbolic singularity. Let $\fol{\ell}$
be the foliation induced by $\onf[\ell]$ and take a germ $\Sigma\subset\left\{ x=x_{*}\right\} $
of a transverse disk at $\left(x_{*},0\right)$ in such a way that
$\Psi$ is holomorphic and injective on $\Sigma$. The union of the
saturation $\sat[\fol{\ell}]{\Sigma}$ and the vertical separatrices
$P^{-1}\left(0\right)$ is a full neighborhood of $\left\{ y=0\right\} $
since no singular point of $\fol{\ell}$ is a node. Therefore $\Psi$
can be extended as a fibered, injective mapping by the usual path-lifting
technique except along the separatrices $P^{-1}\left(0\right)$. Up
to divide $\onf[\ell]$ and $\onf[\widetilde{\ell}]$ by a local holomorphic
unit near each singularity, we can assume that the hypothesis of Lemma~\ref{lem:mattei-moussu}
are met. This completes the proof of the Realization Theorem when
$\mu_{0}\notin\rr_{\leq0}$.

\section{\label{sec:tau}General case $\tau>0$}

In this section we fix $\tau\in\nn$ such that 
\begin{align*}
\mu_{0}+\tau\left(k+1\right) & \notin\rr_{\leq0}.
\end{align*}

\subsection{End of proof of (orbital) Normalization, Uniqueness and Realization
Theorems}

We explain now how to reduce the case $\tau>0$ to the case $\tau=0$
already dealt with. We exploit the observation that formally $\nfsec$
is the pullback of $\nfsec[][y]$ by the mapping
\begin{align}
T~:~\left(\varepsilon,x,y\right) & \longmapsto\left(\varepsilon,x,P_{\varepsilon}^{\tau}\left(x\right)y\right).\label{eq:tau_blow-up}
\end{align}
Albeit not invertible along the lines $\left\{ P_{\varepsilon}\left(x\right)=0\right\} $
(its image is not a neighborhood of $\left\{ y=0\right\} $), the
mapping $T$ transforms the model unfolding
\begin{align}
\fonf\left(x,y\right) & =P_{\varepsilon}\left(x\right)\pp x+y\left(1+\mu_{\varepsilon}x^{k}\right)\pp y\label{eq:fnf}
\end{align}
into
\begin{align*}
\widehat{\tx Y} & :=T^{*}\fonf=P_{\varepsilon}\pp x+(1+\tau P_{\varepsilon}'+\mu_{\varepsilon}x^{k})y\pp y.
\end{align*}
Observe that
\begin{align*}
\tau P'\left(x\right)+\mu x^{k} & \sim_{\infty}\left(\tau\left(k+1\right)+\mu\right)x^{k},
\end{align*}
so that involving $P^{\tau}$ in this way shifts the formal invariant
by $\tau\left(k+1\right)$. Apart from the fact that $\widehat{Y}$
is not in prepared form~\eqref{eq:prepared_form_orbital}, all the
theory developed before for the realization theorem applies in this
case too. Let us be more specific. The key property we used intensively
was to be able to perform most arguments for fixed $\varepsilon$.
This was proved sufficient because automorphisms of prepared forms
fixing the $x$-variable must also fix the canonical parameter $\varepsilon$.
\begin{lem}
\label{lem:preparation_automorphisms}~

\begin{enumerate}
\item The group of (fibered) symmetries 
\begin{align*}
(\varepsilon,x,y)\mapsto(\eta\left(\varepsilon\right),X\left(\varepsilon,x\right),Y\left(\varepsilon,x,y\right))
\end{align*}
of (the unfolding of) vector fields defined by~\eqref{eq:fnf}, is
isomorphic to $\zsk\times\cc^{\times}$ through the linear representation
\begin{align}
\zeta_{0}~:~\zsk\times\cc^{\times} & \longto\GL{k+2}{\cc}\label{eq:ind_rotation}\\
\left(\theta,c\right) & \longmapsto\left((\varepsilon_{0},\ldots,\varepsilon_{k-1},x,y)\mapsto\left(\alpha\varepsilon_{0},\varepsilon_{1},\dots,\varepsilon_{k-1}\alpha^{-(k-2)},~\alpha x,~cy\right)\right)
\end{align}
where $\alpha=\exp\nf{2\ii\pi\theta}k$. 
\item This statement continues to hold in the more general case of an unfolding
\begin{equation}
P_{\varepsilon}\left(x\right)\frac{\partial}{\partial x}+(1+Q_{\varepsilon}(x))y\frac{\partial}{\partial y},\label{eq:fnf_Q}
\end{equation}
where $Q_{\varepsilon}\in\pol x_{\leq k}$ is a polynomial in $x$
of degree at most $k$ and $Q_{\varepsilon}(0)=0$, save for the fact
that the representation $\zeta_{\tau}~:~\zsk\times\cc^{\times}\to\diff[\cc^{k+2},0]$
has no reason to be linear. 
\item In particular, any symmetry tangent to the identity is the identity. 
\end{enumerate}
\end{lem}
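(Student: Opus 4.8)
The statement is about symmetries of the (parametric) model vector fields, so the plan is to reduce everything to the constraints imposed by the $\pp x$-component first, then deal with the $\pp y$-component, and finally pin down the parameter change. I would begin with case~(1). Suppose $\Psi : (\varepsilon,x,y)\mapsto(\eta(\varepsilon),X(\varepsilon,x),Y(\varepsilon,x,y))$ is a fibered symmetry of the family $\fonf$, meaning $\Psi^{*}\fonf[\varepsilon] = \fonf[\eta(\varepsilon)]$ (or more precisely $\fonf[\eta(\varepsilon)]\cdot\Psi = \fonf[\varepsilon]\circ\Psi$ in the sense of Section~\ref{subsec:Vector-fields}). Projecting onto the $\pp x$-component yields $P_{\eta(\varepsilon)}(X(\varepsilon,x)) = P_{\varepsilon}(x)\,\ppp{X}{x}(\varepsilon,x)$. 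Setting $y$ aside (the $x$-component does not involve $y$ since $\Psi$ is fibered and $\fonf$ has $x$-component independent of $y$), this is exactly the conjugacy equation for the one-dimensional vector fields $P_{\varepsilon}\pp x$. Since the singular set of $P_{\varepsilon}\pp x$ is $P_{\varepsilon}^{-1}(0)$ and $\Psi$ must map $\sing{\fonf[\varepsilon]}$ to $\sing{\fonf[\eta(\varepsilon)]}$, the map $x\mapsto X(\varepsilon,x)$ sends the roots of $P_{\varepsilon}$ to those of $P_{\eta(\varepsilon)}$. Evaluating the $\pp x$-relation shows $X(\varepsilon,\cdot)$ conjugates $P_{\varepsilon}\pp x$ to $P_{\eta(\varepsilon)}\pp x$; by Theorem~\ref{thm:preparation} (the uniqueness-of-canonical-parameter clause, which is a purely formal computation on eigenratios), this forces $\eta(\varepsilon) = \theta^{*}\varepsilon$ for some $\theta\in\zsk$, and moreover $X(\varepsilon,x) = \alpha x$ with $\alpha = \exp\nf{2\ii\pi\theta}{k}$ — because the only linear-at-the-singularities conjugacies between $P_{\varepsilon}\pp x$ and $P_{\theta^{*}\varepsilon}\pp x$ preserving the full polynomial structure are the scalings $x\mapsto\alpha x$ (any conjugacy must be affine to preserve $\deg P$, and must fix the ``center of mass'' $\sum x_i = 0$, hence linear, and $\alpha^{k}$ is constrained by matching leading coefficients).

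Once $X(\varepsilon,x)=\alpha x$ and $\eta(\varepsilon)=\theta^{*}\varepsilon$ are known, I would turn to the $\pp y$-component. The relation reads $\big(1 + \mu_{\theta^{*}\varepsilon}(\alpha x)^{k}\big)Y(\varepsilon,x,y) = \fonf[\varepsilon]\cdot Y$. Setting $y:=0$ gives $\big(1+\mu_{\varepsilon}x^{k}\big)Y(\varepsilon,x,0) = P_{\varepsilon}(x)\,\ppp{Y}{x}(\varepsilon,x,0)$ (using $\mu_{\theta^{*}\varepsilon}\alpha^{k}=\mu_{\varepsilon}$, which is how $\mu$ transforms, or absorbing it into the invariance); the unique formal/analytic solution with the required regularity is $Y(\varepsilon,x,0)=0$, since this is precisely the weak-separatrix equation whose only solution through $y=0$ is trivial (cf. the argument in the proof of Lemma~\ref{lem:formal_model_symmetries}, or Lemma~\ref{lem:weak_separatrix_param}). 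Hence $Y(\varepsilon,x,y) = y\exp N(\varepsilon,x,y)$ for some $N$, and the cohomological equation becomes $\fonf[\varepsilon]\cdot N = 0$ (again using the transformation law of $\mu$). By the uniqueness clause of Lemma~\ref{lem:formal_cohomological}, $N$ depends only on $\varepsilon$, i.e. $N = \log c_{\varepsilon}$ for some $c\in\germ{\varepsilon}^{\times}$. This gives $Y(\varepsilon,x,y) = c_{\varepsilon}y$, establishing that $\Psi = \zeta_{0}(\theta,c)$ and proving~(1); the converse (each $\zeta_{0}(\theta,c)$ is a symmetry) is an immediate direct check.

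For~(2), the same skeleton applies to $P_{\varepsilon}\pp x + (1+Q_{\varepsilon}(x))y\pp y$: the $\pp x$-component argument is identical and again yields $X(\varepsilon,x)=\alpha x$, $\eta(\varepsilon)=\theta^{*}\varepsilon$. The $\pp y$-component now gives, after setting $y=0$, the equation $(1+Q_{\theta^{*}\varepsilon}(\alpha x))Y(\varepsilon,x,0) = P_{\varepsilon}(x)\ppp{Y}{x}(\varepsilon,x,0)$; the point is that this still has only the trivial solution $Y(\varepsilon,x,0)=0$ because $P_{\varepsilon}$ and $1+Q_{\varepsilon}$ share no common root (as $Q_{\varepsilon}(0)=0$ forces $1+Q_{\varepsilon}$ to be nonzero at each root of $P_{\varepsilon}$ for small $\varepsilon$, just as $1+\mu x^{k}$ was), so Lemma~\ref{lem:weak_separatrix_sn}-type reasoning applies at each simple root. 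Then $Y=y\exp N$ with $\big(P_{\varepsilon}\pp x + (1+Q_{\varepsilon})y\pp y\big)\cdot N = 0$, whence $N$ is a function of $\varepsilon$ alone by the same cohomological-uniqueness argument (the operator $\big(P_{\varepsilon}\pp x+(1+Q_{\varepsilon})y\pp y\big)\cdot$ has the same kernel structure as $\fonf\cdot$ — constants in $\varepsilon$ — by Lemma~\ref{lem:formal_cohomological}, applied with this $A$). The only difference is that composing the scaling $x\mapsto\alpha x$ with the $y$-rescaling and the resulting parameter change need not act linearly because $Q_{\varepsilon}(\alpha x)\neq Q_{\theta^{*}\varepsilon}(x)$ in general — the $y$-component still ends up being $c_{\varepsilon}y$, but the combined map $\zeta_{\tau}(\theta,c)$ is a genuine biholomorphism of $(\cc^{k+2},0)$ that is only linear in $(x,y)$, not in $\varepsilon$. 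Finally~(3) is an immediate corollary: if $\Psi$ is tangent to the identity then in the parametrization $\Psi = \zeta_{\bullet}(\theta,c)$ we must have $\alpha = 1$ (so $\theta=0$) from the $x$-component derivative and $c_{\varepsilon}\equiv 1$ from the $y$-component derivative at the origin, hence $\Psi=\id$.

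\textbf{Main obstacle.} The delicate point is not any single computation but making precise the claim that the $\pp x$-component equation forces $X(\varepsilon,x)=\alpha x$ with $\alpha^{k}=1$ relative to the \emph{family} — one must rule out $\varepsilon$-dependent rescalings $\alpha=\alpha(\varepsilon)$ and affine shifts, which is exactly where the normalization $P_{\varepsilon}(x)=x^{k+1}+\sum_{j<k}\varepsilon_j x^j$ (no $x^{k}$ term, monic) is used: a shift would reintroduce an $x^{k}$ term, and a non-root-of-unity scaling would rescale the leading coefficient. Getting the bookkeeping of how $\varepsilon$ and $\mu$ (resp. $Q$) transform under $x\mapsto\alpha x$ exactly right, so that the residual cohomological equation is literally $\fonf[\varepsilon]\cdot N=0$ and not some twisted version, is the part that needs care; everything else follows the template of Lemma~\ref{lem:formal_model_symmetries} almost verbatim.
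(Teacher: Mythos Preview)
Your argument for (1) is essentially the direct computation that the cited reference [RouTey] carries out, so that part is fine; the paper simply invokes the citation. For (3) both approaches are the same trivial deduction.

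For (2), however, your direct computation contains a genuine error, and the paper's route is different precisely to avoid it. After you obtain $X(\varepsilon,x)=\alpha x$ and $\eta=\theta^{*}\varepsilon$, and write $Y=y\exp N$, the cohomological equation is \emph{not} $\big(P_{\varepsilon}\pp x+(1+Q_{\varepsilon})y\pp y\big)\cdot N=0$ as you claim. It is
\[
\big(P_{\varepsilon}\pp x+(1+Q_{\varepsilon})y\pp y\big)\cdot N \;=\; Q_{\theta^{*}\varepsilon}(\alpha x)-Q_{\varepsilon}(x),
\]
and the right-hand side is a nonzero polynomial in $x$ of degree $\le k$ in general (there is no reason for $Q$ to be equivariant under the $\zsk$-action). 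By Lemma~\ref{lem:formal_cohomological} this equation has a solution only if the right-hand side lies in the ideal $(P_{\varepsilon})$, which for degree reasons forces it to vanish identically. So your direct approach, with the ansatz $X=\alpha x$, does not produce the symmetries $\zeta_{\tau}(\theta,c)$ for $\theta\neq 0$ unless $Q$ happens to be $\zsk$-equivariant. You flagged exactly this bookkeeping as the ``main obstacle'', but then asserted the equation is homogeneous, which it is not.

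The paper bypasses this entirely: since the general form \eqref{eq:fnf_Q} is itself a generic unfolding, the Formal Normalization Theorem provides a diffeomorphism $\Psi$ (generally with nontrivial $x$- and $\varepsilon$-parts) carrying it to the standard form \eqref{eq:fnf}. Conjugation by $\Psi$ then transports the symmetry group wholesale: every symmetry of the general form is $\Psi^{-1}\circ\zeta_{0}(\theta,c)\circ\Psi$, and this composite is the (generally nonlinear) $\zeta_{\tau}(\theta,c)$. This is both shorter and avoids having to determine the $x$-component of the symmetry explicitly.
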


\begin{proof}
(1) is shown in~\cite{RouTey}. For (2), there exists a diffeomorphism
$\Psi$ of the form $(\varepsilon,x,y)\mapsto(\eta,X,Y)$ transforming
a general formal normal form~\eqref{eq:fnf_Q} to the standard formal
normal form~\eqref{eq:fnf}. Then any symmetry of a general formal
normal form is given by $\Psi^{-1}\circ\zeta_{0}\left(\theta,c\right)\circ\Psi$
for some $\left(\theta,c\right)\in\zsk\times\cc^{\times}$. (3) follows.
\end{proof}
\begin{rem}
\label{rem:remark_sectors}~

\begin{enumerate}
\item In view of Lemma~\ref{lem:preparation_automorphisms}, we could have
replaced~\eqref{eq:fnf} by some other~\eqref{eq:fnf_Q} in all
our constructions regarding realization. In such a form, the parameters
are again canonical, as long as we consider changes of coordinates
tangent to the identity. 
\item The structure of sectors, and also the decomposition in cells $\mathcal{E}$,
are determined from $P_{\varepsilon}$ alone in~\eqref{eq:fnf}:
only the size of the neighborhoods of the origin in $x$-space and
in parameter space might need to be slightly adjusted when passing
from the coordinates $\left(x,y\right)$ to the coordinates $\left(x,P^{\tau}\left(x\right)y\right)$.
Hence, instead of considering \eqref{eq:fnf}, we could have taken
a normal form \eqref{eq:fnf_Q} with the same sectors $V_{\ell}^{j}$
and same cells $\mathcal{E}_{\ell}$. 
\end{enumerate}
\end{rem}

The rest of our argument relies on the next transport result.
\begin{lem}
\label{lem:modulus_unchanged}~

\begin{enumerate}
\item $\left(\mu,\mathfrak{m}\right)$ satisfies the compatibility condition
if and only if $\left(\mu+\tau\left(k+1\right),\mathfrak{m}\right)$
does.
\item Take $\onf$ in orbital normal form~\eqref{eq:orbital_normal_form}
with $\tau:=0$. Consider the corresponding unfolding
\begin{align*}
\mathcal{Y} & :=T^{*}\onf=P_{\varepsilon}\pp x+y\left(1+\tau P'_{\varepsilon}+\mu x^{k}+R\left(x,P^{\tau}y\right)\right)\pp y,
\end{align*}
for $T$ as in~\eqref{eq:tau_blow-up}. Then $\onf$ and $\mathcal{Y}$
have same orbital invariant $\mathfrak{m}\left(\onf\right)=\mathfrak{m}\left(\mathcal{Y}\right)$. 
\end{enumerate}
\end{lem}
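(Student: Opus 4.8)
The statement has two parts, and both are proved by transporting structure through the (non-invertible) map $T:(\varepsilon,x,y)\mapsto(\varepsilon,x,P_\varepsilon^\tau(x)y)$ defined in~\eqref{eq:tau_blow-up}. The unifying principle is that $T$ is a biholomorphism away from $\{P_\varepsilon(x)=0\}$, and on the complement of the separatrices it respects everything the modulus is built from: squid sectors, canonical sectorial first integrals, and transition maps between sectorial spaces of leaves. First I would record the elementary fact that $T^*\fonf[\varepsilon]=\widehat{\tx Y}=P_\varepsilon\pp x+(1+\tau P_\varepsilon'+\mu_\varepsilon x^k)y\pp y$ is of the general shape~\eqref{eq:fnf_Q} with $Q_\varepsilon=\tau P_\varepsilon'+\mu_\varepsilon x^k-\tau(k+1)x^k\in x\pol x_{\leq k}$ after absorbing the top coefficient: indeed $\tau P_\varepsilon'(x)+\mu_\varepsilon x^k\sim_\infty(\tau(k+1)+\mu_\varepsilon)x^k$, so the formal orbital invariant of $\widehat{\tx Y}$ is $\mu+\tau(k+1)$. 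By Remark~\ref{rem:remark_sectors} the cells $\mathcal{E}_\ell$ and the squid sectors $V_\ell^j$ attached to $P_\varepsilon$ are the same for $\widehat{\tx Y}$ as for $\fonf$ (only the sizes of the $x$- and $\varepsilon$-neighborhoods may shrink), and by Lemma~\ref{lem:preparation_automorphisms}(2) the parameter remains canonical modulo $\zsk$ for tangent-to-identity changes, so the whole machinery of Section~\ref{sec:Temporal} and Section~\ref{sec:Analytic} applies verbatim to normal forms of the shape~\eqref{eq:fnf_Q}.

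\textbf{Part (2).} For $\mathcal{X}=\fonf+Ry\pp y$ with $\tau=0$ and $\mathcal{Y}=T^*\mathcal{X}=P_\varepsilon\pp x+y(1+\tau P_\varepsilon'+\mu x^k+R(x,P_\varepsilon^\tau y))\pp y$, I would compare sectorial first integrals directly. On each squid sector $V_\ell^j$ the canonical sectorial first integral of $\mathcal{X}$ is $H_\ell^j=\widehat H^j\exp N_\ell^j$ with $\mathcal{X}\cdot N_\ell^j=-R$ (Remark~\ref{rem:corollaries_squid_solve}(2)); pulling back by $T$, the function $H_\ell^j\circ T$ is a first integral of $\mathcal{Y}$ on $T^{-1}(V_\ell^j\times\neigh)$, and since $T$ is fibered and biholomorphic over $V_\ell^j$ (which avoids the roots of $P_\varepsilon$ by construction), $H_\ell^j\circ T$ is a \emph{canonical} sectorial first integral for $\mathcal{Y}$ once one checks the normalization $\widehat H_{\mathcal{Y}}^{j}:=\widehat H^j\circ T$ satisfies the same formal transition law~\eqref{eq:formal_transition_map} — this is where the shift $\mu\mapsto\mu+\tau(k+1)$ enters, but it does not affect the \emph{nonlinear} transition maps $\sad\psi{\ell}{j,}$, only the linear factor $\exp\nf{2\ii\pi\mu}k$, which is bookkept separately in the pair $(\mu,\mathfrak m)$. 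Concretely, from $H^{j+1}=H^j\exp(\nf{2\ii\pi\mu}k+\sad\phi{\ell}{j,}\circ H^j)$ one gets $H^{j+1}\circ T=(H^j\circ T)\exp(\nf{2\ii\pi\mu}k+\sad\phi{\ell}{j,}\circ(H^j\circ T))$ on $\sad V{\ell}{j,}$, and after renormalizing $\widehat H_{\mathcal Y}^j$ by the appropriate power of $P_\varepsilon$ near $\infty$ to restore the canonical determination, the period $\per[j][\ell](\cdot)$ computed from these first integrals is literally $\sad\phi{\ell}{j,}$ again. Hence $\mathfrak m(\mathcal Y)=\mathfrak m(\mathcal X)$. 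I would be careful to phrase this as: the sectorial transition maps in space of leaves are invariants of the foliation on $V_\ell^j\setminus P_\varepsilon^{-1}(0)$, and $T$ is a fibered foliated isomorphism there, so it cannot change them.

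\textbf{Part (1).} The compatibility condition is formulated (Definition~\ref{def:compatibility_condition}) purely in terms of the necklace dynamics $\mathcal G(\mathfrak m)=(\mathcal G_\ell)_\ell$, where each $\mathcal G_\ell$ is generated by the germs $\sad\psi{\ell}{j,}:h\mapsto h\exp(\nf{2\ii\pi\mu}k+\sad\phi{\ell}{j,}(h))$ and the linear gate maps $\gat\psi{\ell}{j,}:h\mapsto\nu_\ell^j h$, with $\nu_\ell^j$ determined by $\mu$ and the residues $1/P_\varepsilon'(x^m)$. Replacing $\mu$ by $\mu+\tau(k+1)$ changes each $\sad\psi{\ell}{j,}$ by multiplying its linear part by $\exp\nf{2\ii\pi\tau(k+1)}k=\exp(2\ii\pi\tau+2\ii\pi\tau/k)=\exp\nf{2\ii\pi\tau}k$, and changes the ramification indices at the roots from $\exp(-2\ii\pi\frac{1+\mu(x^m)^k}{P_\varepsilon'(x^m)})$ to $\exp(-2\ii\pi\frac{1+(\mu+\tau(k+1))(x^m)^k}{P_\varepsilon'(x^m)})$, i.e.\ multiplies it by $\exp(-2\ii\pi\tau(k+1)(x^m)^k/P_\varepsilon'(x^m))$; correspondingly the gate multipliers $\nu_\ell^j$ get multiplied by suitable powers of $\exp\nf{2\ii\pi\tau}k$ so that, around each singular point, the product relation (one factor $\exp\nf{2\ii\pi\mu}k$ per saddle-crossing and one $\nu_j$ per gate-crossing equals the total ramification) is preserved. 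The upshot is that there is a \emph{single} linear cocycle $h\mapsto\zeta_\varepsilon^{[j]}h$, depending only on $\mu$, $\tau$ and the combinatorics of $\sigma$, such that the new necklace generators $\widetilde\psi_\ell[\mathfrak w]$ are obtained from the old $\psi_\ell[\mathfrak w]$ by conjugating by this linear cocycle (globally in $\varepsilon$, independently of $\ell$). Since conjugating an entire pseudogroup by one and the same linear map — the \emph{same} for $\mathcal E_\ell$ and $\mathcal E_{\widetilde\ell}$ on the overlap — preserves the existence and tangent-to-identity nature of an intertwining $\delta_{\widetilde\ell\leftarrow\ell,\varepsilon}$ (it simply conjugates $\delta_{\widetilde\ell\leftarrow\ell}$ by the linear map, which keeps it tangent-to-identity because the linear map commutes with its own derivative), the compatibility condition for $(\mu,\mathfrak m)$ is equivalent to that for $(\mu+\tau(k+1),\mathfrak m)$. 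The \textbf{main obstacle} is the bookkeeping in this last paragraph: checking precisely that the modified gate multipliers $\widetilde\nu_\ell^j$ conspire, via the Douady–Sentenac relation between $\sum 1/P_\varepsilon'(x)$ on regions and the $\tau_\varepsilon^j$, so that the discrepancy between old and new necklace representations is exactly an inner conjugacy by a fixed linear cocycle; once that is nailed down, the equivalence of the two compatibility conditions is formal.
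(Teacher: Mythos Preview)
Your treatment of Part~(2) is essentially the paper's own argument: pull back the canonical sectorial first integrals through $T$, observe that the saddle-part transition relation transports verbatim, hence the nonlinear parts $\sad\phi{\ell}{j,}$ are unchanged. The paper makes this more explicit by writing the model first integral multiplier of $\mathcal Y$ as $\widetilde E(x)=\prod(x-x^m)^{-1/\widetilde\lambda^m}$ and checking the identity $\prod(x-x^m)^{-1/\lambda^m}=\widetilde E(x)P^\tau(x)$ (which follows from $1/\widetilde\lambda^m=1/\lambda^m+\tau$), so that $H^j=\widetilde H^j\circ T$ on the nose; but your sketch captures the same mechanism.

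For Part~(1), however, your route through a ``linear cocycle'' is more complicated than necessary and, as stated, does not close. Two issues. First, conjugation of the whole necklace by a \emph{single} linear map cannot change the linear part of any $\sad\psi{\ell}{j,}$, so to absorb the shift $\exp\nf{2\ii\pi\mu}k\to\exp\nf{2\ii\pi\widehat\mu}k$ you are forced into a genuine sector-dependent cocycle $(\zeta^j)_j$, and then you must verify that the gate multipliers transform compatibly. Second, your computation of how the ramifications at the roots change is off: you substitute $\widehat\mu=\mu+\tau(k+1)$ into the standard model, getting a factor $\exp\!\bigl(-2\ii\pi\tau(k+1)(x^m)^k/P_\varepsilon'(x^m)\bigr)$ which depends on the particular root $x^m$ and is \emph{not} a power of $\exp\nf{2\ii\pi\tau}k$ in general. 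So the claim that the gate multipliers ``get multiplied by suitable powers of $\exp\nf{2\ii\pi\tau}k$'' is not justified, and this is exactly the obstacle you flag but do not overcome.

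The paper sidesteps this entirely with two elementary observations. The Camacho--Sad index of $\mathcal Y$ at the root $z$ is $\widetilde\lambda=\frac{P'(z)}{1+\tau P'(z)+\mu z^k}$ while that of $\mathcal X$ is $\lambda=\frac{P'(z)}{1+\mu z^k}$, so $\nf1{\widetilde\lambda}=\nf1\lambda+\tau$; since $\tau\in\zz$, the local monodromies $\exp\nf{2\ii\pi}{\widetilde\lambda}=\exp\nf{2\ii\pi}\lambda$ agree, hence the gate maps for the two necklaces are \emph{identical}. Likewise, the linear parts of the $\sad\psi{}{j,}$ are only constrained by the requirement that their product be $\exp2\ii\pi\mu$, and since $(k+1)\tau\in\zz$ gives $\exp2\ii\pi\mu=\exp2\ii\pi\widehat\mu$, one may take the \emph{same} linear parts for both. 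With these choices the two necklace dynamics coincide literally, and the equivalence of the two compatibility conditions is immediate. The point you are missing is that the appropriate model for the formal class $\widehat\mu$ is $\widehat{\tx Y}$ (of shape~\eqref{eq:fnf_Q}), not the standard $\fonf$ with $\mu$ replaced by $\widehat\mu$; Remark~\ref{rem:remark_sectors} explains why this is legitimate.
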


We postpone the proof till Section~\ref{subsec:modulus_unchanged}.
In the meantime we finish establishing the main theorems.

\subsubsection{End of proof of Orbital Realization Theorem}

Let $\left(\mu,\mathfrak{m}\right)$ satisfy the compatibility condition
and let us prove it is realizable as the orbital modulus of some convergent
unfolding. Normalization and Realization Theorems so far hold when
$\tau=0$ (in particular $\mu_{0}\notin\rr_{\leq0}$): in that case
$\mathfrak{m}$ is the modulus of an unfolding in normal form 
\begin{equation}
P_{\varepsilon}\left(x\right)\frac{\partial}{\partial x}+y\left(1+\mu(\varepsilon)x^{k}+y\sum_{j=1}^{k}x^{j}R_{j}(y)\right)\frac{\partial}{\partial y}.\label{eq:realisation_mu}
\end{equation}

To consider the case $\tau>0$, we need to use the following remark:
the whole proof for $\tau=0$ would have worked \emph{verbatim} with
the formal part and parameters given in some alternate form~\eqref{eq:fnf_Q}.
This would have produced a realization of the form 
\begin{equation}
P_{\varepsilon}(x)\frac{\partial}{\partial x}+y\left(1+Q_{\varepsilon}(x)+y\sum_{i=j}^{k}x^{j}R_{j}(y)\right)\frac{\partial}{\partial y},\label{eq:realisation_Q}
\end{equation}
with new canonical parameters. Let $\tau$ be a positive integer such
that $\mu_{0}+\tau(k+1)>0$ and consider the new formal normal form
\[
\widehat{\tx Y}\left(x,y\right)=P_{\varepsilon}(x)\frac{\partial}{\partial x}+(1+\tau P_{\varepsilon}'(x)+\mu(\varepsilon)x^{k})y\frac{\partial}{\partial y}
\]
corresponding to $Q_{\varepsilon}:=\tau P_{\varepsilon}'+\mu(\varepsilon)x^{k}$
in~\eqref{eq:fnf_Q}, with formal invariant 
\begin{align*}
\widehat{\mu} & :=\mu+\tau(k+1).
\end{align*}
But according to Lemma~\ref{lem:modulus_unchanged}:
\begin{enumerate}
\item $\left(\widehat{\mu},\mathfrak{m}\right)$ is compatible,
\item it is realized in the form~\eqref{eq:realisation_Q},
\item the change $\left(x,y\right)\mapsto\left(x,P_{\varepsilon}^{-\tau}(x)y\right)$
transforms~\eqref{eq:realisation_Q} back into an unfolding 
\[
P_{\varepsilon}(x)\frac{\partial}{\partial x}+y\left(1+\mu(\varepsilon)x^{k}+\sum_{j=1}^{k}x^{j}R_{j}(P_{\varepsilon}^{\tau}(x)y)\right)\frac{\partial}{\partial y},
\]
\item the latter unfolding is holomorphic on a whole neighborhood of $\neigh[k+2]$,
and is therefore a realization of $\left(\mu,\mathfrak{m}\right)$.
\end{enumerate}

\subsubsection{End of proof of Normalization Theorem}

The proof we just finished shows that any realizable $\left(\mu,\mathfrak{m}\right)$
can be realized in normal form.

\subsubsection{End of proof of Uniqueness Theorem}

Each vector field $\onf[\varepsilon]$ of the unfolding in normal
form~\eqref{eq:orbital_normal_form} is holomorphic on a domain
\begin{align*}
D\left(r\right) & :=\bigcup_{\varepsilon\in\neigh[k]}\left\{ \left(\varepsilon,x,y\right)~:~\left|x\right|<\rho,~\left|P_{\varepsilon}^{\tau}\left(x\right)y\right|<r\right\} .
\end{align*}
Let $E$ be a neighborhood of $0$ in $\cc^{k+2}$ and $\Psi~:~E\to\neigh[k+2]$
be a local conjugacy between normal forms $\onf$ and $\widetilde{\mathcal{X}}$,
which can be assumed fibered thanks to Corollary~\ref{cor:formal_symmetries}~(2).
We can use the Uniqueness Theorem in the coordinates $\left(x,P^{\tau}\left(x\right)y\right)$
(given by the Uniqueness Theorem for $\mu_{0}\notin\rr_{\leq0}$,
already proved) at the cost of showing that $T^{*}\Psi=T\circ\Psi\circ T^{\circ-1}$
is holomorphic and injective on some small neighborhood of $\left(0,0\right)$
uniformly in $\varepsilon$. This is not trivial since the image of
$E\cap\left\{ \varepsilon=\cst\right\} $ by $T$ can never be such
a uniform neighborhood of $\left(0,0\right)$ if $E$ is bounded in
the $y$-variable. But $T\left(D\left(r\right)\cap\left\{ \varepsilon=\cst\right\} \right)$
is, so we wish to extend $\Psi$ to some $D\left(r'\right)\subset D\left(r\right)$.
The usual path-lifting technique in the foliation $\fol{\varepsilon}$
induced by $\onf[\varepsilon]$ allows to extend $\Psi_{\varepsilon}$
on 

\begin{align*}
\mathcal{U}_{\varepsilon} & :=\left\{ \varepsilon\right\} \times\sat[\fol{\varepsilon}]E\subset D\left(r\right).
\end{align*}
Using the special form of the normal form $\mathcal{X}_{\varepsilon}$
we conclude the proof of the Uniqueness Theorem.
\begin{lem}
\label{lem:saturated_contains_separatrices}Assume that $\rho>0$
is small enough so that $\left|\mu_{\varepsilon}x^{k}+\tau P'_{\varepsilon}\left(x\right)\right|<\frac{1}{4}$
for all $x\in\rho\disc$ and all $\norm[\varepsilon]{}$ small enough.
There exists $r\geq r'>0$ such that for $\mathcal{U}_{\varepsilon}=\mathcal{U}_{\varepsilon}\left(r\right)$
defined as above one has $D\left(r'\right)\subset\bigcup_{\varepsilon\in\neigh[k]}\mathcal{U}_{\varepsilon}\subset D\left(r\right)$.
\end{lem}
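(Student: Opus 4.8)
The plan is to establish the two inclusions $D(r')\subset\bigcup_\varepsilon\mathcal{U}_\varepsilon\subset D(r)$ separately, the second being essentially tautological and the first being the real content. First I would observe that the inclusion $\bigcup_\varepsilon\mathcal{U}_\varepsilon\subset D(r)$ holds by construction: each leaf of $\fol\varepsilon$ through a point of $E$ stays inside the domain of holomorphy $D(r)$ of $\onf[\varepsilon]$, because $D(r)$ is saturated by $\fol\varepsilon$ (the boundary component $\{|x|=\rho\}$ is crossed transversally by $\fol\varepsilon$ away from $P_\varepsilon^{-1}(0)$, but the saturation only enlarges within $\rho\disc$ in $x$, while the other boundary $\{|P_\varepsilon^\tau(x)y|=r\}$ must be handled by the estimate below showing trajectories cannot escape it). So the burden is to produce $r'>0$, uniform in $\varepsilon$, such that every point of $D(r')$ can be joined inside a leaf of $\fol\varepsilon$ to a point of $E$.

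The key step is a variational estimate on the quantity $w:=|P_\varepsilon^\tau(x)y|^2$ along trajectories of $\onf[\varepsilon]$, mimicking the argument used in Remark~\ref{rem:modulus_behavior_near_roots} and in Lemma~\ref{lem:integrable_first-integral}. Writing $v:=P_\varepsilon^\tau(x)y$ one computes that along the flow of $\onf[\varepsilon]$ the logarithmic derivative of $v$ is $1+\tau P_\varepsilon'(x)+\mu_\varepsilon x^k+\sum_j x^j R_{j,\varepsilon}(v)$, whose real part is bounded below by $1-\tfrac14-O(r)>\tfrac12$ once $\rho$ is small enough that $|\mu_\varepsilon x^k+\tau P_\varepsilon'(x)|<\tfrac14$ on $\rho\disc$ and $r$ is small enough that the $R_j$-terms are $<\tfrac14$. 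Hence $|v|$ grows at least exponentially forward in real time, so following a trajectory \emph{backward} in time makes $|v|$ strictly decrease: starting from any $(x_*,y_*)$ with $|x_*|<\rho$ and $|v_*|=|P_\varepsilon^\tau(x_*)y_*|<r'$ for suitable $r'<r$, the backward trajectory reaches the region $\{|v|<\text{const}\}$. One then needs a flow-box / compactness argument on the divisor $\lif{}\simeq\proj$ (more precisely on $\rho\disc\times\{0\}$ together with $\{x=\infty\}$), exactly as in Sections~\ref{subsec:saturated_contains_Hirzebruch} and~\ref{subsec:Uniqueness_orbital}: since the $k+2$ singular points are either hyperbolic or the reduced singularity at $\infty$ (never a node, by the choice $\mu_0+\tau(k+1)\notin\rr_{\leq0}$), a transverse disk close to any singularity saturates a full punctured neighborhood, and the compact remaining part of $\lif{}$ can be covered by finitely many flow-boxes. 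Combining, every point of $D(r')$ flows into $E=\sat[\fol\varepsilon]E$'s seed, giving the claim.

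The main obstacle I anticipate is the uniformity in $\varepsilon$ near $\Delta_k$, where the roots of $P_\varepsilon$ collide and the factor $P_\varepsilon^\tau(x)$ degenerates: the naive choice of $r'$ could shrink to $0$ as $\varepsilon\to\Delta_k$. The remedy is the one already used repeatedly in the paper — work with the coordinate $v=P_\varepsilon^\tau(x)y$ rather than $y$ itself, so that the domain $D(r)$ is defined intrinsically in terms of $|v|$ and the estimate on $\dot w/w$ above is \emph{independent} of how close $\varepsilon$ is to the discriminant (the bound $|\mu_\varepsilon x^k+\tau P_\varepsilon'(x)|<\tfrac14$ on $\rho\disc$ is uniform in $\norm[\varepsilon]{}$ small by the estimates of Section~\ref{subsec:Size-of-sectors}, e.g.~\eqref{eq:polynomial_infty_estim}). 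The flow-box argument near $\{x=\infty\}$ is likewise uniform because the singularity there stays reduced and non-nodal for all small $\varepsilon$, by~\cite{SchaTey}. Once $v$ is used throughout, the constants $r,r'$ depend only on $\rho$ and $\mu_0,\tau$, not on $\varepsilon$, and the proof closes as in the $\tau=0$ case treated in Lemma~\ref{lem:saturated_contains_Hirzebruch}.
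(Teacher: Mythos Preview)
Your variational estimate on $\phi=|P_\varepsilon^\tau(x)y|$ is exactly the paper's starting point: along the real-time flow of $-\onf[\varepsilon]$ one has $\dot\phi=-\phi\,\re{1+\mu_\varepsilon x^k+R_\varepsilon+\tau P_\varepsilon'}<-\phi/2$. The paper also records that $|y(t)|$ itself decreases exponentially, which you omit; this is what actually forces the trajectory into $E$, since $E$ is a neighborhood in $(x,y)$-space bounded in $|y|$, not in $|v|$.

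The real gap is your treatment of the case when $x(t)$ exits $\rho\disc$. You invoke a flow-box argument on $\mathcal L\simeq\proj$ involving a reduced singularity at $\{x=\infty\}$, but for $\tau>0$ the normal form $\onf[\varepsilon]$ does \emph{not} extend holomorphically to a neighborhood of $\proj\times\{0\}$ (see the remark following the Normalization Theorem), so there is no singularity at $\infty$ to argue about and the analogy with Lemma~\ref{lem:saturated_contains_Hirzebruch} breaks down. One might try to rescue this by passing to $v=P_\varepsilon^\tau y$ and working with $\mathcal Y=T^*\onf$, which does extend; but then $T(E)$ is no longer a polydisk (it pinches along $P_\varepsilon^{-1}(0)$) and you would owe a uniform-in-$\varepsilon$ argument that its saturation still contains $\{|v|<r'\}$. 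The paper avoids this entirely by staying local: when $|x(t_0)|=\rho$ it switches to the rotated flow $\dot x=\pm\ii P_\varepsilon(x)$ (sign chosen to point inward along $\rho\sone$) until reaching $x(t_1)$ from which the real-time flow remains in $\rho\disc$ and accumulates on an attractive root. During the detour $\dot\phi/\phi=\pm\im{\mu_\varepsilon x^k+R_\varepsilon+\tau P_\varepsilon'}$ may be positive, so $\phi$ can grow, but only by a factor bounded uniformly in $(x_*,y_*)$ and $\varepsilon$; that uniform bound is what determines $r'/r$.
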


\begin{proof}
For a solution of the flow system
\begin{align*}
\begin{cases}
\dot{x} & =-P_{\varepsilon}\left(x\right)\\
\dot{y} & =-y\left(1+\mu_{\varepsilon}x^{k}+R_{\varepsilon}\left(x,y\right)\right)
\end{cases}
\end{align*}
with $t\in\rr$ and initial value $\left(x_{*},y_{*}\right)$, the
modulus of $\phi\left(t\right):=\left|P_{\varepsilon}^{\tau}\left(x\left(t\right)\right)y\left(t\right)\right|$
satisfies
\begin{align*}
\dot{\phi} & =-\phi\re{1+\mu_{\varepsilon}x^{k}+R_{\varepsilon}+\tau P_{\varepsilon}'}.
\end{align*}
Since $R_{\varepsilon}\left(x,0\right)=0$ we can choose $r$ so small
that $\left|\mu_{\varepsilon}x^{k}+R_{\varepsilon}+\tau P_{\varepsilon}'\left(x\right)\right|<\frac{1}{2}$
for all $\left(\varepsilon,x,y\right)\in D\left(r\right)$, and $\dot{\phi}<-\nf{\phi}2$.
Hence starting at $\left(x_{*},y_{*}\right)$ with $\left|P_{\varepsilon}^{\tau}\left(x_{*}\right)y_{*}\right|<r$
and $\left|x_{*}\right|<\rho$, the trajectory for positive $t$ never
escapes $D\left(r\right)$. But $t\mapsto\left|y\left(t\right)\right|$
is also exponentially decreasing, therefore we eventually reach a
point within $E$. 

Again, this is the ideal situation, because it may happen that $x\left(t\right)$
exits $\adh{\rho\disc}$. If $\left|x\left(t_{0}\right)\right|=\rho$
then we modify the trajectory $x$ by solving $\dot{x}=\pm\ii P_{\varepsilon}\left(x\right)$
from $t_{0}$ on, the sign being chosen so that $\pm\ii P_{\varepsilon}\left(x\left(t_{0}\right)\right)$
points inside $\rho\sone$, until we reach a point $x\left(t_{1}\right)$
through which the solution of $\dot{x}=-P_{\varepsilon}\left(x\right)$
stays in $\rho\disc$ in positive time (\emph{i.e.} accumulate on
an attractive singularity). While for $t\in\left[t_{0},t_{1}\right]$
we cannot control the sign of $\dot{\phi}=\pm\phi\im{\mu_{\varepsilon}x^{k}+R_{\varepsilon}+\tau P_{\varepsilon}'}$,
resulting in a probable increase in $\phi$, the total amount by which
$\frac{\phi}{r}$ increases is bounded uniformly in $\left(x_{*},y_{*}\right)$
and $\varepsilon$. Therefore there exists a radius $r\geq r'>0$
for which, if $\left(x_{*},y_{*}\right)\in D\left(r'\right)$, the
modified trajectory $t\geq0\mapsto\left(x\left(t\right),y\left(t\right)\right)$
does not escape from $D\left(r\right)$ and thus eventually enters
$E$.
\end{proof}

\subsubsection{\label{subsec:modulus_unchanged}Proof of Lemma~\ref{lem:modulus_unchanged}}

First, as noted in Remark~\ref{rem:remark_sectors}, we can choose
the same sectors in $x$ and same cells in the parameter $\varepsilon$,
possibly after adjusting their diameter. Also, we have chosen to take
the linear parts of the $\sad{\psi}{\ell}{j,}$ of the form $\exp\nf{2\ii\pi\mu}k$.
This choice is arbitrary. What is needed is that the product of these
linear parts be equal to $\exp2\ii\pi\mu$. Because $(k+1)\tau\in\zz$,
so that $\exp2\ii\pi\mu=\exp\left(2\pi\ii\left(\mu+\left(k+1\right)\tau\right)\right)$,
we are perfectly entitled to take the same linear parts for $\mathfrak{m}\left(\onf\right)$
and $\mathfrak{m}\left(\mathcal{Y}\right)$.

The Camacho-Sad index $\widetilde{\lambda}^{j}$ (\emph{resp.} $\lambda^{j}$)
of the singular point $\left(z,0\right)\in P^{-1}\left(0\right)\times\left\{ 0\right\} $
in $\mathcal{Y}_{\varepsilon}$ (\emph{resp.} $\mathcal{X}_{\varepsilon}$),
relatively to the invariant line $\left\{ y=0\right\} $, is given
by 
\[
\widetilde{\lambda}^{j}=\frac{P_{\varepsilon}'(z)}{1+\tau P_{\varepsilon}'(z)+\mu_{\varepsilon}z^{k}},\qquad\lambda_{j}=\frac{P_{\varepsilon}'(z)}{1+\mu_{\varepsilon}z^{k}}.
\]
Hence, $\frac{1}{\widetilde{\lambda}^{j}}=\frac{1}{\lambda^{j}}+\tau$,
yielding $\exp\nf{2\ii\pi}{\widetilde{\lambda}^{j}}=\exp\nf{2\ii\pi}{\lambda^{j}}$.
This means that the gate transition maps are the same for both dynamical
necklaces induced by $\left(\mu,\mathfrak{m}\right)$ and by $(\widehat{\mu},\mathfrak{m})$.
Thus, the holonomies involved in the compatibility condition are the
same provided~(2) holds. In particular, this means that $\left(\widehat{\mu},\mathfrak{m}\right)$
satisfies the compatibility condition, proving~(1).

Show now that $\mathfrak{m}$ is the analytic part of the modulus
of $\mathcal{Y}$. It suffices to consider a fixed $\varepsilon\in\mathcal{E}_{\ell}$
and a corresponding saddle part $\sad V{}{j,}$. Recall how a normalizing
map between $\mathcal{Y}$ and its formal model, as in Remark~\ref{rem:corollaries_squid_solve},
defines the canonical sectorial first integral 
\begin{align*}
\widetilde{H}(x,y) & =y\widetilde{E}(x)\exp\widetilde{N}^{j}\left(x,y\right),
\end{align*}
where $\widetilde{E}(x)=\prod_{j=0}^{k}(x-x^{j})^{-\nf 1{\widetilde{\lambda}_{j}}}$
is the multiplier in the model first integral of $\mathcal{Y}_{\varepsilon}$.
Let $\sad{\psi}{}{j,}~:~h\mapsto h\exp\left(\nf{2\ii\pi\mu}k+\mathfrak{m}\left(h\right)\right)$
be the Martinet-Ramis invariant as in Section~\ref{subsec:Invariants},
that is 
\[
\widetilde{H}^{j+1}=\sad{\widetilde{\psi}}{}{j,}\circ\widetilde{H}^{j}.
\]
Let us now move to $\mathcal{X}$. It is clear that a normalizing
map over $\mathcal{V}^{j}$ transforming $\mathcal{X}_{\varepsilon}$
into its normal form is given by 
\begin{align*}
\left(x,y\right) & \longmapsto\left(x,y\exp N^{j}\left(x,y\right)\right)\\
N_{j}\left(x,y\right) & =\widetilde{N}_{j}(x,P_{\varepsilon}^{\tau}(x)y).
\end{align*}
Moreover, the domain of this map is of the form $V^{j}\times\{|P_{\varepsilon}^{\tau}(x)y|<r\}$.
Since 
\begin{align*}
\prod_{j=0}^{k}(x-x^{j})^{-\frac{1}{\lambda_{j}}} & =\widetilde{E}(x)P^{\tau}(x)
\end{align*}
the canonical first integral of $\onf$ has the form 
\begin{align*}
H^{j}(x,y) & =E(x)y\exp N_{j}(x,y)=\widetilde{E}(x)\left(P^{\tau}(x)y\right)\exp\widetilde{N}^{j}\left(x,P^{\tau}\left(x\right)y\right).
\end{align*}
It follows at once that 
\[
H^{j+1}=\sad{\widetilde{\psi}}{}{j,}\circ H^{j},
\]
yielding the conclusion $\sad{\psi}{}{j,}=\sad{\widetilde{\psi}}{}{j,}$
as expected.

\subsection{Section of the period operator: end of proof of the Normalization
Theorem}

Let $\onf$ be a generic unfolding in orbital normal form~\eqref{eq:orbital_normal_form},
understood as a derivation. Theorem~\ref{thm:squid_and_solve} holds
regardless of the value of $\mu_{0}$ or $\tau$. The study performed
in Section~\ref{sec:Temporal} to establish Theorem~\ref{thm:section_period}
can be repeated here but for the fact that the canonical section of
the period operator needs to be adapted. The mapping defined in~\eqref{eq:period_section}
becomes 
\begin{align*}
\mathfrak{K}~:~\germ{\varepsilon,x,y}' & \longto\nfsec[][P^{\tau}y]\\
G & \longmapsto\persec[][\ell]\left(\per[][\ell]\left(G\right)\right)
\end{align*}
whose kernel coincides with $\onf\cdot\germ{\varepsilon,x,y}'$, \emph{i.e.
}the sequence of $\germ{\varepsilon}$-linear operators
\begin{align*}
0\longto\germ{\varepsilon,x,y}'\overset{\onf\cdot}{\longto}\germ{\varepsilon,x,y}'\overset{\mathfrak{K}}{\longto} & \nfsec[][P^{\tau}y]\longto0
\end{align*}
is exact. Up to this modification the temporal part of Realization
Theorem is established.

\bigskip{}

The most obvious reason why one must adapt the target space of the
section operator is computational. Proposition~\ref{prop:period_analytic_k=00003D1}
below recalls the formula for the period of the formal model $\fonf$
for $k=1$. For $xy^{m}\in\nfsec[][y]$, $m\in\nn$, it may happen
that $\widehat{\per}\left(xy^{m}\right)$ vanishes, exactly when $m\mu\in\zz_{\leq0}$.
This situation cannot happen if $\mu_{0}\notin\rr_{\leq0}$, of course.
Pre-composing $xy^{m}$ by $P^{\tau}\left(x\right)y$ yields
\begin{align*}
\widehat{\per}\left(xP^{m\tau}\left(x\right)y^{m}\right) & =\widehat{\per}\left(x^{m\tau\left(k+1\right)+1}y^{m}\right)+\OO{\varepsilon},
\end{align*}
and by hypothesis $m\left(\mu_{0}+\left(k+1\right)\tau\right)\notin\zz_{\leq0}$.
As already noticed, the presence of $P^{\tau}$ acts as a shift by
$\left(k+1\right)\tau$ on powers of $x$. Here it guarantees that
$\mathfrak{S}_{\ell}$ remains invertible. Notice that the map $\mathfrak{S}_{\ell}$
needs to undertake the same modification as in~\eqref{eq:tau_blow-up};
compare~\eqref{eq:cauchy-heine_period}. We will not go into further
details.

\subsection{Alternate normal forms}

The normal forms we propose in the Normalization Theorem are not strictly
speaking a generalization of~\cite{Loray,SchaTey}, which is what
we expected to accomplish in the first place and which we propose
as a conjecture.
\begin{conjecture}
\label{conj:alternate_normal_forms} Fix $k\in\nn$, a germ of holomorphic
function $\mu\in\germ{\varepsilon}$, and $\widehat{\tau}\in\zp$
such that $\mu_{0}+\widehat{\tau}\notin\rr_{\leq0}$. Any generic
convergent unfolding of a germ of saddle-node holomorphic vector field
with the formal invariant $\mu$ is orbitally conjugate to an unfolding
of the form
\begin{align*}
\fonf+y\widehat{R}\pp y~~~~~~~,~~~~\widehat{R}\in x\pol x_{<k}\left\{ x^{\widehat{\tau}}y\right\} .
\end{align*}
Such a form is unique up to conjugacy by linear maps $\left(\varepsilon,x,y\right)\mapsto\left(\varepsilon,x,c_{\varepsilon}y\right)$,
$c\in\germ{\varepsilon}^{\times}$.
\end{conjecture}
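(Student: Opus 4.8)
The plan is to reduce Conjecture~\ref{conj:alternate_normal_forms} to the already-proven Normalization and Uniqueness Theorems by a suitable partial blow-up, exactly in the spirit of Section~\ref{sec:tau}, but now allowing an arbitrary shift $\widehat\tau$ rather than the restricted shift $\tau(k+1)$. Concretely, given $\widehat\tau\in\zp$ with $\mu_0+\widehat\tau\notin\rr_{\leq0}$, one would like to pull back a normal form in the ``$\tau=0$'' regime along a map of the form $T~:~(\varepsilon,x,y)\longmapsto(\varepsilon,x,E_\varepsilon(x)y)$ where $E_\varepsilon$ is a function with $E_\varepsilon(x)\sim_{x\to\infty}x^{\widehat\tau}$ and whose zeros lie exactly at the roots of $P_\varepsilon$, so that the formal invariant is shifted by $\widehat\tau$ and the target functional space $x\pol x_{<k}\{x^{\widehat\tau}y\}$ appears as $T^*$ of the space $\nfsec[k][y]$. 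First I would carry out the formal bookkeeping: the substitution sends $\fonf$ to $P_\varepsilon\pp x+y\bigl(1+\mu_\varepsilon x^k+\tfrac{P_\varepsilon E_\varepsilon'}{E_\varepsilon}\bigr)\pp y$, and one needs $\tfrac{P_\varepsilon(x)E_\varepsilon'(x)}{E_\varepsilon(x)}=\widehat{Q}_\varepsilon(x)$ to be a polynomial of degree $k$ with $\widehat Q_\varepsilon(0)=0$ and $\widehat Q_\varepsilon(x)\sim\widehat\tau x^k$ — this is precisely the condition that $E_\varepsilon$ be, up to a constant, $\prod_{P_\varepsilon(z)=0}(x-z)^{m(z)}$ for integers $m(z)\geq0$ summing to $\widehat\tau$. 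So the first step is to fix such a choice of $E_\varepsilon$ (generically $\widehat\tau$ simple factors, with a continuous/holomorphic choice over each cell obtained by tracking roots), giving an alternate formal normal form of type~\eqref{eq:fnf_Q} with $Q_\varepsilon:=\widehat Q_\varepsilon$.

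Next I would invoke Remark~\ref{rem:remark_sectors} and Lemma~\ref{lem:preparation_automorphisms}~(2): the entire orbital realization machinery of Sections~\ref{sec:Geometric}--\ref{sec:Analytic} works \emph{verbatim} with the formal part~\eqref{eq:fnf_Q} in place of~\eqref{eq:fnf}, the sectors $V_\ell^j$ and cells $\mathcal{E}_\ell$ being governed by $P_\varepsilon$ alone, and tangent-to-identity symmetries still forcing the canonical parameter. Thus, given $\mu$ and a realizable orbital modulus $\mathfrak{m}$ (equivalently: one satisfying the compatibility condition, which by Lemma~\ref{lem:modulus_unchanged}~(1)-type reasoning is insensitive to the integer shift $\widehat\tau$ since it only affects linear parts $\exp\nf{2\ii\pi}{\lambda^j}$ through $\tfrac1{\widetilde\lambda^j}=\tfrac1{\lambda^j}+m(z)\in\tfrac1{\lambda^j}+\zz$), one obtains a convergent realization in the form $\fonf+y\bigl(\widehat Q_\varepsilon(x)-\mu_\varepsilon x^k+\sum_{j=1}^k x^j R_j(y)\bigr)\pp y$ holomorphic on a full neighborhood of $0\in\cc^{k+2}$. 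Pulling back by the inverse substitution $(\varepsilon,x,y)\mapsto(\varepsilon,x,E_\varepsilon^{-1}(x)y)$ converts this into $\fonf+y\widehat R\pp y$ with $\widehat R\in x\pol x_{<k}\{E_\varepsilon(x)y\}$; and since $E_\varepsilon(x)\sim x^{\widehat\tau}$ and differs from $x^{\widehat\tau}$ by a unit near $\infty$, a further harmless rescaling in $x$ (or re-absorption into the $R_j$'s, using that $E_\varepsilon(x)/x^{\widehat\tau}$ is invertible on a neighborhood of the relevant disc) puts $\widehat R$ into $x\pol x_{<k}\{x^{\widehat\tau}y\}$ as required. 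The analogue of Lemma~\ref{lem:modulus_unchanged}~(2) — that $T^*$ preserves the orbital modulus $\mathfrak{m}$ — is proved exactly as in Section~\ref{subsec:modulus_unchanged}, comparing canonical sectorial first integrals $H^j(x,y)=E_\varepsilon(x)y\exp N^j$ and using $\prod_{P_\varepsilon(z)=0}(x-z)^{-1/\widetilde\lambda^j}=\widetilde E(x)E_\varepsilon(x)$ to check the transition maps are unchanged.

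For uniqueness, I would argue as in Section~\ref{sec:tau}: a local fibered conjugacy $\Psi$ between two such alternate normal forms (fibered thanks to Corollary~\ref{cor:formal_symmetries}~(2)) is transported by $T^*$ to a fibered map between the corresponding forms of type~\eqref{eq:fnf_Q}; the obstruction is again that $T(E\cap\{\varepsilon=\mathrm{cst}\})$ is not a uniform neighborhood of $(0,0)$, cured by extending $\Psi$ along leaves of $\fol{\varepsilon}$ to the larger domain $D(r)=\bigcup_\varepsilon\{\,|x|<\rho,\ |E_\varepsilon(x)y|<r\,\}$ — the exact analogue of Lemma~\ref{lem:saturated_contains_separatrices}, whose proof (a Gronwall estimate on $\phi(t)=|E_\varepsilon(x(t))y(t)|$, with the bound $|\widehat Q_\varepsilon(x)+R_\varepsilon|<\tfrac12$ on $\rho\disc$ and the detour trick when $x(t)$ exits $\rho\disc$) goes through unchanged since only $\widehat Q_\varepsilon(0)=0$ and the degree bound are used. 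Then the Uniqueness Theorem in the coordinates $(x,E_\varepsilon(x)y)$ gives $\Psi=c^*\id$ with $c\in\germ{\varepsilon}^\times$. The step I expect to be the genuine obstacle is none of the above soft transport arguments but rather the assertion that the \emph{section of the period operator} still lands in the smaller space $x\pol x_{<k}\{x^{\widehat\tau}y\}$ when $\widehat\tau$ is \emph{not} a multiple of $k+1$: in the proof of Proposition~\ref{prop:section_period_cellular} the growth control near $x=\infty$ uses crucially that multiplying by $P_\varepsilon^\tau$ shifts powers of $x$ by the \emph{full} jump $(k+1)\tau$, so the resulting $Q(x,y)$ is $\OO{x^k}$; with a generic $E_\varepsilon$ one gets $Q=\OO{x^k}$ still (degree of $E_\varepsilon$ is $\widehat\tau$, matching), but the cokernel computation $\widehat{\per}(x^{a}\,E_\varepsilon^{m}(x)\,y^m)\neq0$ must be re-examined, and it is exactly here — showing $\widehat{\per}|_{x\pol x_{<k}\{x^{\widehat\tau}y\}}$ is still injective for small $\varepsilon$ via the limiting operator $\widehat{\per}_0$ and \cite[Proposition 2]{Tey-ExSN} — that a new input (or a new obstruction) may appear. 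This is precisely why the statement is offered as a conjecture rather than a theorem, and in a complete write-up I would isolate this point and either prove the injectivity directly from the explicit formula (Proposition~\ref{prop:period_analytic_k=00003D1} for $k=1$, using $m(\mu_0+\widehat\tau)\notin\zz_{\leq0}$) or flag it as the remaining gap.
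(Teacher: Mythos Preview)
The statement you are attempting to prove is a \emph{conjecture} in the paper, not a theorem: there is no proof to compare against. The authors explicitly write that they ``almost managed to ascertain both the geometric normalization and the cellular realization in that form'' but ``encountered difficulties of a technical nature''. So any honest proposal must either close those gaps or identify them precisely.

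Your outline correctly transports most of the Section~\ref{sec:tau} machinery (Lemma~\ref{lem:preparation_automorphisms}, the modulus-preservation argument of Section~\ref{subsec:modulus_unchanged}, the saturation Lemma~\ref{lem:saturated_contains_separatrices}), and you rightly flag the injectivity of $\widehat{\per}$ on the candidate section space as a genuine issue. But there are two further gaps you underestimate. First, your factor $E_\varepsilon=\prod_{P_\varepsilon(z)=0}(x-z)^{m(z)}$ with $\sum m(z)=\widehat\tau$ is \emph{cell-dependent} whenever $\widehat\tau\notin(k+1)\zz$: the exponents $m(z)$ cannot all be equal, so choosing them requires labelling the roots, which is only possible over a cell and suffers monodromy around $\Delta_k$. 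The resulting normal forms on overlapping cells have no reason to agree, and your gluing argument (which relies on uniqueness for a \emph{fixed} alternate formal part) does not apply when the formal parts themselves differ. Second, the step you call a ``harmless rescaling'' from the space $x\pol x_{<k}\{E_\varepsilon(x)y\}$ to $x\pol x_{<k}\{x^{\widehat\tau}y\}$ is not harmless at all: the ratio $E_\varepsilon(x)/x^{\widehat\tau}$ has a pole of order $\widehat\tau$ at $x=0$ and zeros at the nonzero roots of $P_\varepsilon$, so it is a unit near $\infty$ but certainly not on any disc containing $P_\varepsilon^{-1}(0)$. Re-absorbing this ratio into the $R_j$'s destroys holomorphy near the origin. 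This is exactly the obstacle that forces the paper to use $P_\varepsilon^\tau$ (which is $\varepsilon$-analytic and symmetric in the roots) rather than $x^{\widehat\tau}$, and it is the reason the statement remains a conjecture.
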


(A similar conjecture can be stated for the temporal part.) This conjecture
is very likely to be true as we almost managed to ascertain both the
geometric normalization and the cellular realization in that form.
In both questions we encountered difficulties of a technical nature,
which can surely be overcome by bringing in tedious estimates. 

\section{\label{sec:Bernoulli}Bernoulli unfoldings}

The primary aim of this section is to establish that the compatibility
condition is not trivially satisfied by proving the Parametrically
Analytic Orbital Moduli Theorem. The most difficult direction is~(1)$\Rightarrow$(2).
The whole proof is geared toward using rigidity results of Abelian
finitely generated pseudogroups $G<\diff[\cc,0]$. Let us briefly
explain how Abelian pseudogroups come into consideration here. Elements
$\psi_{\ell}\left[\gamma\right]$ and $\psi_{\widetilde{\ell}}\left[\gamma\right]$
in overlapping cellular necklace dynamics are conjugate by the transition
mapping $\delta_{\ell\leftarrow\ell}$ coming from the compatibility
condition. The parametric holomorphy of $\mathfrak{m}$ forces the
equality $\psi_{\ell}\left[\Gamma\right]=\psi_{\widetilde{\ell}}\left[\Gamma\right]$
for well-chosen loops $\Gamma$, from which stems the commutativity
relation 
\begin{align*}
\psi_{\ell}\left[\Gamma\right]\circ\delta_{\widetilde{\ell}\leftarrow\ell} & =\delta_{\widetilde{\ell}\leftarrow\ell}\circ\psi_{\ell}\left[\Gamma\right].
\end{align*}
Such pseudogroups are completely understood and form now a classical
topic of complex dynamical systems, we refer for instance to~\cite{CerMou,LoRigid}.
``Bernoulli diffeomorphisms'' (defined below) play a central role
in this theory as archetypal examples of solvable and Abelian pseudogroups.

\subsection{Bernoulli diffeomorphism}
\begin{defn}
We say that $\psi\in\diff[\cc,0]$ is a \textbf{Bernoulli} \textbf{diffeomorphism
of index $d\in\nn$} if there exist $\alpha,~\beta\in\cc$ with $\alpha\neq0$
such that 
\begin{align*}
\psi\left(h\right) & =\frac{\alpha h}{\left(1+\beta h^{d}\right)^{\nf 1d}}=:\ber[d]{\alpha}{\beta}\left(h\right).
\end{align*}
We define $\bber$ the set of all such algebraic functions, regardless
of the special values of $\alpha$ and $\beta$ (these are in particular
germs of analytic diffeomorphisms at the origin). Of course when $d\neq\widetilde{d}$
the intersection $\bber\cap\bber[\widetilde{d}]$ coincides with the
group $\tx{GL}_{1}\left(\cc\right)$.
\end{defn}

Let us quickly state without proof the next basic property.
\begin{lem}
\label{lem:Bernoulli_properties}The set $\bber$ is a group equipped
with a semi-direct law. More precisely
\begin{align*}
\ber[d]{\alpha}{\beta}\circ\ber[d]{\widetilde{\alpha}}{\widetilde{\beta}} & =\ber[d]{\alpha\widetilde{\alpha}}{\beta\widetilde{\alpha}^{d}+\widetilde{\beta}}.
\end{align*}
\end{lem}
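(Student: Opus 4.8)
The statement is purely computational: given the definition $\ber[d]{\alpha}{\beta}(h) = \alpha h (1+\beta h^d)^{-1/d}$, we must verify that the composition of two such maps is again of this form and identify the parameters. The plan is to compute $\ber[d]{\alpha}{\beta}\circ\ber[d]{\widetilde\alpha}{\widetilde\beta}$ directly and simplify.

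First I would set $g = \ber[d]{\widetilde\alpha}{\widetilde\beta}(h) = \widetilde\alpha h(1+\widetilde\beta h^d)^{-1/d}$ and compute $g^d = \widetilde\alpha^d h^d(1+\widetilde\beta h^d)^{-1}$. Then
\begin{align*}
\ber[d]{\alpha}{\beta}(g) & = \alpha g\,(1+\beta g^d)^{-1/d} = \alpha\widetilde\alpha h(1+\widetilde\beta h^d)^{-1/d}\left(1 + \beta\widetilde\alpha^d h^d(1+\widetilde\beta h^d)^{-1}\right)^{-1/d}.
\end{align*}
Inside the last parenthesis, putting over a common denominator gives $\bigl((1+\widetilde\beta h^d) + \beta\widetilde\alpha^d h^d\bigr)(1+\widetilde\beta h^d)^{-1} = (1 + (\beta\widetilde\alpha^d + \widetilde\beta)h^d)(1+\widetilde\beta h^d)^{-1}$, so its $(-1/d)$-th power is $(1+(\beta\widetilde\alpha^d+\widetilde\beta)h^d)^{-1/d}(1+\widetilde\beta h^d)^{1/d}$. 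Multiplying, the two factors $(1+\widetilde\beta h^d)^{\pm 1/d}$ cancel, leaving $\alpha\widetilde\alpha h\,(1 + (\beta\widetilde\alpha^d + \widetilde\beta)h^d)^{-1/d} = \ber[d]{\alpha\widetilde\alpha}{\beta\widetilde\alpha^d+\widetilde\beta}(h)$, as claimed. The group structure (closure, the identity being $\ber[d]{1}{0}$, and inverses) follows at once from this formula, and the semi-direct product description is read off by noting that $(\alpha,\beta)\mapsto(\alpha\widetilde\alpha,\beta\widetilde\alpha^d+\widetilde\beta)$ is the multiplication law of $\cc^\times\ltimes\cc$ with $\cc^\times$ acting on $\cc$ by $\alpha\cdot\beta := \alpha^{-d}\beta$ (or its inverse, depending on the order convention).

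The only subtlety — and the sole point requiring a word of care — is the choice of branch for the $d$-th roots: all the identities above are identities of germs of analytic functions at $0$, where every expression $(1+ (\cdots) h^d)^{-1/d}$ is the branch equal to $1$ at $h=0$. Since all intermediate quantities are of the form $1 + \OO{h^d}$, the principal branch is unambiguously defined and the cancellations $(1+\widetilde\beta h^d)^{1/d}\cdot(1+\widetilde\beta h^d)^{-1/d}=1$ are legitimate as germs. There is no real obstacle here; this is the routine verification the excerpt explicitly flags as stated "without proof", so at most one would include the short computation above for completeness.
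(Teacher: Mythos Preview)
Your proposal is correct; the paper explicitly states this lemma ``without proof'' as a basic property, and your direct computation is exactly the routine verification the authors omit. There is nothing to compare: the paper gives no argument at all, and yours is the obvious one.
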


The definition of Bernoulli diffeomorphisms is motivated by the following
computation.
\begin{lem}
\label{lem:bernoulli_unfold_with_bernoulli_modulus}The necklace dynamics
of an unfolding of Bernoulli vector field $\onf=\fonf+y^{d+1}r\left(x\right)\pp y$
consists in Bernoulli diffeomorphisms of index $d$. Moreover
\begin{align*}
\mathfrak{m}\left(\onf\right) & =-\frac{1}{d}\log\left(1+2\ii\pi d\widehat{\per}\left(y^{d}r\right)\right).
\end{align*}
\end{lem}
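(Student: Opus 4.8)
The plan is to prove the two assertions of Lemma~\ref{lem:bernoulli_unfold_with_bernoulli_modulus} by explicit integration of the Bernoulli equation, exploiting the fact that the change of variable $w:=y^{-d}$ linearizes the fibre dynamics. First I would write $\onf=\fonf+y^{d+1}r(x)\pp y$ and pass to the chart $w=y^{-d}$, in which the underlying non-autonomous equation $P_\varepsilon(x)y'=y(1+\mu_\varepsilon x^k)+y^{d+1}r(x)$ becomes the \emph{linear} equation $P_\varepsilon(x)w'=-d(1+\mu_\varepsilon x^k)w-d\,r(x)$. The homogeneous part is exactly (up to the factor $-d$) the equation defining the model first integral $\widehat H^j$ of $\fonf$, so a particular solution is obtained by one quadrature; concretely, if $\widehat H^j(x,y)=y\,E^j(x)$ with $E^j(x)=\exp\int^x-\frac{1+\mu z^k}{P(z)}\dd z$ as in~\eqref{eq:model_first_integral}, then $(\widehat H^j)^{-d}=E^j(x)^{-d}y^{-d}$ solves the homogeneous equation, and the canonical sectorial first integral of $\onf$ over $V_\ell^j$ has the form
\begin{align*}
H^j(x,y)^{-d} & =\bigl(\widehat H^j(x,y)\bigr)^{-d}-d\int_{*}^{x}\frac{r(z)}{P(z)}\,E^j(z)^{-d}\,\dd z\,,
\end{align*}
the lower bound of integration being chosen (inside the node part adhering to $V_\ell^j$) so that $N^j\to0$ along $\{y=0\}$, matching the normalization of Corollary~\ref{cor:secto_normalization_estimate}. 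In particular $H^j=\widehat H^j(1+\OO y)$, so this really is the canonical sectorial first integral.

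Next I would compute the transition maps. On a saddle part $\sad V{}{j,}$ the two integrals defining $(H^j)^{-d}$ and $(H^{j+1})^{-d}$ differ only in the lower bound, hence $(H^{j+1})^{-d}-(H^j)^{-d}$ is a \emph{constant} multiple of... more precisely, using $\widehat H^{j+1}=\widehat H^j\exp\nf{2\ii\pi\mu}k$ from~\eqref{eq:formal_transition_map}, a short manipulation gives $(H^{j+1})^{-d}=\exp(-2\ii\pi\mu/k)\,\bigl((H^j)^{-d}-d\,c^j\bigr)$ for some constant $c^j\in\cc$, i.e. $H^{j+1}=\sad\psi{}{j,}\circ H^j$ where $\sad\psi{}{j,}(h)=\ber[d]{\exp(2\ii\pi\mu/k)}{2\ii\pi d\,c^j}(h)$ is a Bernoulli diffeomorphism of index $d$ in the sense of Definition~\ref{def:section} — wait, of the Bernoulli definition. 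The gate maps $\gat\psi{}{j,}$ are linear $h\mapsto\nu^j h$, hence also Bernoulli of index $d$; since $\bber$ is a group under composition (Lemma~\ref{lem:Bernoulli_properties}), every $\psi_\ell[\mathfrak w]$ is Bernoulli of index $d$, which is the first assertion. It remains to identify the constant $c^j$ with the period $\widehat\per(y^dr)$: the saddle-part transition is governed by the jump of the sectorial solution of $\onf\cdot N^j=-R$ across consecutive sectors, and unwinding the chart change $w=y^{-d}$ shows that the relevant jump in $w$-coordinates is exactly the jump of the sectorial solution of $\fonf\cdot(\,\cdot\,)=-d\,y^d r(x)$ times a factor, that is $-2\ii\pi d\,\widehat\per(y^dr)$ evaluated in the first-integral variable. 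Comparing with $\sad\psi{}{j,}(h)=h\exp(\nf{2\ii\pi\mu}k+\mathfrak m_\ell^{j}(h))$ and using $\ber[d]{\alpha}{\beta}(h)=\alpha h(1+\beta h^d)^{-1/d}$ yields $\exp(\mathfrak m_\ell^{j}(h))=(1+2\ii\pi d\,\widehat\per(y^dr)(h))^{-1/d}$, hence $\mathfrak m=-\frac1d\log(1+2\ii\pi d\,\widehat\per(y^dr))$.

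The main obstacle I anticipate is bookkeeping the normalizations: making sure that the lower integration bounds, the determinations of $E^j(x)^{-d}$ and of the fractional power $(\cdot)^{1/d}$, and the factor conventions in the definition of $\widehat\per$ (Definition~\ref{def:period_operator}, including the $\frac1{2\ii\pi}$) are all consistent, so that the constant appearing in the Bernoulli transition is \emph{precisely} $2\ii\pi d\,\widehat\per(y^dr)$ and not merely proportional to it. A secondary point is to check that the quadrature defining $(H^j)^{-d}$ converges on the unbounded squid sector — but this is immediate from Lemma~\ref{lem:integrable_first-integral}~(2), since $r$ is polynomial and $E^j(z)^{-d}=\widehat H^j(z,1)^{-d}$ decays appropriately along the saddle directions when $\mu_0\notin\rr_{\leq0}$ (the general case $\tau>0$ reducing to this one via the substitution~\eqref{eq:tau_blow-up} as in Lemma~\ref{lem:modulus_unchanged}). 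Everything else is a direct, if slightly tedious, computation that does not require the heavy sectorial machinery, precisely because the Bernoulli equation is explicitly solvable.
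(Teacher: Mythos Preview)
Your approach is correct and essentially the same as the paper's. The paper writes the ansatz $H^{j}=\widehat H^{j}\big(1-d f^{j}(x)y^{d}\big)^{-1/d}$ and reduces to the cohomological equation $\fonf\cdot\big(y^{d}f^{j}\big)=-y^{d}r$, which is precisely your substitution $w=y^{-d}$ seen from the other side: since $\big(H^{j}\big)^{-d}=\big(\widehat H^{j}\big)^{-d}-d\,f^{j}(x)\,E^{j}(x)^{-d}$ and $g^{j}:=f^{j}E^{-d}$ satisfies $P(g^{j})'=-rE^{-d}$, the two computations are literally the same (note a sign slip in your displayed formula: it should be $+d\int$, not $-d\int$). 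The paper's phrasing via $\fonf\cdot(y^{d}f^{j})=-y^{d}r$ has the small advantage that the identification of the transition constant with $\widehat\per(y^{d}r)$ is immediate from Definition~\ref{def:period_operator}, whereas in your write-up this step (``unwinding the chart change shows \ldots'') is the only place that still needs a line of care; once you observe $y^{d}(f^{j+1}-f^{j})=-2\ii\pi\,\widehat\per^{j}(y^{d}r)\circ\widehat H^{j}$, the Bernoulli formula and the expression for $\mathfrak m$ drop out exactly as you indicate.
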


\begin{proof}
As in~\cite[Section 3.3]{Tey-ExSN} one tries and finds an expression
for the sectorial first integrals $H^{j}$ in the form 
\begin{align*}
H^{j}\left(x,y\right) & =\frac{\widehat{H}^{j}\left(x,y\right)}{\left(1-df^{j}\left(x\right)y^{d}\right)^{\nf 1d}}.
\end{align*}
Because 
\begin{align*}
\onf\cdot H^{j} & =\frac{\widehat{H}^{j}}{\left(1-df^{j}\left(x\right)y^{d}\right)^{\nf 1d+1}}\left(\left(1-df^{j}\left(x\right)y^{d}\right)y^{d}r\left(x\right)+\onf\cdot\left(f^{j}\left(x\right)y^{d}\right)\right)\\
 & =\frac{\widehat{H}^{j}}{\left(1-df^{j}\left(x\right)y^{d}\right)^{\nf 1d+1}}\left(y^{d}r\left(x\right)+\widehat{X}\cdot\left(f^{j}\left(x\right)y^{d}\right)\right),
\end{align*}
then $H^{j}$ is a first integral for $\onf$ if and only if 
\begin{align*}
\fonf\cdot\left(y^{d}f^{j}\left(x\right)\right) & =-y^{d}r\left(x\right).
\end{align*}
This equation admits a formal solution (Lemma~\ref{lem:formal_cohomological})
because $\fonf$ is linear in the $y$-variable, and the $f^{j}\left(x\right)y^{d}$
are the sectorial solutions of this equation (Theorem~\ref{thm:squid_and_solve}).
In fact $\left(x,y\right)\mapsto\left(x,\frac{y}{\left(1-dy^{d}f^{j}\left(x\right)\right)^{1/d}}\right)$
is the canonical sectorial normalization of $\mathcal{X}$.

First notice that by definition of the period operator for the formal
model (Definition~\ref{def:period_operator}) we have for all $\left(x,y\right)\in\sad V{}{j,}\times\cc$:
\begin{align*}
y^{d}f^{j+1}\left(x\right)-y^{d}f^{j}\left(x\right) & =-\widehat{\per}^{j}\left(y^{d}r\right)\left(\widehat{H}^{j}\left(x,y\right)\right).
\end{align*}
From the special form of $H^{j}$ we deduce
\begin{align*}
\frac{H^{j+1}}{H^{j}\exp\nf{2\ii\pi\mu}k} & =\left(1-dy^{d}\frac{f^{j+1}-f^{j}}{1-df^{j}y^{d}}\right)^{-\nf 1d}=\exp\left(-\frac{1}{d}\log\left(1+d2\ii\pi\frac{\widehat{\per}^{j}\left(y^{d}r\right)\left(\widehat{H}^{j}\right)}{\left(\widehat{H}^{j}\right)^{d}}\times\left(H^{j}\right)^{d}\right)\right).
\end{align*}
Because $\widehat{H}$ is linear in the $y$-variable we know that
$\widehat{\per}\left(y^{d}r\right)\left(h\right)=\alpha h^{d}$ for
some complex coefficients $\alpha=\left(\alpha_{\varepsilon}^{j}\right)_{j\in\zsk}$.
Hence $\frac{\widehat{\per}\left(y^{d}r\right)\left(\widehat{H}\right)}{\widehat{H}^{d}}\times H^{d}=\widehat{\per}\left(y^{d}r\right)\left(H\right)$.
The rest follows from~\eqref{eq:invar_transition_maps}.
\end{proof}

\subsection{Holomorphic modulus: proof of the Parametrically Analytic Orbital
Moduli Theorem}

The direction (2)$\Rightarrow$(1) is a consequence of Lemma~\ref{lem:bernoulli_unfold_with_bernoulli_modulus}
above and of Proposition~\ref{prop:period_analytic_k=00003D1} below
stating that the model period operator $\widehat{\per}\left(y^{d}r\right)$
is analytic in the parameter when $k=1$ and $d\mu\in\zz$.

Conversely let us suppose that $\left(\mu,\mathfrak{m}\right)$ is
realizable and that $\mathfrak{m}_{\ell}=\phi|_{\mathcal{E}_{\ell}\times\neigh}$
for some holomorphic $k$-tuple 
\begin{align*}
\phi & =\left(\phi^{j}\right)_{j}\in h\germ{\varepsilon,h}^{k}.
\end{align*}
If $\phi=0$ then $\mathfrak{m}=\mathfrak{m}\left(\fonf\right)$ (Theorem~\ref{thm:classification}),
so we can as well assume that $\phi\neq0$. We first establish that
$k=1$ by contraposition, and then present the case $k=1$. That case
can be found originally in~\cite[Proposition 6]{TeySurvey} for $\mu=0$.
We generalize here the result to arbitrary $\mu$.

\bigskip{}

Recall that for $c\in\cc^{\times}$ we write 
\begin{align*}
L_{c}~:~ & h\longmapsto ch.
\end{align*}

\subsubsection{Reduction to the case $k=1$}

Assume then that $k>1$ and prove $\phi=\left(\phi^{j}\right)_{j}=0$.
For each $j\in\zsk$ there exists a cell $\mathcal{E}_{\ell}$ for
which $\sad x{}{j,}$ is attached to only one saddle sector $\sad V{}{j,}$.
Let $\nod x{}{j,}$ be the node point attached to $\sad x{}{j,}$
in the boundary of $\sad V{}{j,}$. The cell $\mathcal{E}_{\ell}$
self-intersects around a regular part of $\Delta_{k}$ in such a way
that the nature of the points $\sad x{}{j,}$ and $\nod x{}{j,}$
is exchanged when seen from one part or the other of the intersection.
With the conventions discussed in Remark~\ref{rem:compatibility_self-intersection},
by this we mean
\begin{align*}
\begin{cases}
\sad{\overline{x}}{}{j,}= & \nod{\widetilde{x}}{}{j,}\\
\nod{\overline{x}}{}{j,}= & \sad{\widetilde{x}}{}{j,}
\end{cases} & .
\end{align*}
We refer to Figures~\ref{fig:auto_inter} and~\ref{fig:analytic_invar_k>1}.

Fix a base-point and base-sector $x_{*}\in V^{j}\backslash\rho_{\varepsilon}\ww{D~}$
and take $\gamma^{-}$, $\gamma^{+}$ two loops based at $x_{*}$
of index $1$ around respectively $\nod x{\overline{\varepsilon}}{}$
and $\sad x{\overline{\varepsilon}}{}$, and index $0$ with respect
to the other roots of $P$ as in Figure~\ref{fig:analytic_invar_k>1}.
Let $\Gamma:=\gamma^{+}\gamma^{-}$ be a loop encircling only $\left\{ \nod x{}{},\sad x{}{}\right\} $.
The compatibility condition ensures the existence of a tangent-to-identity
map 
\begin{align*}
\delta & :=\delta_{\ell\leftarrow\ell}
\end{align*}
which conjugates the respective necklace dynamics based at $x_{*}$.
In particular
\begin{align}
\delta^{*}\overline{\psi}\left[\gamma^{\pm}\right] & =\widetilde{\psi}\left[\gamma^{\pm}\right],\label{eq:dynamics_conjugacy_bernoulli}\\
\delta^{*}\overline{\psi}\left[\Gamma\right] & =\widetilde{\psi}\left[\Gamma\right].\nonumber 
\end{align}

\begin{figure}
\hfill{}\subfloat[In the parameter $\overline{\varepsilon}$]{\includegraphics[width=6cm]{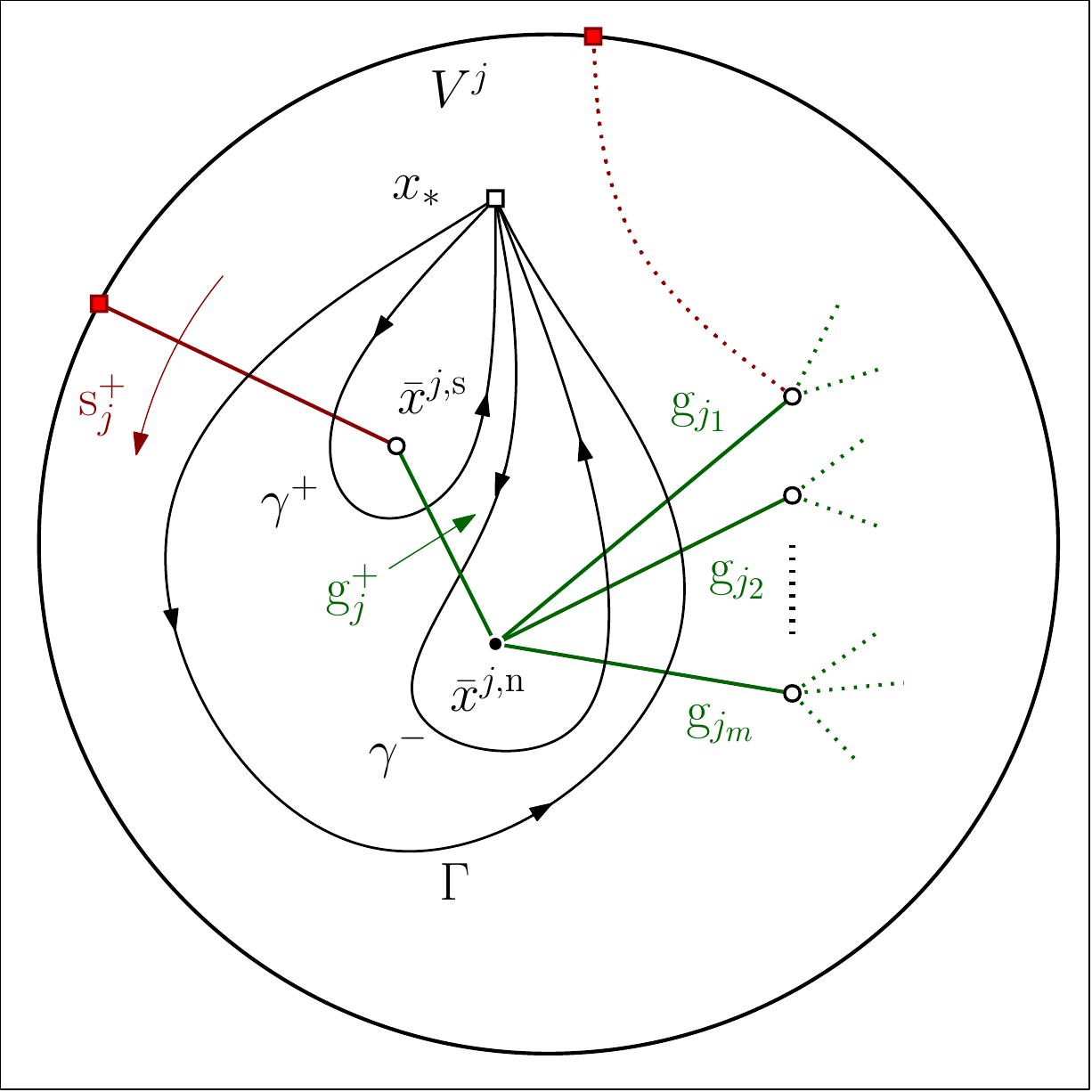}

}\hfill{}\subfloat[In the parameter $\widetilde{\varepsilon}$]{\includegraphics[width=6cm]{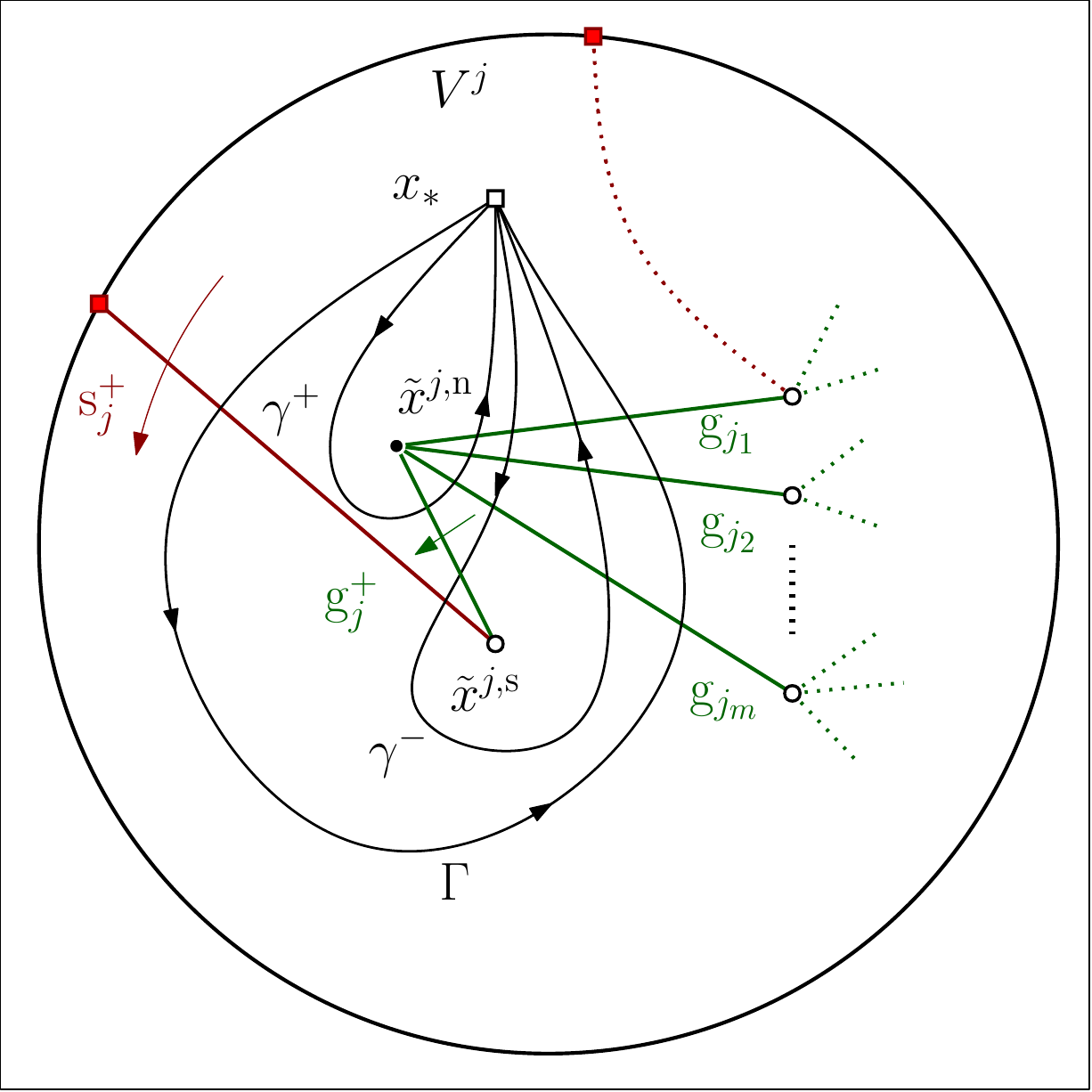}

}\hfill{}

\caption{\label{fig:analytic_invar_k>1}The construction involved in Lemma~\ref{lem:analytic_invar_thus_commute}.}
\end{figure}
\begin{lem}
\label{lem:analytic_invar_thus_commute}We follow the notations of
Figure~\ref{fig:analytic_invar_k>1}. Let $k>1$, and let $m\geq1$
be the number of singular points different from $\sad x{}{j,}$ and
$\nod x{}{j,}$. Each passage of a gate by $\Gamma$ in the figure
yields a linear gate map $L_{\overline{\nu}^{j_{p}}}$ (\emph{resp}.
$L_{\widetilde{\nu}^{j_{p}}}$) for some $\overline{\nu}^{j_{p}}\in\cc^{\times}$
(\emph{resp.} $\widetilde{\nu}^{j_{p}}\in\cc^{\times}$) and $1\leq p\leq m$.
We also set $\gat{\psi}{}{j,}=:L_{\nu^{j}}$.

\begin{enumerate}
\item The equality $\overline{\psi}\left[\Gamma\right]=\widetilde{\psi}\left[\Gamma\right]$
holds, defining a germ $\Delta\in\diff[\cc,0]$. 
\item ~
\begin{align*}
\Delta'\left(0\right)\exp\frac{-2\ii\pi\mu}{k} & =\prod_{p=1}^{m}\overline{\nu}^{j_{p}}=\prod_{p=1}^{m}\widetilde{\nu}^{j_{p}}.
\end{align*}
\item ~ 
\begin{align*}
\delta\circ\Delta & =\Delta\circ\delta.
\end{align*}
\end{enumerate}
\end{lem}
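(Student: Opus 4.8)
The plan is to verify the three claims of Lemma~\ref{lem:analytic_invar_thus_commute} in order, each following rather quickly from the set-up already in place.

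First I would establish~(1). The loop $\Gamma$ encircles only the two singular points $\nod x{}{}$ and $\sad x{}{}$, and by hypothesis $\mathfrak{m}_{\ell}=\phi|_{\mathcal{E}_{\ell}\times\neigh}$ with $\phi\in h\germ{\varepsilon,h}^{k}$ holomorphic on a \emph{full} neighborhood of $0$ in parameter space. Along the word $\mathfrak{w}\left(\Gamma\right)$ the only non-linear letters are the saddle letters $\tx s_{j_{p}}^{\pm}$, whose associated diffeomorphisms $\sad{\psi}{}{j_{p},}\left(h\right)=h\exp\left(\nf{2\ii\pi\mu}k+\phi_{\varepsilon}^{j_{p}}\left(h\right)\right)$ depend analytically on $\varepsilon$ across the self-intersection locus $\Delta_{k}$, while the linear gate letters depend analytically on $\varepsilon$ by Lemma~\ref{lem:gate_is_holomorphic}. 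Hence $\overline{\psi}\left[\Gamma\right]$ and $\widetilde{\psi}\left[\Gamma\right]$ are the two determinations of one and the same germ $\Delta_{\varepsilon}$ obtained by analytic continuation around the regular part of $\Delta_{k}$; since the defining functions are honestly holomorphic there (no branching), the two determinations agree. This is the crux: holomorphy of $\mathfrak{m}$ is exactly what kills the monodromy that would otherwise distinguish $\overline{\psi}\left[\Gamma\right]$ from $\widetilde{\psi}\left[\Gamma\right]$.

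Next, claim~(2) is a linear-part computation. Differentiating the composition defining $\Delta=\psi\left[\Gamma\right]$ at $0$, each saddle letter contributes a factor $\exp\nf{2\ii\pi\mu}k$ and each of the $m$ gate letters crossed contributes its multiplier $\nu^{j_{p}}$; one checks from Figure~\ref{fig:analytic_invar_k>1} that $\Gamma$ crosses exactly $k$ saddle boundaries (one full turn of the necklace) so the saddle contributions multiply to $\exp2\ii\pi\mu$, whence $\Delta'\left(0\right)=\exp\left(\nf{2\ii\pi\mu}k\cdot k'\right)\prod_p\nu^{j_p}$ with the bookkeeping arranged so the stated formula $\Delta'\left(0\right)\exp\frac{-2\ii\pi\mu}{k}=\prod_{p=1}^m\nu^{j_p}$ holds; the equality of the $\overline{\nu}$-product and the $\widetilde{\nu}$-product is then forced by~(1) together with Lemma~\ref{lem:gate_is_holomorphic} (the product of all ramifications around a fixed singular point is the same intrinsic quantity $\exp\left(-2\ii\pi\frac{1+\mu_{\varepsilon}z^k}{P_{\varepsilon}'(z)}\right)$, continued analytically through $\Delta_k$).

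Finally, claim~(3) combines the compatibility relation~\eqref{eq:dynamics_conjugacy_bernoulli}, namely $\delta^{*}\overline{\psi}\left[\Gamma\right]=\widetilde{\psi}\left[\Gamma\right]$, with~(1): since $\overline{\psi}\left[\Gamma\right]=\widetilde{\psi}\left[\Gamma\right]=\Delta$, the relation reads $\delta^{-1}\circ\Delta\circ\delta=\Delta$, i.e. $\delta\circ\Delta=\Delta\circ\delta$. So $\Delta$ commutes with a tangent-to-identity germ $\delta\neq\id$ (it is non-trivial because the two cellular decompositions are genuinely non-equivalent), placing us in the classical rigidity setting for Abelian pseudogroups invoked in Section~\ref{sec:Bernoulli}. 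I expect the only genuine subtlety to be in~(1): one must make precise that the self-intersection of $\mathcal{E}_{\ell}$ occurs around a \emph{regular} part of $\Delta_k$ where analytic continuation of $P_{\varepsilon}^{-1}(0)$ simply swaps the labels of the two roots $\nod x{}{}$ and $\sad x{}{}$, and that the word $\mathfrak{w}_{\ell}\left(\Gamma\right)$ transforms under this swap precisely into the word $\mathfrak{w}_{\widetilde{\ell}}\left(\Gamma\right)$ — a direct inspection of Figure~\ref{fig:analytic_invar_k>1} and Remark~\ref{rem:necklace_dynamics}~(2). Everything else is routine.
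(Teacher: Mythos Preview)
Your argument for~(1) has a genuine gap. You claim that the gate multipliers ``depend analytically on $\varepsilon$'' and hence agree on the self-intersection, but Lemma~\ref{lem:gate_is_holomorphic} only gives holomorphy \emph{on the cell}, not through $\Delta_{k}$. On the self-intersection $\mathcal{E}_{\ell}^{\cap}$ the two squid-sector decompositions are genuinely different (the roles of $\sad x{}{j,}$ and $\nod x{}{j,}$ swap), so the individual gate multipliers $\overline{\nu}^{j_{p}}$ and $\widetilde{\nu}^{j_{p}}$ are \emph{not} equal in general; only certain products can coincide. Your analytic-continuation argument cannot see this, because the combinatorial word $\mathfrak{w}\left(\Gamma\right)$ itself jumps when the decomposition changes.

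The paper's route is quite different: it factors $\overline{\psi}\left[\Gamma\right]=L_{\overline{c}}\circ\sad{\psi}{}{j,}$ and $\widetilde{\psi}\left[\Gamma\right]=L_{\widetilde{c}}\circ\sad{\psi}{}{j,}$ (the \emph{single} saddle map $\sad{\psi}{}{j,}$ is indeed the same on both sides, thanks to the holomorphy of $\phi^{j}$), and then proves $\overline{c}=\widetilde{c}$ by applying the compatibility relation~\eqref{eq:dynamics_conjugacy_bernoulli} to $\gamma^{+}$ and $\gamma^{-}$ \emph{separately}. The point is that $\widetilde{\psi}\left[\gamma^{+}\right]$ is purely linear (the encircled point is a node in the $\widetilde{\varepsilon}$-decomposition), so comparing linear parts of $\delta$-conjugate maps forces $\widetilde{c}=\overline{\nu}^{j}\widetilde{\nu}^{j}\exp\nf{2\ii\pi\mu}k$, and the symmetric computation for $\gamma^{-}$ gives the same value for $\overline{c}$. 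Thus~(1) genuinely \emph{uses} the compatibility condition; it is not a consequence of analyticity alone. Your bookkeeping in~(2) is also off: $\Gamma$ crosses exactly \emph{one} saddle boundary (the sector $\sad V{}{j,}$), not $k$ of them, which is why the formula reads $\Delta'\left(0\right)\exp\frac{-2\ii\pi\mu}{k}=\prod_{p}\nu^{j_{p}}$. Your argument for~(3) is correct.
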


\begin{proof}
Observe that 
\begin{align}
\widetilde{\psi}\left[\gamma^{+}\right] & =L_{\nf{\widetilde{c}}{\widetilde{\nu}^{j}}}~~~~~,~~\widetilde{c}:=\prod_{p=1}^{m}\widetilde{\nu}^{j_{p}}.\label{eq:linear_modulus_analytic}
\end{align}
The linear part is invariant by conjugacy so that $\widetilde{c}=\overline{\nu}^{j}\widetilde{\nu}^{j}\exp\nf{2\ii\pi\mu}k$.
Similarly, considering $\gamma^{-}$ yields $\overline{c}=\overline{\nu}^{j}\widetilde{\nu}^{j}\exp\nf{2\ii\pi\mu}k$.
Hence
\begin{align*}
\overline{c} & =\prod_{p=1}^{m}\overline{\nu}^{j_{p}}=\overline{\nu}^{j}\widetilde{\nu}^{j}\exp\nf{2\ii\pi\mu}k=\widetilde{c}.
\end{align*}
Since $\widetilde{\psi}\left[\Gamma\right]=L_{\widetilde{c}}\circ\sad{\psi}{}{j,}$
and $\overline{\psi}\left[\Gamma\right]=L_{\overline{c}}\circ\sad{\psi}{}{j,}$
the result follows.
\end{proof}
Recall that the map
\begin{align*}
\varepsilon\in\mathcal{E}_{\ell} & \longmapsto\left(\nu_{\varepsilon}^{j}\right)_{j\in\zsk}
\end{align*}
is locally injective (Lemma~\ref{lem:gate_is_holomorphic}). In particular
$\Delta'\left(0\right)$ is not constant and therefore must take non-rational
values on a small subdomain $\Lambda\subset\mathcal{E}_{\ell}^{\cap}$.
It follows that for $\varepsilon\in\Lambda$ the Abelian group $\left\langle \delta,\Delta\right\rangle <\diff[\cc,0]$
is non-resonant and therefore formally linearizable~\cite{LoRigid}.
Hence $\delta=\id$. 
\begin{lem}
\label{lem:analytic_invariant_trivial}If $\delta=\id$ then $\phi^{j}=0$.
\end{lem}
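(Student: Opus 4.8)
If $\delta = \id$ then $\phi^j = 0$.

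The plan is to exploit the conjugacy relations \eqref{eq:dynamics_conjugacy_bernoulli} in the trivial case $\delta=\id$, which collapse them to genuine equalities $\overline{\psi}\left[\gamma^{\pm}\right] = \widetilde{\psi}\left[\gamma^{\pm}\right]$ of germs of diffeomorphisms at $0$. First I would recall that, since $\mathfrak{m}$ is holomorphic, the components $\sad{\phi}{\ell}{j,}$ and $\sad{\phi}{\widetilde{\ell}}{j,}$ are restrictions of one and the same $\phi^j \in h\germ{\varepsilon,h}$ to the two overlapping sheets of $\mathcal{E}_\ell^\cap$; therefore, as functions of $h$, we have $\sad{\overline{\psi}}{}{j,} = \sad{\widetilde{\psi}}{}{j,}$ \emph{as power series in $h$} when $\overline{\varepsilon}$ and $\widetilde{\varepsilon}$ both project to a given $\varepsilon \in \mathcal{E}_\ell^\cap$. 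The subtlety is that the node-leaf coordinate attached to the point $\sad x{}{j,}$ differs from the one attached to $\nod x{}{j,}$: in the loop $\gamma^+$ (which encircles $\sad x{\overline{\varepsilon}}{j,}$, a node when seen from the $\overline{\varepsilon}$-sheet) the abstract holonomy $\overline{\psi}\left[\gamma^+\right]$ involves only the linear gate maps, while $\widetilde{\psi}\left[\gamma^+\right]$ involves the nonlinear saddle map $\sad{\widetilde{\psi}}{}{j,}$ composed with linear gate maps. So the first real step is to write out, using Definition~\ref{def:necklace_dynamics}, the word $\mathfrak{w}(\gamma^+)$ in necklace letters from each point of view and read off the two expressions.

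The key computation is then the following. From $\gamma^+$ seen on the $\widetilde{\varepsilon}$-sheet we get $\widetilde{\psi}\left[\gamma^+\right] = L_{1/\widetilde\nu^j}\circ\sad{\widetilde\psi}{}{j,}\circ(\text{product of linear gate maps})$ — a diffeomorphism whose only nonlinear ingredient is $\sad{\widetilde\psi}{}{j,}$; from $\gamma^+$ seen on the $\overline\varepsilon$-sheet we get $\overline{\psi}\left[\gamma^+\right]$, which is a pure \emph{linear} map $L_{\overline c}$ because $\sad{\overline x}{}{j,}$ is of node type there so crossing around it only produces gate maps $L_{\overline\nu^{j_p}}$ (this is exactly the content of Lemma~\ref{lem:analytic_invar_thus_commute}(1)-(2) and the discussion of necklace dynamics). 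Setting these equal — which is legitimate because $\delta=\id$ — forces $\sad{\widetilde\psi}{}{j,}$ to differ from a linear map only by pre- and post-composition with linear maps, hence $\sad{\widetilde\psi}{}{j,}$ itself is linear: $h\exp\!\left(\tfrac{2\ii\pi\mu}{k}+\sad{\widetilde\phi}{}{j,}(h)\right)$ linear is equivalent to $\sad{\widetilde\phi}{}{j,}\equiv 0$ for that value of $\varepsilon$. Since $\sad{\widetilde\phi}{}{j,}=\phi^j|_{\mathcal{E}_{\widetilde\ell}\times\neigh}$ and this holds for all $\varepsilon$ in the subdomain $\Lambda\subset\mathcal{E}_\ell^\cap$, the identity principle (or analytic continuation) gives $\phi^j\equiv 0$ on a neighborhood of $0$ in $\neigh[k]$. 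Running this argument over the $j$'s — using that for each $j$ we chose a cell $\mathcal{E}_\ell$ for which $\sad x{}{j,}$ is attached to a single saddle sector and self-intersects around $\Delta_k$ exchanging the roles of $\sad x{}{j,}$ and $\nod x{}{j,}$ — yields $\phi=(\phi^j)_j=0$, as claimed.

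The main obstacle I anticipate is bookkeeping the necklace words correctly: one must be careful that the linear gate maps appearing around $\sad{\overline x}{}{j,}$ on one sheet really do match, as a \emph{product}, the linear gate maps together with the linear part of $\sad{\widetilde\psi}{}{j,}$ around the corresponding point on the other sheet (this is where \eqref{eq:linear_modulus_analytic} and the identity $\overline c=\widetilde c$ from Lemma~\ref{lem:analytic_invar_thus_commute} do the work), so that after cancelling the linear prefactors what remains is literally ``$\sad{\widetilde\psi}{}{j,}=$ linear''. Once that is pinned down, the deduction $\phi^j=0$ is immediate from the shape of $\sad{\widetilde\psi}{}{j,}$, and the passage from ``for $\varepsilon\in\Lambda$'' to ``for all $\varepsilon$'' is just the identity theorem for the holomorphic germ $\phi^j\in h\germ{\varepsilon,h}$.
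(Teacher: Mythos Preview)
Your approach is the paper's: compare the two holonomies of $\gamma^{+}$, observe that on one sheet it is a pure linear gate composition while on the other it contains the single nonlinear factor $\sad{\psi}{}{j,}$, deduce from $\delta=\id$ that $\sad{\psi}{}{j,}$ is linear, hence $\phi^{j}=0$ by~\eqref{eq:necklace_diffeo}, and conclude by analytic continuation.

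You have, however, interchanged the roles of $\overline{\varepsilon}$ and $\widetilde{\varepsilon}$. By the setup $\gamma^{+}$ encircles $\sad x{\overline{\varepsilon}}{j,}$, which is by definition of \emph{saddle} type on the $\overline{\varepsilon}$-sheet (it becomes $\nod{\widetilde{x}}{}{j,}$, a node, only on the $\widetilde{\varepsilon}$-sheet, via the exchange $\sad{\overline{x}}{}{j,}=\nod{\widetilde{x}}{}{j,}$). Accordingly it is $\overline{\psi}\left[\gamma^{+}\right]=L_{\overline{\nu}^{j}}\circ\sad{\psi}{}{j,}$ that carries the nonlinear piece, while $\widetilde{\psi}\left[\gamma^{+}\right]$ is the linear map given precisely by~\eqref{eq:linear_modulus_analytic}, not the reverse. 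Once the labels are corrected your argument coincides with the paper's proof.
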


\begin{proof}
According to~\eqref{eq:dynamics_conjugacy_bernoulli}, $\delta$
conjugates $\psi_{\overline{\varepsilon}}\left[\gamma^{+}\right]=L_{\overline{\nu}^{j}}\circ\sad{\psi}{}{j,}$
to $\psi_{\widetilde{\varepsilon}}\left[\gamma^{+}\right]$, but the
latter is linear thanks to~\eqref{eq:linear_modulus_analytic}, therefore
$\sad{\psi}{}{j,}$ itself is linear. It can only mean that $\sad{\phi}{}{j,}=0=\phi^{j}$
using~\eqref{eq:necklace_diffeo}, the equality holding on the whole
cell $\mathcal{E}_{\ell}$ by analytic continuation. 
\end{proof}
Since $j$ is arbitrary we just established
\begin{align*}
\left(k>1\right) & \Longrightarrow\left(\phi=0\right).
\end{align*}

\subsubsection{The case $k=1$: end of the proof of the Parametrically Analytic
Orbital Moduli Theorem}

Since $k=1$ we drop the index $j=0$. We work in the self-intersection
$\mathcal{E}^{\cap}$ of the single parametric cell, and use the notations
and constructions involved just above. In particular Figure~\ref{fig:analytic_invar_k>1}
remains the same except for the fact that there are no gate passage
$j_{1},\ldots,j_{m}$ on the right-hand side of the pictures. 

Recall that we consider a system with $\mathfrak{m}\neq0$. Lemma~\ref{lem:analytic_invariant_trivial}
forbids $\delta=\id$, thus $\Delta=\sad{\psi}{}{}$ is non-linear
($\Delta$ was introduced in Lemma~\ref{lem:analytic_invar_thus_commute}).
Then $\left\langle \delta,\Delta\right\rangle $ is an Abelian group.
Consequently there exists~\cite{MaRa-SN} a formal tangent-to-identity
change $\widehat{\varphi}$ in the variable $h$, unique $d\in\nn$,
$\lambda\in\cc$ and $t\in\cc\backslash\left\{ 0\right\} $ such that,
writing $\widehat{f}:=\widehat{\varphi}^{*}f$ for all $f\in\diff[\cc,0]$,
\begin{align*}
\widehat{\delta} & =\flow{Z\left(d,\lambda\right)}1{}\\
\widehat{\Delta} & =\alpha\flow{Z\left(d,\lambda\right)}t{}~,~\alpha\in\cc^{\times}\\
Z\left(d,\lambda\right) & =\frac{h^{d+1}}{1+\lambda h^{d}}\pp h.
\end{align*}
Commutativity forces the relation 
\begin{align*}
\alpha^{d} & =1.
\end{align*}
Since $\alpha=\exp2\ii\pi\mu$ this gives $d\mu\in\zz$ as expected.
Observe that for all $s\in\cc$
\begin{align*}
\flow{Z\left(d,0\right)}s{} & \in\bber[d],
\end{align*}
therefore we aim at showing $\lambda=0$. This is ultimately done
by applying the next lemma.
\begin{lem}
\cite[Assertions 1.1 to 1.4]{CerMou}\label{lem:formal_abelian_diffeos}
In the following $\xi$ is a formal diffeomorphism in the variable
$h$ at $0$.

\begin{enumerate}
\item Let $Z,~\widetilde{Z}$ be formal vector fields in the variable $h$
at $0$ belonging to $h^{d+1}\frml h^{\times}\pp h$. If $\xi^{*}\flow Z1{}=\flow{\widetilde{Z}}1{}$
then $\xi^{*}Z=\widetilde{Z}$ (the converse is trivial).
\item Assume that $\xi^{*}Z\left(d,\lambda\right)=aZ\left(d,\lambda\right)$
with $a\neq1$. Then $\lambda=0$ and $\xi\in\bber$ (in particular
$\xi$ is analytic). 
\end{enumerate}
\end{lem}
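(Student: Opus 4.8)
The plan is to prove the two assertions separately: (1) by the naturality of formal flows under conjugation together with uniqueness of infinitesimal generators, and (2) by a residue computation followed by an explicit integration of a linear ODE.

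For (1), I would first record the naturality identity $\xi^{*}\flow{Z}{t}{}=\flow{\xi^{*}Z}{t}{}$, valid for any formal diffeomorphism $\xi$ and any $Z\in h^{2}\frml{h}\pp{h}$, where $\xi^{*}Z=\frac{Z\circ\xi}{\xi'}\pp{h}$ on vector fields and $\xi^{*}\psi=\xi^{\circ-1}\circ\psi\circ\xi$ on diffeomorphisms (as in Definition~\ref{def:compatibility_condition}); this is checked by verifying that $t\mapsto\xi^{\circ-1}\circ\flow{Z}{t}{}\circ\xi$ solves the flow system of $\xi^{*}Z$ with initial value the identity. Since $d\geq1$, all of $Z$, $\widetilde{Z}$ and $\xi^{*}Z$ — the last one still lying in $h^{d+1}\frml{h}^{\times}\pp{h}$, with leading coefficient $\alpha^{d}$ if $\xi=\alpha h+\OO{h^{2}}$ — belong to $h^{2}\frml{h}\pp{h}$, so their time-one flows are tangent to the identity and are well defined via the Lie series~\eqref{eq:Lie_identity} applied to the coordinate function $h$. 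The hypothesis then reads $\flow{\xi^{*}Z}{1}{}=\flow{\widetilde{Z}}{1}{}$, and I would conclude by uniqueness of the generator: comparing coefficients order by order in $\flow{X}{1}{}(h)=\sum_{n\geq0}\frac{1}{n!}X\cdot^{n}h$ shows that a tangent-to-identity formal diffeomorphism determines uniquely the $X\in h^{2}\frml{h}\pp{h}$ of which it is the time-one flow; hence $\xi^{*}Z=\widetilde{Z}$.

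For (2), the first step is to pin down $\lambda=0$ through the residue of the dual $1$-form. To $Z=f(h)\pp{h}$ attach $\omega_{Z}:=\frac{\dd h}{f(h)}$; then $\omega_{\xi^{*}Z}=\xi^{*}\omega_{Z}$ and $\omega_{aZ}=\frac{1}{a}\omega_{Z}$, while for $Z(d,\lambda)=\frac{h^{d+1}}{1+\lambda h^{d}}\pp{h}$ one has $\omega_{Z(d,\lambda)}=\bigl(h^{-d-1}+\lambda h^{-1}\bigr)\dd h$, of residue $\lambda$ at $0$. The formal residue — the coefficient of $h^{-1}\dd h$ — is invariant under formal changes of the variable, since every $h^{n}\dd h$ with $n\neq-1$ is formally exact and $\xi^{*}\frac{\dd h}{h}-\frac{\dd h}{h}=\dd\log\bigl(\xi(h)/h\bigr)$ is exact as well (here $\xi(h)/h$ is a unit in $\frml{h}$). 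Thus $\xi^{*}Z(d,\lambda)=aZ(d,\lambda)$ yields $\lambda=\lambda/a$, so $\lambda=0$ because $a\neq1$ (and $a\neq0$, both sides having a nonzero leading coefficient).

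It remains to solve $\xi^{*}\bigl(h^{d+1}\pp{h}\bigr)=a\,h^{d+1}\pp{h}$, i.e. $\xi(h)^{d+1}=a\,h^{d+1}\xi'(h)$. Writing $\xi(h)=h\,g(h)$ with $g(0)\neq0$, the equation becomes $g^{d+1}=a\bigl(g+hg'\bigr)$; substituting $u:=g^{-d}$ and multiplying by $u^{(d+1)/d}$ collapses it to the \emph{linear} ODE $u-\frac{h}{d}u'=\frac{1}{a}$. Matching coefficients forces $u=\frac{1}{a}+c\,h^{d}$ for some $c\in\cc$, whence $\xi(h)=\frac{a^{1/d}h}{\bigl(1+ac\,h^{d}\bigr)^{1/d}}\in\bber$, in particular convergent. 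I expect the only genuinely delicate point to be the careful bookkeeping of the formal (as opposed to convergent) category in (1) — the naturality of flows and the uniqueness of generators are classical but must be invoked with the right normalizations; once the invariance of the residue is granted, everything in (2) is elementary.
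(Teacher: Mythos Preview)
Your argument is correct. The paper does not actually prove this lemma: it is merely quoted from \cite{CerMou}, so there is no in-paper proof to compare against. Your route --- naturality of flows plus uniqueness of the infinitesimal generator for (1), and the invariance of the formal residue of the dual $1$-form followed by the explicit integration of the resulting linear ODE for (2) --- is precisely the classical one found in that reference; the residue trick for forcing $\lambda=0$ and the substitution $u=g^{-d}$ are both standard and carried out cleanly here.
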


Let us show now that $\lambda=0$ and $\widehat{\varphi}\in\bber$
itself, forcing $\Delta=\sad{\psi}{}{}\in\bber$ by application of
Lemma~\ref{lem:Bernoulli_properties}. The key is to exploit the
fact~\eqref{eq:dynamics_conjugacy_bernoulli}, which can be rewritten
as: 
\begin{align}
\delta^{*}\overline{\psi}\left[\tx g^{+}\tx s^{+}\right] & =\widetilde{\psi}\left[\tx g^{-}\right]=L_{1/\widetilde{\nu}}.\label{new_number}
\end{align}
Indeed, referring to Definition~\ref{def:necklace_dynamics} for
the definition of the letters $\tx g^{\pm},~\tx s^{\pm}$ and their
image by $\psi\left[\bullet\right]$, and looking at Figure~\ref{fig:analytic_invar_k>1},
we compare the holonomy maps around the upper singular point. On the
left, the singular point is of saddle type and the holonomy map is
the composition of $\overline{\psi}\left[s^{+}\right]$ (crossing
the saddle sector in the direction of the arrow) with $\overline{\psi}\left[\tx g^{+}\right]$
(crossing the gate sector in the direction of the arrow). On the right,
the same singular point is of node type. Turning around, it comes
to crossing the gate sector in the inverse direction of the arrow.
Hence its holonomy map is $\widetilde{\psi}\left[\tx g^{-}\right]$.
The last equality in \eqref{new_number} follows from the fact that
$\widetilde{\psi}\left[\tx g^{+}\right]=L_{\widetilde{\nu}}$. Note
that \eqref{new_number} means that $\delta$ linearizes $\overline{\psi}\left[\tx g^{+}\tx s^{+}\right]$.

Of course the multipliers at the fixed point in \eqref{new_number}
must be the same. On the left, this multiplier is simply that of $\overline{\psi}\left[\tx g^{+}\tx s^{+}\right]$,
since conjugacy by $\delta$ preserves the multiplier. On the one
hand the multiplier at the fixed point of $\overline{\psi}\left[\tx s^{+}\right]$
is $\exp2\ii\pi\mu$, according to \eqref{eq:necklace_diffeo} for
$k:=1$, as indeed $\overline{\psi}\left[\tx g^{+}\right]=L_{\overline{\nu}}$.
On the other hand $\widetilde{\psi}\left[\tx g^{-}\right]=L_{1/\widetilde{\nu}}$
so that

\begin{align*}
\widetilde{\nu~}\overline{\nu}\exp2\ii\pi\mu & =1.
\end{align*}
We also have $\overline{\psi}\left[\tx s^{+}\right]=\Delta$, since
it is the holonomy obtained by turning counterclockwise around the
two singular points. Hence, replacing in \eqref{new_number} yields
$L_{\overline{\nu}}\circ\Delta\circ\delta=\delta\circ L_{1/\widetilde{\nu}}$.
Composing both sides on the left with $L_{\widetilde{\nu}}$ and taking
$\widehat{\varphi}^{*}$ on both sides yields 
\begin{align*}
\widehat{L}_{1/\widetilde{v}}^{*}\widehat{\delta} & =\widehat{L}_{\widetilde{\nu}}\circ\widehat{\delta}\circ\widehat{L}_{1/\widetilde{\nu}}=\widehat{L}_{\overline{\nu}\widetilde{\nu}}\circ\widehat{\Delta}\circ\widehat{\delta}.
\end{align*}
 For the sake of simplicity we only deal with the case $\mu\in\zz$,
the general case can be adapted by taking into account that $\widehat{L}_{\overline{\nu}\widetilde{\nu}}^{\circ d}=\id$.
Under the current hypothesis $\widehat{L}_{\overline{\nu}\widetilde{\nu}}=\id$,
so that $\widehat{L}_{1/\widetilde{\nu}}$ is a formal conjugacy between
$\widehat{\delta}=\flow{Z\left(d,\lambda\right)}1{}$ and $\widehat{\Delta}\circ\widehat{\delta}=\flow{Z\left(d,\lambda\right)}{1+t}{}=\flow{\left(1+t\right)Z\left(d,\lambda\right)}1{}$
for some $t=t_{\varepsilon}\in\cc^{\times}$. According to Lemma~\ref{lem:formal_abelian_diffeos}
with $\xi:=\widehat{L}_{1/\widetilde{\nu}}$ and $a:=1+t\neq1$, we
have $\lambda=0$ and $\widehat{L}_{1/\widetilde{\nu}}\in\bber$. 

\bigskip{}

So far $\widehat{\varphi}$ is a formal linearization of $\widehat{L}_{1/\widetilde{\nu}}$
which is tangent-to-identity. For values $\varepsilon$ of the parameter
corresponding to $\widetilde{\nu}\notin\rr$ (say $\im{\widetilde{\nu}}>0$)
the fix-point $0$ of $\widehat{L}_{1/\widetilde{\nu}}$ is hyperbolic:
the map $\widehat{\varphi}$ is locally holomorphic at $0$, unique
and therefore given by 
\begin{align*}
\widehat{\varphi} & :=\lim_{n\to\infty}L_{-n/\widetilde{\nu}}\circ\widehat{L}_{1/\widetilde{\nu}}^{\circ n}
\end{align*}
uniformly on a neighborhood of $0$. Lemma~\ref{lem:Bernoulli_properties}
implies that for every $n\in\nn$ we have
\begin{align*}
L_{-n/\widetilde{\nu}}\circ\widehat{L}_{1/\widetilde{\nu}}^{\circ n} & \in\bber[d],
\end{align*}
therefore $\widehat{\varphi}\in\bber$ as requested, since the group
$\bber$ is closed for the topology of local uniform convergence.
This completes the proof of the Parametrically Analytic Orbital Moduli
Theorem.

\section{\label{sec:Computations}A few words about computations}

All the discussion regarding the actual (symbolic or numeric) computations
of normal forms and moduli of saddle-nodes, as presented in~\cite[Section 4]{SchaTey}
for saddle-nodes, can be repeated \emph{verbatim }in the case of convergent
unfoldings: we will not reproduce it here. We nonetheless present
in Section~\ref{subsec:first-order} a consequence of one particular
result, thus unfolding the main result of~\cite{Tey-ExSN}, which
leads us to try and compute the period associated to the formal orbital
normal form $\fonf$ in Section~\ref{subsec:Model_period_computations}. 

\subsection{\label{subsec:first-order}Computation of the dominant term of the
orbital invariant}

\begin{figure}
\hfill{}\includegraphics[width=5cm]{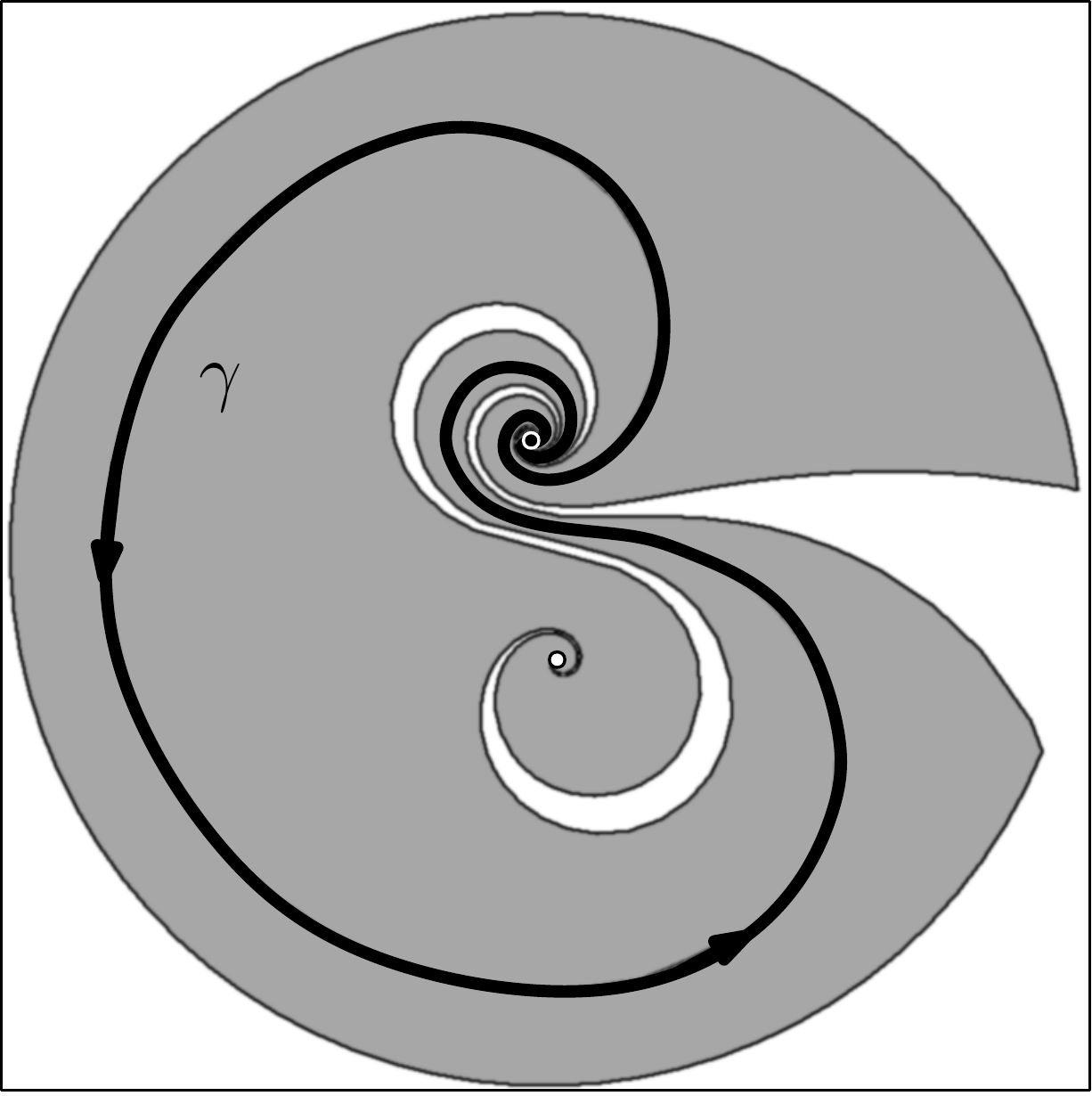}\hfill{}

\caption{\label{fig:asymptotic_cycle}The (asymptotic) path of integration
used to compute the period, which is a cycle when $k=1$.}

\end{figure}
The next lemma holds for a fixed value of $\varepsilon\in\mathcal{E}_{\ell}$.
\begin{lem}
\emph{(See \cite[Proposition 4.1]{SchaTey})} Let $r_{n}\in x\pol x_{<k}$
be the coefficients of 
\begin{align}
R\left(x,y\right) & =\sum_{n>0}r_{n}\left(x\right)\left(P^{\tau}\left(x\right)y\right)^{n}\label{eq:normal_form_expression}
\end{align}
 in the normal form $\onf$. Let $c_{\ell}^{j,p}\left(n,m\right)\in\cc$
be the coefficients of the period 
\begin{align*}
\per[j][\ell]\left(x^{n}y^{m}\right)\left(h\right) & =\sum_{p>0}c_{\ell}^{j,p}\left(n,m\right)h^{p}
\end{align*}
relative to $\onf$. Then we have the following properties.
\begin{lyxlist}{00.00.0000}
\item [{\textbf{Triangularity:}}] $c_{\ell}^{j,p}\left(n,m\right)=0$,
if $p<m$ and 
\begin{align*}
c_{\ell}^{j,m}\left(m,m\right)h^{m} & =2\ii\pi\widehat{\per[][\ell]}^{j}\left(x^{n}y^{m}\right)\left(h\right)
\end{align*}
 is independent of $R$.
\item [{\textbf{Algebraicity:}}] For $p>m,$ the coefficient $c_{\ell}^{j,p}\left(n,m\right)$
depends polynomially in the $k\left(p-m\right)$ variables given by
the coefficients of $r_{1},\ldots,r_{p-m}$ and vanishes when $R=0$.
\end{lyxlist}
\end{lem}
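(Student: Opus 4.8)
The final lemma concerns the structure of the period operator relative to a normal form $\onf$ with $R$ as in~\eqref{eq:normal_form_expression}: it asserts triangularity (vanishing of $c_\ell^{j,p}(n,m)$ for $p<m$, with the diagonal coefficient $c_\ell^{j,m}(m,m)$ depending only on the formal model) and algebraicity (for $p>m$ the coefficient is a polynomial in the coefficients of $r_1,\dots,r_{p-m}$, vanishing with $R$).

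The plan is to follow the strategy of~\cite[Proposition 4.1]{SchaTey} for $\varepsilon=0$, working at a fixed value $\varepsilon\in\mathcal{E}_\ell$ and suppressing the indices $\ell,\varepsilon$. The key observation is that everything is organized by the $y$-adic (equivalently $h$-adic) filtration, and that $\onf\cdot$ acts on the graded pieces as $\fonf\cdot$ modulo higher order terms. First I would recall from Section~\ref{sec:Temporal} (Theorem~\ref{thm:squid_and_solve} and the construction of $\per[][\ell]$ in Section~\ref{subsec:period_operator}) that $\per[j][\ell]\left(x^ny^m\right)(h)=\frac{1}{2\ii\pi}T^j(h)$ where $F^{j+1}-F^j=T^j\circ H^j$ for the canonical sectorial solutions $F^j$ of $\onf\cdot F=x^ny^m$ and the canonical sectorial first integrals $H^j=\widehat H^j\exp N^j$. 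Since $H^j$ is linear in $y$ to leading order ($H^j(x,y)=y\widehat H^j(x,1)+\OO{y^2}$ via~\eqref{eq:sectorial_first_integral}), a source term of $y$-order $m$ produces sectorial solutions of $y$-order $\geq m$ (by Lemma~\ref{lem:formal_cohomological}, solving the recurrence~\eqref{eq:formal_cohomog_recurrence} degree by degree, whose convergent/sectorial counterpart is exactly the content of Theorem~\ref{thm:squid_and_solve}), hence the difference $F^{j+1}-F^j$ is $\OO{y^m}$, and composing with $\left(H^j\right)^{\circ-1}$ shows $T^j(h)=\OO{h^m}$. This gives triangularity $c_\ell^{j,p}(n,m)=0$ for $p<m$.

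For the diagonal term, I would isolate the lowest-order part: writing $x^ny^m=\fonf\cdot\big(y^m\Phi_m(x)\big)+y^m G_m(x)+\OO{y^{m+1}}$ where the $\OO{y^{m+1}}$ terms come from $R\,y\pp y$ (since $\onf=\fonf+Ry\pp y$ and $R\in y\germ{\varepsilon,x,y}$), the $y^m$-homogeneous part of the cohomological equation for $\onf$ coincides with that for $\fonf$. Consequently the $h^m$-coefficient of $T^j$ only sees the model first integral $\widehat H^j$ and the model operator, so $c_\ell^{j,m}(m,m)h^m=2\ii\pi\,\widehat{\per[][\ell]}^j\left(x^ny^m\right)(h)$, independent of $R$. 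For algebraicity, I would proceed by induction on $p-m$: the correction terms entering the $y^{m+1},\dots,y^p$ components of both the sectorial solution and the first integral $H^j=\widehat H^j\exp N^j$ (with $N^j$ the sectorial normalization satisfying $\onf\cdot N^j=-R$, so $N^j=\OO y$ with coefficients built polynomially from the $r_i$ via the recurrence) contribute to $c_\ell^{j,p}(n,m)$ only through $r_1,\dots,r_{p-m}$; expanding $\exp N^j$ and the inverse first integral and tracking the $y$-orders shows each higher coefficient is a universal polynomial in the listed coefficients, visibly vanishing when $R=0$ since then $N^j=0$, $H^j=\widehat H^j$, and $T^j$ reduces to the model period.

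The main obstacle is the bookkeeping for the algebraicity claim: one must verify carefully that in the iterative scheme defining $N^j$ (and hence $H^j$ and the $F^j$), the coefficient of $y^p$ depends only on the coefficients of $r_1,\dots,r_{p-m}$ and not on higher $r_i$, which requires making precise the ``upper-triangular with respect to $y$-degree'' nature of the operators $\onf\cdot$ and of composition/exponentiation — exactly the degree-filtration argument that in the $\varepsilon=0$ case is carried out in~\cite{SchaTey}. Since the constructions of Section~\ref{sec:Temporal} were deliberately performed for fixed $\varepsilon$ with explicit parametric control (Remark~\ref{rem:corollaries_squid_solve}~(3)), the parametric dependence adds nothing new here, and I would simply remark that the proof is \emph{verbatim} that of~\cite[Proposition 4.1]{SchaTey} applied fiberwise, the only change being the systematic replacement of $y$ by $P^\tau(x)y$ as in~\eqref{eq:tau_blow-up} when $\tau>0$.
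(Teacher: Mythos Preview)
Your proposal is correct and follows essentially the same filtration argument as the paper. The paper phrases it slightly differently, working with the integral representation of the period: fixing $x_*\in\sad V{}{j,}$ and letting $x\mapsto y(x,h)$ be the sectorial solution of the differential equation with $H^j(x_*,y(x_*,h))=h$, one has $\per^j(x^ny^m)(h)=\frac{1}{2\ii\pi}\int_\gamma x^n y(x,h)^m\frac{\dd x}{P(x)}$ over an asymptotic path $\gamma$ joining the two node-type points. Since $H^j=\widehat H^j\exp N^j$ with $\widehat H^j$ linear in $y$, inversion gives $y(x,h)=\widehat y(x,h)+h\,\OO h$ with $\widehat y(x,h)=h/\widehat H^j(x,1)$; substituting into $y^m$ makes the triangularity and the identification of the diagonal coefficient with the model period immediate. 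This is equivalent to your approach via the sectorial solutions $F^j$ of the cohomological equation (indeed $F^j$ is exactly this integral in $(x,y)$-coordinates), but the integral viewpoint makes the $h$-expansion more transparent and avoids the separate inversion step you describe. Both routes then invoke the same polynomial recursion in the $y$-degree (equivalently $h$-degree) for the algebraicity claim, and both observe that exchanging $x^{k+1}$ for $P_\varepsilon(x)$ changes nothing in the argument of~\cite[Proposition~4.1]{SchaTey}.
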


\begin{proof}
It is exactly the proof done in~\cite[Proposition 4.1]{SchaTey}
since exchanging $x^{k+1}$ for $P_{\varepsilon}\left(x\right)$ does
not modify anything in the actual computation. We give some brief
elements of the proof. 

Let us drop all indexes and let $x\mapsto y\left(x,h\right)$ be the
sectorial solution of the differential equation induced by the vector
field $\onf$ with initial value $H\left(x_{*},y\left(x_{*},h\right)\right)=h$
(here $x_{*}$ is fixed once and for all in $\sad V{}{}$). Computing
$\per\left(x^{n}y^{m}\right)\left(h\right)$ requires to compute the
integral $\int_{\gamma}x^{n}y\left(x,h\right)^{m}\frac{\dd x}{P\left(x\right)}$
for an asymptotic path $\gamma\subset\cc\times\left\{ 0\right\} $
joining the two nodes in the closure of the union of consecutive squid
sectors (see Figure~\ref{fig:asymptotic_cycle}). This integral is
absolutely convergent because $m>0$ and $\gamma$ spirals in the
right manner (see also Lemma~\ref{lem:integrable_first-integral}).
Since
\begin{align*}
H\left(x,y\right) & =\widehat{H}\left(x,y\right)\exp N\left(x,y\right),
\end{align*}
with $\widehat{H}$ linear in the $y$-variable, we necessary have
\begin{align}
y\left(x,h\right) & =\widehat{y}\left(x,h\right)+h\OO h\label{eq:sectorial_solution_expansion}
\end{align}
where $\widehat{y}\left(x,h\right)=\frac{h}{\widehat{H}\left(x,1\right)}$
is the solution corresponding to the formal model $\fonf$. This gives
the triangularity. The algebraicity property stems from the fact that
the computation can be performed formally in the $y$-variable. The
sought property is true for the expansion~\eqref{eq:sectorial_solution_expansion}
(by studying the inverse of the normalizing mapping) because it is
true for solutions of cohomological equations $\mathcal{X}\cdot N=-R$.
\end{proof}
We extract from this statement useful consequences. 
\begin{prop}
\label{prop:first_order_invariant}~

\begin{enumerate}
\item The quantity 
\begin{align*}
\inf\left\{ n~:~r_{n}\neq0\right\}  & =\inf\left\{ n~:~\left(\exists j\right)~\phi_{n}^{j}\neq0\right\} \\
 & =:d\in\nbar
\end{align*}
does not depend on the cell. 
\item The valuation $d$ is infinite if and only if the unfolding is analytically
conjugate to its formal normal form.
\item If $d<\infty$ the dominant term of the invariant is given by the
period of the formal model 
\begin{align*}
2\ii\pi\widehat{\per}\left(r_{d}y^{d}\right) & =\left(h\mapsto\phi_{d}^{j}h^{d}\right)_{j\in\zsk}.
\end{align*}
\end{enumerate}
\end{prop}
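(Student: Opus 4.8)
The three assertions are all immediate consequences of the triangularity statement of the preceding lemma combined with results already established, so the strategy is to unwind the definitions carefully rather than to prove anything substantially new. First I would fix the notation: for a normal form $\onf$ with $R(x,y)=\sum_{n>0}r_n(x)\left(P^\tau(x)y\right)^n$, the orbital modulus component over the cell $\mathcal{E}_\ell$ is $\sad{\phi}{\ell}{j,}=2\ii\pi\per[j][\ell](-R)$ by~\eqref{eq:invariant_as_period}, so writing $\sad{\phi}{\ell}{j,}(h)=\sum_{p>0}\phi_{\ell,p}^{j}h^{p}$ we have $\phi_{\ell,p}^{j}=-2\ii\pi\sum_{n}(\text{coeff. of }y^n\text{ in }R)\,c_\ell^{j,p}(\cdot,n)$. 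By the triangularity property $c_\ell^{j,p}(n,m)=0$ for $p<m$, so the period of $r_n(x)(P^\tau y)^n$ contributes nothing to $\phi_{\ell,p}^{j}$ for $p<n$; in particular the lowest-order term in $h$ of $\sad{\phi}{\ell}{j,}$ can only come from the $r_n$ with $n$ minimal among those that are nonzero.

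For assertion~(1): let $d_\ell:=\inf\{n:r_{n,\ell}\neq 0\}$ where $R_\ell$ is the normal form over $\mathcal{E}_\ell$; but by the Normalization Theorem in the form established (Theorem~\ref{thm:cellular_realization} / Corollary~\ref{cor:realizable_iff_glue}) the normal form is the \emph{same} over all cells where it is defined — more precisely $R_\ell=R_{\widetilde\ell}$ on $\mathcal{E}_\ell\cap\mathcal{E}_{\widetilde\ell}$ and they patch to a single $R\in\nfsec$ — so $d=\inf\{n:r_n\neq0\}$ is unambiguous and cell-independent. For the other equality I would argue both inclusions. If $r_n=0$ for all $n<d$, then by triangularity $c_\ell^{j,p}(\cdot,n)$ with $p<d$ only involves $r_m$ with $m\le p<d$, hence $\phi_{\ell,p}^{j}=0$ for all $p<d$ and all $j,\ell$; thus $\inf\{n:(\exists j,\ell)\,\phi_{\ell,n}^{j}\neq0\}\ge d$. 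Conversely, the degree-$d$ part: by the triangularity identity $c_\ell^{j,d}(d,d)h^{d}=2\ii\pi\widehat{\per[][\ell]}^{j}(x^n y^d)(h)$ is independent of $R$ and equals the period of the formal model, so $\phi_{\ell,d}^{j}=-2\ii\pi\,\widehat{\per}^{j}(r_d y^d)(h)/h^{d}\cdot(\cdots)$; more precisely the only contribution to the coefficient of $h^d$ in $\sad{\phi}{\ell}{j,}$ comes from $r_d$ (all $r_n$ with $n<d$ vanish, all $r_n$ with $n>d$ contribute only to $h^{p}$ with $p\ge n>d$ by triangularity), giving exactly $2\ii\pi\widehat{\per}^{j}(r_d y^d)$. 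Since $\widehat{\per}$ is injective on $\nfsec$ for small $\varepsilon$ (this is the invertibility of $\widehat{\per}_0$ from~\cite[Proposition 2]{Tey-ExSN} transported as in Section~\ref{subsec:section_period}), $r_d\neq 0$ forces $(h\mapsto\phi_{d}^{j}h^{d})_j\neq 0$, i.e. some $\phi_{\ell,d}^{j}\neq0$. Hence $\inf\{n:(\exists j,\ell)\,\phi_{\ell,n}^{j}\neq0\}\le d$, proving~(1), and this computation simultaneously gives~(3).

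For assertion~(2): $d=\infty$ means $r_n=0$ for all $n$, i.e. $R=0$, i.e. $\onf=\fonf$ is the formal normal form; by the Normalization Theorem an unfolding is analytically conjugate to $\fonf$ precisely when its normal form $R$ vanishes, which by Theorem~\ref{thm:classification} is equivalent to $\mathfrak{m}(X)=0$, i.e. $\phi_{\ell}^{j}=0$ for all $j,\ell$, i.e. by~(1) that $d=\infty$. (Alternatively this is immediate from~(1) together with the fact that $\mathfrak{m}=0\iff X\sim\fonf$.) I expect the main obstacle — though it is minor — to be bookkeeping in~(3): one must be careful that the coefficient of $h^{d}$ in $\sad{\phi}{\ell}{j,}$ receives \emph{no} contribution from the higher $r_n$, which is exactly where the strict inequality $p<m\Rightarrow c^{j,p}(n,m)=0$ (rather than $p\le m$) is used, and that the normalization $R\in x\pol x_{<k}\{P^\tau y\}$ together with the injectivity of $\widehat{\per}$ makes the identification $2\ii\pi\widehat{\per}(r_d y^d)=(\phi_d^j)_j$ an equality of tuples and not merely of leading terms; transporting the injectivity of $\widehat{\per}_0$ to $\widehat{\per}$ for $\varepsilon$ in a cell is the only place genuine analysis enters, and it has already been done in Section~\ref{subsec:section_period}.
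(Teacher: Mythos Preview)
Your proposal is correct and follows exactly the route the paper intends: the paper does not give an explicit proof of this proposition but simply presents it as a direct consequence of the preceding triangularity lemma (``We extract from this statement useful consequences''), and your write-up is a faithful unpacking of that implication. The only ingredients you use --- triangularity of the $c_\ell^{j,p}(n,m)$, the identification of the diagonal term with the model period $\widehat{\per}$, the injectivity of $\widehat{\per}$ on $x\pol x_{<k}y^d$ established in Section~\ref{subsec:section_period}, and the characterization $\mathfrak{m}=0\iff X\sim\fonf$ from Theorem~\ref{thm:classification} --- are precisely those the paper has in place at this point.
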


\begin{rem}
The value of $d$ does not depend on the cell but may differ from
the value obtained on the boundary $\Delta_{k}$. Yet the analytic
continuation principle ensures that
\begin{align*}
\min_{\varepsilon\in\Delta_{k}}\inf\left\{ n~:~r_{n,\varepsilon}\neq0\right\}  & \geq d
\end{align*}
 because $R$ is analytic.
\end{rem}

From this proposition we deduce a final normalization ensuring uniqueness.
\begin{cor}
Assume the generic convergent unfolding $X$ is not analytically conjugate
to its formal normal form $\fonf$ defined in~\eqref{eq:formal_orbital_normal_form}.
There exists a unique $\left(\kappa,j,d\right)\in\zp\times\left\{ 1,2,\ldots,k\right\} \times\nn$
such that $X$ is conjugate to the normal form $\onf[~]=\fonf+Ry\pp y$
as in~\eqref{eq:normal_form_expression} where:
\begin{align*}
r_{\varepsilon,d}\left(x\right) & =\varepsilon^{\kappa}x^{j}+\oo{x^{j}}\\
r_{\varepsilon,n} & =0~~~~\mbox{if }n<d.
\end{align*}
\end{cor}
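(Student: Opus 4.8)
The statement is a uniqueness refinement of the Normalization Theorem: starting from a generic convergent unfolding $X$ not analytically conjugate to its formal model, we want to pin down a unique $\left(\kappa,j,d\right)$ and a unique normal form in which the lowest non-vanishing coefficient $r_{\varepsilon,d}$ of $R$ in the expansion~\eqref{eq:normal_form_expression} has a normalized dominant monomial $\varepsilon^{\kappa}x^{j}$. The plan is to combine three ingredients already available: (i) the Normalization Theorem, which produces \emph{some} normal form $\onf=\fonf+Ry\pp y$ with $R\in\nfsec[k][P^{\tau}y]$; (ii) the Uniqueness Theorem, which says that the only remaining freedom among such normal forms is the action of $\germ{\varepsilon}^{\times}$ by linear maps $\left(\varepsilon,x,y\right)\mapsto\left(\varepsilon,x,c_{\varepsilon}y\right)$; and (iii) Proposition~\ref{prop:first_order_invariant}, which identifies $d:=\inf\left\{n:r_{n}\neq0\right\}$ as a cell-independent, conjugacy invariant integer that is finite precisely because $X$ is not conjugate to $\fonf$.

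First I would fix one normal form $\onf=\fonf+Ry\pp y$ given by the Normalization Theorem and read off $d\in\nn$ via Proposition~\ref{prop:first_order_invariant}; by hypothesis $d<\infty$, and $r_{n}=0$ for $n<d$, $r_{d}\neq0$, with $r_{d}\in x\germ{\varepsilon}\left[x\right]_{<k}$ (an element of $\nfsec[k]$ evaluated at $y$-degree $d$). Next I would analyze how a linear change $c\in\germ{\varepsilon}^{\times}$ acts on $R$: pulling back $\onf$ by $\left(\varepsilon,x,y\right)\mapsto\left(\varepsilon,x,c_{\varepsilon}y\right)$ multiplies the coefficient of $\left(P^{\tau}y\right)^{n}$ by $c_{\varepsilon}^{n}$, hence $r_{d}$ becomes $c_{\varepsilon}^{d}r_{d}$ while the vanishing $r_{n}=0$ for $n<d$ is preserved and $d$ itself is unchanged. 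So the problem reduces to: within the orbit $\left\{c_{\varepsilon}^{d}r_{d}:c\in\germ{\varepsilon}^{\times}\right\}$ of a nonzero element $r_{d}\in x\germ{\varepsilon}\left[x\right]_{<k}$, find a unique canonical representative of the prescribed shape $\varepsilon^{\kappa}x^{j}+\oo{x^{j}}$, and check that the stabilizer of that representative in $\germ{\varepsilon}^{\times}$ is trivial so that the remaining linear freedom is exhausted.

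For the canonical representative I would argue as follows. Write $r_{d}\left(x\right)=\sum_{i=1}^{k}\rho_{i}\left(x\right)$ with $\rho_{i}\left(x\right)=\rho_{i,\varepsilon}x^{i}$... more precisely $r_{d,\varepsilon}\left(x\right)=\sum_{i=1}^{k}a_{i}\left(\varepsilon\right)x^{i}$ with $a_{i}\in\germ{\varepsilon}$ not all zero. Let $j:=\min\left\{i:a_{i}\not\equiv0\right\}$ (this is the smallest $x$-degree actually occurring, and it is $\leq k$, $\geq1$), and let $\kappa:=\tx{ord}_{0}\,a_{j}\in\zp$ be the vanishing order of the holomorphic function $a_{j}$ at $\varepsilon=0$; write $a_{j}\left(\varepsilon\right)=\varepsilon^{\kappa}u\left(\varepsilon\right)$ — here $\varepsilon^{\kappa}$ means any fixed monomial of total degree $\kappa$ with $u\in\germ{\varepsilon}^{\times}$, and one should note the parameter is only defined modulo the $\zsk$-action of Theorem~\ref{thm:preparation}, so a small remark is needed to fix $\varepsilon^{\kappa}$ canonically (pick the lexicographically-first monomial realizing the order, say, which is invariant under the finite cyclic action after suitable conventions). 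Then choosing $c_{\varepsilon}$ with $c_{\varepsilon}^{d}=1/u\left(\varepsilon\right)$ — possible since $u\left(0\right)\neq0$ so $u$ has a holomorphic $d$-th root near $0$ — replaces $r_{d}$ by a polynomial whose coefficient of $x^{j}$ is exactly $\varepsilon^{\kappa}$, all lower powers of $x$ still absent, i.e. $r_{\varepsilon,d}\left(x\right)=\varepsilon^{\kappa}x^{j}+\oo{x^{j}}$. Uniqueness of $\left(\kappa,j,d\right)$ is then immediate: $d$ is the conjugacy invariant of Proposition~\ref{prop:first_order_invariant}, $j$ is the lowest $x$-degree in $r_{d}$ (unchanged by linear maps), $\kappa$ is the vanishing order at $0$ of the coefficient of $x^{j}$ (each rescaling multiplies it by $c^{d}\in\germ{\varepsilon}^{\times}$, which does not change vanishing order). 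Finally, uniqueness of the normal form itself: if two normal forms of the prescribed shape are analytically conjugate, the Uniqueness Theorem gives a $c\in\germ{\varepsilon}^{\times}$ relating them, and matching the coefficient of $x^{j}$ in degree-$d$ forces $c_{\varepsilon}^{d}\varepsilon^{\kappa}=\varepsilon^{\kappa}$, hence $c_{\varepsilon}^{d}=1$, hence (by connectedness of the parameter neighborhood and $c\left(0\right)$ being a $d$-th root of unity) $c$ is a constant $d$-th root of unity; one then observes such a constant $c$ also fixes every higher coefficient pattern only if... actually $c$ constant with $c^{d}=1$ already shows the two forms differ by a finite-order linear symmetry, and to get genuine uniqueness one either absorbs this into the equivalence or pins it down by a further mild convention (e.g. requiring the $\varepsilon^{\kappa}$-coefficient to be $+1$ rather than a root of unity). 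The main obstacle, and the only place requiring care, is exactly this last point — making the $\zsk$-indeterminacy of the canonical parameter and the residual $d$-th-roots-of-unity symmetry interact cleanly so that the triple and the form are \emph{honestly} unique and not merely unique up to a residual finite group; I would handle it by the lexicographic-monomial convention above plus normalizing the leading scalar to $1$, checking compatibility with the $\zsk$-action using the explicit formula $\widetilde{\varepsilon}=\theta^{*}\varepsilon$ from Section~\ref{subsec:Preparation}.
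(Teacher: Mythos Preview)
The paper gives no explicit proof of this corollary; your approach is exactly the natural one and matches what is implicit in the paper: extract $d$ from Proposition~\ref{prop:first_order_invariant}, read off $j$ as the lowest $x$-degree occurring in $r_{d}$, use the residual linear freedom $y\mapsto c_{\varepsilon}y$ (which sends $r_{n}\mapsto c_{\varepsilon}^{n}r_{n}$) to normalize the coefficient of $x^{j}$ in $r_{d}$, and observe that $d$, $j$, and the vanishing order $\kappa$ of that coefficient are unchanged by the linear action, hence are invariants of the conjugacy class.

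Two comments. First, you do more than is asked: the corollary only claims uniqueness of the \emph{triple} $\left(\kappa,j,d\right)$, not of the realizing normal form. Your observation that a constant $d$-th root of unity survives as a residual symmetry is correct, but it does not threaten the statement. Second, you rightly flag the meaning of $\varepsilon^{\kappa}$ when $k>1$. Your proposed lexicographic-monomial fix does not actually work: multiplication by a unit in $\germ{\varepsilon}$ cannot turn, say, $\varepsilon_{0}+\varepsilon_{1}$ into a monomial, since the zero locus is preserved. This is an imprecision in the paper's statement rather than a flaw in your argument; the corollary is really only sharp as written for $k=1$ (compare Remark~\ref{rem:Bernoulli_normalization}, which is in the Bernoulli context where $k=1$). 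For general $k$ one should either allow $\kappa$ to be a multi-index, or replace the normalization of $a_{j}$ by a weaker canonical choice (e.g.\ fix the leading nonzero homogeneous part of $a_{j}$ up to scaling).
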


Notice that in the case $\kappa>0$ this normal form may fail to deliver
meaningful information at the limit $\varepsilon\to0$. Take the extreme
case $R_{\varepsilon}\left(x,y\right)=\varepsilon^{\kappa}x^{j}y^{d}$
with $\kappa>0$: for every $\varepsilon\neq0$ the vector field $\onf$
is not equivalent to the model $\fonf[\varepsilon]$ but $\onf[0]$
is.

\subsection{\label{subsec:Model_period_computations}Formula  for the period
of formal models}

Unfortunately only the case $k=1$ seems tractable enough to obtain
closed-form expressions involving the Gamma function. For the case
$k=2$ one could derive a closed-form formula additionally using generalized
hypergeometric functions, which is already stretching a bit far what
a ``closed-form'' is. There is no evidence that similar calculations
can be performed for $k>2$. 
\begin{prop}
\label{prop:period_analytic_k=00003D1}\cite[Proposition 8]{TeySurvey}
Here $k=1$. Let us introduce the double covering $\varepsilon=-s^{2}$
in the parameter space. Then for $m\in\nn$ and $n\in\zp$: 
\begin{align*}
\widehat{\per}_{s}\left(x^{n}y^{m}\right)\left(h\right) & =h^{m}\times\frac{\left(-m\right)^{n+m\mu}}{\Gama{n+m\mu}}\times t_{s,n,m}\times T_{s,m}\\
t_{s,n,m} & :=\frac{1}{2^{n}}\sum_{p+q=n}\binom{n}{p}\prod_{j=0}^{p-1}\left(1-s\left(\mu+\frac{2j}{m}\right)\right)\prod_{j=0}^{q-1}\left(1+s\left(\mu+\frac{2j}{m}\right)\right)\\
T_{s,m} & :=\frac{\left(-\frac{2s}{m}\right)^{m\mu}}{1+s\mu}\times\frac{\Gama{-\frac{m}{2s}+\frac{m\mu}{2}}}{\Gama{-\frac{m}{2s}-\frac{m\mu}{2}}}.
\end{align*}
This period is holomorphic and bounded in the parameter $s$ on the
sector 
\begin{align*}
S & :=\left\{ 0<\left|s\right|<\frac{1}{2\left|\mu_{0}\right|},~\frac{\pi}{4}<\arg s<\frac{7\pi}{4}\right\} 
\end{align*}
 and extends continuously at $0$ by 
\begin{align*}
\widehat{\per}_{0}\left(x^{n}y^{m}\right)\left(h\right) & =h^{m}\times\frac{\left(-m\right)^{n+m\mu_{0}}}{\Gama{n+m\mu_{0}}}.
\end{align*}
For given $s$ small enough, the period is zero if and only if $n+m\mu_{\varepsilon}\in\zz_{\leq0}$.
The period is an even function of $s$ (\emph{i.e.} holomorphic in
the parameter $\varepsilon$) if and only if $m\mu\in\zz$. In that
case $\mu$ is a rational constant.
\end{prop}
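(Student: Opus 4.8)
The statement to prove is Proposition~\ref{prop:period_analytic_k=00003D1}, giving a closed-form expression for the period of the formal model $\fonf$ when $k=1$, together with the analyticity/boundedness statements and the characterization of when the period is even in $s$.

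The plan is the following. First I would set $k=1$, so $P_\varepsilon(x)=x^2+\varepsilon$, and write $\varepsilon=-s^2$ so that the two roots of $P_\varepsilon$ are $\pm s$. The model first integral~\eqref{eq:model_first_integral} is $\widehat H(x,y)=y\exp\int^x -\frac{1+\mu z}{P_\varepsilon(z)}\,\dd z$, and a partial fraction decomposition $\frac{1+\mu z}{z^2+\varepsilon}=\frac{1}{2}\bigl(\frac{1-s\mu}{x-s}\cdot\frac{1}{?}+\cdots\bigr)$ — more precisely, writing residues $\frac{1+\mu(\pm s)}{\pm 2s}$ at $x=\pm s$ — gives $\widehat H(x,y)=y\,(x-s)^{-\frac{1+s\mu}{2s}}(x+s)^{\frac{1-s\mu}{2s}}$ up to a constant. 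By Definition~\ref{def:period_operator}, computing $\widehat\per(x^n y^m)(h)$ amounts to integrating $x^n y(x,h)^m\frac{\dd x}{P_\varepsilon(x)}$ along the asymptotic cycle $\gamma$ of Figure~\ref{fig:asymptotic_cycle} joining the two nodes at infinity, where $y(x,h)=h/\widehat H(x,1)$ is the sectorial solution for the model. Substituting, $\widehat\per(x^n y^m)(h)=\frac{h^m}{2\ii\pi}\int_\gamma x^n \widehat H(x,1)^{-m}\frac{\dd x}{P_\varepsilon(x)}$, and the integrand becomes an explicit algebraic function $x^n(x-s)^{\frac{m(1+s\mu)}{2s}}(x+s)^{-\frac{m(1-s\mu)}{2s}}\frac{\dd x}{x^2+\varepsilon}$. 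The path $\gamma$ wraps around the branch cut between $\pm s$ (for $k=1$ it is a genuine loop), so the integral is a classical beta-type / Euler integral; after the affine substitution $x=s(2t-1)$ (or $x=s\,u$) it reduces to $\int_0^1 t^{a-1}(1-t)^{b-1}(\text{polynomial in }t)\,\dd t$, i.e. a finite linear combination of Beta functions $\Beta{a}{b}=\frac{\Gama a\Gama b}{\Gama{a+b}}$. Expanding $x^n=(s(2t-1))^n=s^n\sum_{p+q=n}\binom np(-1)^q(2t)^p\cdots$ — actually $x^n=s^n(2t-1)^n$, and one writes $(2t-1)^n$ in terms of $t$ and $1-t$ as $(t-(1-t))^n=\sum\binom np t^p(-(1-t))^q$ — produces the binomial sum defining $t_{s,n,m}$, with the shifted exponents $\mu+\frac{2j}{m}$ coming from the Pochhammer-type products $\frac{\Gama{a+p}}{\Gama a}=\prod_{j=0}^{p-1}(a+j)$ with $a$ proportional to $\frac{m}{2s}$ times the relevant combination. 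The product of the remaining Gamma factors, after using the reflection/duplication relations and collecting powers of $s$, $m$, $-1$, yields exactly $\frac{(-m)^{n+m\mu}}{\Gama{n+m\mu}}\,T_{s,m}$ with $T_{s,m}$ as stated. The residue at $x^2+\varepsilon=0$ inside the loop (or the monodromy factor $1-e^{2\pi\ii(\cdots)}$ picked up crossing the cut) is precisely what contributes the $\Gama{-\frac m{2s}+\frac{m\mu}2}/\Gama{-\frac m{2s}-\frac{m\mu}2}$ ratio and the $(1+s\mu)^{-1}$ normalization in $T_{s,m}$; I would match this carefully against the earlier reference~\cite{TeySurvey} rather than rederiving every constant.

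Next, for the analyticity and boundedness on the sector $S=\{0<|s|<\frac{1}{2|\mu_0|},\ \frac\pi4<\arg s<\frac{7\pi}4\}$: the only possible singularities of the closed form in $s$ are poles of the Gamma functions $\Gama{-\frac m{2s}\pm\frac{m\mu}2}$ and the zero of $1+s\mu$. The constraint $|s|<\frac{1}{2|\mu_0|}$ keeps $1+s\mu$ away from $0$ and, more importantly, forces $\Re(-\frac m{2s})$ (whose sign is governed by $\arg s$ lying in the stated range, i.e. $s$ in the left half-plane direction away from $\rr_{\geq0}$) to avoid the negative-integer locus of $\Gama{}$; I would combine Stirling's asymptotics with the angular condition $\arg s\in(\frac\pi4,\frac{7\pi4})$ to get a uniform bound and holomorphy, and to obtain the continuous extension at $s=0$ by the asymptotic $\Gama{-\frac m{2s}+\frac{m\mu}2}/\Gama{-\frac m{2s}-\frac{m\mu}2}\sim(-\frac m{2s})^{m\mu}$ as $s\to0$ in $S$, which cancels the $(-\frac{2s}{m})^{m\mu}$ prefactor and leaves $\widehat\per_0(x^ny^m)(h)=h^m\frac{(-m)^{n+m\mu_0}}{\Gama{n+m\mu_0}}$. (This also reconciles with Corollary~\ref{cor:characterization_solution_cohomog_fixed_epsilon}, which guarantees the limit is the model period of the limiting saddle-node.) The vanishing criterion is then immediate: $\frac{1}{\Gama{n+m\mu_\varepsilon}}=0$ exactly when $n+m\mu_\varepsilon\in\zz_{\leq0}$, provided one checks the other factors $t_{s,n,m}$ and $T_{s,m}$ do not vanish for $s$ small (they are, up to a nonzero Gamma ratio and nonzero prefactors, a binomial sum which at $s=0$ equals $1$).

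Finally, for the parity statement: $\widehat\per$ is holomorphic in $\varepsilon$ iff it is even in $s$, since $\varepsilon=-s^2$. Under $s\mapsto -s$ the two roots $\pm s$ swap, $t_{s,n,m}\mapsto t_{-s,n,m}$ (which is symmetric: swapping $p\leftrightarrow q$ in the binomial sum corresponds exactly to $s\mapsto -s$, so $t_{s,n,m}$ is already even — I should double-check this cancels, making the parity of the whole expression rest on $T_{s,m}$ alone), while $T_{s,m}$ transforms via $\Gama{\frac m{2s}-\frac{m\mu}2}/\Gama{\frac m{2s}+\frac{m\mu}2}$ and the prefactors $(-\frac{2s}{m})^{m\mu}$, $(1+s\mu)^{-1}\mapsto(1-s\mu)^{-1}$. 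Using Euler's reflection formula $\Gama z\Gama{1-z}=\pi/\sin(\pi z)$ to relate $T_{s,m}$ and $T_{-s,m}$, the ratio $T_{s,m}/T_{-s,m}$ collapses to a ratio of sines $\sin(\pi(\frac m{2s}+\frac{m\mu}2))/\sin(\pi(\frac m{2s}-\frac{m\mu}2))$ times an elementary factor; this equals $1$ identically in $s$ iff $\sin(\pi m\mu)=0$, i.e. $m\mu\in\zz$. Since this must hold for all $m\in\nn$ for which the period is nonzero, and in particular for $m=1$ (or the smallest relevant $m$), we get $\mu\in\qq$, hence $\mu$ is a rational constant (it is a germ in $\germ\varepsilon$ taking rational values on a set with an accumulation point, so constant). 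The main obstacle I anticipate is the bookkeeping of branch choices and constant factors in the Euler-integral evaluation — getting the precise exponents $\mu+\frac{2j}{m}$ and the exact form of $T_{s,m}$ right — and, to a lesser extent, making the Stirling-based uniform estimate on the sector $S$ fully rigorous near $\arg s=\frac\pi4$ and $\frac{7\pi}4$; both are technical rather than conceptual, and I would lean on the already-published computation in~\cite{TeySurvey} for the $\mu=0$ skeleton and only carry out the $\mu$-dependent modifications in detail.
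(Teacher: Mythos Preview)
Your proposal is correct and follows essentially the same approach as the paper: reduce the period integral to a Beta-type integral via the affine change sending $(x-s)^{\alpha}(x+s)^{\beta}$ to a multiple of $(1-z)^{\alpha}z^{\beta}$, evaluate using the Pochhammer contour formula, and identify the constants through standard Gamma-function identities. The only cosmetic difference is in the parity discussion: the paper phrases the obstruction as the accumulation of zeros and poles of $T_{s,m}$ near $s=0$ (which cancel precisely when $m\mu\in\zz$), whereas you compute $T_{s,m}/T_{-s,m}$ directly via Euler's reflection formula and reduce to $\sin(\pi m\mu)=0$; these are two ways of saying the same thing.
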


The result is shown by using the Pochhammer contour integral formula
for the Beta function. Indeed an affine change of coordinates sends
$\left(x-s\right)^{\alpha}\left(x+s\right)^{\beta}$ to a multiple
of $\left(1-z\right)^{\alpha}z^{\beta}$. The final expression comes
from diverse classical properties of the Gamma function. The eventual
lack of evenness of the period comes from the term $T_{s,m}$. If
$m\mu$ is not an integer then $T_{s,m}$ is multivalued and has an
accumulation of zeros and poles as $s\to0$ outside the sector $S$.
Only the coincidence of these two infinite sets when $m\mu\in\zz$
allows the period to be holomorphic through lucky root~/~pole cancellations.

\bigskip{}

Since $T_{s,m}$ is independent on $n$, any nonzero period $\widehat{\per}\left(y^{m}g\right)$
of a germ $g\in\germ{\varepsilon,x}$ is holomorphic in $\varepsilon$
if and only if $m\mu\in\zz$. From Lemma~\ref{lem:bernoulli_unfold_with_bernoulli_modulus},
Theorem~\ref{thm:section_period} and the Parametrically Analytic
Orbital Moduli Theorem we can generalize this observation.
\begin{cor}
\label{prop:period_analytic}Let $G\in\germ{\varepsilon,x,y}$ with
$G\left(\bullet,0\right)=\OO P$. Let us assume that the period $\widehat{\per}\left(G\right)$
is nonzero. Then, $\widehat{\per}\left(G\right)$ is holomorphic in
the parameter if and only if all three conditions hold:

\begin{itemize}
\item $k=1$ ,
\item there exists $d\in\nn$ such that $d\mu\in\zz$,
\item there exist two germs $F\in\germ{\varepsilon,x,y}$ and $Q\in\nfsec[1][P^{d\tau}y^{d}]\backslash\left\{ 0\right\} $
such that 
\begin{align*}
G & =Q+\fonf\cdot F.
\end{align*}
\end{itemize}
\end{cor}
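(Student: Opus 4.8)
\textbf{Proof plan for Corollary~\ref{prop:period_analytic}.}
The plan is to reduce the statement to the already-established Parametrically Analytic Orbital Moduli Theorem (for the backward direction) and to a combination of the triangularity of the period operator with Proposition~\ref{prop:period_analytic_k=00003D1} (for the forward direction). First I would dispose of the ``if'' direction: suppose $k=1$, $d\mu\in\zz$, and $G=Q+\fonf\cdot F$ with $Q\in\nfsec[1][P^{d\tau}y^{d}]\backslash\{0\}$. Since $\fonf\cdot F$ lies in the image of $\fonf\cdot$, it is killed by $\widehat{\per}$, so $\widehat{\per}(G)=\widehat{\per}(Q)$. Writing $Q=\sum_{n}q_{n}(x)\,(P^{d\tau}y^{d})\cdot(\cdots)$, more precisely $Q=xP^{d\tau}(x)y^{d}\,r(x)$ for some $r\in\pol x_{<1}=\cc$ up to parameter (by definition of $\nfsec[1][\cdot]$), I would invoke Lemma~\ref{lem:bernoulli_unfold_with_bernoulli_modulus} and the Parametrically Analytic Orbital Moduli Theorem: such a $Q$ is (up to the change~\eqref{eq:tau_blow-up}) the orbital part of a Bernoulli unfolding, whose modulus $\mathfrak{m}=-\tfrac1d\log(1+2\ii\pi d\,\widehat{\per}(y^{d}r))$ is holomorphic in $\varepsilon$ precisely because, by Proposition~\ref{prop:period_analytic_k=00003D1}, $\widehat{\per}(x^{n}y^{d})$ is holomorphic in $\varepsilon$ when $d\mu\in\zz$. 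Hence $\widehat{\per}(Q)$ is holomorphic in $\varepsilon$, and so is $\widehat{\per}(G)$.

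For the ``only if'' direction, assume $\widehat{\per}(G)\neq0$ is holomorphic in $\varepsilon$ near $0$. The exact sequence~\eqref{eq:formal_exact_sequence} (or Theorem~\ref{thm:section_period} in its $\tau$-version) lets me write $G=\mathfrak{K}(G)+\fonf\cdot F$ with $\mathfrak{K}(G)=:Q\in\nfsec[][P^{\tau}y]$ and $F\in\germ{\varepsilon,x,y}$; since $\widehat{\per}(G)=\widehat{\per}(Q)\neq0$ we have $Q\neq0$. Expand $Q=\sum_{m\geq d_{0}}Q_{m}(x)\,y^{m}$ where $d_{0}\in\nn$ is the $y$-valuation; by triangularity of the period with respect to $y$ (the triangularity statement recalled in Section~\ref{subsec:first-order}, valid for the formal model too), the lowest-order term of $\widehat{\per}(Q)$ in the first-integral variable $h$ is $\widehat{\per}(y^{d_{0}}Q_{d_{0}})\,h^{?}$ and is nonzero, and it must be holomorphic in $\varepsilon$. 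Now Proposition~\ref{prop:period_analytic_k=00003D1} enters: after the double cover $\varepsilon=-s^{2}$ (so $k=1$ is forced for the closed-form formula to apply — which is why I first need to rule out $k>1$, see below), the period $\widehat{\per}_{s}(x^{n}y^{m})$ factors as (explicit stuff depending on $n$) times $T_{s,m}$, and $T_{s,m}$ is independent of $n$; a nonzero $\widehat{\per}(y^{m}g)=\sum_{n}g_{n}\widehat{\per}(x^{n}y^{m})$ is therefore holomorphic in $\varepsilon$ (i.e.\ even in $s$) if and only if $T_{s,m}$ is even in $s$, which by the last sentence of Proposition~\ref{prop:period_analytic_k=00003D1} happens exactly when $m\mu\in\zz$. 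Applying this with $m:=d_{0}$ gives $d_{0}\mu\in\zz$; set $d:=d_{0}$.

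It remains to show $k=1$ and that $Q\in\nfsec[1][P^{d\tau}y^{d}]$, i.e.\ that $Q$ is in fact a pure $d$-th power in $y$ with coefficients in $x\pol x_{<1}$ after the substitution $y\mapsto P^{d\tau}y^{d}$. For $k=1$: if $k>1$ I would argue exactly as in the reduction step of the Parametrically Analytic Orbital Moduli Theorem (Lemma~\ref{lem:analytic_invar_thus_commute} and Lemma~\ref{lem:analytic_invariant_trivial}) — a holomorphic-in-$\varepsilon$, nonzero period contradicts the rigidity of the abstract holonomy pseudogroups built from the self-intersecting cells, forcing the modulus, hence $Q$, to vanish; this is the main obstacle and the place where I must be careful, since I need to translate ``$\widehat{\per}(G)$ holomorphic'' into ``the corresponding modulus $\mathfrak{m}$ is holomorphic'' via the triangularity/upper-triangularity of the modulus-to-normal-form correspondence so that the hypotheses of the Parametrically Analytic Orbital Moduli Theorem are literally met. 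Once $k=1$, the theorem's conclusion~(2) says the realizing normal form is Bernoulli, $\onf=\fonf+r_{\varepsilon}xP^{\tau d}y^{d+1}\pp y$, whose period is $\widehat{\per}(r\,xP^{d\tau}y^{d})\in\nfsec[1][P^{d\tau}y^{d}]$; matching this with $\widehat{\per}(Q)$ and using injectivity of $\widehat{\per}$ on $\nfsec[1][P^{d\tau}y^{d}]$ for small $\varepsilon$ (Proposition~\ref{prop:section_period}, the $\tau$-version) gives $Q\in\nfsec[1][P^{d\tau}y^{d}]$ and therefore $G=Q+\fonf\cdot F$ of the required shape. The main difficulty I anticipate is purely bookkeeping: ensuring the passage between ``period of $G$'', ``section $\mathfrak{K}(G)$'', and ``orbital modulus $\mathfrak{m}$'' is tight enough that holomorphy transfers in both directions without losing the nonvanishing, and correctly handling the $\tau>0$ twist (the shift by $(k+1)\tau$ in $x$-powers that keeps $\mathfrak{S}_\ell$ invertible, as in Section~\ref{sec:tau}).
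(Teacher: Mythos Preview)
Your plan has the right three ingredients (Theorem~\ref{thm:section_period}, Lemma~\ref{lem:bernoulli_unfold_with_bernoulli_modulus}, the Parametrically Analytic Orbital Moduli Theorem), which is exactly what the paper invokes, but you assemble them in a way that leaves a real gap at the step you yourself call ``the main obstacle''.

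First, a simplification you miss: $\widehat{\per}$ is the period operator of the \emph{formal} model $\fonf$, which is \emph{linear} in $y$. Hence $\widehat{\per}$ is not merely triangular but \emph{diagonal} in the $y$-grading: $\widehat{\per}\bigl(g(x)\,y^{m}\bigr)=\alpha_{m}\,h^{m}$ for a scalar $\alpha_{m}(\varepsilon)$. After writing $G=Q+\fonf\cdot F$ with $Q=\sum_{m\geq1}Q_{m}(x)\,y^{m}\in\nfsec[k][P^{\tau}y]$, you get $\widehat{\per}(G)=\sum_{m}\alpha_{m}h^{m}$, holomorphic in $\varepsilon$ iff every $\alpha_{m}$ is.

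Now the gap. To apply the Parametrically Analytic Orbital Moduli Theorem you need a realizable pair $(\mu,\mathfrak{m})$ with $\mathfrak{m}$ holomorphic. You propose taking $\mathfrak{m}$ to be the modulus of $\fonf+Qy\pp y$, but that modulus is computed with the period operator of the \emph{nonlinear} vector field, not with $\widehat{\per}$; the two differ by the off-diagonal terms in the triangularity lemma, and you give no argument that those corrections remain holomorphic. The upper-triangularity you cite runs the wrong direction for what you need here.

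The paper avoids this entirely by working one degree at a time. Pick any $m$ with $\alpha_{m}\neq0$ and form the single-power Bernoulli unfolding $\onf_{m}:=\fonf+Q_{m}(x)\,y^{m+1}\pp y$. Lemma~\ref{lem:bernoulli_unfold_with_bernoulli_modulus} gives its modulus \emph{explicitly} as $-\tfrac{1}{m}\log\bigl(1+2\ii\pi m\,\alpha_{m}h^{m}\bigr)$, which is holomorphic because $\alpha_{m}$ is. The Theorem now applies literally to the realizable $(\mu,\mathfrak{m}(\onf_{m}))$ and forces $k=1$ and $m\mu\in\zz$. Running this for every $m$ with $Q_{m}\neq0$ (equivalently $\alpha_{m}\neq0$, by injectivity of $\widehat{\per}$ on each graded piece, Section~\ref{subsec:section_period}) shows $\mu$ is a rational constant and all such $m$ are multiples of its denominator $d$; a direct regrouping of powers then yields $Q\in\nfsec[1][P^{d\tau}y^{d}]$, with no second appeal to the Theorem and no injectivity-of-$\mathfrak{S}_{\ell}$ argument needed.
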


The fact that the period is never a holomorphic function of the parameter
if $k>1$ is probably a sign that a ``simple'' formula for $\widehat{\per}\left(x^{n}y^{m}\right)$
does not exist.

\bibliographystyle{plain}
\bibliography{bibliography/unfoldings,bibliography/foliations,bibliography/saddle-node}

\end{document}